\documentclass[12pt, a4paper]{amsart}
\usepackage{amsmath,amssymb,amsthm,booktabs,longtable}
\usepackage{enumitem}
\usepackage{comment}
\usepackage{graphicx}
\usepackage[all]{xy}
\usepackage{caption}
\usepackage{color}
\usepackage[colorlinks=true, linkcolor=blue, citecolor=blue, urlcolor=blue]{hyperref}
\hypersetup{breaklinks=true}
\usepackage{mathdots}
\usepackage{url}
\usepackage[T1]{fontenc} 
\usepackage{here}
\usepackage{lscape}
\usepackage{tikz}

%
\setcounter{tocdepth}{3}

\theoremstyle{plain}
\makeatletter 

\def\l@section{\@tocline{1}{0pt}{1em}{}{}}
\def\l@subsection{\@tocline{2}{0pt}{2em}{5em}{}}
\def\l@subsubsection{\@tocline{3}{0pt}{3em}{5em}{}}

\@addtoreset{table}{section}

\@addtoreset{figure}{section}
\makeatother %
\newtheorem{theorem}{Theorem}[section]
\newtheorem{lemma}[theorem]{Lemma}
\newtheorem{corollary}[theorem]{Corollary}
\newtheorem{proposition}[theorem]{Proposition}
\newtheorem{fact}[theorem]{Fact}
\newtheorem{example}[theorem]{Example}
\newtheorem{setting}[theorem]{Setting}
\newtheorem{problem}[theorem]{Problem}
\newtheorem{question}[theorem]{Question}


\theoremstyle{definition}
\newtheorem{definition}[theorem]{Definition}
\newtheorem{deflem}[theorem]{Definition-Lemma}

\newtheorem{condition}[theorem]{Condition}
\newtheorem{remark}[theorem]{Remark}
\newtheorem{notation}[theorem]{Notation}
\newcommand{\ad}{\mathop{\mathrm{ad}}\nolimits}
\newcommand{\Ad}{\mathop{\mathrm{Ad}}\nolimits}
\newcommand{\Int}{\mathop{\mathrm{Int}}\nolimits}
\newcommand{\Aut}{\mathop{\mathrm{Aut}}\nolimits}
\newcommand{\End}{\mathop{\mathrm{End}}\nolimits}

\newcommand{\id}{\mathop{\mathrm{id}}\nolimits}
\newcommand{\Hom}{\mathop{\mathrm{Hom}}\nolimits}
\newcommand{\rank}{\mathop{\mathrm{rank}}\nolimits}
\newcommand{\diag}{\mathop{\mathrm{diag}}\nolimits}
\newcommand{\even}{\mathrm{even}}
\newcommand{\ord}{\operatorname{ord}}
\newcommand{\Q}{\mathbb{Q}}
\newcommand{\R}{\mathbb{R}}
\newcommand{\C}{\mathbb{C}}
\newcommand{\N}{\mathbb{N}}
\newcommand{\Z}{\mathbb{Z}}
\newcommand{\F}{\mathbb{F}}
\newcommand{\E}{\mathbb{E}}
\newcommand{\HA}{\mathbb{H}}
\newcommand{\T}{\mathbb{T}}
\newcommand{\Spin}{\mathbf{Spin}}
\makeatletter
    
    \@addtoreset{equation}{section}
  \makeatother
%
%
%
%
\makeatother

%
\title[]{Zariski-dense deformations of standard discontinuous groups for pseudo-Riemannian homogeneous spaces}
\author{Kazuki Kannaka and Toshiyuki Kobayashi}

\subjclass[2020]{
Primary:
57S30, 
58H15. 
Secondary:
22D50, 
22E40, 
22E46, 
53C30, 
58J50.  
}
\keywords{
discontinuous group; 
proper action; 
Zariski dense subgroup; 
rigidity; 
locally symmetric space; 
pseudo-Riemannian manifold;
Clifford–Klein form; 
spin group.
}

\address[Kazuki KANNAKA]{%
Faculty of Mathematics and Physics, Institute of Science and Engineering, Kanazawa University, Kakumamachi, Kanazawa, Ishikawa, 920-1192, JAPAN;
RIKEN Interdisciplinary Theoretical and Mathematical Sciences (\lowercase{i}THEMS), 
Wako, Saitama 351-0198, Japan.
}
\email{kannaka@se.kanazawa-u.ac.jp}

\address[Toshiyuki KOBAYASHI]{%
Graduate School of Mathematical Sciences,
The University of Tokyo, 3-8-1 Komaba, Meguro, Tokyo, 153-8914, Japan;
French-Japanese Laboratory in Mathematics and its Interactions,
FJ-LMI CNRS IRL2025, Tokyo, Japan
}
\email{toshi@ms.u-tokyo.ac.jp}

\date{\today}

\begin{document}

\begin{abstract}
Let $X=G/H$ be a homogeneous space of a Lie group $G$. When the isotropy subgroup $H$ is non-compact, a discrete subgroup $\Gamma$ may fail to act properly discontinuously on $X$.
In this article, we address the following question: in the setting where
$G$ and $H$ are reductive Lie groups
and $\Gamma \backslash X$ is a standard quotient, to what extent can one deform the discrete subgroup $\Gamma$ while preserving
the proper discontinuity of the action on $X$?

We provide several classification results, including conditions under which local rigidity holds for compact standard quotients $\Gamma\backslash X$, when a standard quotient can be deformed into a non-standard quotient, 
a characterization of the largest Zariski-closure of discontinuous groups under  small deformations, and conditions under which Zariski-dense deformations occur.

\end{abstract}

\maketitle

\tableofcontents

\section{Introduction}
Let $X$ be a homogeneous space $G/H$, where $G$ and $H$ are real reductive algebraic groups. 
In this article, we focus on the case where the isotropy subgroup $H$ is non-compact, and we consider the following two problems:

\begin{problem}
\label{mainquestion} 
To what extent can we deform cocompact discontinuous groups for the homogeneous space $X$?  
\end{problem}

\begin{problem}
\label{mainquestion'}
Does there exist a Zariski-dense subgroup $\Gamma$ 
with cohomological dimension greater than 2 such that $\Gamma$ acts properly discontinuously on $X$?
\end{problem}

In this context, the term ``discontinuous group for $X$'' is used distinctively from ``discrete subgroup of $G$''.
In fact, even when $\Gamma$ is a discrete subgroup of $G$, the quotient space $X_{\Gamma}:=\Gamma\backslash X$
is not always Hausdorff when $H$ is non-compact. 
We say that a discrete subgroup $\Gamma$ of a Lie group $G$ is a \emph{discontinuous group for} the homogeneous space $X$ if the $\Gamma$-action on $X$ is properly discontinuous and free. In this case, $X_{\Gamma}$ carries a unique $C^{\infty}$-manifold structure such that  
the quotient map $X\rightarrow X_{\Gamma}$ is a smooth covering of $X_\Gamma$, through which the quotient space $X_\Gamma$ inherits any local $G$-invariant geometric structure on $X$. The resulting quotient manifold $X_\Gamma$ is also referred to as a \textit{Clifford--Klein form} of $X$, which is a typical example of $(G,X)$-manifolds in the sense of Ehresmann and Thurston. See  e.g., \cite{Kobayashi-unlimit} for a detailed survey.  

We now briefly explain our motivations for the aforementioned problems,
the existing knowledge, and the contributions presented in this article.
The necessary notations and basic concepts will be reviewed in Section~\ref{section:deformation_compact_CK}.

The classical Selberg--Weil local rigidity theorem asserts that the compact quotient $X_\Gamma$ of the irreducible \emph{Riemannian} symmetric space $X=G/K$, where $K$ is a compact subgroup, cannot be continuously deformed unless $\dim X=2$ (\cite{Weil_discrete_subgroups}). 
In contrast, a notable feature when $H$ is non-compact, observed by Kobayashi in the early 1990s \cite{Kobayashi93},
is that cocompact discontinuous groups for \emph{pseudo-Riemannian} symmetric spaces exhibit
greater \lq\lq flexibility\rq\rq. 
Specifically, there exist arbitrarily high-dimensional compact quotients $X_\Gamma$ of irreducible symmetric spaces $X=G/H$ that do allow continuous deformation (\cite[Thms.~A and B]{Kobayashi98}).
On the other hand, it has also been proven that there exist compact quotients $X_\Gamma$ with local rigidity in pseudo-Riemannian symmetric spaces of the form $X=G/H$ (\cite[Prop.\ 1.8]{Kobayashi98}).

The difficulty regarding Problem~\ref{mainquestion} 
 is that when $H$ is not compact, small deformations of a discrete subgroup can easily destroy the proper discontinuity.
Goldman~\cite{Goldmannonstandard} conjectured, in the context of the 3-dimensional compact anti-de Sitter space, that any small deformation of any standard cocompact discontinuous group preserves the proper discontinuity.
This conjecture was proved by Kobayashi~\cite{Kobayashi98}, 
by extending the properness criterion~\cite{Kobayashi89} to the general setting, as shown in \cite{Benoist96, Kobayashi96}. 
He also demonstrated the existence of compact ``standard'' quotients that allow ``non-standard'' deformation when $X$ is locally isomorphic to the group manifold $SO(n,1)$ and $SU(n,1)$ for all $n \ge 2$, which are regarded as symmetric spaces of the form $(G\times G)/\diag G$.
Deformation in the context of a discontinuous group involves the study of deformation spaces of the corresponding geometric structures modelled on $(G, X)$. See also 
Ghys~\cite{Ghys95} for geometric interpretations in the case of $SL(2,\C)$. These results have been extended to the symmetric space $G/H=SO(2,2n)/U(1,n)$ by Kassel~\cite{Kassel12}, where she constructed a small deformation into a Zariski-dense subgroup (Definition~\ref{def:Zariski-dense} in Appendix~\ref{section:algebraic_group})
while preserving the proper discontinuity of the action on $G/H$.

The deformation of the discrete subgroup described above has the following properties: \begin{enumerate}[label=(\arabic*)]
    \item (standard quotient) it starts with a discrete subgroup whose Zariski-closure acts properly on $X=G/H$, or more precisely, it corresponds to a ``standard'' discontinuous group in the sense of \cite[Def.~1.4]{KasselKobayashi16}, see Definition~\ref{def:standard};
    \item (deformation as a discontinuous group) the deformation of the discrete subgroup preserves its proper discontinuity;
    \item (non-standard deformation) after the deformation, the Zariski-closure of the new discrete subgroup no longer acts properly on $X$.
\end{enumerate}

Keeping the assumption that a cocompact discontinuous group is standard, we examine Problem~\ref{mainquestion} in the general setting where $G/H$ is a homogeneous space of \emph{reductive type}, i.e., where $G \supset H$ are real reductive algebraic groups. We begin by providing a rigorous formulation in Question~\ref{question:deform-ck}, which includes conditions for when local rigidity holds, when non-standard deformations are possible, and when Zariski-dense deformations occur. 

We provide answers to these questions by dividing them into the following cases of compact standard quotients, denoted symbolically as $\Gamma \backslash G/H$ and $\Gamma_L\backslash G/\Gamma_H$, 
 which are modeled on the homogeneous space $G/H$ and the group manifold $(G\times G)/\diag G$,
 respectively.
In the first case, $\Gamma\backslash G/H$, we provide answers to Question~\ref{question:deform-ck} in Table~\ref{tab:cpt-CK-sym} for a classification, where $G$ is a simple Lie group 
(Theorem~\ref{theorem:local-nonstd-zariski}). In the second case, $\Gamma_{L}\backslash G/\Gamma_{H}$, the answer is given in Theorem~\ref{theorem:group-manifold-case}. 

We observe from our classification that,
in roughly half of the cases of $\Gamma\backslash G/H$, local rigidity holds, while in the case $\Gamma_{L}\backslash G/\Gamma_{H}$, more deformable cocompact discontinuous groups exist.
We also observe that in the case $\Gamma_{L}\backslash G/\Gamma_{H}$ with $G$ simple, 
there are no Zariski-dense deformations.
In contrast, in the $\Gamma\backslash G/H$ case with $G$ simple, we show that
when deformation to a non-standard form is possible, the same homogeneous space $G/H$ also admits a cocompact discontinuous group that can be deformed into a Zariski-dense subgroup.
Such homogeneous spaces $G/H$ include the indefinite-K\"ahler symmetric space $G/H=SO(2,2n)/U(1,n)$ (\cite{Kassel12})
and the following $7$-dimensional space form, with pseudo-Riemannian metric of signature $(4,3)$ and constant sectional curvature $-1$:

\begin{theorem}[Answer to (Q3) in Table~\ref{tab:cpt-CK-sym}]
\label{thm:7_dim_compact_space_form}
    There exists a standard cocompact discontinuous group
    for $SO(4,4)/SO(3,4)$ that
    admits a small deformation which is Zariski-dense in $SO(4,4)$. 
\end{theorem}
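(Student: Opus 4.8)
\emph{Overview of the strategy.}
The plan is to realise $X=SO(4,4)/SO(3,4)$ as a $7$–dimensional pseudo-sphere on which a copy of $Sp(1,1)$ acts transitively, and then to deform a cocompact lattice of that $Sp(1,1)$ by \emph{several simultaneous bendings} along totally geodesic hypersurfaces, exploiting the extra room inside $SO(4,4)$. Concretely, let $Q$ be a quadratic form of signature $(4,4)$ on $\R^8$, so that $SO(4,4)=SO(Q)$, and put $X=\{v:Q(v)=-1\}$ with its induced metric; this is the $7$–dimensional space form of signature $(4,3)$ and curvature $-1$, i.e.\ $X=SO(4,4)/SO(3,4)$. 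The $8$–dimensional real spin representation of $Spin(4,1)$ carries an invariant form of signature $(4,4)$, which gives a closed embedding $L:=Sp(1,1)\cong Spin(4,1)\hookrightarrow SO(4,4)$ under which $L$ acts transitively on $X$ with compact isotropy $\cong Sp(1)$; equivalently $\mu(L)=\R_{\geq0}(1,1,1,1)$ while $\mu(SO(3,4))$ is the face $\{t_4=0\}$ of the $D_4$–Weyl chamber, so $\mu(L)\cap\mu(SO(3,4))=\{0\}$ and $L$ acts properly and cocompactly on $X$. Fixing a torsion‑free cocompact arithmetic lattice $\Gamma_0\subset L$ of simplest type, we obtain a standard cocompact discontinuous group for $X$ (Definition~\ref{def:standard}), with $\Gamma_0\backslash X$ an $S^3$–bundle over the closed hyperbolic $4$–manifold $\Gamma_0\backslash\HA^4$. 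After passing to a finite cover we may assume that $\Gamma_0\backslash\HA^4$ contains three pairwise disjoint, two‑sided, separating, embedded totally geodesic hypersurfaces $\HA^3/\Delta_1,\HA^3/\Delta_2,\HA^3/\Delta_3$ of three distinct ``directions'', and that the pieces of the resulting graph‑of‑groups decomposition of $\Gamma_0$ have fundamental group Zariski‑dense in $L$ (standard, by the constructions of totally geodesic hypersurfaces and separability in arithmetic hyperbolic manifolds, cf.\ Millson and Gromov--Piatetski-Shapiro).

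\emph{The bending family.}
The relevant branching rule is $\mathfrak{so}(4,4)|_{\mathfrak{sp}(1,1)}=\mathfrak{l}\oplus\mathfrak{z}\oplus\mathfrak{m}$, where $\mathfrak{z}=\mathfrak{so}(3)$ is the Lie algebra of $Z_{SO(4,4)}(L)$ and $\mathfrak{m}=\R^{4,1}\boxtimes\R^3$ as an $\mathfrak{l}\times\mathfrak{z}=\mathfrak{so}(4,1)\times\mathfrak{so}(3)$–module; a short computation then gives $\mathfrak{z}_{\mathfrak{so}(4,4)}(\mathfrak{so}(3,1))=\mathfrak{z}\oplus(\ell\boxtimes\R^3)$, where $\ell=(\R^{4,1})^{\mathfrak{so}(3,1)}$ is a line, so that $\dim Z_{SO(4,4)}(\Delta_j)=6>3=\dim Z_{SO(4,4)}(L)$. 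For each $j$ choose a one‑parameter subgroup $\{z^{(j)}_t\}\subset Z_{SO(4,4)}(\Delta_j)$ whose generator has nonzero component $\xi_j\otimes w_j\in\ell_j\boxtimes\R^3$, with $w_1,w_2,w_3$ a basis of $\R^3$. Twisting the edge maps of the graph‑of‑groups decomposition of $\Gamma_0$ by the $z^{(j)}_t$ produces, for $t$ near $0$, a curve of homomorphisms $\rho_t\colon\Gamma_0\to SO(4,4)$ with $\rho_0$ the inclusion; each $z^{(j)}_t$ centralises $\Delta_j$, so $\rho_t$ is well defined, and $\rho_t$ is not conjugate to $\rho_0$ because the $z^{(j)}_t$ do not all lie in $Z_{SO(4,4)}(L)$.

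\emph{Conclusion.}
By the stability of proper discontinuity for cocompact standard discontinuous groups of homogeneous spaces of reductive type (\cite{Kobayashi98}; cf.\ the proof of Theorem~\ref{theorem:local-nonstd-zariski}), there is $\varepsilon>0$ such that for $|t|<\varepsilon$ the map $\rho_t$ is injective, $\rho_t(\Gamma_0)$ is discrete, and $\rho_t(\Gamma_0)\backslash X$ is again a compact Clifford--Klein form; thus $\rho_t(\Gamma_0)$ is a small deformation of $\Gamma_0$ through cocompact discontinuous groups for $X$. On the other hand $\overline{\rho_t(\Gamma_0)}^{\,\mathrm{Zar}}$ contains $L$ together with the conjugates $z^{(j)}_tLz^{(j),-1}_t$, hence its Lie algebra contains $\mathfrak{l}$ and, for each $j$, the subspace $\R^{4,1}\otimes\langle w_j\rangle$ obtained by saturating $\ell_j\otimes\langle w_j\rangle$ under $\ad(\mathfrak{l})$; since $w_1,w_2,w_3$ span $\R^3$ this already contains $\mathfrak{m}$, and the brackets $[\R^{4,1}\otimes\langle w_i\rangle,\R^{4,1}\otimes\langle w_j\rangle]$ recover all of $\mathfrak{z}$, so the Lie algebra is $\mathfrak{l}\oplus\mathfrak{z}\oplus\mathfrak{m}=\mathfrak{so}(4,4)$ for all small $t\neq0$. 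Hence $\rho_t(\Gamma_0)$ is Zariski‑dense in $SO(4,4)$, proving the theorem; it also settles (Q3) in Table~\ref{tab:cpt-CK-sym} for $SO(4,4)/SO(3,4)$, in parallel with Kassel's treatment of $SO(2,2n)/U(1,n)$ in \cite{Kassel12}.

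\emph{The main obstacle.}
The delicate point is precisely the necessity of \emph{three} bendings: a single bending along one hypersurface only enlarges the Zariski closure to a copy of $SO(5,1)$, and two bendings only to a copy of $SO(4,3)$, because $Z_{SO(4,4)}(\Delta_j)$ meets $\mathfrak{m}$ only in the single line‑worth $\ell_j\boxtimes\R^3$. One must therefore establish the branching rule $\mathfrak{m}=\R^{4,1}\boxtimes\R^3$ and the exact form of $\mathfrak{z}_{\mathfrak{so}(4,4)}(\mathfrak{so}(3,1))$, and then verify that three bendings with directions $w_j$ spanning $\R^3$ suffice — equivalently, that $\mathfrak{l}$ together with $\mathfrak{m}$ generates $\mathfrak{so}(4,4)$, which holds since $(\mathfrak{so}(4,4),\mathfrak{so}(4,1)\oplus\mathfrak{so}(3))$ is an irreducible symmetric pair. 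Invariantly, one can also phrase the argument through Millson's non‑vanishing of $H^1(\Gamma_0,\R^{4,1})$ and the decomposition $H^1(\Gamma_0,\mathfrak{so}(4,4))\cong H^1(\Gamma_0,\R^{4,1})^{\oplus3}\oplus H^1(\Gamma_0,\R)^{\oplus3}$ (using Calabi--Weil rigidity of $\Gamma_0\backslash\HA^4$, so that $H^1(\Gamma_0,\mathfrak{l})=0$), choosing a deformation class whose three $\R^{4,1}$–components are supported on three distinct hypersurfaces and are linearly independent in the $\R^3$–factor; this class integrates (it is realised by the bending family above) and its Zariski closure is forced to be all of $SO(4,4)$.
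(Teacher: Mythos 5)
Your proposal is correct and follows essentially the same route as the paper: the embedding $Spin(4,1)\cong Sp(1,1)\hookrightarrow SO(4,4)$ coming from the spin representation (the paper's $G(4,3)\simeq O(4,4)$ in Example~\ref{example:Clifford-z.d}), the branching $\mathfrak{so}(4,4)=\mathfrak{spin}(4,1)\oplus(\R^{4,1})^{\oplus 3}\oplus\mathbf{1}^{\oplus 3}$ (Case~5-2 of Table~\ref{tab:irr-dec}, which is why exactly three bendings are needed), Johnson--Millson bending along disjoint totally geodesic hypersurfaces of an arithmetic lattice, and Kassel's stability theorem (Fact~\ref{fact:stability-for-properness}) to preserve proper discontinuity and cocompactness. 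The two steps you assert rather than prove are precisely where the paper invests its effort: the existence, inside a torsion-free cocompact lattice of $Spin(4,1)$ itself (not merely of $SO(4,1)$, from which lifting can fail for $n\ge 4$), of three disjoint totally geodesic hypersurfaces with the required connectivity and Zariski-density properties is Theorem~\ref{theorem:Millson-spin} (Section~\ref{section:proof-step1}), and the passage from ``the closure contains $L$ and $z_t^{(j)}Lz_t^{(j),-1}$'' to ``its Lie algebra contains the $\ad(\mathfrak{l})$-saturation of the bending directions for \emph{all} small $t\neq 0$'' requires the analytic-curve and genericity arguments of Lemmas~\ref{lem:span_generic} and~\ref{lemma:analytic_gt_through_ux} rather than a first-order expansion alone — both are gaps of detail, not of substance.
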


So far, we have outlined our results for the case of \emph{cocompact} discontinuous groups in Problem~\ref{mainquestion}. 
Now, in Problem~\ref{mainquestion'}, we investigate the possibility of Zariski-dense deformations. Keeping in mind that such deformations in $G$ are more likely to occur for discrete groups with lower cohomological dimension, we allow discontinuous groups that are not necessarily cocompact for $X$ in addressing Problem~\ref{mainquestion'}.

As one of numerical invariants for an abstract group $\Gamma$,
we recall the projective dimension of the group ring $\R[\Gamma]$ is called the \emph{cohomological dimension} of $\Gamma$, denoted as  $\operatorname{cd}_{\R}(\Gamma)$. Free groups and surface groups (the fundamental groups of closed hyperbolic surfaces) have cohomological dimensions of 1 and 2, respectively.
  
The general upper bound on the cohomological dimension of a subgroup $\Gamma$ acting properly discontinuously on a homogeneous space $X=G/H$ was established in \cite[Cor.~5.5]{Kobayashi89} as the inequality  
\begin{align}
\label{ineq:cd-upper-bound}
\operatorname{cd}_{\R}(\Gamma)\leq d(X),
\end{align}
where
we define the ``non-compact dimension''  of $X$ by
\begin{align}
\label{eqn:dX}
    d(X):= d(G)-d(H),
\end{align}
 and define $d(G)$ for the group $G$ to be the dimension of the Riemannian symmetric space $G/K$. 
The homogeneous space $X$ is diffeomorphic to a vector bundle over a compact manifold
with fiber $\R^{d(G)}$.
The equality in \eqref{ineq:cd-upper-bound} holds if and only if the $\Gamma$-action on $X$ is cocompact (\cite[Cor.~5.5]{Kobayashi89}). 

We have already discussed Problem~\ref{mainquestion'} in the case where $X_\Gamma$ is compact as a part of Problems~\ref{mainquestion}. This is the case where the equality $\operatorname{cd}_{\R}(\Gamma) = d(X)$ holds.

In contrast, for cases where the cohomological dimension is lower, some solutions to Problem~\ref{mainquestion'} are already known (see also Section~\ref{section:deform-noncptCK} for some open questions): 
\begin{itemize}
\item 
($\operatorname{cd}_{\R}(\Gamma)=1$)
when $\Gamma$ is isomorphic to a free group, see Benoist~\cite{Benoist96};
\item
($\operatorname{cd}_{\R}(\Gamma)=2$)
when $\Gamma$ is isomorphic to a surface group  and $G/H$ is a symmetric space, see a recent paper \cite{KannakaOkudaTojo24}. 
\end{itemize}

This article highlights Problem~\ref{mainquestion'} in the setting where
\[
2<\operatorname{cd}_{\R}(\Gamma)<d(X).
\]
by considering deformations of a discrete subgroup that is isomorphic to a cocompact discrete subgroup $\Gamma$ of $Spin(n,1)$, in particular, where $\operatorname{cd}_{\R}(\Gamma)=n$.
Our answers to Problem~\ref{mainquestion'} include the following examples, see Theorem~\ref{theorem:zariski-dense-noncpt-CK}:
there exist Zariski-dense discontinuous groups with cohomological dimension 6 in the cases where $d(X) = 7$, $8$, or $16$ such as
\begin{equation*}
X =
\left\{
\begin{array}{lll}
SO(8,\C)/SO(7,\C) & d(X)=7 & \textrm{(complex sphere)}, \\
SO(8,8)/SO(7,8) & d(X)=8 & \textrm{(space form)}, \\
SU(8,8)/U(7,8) & d(X)=16 & \textrm{(indefinite K\"ahler)}, \\
SL(16,\R)/SL(15,\R) & d(X)=16. &
\end{array}
\right.
\end{equation*}

\medskip

In proving the aforementioned results, particularly Theorems~\ref{theorem:local-nonstd-zariski}, \ref{theorem:group-manifold-case}, and \ref{theorem:zariski-dense-noncpt-CK}, we focus on a special case of the following problem regarding deformations of cocompact discrete subgroups of $Spin(n,1)$.

\begin{problem}
    \label{problem:hyperbolic-lattice-deform}
    Let $G$ be a real algebraic group, 
    $\Gamma$ a torsion-free cocompact discrete subgroup of $Spin(n,1)$,
    and $\varphi\colon \Gamma\rightarrow G$ a group homomorphism. 
    Find a small deformation $\varphi' \in \Hom(\Gamma,G)$ of $\varphi$ that maximizes the Zariski-closure of $\varphi'(\Gamma)$. 
\end{problem}

The reader may wonder why Problem~\ref{problem:hyperbolic-lattice-deform}
and Theorem~\ref{introtheorem:bending} below are formulated in terms of $Spin(n,1)$ rather than $SO(n,1)$, despite the technical difficulties involved. 
The main reasons for this choice are summarized in the following remark.
\begin{remark}
\label{rem:lifting_to_spin}
\begin{enumerate}[label=(\arabic*)]
    \item 
    \label{item:proper-action}
    (Proper actions). Some homogeneous spaces $X$, such as the 15-dimensional space form $SO(8,8)/SO(7,8)$ mentioned above, admit a proper action by reductive subgroups only if they are globally isomorphic to $Spin(n,1)$. In particular, $SO(n,1)$ cannot act properly on such space $X$, as stated in Theorem~\ref{theorem:Kannaka_Tojo_Spin_very-even}
    \item  (Direct implications). The deformation theory of discrete subgroups of $Spin(n,1)$ will imply similar results for quotient groups such as $SO(n,1)$, but not vice versa, as discussed below.
    \item 
    \label{item:lift-counterexample}
    (Counterexample to the lifting for $n \ge 4$). 
         We consider whether the following 
        claim $(P_{n})$ holds: 
        
        $(P_{n})$. \emph{Any torsion-free cocompact discrete subgroup of $SO_{0}(n,1)$ can be lifted to $Spin(n,1)$.} 

        \begin{itemize}
            \item $(P_{2})$ and $(P_{3})$ holds (Culler--Shalen~\cite{Culler-Shalen83} and Thurston~\cite{Thurston-3fold}).
            \item $(P_{n})$ fails when $n\geq 4$ (Martelli--Riolo--Slavich~\cite{nonspin-hyperbolic}). 
        \end{itemize}
\end{enumerate}
\end{remark}

Since a torsion-free cocompact discrete subgroup of $Spin(2,1)$ is a surface group, Problem~\ref{problem:hyperbolic-lattice-deform} for the case $n=2$ already presents non-trivial challenges, as it asks for the largest Zariski-closure of a surface subgroup in $G$ among its small defomations.
This problem has been studied in previous work, such as
 Burger--Iozzi--Wienhard~\cite{BurgerIozziWienhard10} and Kim--Pansu~\cite{KIMPAN15}, for example.

For general $n\geq 2$, and particularly when $n \ge 3$, 
we explore Problem~\ref{problem:hyperbolic-lattice-deform} in the special case 
where the deformation starts from a standard quotient $X_\Gamma$.
To be more explicit, we consider the setting
where
$\varphi\colon\Gamma\rightarrow G$ can be extended to a homomorphism $Spin(n,1) \rightarrow G$, that is, a homomorphism of real algebraic groups.
(There are several equivalent definitions  for \emph{homomorphisms} between real algebraic groups, as discussed in Lemma~\ref{lemma:morphism} in Appendix~\ref{section:algebraic_group}.)

In this case, we introduce a real algebraic subgroup of $G$, denoted by $G^{\varphi}$ (Definition~\ref{def:g-phi}), and obtain the following theorem.
\begin{theorem}[see Theorem~\ref{theorem:bending}]
    \label{introtheorem:bending}
Let $n\ge 2$, $G$ a Zariski-connected real algebraic group, and $\varphi\colon Spin(n,1) \to G$ a homomorphism.
    \begin{enumerate}[label=(\arabic*)]
        \item 
        \label{introitem:realization-gphi}
        There exists a cocompact arithmetic subgroup $\Gamma$ of $Spin(n,1)$ such that $\varphi(\Gamma)$
    can be deformed so that its Zariski-closure is  
    $G^{\varphi}$.
    \item 
    \label{introitem:upper-bound}
    Conversely, the group $G^{\varphi}$ gives the upper bound for the Zariski-closure of $\varphi'(\Gamma)$, up to $G$-conjugacy, for any small deformation $\varphi'$ of $\varphi|_\Gamma$, provided that $n \geq 3$.
    \end{enumerate}
\end{theorem}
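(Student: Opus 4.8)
I would prove the two assertions by quite different routes: part~\ref{introitem:realization-gphi} (that $G^{\varphi}$ is attained) by an arithmetic construction together with bending, and part~\ref{introitem:upper-bound} (that $G^{\varphi}$ cannot be exceeded, for $n\ge 3$) by cohomological rigidity. Throughout, write $L:=\varphi(Spin(n,1))$, regard $\mathfrak{g}=\operatorname{Lie}(G)$ as a $Spin(n,1)$-module via $\mathrm{Ad}_{G}\circ\varphi$, and fix a $Spin(n,1)$-stable splitting $\mathfrak{g}=\mathfrak{g}^{\varphi}\oplus\mathfrak{m}$ with $\mathfrak{g}^{\varphi}=\operatorname{Lie}(G^{\varphi})$ (possible since $Spin(n,1)$ is semisimple). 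The first thing to extract from Definition~\ref{def:g-phi} is that $\mathfrak{g}^{\varphi}$ contains both $\varphi_{*}\mathfrak{spin}(n,1)$ and the space $\mathfrak{g}^{Spin(n-1,1)}$ of $Spin(n-1,1)$-fixed vectors, and hence, being a Lie subalgebra, contains the $\mathfrak{spin}(n,1)$-submodule generated by any such fixed vector; consequently \emph{every} irreducible $Spin(n,1)$-constituent of $\mathfrak{g}$ carrying a nonzero $Spin(n-1,1)$-fixed vector lies in $\mathfrak{g}^{\varphi}$, so that $\mathfrak{m}^{Spin(n-1,1)}=0$. This absence of $Spin(n-1,1)$-invariants in $\mathfrak{m}$ is what makes both parts go through. (If $\varphi_{*}=0$ then $\varphi$ has finite image, $G^{\varphi}=L$ is finite, and both assertions are trivial; so assume $\varphi_{*}$ injective.)

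For part~\ref{introitem:realization-gphi} I would take $\Gamma$ to be a torsion-free cocompact arithmetic lattice in $Spin(n,1)$ of simplest type (arising from a quadratic form of signature $(n,1)$ over a totally real field that is definite at all other real places), so that the closed hyperbolic manifold $M=\Gamma\backslash\HA^{n}$ contains totally geodesic hypersurfaces; after replacing $\Gamma$ by a torsion-free congruence subgroup and using the separability (LERF) properties of such lattices (Bergeron--Haglund--Wise), I may assume one such hypersurface $N$ is embedded and realises a decomposition $\Gamma=\Gamma_{A}\ast_{\Gamma_{N}}\Gamma_{B}$ (if $N$ separates) or an HNN extension of $\Gamma_{A}$ over $\Gamma_{N}$ (if not), with $\Gamma_{N}$ a cocompact lattice in a $G$-conjugate of $Spin(n-1,1)$. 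The bending deformation is the classical one (Johnson--Millson): for $z$ in the centraliser $Z_{G}(\varphi(\Gamma_{N}))$, which by Zariski density of $\Gamma_{N}$ equals a conjugate of $Z_{G}(\varphi(Spin(n-1,1)))$, keep $\varphi$ on $\Gamma_{A}$ and replace it by $z\,\varphi(\cdot)\,z^{-1}$ on $\Gamma_{B}$ (resp.\ multiply the stable letter by $z$); the presentation relations show this defines $\varphi_{z}\in\Hom(\Gamma,G)$, and $t\mapsto\varphi_{z_{t}}$ along a path $z_{t}$ from $e$ exhibits $\varphi_{z}$ as a deformation of $\varphi|_{\Gamma}$. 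Since $\Gamma_{A}$, the fundamental group of a compact hyperbolic manifold with nonempty totally geodesic boundary, is Zariski-dense in $Spin(n,1)$, the image $\varphi(\Gamma_{A})$ is Zariski-dense in $L$; hence $\overline{\varphi_{z}(\Gamma)}^{\mathrm{Zar}}$ contains $L$ and $z$, and therefore equals $\overline{\langle L,z\rangle}^{\mathrm{Zar}}$. A genericity lemma finishes the job: as $z$ ranges over $Z_{G}(\varphi(Spin(n-1,1)))$ the group $\overline{\langle L,z\rangle}^{\mathrm{Zar}}$ attains a maximal value on a Zariski-dense set of parameters, which forces that maximum to contain a Zariski-dense subset of $Z_{G}(\varphi(Spin(n-1,1)))$, hence all of it, hence to equal $\langle L,Z_{G}(\varphi(Spin(n-1,1)))\rangle^{\mathrm{Zar}}=G^{\varphi}$; as the opposite inclusion is obvious, a generic real parameter $z$ gives $\overline{\varphi_{z}(\Gamma)}^{\mathrm{Zar}}=G^{\varphi}$.

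For part~\ref{introitem:upper-bound}, let $\Gamma$ now be an arbitrary torsion-free cocompact discrete subgroup of $Spin(n,1)$ with $n\ge 3$, and let $\varphi'\in\Hom(\Gamma,G)$ be near $\varphi|_{\Gamma}$. The Zariski tangent space of $\Hom(\Gamma,G)$ at $\varphi|_{\Gamma}$ is $Z^{1}(\Gamma,\mathfrak{g})$, and the differential at $(e,\varphi|_{\Gamma})$ of the conjugation map $G\times\Hom(\Gamma,G^{\varphi})\to\Hom(\Gamma,G)$ has image $B^{1}(\Gamma,\mathfrak{g})+Z^{1}(\Gamma,\mathfrak{g}^{\varphi})$. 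The core of the argument is the vanishing $H^{1}(\Gamma,\mathfrak{m})=0$, which I would derive from the absence of $Spin(n-1,1)$-invariants in $\mathfrak{m}$ together with the vanishing theorems of Matsushima--Murakami for cocompact lattices in $Spin(n,1)$: for $n\ge 3$, a finite-dimensional $Spin(n,1)$-module none of whose irreducible constituents has a $Spin(n-1,1)$-fixed vector (equivalently, has highest weight a nonnegative multiple of the first fundamental weight) has vanishing first $\Gamma$-cohomology; the adjoint representation $\mathfrak{so}(n,1)$ is such a module, and this case recovers Weil's local rigidity. The long exact sequence of $0\to\mathfrak{g}^{\varphi}\to\mathfrak{g}\to\mathfrak{m}\to 0$ then shows that $H^{1}(\Gamma,\mathfrak{g}^{\varphi})\to H^{1}(\Gamma,\mathfrak{g})$ is surjective, so the differential above already surjects onto the full Zariski tangent space $Z^{1}(\Gamma,\mathfrak{g})$. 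It remains to promote this infinitesimal statement to: every $\varphi'$ near $\varphi|_{\Gamma}$ is $G$-conjugate to a homomorphism with image in $G^{\varphi}$, whence $\overline{\varphi'(\Gamma)}^{\mathrm{Zar}}$ is $G$-conjugate into $G^{\varphi}$. For this I would analyse the germ of $\Hom(\Gamma,G)$ at $\varphi|_{\Gamma}$ by obstruction theory (Goldman--Millson), checking that the $G$-orbit of $\Hom(\Gamma,G^{\varphi})$ contains a neighbourhood of $\varphi|_{\Gamma}$; Poincar\'e duality on $\Gamma\backslash\HA^{n}$ and the $Spin(n,1)$-module structure of $\mathfrak{g}$ re-enter here to control the quadratic obstruction.

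I expect the principal obstacle to be exactly this last step, from infinitesimal rigidity to local conjugacy: since $H^{2}(\Gamma,\mathfrak{g})$ need not vanish, both $\Hom(\Gamma,G)$ and $\Hom(\Gamma,G^{\varphi})$ may be singular at $\varphi|_{\Gamma}$, so an implicit-function or slice argument does not apply off the shelf, and one must show that the two germs are cut out compatibly. The remaining ingredients are the genericity lemma in part~\ref{introitem:realization-gphi} (that a generic bending attains the maximal Zariski-closure) and the precise form of the cohomology vanishing for a general cocompact $\Gamma$, valid for both parities of $n$. Finally, the hypothesis $n\ge 3$ in part~\ref{introitem:upper-bound} is essential, not cosmetic: when $n=2$ the group $\Gamma$ is a surface group, $H^{1}(\Gamma,\mathfrak{m})$ is typically large, and small deformations readily escape $G^{\varphi}$.
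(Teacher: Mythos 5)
Your part~\ref{introitem:realization-gphi} has a genuine gap: a single bending parameter cannot in general reach $G^{\varphi}$. Your genericity lemma presumes that $\overline{\langle L,z\rangle}^{\mathrm{Zar}}$ attains a common maximal value on a Zariski-dense set of $z\in Z_G(\varphi(Spin(n-1,1)))$, but the groups $\overline{\langle L,z\rangle}^{\mathrm{Zar}}$ for distinct $z$ may be pairwise incomparable proper subgroups of $G^{\varphi}$, so no such maximum exists. Concretely, take $G=Spin(n,1)\ltimes(V\oplus V)$ with $V=\R^{n,1}$ the standard (degree-one spherical harmonics) representation and $\varphi$ the inclusion of the Levi factor: then $\mathfrak{g}^{\varphi}=\mathfrak{g}$, the identity component of $Z_G(\varphi(Spin(n-1,1)))$ is the plane $\R w_1\oplus\R w_2$ of $Spin(n-1,1)$-fixed vectors, and for $z=\exp(aw_1+bw_2)$ the Zariski closure of $\langle L,z\rangle$ is $L\ltimes\exp\bigl(U(\mathfrak{l})(aw_1+bw_2)\bigr)$, whose unipotent radical is the single ``$(a{:}b)$-diagonal'' copy of $V$ inside $V\oplus V$ --- a proper subgroup for every $z$. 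Whenever some spherical harmonics representation occurs in $\mathfrak{g}$ with multiplicity $\ge 2$, or the centralizer $Z_G(\varphi(Spin(n,1)))$ needs more than one topological generator, one bending is provably insufficient. This is precisely why the paper bends along $k$ disjoint \emph{non-separating} totally geodesic hypersurfaces with $k\ge\max\bigl(\eta(Z_G(\varphi(Spin(n,1)))),\max_i[\mathfrak{g}:V_i]\bigr)$, with directions $v_j=u_j+\sum_i v_i^{(j)}$ chosen with one $Spin(n-1,1)$-fixed vector in \emph{each copy} of each isotypic component; producing arbitrarily many such hypersurfaces simultaneously is the content of the $Spin(n,1)$-analogue of Millson's theorem (Theorem~\ref{theorem:Millson-spin}), which your single-hypersurface separability argument does not supply. (A secondary point: in the separating case your deformation only forces $zLz^{-1}$, not $z$ itself, into the Zariski closure, so you need the non-separating configuration in any case.)

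In part~\ref{introitem:upper-bound} you identify the correct vanishing --- $\mathfrak{g}/\mathfrak{g}^{\varphi}$ has no $Spin(n-1,1)$-invariants in any constituent, hence $H^{1}(\Gamma,\mathfrak{g}/\mathfrak{g}^{\varphi})=0$ by Raghunathan for $n\ge 3$ --- but you stop at surjectivity of the differential and flag the passage to genuine local conjugacy as an unresolved obstacle requiring control of the cup product in $H^{2}$. That extra control is not needed: the paper's Proposition~\ref{prop:local-rigid-g/l} shows that $H^{1}(\Gamma,\mathfrak{g}/\mathfrak{l})=0$ alone suffices, by conjugating a formal one-parameter deformation into $L$ order by order (the leading term at each order is again a $1$-cocycle valued in $\mathfrak{g}/\mathfrak{l}$, hence a coboundary), invoking Artin approximation to convert the formal solution into an analytic one, and then reducing an arbitrary nearby homomorphism to an analytic curve via the curve selection lemma for subanalytic sets. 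So this gap is fillable, but by a different and lighter mechanism than the Goldman--Millson obstruction calculus you sketch.
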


We provide a description of the largest Zariski-closure $G^{\varphi}$ purely in terms of finite-dimensional representations of orthogonal Lie algebras.
We then carry out explicit computations
of $G^{\varphi}$ in certain cases, including those that arise naturally from
the spin representations associated with indefinite quadratic forms.
See Theorem~\ref{thm:Clifford-z.d} for a sufficient condition for $G^{\varphi}$ to coincide with $G$, for instance.

The geometric idea behind our proof of Theorem~\ref{introtheorem:bending}~\ref{introitem:realization-gphi} in Section~\ref{section:deform-spin-lattice} is largely parallel to that of Johnson--Millson~\cite{JoMi84}. This geometric approach is also used in Kassel~\cite{Kassel12} and Beyrer--Kassel~\cite{Beyrer-Kassel23}.

However, these results highlight the case where $\varphi$ is a natural embedding from $SO(n,1)$ into a classical Lie group $G$ such as $SO(p,q)$ or $PSL(n+1, \mathbb R)$ (see also Remark~\ref{remark:difference-Kassel}).
In contrast, Theorem~\ref{introtheorem:bending} addresses the situation where $\varphi\colon Spin(n,1) \to G$ is a homomorphism to an arbitrary real algebraic group $G$, thus working in full generality. 

This requires considerable preparation for the proof, not only in extending the bending construction \cite{JoMi84} for $SO(n,1)$ to $Spin(n,1)$, 
but also in providing a detailed framework, within the context of Clifford algebras, for iterating the appropriate bending constructions to reach the largest Zariski-closure.

\medskip
In Sections~\ref{section:deformation_compact_CK}-\ref{section:deformation-cpt-non-cpt}, we have discussed the geometric questions of the quotient space $X_{\Gamma}$, including the Zariski-dense deformation addressed in Theorems~\ref{theorem:local-nonstd-zariski} and  \ref{theorem:group-manifold-case}. 
 Now, turning our attention to the analytic aspects,
in Section~\ref{section:spectral-analysis}, we address analytic problems related to $L^2(X_\Gamma)$ for the quotient space $X_\Gamma$. This is a new area of research
\cite{KasselKobayashi16,KasselKobayashi2019Invariant,KasselKobayashi2020Spectral,KasselKobayashi2019standard,Kobayashi16intrinsic}, 
with a list of basic open problems and related topics discussed in a recent article \cite{Kobayashi22conjecture}.

 We observe that the quotient space
$X_{\Gamma}$ inherits any $G$-invariant differential operators from $X$, such as the pseudo-Riemannian Laplacian, which are referred to as \emph{intrinsic differential operators} in \cite{KasselKobayashi16}.
In contrast to the classical Riemannian setting, it is noteworthy that, when $H$ is non-compact, there may be discrete spectra for such intrinsic differential operators, some of which are stable under deformations of  $X_\Gamma$, while others are not.

In Section~\ref{section:spectral-analysis}, we highlight the analytic aspects of this study from the perspective of deformations of discontinuous groups for $X=G/H$ with $H$ non-compact. We briefly review the current state of the art regarding the stable discrete spectrum and its multiplicities, and pose several problems concerning the distribution of the stable discrete spectrum and its multiplicities.

\medskip

At the end of this article, for the convenience of a broader audience, several technical details and concepts that may not be immediately obvious to the reader are collected in Appendices 
\ref{section:algebraic_group}-\ref{section:appendix-Zariski-dense-subgroups},
on the following topics: 
\begin{itemize}
    \item 
    Appendix~\ref{section:algebraic_group}: 
    Basic concepts regarding real algebraic groups, particularly topologies;
    \item Appendix~\ref{section:clifford-spin}: Basic notations for Clifford algebras and spin groups over a field of characteristic $\neq 2$;
    \item Appendix~\ref{section:torsion-arithmetic}: A sufficient condition for arithmetic groups to be torsion-free;
    \item Appendix~\ref{section:hnn-extension}: Describing the fundamental group of a manifold endowed with a hypersurface in terms of an HNN extension;
    \item Appendix~\ref{section:deform-upper-bound}: An upper bound for small deformations of discrete subgroups in terms of the first cohomology;
    \item Appendix~\ref{section:appendix-Zariski-dense-subgroups}:
    An optimal bound on the number of generators for a continuous family of Zariski-dense subgroups.
\end{itemize}

\subsection*{Notation and Conventions}
\begin{itemize}
    \item $\N=\{0,1,2,\dots\}$, and $\N_{+}=\{1,2,\dots\}$.
    \item For a unital associative algebra $A$, 
    we denote by $A^{\times}$ the group of invertible elements in $A$.
    \item 
    We denote by $\id_{S}$ the identity map on a set $S$.
    \item 
    We denote by $\# I$
    the number of elements in a finite set $I$.
    \item 
    We assume that algebraic groups are linear.  
    \item 
    For an algebraic group $\mathbf{G}$ over a field $\mathbb{F}$, 
    and for an extension field $\mathbb{E}$ of $\mathbb{F}$,  
    $\mathbf{G}(\mathbb{E})$ denotes the group of $\mathbb{E}$-points of $\mathbf{G}$.  
    When $\mathbb{E} = \mathbb{R}$ or $\mathbb{C}$, we write $G = \mathbf{G}(\mathbb{R})$ and $G_{\mathbb{C}} = \mathbf{G}(\mathbb{C})$.
    \item 
    The Lie algebras of Lie groups $G,H,L,\ldots$ are denoted by 
    the corresponding lower-case German letters $\mathfrak{g},\mathfrak{h},\mathfrak{l},\ldots$, respectively.
    \item 
    For a Lie group homomorphism $\varphi\colon L\rightarrow G$, 
    we denote by $d\varphi\colon \mathfrak{l}\rightarrow \mathfrak{g}$
    the differential homomorphism of $\varphi$.
\end{itemize} 

\section{Deformation of locally homogeneous spaces}
\label{section:deformation_compact_CK}

In this section, we discuss Problem~\ref{mainquestion}, which addresses the extent to which we can deform cocompact discontinuous groups for homogeneous spaces.
To this end, we first review basic terminologies related to the deformation of discontinuous groups for homogeneous spaces
in Section~\ref{section:deformation_compact_CK-def}.
We then proceed to provide a rigorous formulation in Question~\ref{question:deform-ck} in Section~\ref{section:formulate-question}, which includes conditions for when local rigidity holds, when non-standard deformations are possible, and when Zariski-dense deformations occur. 

We will provide answers to this question by dividing it into the following cases of compact standard quotients, denoted symbolically as $\Gamma \backslash G/H$ and $\Gamma_L\backslash G/\Gamma_H$, where $G$ is a simple Lie group.

In the first case, $\Gamma\backslash G/H$, which is modeled on the homogeneous space $G/H$, we provide answers to Question~\ref{question:deform-ck} in Table~\ref{tab:cpt-CK-sym} for a classification 
(Theorem~\ref{theorem:local-nonstd-zariski}).
In the second case, $\Gamma_{L}\backslash G/\Gamma_{H}$, which is modeled on the group manifold $(G\times G)/\diag G$, 
the answer is given in Theorem~\ref{theorem:group-manifold-case}. 

\medskip 
We observe from our classification that,
in roughly half of the cases of $\Gamma\backslash G/H$, local rigidity holds, while in the case $\Gamma_{L}\backslash G/\Gamma_{H}$, more deformable cocompact discontinuous groups exist.
We also observe that in the case $\Gamma_{L}\backslash G/\Gamma_{H}$, there are no Zariski-dense deformations, whereas in the $\Gamma\backslash G/H$ case, some spaces of $G/H$ admit Zariski-dense deformations of cocompact discontinuous groups.

The proof of the main results of this section, Theorems~\ref{theorem:local-nonstd-zariski} and~\ref{theorem:group-manifold-case}, will be given in Sections~\ref{section:deform-cptCK} and~\ref{section:proof-group-manifold}, respectively.

\subsection{Deformation space \texorpdfstring{$\mathcal{R}(\Gamma,G;X)$}{R(Gamma,G;X)}}
\label{section:deformation_compact_CK-def}
As mentioned in Introduction, 
not every discrete subgroup $\Gamma$ of a Lie group $G$ acts properly discontinuously on the homogeneous space $X=G/H$ when $H$ is non-compact. 
Moreover, a small deformation of a discrete subgroup $\Gamma$ in $G$
may destroy the proper discontinuity
of the action on $X$.
In light of these considerations, 
we provide a precise definition and notation here to formalize Problems~\ref{mainquestion} 
and \ref{mainquestion'} 
in the next section
(see Question~\ref{question:deform-ck}).

\medskip
First, we set aside the space $X$, and consider the classical setting in which the $\Gamma$-action on $X=G/H$ is not relevant.

Let $\Gamma$ be a finitely-generated discrete subgroup of a Lie group $G$.
By deformation, we mean fixing an abstract group $\Gamma$ and varying the homomorphisms from $\Gamma$ into the group $G$. 
We set: 
\begin{align*}
    \mathcal{R}(\Gamma,G) := &\{\varphi \in \Hom(\Gamma,G) 
    \mid \varphi\text{ is faithful and discrete}\}
\end{align*}
We topologize $\Hom(\Gamma,G)$ by pointwise convergence and equip the subspace $\mathcal{R}(\Gamma,G)$ by the relative topology.

\begin{remark}[{Goldman--Millson~\cite[Thm.~1.1]{goldman-millson87}}]
When $G$ is a linear group, 
    $\mathcal{R}(\Gamma,G)$ is a closed subset of
    $\Hom(\Gamma,G)$
    for any torsion-free group $\Gamma$ that can be realized as a Zariski-dense subgroup of a real semisimple algebraic group.
\end{remark}

The group $G$ acts on itself through inner automorphisms, that is,
the subset $\mathcal{R}(\Gamma,G)$ invariant.  
We recall from Weil~\cite{Weil_remark} the following definition:
\begin{definition}
    A discrete and faithful representation 
    $\varphi\in \mathcal{R}(\Gamma,G)$ is \emph{locally rigid}
    if the $G$-orbit of $\varphi$
    is open in $\mathcal{R}(\Gamma,G)$.
\end{definition}

Next, we return to the main theme of this article: the deformation of \emph{discontinuous groups} for homogeneous spaces $X=G/H$.
To formulate this properly, it is important to note that when $H$ is non-compact, a discrete subgroup of $G$ does not necessarily act properly discontinuously on $G/H$.

We recall from \cite{Kobayashi93}  (see also \cite{Kobayashi98})
the definition of the following subset
of $\mathcal{R}(\Gamma,G)$:
\begin{align*}
    \mathcal{R}(\Gamma,G;X) := &\{\varphi \in \mathcal{R}(\Gamma,G) 
    \mid \text{$\varphi(\Gamma)$ is a discontinuous group for $G/H$}\},
\end{align*}
which plays a basic role in the deformation theory of discontinuous groups for $X$.

There are two natural actions: those of the automorphism group
$\Aut(\Gamma)$ and the inner automorphism group of $G$ on $\Hom(\Gamma,G)$.
These actions commute with each other and
both leave the subset $\mathcal{R}(\Gamma,G;X)$ invariant.

We recall further from \cite[Sect.~5.3]{Kobayashi-unlimit} the definitions of 
$\mathcal{M}(\Gamma,G;X)$ and $\mathcal{T}(\Gamma,G;X)$ as follows:
\begin{align*}
\mathcal{M}(\Gamma,G;X) := \Aut(\Gamma)\backslash \mathcal{R}(\Gamma,G;X)/G.
\end{align*}
In the case where $G$ acts faithfully on $X$, 
we can think of $\mathcal{M}(\Gamma,G;X)$ as
the \emph{moduli space} of
quotients of $X$ by discontinuous groups isomorphic to $\Gamma$. 
Geometrically, this means that, for $\varphi_{1},\varphi_{2}\in \mathcal{R}(\Gamma,G;X)$,  
the locally homogeneous spaces
$\varphi_{1}(\Gamma)\backslash X$ and $\varphi_{2}(\Gamma)\backslash X$ are isomorphic to each other 
if and only if $[\varphi_{1}] = [\varphi_{2}]$ in
$\mathcal{M}(\Gamma,G;X)$. 
The \lq\lq higher-dimensional Teichm\"uller space\rq\rq, which represents the local properties of the \lq\lq moduli space\rq\rq, is defined as follows. 
\[
\mathcal{T}(\Gamma,G;X) := \mathcal{R}(\Gamma,G;X)/G.
\]

\begin{definition}
\label{def:locally_rigid_for_X}
(local rigidity as discontinuous groups, \cite[Sect.\ 1]{Kobayashi98}).
We say that $\varphi \in \mathcal{R}(\Gamma, G; X)$ is \emph{locally rigid as a discontinuous group for $X$}, or that $\varphi(\Gamma) \backslash X$ is \emph{locally rigid}, if the $G$-orbit of $\varphi$ is open in $\mathcal{R}(\Gamma, G; X)$; equivalently, $[\varphi]$ is an isolated point in $\mathcal{T}(\Gamma, G; X)$.

If this is not the case, 
we say that $\varphi \in \mathcal{R}(\Gamma, G; X)$ is \emph{deformable as a discontinuous group for $X$}, or that $\varphi(\Gamma) \backslash X$ is \emph{deformable}.
\end{definition}

In the group case where $X=G/\{e\}$, we have $\mathcal{R}(\Gamma,G;X)=\mathcal{R}(\Gamma,G)$, and 
$\varphi\in \mathcal{R}(\Gamma,G)$ is locally rigid if and only if it is locally rigid
as a discontinuous group for $X=G/\{e\}$. 
We summarize in Section~\ref{section:(G,Gamma)-locally-rigid} some known results regarding local rigidity of discrete subgroups of $G$, without considering their actions on $X = G/H$.
In Section~\ref{section:preliminary-deform-discontinuous-group}, we discuss the difference between the concepts of local rigidity as \emph{discrete groups} and local rigidity as \emph{discontinuous groups}.

\subsection{Question about deformations of standard cocompact discontinuous groups}
\label{section:formulate-question}

In this section, we provide a rigorous formulation of Problem~\ref{mainquestion} concerning
deformations of standard cocompact \emph{discontinuous groups} for homogeneous spaces $G/H$ of reductive type. 
This is one of the central questions addressed in this article, and it includes conditions for when local rigidity holds (Q1), when non-standard deformations are possible (Q2), and when Zariski-dense deformations occur (Q3).

The basic setup involves standard quotients $X_\Gamma$, which we now recall.

\begin{definition}(\cite[Def.~1.4]{KasselKobayashi16}).
\label{def:standard}
Let $X=G/H$ be a homogeneous space of reductive type. 
A discontinuous group $\Gamma$ for $X$
is called \emph{standard} if there exists a closed reductive subgroup $L$ of $G$ which contains $\Gamma$ and acts properly on $X$. 
\end{definition}

We address the following question:

\begin{question}
\label{question:deform-ck}
Let $L \subset G \supset H$ be a triple of real reductive algebraic groups such that
\begin{equation}
\label{eq:LGH_cocompact_proper}
    L \ \text{acts properly and cocompactly on } X=G/H.
\end{equation}
Classify the triples $(G, H, L)$ for which $L$ admits a (torsion-free), cocompact discrete subgroup $\Gamma$ satisfying each of the following properties: (Q1), (Q2), or (Q3), where the conditions become progressively stronger, when $H$ is non-compact.
\begin{enumerate}[label=$(Q\arabic*).$]
    \item 
    $\Gamma$ is deformable, i.e., it is not locally rigid as a discontinuous group for $X$ 
    (Definition~\ref{def:locally_rigid_for_X});
    \item 
    $\Gamma$ can be deformed into a non-standard discontinuous group for $X$;
    \item
    $\Gamma$ can be deformed into a Zariski-dense discrete subgroup of $G$
(Definition~\ref{def:Zariski-dense} in Appendix~\ref{section:algebraic_group}), while preserving the proper discontinuity of the action on $X$.
\end{enumerate}
\end{question}

Clearly, the difference in the compact factors of the subgroups $H$ does not affect the answer to Question~\ref{question:deform-ck}.
We will provide the classification
in Theorem~\ref{theorem:local-nonstd-zariski} 
for Question~\ref{question:deform-ck},
based on the list of triples $(G, H, L)$ satisfying (\ref{eq:LGH_cocompact_proper}),
as presented in Tables~\ref{tab:kobayashi-yoshino} and~\ref{tab:kobayashi-yoshino-dual},
see Notation~\ref{notation:compact-factor}.

If a triple $(G, H, L)$ satisfies the condition (\ref{eq:LGH_cocompact_proper}), that is,
if $L$ acts properly and cocompactly on $X=G/H$,
then the double coset $\Gamma_H \backslash G/ \Gamma_L$
becomes a compact manifold, referred to as
an \emph{exotic} quotient in \cite{KasselKobayashi16}, for any discrete subgroups $\Gamma_H$ and $\Gamma_L$ that have no torsion of $H$ and $L$, respectively.
(The term \emph{exotic quotient} is also used to mean something different in other literature.)
This quotient manifold $\Gamma_H \backslash G/ \Gamma_L$ is a special case of the
  standard quotient of the group manifold $G$,
  considered as the symmetric space $(G\times G)/\diag G$.
We will examine the deformation of these
 (exotic) quotients  $\Gamma_H \backslash G/ \Gamma_L$
 and provide the classification results
in Theorem~\ref{theorem:group-manifold-case},
which answer Question~\ref{question:deform-ck},
based on the same list of triples $(G, H, L)$.

\subsection{List of triples \texorpdfstring{$(G,H,L)$}{(G,H,L)} 
with proper and cocompact actions}

In this section, we provide 
a list of the triples of reductive Lie groups $(G,H,L)$ 
for which Question~\ref{question:deform-ck} will be considered. 
The basic assumption is that the triple $(G, H, L)$ satisfies the condition (\ref{eq:LGH_cocompact_proper}), equivalently, one of the following equivalent conditions is satisfied:
\begin{enumerate}
    \item
      $L$ acts properly and cocompactly on $G/H$;
 \item
      $H$ acts properly and cocompactly on $G/L$;
   \item
      $H\times L$ acts properly and cocompactly on
      $(G \times G)/\diag G$.
\end{enumerate}
These conditions are obviously satisfied if $G$ is compact.
Furthermore, these conditions are unchanged under taking a finite covering or replacing the connected component of the groups.
The following list,
 Tables~\ref{tab:kobayashi-yoshino} and~\ref{tab:kobayashi-yoshino-dual},
 exhibits such triples when $G$ is a simple Lie group and when both $H$ and $L$ are non-compact.
 There are no new results in this section, 
 but we provide a few comments.

 A criterion for a triple $(G,H,L)$ of reductive Lie groups to ensure that $L$ acts properly on $X=G/H$ was established in \cite[Thm.~4.1]{Kobayashi89} in the late 1980s, along with a criterion for cocompactness in \cite[Thm.~4.7]{Kobayashi89}. 
 These criteria are computable. Table~\ref{tab:kobayashi-yoshino} is taken from  \cite{KobayashiYoshino05},  by Kobayashi and Yoshino, 
  and lists irreducible symmetric spaces $G/H$ that admit proper and cocompact actions by reductive subgroups $L$.
A recent work by Tojo~\cite{Tojo2019Classification}  asserts that this list is exhaustive,
 as mentioned in Remark~\ref{rem:tab_kobayashi_yoshino} below.

\begin{table}[!hbt]
\begin{tabular}{c|c|c|c}
Case & $G/H$ & $L_{ss}$ & $L_{max}$ \\
\hline
 1 & $SU(2n,2)/Sp(n,1)$ & $SU(2n,1)$
& $U(2n,1)$
\\
\hline
 2 & $SU(2n,2)/U(2n,1)$ & $Sp(n,1)$ & ---
\\
\hline
 3 & 
$SO(2n,2)/U(n,1)$ & $SO(2n,1)$ & --- \\
\hline
 4 &
$SO(2n,2)/SO(2n,1)$ & 
$SU(n,1)$  & $U(n,1)$ \\
\hline
 5 &
$SO(4n,4)/SO(4n,3)$ & 
$Sp(n,1)$ & $Sp(n,1)\times Sp(1)$\\
\hline
 6 &
$SO(8,8)/SO(8,7)$ & 
$Spin(8,1)$ & --- \\
\hline
 7 &
$SO(4,4)/(SO(4,1)\times SO(3))$ &
$Spin(4,3)$ & --- \\
\hline
 8 &
$SO(8,\mathbb{C})/SO(7,\mathbb{C})$ &
$Spin(7,1)$ & --- \\
\hline
 9 &
$SO(8,\mathbb{C})/SO(7,1)$ &
$Spin(7,\mathbb{C})$ & --- \\
\hline
 10 &
$SO^*(8)/U(3,1)$ &
$Spin(6,1)$ & --- \\
\hline
 11 &
$SO^*(8)/
(SO^*(6)\times SO^*(2)) $ & $Spin(6,1)$ & --- \\
\hline
 12 &
$SO(4,3)/(SO(4,1)\times SO(2))$ 
& $G_{2(2)}$ & ---  \\
\end{tabular}
\caption{Symmetric spaces $G/H$ with $G$ simple that admit 
a proper and cocompact action by a reductive subgroup $L$ of $G$
with $L_{ss} \subset L \subset L_{max}$.}
\label{tab:kobayashi-yoshino}
\end{table}

We have listed the triples $(G,H,L)$ satisfying the condition (\ref{eq:LGH_cocompact_proper}) in Table~\ref{tab:kobayashi-yoshino} where $G/H$ is a symmetric space and $G$ is simple.
Since the condition (\ref{eq:LGH_cocompact_proper}) is symmetric with respect to $H$ and $L$, additional triples can be obtained by switching $H$ and $L$.
The following table is derived in this way, with cases  where both $G/H$ and $G/L$ are symmetric spaces omitted.

\begin{table}[!htb]
\begin{tabular}{c|c|c|c}
Case & $G/H$ & $L_{ss}$ & $L_{max}$ \\
\hline
1' & $SU(2n,2)/SU(2n,1)$ & $Sp(n,1)$ & --- \\
\hline
4' & 
$SO(2n,2)/SU(n,1)$ & $SO(2n,1)$ & --- \\
\hline
5' &
$SO(4n,4)/Sp(n,1)$ & 
$SO(4n,3)$ & --- \\
\hline
6' &
$SO(8,8)/Spin(8,1)$ & 
$SO(8,7)$ & --- \\
\hline
7' &
$SO(4,4)/Spin(4,3)$ &
$SO(4,1)$ & $SO(4,1)\times SO(3)$ \\
\hline
8' &
$SO(8,\mathbb{C})/Spin(7,1)$ &
$SO(7,\mathbb{C})$ & --- \\
\hline
9' &
$SO(8,\mathbb{C})/Spin(7,\mathbb{C})$ &
$SO(7,1)$ & --- \\
\hline
10' &
$SO^*(8)/Spin(6,1)$ &
$SU(3,1)$ & $U(3,1)$ \\
\hline
11' &
$SO^*(8)/Spin(6,1)$ & $SO^*(6)$ & $SO^*(6)\times SO^{*}(2)$ \\
\hline
12' &
$SO(4,3)/G_{2(2)}$ 
& $SO(4,1)$  & $SO(4,1)\times SO(2)$
\end{tabular}
\caption{Non-symmetric spaces $G/H$ with $G$ simple that admit 
a proper and cocompact action by a reductive subgroup $L$ of $G$
with $L_{ss} \subset L \subset L_{max}$.}
\label{tab:kobayashi-yoshino-dual}
\end{table}

The numbering $i \ (1\leq i\leq 12)$ of  Case~$i$' in Table~\ref{tab:kobayashi-yoshino-dual}
corresponds to the numbering of Case~$i$ in
Table~\ref{tab:kobayashi-yoshino}, with $H$ and $L$ switched.

Regarding the subgroup $L$, there are several possible choices. 
While we will see eventually that the answers to Question~\ref{question:deform-ck}
remain unaffected by these choices, we introduce some notation for clarity.
\begin{notation}
\label{notation:compact-factor}
 \begin{itemize}
     \item Let    
     $L_{ss}$ be as given in Tables~\ref{tab:kobayashi-yoshino} or ~\ref{tab:kobayashi-yoshino-dual}. 
     \item 
     Let $L_{max}$ be the identity component
    of the normalizer of $L_{ss}$ in $G$
    in the sense of Definition~\ref{def:Zariski-dense}.
    
     \item
     For a subgroup $L$ satisfying
     $L_{ss}\subset L\subset L_{max}$,
     we denote by $L_{c}$ the compact factor of $L$.
 \end{itemize}
\end{notation}

\begin{remark}
 \begin{enumerate}[label=$(\arabic*)$]
      \item
        We have included $n=1$ in Cases~3 and 4 in Table~\ref{tab:kobayashi-yoshino} and in Case~4' in Table~\ref{tab:kobayashi-yoshino-dual}, although $G$ is not a simple Lie group.
\item 
    The table includes triples that are locally isomorphic in the case of low dimensions; however, we have not excluded overlaps in the table.
    See Remark~\ref{rem:isom_GH_low_dim} for such examples.
   \end{enumerate} 
\end{remark}

\begin{remark}
\label{rem:tab_kobayashi_yoshino}
Tables~\ref{tab:kobayashi-yoshino}
and \ref{tab:kobayashi-yoshino-dual}
complete the earlier list presented in
Kulkarni~\cite{Kulkarni1981proper} and Kobayashi~\cite{Kobayashi89, Kobayashi_kawaguchiko90,Kobayashi1997discontinuouspseudoRiemannian}. 
In particular, Cases~4 and 5 are from
 \cite{Kulkarni1981proper}, and Cases~1, 1', 2, 3, 4', and 5' are from \cite{Kobayashi89}.

It is plausible that Tables~\ref{tab:kobayashi-yoshino}
and \ref{tab:kobayashi-yoshino-dual} list all the homogeneous spaces $G/H$ with $G$ simple, $H$ non-compact and reductive, that admit a proper and cocompact action by a reductive subgroup $L$. 

Based on the (infinitesimal) classification of irreducible symmetric spaces  by Berger~\cite{Berger57},
Tojo~\cite{Tojo2019Classification} proved that irreducible symmetric spaces $G/H$ that admit a reductive subgroup $L$ satisfying the properness and cocompactness condition (\ref{eq:LGH_cocompact_proper}) are either listed in Table~\ref{tab:kobayashi-yoshino}, Riemannian symmetric spaces $G/K$, or group manifolds $(G\times G)/\diag G$.

A more recent preprint of  Boche\'{n}ski--Tralle~\cite{BochenskiTralle24} shows,
under the assumption that $G$ is absolutely simple, that 
Tables~\ref{tab:kobayashi-yoshino}
and \ref{tab:kobayashi-yoshino-dual} contain all the homogeneous spaces of the form $G/H$ with $H$ non-compact and reductive that admit a proper and cocompact action by a reductive subgroup $L$.
We note that our list also includes the cases, such as Cases~8 and 9, where $G$ is a complex simple Lie group.
\end{remark}

\subsection{Answers to \texorpdfstring{Question~\ref{question:deform-ck}}{Question 2.5}: Classification of the triples \texorpdfstring{$(G,H,L)$}{(G,H,L)} 
}
\label{section:answer}

In this section, we provide classification results,
as stated in Theorem~\ref{theorem:local-nonstd-zariski},
for (Q1), (Q2), and (Q3) of  Question~\ref{question:deform-ck}.
This is based on the list of triples $(G, H, L)$ that satisfy (\ref{eq:LGH_cocompact_proper}),
as presented in Tables~\ref{tab:kobayashi-yoshino} and~\ref{tab:kobayashi-yoshino-dual}.

\begin{theorem}
\label{theorem:local-nonstd-zariski}
Let $(G,H,L)$ be one of the triples in Tables~\ref{tab:kobayashi-yoshino} and~\ref{tab:kobayashi-yoshino-dual},
where $L_{ss} \subset L \subset L_{max}$. 
Then, the answer to (Q1), (Q2), and (Q3) in Question~\ref{question:deform-ck} 
is provided in Table~\ref{tab:cpt-CK-sym}. 
\end{theorem}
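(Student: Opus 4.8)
The plan is to reduce the classification in Table~\ref{tab:cpt-CK-sym} to Theorem~\ref{introtheorem:bending} (equivalently Theorem~\ref{theorem:bending}) together with the known properness/cocompactness criteria of \cite{Kobayashi89}. First I would fix a triple $(G,H,L)$ from Tables~\ref{tab:kobayashi-yoshino} or~\ref{tab:kobayashi-yoshino-dual} and recall that, by Notation~\ref{notation:compact-factor}, the semisimple part $L_{ss}$ is in every case locally isomorphic to a group of the form $Spin(m,1)$ (or to $G_{2(2)}$ in Cases~12, 12', or to a product with a compact factor). Thus a cocompact discrete subgroup $\Gamma \subset L_{ss}$ is, up to finite index and torsion, a cocompact lattice in $Spin(m,1)$, and the inclusion $\iota\colon L_{ss}\hookrightarrow G$ extends to a homomorphism $\varphi\colon Spin(m,1)\to G$ of real algebraic groups. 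Theorem~\ref{introtheorem:bending}\ref{introitem:realization-gphi} then produces, for a suitable arithmetic $\Gamma$, a deformation of $\varphi|_\Gamma$ whose Zariski-closure is exactly the algebraic subgroup $G^{\varphi}$; and part~\ref{introitem:upper-bound} shows $G^{\varphi}$ is the maximal possible Zariski-closure (when $m\ge 3$). So the entire table is computed once we: (i) identify, for each case, the group $Spin(m,1)$ and the homomorphism $\varphi$; (ii) compute $G^{\varphi}$; and (iii) read off the three dichotomies.

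The second step is to translate each of (Q1), (Q2), (Q3) into a statement about $G^{\varphi}$. For (Q3), Zariski-dense deformations exist iff $G^{\varphi}=G$, which by Theorem~\ref{thm:Clifford-z.d} happens exactly for those cases where the relevant spin representation is ``large enough'' — this is where the $SO(4,4)/SO(3,4)$ example of Theorem~\ref{thm:7_dim_compact_space_form} and the $SO(2,2n)/U(1,n)$ example of Kassel enter. For (Q1), deformability holds iff $\varphi|_\Gamma$ is not locally rigid as a discontinuous group, which (by the cohomological upper bound of Appendix~\ref{section:deform-upper-bound} and the construction in Theorem~\ref{introtheorem:bending}) amounts to $G^{\varphi}$ strictly containing a conjugate of $L_{ss}$, i.e.\ to the existence of a nontrivial bending direction transverse to $\mathfrak{l}_{ss}$ inside $\mathfrak{g}^{\varphi}$ — detectable via $H^1(\Gamma, \mathfrak{g}/\mathfrak{l})$ and the structure of $\varphi$ restricted to a reflection subgroup. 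For (Q2), one must further check that the deformed group leaves the properness on $X=G/H$ intact (guaranteed for small deformations by the stability theorem of \cite{Kobayashi98}, extending \cite{Kobayashi89, Benoist96, Kobayashi96}) while its Zariski-closure no longer acts properly on $X$; the latter is a computation with the properness criterion applied to $G^{\varphi}$ (or an intermediate bending) versus $H$. Thus (Q1)$\Leftarrow$(Q2)$\Leftarrow$(Q3) is automatic, and the content is to decide, case by case, at which of the three thresholds each triple sits.

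The third step is the actual case-by-case computation of $G^{\varphi}$, which I would organize by the type of $\varphi$. Cases~3, 4, 4', 5, 5' involve $\varphi$ a (possibly twisted) standard inclusion $SO(m,1)\hookrightarrow SO(p,q)$ or the associated $Sp(n,1)$-picture; here $G^{\varphi}$ is computed from the branching of the standard representation, and one expects bending to enlarge $L_{ss}$ precisely when there is ``room'' in the ambient form, yielding non-rigidity in about half the cases and rigidity (local rigidity — answer ``no'' to Q1) in the others, consistent with the summary stated before Theorem~\ref{thm:7_dim_compact_space_form}. Cases~6–12 and 6'–12' involve genuine spin representations (triality for $SO(8)$ and $SO(8,\C)$, the $Spin(7)\subset SO(8)$ and $Spin(6)\cong SU(4)$ coincidences, $G_{2(2)}\subset SO(4,3)$); for these $G^{\varphi}$ is read off from the decomposition of the spin module under $\mathfrak{spin}(m,1)$, and the Clifford-algebra bookkeeping of Section~\ref{section:deform-spin-lattice} and Appendix~\ref{section:clifford-spin} is exactly what makes this tractable. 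The main obstacle I anticipate is precisely this last step: verifying that in each spin case the iterated bending construction actually reaches $G^{\varphi}$ and not merely an intermediate subgroup — i.e.\ checking the hypotheses of Theorem~\ref{theorem:bending} on the existence of enough reflection hypersurfaces in $\Gamma$ with the right totally-geodesic stabilizers, and matching this against the representation-theoretic description of $G^{\varphi}$. Once that bookkeeping is in place for the handful of exceptional-looking entries (notably Cases~7, 8, 12 and their primed versions), the table follows by inspection.
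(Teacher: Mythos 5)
Your proposed reduction of the whole table to Theorem~\ref{theorem:bending} rests on the claim that in every case $L_{ss}$ is locally isomorphic to some $Spin(m,1)$ (or $G_{2(2)}$ as an isolated exception). That is false, and it is where the argument breaks. Scanning Tables~\ref{tab:kobayashi-yoshino} and~\ref{tab:kobayashi-yoshino-dual}, the groups $L_{ss}$ include $SU(2n,1)$, $SU(n,1)$, $SU(3,1)$, $Sp(n,1)$ with $n\ge 2$, $SO^*(6)$, $SO(7,\C)$, $Spin(7,\C)$, and $G_{2(2)}$, none of which is locally isomorphic to a $Spin(m,1)$ (apart from the low-rank coincidences $Sp(1,1)\simeq Spin(4,1)$ and $SU(1,1)\simeq Spin(2,1)$, which is exactly why Cases 1'-2, 2-2, 4-2, 5-2 split off from their $n\ge 2$ siblings). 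For those triples $G^{\varphi}$ is not even defined, and Theorem~\ref{theorem:bending} says nothing. The paper's proof handles them by completely different means: when $\mathfrak{l}_{ss}$ is of none of the types $\mathfrak{so}(n,1)$, $\mathfrak{su}(n,1)$, Raghunathan's vanishing theorem (Fact~\ref{fact:raghunathan}) gives $H^1(\Gamma,\mathfrak{g})=0$ and hence local rigidity outright (Cases 1'-1, 2-1, 5-1, 5', 6', 7, 8', 9, 11', 12), and the same mechanism disposes of the $\mathfrak{so}(n,1)$-cases 6, 8, 9' where $\mathfrak{g}/\mathfrak{l}_{ss}$ contains no spherical harmonics.

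The second, and more serious, gap is in your treatment of (Q1) and (Q2) for the $\mathfrak{su}(n,1)$ cases (Cases 1, 4-1, 10'). You propose to detect deformability via $H^1(\Gamma,\mathfrak{g}/\mathfrak{l})$ and the size of $G^{\varphi}$; but for complex hyperbolic lattices the first cohomology can be nonzero while every actual deformation is still trapped in $L\cdot Z_G(L)$, because the obstruction lives in the cup product $[c,c]\in H^2(\Gamma,\mathfrak{g})$ (Goldman--Millson, Klingler). This is precisely what happens in Cases 1 and 4-1-b: the answer to (Q1) is ``yes'' (one deforms inside $L_{max}\simeq U(n,1)$, by \cite[Thm.~B]{Kobayashi98}) while the answer to (Q2) is ``no'' despite $H^1(\Gamma,\mathfrak{g})\neq 0$. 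No first-order or Zariski-closure criterion of the kind you describe can see this; the paper invokes Proposition~\ref{prop:Goldman-Millson} (resp.\ Fact~\ref{fact:klingler}) for exactly this reason. Your outline is essentially correct only on the cases where $L_{ss}$ genuinely is a $Spin(n,1)$ with nonvanishing $H^1$ (Step 5 of the paper's proof: Cases 1'-2, 2-2, 3, 4-2, 4', 5-2, 7', 10, 11, 12'), where the paper indeed computes $\mathfrak{g}^{\varphi}=\mathfrak{g}$ from the decomposition of $\mathfrak{g}/\mathfrak{l}_{ss}$ and concludes (Q3), hence (Q2) and (Q1), via Theorem~\ref{theorem:Zariski-dense-discontinuous-spin-lattice}; note that in all these cases one gets the strongest conclusion $G^{\varphi}=G$ directly, so no separate properness comparison of an intermediate $G^{\varphi}$ against $H$ is ever needed.
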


\begin{remark}
\label{remark:results_do_not_change_upto_compact_factors}
\begin{enumerate}[label=(\arabic*)]
    \item
Theorem~\ref{theorem:local-nonstd-zariski} includes the claim 
that the choice of $L$, where $L_{ss} \subset L \subset L_{max}$, does not affect
 the answer to any of (Q1), (Q2), or (Q3) in Question~\ref{question:deform-ck}.

\item
Theorem~\ref{theorem:local-nonstd-zariski} remain unchanged if we replace $(G, H, L)$ with locally isomorphic triples (Definition~\ref{def:locally_isomorphic_triple}), as can be seen from the proof and 
Remark~\ref{remark:local_isomorphic_trip}.
\end{enumerate}
\end{remark}

Theorem~\ref{theorem:local-nonstd-zariski} will be proved in Section~\ref{section:deform-cptCK}, using Theorem~\ref{theorem:bending}
from Section~\ref{section:(G,Gamma)-deformation}.

The triples $(G,H,L)$ listed in Table~\ref{tab:cpt-CK-sym} appear in either 
Table~\ref{tab:kobayashi-yoshino} 
or Table~\ref{tab:kobayashi-yoshino-dual}.
The interpretation of Theorem~\ref{theorem:local-nonstd-zariski} is illustrated using Case 1 as an example:

\begin{example}
In the first row of the table, Case~1 shows that for the symmetric space $G/H=SU(2n,2)/Sp(n,1)$ and $L=SU(2n,1)$, (Q1) is ``yes'', (Q2) is ``no'' (and thus (Q3) is also ``no''). This means that there exists a torsion-free cocompact discrete subgroup $\Gamma$ of $L$ such that $\Gamma$ is deformable as a discontinuous group for $G/H$
 (Q1). However, $\Gamma$ cannot be deformed into a non-standard discontinuous group (Q2), and, in particular, it cannot be deformed into a Zariski-dense subgroup while preserving the proper discontinuity of the action on $G/H$ (Q3). 
\end{example}
  
    Regarding the concept of local isomorphisms of triples, we adopt the definition from 
\cite{Kobayashi_2022_bounded_multiplicty_theorems}, which allows for individual applications of \emph{inner automorphisms} 
     at the infinitesimal level. This definition fits well with (Q1)--(Q3) of Question~\ref{question:deform-ck} and is given as follows.
     
\begin{definition}[{\cite[Def.\ 7.1]{Kobayashi_2022_bounded_multiplicty_theorems}}]
\label{def:locally_isomorphic_triple}
The triples $L_1 \subset G_1 \supset H_1$ and  $L_2 \subset G_2 \supset H_2$
    are said to be \emph{locally isomorphic} if there exists an isomorphism $\mathfrak{g}_1 \simeq \mathfrak{g}_2$ between the Lie algebras of the Lie groups $G_1$ and $G_2$, such that under this isomorphism,
    $\mathfrak{h}_1$ is conjugate to ${\mathfrak{h}}_2$ by an inner automorphism,
    and $\mathfrak{l}_1$ is conjugate to $\mathfrak{l}_2$ by another inner automorphism.

we say that the homogeneous spaces $G_1/H_1$ and  $G_2/H_2$
    are \emph{locally isomorphic} 
    if the triples $\{e \} \subset G_1 \supset H_1$ and  $\{ e\} \subset G_2 \supset H_2$
    are locally isomorphic. 
\end{definition}
\begin{remark}
\label{remark:local_isomorphic_trip}    
In Definition~\ref{def:locally_isomorphic_triple}, we do not require the existence of a morphism between $G_1$ and $G_2$.
However, if there is another triple $\tilde{L} \subset \tilde{G} \supset \tilde{H}$
with the following properties:
\begin{enumerate}
    \item[(1)] all the three triples of linear Lie groups are locally isomorphic;
    \item[(2)] there exists a morphism $\phi_j \colon \tilde{G} \to G_j$ for each $j=1,2$;
    \item[(3)] $\tilde{\Gamma}$ is a discrete subgroup of $\tilde{L}$, 
    and $\Gamma_j=\psi_j(\tilde{\Gamma})$,
\end{enumerate}
then we have the following easy consequences:
\begin{enumerate}
\item[(1)] the following three conditions are equivalent:
\begin{enumerate}
    \item[(i)]
      the $\Gamma_1$-action on $X_1=G_1/H_1$ is properly discontinuous (respectively, cocompact),
      \item[(ii)]
       the $\Gamma_2$-action on $X_2=G_2/H_2$ is properly discontinuous (respectively, cocompact),
       \item[(iii)]
        the $\tilde{\Gamma}$-action on $\tilde{X}=\tilde{G}/\tilde{H}$ is properly discontinuous (respectively, cocompact);
\end{enumerate}
    \item[(2)] if $\tilde{\Gamma}$ is deformable as a discontinuous group for $\tilde{X}$, then
        $\Gamma_j$ is deformable as a discontinuous group for $X_j$ ($j=1,2)$. 
\end{enumerate}

\end{remark}

\begin{table}[htb!]
\begin{tabular}{c|c|c|c|c|c}
Case & $G/H$ & $L_{ss}$ & Q1 & Q2 & Q3\\
\hline
1 & $SU(2n,2)/Sp(n,1)$ & $SU(2n,1)$
& yes & no & no
\\
\hline
1'-1 & $SU(2n,2)/SU(2n,1)$ $(n\geq 2)$ & $Sp(n,1)$
& no & no & no \\
1'-2 &
$SU(2,2)/SU(2,1)$ & $Spin(4,1)$
& yes & yes & yes \\
\hline
2-1 & $SU(2n,2)/U(2n,1)$ $(n\geq 2)$ & $Sp(n,1)$
& no & no & no \\
2-2 &
$SU(2,2)/U(2,1)$ & $Spin(4,1)$
& yes & yes & yes \\
\hline
3 & 
$SO(2n,2)/U(n,1)$ & $SO(2n,1)$
& yes & yes & yes \\
\hline
4-1 &
$SO(2n,2)/SO(2n,1)$ $(n\geq 2)$ & 
$SU(n,1)$
& yes & no & no \\
4-2 &
$SO(2,2)/SO(2,1)$ & 
$SU(1,1)$
& yes & yes & yes \\
\hline
4' & 
$SO(2n,2)/SU(n,1)$ & $SO(2n,1)$
& yes & yes & yes \\
\hline
5-1 &
$SO(4n,4)/SO(4n,3)$ $(n\geq 2)$ & 
$Sp(n,1)$
& no & no & no \\
5-2 &
$SO(4,4)/SO(4,3)$ & 
$Spin(4,1)$
& yes & yes & yes \\
\hline
5' &
$SO(4n,4)/Sp(n,1)$ & 
$SO(4n,3)$
& no & no & no \\
\hline
6 &
$SO(8,8)/SO(8,7)$ & 
$Spin(8,1)$
& no & no & no \\
\hline
6' &
$SO(8,8)/Spin(8,1)$ & 
$SO(8,7)$
& no & no & no \\
\hline
7 &
$SO(4,4)/(SO(4,1)\times SO(3))$ &
$Spin(4,3)$
& no & no & no \\
\hline
7' &
$SO(4,4)/Spin(4,3)$ &
$SO(4,1)$
& yes & yes & yes \\
\hline
8 &
$SO(8,\mathbb{C})/SO(7,\mathbb{C})$ &
$Spin(7,1)$ 
& no & no & no \\
\hline
8' &
$SO(8,\mathbb{C})/Spin(7,1)$ &
$SO(7,\mathbb{C})$ 
& no & no & no \\
\hline
9 &
$SO(8,\mathbb{C})/SO(7,1)$ &
$Spin(7,\mathbb{C})$ 
& no & no & no \\
\hline
9' &
$SO(8,\mathbb{C})/Spin(7,\mathbb{C})$ &
$SO(7,1)$ 
& no & no & no \\
\hline
10 &
$SO^*(8)/U(3,1)$ &
$Spin(6,1)$ 
& yes & yes & yes \\
\hline
10' &
$SO^*(8)/Spin(6,1)$ &
$SU(3,1)$ 
& yes & no & no \\
\hline
11 &
$SO^*(8)/
(SO^*(6)\times SO^*(2)) $ & $Spin(6,1)$
& yes & yes & yes \\
\hline
11' &
$SO^*(8)/Spin(6,1)$ & $SO^*(6)$
& no & no & no \\
\hline
12 &
$SO(4,3)/(SO(4,1)\times SO(2))$ 
& $G_{2(2)}$ 
& no & no & no \\
\hline
12' &
$SO(4,3)/G_{2(2)}$ 
& $SO(4,1)$ 
& yes & yes & yes 
\end{tabular}
\caption{Complete answers to Question~\ref{question:deform-ck} for the triples $(G,H,L)$ with non-compact semisimple factors $L_{ss}$ in Tables~\ref{tab:kobayashi-yoshino} and~\ref{tab:kobayashi-yoshino-dual}.}
\label{tab:cpt-CK-sym}
\end{table}

\begin{remark}
    For Cases~1',~2, and~5, there is an isomorphism of Lie groups $L$, 
   $Sp(1,1)\simeq Spin(4,1)$.
   As can be seen in the proof, it is this isomorphism that causes the differences in the answers to Question~\ref{question:deform-ck}
   (for each of Cases 1', 2, and 5).   
\end{remark}

As a direct consequence of Theorem~\ref{theorem:local-nonstd-zariski},
we obtain the following list of homogeneous spaces $G/H$ that have the property that (Q2) holds (and (Q3) holds as well).
Since this property is unaffected by replacing $H$ with a connected cocompact subgroup, the following classification result in (iii) is stated with this consideration in mind.

\begin{theorem}
\label{thm:zariskidense-up-to-local-isom}
Let $(G,H,L)$ be one of the triples in Tables~\ref{tab:kobayashi-yoshino} and~\ref{tab:kobayashi-yoshino-dual},
or let $H$ be replaced by a connected cocompact subgroup of $H$. 
Then the following are equivalent: 
\begin{enumerate}[label=(\roman*)]
   \item 
   \label{item:thm:zariskidense-up-to-local-isom-nonstd}
   ((Q2) in Question~\ref{question:deform-ck} for $G/H$)
    There exists a torsion-free, cocompact discrete subgroup $\Gamma$ of $L$ such that the quotient
    $\Gamma\backslash G/H$ can be deformed into a non-standard quotient.

    \item 
    \label{item:thm:zariskidense-up-to-local-isom-zariski-dense}
    ((Q3) in Question~\ref{question:deform-ck} for $G/H$)
    There exists a torsion-free, cocompact discrete subgroup $\Gamma$ of $L$ such that $\Gamma$ can be deformed into a Zariski-dense subgroup in $G$, keeping the proper discontinuity of the action on $G/H$.
    \item 
    \label{item:theorem:zariskidense-up-to-local-isom}
    (Classification).
    $G/H$ is one of the following homogeneous spaces, modulo compact factors of the subgroup $H$: 
    \begin{itemize}
    \item 
    $SU(2,2)/U(2,1)$;
    \item 
    $SO(2,2n)/U(1,n)$ $(n\geq 2)$;
    \item 
    $SO(4,4)/SO(4,3)$;
    \item 
    $SO^{*}(8)/U(3,1)$;
    \item 
    $SO^{*}(8)/(SO^*(6)\times SO^*(2))$;
    \item
    $SU(2,2)/SU(2,1)$; 
    \item 
    $SO(2,2n)/SU(1,n)$ $(n\geq 2)$;      
    \item 
    $SO(4,4)/Spin(4,3)$;
    \item 
    $SO(4,3)/G_{2(2)}$;
    \item 
    $SO^{*}(8)/SU(3,1)$;
    \item 
    $SO^{*}(8)/SO^*(6)$.
    \end{itemize}       
\end{enumerate}

\end{theorem}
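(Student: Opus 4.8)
This statement is, in essence, a re-packaging of Theorem~\ref{theorem:local-nonstd-zariski}: once Table~\ref{tab:cpt-CK-sym} is available, the equivalences reduce to one general observation plus bookkeeping. The plan is to establish the cycle $\mathrm{(iii)}\Rightarrow\mathrm{(ii)}\Rightarrow\mathrm{(i)}\Rightarrow\mathrm{(iii)}$.

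First I would isolate a reduction lemma: for a torsion-free discrete subgroup $\Gamma$, each of the properties (Q1), (Q2), (Q3) attached to a triple $(G,H,L)$ is unchanged when $H$ is replaced by a connected cocompact subgroup $H'$ of $H$. Indeed, properness of a subset of $G$ on $G/H$ depends only on $H$ up to compact subgroups; $\Gamma\backslash G/H'$ is compact iff $\Gamma\backslash G/H$ is, since the former fibers over the latter with compact fibre $H/H'$; and for torsion-free $\Gamma$ a properly discontinuous action is automatically free, so $\mathcal{R}(\Gamma,G;G/H')=\mathcal{R}(\Gamma,G;G/H)$; finally, ``standard'' and ``Zariski-dense in $G$'' do not refer to $H$ at all. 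Consequently the list in (iii), stated modulo compact factors of $H$, is well posed, and it suffices to read off the relevant rows of Table~\ref{tab:cpt-CK-sym}, using that $SU(3,1)$ is cocompact in $U(3,1)$ and $SO^{*}(6)$ in $SO^{*}(6)\times SO^{*}(2)$ (where $SO^{*}(2)\cong U(1)$ is compact).

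The implication $\mathrm{(ii)}\Rightarrow\mathrm{(i)}$ is the only point that is not pure bookkeeping, but it is easy and holds for every semisimple $G$ with $H$ non-compact: a Zariski-dense discontinuous group for $G/H$ is automatically non-standard. If $\varphi'(\Gamma)$ is Zariski-dense in $G$ and sits inside a closed reductive subgroup $L'$ acting properly on $G/H$, then $L'$ is Zariski-dense in $G$; since $L'$ is reductive, the Lie algebra of its semisimple part is a nonzero $\Ad(G)$-stable ideal, hence all of $\mathfrak{g}$, so $L'\supseteq G^{0}$. But then $G^{0}$ acts properly on $G/H$, in particular on the orbit $G^{0}/(H\cap G^{0})$, forcing its stabilizer $H\cap G^{0}\ (\supseteq H^{0})$ to be compact --- contradicting $H$ non-compact. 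Thus the same $\Gamma$ that witnesses (ii) also witnesses (i).

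For $\mathrm{(iii)}\Rightarrow\mathrm{(ii)}$ I would match each space of the list with a row of Table~\ref{tab:cpt-CK-sym} carrying ``Q3 $=$ yes'': $SU(2,2)/U(2,1)$ (Case 2-2), $SO(2,2n)/U(1,n)$ with $n\geq2$ (Case 3), $SO(4,4)/SO(4,3)$ (Case 5-2), $SO^{*}(8)/U(3,1)$ (Case 10), $SO^{*}(8)/(SO^{*}(6)\times SO^{*}(2))$ (Case 11), $SU(2,2)/SU(2,1)$ (Case 1'-2), $SO(2,2n)/SU(1,n)$ with $n\geq2$ (Case 4'), $SO(4,4)/Spin(4,3)$ (Case 7'), $SO(4,3)/G_{2(2)}$ (Case 12'), together with $SO^{*}(8)/SU(3,1)$ and $SO^{*}(8)/SO^{*}(6)$ obtained from Cases 10 and 11 via the reduction lemma; Theorem~\ref{theorem:local-nonstd-zariski} then produces the desired $\Gamma$ (and $\mathrm{(ii)}\Rightarrow\mathrm{(i)}$ reproves (i) along the way). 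For the converse $\mathrm{(i)}\Rightarrow\mathrm{(iii)}$, by the reduction lemma it suffices to check that every remaining triple in Tables~\ref{tab:kobayashi-yoshino}--\ref{tab:kobayashi-yoshino-dual} --- Cases 1, 1'-1, 2-1, 4-1, 5-1, 5', 6, 6', 7, 8, 8', 9, 9', 10', 11', 12 --- has ``Q2 $=$ no'' in Table~\ref{tab:cpt-CK-sym}, a finite inspection. The delicate part --- the one step I expect to require genuine attention --- is the low-rank bookkeeping: the entries $SO(2,2)/SO(2,1)$ (Case 4-2) and the $n=1$ instances of Cases 3 and 4' do carry ``Q2 $=$ yes'' but are locally isomorphic to the group manifold $(SL(2,\R)\times SL(2,\R))/\diag SL(2,\R)$, respectively to a Riemannian symmetric space, so they fall outside the scope of (iii) and are accounted for separately (Theorem~\ref{theorem:group-manifold-case}); one must verify that these are the only such coincidences, using the low-dimensional isomorphisms recorded after Tables~\ref{tab:kobayashi-yoshino-dual} and~\ref{tab:cpt-CK-sym} (notably $Sp(1,1)\simeq Spin(4,1)$). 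Beyond this, the real difficulty has been exported to Theorem~\ref{theorem:local-nonstd-zariski}, whose proof rests on the bending construction for cocompact lattices of $Spin(n,1)$ and its iteration toward Zariski-density (Theorems~\ref{theorem:bending} and~\ref{thm:Clifford-z.d}).
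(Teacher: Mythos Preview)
Your overall strategy---reading (i)$\Leftrightarrow$(ii)$\Leftrightarrow$(iii) off Table~\ref{tab:cpt-CK-sym}---is exactly the paper's: Theorem~\ref{thm:zariskidense-up-to-local-isom} is declared a ``direct consequence'' of Theorem~\ref{theorem:local-nonstd-zariski} with no further argument, so your reduction lemma and the explicit (ii)$\Rightarrow$(i) fill in details the paper leaves implicit. One remark on the latter: a cleaner line is that in Definition~\ref{def:standard} the reductive subgroup $L'$ is real algebraic, hence Zariski-closed; containing the Zariski-dense $\varphi'(\Gamma)$ then forces $L'=G$, so $G$ itself would act properly on $G/H$, contradicting non-compactness of $H$. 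Your route via ideals also works, but you should make explicit the step you use: $\mathfrak{l}'$ is $\Ad(\varphi'(\Gamma))$-stable (since $\varphi'(\Gamma)\subset L'$), hence by Zariski-density and algebraicity of $\Ad$ it is $\Ad(G)$-stable, i.e.\ an ideal of $\mathfrak{g}$.

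The one loose end is the $n=1$ bookkeeping. Your local identifications are correct---$SO(2,2)/SO(2,1)$ and $SO(2,2)/SU(1,1)$ are locally the group manifold $(SL(2,\R)\times SL(2,\R))/\diag SL(2,\R)$, and $SO(2,2)/U(1,1)$ is locally $SL(2,\R)/SO(2)=\mathbb{H}^2$---and you are right that these carry Q2$=$Q3$=$yes in Table~\ref{tab:cpt-CK-sym} yet are absent from the list~(iii). But the conclusion that they ``fall outside the scope of (iii)'' does not follow from the theorem's hypothesis: being locally a group manifold or a Riemannian symmetric space is nowhere an exclusion criterion, so this is not a valid step in proving (i)$\Rightarrow$(iii). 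What is actually happening is a minor imprecision in the paper: the restriction $n\geq 2$ in (iii) reflects the tacit focus on simple $G$, and the $n=1$ cases (where $G=SO(2,2)$ is not simple) are treated separately via Theorem~\ref{theorem:group-manifold-case} and Proposition~\ref{prop:so(2,2)-exotic}. You should flag this as a discrepancy between the literal statement and Table~\ref{tab:cpt-CK-sym}, rather than invent a logical exclusion the theorem does not supply.
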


Contrary to the equivalence (i) $\Leftrightarrow$ (ii)  in Theorem~\ref{thm:zariskidense-up-to-local-isom} for $\Gamma \backslash G/H$, 
such an equivalence does not hold for the exotic quotients $\Gamma_H\backslash G/\Gamma_L$, 
as stated in Theorem~\ref{theorem:group-manifold-case}~(2).

    \begin{remark}
    \label{rem:isom_GH_low_dim}    

Kassel proved in \cite{Kassel12} that the answer to (Q3) is affirmative when $G/H=SO(2,2n)/U(1,n)$.
To clarify the relationship between this space and the other homogeneous spaces in the list in 
Theorem~\ref{thm:zariskidense-up-to-local-isom}~~\ref{item:theorem:zariskidense-up-to-local-isom}, 
we introduce the following symbols: $\dashrightarrow$,  $\approx$, and $\xrightarrow{S^{1}}$.
\begin{enumerate}
    \item 
$G'/H' \approx G/H$: two homogeneous spaces 
$G'/H'$ and $G/H$ are locally isomorphic to each other;
\item
$G/H' \xrightarrow{S^{1}} G/H$: a $G$-equivariant $S^1$ fiber bundle when $H$ is a subgroup of $H'$ such that $H'/H$ is isomorphic to $S^1$;
\item
$G'/H' \dashrightarrow G/H$: an injective homomorphism $\psi\colon G' \to G$ induces the diffeomorphism $G'/H'$ onto $G/H$.
  \end{enumerate}

Obviously, the relation $\xrightarrow{S^{1}}$
does not affect the answer to (Q3).
A typical example of this relation is:
\begin{equation}
\label{eqn:S1_bundle}
SO(2,2n)/SU(1,n)     \xrightarrow{S^{1}} 
SO(2,2n)/U(1,n).
\end{equation}

Using the binary relation symbols $\dashrightarrow$, $\approx$, and $\xrightarrow{S^{1}}$,
the homogeneous spaces of lower dimensions in the list in 
Theorem~\ref{thm:zariskidense-up-to-local-isom}~~\ref{item:theorem:zariskidense-up-to-local-isom} can be grouped into two categories, which are connected to (\ref{eqn:S1_bundle}) as follows:
    
    \begin{enumerate}[label=(\arabic*)]
    \item \ ($n=2$ in (\ref{eqn:S1_bundle}))
\[
\begin{array}{ccc}
    SU(2,2)/SU(1,2) & \xrightarrow{S^{1}} & SU(2,2)/U(1,2)
    \\
    \rotatebox[origin=c]{90}{$\approx$} & & \rotatebox[origin=c]{90}{$\approx$}
    \\
    SO(2,4)/SU(1,2) & \xrightarrow{S^{1}} & SO(2,4)/U(1,2)
    \\
    \rotatebox[origin=c]{90}{$\dashleftarrow$} & & 
    \\
    SO(4,3)/G_{2(2)} &&
    \\
    \rotatebox[origin=c]{90}{$\dashleftarrow$}& & 
    \\
    SO(4,4)/Spin(4,3) &&
     \\
    \rotatebox[origin=c]{90}{$\approx$} & & 
    \\
    SO(4,4)/SO(4,3) &&
\end{array}
\]
\item \
 ($n=3$ in (\ref{eqn:S1_bundle}))
\[
\begin{array}{ccc}
    SO(2,6)/SU(1,3) & \xrightarrow{S^{1}} & SO(2,6)/U(1,3)
    \\
    \rotatebox[origin=c]{90}{$\approx$} & & \rotatebox[origin=c]{90}{$\approx$}
    \\
    SO^{*}(8)/SO^{*}(6) & \xrightarrow{S^{1}} & SO^{*}(8)/(SO^{*}(6)\times SO^{*}(2))
    \\
    \rotatebox[origin=c]{90}{$\approx$} & & \rotatebox[origin=c]{90}{$\approx$}
    \\
    SO^{*}(8)/SU(1,3) & \xrightarrow{S^{1}} & SO^{*}(8)/U(1,3)
    \\
\end{array}
\]
\end{enumerate}

To be precise, we recall that $G_{2(2)}=\Aut(\mathbb{O}')$ (where $\mathbb{O}'$ is the 8-dimensional nonassociative algebra of 
  split-octonions) has a maximal compact subgroup $(Sp(1)\times Sp(1))/\diag \{\pm 1\}$.

  \medskip
Second, We examine whether the  binary relation $\approx$  affects the answer to (Q3).

As a typical example of the relation $G'/H' \approx G/H$, we consider the case where $\psi \colon G' \to G$ is a covering map.
In this case, given a torsion-free cocompact discrete subgroup $\Gamma$ of $G$, it may happen that there does not exist a torsion-free discrete subgroup $\Gamma'$ of $G'$ such that $\Gamma = \psi(\Gamma')$, as discussed in Remark~\ref{rem:lifting_to_spin}~\ref{item:lift-counterexample} for $\psi \colon Spin(n,1) \to SO_0(n,1)$ with $n \geq 4$. 
Thus, the relation $\approx$ may, a priori, affect the answer to (Q3).
This is why we formulate the deformation theory in $Spin(n,1)$ in Section~
\ref{section:(G,Gamma)-deformation},
rather than $SO(n,1)$. From this formulation, we get the conclusion that our results remain unaffected by the binary relation $\approx$.

Third, regarding the relation $G'/H'\dashrightarrow G/H$, where the dimension of $G$ is larger than that of $G'$,
  the answers to (Q1)--(Q3) generally differ between the two (see Remark~\ref{rem:Question_is_non-trivial_for_diffeo} below). 
However, somewhat surprisingly, it does not affect the results concerning the series of homogeneous spaces discussed above.
\end{remark}

\begin{remark}
\label{rem:Question_is_non-trivial_for_diffeo}
Consider the relation $G'/H' \dashrightarrow G/H$ in Remark~\ref{rem:isom_GH_low_dim},
which is given by a group homomorphism
$\psi \colon G' \to G$ that induces a diffeomorphism 
\[ G'/H' \overset{\sim}{\rightarrow} G/H
\]
between the two homogeneous spaces $G'/H'$ and $G/H$.
For a discrete subgroup $\Gamma'$ of $G'$,
we set
\begin{equation}
\label{eqn:Gamma_image}
    \Gamma := \psi(\Gamma'), 
    \ L:=\psi(L')
\end{equation}
Regarding (Q1)--(Q3) in  Question~\ref{question:deform-ck},
we may compare the $\Gamma'$-action on $G'/H'$ with
the $\Gamma$-action on $G/H$.
Since the bijection
$G'/H' \overset{\sim}{\rightarrow} G/H$ is a homeomorphism, any topological properties like proper discontinuity or cocompactness are the same for $G'/H'$ and $G/H$,
and some of obvious relationships follow: for instance, the existence of a cocompact discontinuous group for $G'/H'$ implies that there exists a cocompact discontinuous group for $G/H$.
However, there are delicate differences regarding deformations as follows:
\begin{enumerate}
    \item[On (Q1)] Even if $\Gamma'$ is locally rigid as a discontinuous group for $G'/H'$, 
     it is possible that $\Gamma=\psi(\Gamma')$ is deformable as a discontinuous group for $G/H$.

   \item[On (Q2)] Even if $\Gamma'$ cannot be deformed into a non-standard discontinuous group for $G'/H'$, it is possible that $\Gamma=\psi(\Gamma')$ is deformed into a non-standard discontinuous group for $G/H$.
  
\item[On (Q3)]  Even if $\Gamma'$ is deformable into a Zariski-dense discrete subgroup of $G'$ preserving the proper discontinuity of the action on $G'/H'$, 
     $\Gamma$ is not necessarily deformable into a Zariski-dense discrete subgroup of $G$.
  
\end{enumerate}
\end{remark}

\subsection{Group manifold case}

In contrast to the Selberg--Weil rigidity theorem for the Riemannian symmetric space $G/K$, an irreducible pseudo-Riemannian symmetric space may admit a cocompact discontinuous group that is not locally rigid, even in higher dimensions. This was first observed in the early 90s (see \cite{Kobayashi93}) for the group manifold $G$, viewed as a homogeneous space $(G \times G)/\diag G$.
The classification for such simple Lie groups $G$ is obtained by Kobayashi as follows:
\begin{fact}
[{\cite[Thm.~A]{Kobayashi98}}]
\label{fact:Kobayshi98_deform_G}
   Let $G$ be a non-compact linear semisimple Lie group.
Then the following three conditions on $G$ are equivalent:
    \begin{enumerate}
        \item 
        
        the Lie algebra $\mathfrak{g}$ of $G$ is isomorphic to either $\mathfrak{so}(n,1)$ or $\mathfrak{su}(n,1)$; 
        \item (affirmative answer to (Q1) in Question~\ref{question:deform-ck})
        there exists a cocompact discrete subgroup $\Gamma$ of $G$ such that  
        $\Gamma\times \{1\}$ is not locally rigid as a discontinuous group for $(G\times G)/\diag G$; 
        \item (affirmative answer to (Q2) in Question~\ref{question:deform-ck})
        there exists a cocompact discrete subgroup $\Gamma$ of $G$ such that the compact manifold $ \Gamma\backslash G$ admits a non-standard deformation of the quotient of 
     $(G\times G)/\diag G \ (\simeq G)$.
    \end{enumerate}
    
\end{fact}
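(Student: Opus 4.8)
The plan is to establish the cycle of implications $(1)\Rightarrow(3)\Rightarrow(2)\Rightarrow(1)$; throughout I take $G$ to be \emph{simple} (the relevant case, since $(G\times G)/\diag G$ is an irreducible symmetric space precisely then) and write $\varphi_{0}=(\iota,\mathbf{1})\colon\Gamma\to G\times G$ for the homomorphism with $\iota$ the inclusion and $\mathbf{1}$ trivial, so that $\varphi_{0}(\Gamma)=\Gamma\times\{1\}$. The implication $(3)\Rightarrow(2)$ is immediate: $\varphi_{0}$ is a \emph{standard} discontinuous group, since $\Gamma\times\{1\}$ lies in the reductive subgroup $G\times\{1\}$, which acts properly (indeed transitively) on $(G\times G)/\diag G$ by left translations; standardness is invariant under $(G\times G)$-conjugation, so any non-standard deformation of $\varphi_{0}$ lies outside the $(G\times G)$-orbit of $\varphi_{0}$, whence that orbit is not open in $\mathcal R(\Gamma,G\times G;X)$ and $\varphi_{0}$ is deformable.

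For $(1)\Rightarrow(3)$ I would proceed as follows. When $\mathfrak g\cong\mathfrak{so}(n,1)$ or $\mathfrak{su}(n,1)$ --- equivalently, when $G$ lacks Kazhdan's property $(T)$ --- there is a torsion-free cocompact lattice $\Gamma$ in $G$ with $b_{1}(\Gamma)>0$ (classical for surface groups; Millson for $\mathfrak{so}(n,1)$; Kazhdan for $\mathfrak{su}(n,1)$; use Selberg's lemma to pass to a torsion-free finite-index subgroup). Fix a nonzero $f\in\Hom(\Gamma,\mathbb R)$ and a nonzero element $v$ of a maximal $\mathbb R$-split abelian subspace of $\mathfrak g$, and set
\[
\varphi_{s}:=(\iota,\rho_{s})\colon\Gamma\to G\times G,\qquad \rho_{s}(\gamma):=\exp\bigl(s\,f(\gamma)\,v\bigr),
\]
for $s\in\mathbb R$; this is a homomorphism since $[v,v]=0$, and $\varphi_{s}\to\varphi_{0}$ as $s\to 0$. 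For $|s|$ small the group $\varphi_{s}(\Gamma)$ still acts freely and properly discontinuously on $X=(G\times G)/\diag G$: this is the stability of proper discontinuity for a discontinuous group contained in a reductive subgroup acting properly and cocompactly (Kobayashi's deformation theorem, resting on the properness criterion of \cite{Kobayashi89} and its extension \cite{Benoist96,Kobayashi96}); one may also verify the ``uniform contraction'' hypothesis by hand, comparing $\|\mu_{G}(\rho_{s}(\gamma))\|\le|s|\,\|f\|\,\|v\|\cdot|\gamma|$ with $\|\mu_{G}(\gamma)\|\ge c\,|\gamma|$, where $\mu_{G}$ is the Cartan projection and $|\cdot|$ the word length on the cocompact lattice $\Gamma$. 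It remains to see that $\varphi_{s}(\Gamma)$ is \emph{non-standard} for $s\neq 0$. Here $v$ being split (and $f\neq 0$) makes $\rho_{s}(\Gamma)$ an unbounded subgroup of the one-dimensional $\mathbb R$-split torus $T:=\exp(\mathbb R v)$, with Zariski closure all of $T$; and since $\ker f$ --- an infinite normal subgroup of the lattice $\Gamma$ --- is Zariski-dense in $G$, the Zariski closure of $\varphi_{s}(\Gamma)$ in $G\times G$ is exactly $G\times T$. If $\varphi_{s}(\Gamma)$ were standard it would lie in a reductive algebraic subgroup $L\subset G\times G$ acting properly on $X$; then $L\supseteq G\times T$, so $G\times T$ itself would act properly on $X$ --- but it does not, since its action $x\mapsto gxt^{-1}$ on $X\cong G$ has the non-compact stabilizer $\{(t,t):t\in T\}$ at the base point (equivalently, the properness criterion \cite{Kobayashi89} fails for the pair $(G\times T,\diag G)$ in rank one). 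This contradiction proves $(3)$, hence also $(2)$.

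For $(2)\Rightarrow(1)$ I would prove the contrapositive. If $G$ is simple, non-compact, and $\mathfrak g\not\cong\mathfrak{so}(n,1),\mathfrak{su}(n,1)$, then $G$ has property $(T)$. Let $\Gamma$ be \emph{any} torsion-free cocompact lattice in $G$ and let $\varphi=(\rho_{1},\rho_{2})$ lie near $\varphi_{0}=(\iota,\mathbf{1})$ in $\mathcal R(\Gamma,G\times G;X)$. Then $\rho_{1}$ is near $\iota$, hence $G$-conjugate to $\iota$ by the Selberg--Weil local rigidity theorem \cite{Weil_discrete_subgroups} (applicable because $\mathfrak g$ has no simple factor $\cong\mathfrak{sl}(2,\mathbb R)\cong\mathfrak{so}(2,1)$ or $\cong\mathfrak{sl}(2,\mathbb C)\cong\mathfrak{so}(3,1)$); and $\rho_{2}$ is near $\mathbf{1}$, hence equal to $\mathbf{1}$, since the Zariski tangent space $Z^{1}(\Gamma,\mathfrak g_{\mathrm{triv}})=\Hom(\Gamma,\mathbb R)\otimes\mathfrak g$ of $\Hom(\Gamma,G)$ at the trivial representation vanishes --- property $(T)$ forces $\Gamma^{\mathrm{ab}}$ to be finite --- so $\mathbf{1}$ is an isolated point of $\Hom(\Gamma,G)$. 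Thus $\varphi$ is $(G\times G)$-conjugate to $\varphi_{0}$, the orbit of $\varphi_{0}$ is open, and $\varphi_{0}$ is locally rigid --- for \emph{every} such $\Gamma$ --- which negates $(2)$.

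The hard part is $(1)\Rightarrow(3)$: it is the only place drawing on input beyond formal manipulation, namely the existence of cocompact lattices with positive first Betti number for $\mathfrak{so}(n,1)$ and $\mathfrak{su}(n,1)$ (so that the deforming class $f$ exists at all), together with the deformation/stability theorem guaranteeing that the manifestly small --- but unbounded --- perturbation $\rho_{s}$ preserves proper discontinuity. Granting these, the passage to non-standardness is the short computation above, using only that $G$ is simple and non-compact and the rank-one form of the properness criterion.
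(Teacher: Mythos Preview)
The paper does not prove this statement: it is recorded as a Fact cited from \cite[Thm.~A]{Kobayashi98}, with only the remark that the case $\mathfrak g=\mathfrak{so}(2,1)$ of $(1)\Rightarrow(3)$ is due to Goldman. There is therefore no in-paper proof to compare against.

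Your outline is correct and is essentially the argument of the original reference. The deformation $\varphi_s=(\iota,\rho_s)$ with $\rho_s$ valued in a one-parameter split subgroup via a nonzero class in $\Hom(\Gamma,\mathbb R)$ is precisely Kobayashi's construction, and the stability input you invoke is \cite[Thm.~2.6]{Kobayashi98} itself. Your non-standardness argument is clean: any closed reductive $L\subset G\times G$ containing $\varphi_s(\Gamma)$ is Zariski-closed, hence contains the Zariski closure $G\times T$, and $G\times T$ already has the non-compact isotropy $\{(t,t):t\in T\}$ at the identity coset. For the converse $(2)\Rightarrow(1)$, your decomposition $\varphi=(\rho_1,\rho_2)$ plus Weil rigidity for $\rho_1$ and isolatedness of the trivial representation for $\rho_2$ is again Kobayashi's route; one small simplification is that property~(T) is not needed as a separate input, since Raghunathan's vanishing theorem (Fact~\ref{fact:raghunathan} in the paper) already yields $H^1(\Gamma,\mathbb R)=0$ whenever $\mathfrak g$ is simple and not isomorphic to $\mathfrak{so}(n,1)$ or $\mathfrak{su}(n,1)$. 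Your restriction to simple $G$ is appropriate: the equivalence as literally stated fails for products such as $SO(n,1)\times SO(m,1)$, and the original theorem is for simple $G$ (as the paper's lead-in sentence also indicates).
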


The implication (i) $\Rightarrow$ (iii) in Fact~\ref{fact:Kobayshi98_deform_G}, in the case where $\mathfrak{g} = \mathfrak{so}(2,1) \simeq \mathfrak{su}(1,1)$, is due to Goldman~\cite{Goldmannonstandard}. 
 
A natural extension of this problem is to strengthen the \lq\lq non-standard deformation\rq\rq\ in (iii) of Fact~\ref{fact:Kobayshi98_deform_G} and ask (Q3): Is it possible to deform it into a Zariski-dense discrete subgroup while preserving the proper discontinuity of the action on the group manifold $(G \times G)/\diag G$? The answer is affirmative for $\mathfrak{g}=\mathfrak{so}(n,1)$, as follows.

\begin{theorem}
\label{thm:SOn1-Zariski-dense}
For any Zariski-connected real algebraic group $G$ with Lie algebra $\mathfrak{so}(n,1)$,
there exist a torsion-free, cocompact discrete subgroup $\Gamma$ of G and a small deformation $\Gamma'$ of $\Gamma \times \{e\}$ such that $\Gamma'$ is Zariski-dense in the direct product group $G \times G$, while preserving the proper discontinuity of the action on the group manifold $(G \times G)/\diag G$. 
\end{theorem}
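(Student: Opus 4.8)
\emph{Proof proposal.} The plan is to exhibit the required deformation explicitly, as a small perturbation of the \emph{second} coordinate of the standard inclusion $\Gamma\times\{e\}\hookrightarrow G\times G$, fed through a free quotient of $\Gamma$; properness will then come for free from Kobayashi's stability theorem, and Zariski-density in $G\times G$ from the Borel density theorem together with Goursat's lemma. Write $\mathbf{G}$ for the underlying real algebraic group, so that $G=\mathbf{G}(\R)$ and $\mathfrak{g}=\mathfrak{so}(n,1)$ is simple and without compact factors. Since the identity component of $G$, acting as $G\times\{e\}$ on $(G\times G)/\diag G\cong G$ by left translation, acts simply transitively and hence properly, the subgroup $\Gamma\times\{e\}$ is a standard cocompact discontinuous group for the group manifold for every cocompact lattice $\Gamma\subset G$.

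First I would choose $\Gamma$ with care. It is known that $G$ contains a torsion-free (Selberg's lemma) cocompact lattice $\Gamma$ that is moreover \emph{large}, i.e.\ admits a surjection $q\colon\Gamma\twoheadrightarrow F_2$ onto the rank-two free group: for $n=2$ one may take a surface group, and for $n\ge 3$ this holds after passing to a finite-index subgroup by the largeness of suitable arithmetic hyperbolic lattices (Lubotzky and subsequent work). Such a $\Gamma$ satisfies $\operatorname{cd}_{\R}(\Gamma)=n\ge 2$, hence is not virtually free, so $\ker q$ is infinite; and by the Borel density theorem $\Gamma$ is Zariski-dense in $\mathbf{G}$. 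Let $\iota\colon\Gamma\hookrightarrow G$ be the inclusion. The set of pairs $(h_1,h_2)\in G\times G$ for which $\langle h_1,h_2\rangle$ is Zariski-dense in $\mathbf{G}$ is the complement of a countable union of proper Zariski-closed subsets; since $\mathbf{G}$ is connected, semisimple and non-abelian, this complement is dense, in particular it meets every neighbourhood of $(e,e)$ (see also Appendix~\ref{section:appendix-Zariski-dense-subgroups}). Fix such a pair $(h_1,h_2)$ close to $(e,e)$, let $\psi\colon F_2\to G$ send the free generators to $h_1,h_2$, and set
\[
\rho_0:=(\iota,\mathbf{1}),\qquad \rho:=(\iota,\psi\circ q)\colon\Gamma\longrightarrow G\times G .
\]
For $(h_1,h_2)$ close enough to $(e,e)$, the homomorphism $\rho$ is arbitrarily close to $\rho_0$ in $\Hom(\Gamma,G\times G)$ (the first coordinate is unchanged, and $\psi\circ q$ is near the trivial homomorphism on a fixed finite generating set of $\Gamma$), so $\rho$ is a small deformation of $\rho_0=\Gamma\times\{e\}$.

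By Kobayashi's stability theorem~\cite{Kobayashi98} (the general solution of Goldman's conjecture), every sufficiently small deformation of the standard cocompact discontinuous group $\Gamma\times\{e\}$ remains a discontinuous group for $(G\times G)/\diag G$; hence $\rho\in\mathcal{R}(\Gamma,G\times G;(G\times G)/\diag G)$, that is, $\rho(\Gamma)$ acts properly discontinuously — and freely, being torsion-free — on the group manifold. It remains to show that $\Gamma':=\rho(\Gamma)$ is Zariski-dense in $G\times G$. Let $Z\le\mathbf{G}\times\mathbf{G}$ be its Zariski closure. The first projection of $\rho(\Gamma)$ is $\iota(\Gamma)$, Zariski-dense in $\mathbf{G}$; the second is $\langle h_1,h_2\rangle$, Zariski-dense in $\mathbf{G}$ by construction; so both projections of $Z$ are onto $\mathbf{G}$. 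By Goursat's lemma, $N:=Z\cap(\mathbf{G}\times\{e\})$, regarded as a subgroup of $\mathbf{G}$, is Zariski-closed and normal, hence — as $\mathfrak{g}$ is simple — either finite or all of $\mathbf{G}$. If $N$ were finite, $Z$ would be the graph of an isomorphism $\mathbf{G}/N\xrightarrow{\ \sim\ }\mathbf{G}/N'$ with $N':=Z\cap(\{e\}\times\mathbf{G})$ also finite; evaluating on $\gamma\in\ker q$ gives $\psi(q(\gamma))=e$, whence $\iota(\gamma)\in N$, so $\iota(\ker q)$ would be finite, contradicting that $\ker q$ is infinite and $\iota$ injective. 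Hence $N=\mathbf{G}$, so $Z\supseteq\mathbf{G}\times\{e\}$, and combined with the surjectivity of the second projection this forces $Z=\mathbf{G}\times\mathbf{G}$. Thus $\Gamma'$ is a small deformation of $\Gamma\times\{e\}$ that acts properly discontinuously on $(G\times G)/\diag G$ and is Zariski-dense in $G\times G$.

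The step I expect to be the real obstacle — indeed the only non-formal input — is the choice of $\Gamma$: that a torsion-free cocompact lattice in $G$ surjecting onto $F_2$ exists for every $n\ge 2$. For $n=2,3$ this is classical, but for $n\ge 4$ it rests on the largeness of arithmetic hyperbolic lattices of simplest type. An alternative avoiding this input would run the bending construction of Section~\ref{section:deform-spin-lattice} along a family of disjoint $\pi_1$-injective (e.g.\ totally geodesic) non-separating hypersurfaces in a finite cover, realizing the same $\rho$ as an iterated HNN twist of $\rho_0$ in which the stable letters are sent to $(\iota(t_i),h_i)$; however, the free-quotient formulation above is the most economical route, and everything else (genericity of Zariski-dense-generating near-identity pairs, the appeal to Kobayashi's stability theorem, and the Goursat–Borel-density endgame) is routine.
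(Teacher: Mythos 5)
Your proposal is correct, and it reaches the conclusion by a more direct route than the paper. The paper reduces to $G=Spin(n,1)$ via the covering $Spin(n,1)\to G$ and then applies its general machinery (Theorem~\ref{theorem:Zariski-dense-discontinuous-spin-lattice}, i.e.\ Theorem~\ref{theorem:bending}~\ref{item:bending-realize-Gphi} plus the stability theorem), after observing that for $\varphi\colon G\times\{e\}\hookrightarrow G\times G$ the complement $\{0\}\oplus\mathfrak{g}$ is a sum of trivial (degree-$0$ spherical harmonics) modules, so $(\mathfrak{g}\oplus\mathfrak{g})^{\varphi}=\mathfrak{g}\oplus\mathfrak{g}$. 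Your construction is in substance the same deformation — a homomorphism that is unchanged in the first coordinate and factors through a free quotient in the second is exactly the bending deformation of Lemma~\ref{lemma:bending} with $v_i\in\{0\}\oplus\mathfrak{g}$ and $q$ the surjection onto $F_k$ furnished by the iterated HNN presentation of Lemma~\ref{lemma:hnn-repeat} — but you replace the two nontrivial components of the general argument by elementary substitutes adapted to the group-manifold case: the computation of the Zariski closure of the bent representation (Proposition~\ref{prop:zariski-closure-gphi}, which is delicate in general) becomes the Goursat/Borel-density endgame, which works precisely because the deformation lives entirely in the second factor and its second-factor image is a two-generated Zariski-dense subgroup by fiat; and the construction of $\Gamma$ satisfying Condition~\ref{condition-ck} becomes the largeness of a cocompact lattice. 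The one external input you flag — largeness for all $n\ge 2$ — is legitimate (Lubotzky, for arithmetic lattices of simplest type), and is in any case already supplied internally: Theorem~\ref{theorem:Millson-spin} with $k=2$ together with Lemma~\ref{lemma:hnn-repeat} yields a surjection $\pi_1(M)\twoheadrightarrow F_2$ by killing $\pi_1(S)$, since the HNN relators $a_i j_{i,+}([\ell])a_i^{-1}j_{i,-}([\ell])^{-1}$ die under that map. Two small points to keep in mind: for an arbitrary Zariski-connected $G$ with Lie algebra $\mathfrak{so}(n,1)$ you should note that largeness passes to commensurable lattices and to quotients by finite central subgroups (the image of a finite central subgroup in $F_2$ is trivial), so that a torsion-free large cocompact lattice exists in $G$ itself; and the existence of near-identity Zariski-dense-generating pairs is exactly $\eta(G)=2$ from Theorem~\ref{thm:two_generates_reductive}, which you may cite instead of the Baire-category argument. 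What your route buys is a short, self-contained proof of this special case; what the paper's route buys is that the statement falls out of the uniform treatment of all homomorphisms $\varphi\colon Spin(n,1)\to G$ needed elsewhere.
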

Theorem~\ref{thm:SOn1-Zariski-dense}
 is more or less well-known to experts, at least for $G=SO(n,1)$ or its quotient groups.
Nevertheless, we provide a proof for
$G=Spin(n,1)$ in Section~\ref{section:proof-son1-group-manifold1},
as an immediate application of Theorem~\ref{theorem:Zariski-dense-discontinuous-spin-lattice}.
\begin{remark}
In the case where $G$ is a linear Lie group locally isomorphic to $SU(n,1)$ with $n \ge 3$, the authors are unsure whether
 the implication (i) $\Rightarrow$ (iii) in Fact~\ref{fact:Kobayshi98_deform_G} can be strengthened, as in Theorem~\ref{thm:SOn1-Zariski-dense} for $\mathfrak g=\mathfrak{so}(n,1)$.
However, the answer is affirmative when $n=2$, as noted in Tholozan~\cite[Sect.~2.3.2]{Tholozan_Habilitation}.
\end{remark}

\begin{theorem}
    \label{theorem:group-manifold-case}
    Let $(G,H,L)$ be a triple of reductive groups in Table~\ref{tab:cpt-CK-sym}.
    \begin{enumerate}[label=$(\arabic*)$]
    \item 
    \label{item:theorem:group-manifold-case:local-rigidity}
    The following three conditions on the triple $(G, H, L)$ are equivalent: 
    \begin{enumerate}[label=$(\roman*)$]
        \item 
        \label{item:G/H-deformable}
           ((Q1) in Question~\ref{question:deform-ck} for $G/H$).
     Up to switching $H$ and $L$ if necessary,
      there exists a torsion-free, cocompact discrete subgroup $\Gamma$ of $L$ such that $\Gamma$ is deformable as a discontinuous group for $G/H$.
        \item 
        \label{item:group-case-deformable}
     ((Q1) in Question~\ref{question:deform-ck} for $(G\times G)/\diag G$). 
     There exist torsion-free, cocompact discrete subgroups
        $\Gamma_{H}$ and $\Gamma_{L}$ of $H$ and $L$, respectively, such that the discrete subgroup
        $\Gamma_{H}\times \Gamma_{L}$ is deformable as a discontinuous group for the group manifold $(G\times G)/\diag G$
     \item (Classification)
     \label{item:group-deformable}
     Up to switching $H$ and $L$, and up to compact factors, 
    the triple $(G,H,L)$ does not belong to one of the following lists: 
    \begin{itemize}
        \item (Case~5, $n\geq 2$) $(SO(4n,4), SO(4n,3), Sp(n,1))$;
        \item (Case~6) $(SO(8,8), SO(8,7), Spin(8,1))$;
        \item (Case~8) $(SO(8,\C), SO(7,\C), Spin(7,1))$;
        \item (Case~9) $(SO(8,\C), SO(7,1), Spin(7,\C))$.
    \end{itemize}
    \end{enumerate}
    \item
    \label{item:theorem:group-manifold-case:equiv}
The following three conditions on the triple $(G, H, L)$ are equivalent: 
    \begin{enumerate}[label=$(\roman*)$]
        \item 
        \label{item:G/H-nonstd}
           ((Q2) in Question~\ref{question:deform-ck} for $G/H$).
     Up to switching $H$ and $L$ if necessary,
      there exists a torsion-free, cocompact discrete subgroup $\Gamma$ of $L$ such that $\Gamma$ can be deformed into a non-standard, cocompact, disconitnuous group for $G/H$.
        \item 
        \label{item:group-case-nonstd}
         ((Q2) in Question~\ref{question:deform-ck} for $(G\times G)/\diag G$). 
        There exist torsion-free, cocompact discrete subgroups
        $\Gamma_{H}$ and $\Gamma_{L}$ of $H$ and $L$, respectively, such that the discrete subgroup
        $\Gamma_{H}\times \Gamma_{L}$ can be deformed into a non-standard, cocompact, discontinuous group for $(G\times G)/\diag G$.
      
        \item 
        \label{item:group-case-examples}
        (Classification).
        Up to switching $H$ and $L$ and up to compact factors, the triple $(G,H,L)$ belongs to one of the following lists: 
        \begin{itemize}
        \item (Cases~1 and 2, $n=1$) $(SU(2,2),Sp(1,1), SU(2,1))$;
        \item (Cases~3 and 4) $(SO(2n,2),SO(2n,1), SU(n,1))$;
        \item (Case~5, $n=1$)  $(SO(4,4),SO(4,3),Spin(4,1))$;
        \item (Case~7)  $(SO(4,4),SO(4,1), Spin(4,3))$;
        \item (Case~10) $(SO^{*}(8),SU(3,1),Spin(6,1))$;
        \item (Case~11) $(SO^{*}(8),SO^*(6),Spin(6,1))$;
        \item (Case~12) $(SO(4,3),SO(4,1), G_{2(2)})$.
        \end{itemize}
    \end{enumerate}
    \item 
    \label{item:theorem:group-manifold-case:non-zariski}
    ((Q3) in Question~\ref{question:deform-ck} for $(G\times G)/\diag G$). 
    Assume $G$ is simple. Then 
  there does not exist a cocompact, direct product discrete subgroup $\Gamma_{H} \times\Gamma_{L}$ in $H \times L$, which can be deformed into a Zariski-dense subgroup of $G\times G$ while keeping the proper discontinuity of the action on $(G\times G)/\diag G$ for any triple $(G,H,L)$ in Table~\ref{tab:cpt-CK-sym}.
    \end{enumerate}
\end{theorem}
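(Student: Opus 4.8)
\emph{Overall strategy.} The plan is to reduce the entire statement, via two structural steps, to bookkeeping against Theorem~\ref{theorem:local-nonstd-zariski} (equivalently Table~\ref{tab:cpt-CK-sym}) together with the explicit description of the groups $G^{\varphi}$ furnished by Theorem~\ref{introtheorem:bending}. Throughout, $(G,H,L)$ satisfies \eqref{eq:LGH_cocompact_proper}, so $\Gamma_H\times\Gamma_L\subset H\times L$ is a standard cocompact discontinuous group for $(G\times G)/\diag G$ (Definition~\ref{def:standard}), and $\Gamma\subset L$ (resp.\ $\Gamma\subset H$) is one for $G/H$ (resp.\ $G/L$). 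First I would invoke Kobayashi's solution of Goldman's conjecture \cite{Kobayashi98}: every sufficiently small deformation of a standard cocompact discontinuous group remains properly discontinuous and cocompact. Hence, near each of these standard points, the set $\mathcal R(\,\cdot\,;X)$ coincides, as a subset of $\Hom$, with the set of faithful discrete representations into the ambient group, which is a neighbourhood of the inclusion (Weil). Consequently (Q1), (Q2), (Q3) for the exotic quotient $\Gamma_H\backslash G/\Gamma_L$, and for $\Gamma\backslash G/H$ and $\Gamma\backslash G/L$, become assertions purely about small deformations of the inclusion as a discrete subgroup of $G\times G$ (resp.\ of $G$) and about which Zariski-closures such deformations realize, properness being automatic.

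\emph{Local structure of $\Hom(\Gamma_H\times\Gamma_L,G\times G)$.} Restricting a nearby homomorphism $\phi$ to the two direct factors and using that $\phi(\Gamma_H\times\{e\})$ and $\phi(\{e\}\times\Gamma_L)$ commute, one shows, after a small $G\times G$-conjugation, that $\phi$ has the shape $(h,l)\mapsto(\alpha(h)\gamma(l),\beta(h)\delta(l))$, where $\alpha$ (resp.\ $\delta$) is a small deformation of $\iota_H$ (resp.\ $\iota_L$) in $G$, and $\gamma,\beta$ are valued in the central tori $Z_G(\alpha(\Gamma_H))^{\circ}$, $Z_G(\delta(\Gamma_L))^{\circ}$. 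For each triple in Tables~\ref{tab:kobayashi-yoshino}--\ref{tab:kobayashi-yoshino-dual} an inspection gives that $Z_G(H)$ and $Z_G(L)$ are compact abelian; the twists $\gamma,\beta$ then move the group only inside the fixed reductive subgroups $L\cdot Z_G(L)^{\circ}$ and $H\cdot Z_G(H)^{\circ}$, whose Cartan projections equal those of $L$ and $H$ — so they never destroy proper discontinuity, never make a quotient non-standard, and never enlarge a Zariski-closure to all of $G$; moreover, whenever $\mathfrak l$ (resp.\ $\mathfrak h$) is not locally $\mathfrak{so}(m,1)$ or $\mathfrak{su}(m,1)$, all cocompact lattices have vanishing first Betti number and the twist is absent, and a case check shows that no table triple simultaneously has both $\iota_H,\iota_L$ rigid in $G$ and a non-trivial such twist. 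Thus, up to harmless twists, a small deformation of $\Gamma_H\times\Gamma_L$ in $G\times G$ is a product $\psi_H(\Gamma_H)\times\psi_L(\Gamma_L)$ of small deformations in $G$, with Zariski-closure $\overline{\psi_H(\Gamma_H)}\times\overline{\psi_L(\Gamma_L)}$; by Theorem~\ref{introtheorem:bending}\,\ref{introitem:upper-bound} (when a factor is locally $Spin(m,1)$) and by Mostow--Weil rigidity / property (T) otherwise, each factor lies, up to conjugacy, in $G^{\varphi_H}$, resp.\ $G^{\varphi_L}$, and equals $H$, resp.\ $L$, in the rigid cases.

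\emph{Parts (1) and (2).} By the two steps above, (Q1) for $(G\times G)/\diag G$ holds iff $\iota_H$ or $\iota_L$ is non-rigid in $G$ for a suitable lattice; applying the first step to $G/H$ and $G/L$ identifies the latter with ``(Q1) holds for $G/H$ or for $G/L$'', which is exactly condition \ref{item:G/H-deformable}; this gives \ref{item:G/H-deformable}$\Leftrightarrow$\ref{item:group-case-deformable}. For \ref{item:group-deformable} one reads the Q1-column of Table~\ref{tab:cpt-CK-sym}: (Q1) fails in \emph{both} orientations of a pair precisely for Cases~5 $(n\ge2)$, 6, 8, 9 — where both members are $Sp(n,1)$, a higher-rank classical, or an exceptional group (Mostow--Weil rigid in $G$), or are embedded via a spin representation in which every totally geodesic hypersurface has finite centralizer in $G$, so that $G^{\varphi}$ collapses to the source group. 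Part \ref{item:theorem:group-manifold-case:equiv} is handled the same way: (i)$\Leftrightarrow$(iii) by reading the Q2-column (up to switching $H\leftrightarrow L$ and compact factors); (ii)$\Leftrightarrow$(iii) follows from the local structure above — for a triple in the list one deforms a single factor, using the Q2-answer for $G/H$ or $G/L$ and Theorem~\ref{introtheorem:bending}\,\ref{introitem:realization-gphi}, to make the exotic quotient non-standard, while for a triple not in the list one checks from the explicit subgroups $G^{\varphi_H},G^{\varphi_L}$ that $\mu_G(G^{\varphi_H})\cap\mu_G(G^{\varphi_L})=\{0\}$, so that every small deformation keeps the quotient standard (Kobayashi's properness criterion \cite{Kobayashi89}, in the cone formulation).

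\emph{Part (3) and the main obstacle.} Let $G$ be simple. By the local structure, any small deformation of $\Gamma_H\times\Gamma_L$ preserving proper discontinuity on $(G\times G)/\diag G$ has Zariski-closure contained, up to conjugacy, in $\widehat H\times\widehat L$ with $\widehat H\in\{H,G^{\varphi_H}\}$ and $\widehat L\in\{L,G^{\varphi_L}\}$; it is Zariski-dense in $G\times G$ only if $\widehat H=\widehat L=G$, i.e.\ only if (Q3) is affirmative for $G/H$ \emph{and} for $G/L$. But the Q3-column of Table~\ref{tab:cpt-CK-sym} shows that, for $G$ simple, at most one orientation of each pair $(H,L)$ has (Q3) affirmative — in every pair the other member is $Sp(n,1)$, a higher-rank classical or exceptional group, an $SU(m,1)$-type subgroup admitting no Zariski-escaping deformation in $G$, or a spin-embedded subgroup with $G^{\varphi}\subsetneq G$; the only exception $G=SO(2,2)$ is excluded by simplicity. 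Hence no Zariski-dense deformation exists, proving \ref{item:theorem:group-manifold-case:non-zariski}. The hard part is the second step: establishing the precise local description of the product deformation space and that the exceptional central-torus twists are harmless, together with the input it depends on — the explicit values of $G^{\varphi}$ for the spin-representation embeddings of Cases~6, 8, 9, which is exactly where the paper's Clifford-algebra machinery is needed. Once this is in place, parts \ref{item:theorem:group-manifold-case:local-rigidity}--\ref{item:theorem:group-manifold-case:non-zariski} are bookkeeping against the already-established Table~\ref{tab:cpt-CK-sym}.
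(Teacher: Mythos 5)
Your overall architecture matches the paper's: a decomposition lemma for small deformations of $\Gamma_H\times\Gamma_L$ in $G\times G$ (your ``local structure'' step is the paper's Lemma~\ref{lemma:group-manifold-deformation}), stability of properness, and then bookkeeping against Table~\ref{tab:cpt-CK-sym}. The genuine gap is in the rigidity input you use for the factors that are \emph{not} locally isomorphic to $Spin(m,1)$. You claim these factors are handled ``by Mostow--Weil rigidity / property (T)'' or, alternatively, by a cone condition $\mu_G(G^{\varphi_H})\cap\mu_G(G^{\varphi_L})=\{0\}$. Neither works for the $\mathfrak{su}(m,1)$-type factors, which are exactly the cases that separate parts \ref{item:theorem:group-manifold-case:local-rigidity} and \ref{item:theorem:group-manifold-case:equiv}: for the triple $(SU(2n,2),Sp(n,1),U(2n,1))$ with $n\ge 2$ (Cases 1--2), a cocompact lattice $\Gamma_L\subset U(2n,1)$ has $H^1(\Gamma_L,\mathfrak g)\neq 0$, so it is \emph{not} locally rigid in $G$ (this is why Q1 is ``yes'' there), $SU(m,1)$ does not have property (T), and Mostow--Weil rigidity says nothing about embeddings into the larger group $G$. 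Moreover $G^{\varphi}$ (Definition~\ref{def:g-phi}) is only defined for homomorphisms $\varphi\colon Spin(n,1)\to G$, so $G^{\varphi_L}$ does not exist for the $SU(2n,1)$/$U(2n,1)$ factor and your $\mu_G$-criterion has no content. What is actually needed is the statement that every small deformation of $\Gamma_L$ in $SU(2n,2)$ remains inside $U(2n,1)$ up to conjugacy, and this requires the Klingler local rigidity theorem (Fact~\ref{fact:klingler} with Remark~\ref{remark:klingler}) or the Goldman--Millson cup-product obstruction (as in Proposition~\ref{prop:Goldman-Millson} and its use in Proposition~\ref{prop:so*8/su(3,1)}); the paper feeds exactly this into hypothesis \ref{item:L-locally-rigid-up-to-compact} of Proposition~\ref{proposition:exotic-non-standard} to prove \ref{item:group-case-nonstd}$\Rightarrow$\ref{item:group-case-examples}. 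The same missing input undermines part \ref{item:theorem:group-manifold-case:non-zariski} in several pairs (e.g.\ Cases 1 with $n=1$, 4-1, 10$'$), where the decisive ``no'' orientation for Q3 is precisely the $SU(m,1)$ one.

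Two smaller points. First, your reduction of part \ref{item:theorem:group-manifold-case:non-zariski} to ``Zariski closure $\subset\widehat H\times\widehat L$'' ignores that the commuting twist $\rho_L$ lands in $Z_G(H')$ for the \emph{deformed} closure $H'$, so the first-factor projection is only contained in $H'\cdot Z_G(H')$; to conclude non-density you still need the simplicity argument that $H'\cdot Z_G(H')\subsetneq G$ whenever $H'$ is a proper positive-dimensional subgroup (this is the content of Proposition~\ref{proposition:exotic-not-zariski-dense}), which you assert implicitly but never prove. Second, the blanket claim that $Z_G(H)$ and $Z_G(L)$ are ``compact abelian'' tori for every triple in the tables is inaccurate (e.g.\ $Z_{SO(4,4)}(SO(4,1))$ contains $SO(3)$); compactness, respectively finiteness, of the relevant centralizers is what the argument needs, and it must be checked case by case, as the paper does when verifying hypotheses \ref{item:centralizer-compact} and \ref{item:centralizer-compact-prime}.
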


As stated in Remark~\ref{remark:results_do_not_change_upto_compact_factors} for Theorem~\ref{theorem:local-nonstd-zariski}, Theorem~\ref{theorem:group-manifold-case} also includes the claim that the choice of $H$ and $L$, where $H_{ss} \subset H \subset H_{max}$ and $L_{ss} \subset L \subset L_{max}$, respectively,
does not affect the answer to any of (Q1), (Q2), or (Q3) in Question~\ref{question:deform-ck}, and that the answer remains unchanged if we replace $(G, H, L)$ with locally isomorphic triples (Definition~\ref{def:locally_isomorphic_triple}).

Theorem~\ref{theorem:group-manifold-case}  will be proved in Section~\ref{section:proof-group-manifold}.

\medskip
In the last statement \ref{item:theorem:group-manifold-case:non-zariski} of Theorem~\ref{theorem:group-manifold-case}, we assumed that $G$ is simple. An analogous statement fails when $G$ is not simple, as observed in the following proposition, where $G = SO(2,2)$ in Cases 3 and 4 with $n = 1$.

\begin{proposition}
\label{prop:so(2,2)-exotic}
The six-dimensional group manifold $G = SO(2,2)$ has an exotic compact quotient of the form $\Gamma_1 \backslash G / \Gamma_2$, where both $\Gamma_1$ and $\Gamma_2$ are torsion-free, Zariski-dense discrete subgroups of $G$.
\end{proposition}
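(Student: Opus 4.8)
The plan is to realize the asserted quotient as a small, Zariski-dense deformation of the standard exotic quotient attached to Case~4 with $n=1$, exploiting the splitting $\mathfrak{so}(2,2)\simeq\mathfrak{sl}(2,\R)\oplus\mathfrak{sl}(2,\R)$. Fix an isomorphism $SO_{0}(2,2)\simeq(SL(2,\R)\times SL(2,\R))/\{\pm(I,I)\}$, under which $H=SO(2,1)$ is the image of the diagonal copy of $SL(2,\R)$ and $L\cong SU(1,1)$ is the image of the first factor. Choose a torsion-free cocompact lattice $\tau_{0}\colon\Gamma\hookrightarrow SL(2,\R)$ (so $\Gamma$ is a surface group), and start from the standard exotic quotient $\Gamma_{H}\backslash SO(2,2)/\Gamma_{L}$, where $\Gamma_{H}$ is the image of $\gamma\mapsto(\tau_{0}(\gamma),\tau_{0}(\gamma))$ and $\Gamma_{L}$ the image of $\gamma\mapsto(\tau_{0}(\gamma),e)$. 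Here proper discontinuity comes from the \emph{second} $\mathfrak{sl}_{2}$-factor, on which $\Gamma_{L}$ acts trivially while $\Gamma_{H}$ acts by a proper left translation, and cocompactness follows from $\operatorname{cd}_{\R}(\Gamma_{H}\times\Gamma_{L})=2+2=4=d(SO(2,2))$ via \cite[Cor.~5.5]{Kobayashi89}. This is a standard cocompact discontinuous group for $(SO(2,2)\times SO(2,2))/\diag SO(2,2)$ in the sense of \cite{KasselKobayashi16}.

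Next I would deform this pair inside $SO(2,2)\times SO(2,2)$ along a small path, moving the first coordinate of $\Gamma_{H}$ and the second coordinate of $\Gamma_{L}$. Let $\rho_{1}$ be a small deformation of $\tau_{0}$ which is non-elementary (hence Zariski-dense in $SL(2,\R)$) but not conjugate to $\tau_{0}$ in $PGL(2,\R)$ — a nearby Fuchsian representation or a small bending will do — and let $\sigma_{2}$ be a small deformation of the trivial representation $\Gamma\to\{e\}$ that is non-elementary; such $\sigma_{2}$ exist arbitrarily close to the trivial one by factoring $\Gamma$ through a free quotient $\Gamma\twoheadrightarrow F_{2}$ (e.g.\ via a handlebody) and sending the free generators to $\exp(tA),\exp(tB)$ for generic hyperbolic $A,B\in\mathfrak{sl}(2,\R)$ and small $t>0$. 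Put $\Gamma_{1}:=\{(\rho_{1}(\gamma),\tau_{0}(\gamma))\}$ and $\Gamma_{2}:=\{(\tau_{0}(\gamma),\sigma_{2}(\gamma))\}$, viewed in $SO(2,2)$. Each is torsion-free, discrete and faithful because one of its two coordinate projections is the Fuchsian inclusion $\tau_{0}$, and each is Zariski-dense in $SO(2,2)$ by Goursat's lemma: both coordinate projections are Zariski-dense in $SL(2,\R)$, and the two representations defining $\Gamma_{1}$ (resp.\ $\Gamma_{2}$) are not related by an automorphism of $SL(2,\R)$ — distinct points of Teichm\"uller space in the first case, one faithful and one non-faithful in the second.

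It then remains to check that $\Gamma_{1}\times\Gamma_{2}$ still acts properly discontinuously and cocompactly on the group manifold $SO(2,2)\simeq(SO(2,2)\times SO(2,2))/\diag SO(2,2)$. Since the deformation is small and starts from a standard cocompact quotient, proper discontinuity is preserved by the stability theorem \cite{Kobayashi98} (extending the properness criterion of \cite{Kobayashi89,Benoist96,Kobayashi96}); cocompactness then follows once more from $\operatorname{cd}_{\R}(\Gamma_{1}\times\Gamma_{2})=4$ and \cite[Cor.~5.5]{Kobayashi89}, freeness being automatic since a torsion-free group acting properly discontinuously has trivial stabilizers. For a self-contained verification of properness I would use the Cartan-projection criterion: with $\mu$ the Cartan projection of $SL(2,\R)$, the action is $(x_{a},x_{b})\mapsto(\rho_{1}(\gamma)\,x_{a}\,\tau_{0}(\delta)^{-1},\ \tau_{0}(\gamma)\,x_{b}\,\sigma_{2}(\delta)^{-1})$, and properness amounts to finiteness, for every $R$, of $\{(\gamma,\delta):|\mu(\rho_{1}(\gamma))-\mu(\tau_{0}(\delta))|\le R,\ |\mu(\tau_{0}(\gamma))-\mu(\sigma_{2}(\delta))|\le R\}$. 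Using that $\mu\circ\tau_{0}$ is bi-Lipschitz in the word length of $\Gamma$ ($\tau_{0}$ being Fuchsian, hence Anosov), that $\mu(\rho_{1}(\gamma))\le A|\gamma|+C$, and that $\mu(\sigma_{2}(\delta))\le\varepsilon|\delta|+C$ with $\varepsilon=O(t)$ as small as we wish, the first inequality forces $|\delta|\lesssim|\gamma|$ and the second then forces $|\gamma|\lesssim\varepsilon|\gamma|$, so for $\varepsilon$ small enough $|\gamma|,|\delta|$ are bounded and the set is finite.

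The main obstacle is this properness step under the requirement that $\Gamma_{1}$ and $\Gamma_{2}$ both be Zariski-dense. In the standard quotient, properness was produced by the \emph{exactly} trivial action of $\Gamma_{L}$ on one $\mathfrak{sl}_{2}$-factor; Zariski-density forbids any coordinate projection from being trivial, and neither $\mathfrak{sl}_{2}$-direction alone can support a proper double-coset action of two non-elementary groups. The way out — and the reason the statement is special to the \emph{non-simple} group $SO(2,2)$, in contrast with Theorem~\ref{theorem:group-manifold-case}~\ref{item:theorem:group-manifold-case:non-zariski} — is that $\sigma_{2}$ can be kept Zariski-dense in the second factor (it need only be non-elementary) while having an arbitrarily small translation-length spectrum, so the two $\mathfrak{sl}_{2}$-directions cooperate in the Cartan-projection estimate. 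The only genuine work is to confirm that such ``small'' non-elementary representations exist arbitrarily near the trivial one and to keep track of the Anosov/Lipschitz constants of $\tau_{0}$ and $\rho_{1}$ so that the threshold on $\varepsilon$ is actually met; everything else is bookkeeping.
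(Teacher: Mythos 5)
Your argument is correct, and its skeleton coincides with the paper's: start from the standard exotic quotient $\Gamma_H\backslash G/\Gamma_L$ attached to the triple $(SO(2,2),SO(2,1),SU(1,1))$ of Cases~3 and 4 with $n=1$, deform $\Gamma_H$ and $\Gamma_L$ separately into Zariski-dense subgroups of $G$, and then preserve proper discontinuity and cocompactness via the stability theorem for exotic quotients together with the cohomological-dimension criterion of \cite[Cor.~5.5]{Kobayashi89}. Where you genuinely diverge is in how the Zariski-dense deformations are produced. The paper computes $\mathfrak g^{\iota_H}=\mathfrak g^{\iota_L}=\mathfrak g$ from the decompositions $\mathfrak g\simeq\mathfrak h+\R^{2,1}$ and $\mathfrak g\simeq\mathfrak l+\mathbf 1^{\oplus 3}$ and invokes Theorem~\ref{theorem:bending}~\ref{item:bending-realize-Gphi}, i.e., the full bending machinery; you instead exploit the splitting $\mathfrak{so}(2,2)\simeq\mathfrak{sl}(2,\R)\oplus\mathfrak{sl}(2,\R)$ to write the deformed groups explicitly as graphs $\{(\rho_1(\gamma),\tau_0(\gamma))\}$ and $\{(\tau_0(\delta),\sigma_2(\delta))\}$ and verify Zariski density by Goursat's lemma (distinct Teichm\"uller points in the first case, faithful versus non-faithful in the second). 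This is more elementary and self-contained — no arithmetic lattices, no $G^{\varphi}$ — but it is special to the non-simple group $SO(2,2)$ and buys nothing for the other entries of Table~\ref{tab:cpt-CK-sym}, whereas the paper's route is an instance of its general method. Two small remarks: the stability statement you need for the two-sided double-coset action is Fact~\ref{fact:exotic-stability} from \cite{KasselKobayashi16} rather than the one-sided statement of \cite{Kobayashi98}, since $\Gamma_H\times\Gamma_L$ is not an irreducible lattice of a rank-one group — though your quantitative Cartan-projection estimate is sound and makes that citation unnecessary; and your Goursat argument gives the stronger conclusion, also obtained in the paper, that $\Gamma_1\times\Gamma_2$ is Zariski-dense in $G\times G$.
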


 This proposition will be proven also in 
    Section~\ref{section:proof-group-manifold}.

\section{Deformation of discrete subgroups}
\label{section:(G,Gamma)-deformation}
Let $\Gamma$ be a discrete subgroup of $G$.
In this section, in order to study the deformation of $\Gamma$ as a \emph{discontinuous group} for a $G$-manifold $X$, 
we will momentarily set aside the space $X$ and focus on the deformation of $\Gamma$ within $G$.
To put it succinctly, the focus will be solely on the pair $(\Gamma, G)$ rather than the triplet $(\Gamma,G,X)$, which simplifies the argument.

In Section~\ref{section:(G,Gamma)-locally-rigid},
we recall results related to the local rigidity of representations of discrete groups, such as Raghunathan's vanishing theorem and a theorem of Klingler~\cite{Klingler11}.
 
Owing to these local rigidity theorems, to examine the extent to which the standard quotient can be deformed (Problems~\ref{mainquestion} and~\ref{mainquestion'}), it suffices to focus on the case where
$\Gamma$ is a cocompact discrete subgroup of $Spin(n,1)$.
This will be the topic of the latter part of this section.

In Section~\ref{section:(G,Gamma)-deform-spin}, we will consider a general setup where
$G$ is a Zariski-connected real algebraic group and $\varphi\colon Spin(n,1) \to G$ is a homomorphism, and
introduce a real algebraic subgroup $G^{\varphi}$ (Definition~\ref{def:g-phi}). 
We prove in Theorem~\ref{theorem:bending}~\ref{item:maximal-zariski-closure},
for any cocompact discrete subgroup $\Gamma$ of $Spin(n,1)$ and 
any small deformation $\varphi'$ of $\varphi|_{\Gamma}$ in $G$,
that $\varphi'(\Gamma)$ is contained in $G^{\varphi}$ up to $G$-conjugacy.
This upper bound is optimal: we shall prove in Theorem~\ref{theorem:bending}~\ref{item:bending-realize-Gphi} that  
there exist $\Gamma$ and $\varphi'$
such that the Zariski-closure of $\varphi'(\Gamma)$ coincides with $G^{\varphi}$. 

In Section~\ref{section:(G,Gamma)-bending-application},
we will discuss in more detail the special cases where $n=2$ or $\varphi$ is a spin representation.
In the case where $n=2$, we explain how Theorem~\ref{theorem:bending}  
connects the Zariski-closure of deformations of specific surface group representations
to the combinatorics of nilpotent orbits in complex reductive Lie algebras.
In the case where $\varphi$ is a spin representation, we compute $G^{\varphi}$ in terms of the Clifford algebra associated with the indefinite quadratic form of signature $(p,q)$ and provide sufficient conditions on the triple $(p,q,n)$ for the classical group $G(p,q)$ to admit 
a Zariski-dense subgroup as a deformation of a cocompact discrete subgroup of $Spin(n,1)$ (see Theorem~\ref{thm:Clifford-z.d}).
For notation related to $G(p,q)$ and Clifford algebras, see Appendix~\ref{section:clifford-spin}.

\subsection{Local rigidity theorems after Weil, Matsushima--Murakami, Raghunathan, Goldman--Millson, and Klingler}
\label{section:(G,Gamma)-locally-rigid}
A finitely-generated discrete subgroup $\Gamma$ of $G$ is locally rigid if it is \emph{infinitesimally rigid},
that is, the first cohomology
$H^{1}(\Gamma,\mathfrak{g})$ vanishes (Weil~\cite{Weil_remark}).

We begin with the classical vanishing theorems for the first cohomology, notably those by Matsushima--Murakami~\cite{Matsushima-Murakami-63}
and Raghunathan:
\begin{fact}[{Raghunathan~\cite{Raghunathan65}}] \label{fact:raghunathan} Let $L$ be a non-compact, simple Lie group, $\pi$ be an irreducible finite-dimensional representation of $L$ on a real vector space $V$, and $\Gamma$ be a cocompact discrete subgroup without torsion.
If $H^{1}(\Gamma,V) \neq 0$, then one of the following holds: 
\begin{enumerate}[label=(\arabic*)] 
\item
The Lie algebra $\mathfrak{l}$ is isomorphic to $\mathfrak{so}(n,1)$ with $n \ge 3$, and $V$ contains a non-zero $\mathfrak{so}(n-1,1)$-invariant vector. 
\item 
The Lie algebra $\mathfrak{l}$ is isomorphic to $\mathfrak{su}(n,1)$ with $n \ge 1$, and $(\pi, V)$ is isomorphic to the symmetric tensor of the standard representation or its dual. \end{enumerate} \end{fact}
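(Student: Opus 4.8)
The statement to prove is Raghunathan's vanishing theorem: for a non-compact simple Lie group $L$ with torsion-free cocompact lattice $\Gamma$ and an irreducible finite-dimensional real representation $(\pi,V)$, if $H^1(\Gamma,V)\neq 0$ then $\mathfrak{l}\simeq\mathfrak{so}(n,1)$ (with $V$ having a nonzero $\mathfrak{so}(n-1,1)$-fixed vector) or $\mathfrak{l}\simeq\mathfrak{su}(n,1)$ (with $V$ the symmetric tensor of the standard representation or its dual). The plan is to reduce the group cohomology of $\Gamma$ to de Rham cohomology of the compact locally symmetric space $M=\Gamma\backslash L/K$ with coefficients in the flat bundle $E_V$ attached to $\pi$, then to Hodge theory and a Bochner--Weitzenb\"ock argument, and finally to a representation-theoretic case analysis that isolates exactly the $\mathfrak{so}(n,1)$ and $\mathfrak{su}(n,1)$ exceptions.

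First I would invoke the Matsushima--Murakami identification: since $M$ is a $K(\Gamma,1)$, $H^1(\Gamma,V)\simeq H^1(M,E_V)$, which by Hodge theory for the (possibly non-unitary, but this is handled via the standard device of choosing a suitable metric, or better via the $(\mathfrak{l},K)$-module decomposition of $L^2(\Gamma\backslash L)$) is computed by harmonic $1$-forms. Using the Matsushima formula one writes $H^1(M,E_V)$ as a sum over the irreducible unitary $L$-subrepresentations $H_\rho$ occurring in $L^2(\Gamma\backslash L)$ of $\dim\Hom_L(H_\rho, \ \text{(1-form valued harmonic space)})$, i.e. of multiplicity spaces of certain $(\mathfrak{l},K)$-cohomology groups $H^1(\mathfrak{l},K; H_\rho\otimes V)$. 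The trivial representation $H_\rho=\mathbf{1}$ contributes $H^1(\mathfrak{l},K;V)$, which vanishes unless $V$ is trivial (and the trivial case is excluded since $L$ is simple and $V$ irreducible nontrivial forces $H^1=0$ there by a dimension count, or contributes only in the degenerate situation). So the work is to show that $H^1(\mathfrak{l},K;H_\rho\otimes V)=0$ for \emph{all} nontrivial unitary $(\mathfrak{l},K)$-modules $H_\rho$ unless $\mathfrak{l}$ is of real rank one of the stated two types.

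The key analytic input is the Bochner-type vanishing: a harmonic $E_V$-valued $1$-form $\omega$ satisfies a Weitzenb\"ock identity $\Delta\omega = \nabla^*\nabla\omega + \mathcal{R}_\pi\,\omega$ where the curvature term $\mathcal{R}_\pi$ is built from the Casimir of $\pi$ and the curvature of the symmetric space; integrating over the compact $M$ forces $\omega$ parallel (hence $H^1=0$) unless $\mathcal{R}_\pi$ can be non-positive on some $1$-form, which — using the explicit root-theoretic description of $\mathcal{R}_\pi$ and the infinitesimal character / Casimir eigenvalue estimates for unitary $(\mathfrak{l},K)$-modules — happens only in a very restricted list. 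Concretely, one reduces to examining, for each simple $\mathfrak{l}$, which $K$-types of $\mathfrak{p}\otimes V$ can support a cohomology class; Kostant's theorem on Lie algebra cohomology and the classification of unitary modules with $(\mathfrak{l},K)$-cohomology in degree $1$ (essentially Vogan--Zuckerman $A_{\mathfrak{q}}(\lambda)$-modules, or for degree one the earlier results) pin these down. For $\mathfrak{l}$ of real rank $\geq 2$, or rank one of type $\mathfrak{sp}(n,1)$ or $\mathfrak{f}_{4(-20)}$, Kazhdan's property (T) (or Matsushima's original positivity) kills all the contributions; only $\mathfrak{so}(n,1)$ and $\mathfrak{su}(n,1)$ survive, and there the surviving $V$ are exactly the ones listed — for $\mathfrak{so}(n,1)$ the condition is that $V$ as an $\mathfrak{so}(n-1,1)$-module (the Levi of the minimal parabolic's reductive part, up to the compact factor) contains a fixed vector, and for $\mathfrak{su}(n,1)$ that $V$ is $S^k(\C^{n+1})$ or its dual realized over $\R$.

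The main obstacle is the representation-theoretic bookkeeping in the last step: one must control \emph{all} unitary $(\mathfrak{l},K)$-modules $H_\rho$ contributing to $H^1$, not just tempered ones, so the argument genuinely needs the $(\mathfrak{l},K)$-cohomology classification (or, as in Raghunathan's and Matsushima--Murakami's original approach, a sufficiently sharp Bochner inequality that makes the curvature operator strictly positive on $1$-forms outside the exceptional list). I expect the cleanest route is: (i) set up Matsushima's formula, (ii) quote the degree-one case of the Vogan--Zuckerman classification to enumerate candidate $H_\rho$ together with the constraint they impose on $V$ via Kostant, (iii) for the non-rank-one and the $\mathfrak{sp}(n,1),\mathfrak{f}_{4(-20)}$ cases, use property (T) to conclude $H^1(\Gamma,V)=0$ directly, and (iv) for $\mathfrak{so}(n,1)$ and $\mathfrak{su}(n,1)$ read off the precise conditions on $V$ from the structure of the relevant $A_{\mathfrak{q}}(\lambda)$ and their lowest $K$-types. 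The delicate point throughout is that $\pi$ need not be unitary, which is why one works on $\Gamma\backslash L$ with $L^2$-cohomology of $\mathfrak{g}$-coefficients rather than naively Hodge-decomposing $E_V$; this is standard but must be handled carefully, e.g. via Borel--Wallach's treatment.
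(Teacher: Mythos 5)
The paper does not prove this statement: it is quoted as a Fact and attributed to Raghunathan~\cite{Raghunathan65} (together with Matsushima--Murakami for the underlying machinery), so there is no in-paper proof to compare against. Your proposal correctly reconstructs the standard strategy of the original proof --- identify $H^{1}(\Gamma,V)$ with $H^{1}(M,E_V)$ for the compact aspherical quotient $M=\Gamma\backslash L/K$, pass to harmonic forms via the Matsushima--Murakami formalism (handled carefully for non-unitary $V$ as in Borel--Wallach), and kill everything outside the two exceptional families by a Weitzenb\"ock positivity estimate, with the $(\mathfrak{l},K)$-cohomology classification as a modern substitute for the curvature computation. That is indeed how the cited result is proved.

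Two points need correction. First, in your step (iii) you propose to ``use property (T) to conclude $H^{1}(\Gamma,V)=0$ directly'' for higher rank and for $\mathfrak{sp}(n,1)$, $\mathfrak{f}_{4(-20)}$. Property (T) gives vanishing of $H^{1}(\Gamma,\mathcal H)$ only for \emph{unitary} coefficient modules $\mathcal H$; the coefficients here are non-trivial finite-dimensional representations, which are never unitary for non-compact simple $L$, so (T) does not apply as stated. The vanishing in those cases must come from the sharp Bochner inequality (Raghunathan's actual argument, which predates (T)) or, equivalently, from the fact that no non-trivial unitary $(\mathfrak{l},K)$-module has $R_{\mathfrak q}=\dim(\mathfrak u\cap\mathfrak p)\le 1$, so that $H^{1}(\mathfrak{l},K;H_\rho\otimes V)=0$ for every $H_\rho$ occurring in $L^{2}(\Gamma\backslash L)$. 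Since you also offer this alternative route, the proof can be repaired, but the (T) shortcut should be deleted. Second, a minor slip: $\mathfrak{so}(n-1,1)$ is not ``the Levi of the minimal parabolic'' of $SO(n,1)$ (that Levi is $\R\oplus\mathfrak{so}(n-1)$ up to the split factor); it is the stabilizer of a spacelike vector, and the correct condition --- which you do state --- is that $V$ contain a non-zero $\mathfrak{so}(n-1,1)$-invariant vector, equivalently a non-zero $\mathfrak{so}(n)$-invariant vector (cf.\ Remark~\ref{remark:spherical_harmonics}).
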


\begin{remark} 
\label{remark:spherical_harmonics}
\begin{enumerate}[label=(\arabic*)]
    \item For an irreducible representation $\pi$ of $\mathfrak{so}(n,1)$ on a finite-dimensional real vector space $V$, the following two conditions are equivalent:
    \begin{enumerate}
        \item[(i)]  $V$ contains a non-zero $\mathfrak{so}(n-1,1)$-invariant vector;
        \item[(ii)] $V$ contains a non-zero $\mathfrak{so}(n)$-invariant vector.
    \end{enumerate}
    For $n\geq 2$, the complexification of such a representation $\pi$
    can be realized in
    the space of spherical harmonics of degree $k$, defined by 
    \[
    \{f\in C^{\infty}(S^{n})\mid 
    \Delta_{S^{n}}f=-k(k+n-1)f\},
    \]
    where $\Delta_{S^{n}}$ is 
    the Laplacian of the unit sphere $S^{n}$.
    We refer to $\pi$ as the \emph{spherical harmonics representation} of degree $k$.
    
    \item 
    There is an isomorphism 
$\mathfrak{so}(2,1) \simeq \mathfrak{su}(1,1)$ of Lie algebras. For this Lie algebra, we refer to the second statement for $\mathfrak{su}(1,1)$.
\end{enumerate}
\end{remark}

Thanks to Fact~\ref{fact:raghunathan},
 a cocompact discrete subgroup of $L$ can potentially be deformed within a larger Lie group $G$ only if it is locally isomorphic to $SO(n,1)$ or $SU(n,1)$.

Infinitesimal rigidity is a necessary condition for local rigidity, it is not sufficient.
For complex hyperbolic lattices, Goldman--Millson~\cite{goldman-millson88} 
discovered a striking example where local rigidity holds and infinitesimal rigidity fails. This example has been further generalized by a theorem of Klingler (\cite[Thm.~1.3.7]{Klingler11}), whose special cases we recall here:

\begin{fact}[{Klingler~\cite[Thm.~1.3.8]{Klingler11}}]
    \label{fact:klingler}
Let $\Gamma$ be a cocompact discrete subgroup of $SU(n,1)$, and
$\varphi \colon SU(n,1)\hookrightarrow SU(p,q)$ $(p\geq n,q\geq 1)$
be the standard embedding,
Then any morphism $\varphi'$ suffiently close to $\varphi|_{\Gamma}$ 
is conjugate to a representation of the form $\varphi \cdot \chi$,
where $\chi\colon \Gamma \to SU(p,q)$ is a deformation
of the trivial representation in the centralizer
of $SU(n,1)$ in $SU(p,q)$.
\end{fact}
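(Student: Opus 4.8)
The final statement in the excerpt is Fact~\ref{fact:klingler}, which is quoted from Klingler's paper. Since it is stated as a ``Fact'' attributed to an external reference rather than as a theorem of the present paper, the authors will not give their own proof. Nonetheless, here is how I would approach proving it.

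\textbf{Overview of the strategy.} The assertion is a local rigidity statement: every homomorphism $\varphi'\colon\Gamma\to SU(p,q)$ sufficiently close to the restriction $\varphi|_\Gamma$ of the standard embedding is conjugate to a product $\varphi\cdot\chi$, where $\chi$ takes values in $Z_{SU(p,q)}(SU(n,1))\cong S(U(1)\times U(p-n,q-1))$ and deforms the trivial representation. The natural route is cohomological: by Weil's theory, the germ of $\mathrm{Hom}(\Gamma,SU(p,q))$ at $\varphi|_\Gamma$ is controlled, to first order, by the cocycle space $Z^1(\Gamma,\mathfrak{su}(p,q))$ with the $\mathrm{Ad}\circ\varphi$-action, and its tangent obstructions by $H^2$. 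The plan is first to compute $H^1(\Gamma,\mathfrak{su}(p,q))$ as a $\Gamma$-module, decomposing $\mathfrak{su}(p,q)$ under the action of the reductive group $\varphi(SU(n,1))$ into irreducible summands, and then to apply the Matsushima--Murakami--Raghunathan vanishing machinery (Fact~\ref{fact:raghunathan}, with $L=SU(n,1)$) summand by summand.

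\textbf{Key steps.} (1) Decompose $\mathfrak{su}(p,q)_\C$ as an $\mathfrak{su}(n,1)_\C$-module: writing the standard module of $SU(p,q)$ as $\C^{n+1}\oplus W$ with $W$ the centralizer block, one gets a sum of pieces that are either trivial, or isomorphic to the standard/adjoint representations of $\mathfrak{su}(n,1)$, or to $\C^{n+1}\otimes W$, $(\C^{n+1})^*\otimes W$ and their conjugates. (2) For each irreducible summand $V$, invoke Fact~\ref{fact:raghunathan}: $H^1(\Gamma,V)\ne 0$ only if $V$ is a symmetric tensor of the standard representation or its dual. Among the summands above, the only ones of this shape are the $\C^{n+1}$-type and $(\C^{n+1})^*$-type pieces appearing inside $\C^{n+1}\otimes W$ etc.; the adjoint summand $\mathfrak{su}(n,1)$ itself contributes $0$ by Weil rigidity of cocompact lattices in $SU(n,1)$, $n\ge 1$ (the $n=1$ surface-group case requiring the separate, classical Teichm\"uller-theoretic input), and the trivial summands contribute $H^1(\Gamma,\C)$-type classes corresponding exactly to deformations of $\chi$ inside the centralizer. (3) So $H^1(\Gamma,\mathfrak{su}(p,q))$ splits as (centralizer part) $\oplus$ (possibly nonzero ``harmonic'' part coming from the mixed summands). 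One must then show the mixed-summand classes are obstructed at second order, i.e.\ do not integrate: this is where Goldman--Millson's quadratic-cone / mixed Hodge structure argument enters --- the relevant cup products $H^1\times H^1\to H^2$ land in pieces forced to be nonzero by strictness of the Hodge filtration on the K\"ahler group cohomology, so the only integrable directions are the unobstructed centralizer ones. (4) Finally, a slice/transversality argument (implicit function theorem around $\varphi|_\Gamma$, using that the obstructed directions are cut out cleanly) upgrades this infinitesimal picture to the claimed local statement: every nearby $\varphi'$ is $SU(p,q)$-conjugate into the submanifold $\{\varphi\cdot\chi\}$.

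\textbf{Main obstacle.} The genuinely hard part is step (3)--(4): showing that the extra harmonic cohomology coming from the $\C^{n+1}\otimes W$-type summands is killed by the quadratic obstruction. This is precisely the phenomenon Goldman--Millson discovered (infinitesimal rigidity can fail while local rigidity holds), and controlling it requires the machinery of mixed Hodge theory on the cohomology of the compact K\"ahler manifold $\Gamma\backslash\mathbb{C}H^n$ with coefficients in the flat bundle $\mathfrak{su}(p,q)$ --- establishing that the cup-product obstruction map is, up to the relevant Hodge gradings, an isomorphism onto its image so that the quadratic cone is exactly the centralizer-deformation subspace. Everything else (the branching computation in step (1), the bookkeeping of Fact~\ref{fact:raghunathan} in step (2)) is essentially representation-theoretic routine once the decomposition is set up, and the final transversality in step (4) is standard deformation theory given the cohomological input.
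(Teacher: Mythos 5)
You are right that the paper does not prove this Fact---it is quoted from Klingler---and your sketch coincides with the strategy the authors themselves indicate in Remark~\ref{remark:klingler} and formalize in Proposition~\ref{prop:Goldman-Millson}: decompose $\mathfrak{su}(p,q)$ under $\varphi(SU(n,1))$, isolate the mixed summands carrying nonzero $H^1$, kill them via the Goldman--Millson quadratic obstruction $[c,c]\neq 0$ verified through Matsushima--Murakami harmonic theory on $\Gamma\backslash\mathbb{H}^n_{\C}$, and pass from formal to actual deformations by Artin approximation and the curve selection lemma. One caveat: the theorem requires $n\ge 2$ (as in all uses made of it in this paper); for $n=1$ the adjoint summand already satisfies $H^{1}(\Gamma,\mathfrak{su}(1,1))\neq 0$ and the conclusion fails, so your assertion that Weil rigidity disposes of the adjoint block for all $n\ge 1$ should be corrected accordingly.
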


\begin{remark}
\label{remark:klingler}
The above results hold for locally isomorphic groups.
That is, let $\phi\colon L \to G$ be a Lie group homomorphism, and $\Gamma$ be a cocompact discrete subgroup of $L$.
Suppose the differentials of $\phi$ and $\varphi$ (from the theorem above) agree. In this case, any homomorphism sufficiently close to $\phi|_{\Gamma}$ maps into $\phi(L) \cdot Z_G(L)$ up to conjugation by $G$, where $Z_G(L)$ denotes the centralizer of $\phi(L)$ in $G$.

In fact, the proof of Fact~\ref{fact:klingler} is based on the obstruction of the integrability of infinitesimal deformation,
which can be traced back to Goldman--Millson~\cite{goldman-millson88}:
the conclusion of Fact~\ref{fact:klingler} holds if 
\begin{equation}
\label{eqn:cup_product_of_first_cohom}    
\text{for any } c\in H^{1}(\Gamma,\mathfrak{g})\smallsetminus H^{1}(\Gamma,\mathfrak{z}_{\mathfrak{g}}(\mathfrak{l})),\ [c,c]\neq 0
\text{ in }H^{2}(\Gamma,\mathfrak{g}),
\end{equation}
and this statement holds under the same assumption at the Lie algebra level.
\end{remark}

We will use Fact~\ref{fact:klingler} along with Remark~\ref{remark:klingler} in the proof (Step 4) of Theorem~\ref{theorem:local-nonstd-zariski} in Section~\ref{section:deform-cptCK}.

In the sequel, we discuss cocompact discrete subgroup in a group which is locally isomorphic to $SO(n,1)$.

\subsection{Deformation of cocompact discrete subgroups of 
\texorpdfstring{$Spin(n,1)$}{Spin(n,1)}
}
\label{section:(G,Gamma)-deform-spin}

Let $\mathbf{G}$ be a Zariski-connected real algebraic group, 
$G$ the Lie group of real points $\mathbf{G}(\R)$,  
and $\varphi\colon Spin(n,1)\rightarrow G$ a homomorphism $(n\geq 2)$ in the sense of 
Lemma~\ref{lemma:morphism}.
 For basic notions of algebraic groups, we refer to Appendix~\ref{section:algebraic_group}.
In this section, we discuss to what extent the cocompact discrete subgroup $\Gamma$ of $Spin(n,1)$ can be deformed in $G$ through $\varphi$.

\begin{definition}[Lie algebra $\mathfrak{g}^{\varphi}$]
    \label{definition:lie-g-phi}
    We regard the Lie algebra $\mathfrak{g}$ of $G$ as a 
    $\mathfrak{spin}(n,1)$-module via $\varphi$. 
Let $\mathfrak{g}^{\varphi}$ denote the Lie subalgebra of $\mathfrak{g}$, generated by 
    $d\varphi(\mathfrak{spin}(n,1))$ and all  the 
    $\mathfrak{spin}(n,1)$-submodules that are isomorphic to irreducible
    spherical harmonics modules of $\mathfrak{spin}(n,1)$ (see Remark~\ref{remark:spherical_harmonics}).      
In other words, $\mathfrak{g}^{\varphi}$ is the smallest Lie subalgebra of $\mathfrak g$
that contains
 \begin{equation}
 \label{eq:g-phi}
 d\varphi(\mathfrak{spin}(n,1)) + 
 \mathfrak{z}_{\mathfrak g}({d\varphi(\mathfrak{spin}(n-1,1))}),
 \end{equation}
 where
 $\mathfrak{z}_{\mathfrak g}({d\varphi(\mathfrak{spin}(n-1,1))})$ denotes the centralizer of 
$d\varphi(\mathfrak{spin}(n-1,1))$ in $\mathfrak{g}$.
\end{definition}

We define a Zariski-connected real algebraic group $\mathbf{G}^{\varphi}$ corresponding to the Lie algebra $\mathfrak{g}^{\varphi}$ (we refer to Definition~\ref{def:Zariski-connected} in Appendix~\ref{section:algebraic_group}
for the definition of Zariski-connectedness of real algebraic groups).  
For this purpose, we use the following lemma, the proof of which will be given in Section~\ref{section:proof:g-phi-alebraic}.
\begin{lemma}
    \label{lemma:g-phi-algebraic}
    Let $G_{\C}$ be the complex Lie group of complex points $\mathbf{G}(\C)$,
    and $G^{\varphi}_{\C}$ the analytic subgroup of $G_{\C}$
    corresponding to $\mathfrak{g}^{\varphi}\otimes_{\R}\C$.
    Then $G^{\varphi}_{\C}$ is a Zariski-closed subset defined over $\R$. 
\end{lemma}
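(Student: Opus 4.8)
\textbf{Proof plan for Lemma~\ref{lemma:g-phi-algebraic}.}

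The plan is to show that the analytic subgroup $G^{\varphi}_{\C} \subset G_{\C}$ is algebraic by exhibiting it as the Zariski-closure of a suitable subgroup and then checking that this Zariski-closure has the right Lie algebra. First I would recall the standard fact (see e.g.\ the appendix on algebraic groups) that for any subset $S$ of the complex algebraic group $\mathbf{G}(\C)$, the Zariski-closure $\overline{S}$ is a complex algebraic subgroup whenever $S$ is a subgroup, and that its Lie algebra $\overline{\mathfrak{s}}$ is the smallest \emph{algebraic} Lie subalgebra of $\mathfrak{g}_{\C}$ containing the Lie algebra generated by $S$. The key point is therefore to identify $\mathfrak{g}^{\varphi}\otimes_{\R}\C$ as an \emph{algebraic} Lie subalgebra, i.e.\ one that is the Lie algebra of a Zariski-closed subgroup.

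The central observation is that $\mathfrak{g}^{\varphi}$ is generated, as a Lie algebra, by $d\varphi(\mathfrak{spin}(n,1))$ together with a collection of irreducible $\mathfrak{spin}(n,1)$-submodules of $\mathfrak{g}$, each of which is a \emph{completely reducible} (in fact semisimple) module for the semisimple group $Spin(n,1)$. Since $Spin(n,1)$ is semisimple, the homomorphism $\varphi$ is automatically a morphism of algebraic groups after passing to Zariski-closures, so $\varphi(Spin(n,1))$ is a Zariski-closed connected semisimple subgroup $S$ of $\mathbf{G}(\C)$, and the $S$-module decomposition of $\mathfrak{g}_{\C}$ is a decomposition into algebraic submodules. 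Concretely, the spherical-harmonics isotypic components and the span of $d\varphi(\mathfrak{spin}(n,1))$ are all $S$-stable subspaces of $\mathfrak{g}_{\C}$, hence cut out by linear (in particular polynomial) equations; being $S$-stable linear subspaces, their Lie-algebra-generated span is a subspace on which the adjoint action of $S$ is algebraic. I would then invoke the theorem that the subalgebra generated by $\Ad$-invariant (under a reductive algebraic group) algebraic pieces, together with the fact that the Lie subalgebra generated by a completely reducible set of root-type spaces is algebraic whenever it is normalized by a maximal torus's worth of semisimple elements — more cleanly, I would use that $\mathfrak{g}^{\varphi}$ is stable under $\Ad(S)$ and that a Lie subalgebra $\mathfrak{m}\subset\mathfrak{g}_{\C}$ which is the sum of finitely many irreducible modules for a connected semisimple algebraic group $S\subset\mathbf{G}(\C)$ acting algebraically, and which is a subalgebra, is automatically algebraic: indeed $\mathfrak{m}$ is then the Lie algebra of the group generated by $S$ and the one-parameter unipotent or semisimple subgroups $\exp(\C v)$ for $v$ ranging over highest weight vectors, and this group is Zariski-closed because it is an almost-direct product of $S$ with the algebraic subgroup generated by those one-parameter subgroups (each $\overline{\exp(\C v)}$ is algebraic, and the group they generate with $S$ is algebraic by Chevalley's theorem on the closure of a group generated by algebraic subgroups). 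This realizes $G^{\varphi}_{\C}$ as $\overline{\langle S, \exp(\mathfrak{g}^{\varphi}\otimes\C)\rangle}$, a Zariski-closed subgroup, and since its Lie algebra contains $\mathfrak{g}^{\varphi}\otimes_{\R}\C$ and is contained in it (the generators all lie in $\mathfrak{g}^{\varphi}\otimes\C$), it equals $\mathfrak{g}^{\varphi}\otimes_{\R}\C$, so $G^{\varphi}_{\C}$ is exactly that Zariski-closed subgroup.

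Finally, to see that $G^{\varphi}_{\C}$ is \emph{defined over} $\R$, I would use that $\mathfrak{g}^{\varphi}$ itself is a real Lie subalgebra of $\mathfrak{g}$ — by construction it is generated by real subspaces ($d\varphi(\mathfrak{spin}(n,1))$ and the real spherical-harmonics submodules of the real module $\mathfrak{g}$) — so $\mathfrak{g}^{\varphi}\otimes_{\R}\C$ is the complexification of a real form, hence is stable under the complex conjugation $\sigma$ on $\mathfrak{g}_{\C}$ associated to the real structure of $\mathbf{G}$. Since the Lie algebra of $G^{\varphi}_{\C}$ is $\sigma$-stable and $G^{\varphi}_{\C}$ is Zariski-connected, $\sigma(G^{\varphi}_{\C})=G^{\varphi}_{\C}$, which (by the Galois-descent criterion for algebraic subgroups, cf.\ the appendix) means $G^{\varphi}_{\C}$ is defined over $\R$. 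The main obstacle I anticipate is the algebraicity step in the middle paragraph: verifying that the Lie subalgebra generated by a semisimple-module-theoretic collection of subspaces is actually algebraic. The cleanest route is probably to avoid the ad hoc generation argument and instead appeal to the structural fact that $\mathfrak{g}^{\varphi}$ is reductive (it is generated by $\mathfrak{spin}(n,1)$-submodules under a reductive group action, and one checks its radical is trivial or central), and that reductive Lie subalgebras of $\mathfrak{g}_{\C}$ normalized by a Zariski-closed reductive subgroup are algebraic; alternatively one uses that each generating piece lies in the algebraic hull and the algebraic hull of a Lie subalgebra is itself a Lie subalgebra, then shows the algebraic hull adds nothing because $S$ already acts with the predicted isotypic components. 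I would present whichever of these is shortest given the conventions already fixed in Appendix~\ref{section:algebraic_group}.
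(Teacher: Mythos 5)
There are two genuine gaps in your argument, both in the middle ``algebraicity'' step that you yourself flag as the main obstacle. First, the lemma is stated for an arbitrary Zariski-connected real algebraic group $G$, and your fallback claim that $\mathfrak{g}^{\varphi}$ is reductive is false in general: spherical-harmonics submodules of $\mathfrak{g}$ can lie inside the nilpotent radical $\mathfrak{u}$, so $\mathfrak{g}^{\varphi}$ acquires a nontrivial nilradical. The paper deals with this by fixing a Levi decomposition $\mathbf{G}=\mathbf{S}\cdot\mathbf{U}$ with $\varphi(Spin(n,1))\subset S$, writing $\mathfrak{g}^{\varphi}=\mathfrak{s}^{\varphi}+\mathfrak{u}^{\varphi}$, and disposing of the unipotent part by the fact that $\exp\colon\mathfrak{u}\otimes_{\R}\C\to\mathbf{U}(\C)$ is an isomorphism of algebraic varieties, so \emph{every} Lie subalgebra of $\mathfrak{u}\otimes_{\R}\C$ exponentiates to a Zariski-closed subgroup. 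Your sketch has no mechanism for this part.

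Second, your generation argument is circular where it matters most. The Zariski-closure $\overline{\exp(\C v)}$ has Lie algebra $\C v$ only when $v$ is nilpotent; for a semisimple $v$ it is the smallest algebraic torus containing the one-parameter subgroup, which can be strictly larger. Consequently the group generated by $S$ and the $\exp(\C v)$ may have Lie algebra strictly larger than $\mathfrak{m}$, and showing it does not requires already knowing that $\mathfrak{m}$ is algebraic. The danger is concentrated exactly in the abelian (central) part of $\mathfrak{s}^{\varphi}$, which is the classical source of non-algebraic Lie subalgebras, and neither of your alternative routes addresses it. The paper's resolution is structural: a Cartan involution preserving $\varphi(Spin(n,1))$ shows $\mathfrak{s}^{\varphi}$ is reductive; its semisimple part exponentiates to a Zariski-closed subgroup by the general fact that analytic subgroups with semisimple Lie algebra are algebraic (Lemma~\ref{lemma:analytic->algebraic}); and, crucially, its center is realized as the Lie algebra of an \emph{explicitly algebraic} group $\mathbf{Z}$, namely the identity component of the centralizer in $\mathbf{S}$ of $d\varphi(\mathfrak{spin}(n,1))$ and of all spherical-harmonics submodules. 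Without an analogue of this last step your proof does not close. Your final paragraph on definedness over $\R$ (conjugation-stability of a connected group with $\sigma$-stable Lie algebra) matches the paper and is fine.
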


Since $G^{\varphi}_{\C}$ is connected in the usual topology, 
it is also connected in the Zariski-topology.

\begin{definition}[Real algebraic group $G^{\varphi}$]
    \label{def:g-phi}
    Let $\mathbf{G}^{\varphi}$ be the Zariski-connected real algebraic group whose set of $\mathbb{C}$-points is the Zariski-closed subset $G^{\varphi}_{\mathbb{C}}$ given in Lemma~\ref{lemma:g-phi-algebraic}.  
    Furthermore, define $G^{\varphi} := \mathbf{G}^{\varphi}(\mathbb{R})$.
\end{definition}

\begin{remark}
   If $G$ is reductive, then $G^\varphi$ is also reductive. 
\end{remark}

Here is the main result of this section.
 
\begin{theorem}
\label{theorem:bending}
Let $G$ be a Zariski-connected real algebraic group,  
and $\varphi\colon Spin(n,1)\rightarrow G$ a homomorphism, where $n\geq 2$.
\begin{enumerate}[label=(\arabic*)]
    \item 
    \label{item:bending-realize-Gphi}
    There exist a torsion-free cocompact discrete subgroup $\Gamma$ of $Spin(n,1)$
and a small deformation $\varphi'$ of 
$\varphi|_{\Gamma}$ in $\Hom(\Gamma,G)$ such that the Zariski-closure of 
$\varphi'(\Gamma)$ coincides with $G^{\varphi}$, 
where 
$G^{\varphi}$ is the real algebraic group, as given in Definition~\ref{def:g-phi}.

\item 
\label{item:discrete-and-faithful}
If $G$ is reductive and if $\varphi\colon Spin(n,1)\rightarrow G$ is non-trivial,
then we can choose $\varphi'\in \Hom(\Gamma,G)$ as in \ref{item:bending-realize-Gphi} such that
$\varphi'$ is discrete and faithful.

\item 
\label{item:maximal-zariski-closure}
Assume $n\geq 3$.
For any torsion-free cocompact discrete subgroup $\Gamma$ of $Spin(n,1)$, 
there exists a neighborhood $U$ of $\varphi|_{\Gamma}$ 
in $\Hom(\Gamma,G)$ 
such that  $\varphi'(\Gamma) \subset G^{\varphi}$ up to $G$-conjugacy 
for any $\varphi'\in U$. 
\end{enumerate}
\end{theorem}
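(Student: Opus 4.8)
\textbf{Proof proposal for Theorem~\ref{theorem:bending}.}

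The plan is to treat the three parts in a natural order, since \ref{item:bending-realize-Gphi} contains the substance and \ref{item:discrete-and-faithful}, \ref{item:maximal-zariski-closure} are refinements and a converse, respectively. For \ref{item:bending-realize-Gphi}, I would proceed by a bending construction in the spirit of Johnson--Millson~\cite{JoMi84}, iterated in a way that is controlled by the Clifford-algebra description of the $\mathfrak{spin}(n,1)$-module structure on $\mathfrak g$. First I would choose a cocompact arithmetic lattice $\Gamma$ of $Spin(n,1)$ that contains, for a suitable rational form, an embedded totally geodesic hypersurface corresponding to a $Spin(n-1,1)$-subgroup, so that $\Gamma$ decomposes (up to finite index, after passing to a torsion-free congruence subgroup via Appendix~\ref{section:torsion-arithmetic}) as an amalgam or HNN extension along $\Gamma_0 := \Gamma \cap Spin(n-1,1)$ in the sense of Appendix~\ref{section:hnn-extension}. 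A bending deformation of $\varphi|_\Gamma$ along $\Gamma_0$ is then parametrized by a one-parameter subgroup of $\mathfrak z_{\mathfrak g}(d\varphi(\mathfrak{spin}(n-1,1)))$; the van Kampen/Mayer--Vietoris description of $H^1(\Gamma,\mathfrak g)$ shows that the resulting infinitesimal deformations fill out a complement to $d\varphi(\mathfrak{spin}(n,1))$ inside (a copy of) this centralizer, so a single bending already moves the Zariski-closure up to the algebraic group generated by $\varphi(Spin(n,1))$ and that centralizer, i.e. the group with Lie algebra the bracket-closure of \eqref{eq:g-phi} — but only on the first iteration. The key point is that $\mathfrak g^\varphi$ is defined as the \emph{Lie subalgebra generated} by $d\varphi(\mathfrak{spin}(n,1))$ and \emph{all} $\mathfrak{spin}(n,1)$-submodules isomorphic to spherical harmonics modules, so I must iterate: having enlarged the Zariski-closure to some $\mathbf G_1 \subsetneq \mathbf G^\varphi$, I identify a fresh spherical-harmonics submodule of $\mathfrak g$ not yet contained in $\mathfrak g_1$, observe (using Fact~\ref{fact:raghunathan} and Remark~\ref{remark:spherical_harmonics}) that such submodules are exactly the ones carrying nonzero $H^1(\Gamma_0',\cdot)$ for an appropriate sub-hypersurface group, and bend again along a nested totally geodesic hypersurface. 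Here I would need $\Gamma$ to contain a suitable flag of nested arithmetic totally geodesic $Spin(n-k,1)$-submanifolds, which can be arranged for a well-chosen arithmetic lattice; each bending is a genuine deformation (not killed by an obstruction) because the relevant cup product $[c,c] \in H^2$ vanishes for these bending classes, the obstruction being supported on the hypersurface where the cocycle is a coboundary. After finitely many steps the Zariski-closure stabilizes, and by construction its Lie algebra contains $d\varphi(\mathfrak{spin}(n,1))$ together with every spherical-harmonics submodule of $\mathfrak g$, hence equals $\mathfrak g^\varphi$; then invoke Lemma~\ref{lemma:g-phi-algebraic} and Definition~\ref{def:g-phi} to conclude the closure is exactly $\mathbf G^\varphi$. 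One must also keep the deformation \emph{small}: since there are only finitely many iterations, choosing each bending parameter small enough keeps the final $\varphi'$ in any prescribed neighborhood of $\varphi|_\Gamma$, using that $\Gamma$ is finitely generated so $\Hom(\Gamma,G)$ has the topology of pointwise convergence on a finite generating set.

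For \ref{item:discrete-and-faithful}, when $G$ is reductive and $\varphi$ nontrivial, $d\varphi$ is injective on $\mathfrak{spin}(n,1)$ (as the latter is simple), so $\varphi$ itself is faithful and discrete on a cocompact lattice; I would then note that discreteness and faithfulness are open conditions on $\mathcal R(\Gamma,G)$ in the relevant sense — more precisely, by Weil's rigidity-type argument the property of being a discrete faithful representation of a cocompact lattice into a reductive group is stable under small deformation (this is where reductivity and the Goldman--Millson closedness of $\mathcal R(\Gamma,G)$ enter) — so shrinking the neighborhood in \ref{item:bending-realize-Gphi} keeps $\varphi'$ discrete and faithful while not affecting the Zariski-closure, which was the generic (hence persisting under further shrinking toward the constructed point) value.

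For \ref{item:maximal-zariski-closure}, assuming $n \ge 3$, the point is an upper bound: any $\varphi'$ close to $\varphi|_\Gamma$ has $\varphi'(\Gamma)$ contained, up to $G$-conjugacy, in $G^\varphi$. I would restrict $\varphi'$ to the hypersurface subgroup $\Gamma_0 \subset \Gamma$ corresponding to $Spin(n-1,1)$; by Raghunathan's vanishing theorem (Fact~\ref{fact:raghunathan}) applied to $Spin(n-1,1)$ with $n-1 \ge 2$ — here one uses $n \ge 3$ — together with Klingler's rigidity (Fact~\ref{fact:klingler}, via Remark~\ref{remark:klingler}), any small deformation of $\varphi|_{\Gamma_0}$ is conjugate into $\varphi(Spin(n-1,1)) \cdot Z_G(\varphi(Spin(n-1,1)))$; the argument then bootstraps up the tower, at each stage controlling how the deformed image of the next-larger subgroup $Spin(n-k+1,1)$ can differ from $\varphi$ of it, the only allowed new directions being along spherical-harmonics submodules inside the successive centralizers, which are by definition absorbed into $\mathfrak g^\varphi$. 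Concatenating these conjugations (all within $G$, and applied to nested subgroups whose union generates $\Gamma$) yields a single $g \in G$ with $g\varphi'(\Gamma)g^{-1} \subset G^\varphi$. The main obstacle I anticipate is the bookkeeping in \ref{item:bending-realize-Gphi}: organizing the iterated bendings so that the Zariski-closure genuinely strictly increases at each stage until it reaches $\mathbf G^\varphi$ and no further, verifying at each stage that the relevant $H^1$ is nonzero (so the bending is nontrivial) while the quadratic obstruction vanishes (so it integrates), and doing this uniformly in the Clifford-algebra language so that the construction applies to an arbitrary homomorphism $\varphi$ into an arbitrary real algebraic group rather than just the classical embeddings treated in \cite{JoMi84, Kassel12}. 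The compatibility of the nested totally geodesic hypersurfaces with a single fixed arithmetic structure on $\Gamma$ is the technical heart of that bookkeeping.
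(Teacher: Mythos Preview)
Your proposal for \ref{item:bending-realize-Gphi} has a genuine structural gap. You propose to iterate the bending along a \emph{nested flag} of $Spin(n-k,1)$-submanifolds of increasing codimension. But the centralizer $\mathfrak z_{\mathfrak g}(d\varphi(\mathfrak{spin}(n-k,1)))$ for $k\ge 2$ contains invariant vectors coming from $\mathfrak{spin}(n,1)$-modules that are \emph{not} spherical harmonics (any irreducible $V$ with a nonzero $\mathfrak{spin}(n-k,1)$-fixed vector contributes, and for $k\ge 2$ this is strictly more than the spherical harmonics), so bending along deeper hypersurfaces would push the Zariski-closure \emph{outside} $G^{\varphi}$, destroying the upper bound you need. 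Conversely, a single codimension-one bend only gives you one vector per spherical-harmonics isotypic component (the $\mathfrak{spin}(n-1,1)$-fixed line in each irreducible), so if some $V_i$ occurs in $\mathfrak g$ with multiplicity $m>1$ you cannot reach all of $\mathfrak g(V_i)$ this way. The paper's construction addresses exactly this: it uses $k$ \emph{disjoint, parallel} codimension-one hypersurfaces $N_1,\dots,N_k$ (Condition~\ref{condition-ck}, produced by an arithmetic construction in the spirit of Millson but reformulated for $Spin$ via Clifford algebras), expresses $\Gamma$ as an iterated HNN extension of length $k$, and bends simultaneously along $k$ carefully chosen vectors $v_1,\dots,v_k\in\mathfrak z_{\mathfrak g}(d\varphi(\alpha_i\mathfrak{spin}(n-1,1)\alpha_i^{-1}))$ designed so that their projections to each isotypic component $\mathfrak g(V_i)$ together generate it (Proposition~\ref{prop:zariski-closure-gphi}). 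The paper explicitly notes (Remark~\ref{remark:difference-Kassel}) that the ``tower of reductive subalgebras'' picture underlying your iteration is \emph{not} available for general $\varphi$, which is why this more delicate simultaneous bending is needed.

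Your proposal for \ref{item:maximal-zariski-closure} is also off: Klingler's theorem (Fact~\ref{fact:klingler}) concerns lattices in $SU(n,1)$, not $Spin(n,1)$, and is not applicable here. The paper's argument is much more direct and does not restrict to the hypersurface subgroup at all: by the very definition of $\mathfrak g^{\varphi}$, the quotient $\mathfrak g/\mathfrak g^{\varphi}$ contains no spherical-harmonics summand, and since $n\ge 3$ rules out the $\mathfrak{su}(m,1)$ case, Raghunathan's vanishing (Fact~\ref{fact:raghunathan}) gives $H^1(\Gamma,\mathfrak g/\mathfrak g^{\varphi})=0$ for every torsion-free cocompact $\Gamma\subset Spin(n,1)$; the general upper-bound lemma (Proposition~\ref{prop:local-rigid-g/l}) then immediately forces any nearby $\varphi'$ into $G^{\varphi}$ up to conjugacy. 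Your treatment of \ref{item:discrete-and-faithful} is essentially the same as the paper's.
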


\begin{remark}
Analogous results to Theorem~\ref{theorem:bending} hold as well
if we replace $Spin(n,1)$ with $SO(n,1)$.
The results are likewise new even in that context, and can be directly deduced
from Theorem~\ref{theorem:bending}
via the double covering map
$Spin(n,1)\rightarrow SO_{0}(n,1)$. 
\end{remark}

In general,  we have
\[
d\varphi(\mathfrak{spin}(n,1))+
\mathfrak{z}_{\mathfrak g}({d\varphi(\mathfrak{spin}(n,1))})\subset
\mathfrak{g}^\varphi\subset \mathfrak{g}.
\]
As special cases of Theorem~\ref{theorem:bending}, corresponding to the minimal case $\mathfrak{g}^\varphi = d\varphi\left(\mathfrak{spin}(n,1) + \mathfrak{z}_{\mathfrak{g}}(d\varphi(\mathfrak{spin}(n,1)))\right)$ and 
the maximal case $\mathfrak{g}^\varphi = \mathfrak{g}$, we obtain Corollary~\ref{cor:deform_to_centralizer} and Corollary~\ref{cor:deform_Zariski_dense},
respectively.

We begin with the particular case where $\mathfrak{g}^\varphi$ is minimal.
\begin{corollary}
    \label{cor:deform_to_centralizer}
Let $G$ be a Zariski-connected real algebraic group and let $n \ge 3$. 
 Suppose that $\varphi\colon Spin(n,1)\rightarrow G$ is a non-trivial homomorphism. Then the following two conditions are equivalent:
\begin{enumerate}[label=(\roman*)]
\item $\mathfrak{z}_{\mathfrak g}({d\varphi(\mathfrak{spin}(n,1))})=
\mathfrak{z}_{\mathfrak g}({d\varphi(\mathfrak{spin}(n-1,1))})$.
\item
For any cocompact discrete subgroup $\Gamma$ of $Spin(n,1)$, any
morphism $\varphi' \colon \Gamma \to G$ sufficiently close to $\varphi|_{\Gamma}$ 
is conjugate to a representation of the form $\varphi \cdot \chi$,
where $\chi\colon \Gamma \to G$ is a deformation
of the trivial representation with image in the centralizer of $\varphi(Spin(n,1))$ in $G$.
\end{enumerate}
\end{corollary}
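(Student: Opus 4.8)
The plan is to first reduce the equivalence to a statement about the Lie algebra $\mathfrak{g}^{\varphi}$ alone, and then to feed that into Theorem~\ref{theorem:bending}.

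\smallskip
\emph{Step 1 (reformulating (i)).} Write $\mathfrak{l}:=d\varphi(\mathfrak{spin}(n,1))$, $\mathfrak{z}:=\mathfrak{z}_{\mathfrak g}(d\varphi(\mathfrak{spin}(n,1)))$ and $\mathfrak{z}':=\mathfrak{z}_{\mathfrak g}(d\varphi(\mathfrak{spin}(n-1,1)))$, so that $\mathfrak{z}\subset\mathfrak{z}'$ always and (i) reads $\mathfrak{z}=\mathfrak{z}'$. Since $\varphi$ is non-trivial and $\mathfrak{spin}(n,1)$ is simple, $d\varphi$ is injective, so $\mathfrak{z}_{\mathfrak l}(d\varphi(\mathfrak{spin}(n-1,1)))\cong\mathfrak{z}_{\mathfrak{spin}(n,1)}(\mathfrak{spin}(n-1,1))$, and this vanishes for $n\ge3$ because $\mathfrak{so}(n,1)\cong\mathfrak{so}(n-1,1)\oplus\R^{n}$ as an $\mathfrak{so}(n-1,1)$-module, with $\R^{n}$ the non-trivial irreducible standard module. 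I claim (i) is equivalent to $\mathfrak{g}^{\varphi}=\mathfrak{l}+\mathfrak{z}$ (the minimal value): if $\mathfrak{z}=\mathfrak{z}'$ then $\mathfrak{l}+\mathfrak{z}'=\mathfrak{l}+\mathfrak{z}$, which is already a subalgebra since $[\mathfrak{l},\mathfrak{z}]=0$, so $\mathfrak{g}^{\varphi}=\mathfrak{l}+\mathfrak{z}$; conversely, if $\mathfrak{z}\subsetneq\mathfrak{z}'$, then using $\mathfrak{z}_{\mathfrak l}(d\varphi(\mathfrak{spin}(n-1,1)))=0$ one checks $\mathfrak{l}+\mathfrak{z}'\supsetneq\mathfrak{l}+\mathfrak{z}$, so $\dim\mathfrak{g}^{\varphi}\ge\dim(\mathfrak{l}+\mathfrak{z}')>\dim(\mathfrak{l}+\mathfrak{z})$. (For $n=2$ this centralizer is $\mathfrak{so}(1,1)\ne0$, which is exactly where the hypothesis $n\ge3$ is used.)

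\smallskip
\emph{Step 2 ((i)$\Rightarrow$(ii)).} Assume $\mathfrak{g}^{\varphi}=\mathfrak{l}\oplus\mathfrak{z}$, a direct sum of ideals with $\mathfrak{l}\cong\mathfrak{so}(n,1)$ simple. Then $\mathbf{G}^{\varphi}$ is the almost-direct product $\mathbf{M}\cdot\mathbf{Z}^{\circ}$, where $\mathbf{M}=\varphi(\mathbf{Spin}(n,1))$ and $\mathbf{Z}^{\circ}$ is the Zariski-identity component of $Z_{\mathbf{G}}(\mathbf{M})$; the two factors commute and intersect in a finite central subgroup. Fix a torsion-free cocompact discrete subgroup $\Gamma$ of $Spin(n,1)$. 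By Theorem~\ref{theorem:bending}~\ref{item:maximal-zariski-closure}, after conjugating $\varphi'$ by an element of $G$ close to $e$ we may assume $\varphi'(\Gamma)\subset G^{\varphi}$, still with $\varphi'$ close to $\varphi|_{\Gamma}$. Composing with the quotient $q\colon\mathbf{G}^{\varphi}\to\mathbf{G}^{\varphi}/\mathbf{Z}^{\circ}=:\overline{\mathbf{M}}$, the homomorphism $q\circ\varphi'$ is a small deformation of $q\circ\varphi|_{\Gamma}$, which identifies $\Gamma$ with a cocompact lattice in a group locally isomorphic to $SO_{0}(n,1)$ acting on $\mathfrak{so}(n,1)$ by the adjoint representation. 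As $\mathfrak{so}(n,1)$ has no non-zero $\mathfrak{so}(n-1,1)$-invariant vector, $H^{1}(\Gamma,\mathfrak{so}(n,1))=0$ by Fact~\ref{fact:raghunathan} (together with Remark~\ref{remark:spherical_harmonics}), so this representation is infinitesimally, hence locally (Weil), rigid. Therefore $q\circ\varphi'$ is conjugate to $q\circ\varphi|_{\Gamma}$ by a small element of $\overline{\mathbf{M}}(\R)$, which lifts — along the submersion $\mathbf{G}^{\varphi}(\R)\to\overline{\mathbf{M}}(\R)$ with differential the projection $\mathfrak{l}\oplus\mathfrak{z}\to\mathfrak{l}$ — to a small element of $\mathbf{G}^{\varphi}(\R)$; conjugating once more, we may assume $q\circ\varphi'=q\circ\varphi|_{\Gamma}$. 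Then $\chi(\gamma):=\varphi(\gamma)^{-1}\varphi'(\gamma)$ lies in $\mathbf{Z}^{\circ}(\R)=\ker\bigl(q\colon\mathbf{G}^{\varphi}(\R)\to\overline{\mathbf{M}}(\R)\bigr)\subset Z_{G}(\varphi(Spin(n,1)))$; because $\mathbf{Z}^{\circ}(\R)$ commutes with $\varphi(\Gamma)$, the map $\chi$ is a homomorphism, and it is a deformation of the trivial representation since $\varphi'$ is close to $\varphi|_{\Gamma}$. Hence $\varphi'$ is conjugate to $\varphi\cdot\chi$, which is (ii).

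\smallskip
\emph{Step 3 ((ii)$\Rightarrow$(i)).} I argue by contraposition. If (i) fails, then $\dim\mathfrak{g}^{\varphi}>\dim(\mathfrak{l}+\mathfrak{z})$ by Step~1. By Theorem~\ref{theorem:bending}~\ref{item:bending-realize-Gphi} there exist a cocompact discrete subgroup $\Gamma$ of $Spin(n,1)$ and a deformation $\varphi'$ of $\varphi|_{\Gamma}$ — arbitrarily close to $\varphi|_{\Gamma}$, by the bending construction underlying that theorem — whose image has Zariski-closure $G^{\varphi}$. But any representation of the form $\varphi\cdot\chi$ has image inside $\varphi(Spin(n,1))\cdot Z_{G}(\varphi(Spin(n,1)))$, an algebraic subgroup with Lie algebra $\mathfrak{l}+\mathfrak{z}$, so the Zariski-closure of $(\varphi\cdot\chi)(\Gamma)$ has dimension at most $\dim(\mathfrak{l}+\mathfrak{z})<\dim G^{\varphi}$. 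Since conjugation preserves the dimension of the Zariski-closure, $\varphi'$ is not conjugate to any $\varphi\cdot\chi$, so (ii) fails. The main obstacle is Step~2: one must keep every conjugation small, so that the resulting $\chi$ is genuinely a deformation of the trivial representation, and one must deal carefully with the isogeny and real-point subtleties hidden in the identity $\mathbf{G}^{\varphi}=\mathbf{M}\cdot\mathbf{Z}^{\circ}$; all the rest is routine once Theorem~\ref{theorem:bending}~\ref{item:maximal-zariski-closure} and the local rigidity of cocompact lattices of $SO(n,1)$ on the adjoint module are in hand.
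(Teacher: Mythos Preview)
Your proof is correct and shares the paper's core reduction: condition (i) is equivalent to $\mathfrak{g}^{\varphi}=d\varphi(\mathfrak{spin}(n,1))+\mathfrak{z}_{\mathfrak g}(d\varphi(\mathfrak{spin}(n,1)))$, after which the corollary is meant to follow from Theorem~\ref{theorem:bending}. The paper's proof is a single sentence invoking only Theorem~\ref{theorem:bending}\ref{item:maximal-zariski-closure}; you spell out what that sentence leaves implicit. Theorem~\ref{theorem:bending}\ref{item:maximal-zariski-closure} by itself only yields $\varphi'(\Gamma)\subset G^{\varphi}$ up to conjugacy, which is strictly weaker than the factorization $\varphi'=\varphi\cdot\chi$ required in (ii), and your Step~2 supplies the missing ingredient: local rigidity of the lattice in the semisimple quotient $G^{\varphi}/\mathbf{Z}^{\circ}$ (via Weil and Fact~\ref{fact:raghunathan}) to pin down the $\varphi$-component, followed by lifting the small conjugating element along the submersion on real points. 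You also make explicit the appeal to Theorem~\ref{theorem:bending}\ref{item:bending-realize-Gphi} for the contrapositive direction, which the paper's proof does not mention. So your argument is the same route, but genuinely more complete.

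One minor point: in Step~2 you fix a \emph{torsion-free} cocompact $\Gamma$, whereas (ii) is stated for all cocompact $\Gamma$. Since Theorem~\ref{theorem:bending}\ref{item:maximal-zariski-closure} is itself only stated for torsion-free $\Gamma$, this gap is inherited from the paper; the general case follows routinely by passing to a torsion-free subgroup of finite index (Selberg's lemma) and using transfer for $H^{1}$ over $\R$.
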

\begin{remark}
Suppose that $\rho\colon L \to G$ is a homomorphism.
In the case where $L=SU(n,1)$,    Klingler~\cite[Thm.1.3.7]{Klingler11} established a sufficient condition under which every torsion-free cocompact discrete subgroup $\Gamma \subset L$ admits no nontrivial deformations beyond the centralizer of $L$. In contrast, Corollary~\ref{cor:deform_to_centralizer} considers the analogous problem for $L=Spin(n,1)$, and provides a necessary and sufficient condition formulated in terms of finite-dimensional representations of Lie algebras.
\end{remark}
\begin{proof}[Proof of Corollary~\ref{cor:deform_to_centralizer}]
    
By Theorem~\ref{theorem:bending}~(3), it suffices to show that the condition (i) is equivalent to $\mathfrak{g}^\varphi=d\varphi(\mathfrak{spin}(n,1)) + \mathfrak{z}_{\mathfrak{g}}(d\varphi(\mathfrak{spin}(n,1)))$,
but this is clear from the alternative definition of $\mathfrak{g}^\varphi$, 
as stated in Remark~
\ref{definition:lie-g-phi}.
\end{proof}

By contrast, the following is an immediate  consequence of Theorem~\ref{theorem:bending} ~\ref{item:bending-realize-Gphi} in the particular case where $\mathfrak{g}^\varphi$ is maximal.
\begin{corollary}
\label{cor:deform_Zariski_dense}
    Let $G$ be a Zariski-connected real algebraic group. 
    If $\mathfrak{g}^{\varphi}=\mathfrak{g}$ holds for 
    a homomorphism $\varphi\colon Spin(n,1)\rightarrow G$, then $\varphi(\Gamma)$
    can be deformed into a Zariski-dense subgroup. 
    The converse also
    holds, provided that $n\geq 3$. 
\end{corollary}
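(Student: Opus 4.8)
The plan is to read Corollary~\ref{cor:deform_Zariski_dense} as the two extreme instances of Theorem~\ref{theorem:bending} in which the intermediate group $G^{\varphi}$ is replaced by $G$ itself, so that essentially nothing beyond Theorem~\ref{theorem:bending} is needed. The first step I would carry out is the elementary reduction $\mathfrak{g}^{\varphi}=\mathfrak{g}\iff G^{\varphi}=G$. Indeed, by Definition~\ref{def:g-phi} and Lemma~\ref{lemma:g-phi-algebraic}, $G^{\varphi}_{\C}$ is the analytic subgroup of $G_{\C}$ with Lie algebra $\mathfrak{g}^{\varphi}\otimes_{\R}\C$; since $\mathbf{G}$ is Zariski-connected, $G_{\C}$ is connected in the usual topology, so if $\mathfrak{g}^{\varphi}=\mathfrak{g}$ then $G^{\varphi}_{\C}=G_{\C}$, whence $\mathbf{G}^{\varphi}=\mathbf{G}$ and $G^{\varphi}=G$; the reverse implication is immediate by passing to Lie algebras.

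For the forward direction I would assume $\mathfrak{g}^{\varphi}=\mathfrak{g}$, invoke the first step to get $G^{\varphi}=G$, and then quote Theorem~\ref{theorem:bending}~\ref{item:bending-realize-Gphi}: it produces a torsion-free cocompact discrete subgroup $\Gamma$ of $Spin(n,1)$ and a small deformation $\varphi'$ of $\varphi|_{\Gamma}$ whose image has Zariski-closure $G^{\varphi}=G$, i.e.\ $\varphi'(\Gamma)$ is Zariski-dense in $G$. This uses only $n\ge 2$.

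For the converse (with $n\ge 3$) I would argue by contraposition. Suppose $\mathfrak{g}^{\varphi}\neq\mathfrak{g}$, hence $G^{\varphi}$ is a proper Zariski-closed subgroup of $G$ by the first step. Fix any torsion-free cocompact discrete subgroup $\Gamma$ of $Spin(n,1)$ and apply Theorem~\ref{theorem:bending}~\ref{item:maximal-zariski-closure}: there is a neighborhood $U$ of $\varphi|_{\Gamma}$ in $\Hom(\Gamma,G)$ such that every $\varphi'\in U$ satisfies $\varphi'(\Gamma)\subset g\,G^{\varphi}g^{-1}$ for some $g\in G$. As $g\,G^{\varphi}g^{-1}$ is Zariski-closed and proper in $G$, no $\varphi'\in U$ can have Zariski-dense image; since any continuous deformation $\varphi_{t}$ of $\varphi|_{\Gamma}$ lies in $U$ for small $t$, it follows that $\varphi(\Gamma)$ cannot be deformed into a Zariski-dense subgroup.

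There is essentially no genuine obstacle here: the whole content is already packaged in Theorem~\ref{theorem:bending}. The only point requiring a little care is the interpretation of ``can be deformed into a Zariski-dense subgroup'': one must take it to mean the existence of deformations arbitrarily close to $\varphi|_{\Gamma}$ (equivalently, a continuous path issuing from $\varphi|_{\Gamma}$), so that the Zariski-dense deformation can be assumed to lie in the neighborhood $U$ supplied by Theorem~\ref{theorem:bending}~\ref{item:maximal-zariski-closure}. With this reading both implications follow formally, using only that $G^{\varphi}$ and its $G$-conjugates are Zariski-closed in $G$.
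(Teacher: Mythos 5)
Your proposal is correct and follows exactly the route the paper intends: the forward direction is Theorem~\ref{theorem:bending}~\ref{item:bending-realize-Gphi} once one notes $\mathfrak{g}^{\varphi}=\mathfrak{g}\Rightarrow G^{\varphi}=G$ (the same Zariski-connectedness argument the paper spells out for (i)$\Leftrightarrow$(ii) in Definition-Lemma~\ref{definition-lemma:even}), and the converse for $n\ge 3$ is the contrapositive of Theorem~\ref{theorem:bending}~\ref{item:maximal-zariski-closure}. The paper presents the corollary as an immediate consequence without further proof, and your write-up supplies precisely the missing routine details.
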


We end this section with the proof of Theorem~\ref{theorem:bending}~\ref{item:discrete-and-faithful} and~\ref{item:maximal-zariski-closure}. 
The proof of~\ref{item:bending-realize-Gphi} is differed to Section~\ref{section:deform-spin-lattice}.
\begin{proof}[Proof of Theorem~\ref{theorem:bending}~\ref{item:discrete-and-faithful}]
If $\varphi\colon Spin(n,1)\rightarrow G$ is non-trivial, then 
$\varphi|_{\Gamma}$ is discrete.
Since $\Gamma$ is torsion-free, $\varphi|_{\Gamma}$ is faithful.
Therefore, the claim that
a small deformation of $\varphi|_{\Gamma}$ preserves faithfulness and discreteness
is a special case of the stability of discontinuous groups,
as will be explained in Section~\ref{section:preliminary-deform-discontinuous-group}.
(We can apply Fact~\ref{fact:stability-for-properness} by Kassel to $X=G/\{e\}$.) 
\end{proof}

\begin{remark}
 The proof of the stability of the discontinuous group for homogeneous spaces $X=G/H$ of reductive groups $G$, using a quantitative estimate, may be traced back to Kobayashi~\cite[Thm.~2.6]{Kobayashi98}.
 In the above proof of \ref{item:discrete-and-faithful} with $H=\{e\}$,
 one can apply Guichard~\cite[Thm.~2]{Guichard04} if $G$ is semisimple. 
\end{remark}

\begin{proof}[Proof of Theorem~\ref{theorem:bending}
~\ref{item:maximal-zariski-closure}]
We apply a general result concerning an upper bound for the local deformation of discrete subgroups, which will be presented in Proposition~\ref{prop:local-rigid-g/l} in Appendix~\ref{section:deform-upper-bound}.

Let $n \geq 3$, and we consider the $\mathfrak{spin}(n,1)$-module $\mathfrak{g}/\mathfrak{g}^{\varphi}$.  
By the definition of $\mathfrak{g}^{\varphi}$, this module does not contain any spherical harmonics representation as an irreducible component.  
Furthermore, since $n \geq 3$, the Lie group $Spin(n,1)$ is not locally isomorphic to $SU(m,1)$ for any $m \in \mathbb{N}_+$.  
Then, it follows from Raghunathan's vanishing theorem (Fact~\ref{fact:raghunathan}) that $H^{1}(\Gamma, \mathfrak{g}/\mathfrak{g}^{\varphi}) = 0$.
By applying Proposition~\ref{prop:local-rigid-g/l} to $L = G^{\varphi}$, we obtain the desired conclusion.
\end{proof}

\begin{remark}
\label{remark:Kim-Pansu}
The assumption $n\ge 3$ in Theorem~\ref{theorem:bending}~\ref{item:maximal-zariski-closure} is necessary and cannot be dropped for this upper bound result.
In fact, an analogous statement fails when $n=2$.
For instance, let $G = SL(3, \mathbb{R})$, and let $L=SL(2,\mathbb{R})$ be the standard subgroup.
Since $Spin(2,1)$ is isomorphic to $SL(2,\mathbb{R})$, we may consider the embedding  
    \[
    \varphi \colon Spin(2,1)\simeq SL(2,\R) \hookrightarrow SL(3, \mathbb{R}).
    \]
    Applying Kim--Pansu~\cite[Thm.~1]{KIMPAN15} to this setting,  
    one sees that a discrete surface subgroup of $SL(2, \mathbb{R})$ 
    of genus $\geq 128$ can be deformed into a Zariski-dense subgroup of $SL(3, \mathbb{R})$. 
    On the other hand, $G^{\varphi}$ is given by 
    \[
    G^{\varphi} = S(GL(2, \mathbb{R}) \times GL(1, \mathbb{R})),
    \]
    which is a proper subgroup of $G$.
\end{remark}

\subsection{Examples of \texorpdfstring{Theorem~\ref{theorem:bending}}{Theorem 3.8}}
\label{section:(G,Gamma)-bending-application}
In this section, we illustrate Theorem~\ref{theorem:bending}
with some simple examples.

Theorem~\ref{theorem:bending}, applied to the deformation of discontinuous groups for $X=G/H$, will be discussed in Theorem~\ref{theorem:Zariski-dense-discontinuous-spin-lattice}, 
which is used in the proof of the main theorems in Section~\ref{section:deformation_compact_CK} for the compact standard quotients $X_\Gamma$ as well as in the deformation theory of non-compact standard quotients $X_\Gamma$ in Section~\ref{section:deform-noncptCK}.

We begin with the case $n=2$. Theorem~\ref{theorem:bending} in the $n=2$ case and Corollary~\ref{cor:KOT} were recently proved in \cite{KannakaOkudaTojo24}, where it is noted that $Spin(2,1)$ is isomorphic to $SL(2,\mathbb{R})$.

Let $G$ be a Zariski-connected real reductive algebraic group and 
$\varphi\colon SL(2,\R)\rightarrow G$ a homomorphism. 
We define
\[
\sigma^{\varphi} := 
\varphi(\begin{pmatrix}
    -1 & 0 \\
    0 & -1
\end{pmatrix})
=
\exp(\pi\sqrt{-1}d\varphi(\begin{pmatrix}
    1 & 0 \\
    0 & -1
\end{pmatrix}))\in G.
    \]
Then it follows from \cite[Lem.~3.3]{KannakaOkudaTojo24} that  
$\mathfrak{g}^{\varphi}$ is the centralizer of $\sigma^{\varphi}$ in $\mathfrak{g}$ and that 
$G^{\varphi}$ is the identity component
of the centralizer of $\sigma^{\varphi}$ in $G$ in the Zariski-topology.

The following equivalence makes the condition $G=G^\varphi$ transparent:
\begin{deflem}
    \label{definition-lemma:even}
    Let $G$ be a Zariski-connected real reductive algebraic group and 
    $\varphi\colon SL(2,\R)\rightarrow G$ a homomorphism.
    The following four conditions on a homomorphism $\varphi\colon SL(2,\mathbb{R}) \rightarrow G$ are equivalent:
    \begin{enumerate}[label=(\roman*)]
            \item
        \label{item:grp-g-phi}
        $G^{\varphi}=G$; 
        
        \item 
        \label{item:alg-g-phi}    $\mathfrak{g}^{\varphi}=\mathfrak{g}$; 
        \item 
        \label{item:nilpotent-orbit-even}
        The complex nilpotent orbit in $\mathfrak{g}_{\C}:=\mathfrak{g}\otimes_{\R}\C$
        given by
        \begin{equation*}
        \Int(\mathfrak{g}_{\C})\cdot d\varphi(\begin{pmatrix}
        0 & 1 \\
        0 & 0
        \end{pmatrix})       
        \end{equation*}
         is even in the sense 
         that the weights in the weighted Dynkin diagram associated to this nilpotent orbit are even, see
        \cite[Chap.~3.8]{CoMc93}, for example;
        \item 
        \label{item:sl2-even}
        We regard $\mathfrak{g}$ as an $SL(2,\mathbb{R})$-module via $\operatorname{Ad} \circ \varphi$.  
        Then, every irreducible component of $\mathfrak{g}$ 
        has an odd dimension, or equivalently, its highest weight is even.
    \end{enumerate}
    We say that the homomorphism $\varphi$ is \emph{even} if it satisfies one of these equivalent conditions. 
\end{deflem}

\begin{proof}
    \ref{item:grp-g-phi}$\Leftrightarrow$\ref{item:alg-g-phi}: The implication \ref{item:grp-g-phi} $\Rightarrow$ \ref{item:alg-g-phi} is clear. 
    Conversely, assume $\mathfrak{g}^{\varphi}=\mathfrak{g}$.  
    The complex algebraic group $G_{\C}$ is Zariski-connected and,  
    by Lemma~\ref{lem:Zariski-connected}, also connected in the usual topology.  Now, recall from Definition~\ref{def:g-phi} that the analytic subgroup of $G_{\mathbb{C}}$ corresponding to $\mathfrak{g}^{\varphi}\otimes_{\R} \C$ is $G^{\varphi}_{\mathbb{C}}$.  
    Thus, we conclude that $G_{\mathbb{C}} = G^{\varphi}_{\mathbb{C}}$.  
    By taking real points of these two algebraic groups, it follows that $G = G^{\varphi}$.

    \ref{item:alg-g-phi}$\Leftrightarrow$\ref{item:sl2-even}: 
    For any irreducible representation $(\tau,V)$ of $SL(2,\mathbb{R})$,  
    the element  
    \[
\tau(\begin{pmatrix} -1 & 0 \\ 0 & -1 \end{pmatrix})
    \]  
    acts trivially on $V$ if and only if $V$ has odd dimension.  
    The equivalence \ref{item:alg-g-phi} $\Leftrightarrow$ \ref{item:sl2-even} follows immediately from this fact.

    \ref{item:nilpotent-orbit-even}$\Leftrightarrow$\ref{item:sl2-even}: This is an elementary representation theory of $\mathfrak{sl}(2, \R)$, see \cite[Lem.~3.8.7]{CoMc93}, for instance. 
\end{proof}
As a corollary to Theorem~\ref{theorem:bending}~\ref{item:bending-realize-Gphi}
applied to the $n=2$ case, 
we obtain the following: 

\begin{corollary}[{\cite[Cor.~1.7]{KannakaOkudaTojo24}}]
\label{cor:KOT}
Let $G$ be a Zariski-connected real reductive algebraic group and 
$\varphi\colon SL(2,\R)\rightarrow G$ an even homomorphism.
Then there exists a discrete surface subgroup $\Gamma$ of $SL(2,\R)$ such that
$\varphi(\Gamma)$ can be deformed into a Zariski-dense subgroup of $G$.
\end{corollary}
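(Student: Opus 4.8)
The plan is to deduce Corollary~\ref{cor:KOT} as a direct specialization of Theorem~\ref{theorem:bending}~\ref{item:bending-realize-Gphi} to the case $n=2$, using the identification $Spin(2,1)\simeq SL(2,\R)$ and the equivalence \ref{item:grp-g-phi}$\Leftrightarrow$\ref{item:sl2-even} of Definition-Lemma~\ref{definition-lemma:even}. First I would invoke the isomorphism $Spin(2,1)\simeq SL(2,\R)$ (an explicit $\R$-algebraic isomorphism), so that the homomorphism $\varphi\colon SL(2,\R)\to G$ is the same datum as a homomorphism $Spin(2,1)\to G$ in the sense of Lemma~\ref{lemma:morphism}. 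A torsion-free cocompact discrete subgroup of $Spin(2,1)$ is precisely a discrete surface subgroup of $SL(2,\R)$, so the output of Theorem~\ref{theorem:bending} is of the required type.

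Next I would verify that the hypothesis ``$\varphi$ is even'' is exactly what forces $\mathfrak{g}^{\varphi}=\mathfrak{g}$, hence $G^{\varphi}=G$. This is immediate from Definition-Lemma~\ref{definition-lemma:even}: evenness of $\varphi$ is one of the four equivalent conditions, and condition \ref{item:grp-g-phi} reads $G^{\varphi}=G$. (Alternatively, one can argue directly: writing $\mathfrak{g}$ as an $SL(2,\R)$-module via $\Ad\circ\varphi$, evenness says every irreducible summand has odd dimension, i.e.\ carries a nonzero $\mathfrak{so}(2)$-fixed vector; by Remark~\ref{remark:spherical_harmonics} these are exactly the spherical harmonics modules of $\mathfrak{spin}(2,1)$, so the Lie subalgebra they generate together with $d\varphi(\mathfrak{spin}(2,1))$ is all of $\mathfrak{g}$.)

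Finally, I would apply Theorem~\ref{theorem:bending}~\ref{item:bending-realize-Gphi} with $n=2$: it provides a torsion-free cocompact discrete subgroup $\Gamma$ of $Spin(2,1)\simeq SL(2,\R)$ and a small deformation $\varphi'$ of $\varphi|_{\Gamma}$ in $\Hom(\Gamma,G)$ whose Zariski-closure equals $G^{\varphi}=G$; that is, $\varphi(\Gamma)$ can be deformed into a Zariski-dense subgroup of $G$. Transporting $\Gamma$ back to $SL(2,\R)$ under the fixed isomorphism yields a discrete surface subgroup with the asserted property, which completes the proof.

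I do not expect any genuine obstacle here, since all the substantive work is already contained in Theorem~\ref{theorem:bending} and in the equivalences of Definition-Lemma~\ref{definition-lemma:even}; the only points requiring a word of care are the bookkeeping identification $Spin(2,1)\simeq SL(2,\R)$ (so that ``discrete surface subgroup'' and ``torsion-free cocompact discrete subgroup of $Spin(2,1)$'' coincide) and the translation of ``$\varphi$ even'' into ``$G^{\varphi}=G$'', both of which are already recorded above.
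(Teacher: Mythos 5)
Your proposal is correct and matches the paper's own derivation: the paper likewise obtains this corollary by applying Theorem~\ref{theorem:bending}~\ref{item:bending-realize-Gphi} in the case $n=2$ under the identification $Spin(2,1)\simeq SL(2,\R)$, with the hypothesis that $\varphi$ is even translated into $G^{\varphi}=G$ via the equivalences of Definition-Lemma~\ref{definition-lemma:even}. The bookkeeping points you flag (surface subgroups versus torsion-free cocompact subgroups of $Spin(2,1)$, and the spherical-harmonics reading of evenness) are exactly the right ones and are handled correctly.
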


\medskip

Next, we consider the case where $\varphi$ is the spin representation of $Spin(n,1)$ for general $n$. 
As an example of Theorem~\ref{theorem:bending}~\ref{item:bending-realize-Gphi}, we present classical groups that contain a Zariski-dense discrete subgroup isomorphic to a cocompact discrete subgroup of $Spin(n,1)$.

To be more precise, 
let $C(p,q)$ be the Clifford algebra 
associated with a real quadratic form of signature $(p,q)$, and let $G$ be
the group $G(p,q)$, introduced in \cite[Def.~4.3.1]{KobayashiYoshino05}.
We refer to Appendix~\ref{section:clifford-spin} for the notation of $G(p,q)$, and to Proposition~\ref{proposition:Kobayashi-Yoshino-G(p,q)} for the identifications of $G(p,q)$ with classical groups that exhibit certain periodicities of $p+q$ and $p-q$.

When $p\geq n$ and $q\geq 1$, there exists a natural injective homomorphism $\varphi\colon Spin(n,1)\rightarrow G(p,q)$. 
    \begin{theorem}
    \label{thm:Clifford-z.d}
    Let $p\geq n \ge 2$ and $q\geq 1$. We set $m=p+q$.  
    Assume that one of the following conditions holds:
 \begin{enumerate}[label=$( \arabic* )$] \item \label{item:Clifford-z.d-general} When $m \equiv i \mod 4$ (where $i = 1, 2, 3, 4$), it holds that $m - n \geq i + 2$.

\item \label{item:Clifford-z.d-exceptional} $m < 10$ and $n \leq 6$. \end{enumerate} 
Then, there exist a torsion-free, cocompact discrete subgroup $\Gamma$ of $Spin(n,1)$ and a small deformation $\varphi' \in \mathcal{R}(\Gamma, G(p,q))$ of the map $\varphi|_{\Gamma} \colon \Gamma \to G(p,q)$ such that the Zariski-closure of $\varphi'(\Gamma)$ coincides with the identity component of the group $G(p,q)$ in the Zariski-topology.
    \end{theorem}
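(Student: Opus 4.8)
The plan is to reduce Theorem~\ref{thm:Clifford-z.d} to the computation of the Lie algebra $\mathfrak{g}^{\varphi}$ for $\varphi\colon Spin(n,1)\hookrightarrow G(p,q)$, and then to invoke Corollary~\ref{cor:deform_Zariski_dense} together with Theorem~\ref{theorem:bending}~\ref{item:bending-realize-Gphi} and~\ref{item:discrete-and-faithful}. Indeed, once we show that $\mathfrak{g}^{\varphi}=\mathfrak{g}$ (where $\mathfrak{g}=\operatorname{Lie}G(p,q)$), the existence of a torsion-free cocompact $\Gamma\subset Spin(n,1)$ and a small deformation $\varphi'$ whose Zariski-closure is the identity component of $G(p,q)$ follows immediately from Theorem~\ref{theorem:bending}~\ref{item:bending-realize-Gphi}, and $\varphi'$ can be taken discrete and faithful by~\ref{item:discrete-and-faithful} since $G(p,q)$ is reductive and $\varphi$ is non-trivial. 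So the entire content of the proof is the branching-law computation establishing $\mathfrak{g}^{\varphi}=\mathfrak{g}$.

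\textbf{Reduction to a branching problem.} First I would recall from Appendix~\ref{section:clifford-spin} (and Proposition~\ref{proposition:Kobayashi-Yoshino-G(p,q)}) the explicit description of $\mathfrak{g}=\operatorname{Lie}G(p,q)$ as a subalgebra of the Clifford algebra $C(p,q)$, with the spin representation realizing $Spin(n,1)$ inside $G(p,q)$ via the inclusion $C(n,1)\hookrightarrow C(p,q)$ induced by an orthogonal splitting $\R^{p,q}=\R^{n,1}\oplus\R^{p-n,q-1}$. As a $\mathfrak{spin}(n,1)$-module (equivalently, after complexification, a $\mathfrak{so}(n+1,\C)$-module), $\mathfrak{g}_{\C}$ decomposes into pieces built from tensor products of spin and vector representations of $\mathfrak{so}(n+1,\C)$ with the trivial action on the complement; concretely one gets summands of the form $S\otimes S^{*}$, $S\otimes(\text{vector of }\mathfrak{so}(p-n,q-1))$, $\Lambda^{\bullet}(\text{vector})$, etc. The key point is then: by Definition~\ref{definition:lie-g-phi}, $\mathfrak{g}^{\varphi}$ is the smallest Lie subalgebra of $\mathfrak{g}$ containing $d\varphi(\mathfrak{spin}(n,1))$ together with \emph{every} $\mathfrak{spin}(n,1)$-submodule isomorphic to a spherical harmonics module (i.e.\ an irreducible $\mathfrak{so}(n+1,\C)$-module with a non-zero $\mathfrak{so}(n)$-fixed vector — see Remark~\ref{remark:spherical_harmonics}). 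So the plan is: (a) decompose $\mathfrak{g}_{\C}$ into $\mathfrak{so}(n+1,\C)$-irreducibles; (b) identify which irreducible constituents are spherical harmonics modules; (c) show that the subalgebra generated by $d\varphi(\mathfrak{spin}(n,1))$ and these spherical constituents is all of $\mathfrak{g}$, typically by a bracket-generation argument — brackets of two spherical pieces, or of a spherical piece with $d\varphi(\mathfrak{spin}(n,1))$, sweep out the remaining (non-spherical, e.g.\ genuinely spinorial) constituents.

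\textbf{Case analysis via periodicity.} The two hypotheses~\ref{item:Clifford-z.d-general} and~\ref{item:Clifford-z.d-exceptional} are exactly the ranges in which step (c) succeeds, and they are phrased using $m=p+q\bmod 4$ precisely because the structure of $C(p,q)$ — and hence the type of $G(p,q)$ (real/complex/quaternionic orthogonal, symplectic, or linear) — is governed by $(p+q)\bmod 8$ and $(p-q)\bmod 8$, via Proposition~\ref{proposition:Kobayashi-Yoshino-G(p,q)}. I would organize the argument by the congruence class of $m\bmod 4$ (equivalently $\bmod 8$ together with the signature), treating in each class the ``large'' case $m-n\geq i+2$ uniformly: here the complement $\R^{p-n,q-1}$ is big enough that $\mathfrak{g}_{\C}$ contains a copy of the $\mathfrak{so}(n+1,\C)$-vector representation (a spherical harmonics module of degree $1$) with multiplicity/position guaranteeing that its iterated brackets with $d\varphi(\mathfrak{spin}(n,1))$ and with a second vector copy generate all of $\mathfrak{g}$. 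Then the small exceptional range $m<10,\ n\leq 6$ requires a finite, explicit check, likely appealing to the low-dimensional identifications in Proposition~\ref{proposition:Kobayashi-Yoshino-G(p,q)} and possibly to the even-homomorphism criterion Definition-Lemma~\ref{definition-lemma:even} (via restriction to a principal $SL(2,\R)$), as well as to known bending computations such as Kassel~\cite{Kassel12} for the $SO(2,2n)/U(1,n)$ family.

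\textbf{Main obstacle.} The hard part will be step (c) in the borderline congruence classes — showing that the spherical constituents together with $d\varphi(\mathfrak{spin}(n,1))$ actually bracket-generate $\mathfrak{g}$, rather than a proper subalgebra. Bracket-generation failures are exactly what Remark~\ref{remark:Kim-Pansu} warns about ($G^{\varphi}$ can be a proper Levi-type subgroup when $\varphi$ is ``odd''), so the inequality $m-n\geq i+2$ must be used essentially: it is the precise threshold below which the only spherical constituent is the adjoint-type piece $d\varphi(\mathfrak{spin}(n,1))$ itself plus central/centralizer pieces, leaving $\mathfrak{g}^{\varphi}\subsetneq\mathfrak{g}$. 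I would expect to need a careful weight/highest-weight bookkeeping — using that a spin $\otimes$ vector constituent of $\mathfrak{so}(n+1,\C)$ is spherical iff it is ``vector-like'' (integral, no half-integral weights) — to pin down exactly when enough vector-type pieces appear, and then an explicit Clifford-algebra bracket identity (products $e_{i}e_{j}$ generating all of $\mathfrak{spin}$ and beyond) to finish bracket-generation. The complex and quaternionic cases ($i=2,4$ roughly) will need slightly different bookkeeping than the real case, which is why the threshold $i+2$ depends on $i$.
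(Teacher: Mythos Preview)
Your reduction step is exactly the paper's: Theorem~\ref{thm:Clifford-z.d} is deduced from Theorem~\ref{theorem:bending}~\ref{item:bending-realize-Gphi} and~\ref{item:discrete-and-faithful} once one has shown $\mathfrak{g}^{\varphi}=\mathfrak{g}(p,q)$ (the paper isolates this as Proposition~\ref{prop:Clifford-z.d}). So the strategy is right.

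Where you diverge from the paper is in the branching computation itself, and there is both a likely misconception and a missing key idea.

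\textbf{The misconception.} You describe the decomposition of $\mathfrak{g}_{\C}$ as a $\mathfrak{spin}(n,1)$-module in terms of ``$S\otimes S^{*}$, $S\otimes(\text{vector})$'' pieces, i.e.\ via the spin representation. But the adjoint action of $Spin(n,1)$ on $\mathfrak{g}(p,q)\subset C_{\even}(p,q)$ is \emph{conjugation in the Clifford algebra}, which on the degree-graded piece $\wedge^{k}$ (Lemma~\ref{lemma:g(p,q)-structure}: $\mathfrak{g}(p,q)=\bigoplus_{k\equiv 2\bmod 4}\wedge^{k}$) is the $k$-th exterior power of the \emph{vector} representation $\R^{p,q}$, restricted to $\mathfrak{spin}(n,1)$. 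So the relevant constituents are $\wedge^{j}W\otimes\wedge^{k-j}(\R^{m-n-1})$ with $W=\R^{n,1}$ (see the paper's \eqref{eqn:wedge_ell_V_decomposition}), not spin-type tensor products. This makes the sphericity analysis almost trivial: $\wedge^{j}W$ is spherical for $j\in\{0,1,n,n+1\}$ and non-spherical otherwise.

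\textbf{The missing shortcut.} Your plan is to bracket-generate directly at the level of $\mathfrak{spin}(n,1)$-irreducibles, which you rightly anticipate to be painful. The paper instead bootstraps through the intermediate algebra $\mathfrak{spin}(p,q)=\wedge^{2}$. From \eqref{eqn:wedge_2_V_decomposition}, as a $\mathfrak{spin}(n,1)$-module $\wedge^{2}$ decomposes as $\wedge^{2}W\oplus W^{\oplus(m-n-1)}\oplus\mathbf{1}^{\oplus\binom{m-n-1}{2}}$; every summand is either $d\varphi(\mathfrak{spin}(n,1))$ itself or a spherical harmonics representation, so $\mathfrak{spin}(p,q)\subset\mathfrak{g}^{\varphi}$ automatically (Lemma~\ref{lemma:wedge_l_contains_spherical_harmonics}~\ref{item:g(p,q)-contain-spin}). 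Consequently $\mathfrak{g}^{\varphi}$ is a $\mathfrak{spin}(p,q)$-\emph{submodule} of $\mathfrak{g}(p,q)$. Now the $\mathfrak{spin}(p,q)$-module structure of $\mathfrak{g}(p,q)$ is extremely simple --- just the pieces $\wedge^{k}$, each irreducible (or, when $k=m/2$, a sum of two half-spin-like pieces) --- so to get $\wedge^{\ell}\subset\mathfrak{g}^{\varphi}$ it suffices to exhibit a single spherical harmonics constituent inside $\wedge^{\ell}$, which Lemma~\ref{lemma:wedge_l_contains_spherical_harmonics}~\ref{item:wedge^lcontained-in-spin} does whenever $\ell\geq n$ or $m-n\geq\ell$. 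Finally, a short Clifford bracket computation (Lemmas~\ref{lemma:sym-diff} and~\ref{lemma:g(p,q)-generator}) shows that $\mathfrak{spin}(p,q)$ together with any single $\wedge^{\ell}$ with $\min(\ell,m-\ell)\geq 3$ generates all of $\mathfrak{g}(p,q)$. The numerical hypotheses~\ref{item:Clifford-z.d-general} and~\ref{item:Clifford-z.d-exceptional} are exactly what makes a suitable $\ell$ (namely $\ell=m-2-i$, resp.\ $\ell=6$) available; the only subtlety is $m=12$, where $\wedge^{6}$ is reducible under $\mathfrak{spin}(p,q)$ and one has to check both halves separately.

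In short: your framework would in principle get there, but the paper's two-step ``promote to a $\mathfrak{spin}(p,q)$-module, then use $\mathfrak{spin}(p,q)$-irreducibility'' collapses your anticipated hard bracket-generation into a couple of one-line lemmas. Without that idea, carrying out your step~(c) case-by-case at the $\mathfrak{spin}(n,1)$ level would be substantially more laborious, and your sketch does not yet indicate how to control it.
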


Owing to Theorem~\ref{theorem:bending}~\ref{item:bending-realize-Gphi},  
    the proof of Theorem~\ref{thm:Clifford-z.d} reduces to the following proposition on the finite-dimensional representation theory of Lie algebras.
    \begin{proposition}
    \label{prop:Clifford-z.d}
    In the setting of Theorem~\ref{thm:Clifford-z.d}, 
    we have $\mathfrak{g}^{\varphi} = \mathfrak{g}$.
    \end{proposition}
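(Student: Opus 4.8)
The plan is to reduce everything to complexified Lie algebras and to realize the relevant objects inside the complex Clifford algebra. Since $\mathfrak{g}^{\varphi}$ is defined through its complexification (Lemma~\ref{lemma:g-phi-algebraic}, Definition~\ref{def:g-phi}), it suffices to prove $\mathfrak{g}^{\varphi}\otimes_{\R}\C=\mathfrak{g}\otimes_{\R}\C$. I would work inside $C:=C(p,q)\otimes_{\R}\C\cong C(m,\C)$, $m=p+q$, with its degree filtration $C^{(k)}$ (the span of products of $k$ distinct generators); as a module over $\mathfrak{so}(m,\C)=C^{(2)}$ one has $C^{(k)}\cong\Lambda^{k}\C^{m}$, which is irreducible for $0<k<m$, $k\neq m/2$. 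By Appendix~\ref{section:clifford-spin} and Proposition~\ref{proposition:Kobayashi-Yoshino-G(p,q)}, one identifies $\mathfrak{g}\otimes_{\R}\C$ with $\bigoplus_{k\in S}C^{(k)}$ for a subset $S\subseteq\{0,1,\dots,m\}$ which is a union of residue classes modulo $4$ and always contains $2$ (because $\mathfrak{g}$ contains $\mathfrak{so}(p,q)$, whose complexification is $C^{(2)}$). Since $\varphi$ is the spin representation, $\varphi(Spin(n,1))$ lies in the sub-Clifford-algebra generated by the first $n+1$ coordinates; writing $\C^{m}=V\oplus W$ with $V=\C^{n+1}$ carrying the standard $\mathfrak{so}(n+1,\C)$-action and $W$ trivial, conjugation by $\varphi(Spin(n,1))$ preserves each $C^{(k)}$ and
\[
 C^{(k)}|_{\mathfrak{spin}(n,1)}\ \cong\ \bigoplus_{a}\bigl(\Lambda^{a}V\bigr)^{\oplus\binom{m-n-1}{k-a}}.
\]
Among the $\Lambda^{a}V$, those isomorphic to a spherical harmonics module (Remark~\ref{remark:spherical_harmonics}) are exactly the ones with $a\in\{0,1,n,n+1\}$, using $\Lambda^{n}V\cong\Lambda^{1}V$ and $\Lambda^{n+1}V\cong\Lambda^{0}V$.

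First I would show $C^{(2)}\subseteq\mathfrak{g}^{\varphi}\otimes_{\R}\C$. From the decomposition above, $C^{(2)}|_{\mathfrak{spin}(n,1)}=\Lambda^{2}V\oplus(V\otimes W)\oplus\Lambda^{2}W$: here $\Lambda^{2}V$ is the complexified image of $d\varphi(\mathfrak{spin}(n,1))$, while the $m-n-1$ copies of $V\cong\Lambda^{1}V$ and the trivial copies filling $\Lambda^{2}W$ are spherical harmonics modules; so all of these lie in $\mathfrak{g}^{\varphi}\otimes_{\R}\C$ by Definition~\ref{definition:lie-g-phi}, and their brackets produce $\Lambda^{2}W$, giving $C^{(2)}=\mathfrak{so}(m,\C)\subseteq\mathfrak{g}^{\varphi}\otimes_{\R}\C$. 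Hence $\mathfrak{g}^{\varphi}\otimes_{\R}\C$ is $\ad(C^{(2)})$-stable, and since each $C^{(k)}$ ($0<k<m$, $k\neq m/2$) is $\mathfrak{so}(m,\C)$-irreducible it is enough to prove $C^{(k)}\cap(\mathfrak{g}^{\varphi}\otimes_{\R}\C)\neq0$ for every $k\in S$; the degrees $k\in\{0,m,m/2\}$ are handled separately and routinely. Next, the centralizer $\mathfrak{z}_{\mathfrak{g}}(d\varphi(\mathfrak{spin}(n-1,1)))$, which lies in $\mathfrak{g}^{\varphi}$ by Definition~\ref{definition:lie-g-phi}, is computed directly in $C$: writing $\C^{m}=V_{0}\oplus W_{0}$ with $V_{0}=\C^{n}$ and $\dim W_{0}=m-n$, it is spanned by $\Lambda^{b}W_{0}\subseteq C^{(b)}$ and $\omega_{V_{0}}\cdot\Lambda^{b}W_{0}\subseteq C^{(n+b)}$ with $0\le b\le m-n$, where $\omega_{V_{0}}$ is the volume element of $V_{0}$. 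Intersecting with $\bigoplus_{k\in S}C^{(k)}$ yields $C^{(k)}\cap(\mathfrak{g}^{\varphi}\otimes_{\R}\C)\neq0$ whenever $k\in S$ and either $k\le m-n$ or $k\ge n$.

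It remains to treat the degrees $k\in S$ with $m-n<k<n$, which occur only if $m<2n$. Here I would use a ``seed and bracket'' argument: by condition~\ref{item:Clifford-z.d-general}, matched against the description of $S$ modulo $4$, there is an odd degree $k_{0}\in S$ with $3\le k_{0}\le m-n$, so $C^{(k_{0})}\subseteq\mathfrak{g}^{\varphi}\otimes_{\R}\C$ by the previous paragraph; one needs $k_{0}$ odd so that the top graded component of a commutator of two elements of $C^{(k_{0})}$ survives, and $k_{0}\ge3$ since $C^{(1)}+C^{(2)}$ only generates a copy of $\mathfrak{spin}(m\pm1)$. Then explicit Clifford-algebra computations — $[e_{I},e_{J}]$ with $|I|=|J|=k_{0}$ realizes $C^{(2k_{0})}$ in top degree, and after varying the overlap $|I\cap J|$ and bracketing against $C^{(2)}$ and the pieces already obtained, nonzero elements of every $C^{(k)}$, $k\in S$, are produced — which bridges the gap $(m-n,n)$. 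Equivalently, one invokes the classification of subalgebras of the complex simple Lie algebra $\mathfrak{g}\otimes_{\R}\C$ that contain the spin-embedded $\mathfrak{so}(m,\C)$ together with such a $C^{(k_{0})}$: for $m\ge10$ there is no proper intermediate subalgebra, and the finitely many remaining pairs, which all satisfy $m<10$ and $n\le6$, are exactly those of condition~\ref{item:Clifford-z.d-exceptional} and are verified case by case. Combining the three paragraphs, $C^{(k)}\subseteq\mathfrak{g}^{\varphi}\otimes_{\R}\C$ for all $k\in S$, so $\mathfrak{g}^{\varphi}\otimes_{\R}\C=\mathfrak{g}\otimes_{\R}\C$ and hence $\mathfrak{g}^{\varphi}=\mathfrak{g}$.

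The main obstacle is this last step. It requires careful sign-bookkeeping in the Clifford algebra to certify that the relevant top (or intermediate) graded piece of a commutator does not vanish, together with a precise verification that the numerical hypotheses are exactly what is needed — namely, tracking how the residue classes modulo $4$ comprising $S$, and hence the smallest available odd seed $k_{0}$, depend on $m$ and on the involution defining $G(p,q)$ (so that $3\le k_{0}\le m-n$ is forced by condition~\ref{item:Clifford-z.d-general}), and carrying out the small-$m$ exceptional analysis under condition~\ref{item:Clifford-z.d-exceptional}.
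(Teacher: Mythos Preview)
Your overall strategy parallels the paper's --- first show $C^{(2)} = \mathfrak{so}(m,\C) \subset \mathfrak{g}^\varphi \otimes \C$, then use $\mathfrak{so}(m,\C)$-irreducibility of each $C^{(k)}$ (for $k \ne m/2$) to reduce to finding a single seed degree whose brackets generate the rest --- and your centralizer computation correctly recovers the criterion of Lemma~\ref{lemma:wedge_l_contains_spherical_harmonics}. But the seed step has a genuine error. By Lemma~\ref{lemma:g(p,q)-structure} (which follows directly from the definition $\mathfrak{g}(p,q)=\{x\in C_{\even}(p,q):x+x^*=0\}$), one has
\[
\mathfrak{g}(p,q)\;=\;\bigoplus_{\substack{0\le k\le m\\ k\equiv 2\bmod 4}}\wedge^k,
\]
so $S$ is the \emph{single} residue class $\{k\equiv 2 \bmod 4\}$ and contains no odd degrees whatsoever. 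Your proposed ``odd seed $k_0 \in S$'' therefore does not exist, and the accompanying rationale --- that $k_0$ must be odd for top-degree commutators to survive, with $C^{(1)}$ mentioned as a candidate --- is based on a wrong picture of this Lie algebra. The vague ``union of residue classes modulo $4$ depending on the involution'' is also off: there is only one involution $*$ in play, and it pins $S$ down completely.

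The paper's argument uses an \emph{even} seed. Lemma~\ref{lemma:sym-diff} shows that for $|I|, |J|$ even, $[e_I, e_J] \ne 0$ iff $|I \cap J|$ is odd, landing in degree $|I \triangle J|$; from this, Lemma~\ref{lemma:g(p,q)-generator} proves that for any even $\ell$ with $\min(\ell, m-\ell) \ge 3$, $\wedge^2$ together with $\wedge^\ell$ generates all of $\mathfrak{g}(p,q)$, via $\wedge^6 \subset [\wedge^\ell, \wedge^\ell]$ and then $\wedge^{k\pm 4} \subset [\wedge^6, \wedge^k]$ inductively. Taking $\ell = m-2-i$ with $m \equiv i \bmod 4$, $i \in \{1,2,3,4\}$, one has $\ell \equiv 2 \bmod 4$; condition~\ref{item:Clifford-z.d-general} gives $\ell \ge n$, so $\wedge^\ell \subset \mathfrak{g}^\varphi$ by Lemma~\ref{lemma:wedge_l_contains_spherical_harmonics}; and for $m \ge 10$, $m \ne 12$, also $\ell > m/2$ (hence $\wedge^\ell$ is irreducible) and $\min(\ell, m-\ell) = i+2 \ge 3$. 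The cases $m = 12$ (where $\wedge^{m/2}$ splits) and $m < 10$ (condition~\ref{item:Clifford-z.d-exceptional}) are then dispatched by the short direct checks you anticipated.
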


    Before entering the proof, let us provide some examples of Theorem~\ref{thm:Clifford-z.d} in the special case $n=2$.
    \begin{example}
    \label{example:Clifford-z.d}
    Let us consider the following inclusion maps: 
    \begin{enumerate}[label=$(\arabic*)$]
        \item \label{item:example:Clifford-z.d:O(4,4)}
        $Spin(4,1)\rightarrow G(4,3)\simeq O(4,4)$;
        \item \label{item:example:Clifford-z.d:O^{*}(8)}
        $Spin(6,1)\rightarrow G(6,1) \simeq O^{*}(8)$;
        \item 
        $Spin(6,1)\rightarrow G(7,1) \simeq O(8,\C)$;
        \item 
        $Spin(6,1)\rightarrow G(7,2) \simeq O^{*}(16)$;
        \item 
        $Spin(6,1)\rightarrow G(7,3) \simeq GL(8,\HA)$;
        \item 
        $Spin(6,1)\rightarrow G(8,1) \simeq O(8,8)$;
        \item 
        $Spin(6,1)\rightarrow G(8,2) \simeq U(8,8)$;
        \item 
        $Spin(6,1)\rightarrow G(8,3) \simeq Sp(8,8)$;
        \item 
        $Spin(6,1)\rightarrow G(9,1) \simeq GL(16,\R)$;
        \item 
        $Spin(6,1)\rightarrow G(10,1) \simeq Sp(16,\R)$;
        \item 
        $Spin(6,1)\rightarrow G(11,1) \simeq Sp(16,\C)$.
    \end{enumerate}
    Then, we have $\mathfrak{g}^{\varphi}=\mathfrak{g}$. 
    In particular, there
    exists a cocompact discrete subgroup $\Gamma$ of $Spin(6,1)$ that can be deformed into a Zariski-dense subgroup  
        of the identity component of these classical groups in the Zariski-topology.
    \end{example}

    \begin{proof}[Proof of Example~\ref{example:Clifford-z.d}]
        The fact that each $ G(p,q) $ is isomorphic to the classical groups described above follows from Proposition~\ref{proposition:Kobayashi-Yoshino-G(p,q)}. 
    \end{proof}

    To prove Proposition~\ref{prop:Clifford-z.d}, 
    let us introduce some notation and lemmas. 
    Let $C(p,q)$ denote the Clifford algebra associated with the standard
    quadratic form $x_1^2+\dotsb +x_p^2-x_{p+1}^2-\dotsb -x_{p+q}^2$.
Let $\{e_{1},\ldots, e_{p+q}\}$ be
    the standard basis of $\R^{p+q}$.
We define the basis  of $C(p,q)$ by $\{e_I\}$, where
    \[
    e_I := e_{i_1} \cdots e_{i_k} \in C(p,q),
    \]
    for $I=\{i_1,\ldots,i_k\}$ with $1\leq i_1<\dotsb<i_k\leq p+q$.
In the following lemma, we regard  
    $C(p,q)$ 
    as a Lie algebra over $\R$ with the Lie bracket defined by $[x,y] = xy - yx$.

    \begin{lemma}
        \label{lemma:sym-diff}
        Given two subsets $I,J\subset \{1,\ldots,p+q\}$, where $\# I$ and $\# J$ are even, 
        we have $[e_{I},e_{J}]\neq 0$ if and only if $\#(I \cap J)$ is odd. Furthermore, $[e_{I},e_{J}]$ is a scalar multiple of $e_{I\triangle J}$, where 
        $I\triangle J=(I\smallsetminus J) \cup (J\smallsetminus I)$ denotes the symmetric difference.
    \end{lemma}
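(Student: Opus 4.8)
The plan is to derive both assertions from a single computation comparing the two products $e_I e_J$ and $e_J e_I$, using nothing beyond the defining relations of the Clifford algebra.

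First I would record the standard normal form: $e_I e_J$ is always a \emph{nonzero} scalar multiple of $e_{I\triangle J}$. Indeed, sorting the concatenated tuple $(i_1,\dots,i_r,j_1,\dots,j_s)$ by repeatedly using $e_a e_b = -e_b e_a$ $(a\neq b)$, each index in $I\cap J$ eventually meets its twin and collapses to the scalar $e_k^2=\pm1$, so that $e_I e_J = \pm\big(\prod_{k\in I\cap J} e_k^2\big)\,e_{I\triangle J}$ with a coefficient that is a product of $\pm1$'s, hence invertible. For the proof only the nonvanishing of this coefficient is needed, not its precise value.

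Next I would establish the swap identity
\[
e_J e_I = (-1)^{\,\#I\cdot\#J-\#(I\cap J)}\,e_I e_J .
\]
The key point is the single-generator move $e_I e_j = (-1)^{\,\#I-[\,j\in I\,]}\,e_j e_I$, where $[\,j\in I\,]\in\{0,1\}$; this follows by counting the factors of $e_I$ past which $e_j$ must anticommute, the case $j\in I$ being covered by the same count since the two copies of $e_j$ come together and square to a scalar. Writing $e_J=e_{j_1}\cdots e_{j_s}$ and moving the generators $e_{j_1},\dots,e_{j_s}$ one at a time to the left of the untouched factor $e_I$, the signs add up to $(-1)^{\sum_t(\#I-[\,j_t\in I\,])}=(-1)^{\#I\cdot\#J-\#(I\cap J)}$, which is the claimed identity.

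Finally, when $\#I$ and $\#J$ are both even, $\#I\cdot\#J$ is even, so the swap sign reduces to $(-1)^{\#(I\cap J)}$, and therefore
\[
[e_I,e_J]=e_I e_J-e_J e_I=\big(1-(-1)^{\#(I\cap J)}\big)\,e_I e_J .
\]
This is $0$ exactly when $\#(I\cap J)$ is even; when $\#(I\cap J)$ is odd it equals $2\,e_I e_J$, a nonzero scalar multiple of $e_{I\triangle J}$ by the normal form (so in both cases $[e_I,e_J]$ is a scalar multiple of $e_{I\triangle J}$, as claimed). The only step demanding care is the single-generator move together with the bookkeeping while peeling off the factors of $e_J$ — one must verify that $e_I$ itself is never altered during these moves, so that the signs simply accumulate additively; apart from that, everything reduces to parity arithmetic and the case where $\#I$ or $\#J$ is odd is never needed.
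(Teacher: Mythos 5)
Your proof is correct and follows essentially the same route as the paper: the single-generator commutation $e_I e_j = (-1)^{\#I-[j\in I]}e_j e_I$, the resulting swap identity $e_J e_I = (-1)^{\#(I\cap J)}e_I e_J$ for $\#I,\#J$ even, and the conclusion $[e_I,e_J]=(1-(-1)^{\#(I\cap J)})e_I e_J$. The only difference is that you spell out the normal-form fact that $e_I e_J$ is a nonzero scalar multiple of $e_{I\triangle J}$, which the paper leaves implicit.
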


    \begin{proof}
    For each $j\in J$, 
    \[
    e_{j}e_{I} = e_{I}e_{j} \times 
    \begin{cases}
        -1 & (j\in I), \\
        1 & (j\not\in I).
    \end{cases}
    \]
    Hence, we have 
    \begin{equation*}
        e_{J}e_{I}= (-1)^{\#(I\cap J)} \ e_{I}e_{J}.
    \end{equation*}
    Thus, the assertions follow immediately.
    \end{proof}

    For each $k=0,\ldots,m=p+q$,
    we write $\wedge^{k}$ for the 
    $\R$-vector subspace of $C(p,q)$ spanned by the elements $e_{I}$ for all subsets $I\subset \{1,\ldots,p+q\}$ with $\#I=k$.
    
With this notation, the module structure of the Lie algebras
$\mathfrak{g}(p,q)$ and $\mathfrak{spin}(p,q)$ is given as follows
\begin{equation}
\label{eqn:g(p,q)-structure}
\mathfrak{g}(p,q) = \bigoplus_{\substack{0\leq k\leq m \\
k\equiv 2\bmod 4}} \wedge^{k}
\quad \text{and} \quad \mathfrak{spin}(p,q) = \wedge^{2},
\end{equation}
see Lemma~\ref{lemma:g(p,q)-structure} in Appendix~\ref{section:clifford-spin}.
Furthermore,  each $k$, the subspace $\wedge^k$ is an $\mathfrak{spin}(p,q)$-submodule of $\mathfrak{g}(p,q)$ and is isomorphic to the $k$-th exterior tensor representation $\wedge^k V$, where $V$ is the standard representation of $\mathfrak{spin}(p,q)$.

    \begin{lemma}
        \label{lemma:g(p,q)-generator}
        Let $\ell$ be an even number.
        If $\min(\ell,m-\ell)\geq 3$, then 
        the Lie algebra  $\mathfrak{g}(p,q)$ is generated 
        by $\mathfrak{spin}(p,q)$ and 
        $\wedge^{\ell}$. 
    \end{lemma}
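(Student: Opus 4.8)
The plan is to work inside the Clifford algebra $C(p,q)$, viewed as a Lie algebra, and to use the bracket formula of Lemma~\ref{lemma:sym-diff}: for even subsets $I,J\subset\{1,\dots,m\}$ one has $[e_I,e_J]\neq 0$ iff $\#(I\cap J)$ is odd, in which case $[e_I,e_J]$ is a nonzero scalar multiple of $e_{I\triangle J}$, and $\#(I\triangle J)=\#I+\#J-2\#(I\cap J)$. Let $\mathfrak{a}$ denote the Lie subalgebra of $\mathfrak{g}(p,q)=\bigoplus_{k\equiv 2(4)}\wedge^k$ generated by $\mathfrak{spin}(p,q)=\wedge^2$ and $\wedge^\ell$. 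First I would record the two elementary ``one-step'' moves: (a) bracketing $e_I\in\wedge^k$ with a suitable $e_J\in\wedge^2$ with $\#(I\cap J)=1$ produces a nonzero multiple of $e_{I\triangle J}\in\wedge^k$, so $\mathfrak{a}\cap\wedge^k$, once nonzero, is automatically all of $\wedge^k$ (this uses the standard fact that $\wedge^k$ is an irreducible $\mathfrak{spin}(p,q)$-module for $0<k<m$, already invoked after \eqref{eqn:g(p,q)-structure}; alternatively one checks directly that the symmetric-difference moves with $2$-element sets act transitively on $k$-subsets); and (b) bracketing $e_I\in\wedge^k$ with $e_J\in\wedge^\ell$ where $\#(I\cap J)$ is odd yields a nonzero element of $\wedge^{k+\ell-2\#(I\cap J)}$. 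Thus it suffices to show that, starting from the ``degrees'' $\{2,\ell\}$ and repeatedly applying the operation $k\mapsto k+\ell-2j$ for any odd $j$ with $0\le j\le\min(k,\ell)$ (keeping $k$ in the range $0<k<m$, and observing $k+\ell-2j\equiv 2\pmod 4$ is preserved since $\ell$ is even and $2j\equiv 2\pmod 4$), one reaches every $k\equiv 2\pmod 4$ with $0<k<m$.

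The combinatorial heart is this reachability statement, and it is where I expect the only real work. Set $N$ to be the set of degrees reachable from $\{2,\ell\}$. From $k=2$ and $j=1$ we get $2+\ell-2=\ell$, and from $\ell$ with $j=\ell-1$ (odd, since $\ell$ is even and $\ell-1\le\ell$, and we need $\ell-1\le$ the other degree too — here I would bracket $\wedge^\ell$ with $\wedge^\ell$ only after first manufacturing a higher or lower degree if needed) we get $2$; more usefully, from $k$ and $k'$ in $N$ with $\#(I\cap J)$ odd we can also reach $k+k'-2j$. The key observations are: taking two copies at degree $\ell$ and $j=1$ gives $2\ell-2\in N$ (valid when $2\ell-2<m$, i.e. $\ell<m/2+1$; the hypothesis $\min(\ell,m-\ell)\ge3$ handles the symmetric range); and taking degree $k$ with degree $2$ and $j=1$ gives $k$ again, so small perturbations come from mixing. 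The cleanest route is to show $\ell-2$ and $\ell+2$ (whichever lie in $(0,m)$) are reachable, because then one can move by steps of $4$ through the whole residue class: e.g. from $\ell$ and $\ell$ with $j=3$ one gets $2\ell-6$; combining $2\ell-2$ and $\ell$ with appropriate $j$ gives $\ell\pm4$ after a short computation; since $\min(\ell,m-\ell)\ge 3$ forces $\ell\ge 4$ (as $\ell$ is even, $\ell\ge 3$ means $\ell\ge 4$) and $m-\ell\ge 4$, there is always ``room'' on both sides. I would then argue by induction that every $k\equiv 2\pmod 4$ in $(0,m)$ is reachable, using that the moves generate a step of $\pm 4$ and that the endpoints $2$ and (the largest such $k$) are reachable by bracketing down from and up to $\ell$.

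The remaining steps are bookkeeping. Once $N$ contains every $k\equiv2\pmod4$ with $0<k<m$, move~(a) upgrades each $\mathfrak{a}\cap\wedge^k$ from ``nonzero'' to ``everything'', so $\mathfrak{a}\supseteq\bigoplus_{k\equiv2(4),\,0<k<m}\wedge^k$; since $\wedge^0$ is the center (scalars, not in $\mathfrak{g}(p,q)$) and $\wedge^m$ only occurs when $m\equiv2\pmod4$ — in which case $\wedge^m$ is one-dimensional and is obtained as a bracket, e.g. of $\wedge^{m-\ell}$ with $\wedge^\ell$ using the full intersection pattern, which is legitimate because $m-\ell\ge 3$ puts $m-\ell$ in range — we conclude $\mathfrak{a}=\mathfrak{g}(p,q)$. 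The main obstacle is purely the reachability lemma for the degree-set under the operation $k\mapsto k+\ell-2j$; everything else follows from Lemma~\ref{lemma:sym-diff} and irreducibility of the exterior powers. I would isolate that lemma as a small self-contained claim and prove it by the explicit induction sketched above, checking the boundary cases $\ell=4$ and $m-\ell\in\{3,4\}$ by hand.
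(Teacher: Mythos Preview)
Your strategy---using Lemma~\ref{lemma:sym-diff} to navigate between the pieces $\wedge^k$---is the paper's, but the execution is more roundabout than necessary. The paper bypasses your reachability bookkeeping by pivoting through $\wedge^6$: given any $6$-subset $I'\subset\{1,\dots,m\}$, split $I'$ into two $3$-subsets and take any $(\ell-3)$-subset of the complement as $I\cap J$ (possible since $\ell\ge3$ and $m-\ell\ge3$), producing $\ell$-subsets $I,J$ with $I\triangle J=I'$ and $|I\cap J|=\ell-3$ odd; thus $\wedge^6\subset[\wedge^\ell,\wedge^\ell]$. Then $[\wedge^6,\wedge^k]$ with overlap $1$ (resp.\ $5$) gives $\wedge^{k+4}$ (resp.\ $\wedge^{k-4}$), yielding uniform $\pm4$ steps through all degrees $\equiv 2\pmod4$. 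This single pivot replaces your chain through $2\ell-2$, $2\ell-6$, $\ell\pm4$, and the attendant range checks.

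Your handling of $\wedge^m$ is concretely wrong: with $I,J$ disjoint and $|I|+|J|=m$ one has $|I\cap J|=0$, which is even, so $[e_I,e_J]=0$. Indeed $e_{\{1,\dots,m\}}$ is central in $C_{\even}(p,q)$ (its intersection with any even subset has even size), so $\wedge^m$ cannot be obtained as any bracket; when $m\equiv2\pmod4$ the subalgebra generated by $\wedge^2$ and $\wedge^\ell$ genuinely misses this central line. The paper's inductive step glosses over the same point, and the application in Proposition~\ref{prop:Clifford-z.d} is unaffected since $\wedge^m\subset\mathfrak{z}_{\mathfrak g}(d\varphi(\mathfrak{spin}(n,1)))\subset\mathfrak{g}^\varphi$ by definition. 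Two smaller slips: $\wedge^{m/2}$ is not $\mathfrak{spin}(p,q)$-irreducible when $m$ is even, so your transitivity fallback should be the primary argument; and $m-\ell\ge3$ does not force $m-\ell\ge4$, as $m-\ell$ need not be even.
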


    \begin{proof}
    The key step in the proof is to show the following claim:
    $\wedge^{6}\subset [\wedge^{\ell},\wedge^{\ell}]$ if $\min(\ell,m-\ell)\geq 3$.
To prove this claim, let $I'$ be an arbitrary subset of $\{1,\ldots, m\}$
    with $6$ elements. Since $\min(\ell,m-\ell)\geq 3$ by assumption, we can find subsets $I,J \subset \{1,\ldots, m\}$, each containing $\ell$ elements such that $I\triangle J = I'$. 
Since $\ell$ is even,  $\#(I\cap J)$ is odd, and consequently, by Lemma~\ref{lemma:sym-diff},
    we have $e_{I'}\in [\wedge^{\ell},\wedge^{\ell}]$.
    This completes the proof of the claim. 

A similar argument shows that for an even number $k$, we have
\begin{align*}
&\wedge^{k+4}\subset [\wedge^{6},\wedge^{k}]
 &\textrm{ if } \ & k\leq m-4,
\\
&\wedge^{k-4}\subset [\wedge^{6},\wedge^{k}] &\textrm{ if } \ & k\geq 4.
\end{align*}
By (\ref{eqn:g(p,q)-structure}), 
    our assertion follows inductively.
    \end{proof}

  Let $V = \R^{p,q}$ and $W = \R^{n,1}$ be the standard representations of $\mathfrak{spin}(p,q)$ and $\mathfrak{spin}(n,1)$, respectively.  
        Then we have the following branching law of 
        the $\mathfrak{spin}(p,q)$-module $V$ 
        when restricted to $\mathfrak{spin}(n,1)$ via $\varphi$:
        \[
        V \simeq W \oplus \mathbf{1}^{\oplus (m-(n+1))},
        \]
        where $\mathbf{1}$ denotes the trivial representation and $m=p+q$.
\begin{lemma}
\label{lemma:wedge_l_contains_spherical_harmonics}
Suppose $m \ge n+1 \ge 3$.
\begin{enumerate}[label=(\arabic*)]
\item
\label{item:wedge^lcontained-in-spin}
 If $m-n \ge \ell$ or if $\ell \ge n$, then
    $\wedge^\ell$ contains a non-zero spherical harmonics representation
    of  $\mathfrak{spin}(n,1)$. 
\item 
\label{item:g(p,q)-contain-spin}
$ \mathfrak{g}^{\varphi} \supset \mathfrak{spin}(p,q) $.
\end{enumerate}
\end{lemma}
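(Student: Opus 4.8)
The plan is to derive both parts from the branching law $V\simeq W\oplus\mathbf 1^{\oplus r}$ recorded just above the statement (with $r:=m-(n+1)\ge 0$), together with elementary exterior-algebra bookkeeping and the bracket formula of Lemma~\ref{lemma:sym-diff}. I would first fix a coordinate subset $S\subset\{1,\dots,m\}$ with $\#S=n+1$ such that $\varphi$ is induced by the inclusion $\R^{n,1}=\operatorname{span}_{\R}(e_s:s\in S)\hookrightarrow\R^{p,q}$, and put $T:=\{1,\dots,m\}\smallsetminus S$, so $\#T=r$. Under this identification $d\varphi(\mathfrak{spin}(n,1))$ is the span of the $e_ae_b$ with $a,b\in S$ (the copy of $\wedge^2 W$ inside $\wedge^2=\mathfrak{spin}(p,q)$), and for each $t\in T$ the subspace $W_t:=\operatorname{span}_{\R}\{e_se_t:s\in S\}$ is the $t$-th copy of $W$ in the decomposition $\wedge^2 V\simeq\wedge^2W\oplus W^{\oplus r}\oplus\mathbf 1^{\oplus\binom r2}$ obtained from the branching law.

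For part \ref{item:wedge^lcontained-in-spin} I would take $\ell$-th exterior powers of $V\simeq W\oplus\mathbf 1^{\oplus r}$, giving the $\mathfrak{spin}(n,1)$-module isomorphism $\wedge^\ell\simeq\bigoplus_i(\wedge^iW)^{\oplus\binom r{\ell-i}}$. We may assume $0\le\ell\le m$ (otherwise $\wedge^\ell=0$), and the extreme cases $\ell\in\{0,m\}$ give the trivial, i.e.\ degree-zero, spherical harmonics module. Otherwise $1\le\ell\le m-1$: if $\ell\le m-n$ then $0\le\ell-1\le r$, so the $i=1$ summand occurs and $\wedge^\ell$ contains $\wedge^1W=W$; if $\ell\ge n$ then $0\le\ell-n\le r$, so the $i=n$ summand occurs and $\wedge^\ell$ contains $\wedge^nW$, which the nondegenerate equivariant pairing $\wedge^nW\otimes\wedge^1W\to\wedge^{n+1}W\simeq\mathbf 1$ identifies with $(\wedge^1W)^{*}\simeq W$. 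In either case $\wedge^\ell$ contains a copy of $W$, whose complexification is the standard representation of $\mathfrak{so}(n+1,\C)$; this is irreducible because $n+1\ge3$, and it is the degree-one spherical harmonics representation of $\mathfrak{spin}(n,1)$ (Remark~\ref{remark:spherical_harmonics}).

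For part \ref{item:g(p,q)-contain-spin} I would show $\mathfrak{spin}(p,q)=\wedge^2$ lies in $\mathfrak{g}^{\varphi}$ by exhibiting each basis vector $e_ae_b$ $(a\ne b)$ in $\mathfrak{g}^{\varphi}$, according to how $\{a,b\}$ meets $S$. If $a,b\in S$, then $e_ae_b\in d\varphi(\mathfrak{spin}(n,1))\subset\mathfrak{g}^{\varphi}$. If $\{a,b\}=\{s,t\}$ with $s\in S$ and $t\in T$, a short computation with Lemma~\ref{lemma:sym-diff} shows that $W_t$ is a $\mathfrak{spin}(n,1)$-submodule of $\wedge^2$ and is isomorphic to $W$ under $e_se_t\leftrightarrow e_s$, hence a spherical harmonics module (by part \ref{item:wedge^lcontained-in-spin} and the branching law); so $W_t\subset\mathfrak{g}^{\varphi}$ by the very definition of $\mathfrak{g}^{\varphi}$, and in particular $e_se_t\in\mathfrak{g}^{\varphi}$. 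Finally, if $a,b\in T$, I would pick any $s\in S$ and invoke Lemma~\ref{lemma:sym-diff} to get $[e_se_a,e_se_b]=c\,e_ae_b$ with $c\ne0$, since $\{s,a\}\cap\{s,b\}=\{s\}$ has odd cardinality; as $e_se_a,e_se_b\in\mathfrak{g}^{\varphi}$ and $\mathfrak{g}^{\varphi}$ is a Lie subalgebra, $e_ae_b\in\mathfrak{g}^{\varphi}$. Letting $a,b$ range over all pairs gives $\mathfrak{spin}(p,q)\subset\mathfrak{g}^{\varphi}$. (When $r\le1$ the last case is vacuous and the first two already exhaust $\wedge^2$, by a dimension count, so the argument still goes through.)

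The only genuinely delicate point I anticipate is the bookkeeping underlying the first paragraph: verifying that, in the chosen coordinates, $d\varphi(\mathfrak{spin}(n,1))$ and the $W_t$ are exactly the stated coordinate subspaces, and that the $\mathfrak{spin}(n,1)$-action on each $W_t$ agrees with the standard action on $W$ under $e_se_t\leftrightarrow e_s$. This is routine once the descriptions of $\mathfrak{g}(p,q)$ and $\mathfrak{spin}(p,q)$ from Appendix~\ref{section:clifford-spin} and Lemma~\ref{lemma:sym-diff} are in hand; no representation-theoretic input beyond the irreducibility of the standard $\mathfrak{so}(n+1,\C)$-module---needed only to conclude that $W$ is genuinely a spherical harmonics module---is involved.
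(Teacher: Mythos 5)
Your proof is correct and follows essentially the same route as the paper: both parts rest on the branching $V\simeq W\oplus\mathbf{1}^{\oplus(m-n-1)}$ and the resulting decomposition of $\wedge^{\ell}$ into $\bigoplus_k \wedge^k W\otimes\wedge^{\ell-k}(\C^{m-(n+1)})$, with part (2) read off from the $\ell=2$ case. The only (harmless) variations are that for $\ell\ge n$ you use the duality $\wedge^{n}W\simeq W$ inside $W$ where the paper uses $\wedge^{\ell}V\simeq\wedge^{m-\ell}V$ inside $V$, and in part (2) you capture the trivial-isotypic part of $\wedge^{2}$ by bracketing the $W_t$-components (using that $\mathfrak{g}^{\varphi}$ is a Lie subalgebra) rather than by noting that the trivial representation is the degree-zero spherical harmonics module and hence already lies in the generating set of $\mathfrak{g}^{\varphi}$.
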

\begin{proof}
\ref{item:wedge^lcontained-in-spin}:
The assertion is clear if $\ell = 0$ or $m$.
Suppose that $1 \le \ell \le m-1$.
We have the following decomposition as $\mathfrak{spin}(n,1)$-modules:
\begin{equation}
\label{eqn:wedge_ell_V_decomposition}
        \wedge^{\ell}   
        \simeq
        \bigoplus_{k=0}^{\ell} 
        \wedge^k W \otimes \wedge^{\ell -k} ({\C}^{m-(n+1)}),
\end{equation}
where $\mathfrak{spin}(n,1)$ acts on $\wedge^k W$ as the $k$-th exterior power
of the standard representation $W$, and trivially on $\wedge^{\ell -k} ({\C}^{m-(n+1)})$.
Therefore, $\wedge^{\ell}$ contains a non-zero spherical harmonics if $W \otimes \wedge^{\ell -1} ({\C}^{m-(n+1)}) \neq 0$, which occurs if $m-n \ge \ell$.

Furthermore, we have $\wedge^k V \simeq \wedge^{m-k} V$ for all $0 \le k \le m$ as $\mathfrak{spin}(p,q)$-modules when $m \ge 3$. Hence, the same conclusion holds if $m-n \ge m-\ell$, i.e., if $\ell \ge n$. Thus, the first statement is proved.
\newline
\ref{item:g(p,q)-contain-spin}: The $\ell = 2$ case in (\ref{eqn:wedge_ell_V_decomposition}) gives
an isomorphism of $\mathfrak{spin}(n,1)$-modules:
 \begin{equation}
\label{eqn:wedge_2_V_decomposition}
        \wedge^2 V \simeq \wedge^2 W \oplus W^{\oplus(m-(n+1))} \oplus  \mathbf{1}^{\oplus\binom{m-(n+1)}{2}}.
\end{equation}      
As we have seen in (\ref{eqn:g(p,q)-structure}), the Lie algebra $\mathfrak g=\mathfrak{g}(p,q)$ contains $\wedge^2=\wedge^2 V$ as a Lie subalgebra,  isomorphic to $\mathfrak{spin}(p,q)$.
Furthermore,  the first component $\wedge^2 W$ in (\ref{eqn:wedge_2_V_decomposition}) corresponds to the Lie subalgebra $d\varphi(\mathfrak{spin}(n,1))$,
while all other irreducible components are spherical harmonics representations of 
 $\mathfrak{spin}(n,1)$.  
Thus, by Definition~\ref{definition:lie-g-phi}, we conclude that $ \mathfrak{g}^{\varphi} \supset \mathfrak{spin}(p,q) $.
\end{proof}

     Before proving Proposition~\ref{prop:Clifford-z.d}~\ref{item:Clifford-z.d-general} and \ref{item:Clifford-z.d-exceptional},  
        we state a general principle used in the proof.  
        Since $\mathfrak{g}^{\varphi}$ is a Lie algebra,  
        it is also a $\mathfrak{spin}(p,q)$-submodule of $\mathfrak{g}=\mathfrak{g}(p,q)$
        by Lemma~\ref{lemma:wedge_l_contains_spherical_harmonics}.
        Thus, to show that an irreducible
        $\mathfrak{spin}(p,q)$-submodule $U$ of $\mathfrak{g}$  
        is contained in $\mathfrak{g}^{\varphi}$,  
        it suffices to verify that $\mathfrak{g}^{\varphi} \cap U \neq 0$.

    We are ready to show Proposition~\ref{prop:Clifford-z.d}.  
\begin{proof}[Proof of Proposition~\ref{prop:Clifford-z.d}]
 \ref{item:Clifford-z.d-general}:
        First, we consider the case where $m \geq 10$ and $m \neq 12$. 
        In this case, the $\mathfrak{spin}(p,q)$-module $\wedge^{m-2-i}$ is irreducible
        because $m - 2 - i > m/2$ and $i \in \{1,2,3,4\}$.
        Moreover, since $m\equiv i \mod 4$,   
        it follows from (\ref{eqn:g(p,q)-structure}) that $\wedge^{m-2-i} \subset \mathfrak{g}=\mathfrak{g}(p,q)$. 
Since $m -2 -i \ge n$ by the assumption of \ref{item:Clifford-z.d-general},
 $\mathfrak{g}^{\varphi} \cap \wedge^{m-2-i} \neq 0$ by Lemma~\ref{lemma:wedge_l_contains_spherical_harmonics}~\ref{item:wedge^lcontained-in-spin}.
        By the principle above,  
        it follows that $ \mathfrak{g}^{\varphi} \supset \wedge^{m-2-i} $.  
        Furthermore, since $m \geq 10$, we have $\min(m-2-i, i+2) \geq 3$,  
        it follows from Lemma~\ref{lemma:g(p,q)-generator} that  
        $\mathfrak{g}^{\varphi} = \mathfrak{g}(p,q)$.  

        Next, we consider the case $m = 12$.  
        In this case, the assumption~\ref{item:Clifford-z.d-general} implies that $m = 12$, $i = 4$, and $n \leq 6$.  
        Then we have  
        \[
        \mathfrak{g}(p,q) = \wedge^2 + \wedge^6 + \wedge^{10}.
        \]  
        By Lemma~\ref{lemma:g(p,q)-generator},  
        it suffices to show that $ \wedge^6 $ is contained in $ \mathfrak{g}^{\varphi} $.  

        In contrast to the above cases, we need to be careful because the module $ \wedge^6 $ is not irreducible as a $ \mathfrak{spin}(p,q) $-module  
        and splits into the direct sum of two distinct irreducible submodules, which we denote by $ V_1 $ and $ V_2 $.  
        Since $ n+1 < m $, the modules $ V_1 $ and $ V_2 $  
        are isomorphic to each other as $ \mathfrak{spin}(n,1) $-modules.  
Moreover, since $ n \leq 6 $, the module $ \wedge^6 \simeq \wedge^6 V$ contains at least two irreducible components, each of which is isomorphic to the standard representation $ W \simeq \wedge^{n}W $ as a $ \mathfrak{spin}(n,1) $-module.
        Hence, both $ V_1 $ and $ V_2 $ contain $ W $  
        as an irreducible component when regarded as $ \mathfrak{spin}(n,1) $-modules.  
        Thus, for each $ i = 1,2 $, we obtain $ \mathfrak{g}^{\varphi} \cap V_i \neq 0 $,  
        which, by the principle above, implies $ V_i \subset \mathfrak{g}^{\varphi} $.  
        Consequently, we conclude that $\wedge^6 \subset \mathfrak{g}^{\varphi}$,  
        which completes the proof in this case.

\ref{item:Clifford-z.d-exceptional}:
       When $m \leq 5$, we have 
        $\mathfrak{g}(p,q) = \wedge^2 = \mathfrak{spin}(p,q)$,  
        and thus it follows that $\mathfrak{g}^{\varphi} = \mathfrak{g}(p,q)$.
        Hence, the assertion is obvious. 

        Next, when $6\leq m< 10$, we have 
        $\mathfrak{g}(p,q)=\wedge^2 + \wedge^6$.     
Since $n \ge 6$ by the assumption of \ref{item:Clifford-z.d-exceptional},
 $\mathfrak{g}^{\varphi} \cap \wedge^{6} \neq 0$ by Lemma~\ref{lemma:wedge_l_contains_spherical_harmonics}~\ref{item:wedge^lcontained-in-spin}.
Since $\wedge^6$ is an irreducible $\mathfrak{spin}(p,q)$-module,  based on the above principle, we conclude that $ \mathfrak{g}^{\varphi} = \mathfrak{g}$.
Thus, the proof of \ref{item:Clifford-z.d-exceptional} is complete.
    \end{proof}

As mentioned, the proof of Theorem~\ref{thm:Clifford-z.d} is derived from Proposition~\ref{prop:Clifford-z.d} by Theorem~\ref{theorem:bending}~\ref{item:bending-realize-Gphi}.
    The proof of Theorem~\ref{theorem:bending}~\ref{item:bending-realize-Gphi} is the main task of the next section.

\section{Deformations of the representations of spin hyperbolic lattices that maximize the Zariski-closure}
\label{section:realize-zariski}
Let $\mathbf{G}$ be a Zariski-connected real algebraic group,  
$G = \mathbf{G}(\mathbb{R})$, $G_{\mathbb{C}} = \mathbf{G}(\mathbb{C})$,  
and let $\varphi \colon Spin(n,1) \to G$ be a homomorphism (we refer to Lemma~\ref{lemma:morphism} in Appendix~\ref{section:algebraic_group} for equivalent definitions in different categories).  Suppose that $\Gamma$ is a torsion-free, cocompact discrete subgroup of $Spin(n,1)$.

In this section, as in the previous one,
we set aside the space $X = G/H$ and focus on the deformation of $\varphi|_{\Gamma} \in \Hom(\Gamma, G)$.  
We construct a pair $(\Gamma, \varphi')$,
where $\Gamma$ is a torsion-free, cocompact discrete subgroup and $\varphi'$ is a small deformation of $\varphi|_{\Gamma}$, such that the pair achieves the maximal Zariski-closure of $\varphi'(\Gamma)$ up to $G$-conjugacy, 
when $n \geq 3$. 
The argument in this section completes the proofs of the statements postponed in  
Section~\ref{section:(G,Gamma)-deform-spin},  including
Lemma~\ref{lemma:g-phi-algebraic}  
and Theorem~\ref{theorem:bending}~\ref{item:bending-realize-Gphi}.

Lemma~\ref{lemma:g-phi-algebraic} was necessary for the definition of the maximal Zariski-closure $G^{\varphi}$
in Definition~\ref{def:g-phi}
and is proven in Section~\ref{section:proof:g-phi-alebraic}.   

The proof of Theorem~\ref{theorem:bending}~\ref{item:bending-realize-Gphi} consists of three steps.  
In Section~\ref{section:outline-bending}, we explain the role of each step.  
The details of each step are provided in
Sections~\ref{section:proof-step1}, \ref{section:step2}, and \ref{section:step3}, where we also
 complete the proof of the theorem.

\label{section:deform-spin-lattice}
\subsection{Proof of \texorpdfstring{Lemma~\ref{lemma:g-phi-algebraic}}{Lemma 3.6}}
    \label{section:proof:g-phi-alebraic}
    In this section, we prove Lemma~\ref{lemma:g-phi-algebraic}. 

    Let $\mathbf{G}=\mathbf{S}\cdot \mathbf{U}$ be a Levi decomposition, 
    where $S=\mathbf{S}(\R)$ is a maximal real reductive algebraic subgroup containing  $\varphi(Spin(n,1))$ 
    and $\mathbf{U}$ is 
    the unipotent radical of 
    $\mathbf{G}$. Then the real Lie algebra $\mathfrak{s}^{\varphi}$ is defined in a manner similar to $\mathfrak{g}^{\varphi}$ in Definition~\ref{definition:lie-g-phi}, using $\varphi\colon Spin(n,1)\to S$.
    Lemma~\ref{lemma:g-phi-algebraic} asserts that the analytic subgroup $G_{\mathbb{C}}^{\varphi}$ of $G_{\mathbb{C}}$ corresponding to $\mathfrak{g}^{\varphi} \otimes_{\mathbb{R}} \mathbb{C}$ is Zariski-closed in $G_{\mathbb{C}}$
    and is defined over $\R$.
    
    First, let us show that the analytic subgroup $S^{\varphi}_{\C}$ of $S_{\C}=\mathbf{S}(\C)$ corresponding to 
    $\mathfrak{s}^{\varphi} \otimes_{\R} \C$ is Zariski-closed.
    Take a Cartan involution $\theta$ of $\mathbf{S}(\R)$ 
    which preserves $\varphi(Spin(n,1))$. 
    By definition, $\theta$ also preserves $\mathfrak{s}^{\varphi}$, 
    and thus we see that the Lie algebra $\mathfrak{s}^{\varphi}$ is reductive. 
    Let $(S_{\C}^{\varphi})_{ss}$ be the analytic subgroup of $\mathbf{S}(\C)$ corresponding to the semisimple part of $\mathfrak{s}^{\varphi} \otimes_{\R} \C$. Then, 
    by Lemma~\ref{lemma:analytic->algebraic}~\ref{item:semisimple->Zariski-closed} in Appendix~\ref{section:algebraic_group}, we see that $(S_{\C}^{\varphi})_{ss}$ is Zariski-closed.
    Furthermore, let us consider the identity component $\mathbf{Z}$ of the following 
    real algebraic group in the Zariski-topology:
    \begin{align*}
    \{g \in \mathbf{S} &\mid 
    g\text{ centralizes } d\varphi(\mathfrak{spin}(n,1)) \text{ and all the submodules }\\
    &\text{of $\mathfrak{s}$ isomorphic to some spherical harmonics of $\mathfrak{spin}(n,1)$}
    \}.
    \end{align*} 
    By the definition of $\mathfrak{s}^{\varphi}$, 
    we see that the Lie algebra of $\mathbf{Z}(\C)$ coincides with the center of $\mathfrak{s}^{\varphi}\otimes_{\R}\C$.
    Since $\mathbf{Z}(\C)$ is also connected in the usual topology by Lemma~\ref{lem:Zariski-connected}, 
    $\mathbf{Z}(\C)$ is the analytic subgroup corresponding to the center of $\mathfrak{s}^{\varphi}\otimes_{\R}\C$. Hence, 
    $S_{\C}^{\varphi}=(S_{\C}^{\varphi})_{ss} \cdot \mathbf{Z}(\C)$, and thus $S_{\C}^{\varphi}$ is Zariski-closed.

    Next, let us show our assertion.
    Denote by $\mathfrak{u}$ the nilpotent radical of $\mathfrak{g}$.
    Since $\mathfrak{u}$ is stable under the adjoint action of  $d\varphi(\mathfrak{spin}(n,1))$, 
    we get a decomposition 
    \[
    \mathfrak{g}^{\varphi} = \mathfrak{s}^{\varphi} + \mathfrak{u}^{\varphi},
    \]
    where $\mathfrak{u}^{\varphi}$ is a Lie subalgebra of $\mathfrak{u}$.
    Let $U^{\varphi}_{\C}$ be the analytic subgroup of $\mathbf{U}(\C)$
    corresponding to $\mathfrak{u}^{\varphi} \otimes_{\R} \C$.
    Here we note that the exponential map $\exp\colon \mathfrak{u} \otimes_{\R} \C \rightarrow \mathbf{U}(\C)$ 
    gives an isomorphism of algebraic varieties. 
    Hence, $U^{\varphi}_{\C}=\exp(\mathfrak{u}^{\varphi} \otimes_{\R} \C)$ 
    is Zariski-closed, and thus 
    $G^{\varphi}_{\C}=S^{\varphi}_{\C}\cdot U^{\varphi}_{\C}$ 
    is also Zariski-closed. 
    Since $G^{\varphi}_{\C}$ is connected in the usual topology and 
    its Lie algebra is defined over $\R$, 
    $G^{\varphi}_{\C}$ is stable under the complex conjugation of $G_{\C}$. Thus 
    the Zariski-closed subset $G^{\varphi}_{\C}$ is defined over $\R$. Thus Lemma~\ref{lemma:g-phi-algebraic} is proved.

\subsection{
Outline of the proof of 
\texorpdfstring{
Theorem~\ref{theorem:bending}~\ref{item:bending-realize-Gphi}}{Theorem 3.8 (1)}}
\label{section:outline-bending}
This section provides an outline of Theorem~\ref{theorem:bending}~\ref{item:bending-realize-Gphi}.
We recall the setting: $G$ is a Zariski-connected real algebraic group, $L=Spin(n,1)$, 
$\varphi\colon L\rightarrow G$ is a homomorphism, 
and $G^{\varphi}$ is the algebraic group introduced in Definition~\ref{def:g-phi}. 
What we need to do is to find 
a torsion-free cocompact discrete subgroup 
$\Gamma$ and a small deformation of 
$\varphi|_{\Gamma}$ such that 
the Zariski-closure of $\varphi'(\Gamma)$ coincides with $G^{\varphi}$. 

The proof of Theorem~\ref{theorem:bending}~\ref{item:bending-realize-Gphi} consists of the following three steps: 
\begin{description}
    \item[Step~1] Construction of a cocompact discrete subgroup $\Gamma$ of $Spin(n,1)$;
    \item[Step~2] An overview of Johnson--Millson's bending deformation; 
    \item[Step~3] 
    Finding the Zariski-closure of a specific deformation of $\varphi|_{\Gamma}$.
\end{description}
First, we will outline the summary of each step, 
and then proceed with the proofs of each step in the following sections.

\subsection*{Step~1 (Construction of 
\texorpdfstring{$\Gamma$}{Gamma})}

Let $L=Spin(n,1)$ ($n\geq 2$), $L'=Spin(n-1,1)$ a subgroup of $L$, $L_{K}=Spin(n)$
a maximal compact subgroup of $L$, and 
$L'_{K}=L'\cap L_{K}\simeq Spin(n-1)$. 
Then $X'=L'/L'_{K}$ is a totally geodesic hypersurface of 
the $n$-dimensional hyperbolic space $X=L/L_{K}$.
For a torsion-free cocompact discrete subgroup $\Gamma$ of $L$, the quotient space $\Gamma\backslash X$
is an orientable connected compact hyperbolic $n$-manifold.

We consider the following condition for $\Gamma$: 
\begin{condition}
    \label{condition-ck}
    Let $k$ be a positive integer. 
    There exist $k$ orientable connected totally geodesic closed hypersurfaces $N_{1},\ldots, N_{k}$  of $M:=\Gamma\backslash X$ such that 
\begin{itemize}
    \item $N_{i}\cap N_{j} = \emptyset$ for any $i\neq j$.
    \item $M\smallsetminus (N_{1}\cup\cdots \cup N_{k})$ is connected.
\end{itemize}
\end{condition}
Step~1 is to prove the following:
\begin{theorem}
\label{theorem:Millson-spin}
For any positive integer $k$, 
there exists a torsion-free cocompact arithmetic subgroup $\Gamma\equiv\Gamma_{k}$ of $L=Spin(n,1)$ satisfying 
Condition~\ref{condition-ck}.
\end{theorem}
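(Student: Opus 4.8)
The plan is to build the arithmetic lattice $\Gamma$ over a totally real number field using a quadratic form of signature $(n,1)$ at one archimedean place and definite at all others, following the classical construction of Millson~\cite{JoMi84}, and then to lift it to $Spin(n,1)$ and arrange Condition~\ref{condition-ck} by passing to a suitable finite-index subgroup. Concretely, I would take a totally real field $F$ with a prime element, and a diagonal quadratic form $q = \langle a_1,\dots,a_n,-a_{n+1}\rangle$ over $F$ whose signature at one place $v_0$ is $(n,1)$ and whose signature at every other archimedean place is $(n+1,0)$; then $\mathbf{Spin}(q)$ is an $F$-algebraic group whose group of $\mathcal{O}_F$-points $\Gamma_0$ projects to a cocompact arithmetic lattice in $Spin(n,1)$ (cocompactness by Godement's criterion, since $q$ is anisotropic over $F$). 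Choosing the form so that the subform $\langle a_1,\dots,a_{n-1},-a_{n+1}\rangle$ is also defined over $F$ gives an $F$-embedding $\mathbf{Spin}(n-1,1)\hookrightarrow \mathbf{Spin}(q)$, whose real points cut out a totally geodesic hypersurface $X'\subset X$; its image $N$ in $M=\Gamma_0\backslash X$ is a closed totally geodesic hypersurface (closed because the smaller group is again an arithmetic lattice in $Spin(n-1,1)$). To guarantee torsion-freeness I would invoke the sufficient condition from Appendix~\ref{section:torsion-arithmetic} (a congruence condition modulo a suitable prime ideal), replacing $\Gamma_0$ by a principal congruence subgroup if necessary.

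The next step is to produce the $k$ disjoint copies $N_1,\dots,N_k$ and to arrange that the complement stays connected. Here I would use a separability/LERF-type argument for the hypersurface subgroup: since $Spin(n,1)$-lattices of this arithmetic type are known to be subgroup separable with respect to geometrically finite subgroups (Bergeron--Haglund--Wise), the subgroup $\Gamma\cap Spin(n-1,1)$ is separable in $\Gamma$, so after passing to a further finite-index subgroup one can make the (finitely many) translates of $N$ that would otherwise intersect $N$ disjoint, and iterate to obtain $k$ pairwise-disjoint totally geodesic closed hypersurfaces all lifting from the same geodesic hypersurface in a fixed commensurable manifold. Connectedness of $M\smallsetminus(N_1\cup\cdots\cup N_k)$ can be forced by an elementary covering-space trick: pass to a finite cyclic cover dual to the class $[N_1]+\cdots+[N_k]\in H_{n-1}(M;\Z/2)$ (or its integral lift), which cuts the manifold along the hypersurfaces into a single connected piece; alternatively one adds one more hypersurface in a homologically independent class to kill the disconnecting class. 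This is essentially the Johnson--Millson bending setup, and the hypersurfaces can be chosen so that cutting along their union yields a connected manifold with boundary.

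Finally, I would lift the whole picture to $Spin(n,1)$. The arithmetic group $\Gamma_0$ a priori lives in $SO_0(n,1)$ (or $\mathbf{SO}(q)(\mathcal O_F)$); but because we realized it as $\mathbf{Spin}(q)(\mathcal O_F)$ from the start (which maps to $SO_0(n,1)$ with kernel of order $2$), we get a lattice already sitting inside $Spin(n,1)$, circumventing the lifting obstruction recorded in Remark~\ref{rem:lifting_to_spin}~\ref{item:lift-counterexample} for $n\ge 4$. One checks that the hypersurface subgroups $\mathbf{Spin}(n-1,1)(\mathcal O_F)$ sit inside $\Gamma$ compatibly, and that all the finite-index refinements used above can be performed inside $Spin(n,1)$, so Condition~\ref{condition-ck} holds for the resulting $\Gamma\equiv\Gamma_k$.

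\medskip
\textbf{Main obstacle.} I expect the delicate point to be simultaneously achieving (a) pairwise disjointness of the $N_i$, (b) connectedness of the complement, and (c) torsion-freeness, all within $Spin(n,1)$ rather than $SO(n,1)$, while keeping the group arithmetic; the disjointness and connectedness are at odds (disjointness wants a large finite cover, connectedness wants to control homology), and reconciling them needs the separability input together with a careful choice of the cyclic cover dual to the correct $\Z/2$- or $\Z$-homology class. Everything else is a standard, if technical, application of the arithmetic-of-quadratic-forms machinery and the Johnson--Millson bending construction.
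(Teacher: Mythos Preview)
Your overall architecture matches the paper's: construct the lattice directly inside $\mathbf{Spin}(q)$ over a totally real field (the paper does this via the even Clifford algebra, setting $\Gamma_\Lambda = C_{\even}(\Lambda) \cap Spin(V)$), produce the totally geodesic hypersurface from a codimension-one subform, and pass to congruence subgroups for torsion-freeness (Lemma~\ref{lemma:torsion-free}). Your observation that realizing $\Gamma$ in $\mathbf{Spin}$ from the outset sidesteps the lifting obstruction of Remark~\ref{rem:lifting_to_spin}\,\ref{item:lift-counterexample} is exactly the point the paper makes.

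The genuine gap is the connectedness step. Passing to a cyclic cover dual to $[N_1]+\cdots+[N_k]$ does not make the complement of the (lifted) hypersurfaces connected; what you would actually need is that the individual classes $[N_i]$ become linearly independent in $H_{n-1}$, and a cover dual to their \emph{sum} gives no control over this. Your alternative (``add one more hypersurface to kill the disconnecting class'') likewise does not increase connectedness of a complement. The paper, following Millson's 1976 argument, uses a different device: the Clifford reflection $\tau(x)=e_0 x e_0^{-1}$ descends to an involutive isometry of $M$ fixing the base hypersurface $N$ pointwise, and if the translates $\alpha_i(N)$ are chosen $\tau$-stable with $\tau$ orientation-preserving on each, then Lemma~\ref{lemma:millson-main} forces $M\smallsetminus\bigcup_i\alpha_i(N)$ to be connected. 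Producing such $\alpha_i$ is where the arithmetic enters: strong approximation for $\mathbf{Spin}_V$ (Proposition~\ref{prop:Gamma/GammaI}) identifies the relevant congruence quotient with a product of spin groups over finite fields, and the $\alpha_i$ are then read off from the explicit coset description \eqref{eq:strong-approximation}, with the $\tau$-compatibility checked by a direct Clifford-algebra computation. Your separability input via Bergeron--Haglund--Wise could arrange disjointness, but it does not see $\tau$ and so cannot feed into this mechanism; the paper stays entirely within the congruence framework and obtains disjointness and $\tau$-compatibility simultaneously (Proposition~\ref{prop:Ck}, Example~\ref{example:desired-Gamma}).
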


\begin{example}
    \label{example:millson-n=2}
    When $n=2$, Theorem~\ref{theorem:Millson-spin} clearly holds.
In fact, let $M=\Sigma_k$ be an orientable 
    compact hyperbolic Riemann surface of genus $k \ge 2$,
    and  we express  $\pi_{1}(\Sigma_{k})$ in terms of generators and relations as follows: 
    \[
    \pi_{1}(\Sigma_{k}) = \langle a_{1},b_{1},\ldots, a_{k},b_{k} \mid [a_{1},b_{1}]\cdots[a_{k},b_{k}]= 1\rangle.
    \]
    Let $N_i$ be a simple closed geodesic representing the free homotopy class of $a_i$ for each $i=1,\ldots,k$.
    Since the holonomy representation 
    $\pi_{1}(\Sigma_{k})\rightarrow PSL(2,\R)$
    can be lifted to 
    $SL(2,\R)\simeq Spin(2,1)$, Theorem~\ref{theorem:Millson-spin} 
    holds when $n=2$.
\end{example}

We shall prove Theorem~\ref{theorem:Millson-spin}
in Section~\ref{section:proof-step1}. 
When $n\geq 3$, 
we will actually construct such a group
(see Example~\ref{example:desired-Gamma}).

Theorem~\ref{theorem:Millson-spin}
was originally proved by Millson~\cite{Millson76} where he treated the case where $L=SO(n,1)$.

\begin{remark}
    \label{remark:lift}
    \begin{enumerate}[label=(\arabic*)]
        \item 
        Theorem~\ref{theorem:Millson-spin} in the case $L=Spin(n,1)$ implies 
        an analogous result in the case where $L=SO(n,1)$.
        \item 
        As seen in Remark~\ref{rem:lifting_to_spin}~\ref{item:lift-counterexample},  
        the converse implication holds for $n=2,3$, but fails for $n \geq 4$.
    \end{enumerate}
\end{remark}

We shall need a torsion-free cocompact discrete subgroup $\Gamma$ in $L=Spin(n,1)$
rather than in $L=SO(n,1)$, as stated in Theorem~\ref{theorem:Millson-spin}.
As we will see in Section~\ref{section:deform-noncptCK}, for a certain class of homogeneous spaces $X=G/H$, any non-abelian and standard discontinuous groups are virtually contained in $Spin(n,1)$ but not in $SO(n,1)$ for some $n$. 
For example, deformations of compact standard quotients of $SO(4,4)/SO(3,4)$ are obtained via $Spin(4,1)$  (see Section~\ref{section:deform-cptCK}), and those of non-compact standard quotients of $SO(8,8)/SO(7,8)$ via $Spin(6,1)$ (see Section~\ref{section:deform-noncptCK}). 

\subsection*{Step~2 (An overview of bending construction)}
In Step~2, based on the geometric idea of 
bending construction by 
Johnson--Millson~\cite{JoMi84}, we
reformulate a general principle of 
the construction of small deformation, 
that we shall use (Lemma~\ref{lemma:bending}).

    From now on, fix $k \in \mathbb{N}$. 
    By Theorem~\ref{theorem:Millson-spin}, 
    we take  a torsion-free cocompact discrete subgroup $\Gamma\equiv \Gamma_{k}$ of $Spin(n,1)$  
    satisfying Condition~\ref{condition-ck} for $k \in \mathbb{N}$.
    Let $X=Spin(n,1)/Spin(n)$, $M=\Gamma\backslash X$ and $N_{1},\ldots, N_{k}$ as in Condition~\ref{condition-ck}. 

    For each $i = 1, \dots, k$,  
    by the tubular neighborhood theorem,  
    we choose an open neighborhood $\tilde{N}_{i}^{(4)}$ of $N_{i}$ in $M$  
    and a diffeomorphism  
    \[
    f_{i}\colon N_{i} \times (-4,4) \simeq \tilde{N}_{i}^{(4)}
    \]
    such that $f_{i}(N_{i}\times \{0\})$ coincides with $N_i$.
    We define
    \[
    \tilde{N}_{i} := f_{i}(N \times (-2,2)),
    \]
    fix $y_{i} \in N_{i}$, and put 
    \[ 
    y_{i,+} := f_{i}(y_{i},1) \text{ and } y_{i,-} := f_{i}(y_{i},-1).
    \]  
    Since $N_{1},\ldots,N_{k}$ are disjoint and since 
    \[
    S := M \smallsetminus (N_{1} \cup \dots \cup N_{k})
    \]
    is path-connected,  
    we may and do assume 
    \begin{itemize}
        \item $\tilde{N}_{1},\ldots,\tilde{N}_{k}$ are disjoint;
        \item $M \smallsetminus (\tilde{N}^{(4)}_{1} \cup \dots \cup \tilde{N}^{(4)}_{k})$ is path-connected.
    \end{itemize}

    \begin{definition}
        \label{def:nu_i}
        Fix a base point $x_0 \in M \smallsetminus (\tilde{N}^{(4)}_{1} \cup \dots \cup \tilde{N}^{(4)}_{k})$.  
        For each $i = 1, \dots, k$, 
        we define the oriented loop $\nu_{i}$ in $M$, starting at $x_0$,  
        as the composition of the following paths:      
        \begin{itemize}
            \item a path from $x_0$ to $f_{i}(y_{i},3)$  
            inside $M \smallsetminus f_{i}(N_{i} \times (-3,3))$;
            \item the path from $f_{i}(y_{i},3)$ to $f_{i}(y_{i},-3)$,  
            given by $f_{i}(\{y_{i}\} \times [-3,3])$;
            \item a path from $f_{i}(y_{i},-3)$ to $x_0$  
            inside $M \smallsetminus f_{i}(N_{i} \times (-3,3))$.
        \end{itemize}
Furthermore, we can take the loop $\nu_{i}$ to be a closed \emph{submanifold} of $M$.  
Let $\nu_{i,+}$ denote the segment of the loop $\nu$ from $x_0$ to $y_{i,+}$,  
and let $\nu_{i,-}$ denote the segment of $\nu$ from $y_{i,-}$ back to $x_0$.  

    \end{definition}
    
    Figure~\ref{fig:hnn2} summarizes some of the notation introduced so far.
    \begin{figure}
        \centering
        \includegraphics[width=5cm]{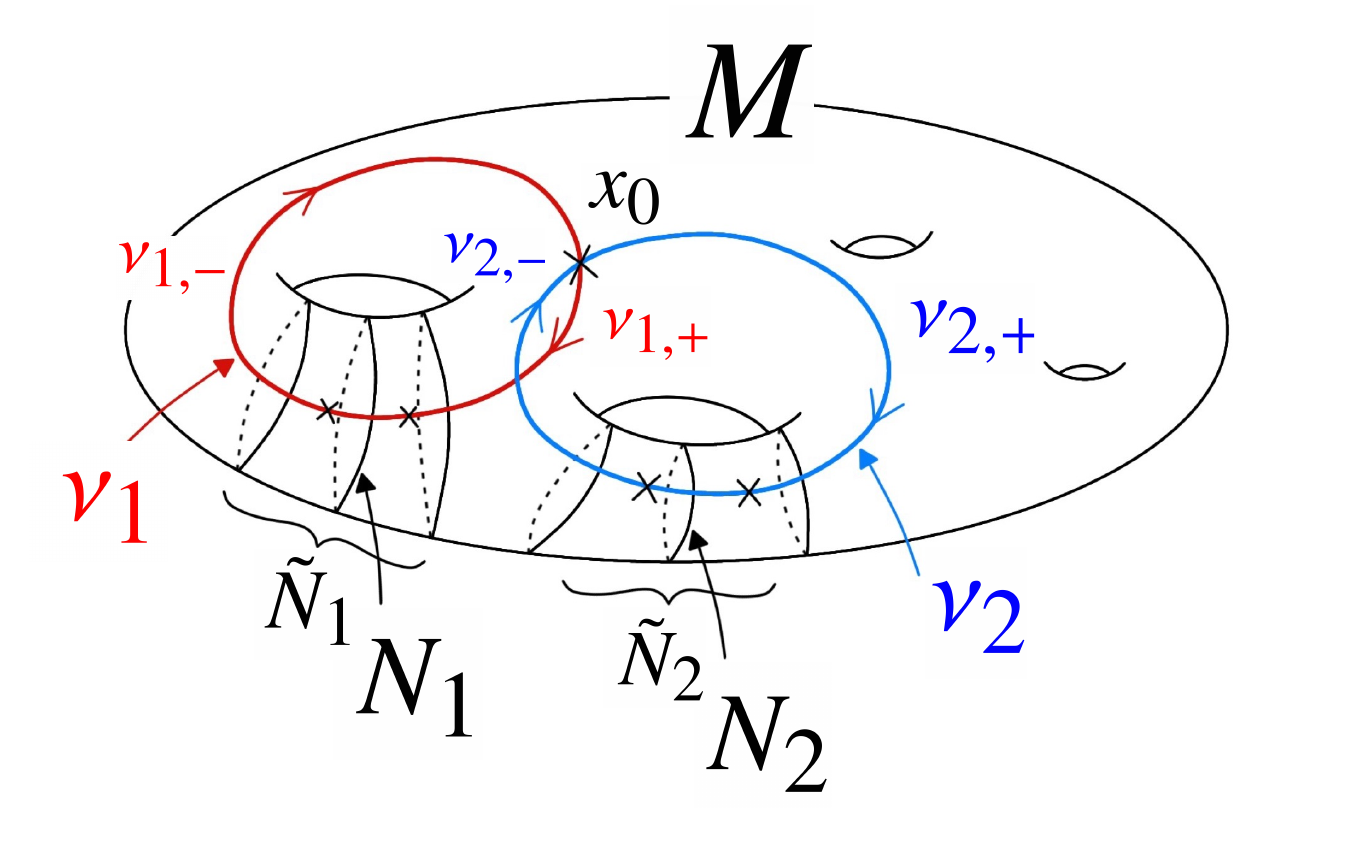}
        \caption{The loops $\nu_{1}$ and $\nu_{2}$ in the case $k=2$.}
        \label{fig:hnn2}
    \end{figure}

    Let $i=1,\ldots,k$. 
    For an oriented loop $\ell$ starting at $y_{i}$ in $N_{i}$,  
    $f_{i}(\ell, 1)$ and $f_{i}(\ell, -1)$ form oriented loops in $M$ starting at $y_{i,+}$ and $y_{i,-}$, respectively.
    With the above paths $\nu_{i,+}$ and $\nu_{i,-}$, 
    we define $j_{i,+},j_{i,-}\colon \pi_{1}(N_{i},y_{i})\rightarrow \pi_{1}(S,x_{0})$ 
    as two group homomorphisms 
    \begin{align}
    \label{def:j+-}
    j_{i,+}([\ell])&:= [\nu_{i,+}^{-1} \circ f_{i}(\ell,1) \circ \nu_{i,+}], \\
    j_{i,-}([\ell])&:= [\nu_{i,-}\circ f_{i}(\ell,-1)\circ\nu_{i,-}^
    {-1}]. \nonumber
    \end{align}

    Here, $b^{-1}$ denotes the path obtained by reversing the orientation of the path $b$, and $[c]$ denotes the homotopy class defined by the loop $c$.

For the deformation of a representation of $\Gamma$, we require the following two lemmas.  
As discussed later, via deck transformations,  
the discontinuous group $\Gamma$ for the hyperbolic space $X$ can be identified with 
the fundamental group $\pi_1(M)$ of $M=\Gamma\backslash X$.  
Lemma~\ref{lemma:hnn-repeat} states that the group structure of $\pi_1(M)$  
is obtained as an iterated HNN extension of $\pi_1(S)$. 
Lemma~\ref{lemma:bending} utilizes this group structure
to explicitly construct a small deformation of a representation of $\Gamma\simeq \pi_1(M, x_0)$ into a Lie group. For the proofs, see Section~\ref{section:step2}.  

\begin{lemma}
\label{lemma:hnn-repeat}
Let $F_k$ be the free group generated by the words $a_1, \dots, a_k$.  
We define a homomorphism $\Psi$
from the free product 
$\pi_1(S,x_0) * F_k$
to $\pi_1(M, x_0)$,
\begin{equation}
\label{eqn:Psi_pi_1}
\Psi \colon \pi_1(S,x_0) * F_k \to \pi_1(M, x_0),  
\end{equation}
as the homomorphism induced by the natural map
$\pi_1(S,x_0) \to \pi_1(M,x_0)$
and by $\Psi(a_i) = [\nu_i]$  for each $i = 1, \dots, k$.
Then $\Psi$ is surjective
and its kernel is the normal subgroup $\mathcal{N}$ generated by
\[
a_i j_{i,+}([\ell]) a_i^{-1} j_{i,-}([\ell])^{-1} \ \ \   
\text{ for } [\ell] \in \pi_1(N_i,y_{i}) \text{ and } i = 1, \dots, k.
\]

\end{lemma}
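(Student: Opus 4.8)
The plan is to obtain Lemma~\ref{lemma:hnn-repeat} as a $k$-fold iteration of the single-hypersurface statement of Appendix~\ref{section:hnn-extension}, via the Seifert--van Kampen theorem. I would work with the open cover of $M$ consisting of $U := S = M\smallsetminus(N_1\cup\cdots\cup N_k)$ together with the pairwise disjoint tubular neighbourhoods $V_i := \tilde N_i = f_i(N_i\times(-2,2))$, $i=1,\dots,k$; these cover $M$ since $U\cup\bigcup_i V_i \supseteq (M\smallsetminus\bigcup_i N_i)\cup\bigcup_i N_i = M$, and $x_0\in U$. Each $V_i$ deformation retracts onto $f_i(N_i\times\{0\})=N_i$, while $U\cap V_i = f_i(N_i\times((-2,2)\smallsetminus\{0\}))$ is \emph{disconnected}, with the two path-components $C_i^{+}:=f_i(N_i\times(0,2))\ni y_{i,+}$ and $C_i^{-}:=f_i(N_i\times(-2,0))\ni y_{i,-}$, each retracting onto a copy of $N_i$. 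Because the $V_i$ are disjoint and $U$ is path-connected, van Kampen for this cover yields a presentation of $\pi_1(M,x_0)$ with generating set $\pi_1(U,x_0)$ together with one extra generator $a_i$ per index $i$ (a stable letter recording transport between the two components of $U\cap V_i$), subject to relations asserting that, for each $i$ and each class in $\pi_1(N_i,y_i)$, its two images in $\pi_1(U,x_0)$ under the inclusions of $C_i^{+}$ and of $C_i^{-}$ are intertwined by $a_i$. There is no interaction between distinct indices since $V_i\cap V_j=\emptyset$; equivalently, one glues the $k$ tubes back one at a time, checking by induction that the intermediate spaces remain path-connected (each is the closure in itself of the path-connected $U$, its complement being a finite union of the codimension-one $N_j$), so each step is an HNN-type van Kampen gluing contributing exactly one new generator.

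I would then match this abstract presentation with the concrete data of the lemma. First, the two edge homomorphisms $\pi_1(N_i,y_i)\to\pi_1(U,x_0)=\pi_1(S,x_0)$ are, after fixing the connecting paths $\nu_{i,+}$ from $x_0$ to $y_{i,+}$ and $\nu_{i,-}$ from $y_{i,-}$ to $x_0$ of Definition~\ref{def:nu_i}, exactly the maps $j_{i,+}$ and $j_{i,-}$ of \eqref{def:j+-}: conjugating a loop $f_i(\ell,1)$ in $C_i^{+}$ (respectively $f_i(\ell,-1)$ in $C_i^{-}$) by $\nu_{i,+}$ (respectively $\nu_{i,-}$) is precisely the basepoint transport built into those formulas. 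Second, the stable letter $a_i$ is represented by $\nu_i$: indeed $\nu_i$ leaves $x_0$ inside $S$, enters $V_i$ once transversally through $C_i^{+}$ near $y_{i,+}$, runs down the fibre $f_i(\{y_i\}\times[-3,3])$ to $C_i^{-}$, and returns to $x_0$ inside $S$, which is exactly one transverse crossing of $N_i$ --- the meaning of the $i$-th stable letter. Substituting $\pi_1(U,x_0)=\pi_1(S,x_0)$, $a_i\mapsto[\nu_i]$, and $j_{i,\pm}$ for the edge maps, the presentation becomes precisely the statement that $\Psi$ of \eqref{eqn:Psi_pi_1} is surjective with kernel the normal closure of the elements $a_i\,j_{i,+}([\ell])\,a_i^{-1}\,j_{i,-}([\ell])^{-1}$ over all $[\ell]\in\pi_1(N_i,y_i)$ and all $i$. (We neither need nor use $\pi_1$-injectivity of the $N_i$; that would only be required to call $\mathcal{N}$ the relations of an honest HNN decomposition.)

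The part I expect to demand the most care --- and which I would either relegate to, or import verbatim from, Appendix~\ref{section:hnn-extension} in the case $k=1$ --- is the orientation-and-side bookkeeping in the matching step: verifying that the van Kampen stable letter agrees with $[\nu_i]$ \emph{on the nose} and that the two edge maps are $j_{i,+}$ and $j_{i,-}$ in \emph{this} order (not interchanged), so that the resulting relator is literally $a_i\,j_{i,+}([\ell])\,a_i^{-1}\,j_{i,-}([\ell])^{-1}$ rather than a conjugate or an inverse of it. Once the $k=1$ case is pinned down, the reduction of the general statement to it is purely formal, using only the disjointness of the tubes $\tilde N_i$ and the path-connectedness hypothesis of Condition~\ref{condition-ck}.
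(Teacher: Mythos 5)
Your proposal is correct, and its inductive variant is essentially the paper's proof: the paper also proceeds by induction on $k$, peeling off one hypersurface at a time and invoking the single-hypersurface statement (Proposition~\ref{prop:hnn}) at each step, with the disjointness of the tubes and the connectedness from Condition~\ref{condition-ck} making the reduction formal. The one point worth flagging is your headline version, which applies van Kampen simultaneously to the cover $\{S,\tilde N_1,\dots,\tilde N_k\}$ and accepts that each $S\cap\tilde N_i$ is disconnected: the classical Seifert--van Kampen theorem requires path-connected intersections, so this route needs the groupoid (or non-connected-intersection) version of the theorem to produce the stable letters. The paper sidesteps that entirely in Proposition~\ref{prop:hnn} by enlarging the piece containing $N$ to $\tilde N^{(2)}\cup U_{\nu}$, a tubular neighbourhood of $N$ together with one of the loop $\nu$, so that its intersection with $S$ becomes path-connected and only the classical theorem is used. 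Since you explicitly defer the $k=1$ orientation-and-side bookkeeping to Appendix~\ref{section:hnn-extension}, where that trick lives, your argument is complete as a reduction to Proposition~\ref{prop:hnn}, and nothing essential is missing.
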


To state the second lemma, we introduce some notation.  
Let $L' := Spin(n-1,1)$ be regarded as a subgroup of $L := Spin(n,1)$, 
and let $X' := Spin(n-1,1)/Spin(n-1)$ be viewed
as a totally geodesic hypersurface of the hyperbolic space $X = Spin(n,1)/Spin(n)$.  
Recall that $N_1, \dots, N_k$ are orientable, connected, totally geodesic hypersurfaces of the hyperbolic manifold  
$M = \Gamma \backslash X$.  
Hence, for each $i = 1, \dots, k$, there exist an element $\alpha_i \in L$  
and a diffeomorphism  
$N_i \simeq (\alpha_i L' \alpha_i^{-1} \cap \Gamma) \backslash \alpha_i X'$  
such that the following diagram commutes:
\begin{equation}
\label{eq:N_{i}-isom}
\xymatrix{
N_{i} \ar@{^{(}->}[r]^{\text{inclusion}} \ar[d]^{\simeq} & M \ar[d]^{=}  \\ 
(\alpha_{i}L'\alpha_{i}^{-1} \cap \Gamma)\backslash \alpha_{i}X'
\ar[r]^(.7){\text{natural}} & 
\Gamma\backslash X. \ar@{}[lu]|{\circlearrowright}
}
\end{equation}

Now we recall how the discontinuous group $\Gamma$ for $X$ is identified with the fundamental group $\pi_1(M, x_0)$ of $M = \Gamma \backslash X$.
Since the hyperbolic space $X$ is simply-connected, 
the quotient map $\pi_{\Gamma}\colon X\rightarrow \Gamma\backslash X=M$ is a universal covering. 
We fix a point $\tilde{x}_{0}\in X$ in the discrete fiber $X_{x_{0}} :=\pi_\Gamma^{-1}(x_0)$ of $x_{0}\in M$. 
Then, we get a group isomorphism $D_{\tilde{x}_{0}}^{M}\colon \pi_{1}(M,x_{0})\rightarrow \Gamma$, by the following relation of the deck transformation:
\begin{equation}
\label{def:deck-transformation}
[\ell]\cdot \tilde{x}_{0}
=D_{\tilde{x}_{0}}^{M}([\ell])
\cdot \tilde{x}_{0}
\quad\text{for $[\ell]\in \pi_{1}(M,x_{0})$}.
\end{equation}

\begin{lemma}
    \label{lemma:bending}
    Let $G$ be a Lie group,
    $\varphi\colon \Gamma\rightarrow G$ a group homomorphism, $L'=Spin(n-1,1)$,
    and $\alpha_{1},\ldots, 
    \alpha_{k}\in L=Spin(n,1)$ as above. 
    For each $i=1,\ldots,k$, we take a possibly zero element $v_{i}$ of $ \mathfrak{g}$ such that
    \begin{equation}
    \label{assumption:lemma:bending}
    v_{i}\text{ is }\varphi(\alpha_{i}L'\alpha_{i}^{-1} \cap \Gamma)\text{-invariant}.
    \end{equation}
    Retain the notation as in Lemma~\ref{lemma:hnn-repeat}.
    For $t\in \R$, we put
    \begin{enumerate}[label=(\alph*)]
        \item 
        \label{item:lemma:bending-S}
$\varphi_{t} \equiv \varphi$
 on $D_{\tilde{x}_{0}}^{M} \circ \Psi (\pi_{1}(S,x_{0}))$;
        \item 
        $\varphi_{t}(D_{\tilde{x}_{0}}^{M}([\nu_{i}])) := \varphi(D_{\tilde{x}_{0}}^{M}([\nu_{i}]))\exp(tv_{i})$
        for each $i=1,\ldots,k$. 
    \end{enumerate}
    Then $\varphi_{t}$ induces a group homomorphism 
    $\varphi_{t}\colon \Gamma\rightarrow G$. 
\end{lemma}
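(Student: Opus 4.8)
The plan is to work on the fundamental group $\pi_{1}(M,x_{0})$, identified with $\Gamma$ via the deck-transformation isomorphism $D_{\tilde{x}_{0}}^{M}$ of \eqref{def:deck-transformation}, and to exploit the iterated HNN presentation furnished by Lemma~\ref{lemma:hnn-repeat}. First I would define a homomorphism
\[
\hat\varphi_{t}\colon \pi_{1}(S,x_{0}) * F_{k}\longrightarrow G
\]
by the universal property of the free product: on the factor $\pi_{1}(S,x_{0})$ set $\hat\varphi_{t}:=\varphi\circ D_{\tilde{x}_{0}}^{M}\circ\Psi$, and on the free generators set $\hat\varphi_{t}(a_{i}):=\varphi\bigl(D_{\tilde{x}_{0}}^{M}([\nu_{i}])\bigr)\exp(tv_{i})$ for $i=1,\dots,k$. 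Since $\Psi$ is surjective with kernel the normal closure $\mathcal{N}$ of the elements $a_{i}\,j_{i,+}([\ell])\,a_{i}^{-1}\,j_{i,-}([\ell])^{-1}$, it is enough to check that $\hat\varphi_{t}$ annihilates each of these; then $\hat\varphi_{t}$ descends to a homomorphism $\tilde\varphi_{t}\colon\pi_{1}(M,x_{0})\to G$, and $\varphi_{t}:=\tilde\varphi_{t}\circ(D_{\tilde{x}_{0}}^{M})^{-1}$ is a homomorphism $\Gamma\to G$ which, by construction, satisfies conditions \ref{item:lemma:bending-S} and~(b).

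For the annihilation, fix $i$ and $[\ell]\in\pi_{1}(N_{i},y_{i})$, and put $\gamma_{i}:=D_{\tilde{x}_{0}}^{M}([\nu_{i}])$ and $g_{\pm}:=D_{\tilde{x}_{0}}^{M}\bigl(\Psi(j_{i,\pm}([\ell]))\bigr)\in\Gamma$. Applying the homomorphism $D_{\tilde{x}_{0}}^{M}$ to the relation $\Psi\bigl(a_{i}\,j_{i,+}([\ell])\,a_{i}^{-1}\,j_{i,-}([\ell])^{-1}\bigr)=e$ in $\pi_{1}(M,x_{0})$ gives $\gamma_{i}g_{+}\gamma_{i}^{-1}=g_{-}$. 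Using $\varphi(g_{-})^{-1}=\varphi(\gamma_{i})\varphi(g_{+})^{-1}\varphi(\gamma_{i})^{-1}$ one obtains
\begin{align*}
\hat\varphi_{t}\bigl(a_{i}\,j_{i,+}([\ell])\,a_{i}^{-1}\,j_{i,-}([\ell])^{-1}\bigr)
&=\varphi(\gamma_{i})\exp(tv_{i})\,\varphi(g_{+})\,\exp(-tv_{i})\,\varphi(\gamma_{i})^{-1}\,\varphi(g_{-})^{-1}\\
&=\varphi(\gamma_{i})\Bigl[\exp(tv_{i})\,\varphi(g_{+})\,\exp(-tv_{i})\,\varphi(g_{+})^{-1}\Bigr]\varphi(\gamma_{i})^{-1}.
\end{align*}
This is the identity for all $t\in\R$ as soon as $\exp(tv_{i})$ commutes with $\varphi(g_{+})$, i.e. as soon as $\Ad(\varphi(g_{+}))v_{i}=v_{i}$. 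Granting the invariance hypothesis~\eqref{assumption:lemma:bending}, the argument thus reduces to showing that $g_{+}\in\alpha_{i}L'\alpha_{i}^{-1}\cap\Gamma$ for every $[\ell]\in\pi_{1}(N_{i},y_{i})$.

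This last point is the geometric heart. Unwinding \eqref{def:j+-}, the class $\Psi(j_{i,+}([\ell]))\in\pi_{1}(M,x_{0})$ is a conjugate, transported to the basepoint $x_{0}$ along a path $\beta_{i,+}$ from $x_{0}$ to $y_{i}$ (obtained by following $\nu_{i,+}$ and then the segment $f_{i}(\{y_{i}\}\times[1,0])$), of the image of $[\ell]$ under the inclusion-induced map $\pi_{1}(N_{i},y_{i})\to\pi_{1}(M,y_{i})$. Hence $g_{+}$ equals the image of $[\ell]$ under the deck-transformation isomorphism $D_{\tilde{y}_{i}}^{M}$ based at the endpoint $\tilde{y}_{i}\in X$ of the lift of $\beta_{i,+}$ starting at $\tilde{x}_{0}$. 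Now $\pi_{\Gamma}^{-1}(N_{i})$ is the disjoint union of the $\Gamma$-translates of the totally geodesic hypersurface $\alpha_{i}X'$ — disjoint because $N_{i}$ is embedded and $\Gamma$ acts freely on $X$ (being torsion-free, $\Gamma$ does not contain the central involution of $Spin(n,1)$, so it embeds into $SO_{0}(n,1)$). Choosing $\alpha_{i}$ so that $\alpha_{i}X'$ is the component of $\pi_{\Gamma}^{-1}(N_{i})$ containing $\tilde{y}_{i}$ — compatibly with \eqref{eq:N_{i}-isom} — the restriction $\pi_{\Gamma}|_{\alpha_{i}X'}\colon\alpha_{i}X'\to N_{i}$ is the universal covering of $N_{i}$ (since $\alpha_{i}X'\cong X'$ is contractible), with deck group $\mathrm{Stab}_{\Gamma}(\alpha_{i}X')=\alpha_{i}L'\alpha_{i}^{-1}\cap\Gamma$. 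Lifting $[\ell]$ from $\tilde{y}_{i}$ keeps the path inside $\alpha_{i}X'$, so its holonomy in $X\to M$ and its holonomy in $\alpha_{i}X'\to N_{i}$ send $\tilde{y}_{i}$ to the same point; as $\Gamma$ acts freely they coincide, and the latter lies in $\alpha_{i}L'\alpha_{i}^{-1}\cap\Gamma$. Therefore $g_{+}\in\alpha_{i}L'\alpha_{i}^{-1}\cap\Gamma$, so $\Ad(\varphi(g_{+}))v_{i}=v_{i}$ by~\eqref{assumption:lemma:bending}, which completes the descent; the degenerate case $v_{i}=0$ is trivial.

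I expect the main obstacle to be exactly the bookkeeping in the last paragraph — identifying $\Psi\circ j_{i,+}$ precisely as a based conjugate of the inclusion-induced map $\pi_{1}(N_{i})\to\pi_{1}(M)$, and pinning down the lift $\tilde{y}_{i}$ together with a choice of $\alpha_{i}$ compatible with \eqref{eq:N_{i}-isom} so that the holonomy of loops coming from $N_{i}$ lands in $\alpha_{i}L'\alpha_{i}^{-1}\cap\Gamma$ on the nose, and not merely in a $\Gamma$-conjugate of it. The remaining ingredients — the free-product construction, the reduction through the HNN relation of Lemma~\ref{lemma:hnn-repeat}, and the passage from commuting with $\exp(tv_{i})$ to $\Ad$-invariance — are routine.
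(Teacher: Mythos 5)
Your proposal is correct and follows essentially the same route as the paper: verify that the assignments respect the relators of the iterated HNN presentation from Lemma~\ref{lemma:hnn-repeat}, with the key point being that $\exp(tv_i)$ commutes with $\varphi$ of the image of $j_{i,+}$, which in turn rests on showing $D^{M}_{\tilde{x}_{0}}(\Psi(j_{i,+}([\ell])))\in\alpha_i L'\alpha_i^{-1}\cap\Gamma$. Your covering-space/lifting argument for this last inclusion is exactly what the paper encodes in its commutative diagram of deck-transformation isomorphisms, so the two proofs coincide in substance.
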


\subsection*{Step~3 (Zariski-closure of specific deformation)}
Let $G$ be a Zariski-connected real algebraic group, $L:=Spin(n,1)$
and $\varphi\colon L\rightarrow G$ a homomorphism. 
In Step~3, we construct a small deformation $\varphi'$ of $\varphi|_{\Gamma}$
such that the Zariski-closure of 
$\varphi'(\Gamma)$ 
coincides with $G^{\varphi}$ (Proposition~\ref{prop:zariski-closure-gphi}),
where $\Gamma$ is a certain cocompact discrete subgroup of $L$. 
This is achieved by finding appropriate vectors $v_{i}\in \mathfrak{g}$ ($1 \le i \le k$) in the setting of Lemma~\ref{lemma:bending},
considering the full generality of $G$ and  $\varphi\colon L\rightarrow G$.
We shall also optimize the number $k$, which corresponds to the number of totally geodesic hypersurfaces in Condition~\ref{condition-ck} that we use for bending constructions.

\begin{remark}
\label{remark:difference-Kassel}
When ${\mathfrak g}^{\varphi}=\mathfrak g$, 
the conclusion of Proposition~\ref{prop:zariski-closure-gphi}
below implies that $\Gamma$ can be deformed into a Zariski-dense subgroup in $G$. 
The previous results for the case $L=SO(n,1)$, such as
Johnson--Millson~\cite{JoMi84} ($\varphi$ maps into $G=SO(n+1,1)$ or $PSL(n+1, \mathbb{R})$), 
and Kassel~\cite{Kassel12}
($\varphi$ maps into $SO(n,2)$), 
are (implicitly) based on
the following structure in their proofs:
\textit{the $\mathfrak{l}$-module $\mathfrak{g}/\mathfrak{l}$ is an irreducible spherical harmonics module.}
Along the same lines of argument,
this assumption can be relaxed to the existence of an increasing sequence of reductive Lie algebras 
\[
    \mathfrak{l}=\mathfrak{l}^{0}
    \subset \mathfrak{l}^{1}\subset 
    \dots\subset \mathfrak{l}^{m}=\mathfrak{g}
\]
   such that $\mathfrak{l}^{k+1}/\mathfrak{l}^{k}$ is an irreducible $\mathfrak{l}^{k}$-module and that it contains a non-zero spherical harmonics representation of $\mathfrak{l}$.
   For example, this applies to the case
$G=SO(p,q)$ $(p\geq n,\ q\geq 1)$, as discussed in a recent preprint \cite[Appendix~A]{Beyrer-Kassel23} by Beyrer and Kassel.
  
However, in our general setting of $\varphi$ and $G$, we cannot rely on such a restrictive structure for $\mathfrak{l}$ and $\mathfrak{g}$.
    For instance, in the context of Theorem~\ref{thm:Clifford-z.d}, 
    this is not the case for
    the natural homomorphism $\varphi \colon Spin(n,1) \to G(p,q)$ when $p$ and $q$ take arbitrary values.
\end{remark}

To construct a desired small deformation, we first introduce some notation. 

Let $V_{i}$ denote the irreducible spherical harmonics representation of $L:=Spin(n,1)$ of degree $i$ (see Remark~\ref{remark:spherical_harmonics}).
Given a homomorphism $\varphi\colon L\rightarrow G$,
we denote by $Z_G(\varphi(L))$ the identity component (in the Zariski topology) of the centralizer of $\varphi(L)$ in $G$.
For $i \in \N$, let $\mathfrak{g}(V_i)$ denote the isotypic component of $V_i$ in $\mathfrak{g}$.
Then $\mathfrak{g}(V_0)= \mathfrak{z}_{\mathfrak{g}}(d\varphi(\mathfrak{l}))$, the Lie algebra of $Z_G(\varphi(L))$.

We consider a decomposition of $\mathfrak{g}$  
as an $L$-module via $\varphi \circ \operatorname{Ad}$:
\begin{align}
\label{eq:decomposition-g-even}
\mathfrak{g}= d\varphi(\mathfrak{l})
\oplus \mathfrak{z}_{\mathfrak{g}}(d\varphi(\mathfrak{l}))
\oplus \bigoplus_{i=1}^{\infty} 
\mathfrak{g}(V_i)
\oplus (\text{other representations}).
\end{align}
By Definition~\ref{definition:lie-g-phi}, $\mathfrak{g}^\varphi$ is the Lie algebra generated by
\[
d\varphi(\mathfrak{l})
\oplus \mathfrak{z}_{\mathfrak{g}}(d\varphi(\mathfrak{l}))
\oplus \bigoplus_{i=1}^{\infty} 
\mathfrak{g}(V_i).
\]
Needless to say, the summation is finite because $\dim \mathfrak{g} < \infty$.
The multiplicity of the irreducible $L$-module $V_i$ in $\mathfrak{g}$ is denoted by
\[
[\mathfrak g:V_i] :=\dim_{\R} \Hom_{L}(V_i, \mathfrak g).
\]
Then, we may decompose
\[
\mathfrak{g}(V_i)
=
\bigoplus_{j=1}^{[\mathfrak{g}:V_{i}]} V_{i}^{(j)},
\]
where each $V_{i}^{(j)}$ is isomorphic to $V_{i}$.

We set $m := \max_{i\in \N_+}[\mathfrak{g}:V_{i}]$.
We define $k$, for example, by
\begin{equation}
    \label{def:k}
    k:=\max(\eta(Z_G(\varphi(L))), m)  \in \N,
\end{equation}
with the notation as defined in Definition~\ref{def:eta} of Appendix~\ref{section:appendix-Zariski-dense-subgroups}.
We note that $\eta(Z_G(L)) \le 2$ if $G$ is reductive by Theorem~\ref{thm:two_generates_reductive}.

We take $\Gamma$ satisfying Condition~\ref{condition-ck} for $k$ by Theorem~\ref{theorem:Millson-spin}.
Let $\alpha_{1},\ldots,\alpha_{k}$ be elements of $L=Spin(n,1)$ as in \eqref{eq:N_{i}-isom}.

We recall $L'=Spin(n-1,1)$.
For each $i \in \N_+$, we take $v_i^{(1)}, \dots, v_i^{(k)}$ as follows:
$v_i^{(j)}$ is a non-zero 
 $\varphi(\alpha_{i}L'\alpha_{i}^{-1})$-invariant element in $V_{i}^{(j)}$
 for $1 \le j \le [\mathfrak g:V_i]$
 and $v_i^{(j)}:=0$ for $[\mathfrak g:V_i]+1\le j \le k$.
 We note that $v^{(j)}_{i}=0$ for $i\gg 0$.

We take $u_{1}, \dots, u_{k}
\in \mathfrak{z}_{\mathfrak{g}}(\mathfrak{l})$ as follows:
With the notation as defined in 
Definition~\ref{def:GLtX_Zariski_closure},
$G(t\{u_1, \dots, u_{\eta(Z_G(\varphi(L)))}\}) = Z_G(L)$ and $u_j:=0$ if $\eta(Z_G(L)) < j \le k$.

We now define $v_j\in \mathfrak{g}$, 
for  $1 \le j\le k$, by a finite sum: 
\begin{equation}
\label{eqn:optimal_direction_for_bending}
   v_j:=u_j+\sum_{i=1}^\infty v_i^{(j)}. 
\end{equation}
\begin{proposition}
    \label{prop:zariski-closure-gphi}
Let $G$ be a Zariski-connected, real algebraic group, and let $\varphi\colon Spin(n,1)\rightarrow G$ a homomorphism.
Let $\varphi_t\colon \Gamma \rightarrow G$ be the homomorphism associated with $v_{1},\ldots, v_k$, as defined in Lemma~\ref{lemma:bending}.
Then, the Zariski-closure of $\varphi_{t}(\Gamma)$ coincides with $G^{\varphi}$ for any sufficiently small real number
$t \neq 0$.
\end{proposition}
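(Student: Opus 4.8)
The plan is to prove the two inclusions $\overline{\varphi_{t}(\Gamma)}\subseteq G^{\varphi}$ and $G^{\varphi}\subseteq\overline{\varphi_{t}(\Gamma)}$ separately, for all sufficiently small $t\neq 0$. First I would record that the vectors $v_{1},\dots,v_{k}$ of \eqref{eqn:optimal_direction_for_bending} satisfy the hypothesis \eqref{assumption:lemma:bending} of Lemma~\ref{lemma:bending}: indeed $u_{j}\in\mathfrak{z}_{\mathfrak{g}}(\mathfrak{l})$ is fixed by all of $\varphi(L)$, and each $v_{i}^{(j)}$ was chosen $\varphi(\alpha_{j}L'\alpha_{j}^{-1})$-invariant, so $v_{j}$ is $\varphi(\alpha_{j}L'\alpha_{j}^{-1}\cap\Gamma)$-invariant; hence $\varphi_{t}\colon\Gamma\to G$ is a well-defined homomorphism. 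The inclusion $\overline{\varphi_{t}(\Gamma)}\subseteq G^{\varphi}$ is then immediate: $G^{\varphi}$ is Zariski-closed (Lemma~\ref{lemma:g-phi-algebraic}), contains the connected subgroup $\varphi(Spin(n,1))$, and contains $\exp(tv_{j})$ since $u_{j}\in\mathfrak{z}_{\mathfrak{g}}(\mathfrak{l})\subseteq\mathfrak{g}^{\varphi}$ and $v_{i}^{(j)}\in\mathfrak{g}(V_{i})\subseteq\mathfrak{g}^{\varphi}$ for $i\geq 1$; by Lemma~\ref{lemma:hnn-repeat} the group $\varphi_{t}(\Gamma)$ is generated by $\varphi\bigl(D^{M}_{\tilde{x}_{0}}\Psi(\pi_{1}(S))\bigr)$ together with the elements $\varphi(D^{M}_{\tilde{x}_{0}}([\nu_{j}]))\exp(tv_{j})$, all of which lie in $G^{\varphi}$.

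For the reverse inclusion, write $Q:=\overline{\varphi_{t}(\Gamma)}$ and $\mathfrak{q}:=\operatorname{Lie}(Q)$. The first task is to show $\varphi(Spin(n,1))\subseteq Q$. Since $\varphi_{t}$ agrees with $\varphi$ on $\Gamma_{S}:=D^{M}_{\tilde{x}_{0}}(\Psi(\pi_{1}(S)))\subseteq\Gamma$, we have $\overline{\varphi(\Gamma_{S})}\subseteq Q$, so it suffices to know that $\Gamma_{S}$, the image of $\pi_{1}(S)$ in $\Gamma$, is Zariski-dense in $Spin(n,1)$: then $\varphi(\Gamma_{S})$ is Zariski-dense in the Zariski-closed subgroup $\varphi(Spin(n,1))$. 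This Zariski-density I would deduce from the Borel density theorem together with the iterated HNN structure of $\Gamma$ along the disjoint totally geodesic hypersurfaces $N_{1},\dots,N_{k}$ (Lemma~\ref{lemma:hnn-repeat}): were $\overline{\Gamma_{S}}$ a proper algebraic subgroup it would, up to conjugacy and finite index, stabilize a proper totally geodesic subspace of the hyperbolic space $X$; but $\Gamma_{S}$ contains cocompact lattices in the stabilizers of the distinct hyperplanes underlying $N_{1},\dots,N_{k}$, while $\Gamma_{S}$ and the stable letters $[\nu_{j}]$ generate the Zariski-dense group $\Gamma$, and a short case analysis on the possible geodesic subspace — and, when $k=1$, on the normalizer of the stabilizer of the hyperplane underlying $N_{1}$ — excludes every proper possibility. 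Granting $\varphi(Spin(n,1))\subseteq Q$, each $\varphi(D^{M}_{\tilde{x}_{0}}([\nu_{j}]))\in\varphi(Spin(n,1))\subseteq Q$, so $\exp(tv_{j})=\varphi(D^{M}_{\tilde{x}_{0}}([\nu_{j}]))^{-1}\,\varphi_{t}(D^{M}_{\tilde{x}_{0}}([\nu_{j}]))\in Q$ for each $j=1,\dots,k$.

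To conclude $G^{\varphi}\subseteq Q$ it is enough to show $\mathfrak{g}^{\varphi}\subseteq\mathfrak{q}$, both groups being Zariski-connected. Because $Q$ contains the connected group $\varphi(Spin(n,1))$, the subalgebra $\mathfrak{q}$ is stable under $\operatorname{Ad}\circ\varphi$, i.e.\ it is an $L$-submodule of $\mathfrak{g}$ containing $d\varphi(\mathfrak{l})$. Next, for all but countably many small $t\neq 0$ the Zariski-closure of the cyclic group $\langle\exp(tv_{j})\rangle$ has Lie algebra containing $v_{j}$ (split $v_{j}$ into its semisimple and nilpotent Jordan parts: the nilpotent part always generates its one-parameter subgroup, and the semisimple part does so for $t$ outside a countable set), so $v_{j}\in\mathfrak{q}$. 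Now $v_{j}=u_{j}+\sum_{i\geq 1}v_{i}^{(j)}$ is exactly the decomposition of $v_{j}$ into $L$-isotypic components, with $u_{j}\in\mathfrak{g}(V_{0})=\mathfrak{z}_{\mathfrak{g}}(\mathfrak{l})$ and $v_{i}^{(j)}\in V_{i}^{(j)}\subseteq\mathfrak{g}(V_{i})$, so the $L$-stability of $\mathfrak{q}$ forces $u_{j}\in\mathfrak{q}$ and $v_{i}^{(j)}\in\mathfrak{q}$ for all $i,j$. Since each nonzero $v_{i}^{(j)}$ lies in the irreducible submodule $V_{i}^{(j)}$, $L$-stability gives $V_{i}^{(j)}\subseteq\mathfrak{q}$, and letting $j$ range over $1,\dots,[\mathfrak{g}:V_{i}]$ — possible because $k\geq m=\max_{i}[\mathfrak{g}:V_{i}]$ — yields $\mathfrak{g}(V_{i})\subseteq\mathfrak{q}$ for every $i\geq 1$. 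Likewise $u_{1},\dots,u_{\eta(Z_{G}(\varphi(L)))}\in\mathfrak{q}$, and these were chosen (possible since $k\geq\eta(Z_{G}(\varphi(L)))$, with $\eta\leq 2$ when $G$ is reductive by Theorem~\ref{thm:two_generates_reductive}) so as to generate $\mathfrak{z}_{\mathfrak{g}}(\mathfrak{l})$ as a Lie algebra; hence $\mathfrak{z}_{\mathfrak{g}}(\mathfrak{l})\subseteq\mathfrak{q}$. Finally $\mathfrak{q}$, a subalgebra containing $d\varphi(\mathfrak{l})+\mathfrak{z}_{\mathfrak{g}}(\mathfrak{l})+\sum_{i\geq 1}\mathfrak{g}(V_{i})$, contains the subalgebra generated by this set, which is $\mathfrak{g}^{\varphi}$ by Definition~\ref{definition:lie-g-phi}; so $\mathfrak{g}^{\varphi}\subseteq\mathfrak{q}$ and $Q=G^{\varphi}$. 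The finitely-or-countably many excluded $t$ I would then eliminate a posteriori, either by a semicontinuity argument for $\dim\overline{\varphi_{t}(\Gamma)}$ along the algebraic family or by checking that the vectors $v_{i}^{(j)}$ can be taken nilpotent.

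The main obstacle is this reverse inclusion, and two points inside it need genuine work. The first is the Zariski-density of $\pi_{1}(S)$ in $Spin(n,1)$: this is what upgrades ``$\varphi_{t}$ agrees with $\varphi$ on $\Gamma_{S}$'' to ``$\varphi(Spin(n,1))\subseteq Q$'', and it must be drawn from the geometry of the hypersurface configuration of Condition~\ref{condition-ck} rather than being formal. The second is the bookkeeping that makes bending exhaust all of $\mathfrak{g}^{\varphi}$: one needs enough hypersurfaces, $k=\max(\eta(Z_{G}(\varphi(L))),m)$, so that the $j$-th bending direction $v_{j}$ can simultaneously carry a nonzero vector into the $j$-th copy $V_{i}^{(j)}$ of every spherical-harmonics type $V_{i}$ and (for $j\leq\eta(Z_{G}(\varphi(L)))$) a Lie-generating element of the centralizer — and it is precisely the $\operatorname{Ad}(\varphi(L))$-stability of $\mathfrak{q}$ that lets these contributions be read off one isotypic component at a time.
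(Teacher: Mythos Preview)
Your overall architecture matches the paper's, including the use of Zariski-density of $\pi_1(S)$ (the paper cites \cite[Lem.~5.9]{JoMi84} for this, packaged as Lemma~\ref{lemma:deform-of-hyperbolic-lattice-property}) to obtain $\varphi(L)\subseteq L_t$ and $\exp(tv_j)\in L_t$, and the exploitation of the $L$-module structure of $\mathfrak{l}_t$. The gap is your assertion that $v_j\in\mathfrak{l}_t$ for all but countably many small $t$, together with the hope of eliminating the exceptions afterwards. The paper explicitly singles out this point, writing ``We might expect \ldots\ that $v_j\in\mathfrak{l}_t$ for a sufficiently small $t$. However, such a statement is not true in our general setting.'' The countable exceptional set you identify is typically \emph{dense} in $\mathbb{R}$: whenever the semisimple part of $v_j$ has a compact-torus component, the bad $t$'s are those for which $\exp(t(v_j)_s)$ fails to generate the minimal torus containing $\exp(\mathbb{R}(v_j)_s)$, a set of the shape $\pi\Lambda(X)$ as in \eqref{eqn:exclude_rational}, and $\Lambda(X)$ is dense. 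So ``all but countably many'' gives nothing on a punctured neighborhood of $0$. Your proposed fixes do not help: the map $t\mapsto\dim L_t$ is not semicontinuous in the direction you need (already for $\varphi_t(1)=e^{it}\in\mathbb{T}$ the dimension drops to $0$ on a dense set of $t$), and the $v_i^{(j)}$ are pinned down up to scalar as the $\alpha_jL'\alpha_j^{-1}$-fixed line in $V_i^{(j)}$, so you cannot force them nilpotent---nor would that dispose of the $u_j$ summand of $v_j$.

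The paper's workaround never asserts $v_j\in\mathfrak{l}_t$. Instead, Lemma~\ref{lemma:analytic_gt_through_ux} produces, for a suitably chosen $Y\in\mathfrak{l}$, analytic curves $a_j(t)\in\mathfrak{g}(\varphi(L);t\{v_j\})\subseteq\mathfrak{l}_t$ with $a_j(0)=[Y,v_j]$; projecting to each $\mathfrak{g}(V_i)$ and invoking the Zariski-openness of the generating locus (Lemma~\ref{lem:span_generic}) then yields $\mathfrak{g}(V_i)\subseteq\mathfrak{l}_t$ for \emph{all} small $t\neq0$ at once. A secondary issue: your claim that the $u_j$ ``generate $\mathfrak{z}_{\mathfrak{g}}(\mathfrak{l})$ as a Lie algebra'' is not what was arranged---they were chosen so that $G(t\{u_1,\dots,u_\eta\})=Z_G(\varphi(L))$, which is a different condition (for a split torus $\eta=1$, yet one element cannot Lie-generate an abelian $\mathfrak{z}$ of dimension $>1$). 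The paper handles this via a Baker--Campbell--Hausdorff computation: once $\mathfrak{g}(V_i)\subseteq\mathfrak{l}_t$ for $i\geq1$ is known, one shows $\exp(-tu_j)\exp(tv_j)\in L_t$, hence $\exp(tu_j)\in L_t$, and therefore $Z_G(\varphi(L))=G(t\{u_1,\dots,u_\eta\})\subseteq L_t$.
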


This proposition will be proved in Section~\ref{section:step3}, 
and thus the proof of Theorem~\ref{theorem:bending}~\ref{item:bending-realize-Gphi} 
is complete.

\subsection{Proof of \texorpdfstring{Theorem~\ref{theorem:Millson-spin} (Step~1)}{Theorem 4.2 (Step 1)}}
\label{section:proof-step1}
In this section, we prove a $Spin(n,1)$ analogue  
of Millson's theorem \cite{Millson76} for $SO(n,1)$  
by reformulating his argument \cite{Millson76} in terms of Clifford algebras (Theorem~\ref{theorem:Millson-spin}).   
The part of the proof in  \cite{Millson76} 
 that relies on the strong approximation theorem  becomes slightly simpler when working with $Spin(n,1)$ instead of $SO(n,1)$ since 
 the algebraic group $\mathbf{Spin}_{n,1}$ is simply-connected
(Proposition~\ref{prop:Gamma/GammaI}).

To prove Theorem~\ref{theorem:Millson-spin}, 
we introduce some notation.
Let $\F$ be a totally real number field of degree $r>1$, 
$\{\sigma_{1},\ldots,\sigma_{r}\}$ the set of embeddings of the field $\F$ into $\R$,
$\mathcal{O}$ the ring of integers of $\F$, 
$V$ an $(n+1)$-dimensional $\F$-vector space equipped with
a quadratic form $Q=Q(v)$ on $V$, 
and $C_{\even}(V)$ the even Clifford algebra associated to $(V,Q)$.
We refer to Appendix~\ref{section:clifford-spin} for 
the notation related to Clifford algebras and spin groups. 

Take an orthogonal $\F$-basis $\{e_{0},\ldots, e_{n}\}$ of $V$ with respect to $Q$,
such that $Q(e_{i})\in \mathcal{O}$ for any $i=0,\ldots,n$.
Let $\Lambda$ be the $\mathcal{O}$-submodule of $V$ generated by
$e_{0},\ldots,e_{n}$, and let $C_{\even}(\Lambda)$ be the $\mathcal{O}$-subalgebra 
of $C_{\even}(V)$ generated by 
$e_{i_{1}}\cdots e_{i_{\ell}}$ for any even number $\ell$
and any $0\leq i_{1}<\cdots <i_{\ell}\leq n$. 
We set
\begin{equation}
\label{eq:gamma-lambda}
    \Gamma_{\Lambda}:=C_{\even}(\Lambda)\cap Spin(V). 
\end{equation}
For an ideal $I$ of $\mathcal{O}$, we also consider its congruence subgroup
\begin{equation}
\label{eq:gamma-lambda-I}
\Gamma_{\Lambda}(I)
    :=\{\gamma\in \Gamma_{\Lambda}\mid \gamma-1\in IC_{\even}(\Lambda)\}.
\end{equation}
This is a finite-index subgroup of $\Gamma_{\Lambda}$.
We will prove that $\Gamma_{\Lambda}(I)$ satisfies Condition~\ref{condition-ck} under certain assumptions (Proposition~\ref{prop:Ck}). 

First, we verify that $\Gamma_{\Lambda}$ satisfies the following properties:
\begin{itemize}
    \item (Lemma~\ref{lemma:arithmetic}) $\Gamma_{\Lambda}$
    can be regarded as 
    an $\mathcal{O}$-arithmetic group of 
    the $\F$-group $\mathbf{Spin}_{V}$ (see e.g., \cite[p.227]{PlRa94});
    \item (Proposition~\ref{proposition:discrete-cocompact}) 
    Under certain conditions, \( \Gamma_{\Lambda} \) can be regarded as a cocompact discrete subgroup of \( Spin(n,1) \).
\end{itemize}
See \eqref{def:spin_V} in Appendix~\ref{section:clifford-spin} for the definition of the $\F$-group $\mathbf{Spin}_{V}$. 

Using the $\F$-basis of $C_{\even}(V)$
\begin{equation}
\label{eq:Fbasis}
    \{e_{i_{1}}\cdots e_{i_{\ell}}
    \mid \text{$\ell$ is even and }0\leq i_{1}<\ldots <i_{\ell}\leq n\},
\end{equation}
we identify the $\F$-vector space
$C_{\even}(V)$ with $\F^{2^{n}}$. 
For any extension field $\E$ of $\F$,  
we identify the $\E$-algebra $C_{\even}(V) \otimes_{\F} \E$ naturally
with the Clifford algebra $C_{\even}(V \otimes_{\F} \E)$.
For each $x \in C_{\even}(V) \otimes_{\F} \E$,  
consider the left multiplication map by $x$.  
This yields an injective $\E$-algebra homomorphism  
\[
\iota_{\E} \colon C_{\even}(V) \otimes_{\F} \E  
\to \End_{\E}(C_{\even}(V) \otimes_{\F} \E)  
\simeq M(2^{n}, \E).
\]  
Using this embedding, we regard $\mathbf{Spin}_V$  
as an $\F$-algebraic group and define 
the group of $\mathcal{O}$-points by 
\[
\mathbf{Spin}_V(\mathcal{O})  
:= \mathbf{Spin}_V(\E) \cap \iota_{\E}^{-1}(GL(2^n, \mathcal{O})).
\]  
This definition does not depend on the choice of the extension field $\E$.
 
\begin{lemma}
\label{lemma:arithmetic}
    The group $\Gamma_{\Lambda}$ coincides with
    $\mathbf{Spin}_{V}(\mathcal{O})$. 
    In particular, $\Gamma_{\Lambda}$ is an $\mathcal{O}$-arithmetic subgroup of $\mathbf{Spin}_{V}$.
\end{lemma}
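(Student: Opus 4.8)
The plan is to reinterpret both $\Gamma_{\Lambda}$ and $\mathbf{Spin}_V(\mathcal{O})$ as the subsets of $Spin(V)$ cut out by a single lattice‑stabilization condition inside the $\F$-algebra $C_{\even}(V)$, and then observe that the two conditions agree. The first step is to identify
\[
C_{\even}(\Lambda)=\bigoplus_{\substack{I\subseteq\{0,\dots,n\},\ \#I\ \mathrm{even}}}\mathcal{O}\,e_I ,
\]
where $e_I=e_{i_1}\cdots e_{i_k}$ for $I=\{i_1<\dots<i_k\}$: the inclusion $\supseteq$ is immediate, and for $\subseteq$ one reduces any product of the generators $e_{i_1}\cdots e_{i_\ell}$ to an $\mathcal{O}$-multiple of a single $e_J$ with $\#J$ even, using only $e_ie_j=-e_je_i$ $(i\neq j)$ and $e_i^2\in\mathcal{O}$ — the same anticommutation computation underlying Lemma~\ref{lemma:sym-diff}. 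In particular $C_{\even}(\Lambda)$ is a full $\mathcal{O}$-lattice in $C_{\even}(V)$, and it is the $\mathcal{O}$-span of the very $\F$-basis \eqref{eq:Fbasis} used to define $\iota_\F$. Hence, for $x\in C_{\even}(V)$, the matrix $\iota_\F(x)$ of left multiplication by $x$ lies in $M(2^n,\mathcal{O})$ if and only if $x\,C_{\even}(\Lambda)\subseteq C_{\even}(\Lambda)$, and — because $C_{\even}(\Lambda)$ is a subring containing $1$ — if and only if $x\in C_{\even}(\Lambda)$.

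Applying this to $\gamma\in Spin(V)\subseteq C_{\even}(V)^{\times}$, and using $\iota_\F(\gamma)^{-1}=\iota_\F(\gamma^{-1})$ together with the elementary fact that a matrix of $M(2^n,\mathcal{O})$ which is invertible over $\F$ lies in $GL(2^n,\mathcal{O})$ exactly when its inverse is again integral, one gets
\[
\gamma\in\mathbf{Spin}_V(\mathcal{O})\iff \gamma\in C_{\even}(\Lambda)\ \text{and}\ \gamma^{-1}\in C_{\even}(\Lambda).
\]
(Here $\iota_\F(\gamma)$ being integral already forces $\gamma=\iota_\F(\gamma)(1)\in C_{\even}(V)$, so one may take $\E=\F$ in the definition of $\mathbf{Spin}_V(\mathcal{O})$.) The inclusion $\mathbf{Spin}_V(\mathcal{O})\subseteq\Gamma_{\Lambda}$ is then immediate from \eqref{eq:gamma-lambda}. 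For the reverse inclusion I would take $\gamma\in\Gamma_{\Lambda}$ and show only that $\gamma^{-1}\in C_{\even}(\Lambda)$: here I would invoke the definition of $\mathbf{Spin}_V$ (see \eqref{def:spin_V} in Appendix~\ref{section:clifford-spin}), by which $\gamma^{-1}=t(\gamma)$ with $t$ the principal anti-automorphism (reversion) of the Clifford algebra, and note that $t$ preserves $C_{\even}(\Lambda)$ since $t(e_{i_1}\cdots e_{i_\ell})=e_{i_\ell}\cdots e_{i_1}=\pm e_{i_1}\cdots e_{i_\ell}$ by the anticommutation of the orthogonal basis. This gives $\Gamma_{\Lambda}=\mathbf{Spin}_V(\mathcal{O})$.

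For the last assertion I would note that $\iota_\F$ restricts to a faithful $\F$-rational representation $\mathbf{Spin}_V\hookrightarrow GL_{2^n}$ (faithful because $\gamma=\iota_\F(\gamma)(1)$; $\F$-rational because its matrix entries are $\F$-linear in the coordinates of $\gamma$ with respect to $\{e_I\}$, the structure constants of $C_{\even}(V)$ lying in $\F$), and that $C_{\even}(\Lambda)$ is a full $\mathcal{O}$-lattice in the representation space. By the identity just proved, $\Gamma_{\Lambda}=\mathbf{Spin}_V(\F)\cap GL_{2^n}(\mathcal{O})$ is the stabilizer of this lattice, hence an $\mathcal{O}$-arithmetic subgroup of $\mathbf{Spin}_V$, with commensurability class independent of the auxiliary choices, by the standard theory (see e.g.\ \cite[p.~227]{PlRa94}). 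Most of this is bookkeeping with the basis $\{e_I\}$; the one step requiring genuine input about $Spin(V)$ rather than about the lattice $C_{\even}(\Lambda)$ — and the point I expect to need the most care, in particular to match the paper's sign conventions for the Clifford involutions — is the identity $\gamma^{-1}=t(\gamma)$, which is precisely what keeps $\gamma^{-1}$ inside $C_{\even}(\Lambda)$.
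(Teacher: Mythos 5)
Your proof is correct and follows essentially the same route as the paper's: both identify $\mathbf{Spin}_{V}(\mathcal{O})$ with the set of elements of $Spin(V)$ whose left multiplication preserves the $\mathcal{O}$-lattice $C_{\even}(\Lambda)$, and then reduce this to $Spin(V)\cap C_{\even}(\Lambda)$. The only difference is that you spell out the step the paper leaves implicit, namely that $\gamma^{-1}=\gamma^{*}\in C_{\even}(\Lambda)$ because the reversion anti-automorphism permutes the $\mathcal{O}$-basis $\{e_{I}\}$ up to sign.
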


\begin{proof}
Note that \eqref{eq:Fbasis} forms an $\mathcal{O}$-basis of  
$C_{\even}(\Lambda)$. 
We have
    \begin{align*}
        \mathbf{Spin}_{V}(\mathcal{O}) &= \mathbf{Spin}_{V}(\F)\cap 
        \iota_{\F}^{-1}(GL(2^{n},\mathcal{O})) \\
        &= Spin(V) \cap \iota_{\F}^{-1}(\{x\in GL(2^{n},\F)\mid x\mathcal{O}^{2^{n}}=\mathcal{O}^{2^{n}}\}) \\
        &= \{x\in Spin(V)\mid 
        xC_{\even}(\Lambda) = C_{\even}(\Lambda)\} \\
        &= Spin(V)\cap C_{\even}(\Lambda) \\
        &= \Gamma_{\Lambda}.
    \end{align*}
    Hence, the assertion is proved.
\end{proof}

\begin{proposition}
    \label{proposition:discrete-cocompact}
    For each $i = 1, \dots, r$,  
    let $\F_{\sigma_i}$ denote the field $\R$ regarded as an extension field of $\F$ via the embedding $\sigma_i \colon \F \to \R$.  
    Assume the following:  
    \begin{itemize}
    \item $\sigma_{1}(Q(e_{i}))>0$ for any $i=0,\ldots,n-1$
    and $\sigma_{1}(Q(e_{n}))<0$;
    \item $\sigma_{j}(Q(e_{i}))>0$ for any $i=0,\ldots,n$ and $j=2,\ldots,r$;
    \end{itemize}
    Then we have 
    \[
    \Spin_{V}(\F_{\sigma_{i}})\simeq 
    \begin{cases}
    Spin(n,1) & (i=1), \\
    Spin(n+1) & (i>1).
    \end{cases}
    \]
    Furthermore, the inclusion map
    \[
    \Spin_{V}(\mathcal{O})=\Gamma_{\Lambda} 
    \rightarrow \Spin_{V}(\F_{\sigma_{1}})
    \simeq Spin(n,1)
    \]
    is discrete and cocompact.
\end{proposition}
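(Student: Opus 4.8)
The plan is to realize $\Gamma_{\Lambda}$ as the group of integral points of a $\Q$-form of a product of real spin groups, obtained by restriction of scalars, and then to invoke the classical theorems on arithmetic subgroups (Borel--Harish-Chandra, and Godement's anisotropic compactness criterion); see \cite{PlRa94}.

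First I would settle the identification of the real points. For each $i$, extending scalars along $\sigma_i\colon\F\to\R$ turns $Q$ into a non-degenerate real quadratic form, diagonal in the basis $e_0,\dots,e_n$ with entries $\sigma_i(Q(e_0)),\dots,\sigma_i(Q(e_n))$; by Sylvester's law of inertia the sign hypotheses force signature $(n,1)$ when $i=1$ and positive definiteness when $i>1$. Since the formation of the even Clifford algebra, and hence of the group $\mathbf{Spin}_{V}$ defined in \eqref{def:spin_V}, commutes with base change (Appendix~\ref{section:clifford-spin}), this yields $\mathbf{Spin}_{V}(\F_{\sigma_1})\simeq Spin(n,1)$ and $\mathbf{Spin}_{V}(\F_{\sigma_i})\simeq Spin(n+1)$ for $i>1$.

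Next I would set $\mathbf{G}:=\mathrm{Res}_{\F/\Q}\mathbf{Spin}_{V}$, a connected semisimple $\Q$-group with
\[
\mathbf{G}(\R)\ \simeq\ \prod_{i=1}^{r}\mathbf{Spin}_{V}(\F_{\sigma_i})\ \simeq\ Spin(n,1)\times Spin(n+1)^{\,r-1},
\]
the embedding $\Gamma_{\Lambda}\hookrightarrow\mathbf{G}(\R)$ being $\gamma\mapsto(\sigma_1(\gamma),\dots,\sigma_r(\gamma))$. By Lemma~\ref{lemma:arithmetic} and the usual identification of $\mathcal{O}$-points with $\Z$-points under restriction of scalars (taking the $\Z$-lattice $C_{\even}(\Lambda)$ inside the $\Q$-vector space underlying $C_{\even}(V)$), $\Gamma_{\Lambda}=\mathbf{Spin}_{V}(\mathcal{O})$ corresponds to $\mathbf{G}(\Z)$, which is a lattice in $\mathbf{G}(\R)$ by Borel--Harish-Chandra. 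Projecting along the compact factor $K:=Spin(n+1)^{\,r-1}$ via $p\colon\mathbf{G}(\R)\to Spin(n,1)$, which is injective on $\Gamma_{\Lambda}$ since $\sigma_1$ is, shows that $\Gamma_{\Lambda}$ maps isomorphically onto a \emph{discrete} subgroup of $Spin(n,1)$, and this is exactly the inclusion map of the proposition.

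The step carrying the real content is cocompactness, for which I would use Godement's criterion: $\mathbf{G}(\Z)\backslash\mathbf{G}(\R)$ is compact once $\mathbf{G}$ is $\Q$-anisotropic (it has no nontrivial $\Q$-characters automatically, being semisimple). Here the hypothesis $r>1$ is essential: pick any $j\ge 2$; since $Q^{\sigma_j}$ is positive definite it is anisotropic over $\R$, hence $Q$ is anisotropic over $\F$, hence $\mathbf{Spin}_{V}$ is $\F$-anisotropic, and therefore $\mathbf{G}=\mathrm{Res}_{\F/\Q}\mathbf{Spin}_{V}$ is $\Q$-anisotropic. Thus $\Gamma_{\Lambda}\backslash\mathbf{G}(\R)$ is compact, and since $K$ is compact, $\Gamma_{\Lambda}\backslash Spin(n,1)$ is compact as well. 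The only genuinely delicate point is this anisotropy argument feeding Godement's criterion, together with a careful check that the Clifford-algebra model of $\mathbf{Spin}_{V}$ and its $\mathcal{O}$-points fit the general framework of arithmetic groups --- the latter already being supplied by Lemma~\ref{lemma:arithmetic}.
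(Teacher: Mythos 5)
Your proposal is correct and follows essentially the same route as the paper: identify the real points at each archimedean place by Sylvester's law, then deduce discreteness and cocompactness of the diagonal embedding into $Spin(n,1)\times Spin(n+1)^{r-1}$ from anisotropy over $\F$ via the standard compactness criterion for arithmetic groups (the paper cites \cite[Thm.~4.17~(3)]{PlRa94} directly, which is the number-field form of the Borel--Harish-Chandra/Godement statement you reconstruct via restriction of scalars). The only cosmetic difference is that you verify $\F$-anisotropy through the Witt index of $Q$ (anisotropic since $Q^{\sigma_j}$ is definite for $j\geq 2$), whereas the paper observes that the embedding $Spin(V)\hookrightarrow Spin(n+1)$ rules out non-trivial unipotent elements; both are valid.
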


\begin{proof}
The first statement follows immediately from the assumption on the quadratic form \( Q \).
Thus we focus on proving the second statement.  
Since \( r > 1 \), there exists an inclusion  
\[
\Spin_V(\mathbb{F}) = Spin(V) \to \Spin_V(\mathbb{F}_{\sigma_2}) \simeq Spin(n+1),
\]  
which implies that \( Spin(V) \) has no non-trivial unipotent elements.  
Hence, \( \Spin_V \) is anisotropic over \( \mathbb{F} \).  
By a well-known criterion (e.g., \cite[Thm.~4.17~(3)]{PlRa94}),  
the diagonal embedding  
\[
\Spin_V(\mathcal{O}) = \Gamma_{\Lambda}  
\to \prod_{i=1}^{r} \Spin_V(\mathbb{F}_{\sigma_i})  
\simeq Spin(n,1) \times Spin(n+1)^{r-1}
\]  
is discrete and cocompact.  
Since \( Spin(n+1)^{r-1} \) is compact, the second statement follows.
\end{proof}

Next, let $I$ be an ideal of $\mathcal{O}$.
We examine properties of $\Gamma_{\Lambda}(I)$.  
By an argument similar to the proof of Lemma~\ref{lemma:arithmetic},  
we obtain the identity  
\begin{equation}
\label{eq:congruence}
\Gamma_{\Lambda}(I) = \Gamma_{\Lambda} \cap \iota_{\F}^{-1}(\{g \in GL(2^n, \mathcal{O}) \mid g \equiv 1 \mod I\}).
\end{equation}
Using Lemma~\ref{lemma:torsion-free-general} in Appendix, we have:
\begin{lemma}
    \label{lemma:torsion-free}
    Let $P,Q$ be prime ideals of $\mathcal{O}$ 
    such that $P\cap \Z\neq Q\cap \Z$. 
    Then $\Gamma_{\Lambda}(PQ)$ is torsion-free. 
\end{lemma}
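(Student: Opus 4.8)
The plan is to reduce the torsion-freeness of $\Gamma_\Lambda(PQ)$ to the general arithmetic criterion recorded in Lemma~\ref{lemma:torsion-free-general} in the appendix. Recall from \eqref{eq:congruence} that, under the embedding $\iota_\F$, the group $\Gamma_\Lambda(PQ)$ is realized inside $GL(2^n,\mathcal{O})$ as the principal congruence subgroup of level $PQ$ intersected with $\Gamma_\Lambda$. Since $P\cap\Z\neq Q\cap\Z$, the rational primes $p$ and $q$ lying below $P$ and $Q$ respectively are distinct. The strategy is then: first, show that any torsion element $\gamma\in\Gamma_\Lambda(PQ)$ must have order a power of $p$ \emph{and} a power of $q$ simultaneously, forcing $\gamma=1$.

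Concretely, I would proceed as follows. First, fix the identification of $\Gamma_\Lambda$ with a subgroup of $GL(2^n,\mathcal{O})$ via left multiplication, so that congruences modulo an ideal make sense entrywise. Second, suppose $\gamma\in\Gamma_\Lambda(PQ)$ is a nontrivial element of finite order; since $\Gamma_\Lambda$ is discrete in a Lie group (Proposition~\ref{proposition:discrete-cocompact}) and $\gamma$ has finite order, this is a genuine torsion element, and because $\mathbf{Spin}_V$ is anisotropic over $\F$ the element $\gamma$ is semisimple with eigenvalues that are roots of unity. Third, apply the standard local argument: reducing modulo $P$, the hypothesis $\gamma\equiv 1\bmod PQ\subset P$ says $\gamma$ reduces to the identity in $GL(2^n,\mathcal{O}/P)$, and $\mathcal{O}/P$ is a finite field of characteristic $p$; a standard lemma (the content of Lemma~\ref{lemma:torsion-free-general}) then forces the order of $\gamma$ to be a power of $p$. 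Fourth, repeat with $Q$ in place of $P$ to conclude the order of $\gamma$ is also a power of $q$. Since $p\neq q$, the order must be $1$, contradicting $\gamma\neq 1$. Hence $\Gamma_\Lambda(PQ)$ is torsion-free.

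The one point requiring care — and the main obstacle — is verifying that the hypotheses of Lemma~\ref{lemma:torsion-free-general} are met, namely that $\gamma$ is the image of an element of $GL(2^n,\mathcal{O})$ congruent to the identity modulo a prime ideal with residue characteristic $p$, and that the conclusion "order is a $p$-power" is exactly what that lemma delivers. This is where the identity \eqref{eq:congruence} does the work: it identifies $\Gamma_\Lambda(PQ)$ precisely with $\Gamma_\Lambda\cap\iota_\F^{-1}(\{g\in GL(2^n,\mathcal{O}): g\equiv 1\bmod PQ\})$, and since $PQ\subset P$ and $PQ\subset Q$, each torsion element lies in the level-$P$ and level-$Q$ principal congruence subgroups of $GL(2^n,\mathcal{O})$. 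Everything else is a direct citation of the appendix lemma together with the anisotropy of $\mathbf{Spin}_V$ established in the proof of Proposition~\ref{proposition:discrete-cocompact}, which guarantees torsion elements are semisimple and behave well under reduction. No further computation is needed.
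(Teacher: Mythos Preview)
Your argument is mathematically sound but you have misread what Lemma~\ref{lemma:torsion-free-general} actually states. That lemma does \emph{not} say ``congruence to $1$ modulo $P$ forces $p$-power order''; it says directly that the level-$PQ$ principal congruence subgroup of $GL(N,\mathcal{O})$ is torsion-free whenever $P\cap\Z\neq Q\cap\Z$. Accordingly, the paper's proof is a single citation: by \eqref{eq:congruence} one has $\Gamma_\Lambda(PQ)\subset\{g\in GL(2^n,\mathcal{O}):g\equiv 1\bmod PQ\}$, and Lemma~\ref{lemma:torsion-free-general} finishes. Your steps 3--4 (order is a $p$-power \emph{and} a $q$-power, hence $1$) constitute a perfectly valid alternative proof of Lemma~\ref{lemma:torsion-free-general} itself --- the classical local lifting argument --- but they are redundant once that lemma is available, and they are not what that lemma says.

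Two further comments. First, the appeal to anisotropy of $\mathbf{Spin}_V$ is unnecessary: any element of finite order in $GL_N$ over a field of characteristic zero is automatically semisimple with eigenvalues roots of unity, independent of any geometric hypothesis on the group. Second, that appeal is also formally illegitimate here, since anisotropy is established in Proposition~\ref{proposition:discrete-cocompact} under extra sign hypotheses on the quadratic form that Lemma~\ref{lemma:torsion-free} does not assume.
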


To prove Theorem~\ref{theorem:Millson-spin},  
we analyze the finite group $\Gamma_{\Lambda}/\Gamma_{\Lambda}(I)$  
for some ideal $I$ (Proposition~\ref{prop:Gamma/GammaI}).  
To state the proposition, we first review some basic concepts of algebraic groups over number fields needed in our discussion.  
For further details, see, for example, \cite{PlRa94}.  

For a maximal ideal $P$ of the ring of integers $\mathcal{O}$,  
the completion of the global field $\F$ with respect to the $P$-adic topology  
defines a non-Archimedean local field $\F_P$.  
Let $\mathcal{O}_P$ denote the ring of integers of $\F_P$.  

In the same manner as the definition of $\mathbf{Spin}_V(\mathcal{O})$,  
we define the $\mathcal{O}_P$-points $\mathbf{Spin}_V(\mathcal{O}_P)$ 
as  
\[
\mathbf{Spin}_V(\mathcal{O}_P) := \mathbf{Spin}_V(\F_P) \cap \iota_{\F_P}^{-1}(GL(2^n, \mathcal{O}_P)),
\] 
via the left regular representation  
$\iota_{\F_P} \colon C_{\even}(V \otimes_{\F} \F_P) \to M(2^n, \F_P)$
using the $\F$-basis from \eqref{eq:Fbasis}.

Furthermore, assume that the maximal ideal $P$ does not contain $2Q(e_0) \cdots Q(e_n)$.  
In this case, the residue field  $k_P := \mathcal{O}/P$
has characteristic greater than $2$.  
We define a $k_P$-vector space by  
\[
\overline{\Lambda}_P := \Lambda / P\Lambda,
\]  
and denote by $\overline{Q}_P$ the quadratic form on $\overline{\Lambda}_P$  
induced by $Q$.  
Thus the Clifford algebra $C_{\even}(\overline{\Lambda}_P)$  
and the spin group $Spin(\overline{\Lambda}_P)$ are defined.  
By definition, it follows immediately that  
$Spin(\overline{\Lambda}_P)$ provides a \emph{reduction} of $\mathbf{Spin}_V$ modulo $P$  
in the sense of \cite[p.~142]{PlRa94}.  
Hence, by \cite[Prop.~3.20]{PlRa94}, we obtain the following:

\begin{lemma}
    \label{lemma:exceptional-set-maximal-ideals}
    There exists a finite set of maximal ideals $E_V$ of $\mathcal{O}$,  
    depending only on $(V, Q, \Lambda)$ such that every maximal ideal $P \not\in E_V$ does not contain $2Q(e_0) \cdots Q(e_n)$ and that 
    the reduction map  
    \[
    \mathbf{Spin}_{V}(\mathcal{O}_{P}) \to Spin(\overline{\Lambda}_{P})
    \]  
    is surjective.  
\end{lemma}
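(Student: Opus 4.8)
The plan is to take $E_V$ as the union of two explicit finite sets of maximal ideals of $\mathcal{O}$. For the first, note that $Q$ is non-degenerate on the orthogonal basis $\{e_0,\dots,e_n\}$, so each $Q(e_i)$ is a non-zero element of $\F$ and hence $d:=2\,Q(e_0)\cdots Q(e_n)$ is a non-zero element of $\mathcal{O}$; I take $E_V^{(0)}$ to be the finite set of maximal ideals of $\mathcal{O}$ that contain $d$. For $P\notin E_V^{(0)}$ the residue field $k_P=\mathcal{O}/P$ has characteristic $>2$ and the induced quadratic form $\overline{Q}_P$ on $\overline{\Lambda}_P=\Lambda/P\Lambda$ is non-degenerate, so $C_{\even}(\overline{\Lambda}_P)$ and $Spin(\overline{\Lambda}_P)$ are defined exactly as in the discussion preceding the lemma.

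Next I would fix such a $P$ and pass to the localization $\mathcal{O}_P$. The $\mathcal{O}_P$-order $C_{\even}(\Lambda)\otimes_{\mathcal{O}}\mathcal{O}_P$, written in the $\mathcal{O}_P$-basis \eqref{eq:Fbasis}, reduces modulo $P$ to $C_{\even}(\overline{\Lambda}_P)$, and the equations from \eqref{def:spin_V} defining $\mathbf{Spin}_V$, transported through the left-regular embedding $\iota_{\F_P}$, cut out a smooth affine group scheme over $\mathcal{O}_P$ whose generic fibre is $\mathbf{Spin}_V\times_\F\F_P$, whose special fibre is $Spin(\overline{\Lambda}_P)$, and whose $\mathcal{O}_P$-points are exactly $\mathbf{Spin}_V(\mathcal{O}_P)$ by the definition of the latter. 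In other words, for every $P\notin E_V^{(0)}$ the group $Spin(\overline{\Lambda}_P)$ is a \emph{reduction} of $\mathbf{Spin}_V$ modulo $P$ in the sense of \cite[p.~142]{PlRa94}.

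Since $\mathbf{Spin}_V$ is connected, semisimple, and simply connected, I would then invoke \cite[Prop.~3.20]{PlRa94} for $\mathbf{Spin}_V$ equipped with the integral model above; this produces a finite set $E_V^{(1)}$ of maximal ideals outside of which the reduction homomorphism $\mathbf{Spin}_V(\mathcal{O}_P)\to Spin(\overline{\Lambda}_P)$ is surjective --- concretely an application of Hensel's lemma to the smooth $\mathcal{O}_P$-model, the excluded primes being precisely those at which smoothness, or the structural hypotheses of that proposition, fails. Taking $E_V:=E_V^{(0)}\cup E_V^{(1)}$ gives a finite set which visibly depends only on $(V,Q,\Lambda)$ together with the fixed orthogonal basis, and for $P\notin E_V$ both required properties hold.

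The one step that demands genuine care --- and which the phrase ``by definition, it follows immediately'' in the paragraph preceding the lemma is meant to cover --- is the middle paragraph: verifying that the affine $\mathcal{O}_P$-model obtained from the lattice $\Lambda$ and the embedding $\iota$ really is smooth with special fibre $Spin(\overline{\Lambda}_P)$, so that it qualifies as a reduction in the sense of \cite{PlRa94} and its $\mathcal{O}_P$-points coincide with $\mathbf{Spin}_V(\mathcal{O}_P)$. Once this is in place, the surjectivity of the reduction map for all but finitely many $P$ is formal, and the lemma follows.
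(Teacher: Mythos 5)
Your proposal is correct and follows essentially the same route as the paper: the paper's entire argument is the paragraph preceding the lemma, which observes that for every $P$ not containing $2Q(e_0)\cdots Q(e_n)$ the group $Spin(\overline{\Lambda}_P)$ is a reduction of $\mathbf{Spin}_V$ modulo $P$ in the sense of \cite[p.~142]{PlRa94}, and then invokes \cite[Prop.~3.20]{PlRa94} to obtain surjectivity outside a further finite set of primes. Your extra discussion of the smooth $\mathcal{O}_P$-model and Hensel's lemma is just an unpacking of what that proposition delivers, not an additional ingredient.
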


\begin{remark}
For the orthogonal group $\mathbf{SO}_{V}$, the reduction map  
\[
\mathbf{SO}_{V}(\mathcal{O}_{P}) \to SO(\overline{\Lambda}_{P})
\]
is surjective for every maximal ideal $P$ that does not contain $2$ or the discriminant $Q(e_0) \cdots Q(e_n)$.  
Millson~\cite{Millson76} used this fact for the orthogonal group $\mathbf{SO}_{V}$.  
However, the authors do not know whether a similar statement holds for the spin group $\mathbf{Spin}_{V}$,
as it is unclear whether it is possible to take $E_{V}$ to be the set of all maximal ideals $P$  that do not contain $2Q(e_0) \cdots Q(e_n)$  in Lemma~\ref{lemma:exceptional-set-maximal-ideals}.
This statement would strengthen Example~\ref{example:desired-Gamma} below by removing the assumption of ``sufficiently large'' from it.
\end{remark}

We are now ready to state the proposition on $\Gamma_{\Lambda}/\Gamma_{\Lambda}(I)$: 
\begin{proposition}
    \label{prop:Gamma/GammaI}
    Suppose that $\dim_{\F} V \geq 3$ (recall that $\dim_{\F} V = n+1$)
    and that all the assumptions in Proposition~\ref{proposition:discrete-cocompact} hold.
    Let $E_{V}$ be the finite set of maximal ideals in Lemma~\ref{lemma:exceptional-set-maximal-ideals},  
    and let $P_1, \dots, P_m$ be maximal ideals of $\mathcal{O}$ that do not belong to $E_{V}$. Then, the reduction map induces an isomorphism  
    \[
    \Gamma_{\Lambda}/\Gamma_{\Lambda}(P_1 \cdots P_m) \simeq \prod_{i=1}^{m} Spin(\overline{\Lambda}_{P_i}).
    \]
\end{proposition}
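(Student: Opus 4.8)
The plan is to combine the strong approximation theorem for the simply-connected $\F$-group $\mathbf{Spin}_V$ with the local surjectivity statement from Lemma~\ref{lemma:exceptional-set-maximal-ideals}, in the standard way. First I would reformulate the claim adelically: writing $\mathbb{A}_f^{(S)}$ for the finite adeles away from a finite set $S$ of places (to be chosen to contain the archimedean places, the places in $E_V$, and the places dividing $2Q(e_0)\cdots Q(e_n)$), the congruence subgroup $\Gamma_\Lambda(P_1\cdots P_m)$ is, by \eqref{eq:congruence}, exactly the intersection of $\Gamma_\Lambda=\mathbf{Spin}_V(\mathcal O)$ with the open compact subgroup $\prod_{i=1}^m K_{P_i}(1) \times \prod_{P\notin\{P_1,\dots,P_m\}} \mathbf{Spin}_V(\mathcal O_P)$ of $\mathbf{Spin}_V(\mathbb A_f)$, where $K_{P_i}(1)$ is the principal congruence subgroup of level $P_i$ in $\mathbf{Spin}_V(\mathcal O_{P_i})$.

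The key input is that $\mathbf{Spin}_V$ is simply-connected (as noted just before the proposition, using that $\mathbf{Spin}_{n,1}$ is simply connected; over $\F$ the group $\mathbf{Spin}_V$ is an inner form of this and is likewise simply connected and, by Proposition~\ref{proposition:discrete-cocompact}, isotropic at no finite place obstructs strong approximation — it is $\F$-anisotropic but $\F_{\sigma_1}$-isotropic... actually it is $\R$-isotropic at the place $\sigma_1$, which is what strong approximation requires). Since $\mathbf{Spin}_V$ is simply connected and $\mathbf{Spin}_V(\F_{\sigma_1})\simeq Spin(n,1)$ is noncompact (as $n+1=\dim_\F V\ge 3$), the Kneser–Platonov strong approximation theorem applies: $\mathbf{Spin}_V(\F)$ is dense in $\mathbf{Spin}_V(\mathbb A_f)$. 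Therefore the reduction map
\[
\Gamma_\Lambda=\mathbf{Spin}_V(\mathcal O)\longrightarrow \prod_{i=1}^m \mathbf{Spin}_V(\mathcal O_{P_i})\big/K_{P_i}(1)
\]
is surjective: density of $\mathbf{Spin}_V(\F)$ in the finite-adelic group, intersected with the open subgroup that is trivial at $P_1,\dots,P_m$ and full elsewhere, forces $\Gamma_\Lambda$ to surject onto each finite quotient at $P_1,\dots,P_m$ simultaneously. Next, by Lemma~\ref{lemma:exceptional-set-maximal-ideals} the reduction $\mathbf{Spin}_V(\mathcal O_{P_i})\to Spin(\overline\Lambda_{P_i})$ is surjective, and its kernel is precisely $K_{P_i}(1)$ (elements congruent to $1$ modulo $P_i$ in the left-regular embedding), so $\mathbf{Spin}_V(\mathcal O_{P_i})/K_{P_i}(1)\simeq Spin(\overline\Lambda_{P_i})$. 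Composing, $\Gamma_\Lambda$ surjects onto $\prod_i Spin(\overline\Lambda_{P_i})$ with kernel $\Gamma_\Lambda\cap\prod_i K_{P_i}(1)\cdot(\text{full elsewhere})=\Gamma_\Lambda(P_1\cdots P_m)$ by \eqref{eq:congruence} and the Chinese remainder theorem for the ideal $P_1\cdots P_m$. This yields the asserted isomorphism.

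I expect the main obstacle to be the bookkeeping needed to verify that strong approximation applies cleanly in this Clifford-algebra presentation: one must check that the $\F_{\sigma_1}$-points are genuinely noncompact (done via Proposition~\ref{proposition:discrete-cocompact}), that $\mathbf{Spin}_V$ really is the simply-connected cover in the relevant sense over $\F$ (not just over the reals), and — most delicately — that the local integral structures $\mathbf{Spin}_V(\mathcal O_P)$ coming from the lattice $C_{\even}(\Lambda)$ are hyperspecial maximal compact subgroups for $P\notin E_V$, which is exactly what Lemma~\ref{lemma:exceptional-set-maximal-ideals} (via \cite[Prop.~3.20]{PlRa94}) packages for us, so that the principal congruence subgroups $K_{P_i}(1)$ have the expected quotients. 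Once these local-global compatibilities are in place the argument is the classical one; I would present it by (i) identifying $\Gamma_\Lambda(P_1\cdots P_m)$ with the relevant congruence kernel, (ii) invoking strong approximation to get adelic density, (iii) invoking Lemma~\ref{lemma:exceptional-set-maximal-ideals} for surjectivity of the residual reductions, and (iv) assembling the product via CRT.
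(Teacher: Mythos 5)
Your proposal is correct and follows essentially the same route as the paper: strong approximation for the simply-connected, $\F_{\sigma_1}$-noncompact group $\mathbf{Spin}_V$ combined with the local surjectivity of Lemma~\ref{lemma:exceptional-set-maximal-ideals}, with the kernel identified via \eqref{eq:congruence}. The paper phrases the surjectivity step by intersecting $Spin(V)$ with an explicit non-empty adelic open set rather than via cosets of the principal congruence subgroups, but this is the same argument.
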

\begin{proof}
    Since $\dim_{\F} V \geq 3$, the $\F$-group $\mathbf{Spin}_V$ 
    is simply-connected. 
    Under the assumptions in Proposition~\ref{proposition:discrete-cocompact}, 
    $\mathbf{Spin}_V(\F_{\sigma_{1}})\simeq Spin(n,1)$, 
    which is a non-compact simple Lie group for every 
    $n\geq 2$.
    Thus, the \emph{strong approximation theorem}  
    (Kneser~\cite{Kne66}, see \cite[Thm.~7.12]{PlRa94}) applies to $\mathbf{Spin}_V$. Although the following proof relies on the standard argument using the strong   approximation theorem, we provide a proof for readers who may be unfamiliar with it.  

    Let us recall the statement of the strong approximation theorem  
    in the form required for our setting.  
    We define the finite adelization of $\mathbf{Spin}_{V}$ as  
    \[
    \mathbf{Spin}_{V}(\mathbb{A}^{f}_{\F})
    :=\bigcup_{S} \left( \prod_{P\in S} \mathbf{Spin}_{V}(\F_{P})  
    \times \prod_{P\not \in S} \mathbf{Spin}_{V}(\mathcal{O}_{P}) \right),
    \]  
    where $S$ ranges over all finite sets of maximal ideals of $\mathcal{O}$.  
    Furthermore, we endow $\mathbf{Spin}_{V}(\mathbb{A}^{f}_{\mathbb{F}})$  
    with the inductive limit topology arising from the direct system  
    of locally compact totally disconnected groups
    \[
    \prod_{P \in S} \mathbf{Spin}_{V}(\mathbb{F}_{P})  
    \times \prod_{P \notin S} \mathbf{Spin}_{V}(\mathcal{O}_{P}). 
    \]
    With this topology, $\mathbf{Spin}_{V}(\mathbb{A}^{f}_{\F})$  
    becomes a locally compact Hausdorff topological group.  
    Moreover, the diagonal morphism  
    $\mathbf{Spin}_{V}(\F) \to \mathbf{Spin}_{V}(\mathbb{A}^{f}_{\F})$  
    is well-defined, and thus we can regard $\mathbf{Spin}_{V}(\F)$  
    as a subgroup of $\mathbf{Spin}_{V}(\mathbb{A}^{f}_{\F})$.  
    The strong approximation theorem asserts that  
    $\mathbf{Spin}_{V}(\F)=Spin(V)$ is dense in $\mathbf{Spin}_{V}(\mathbb{A}^{f}_{\F})$.    
    
    We now prove our assertion.  
    Since the kernel of the reduction map  
    \[
    \Gamma_{\Lambda} \to \prod_{i=1}^{m} Spin(\overline{\Lambda}_{P_{i}})
    \]  
    is clearly $\Gamma_{\Lambda}(P_{1} \cdots P_{m})$,  
    it suffices to prove that the reduction map is surjective.  

    For each $i = 1, \dots, m$, take an element $\bar{g}_{i} \in Spin(\overline{\Lambda}_{P_{i}})$,  
    and consider the open subset of $\mathbf{Spin}_{V}(\mathbb{A}^{f}_{\F})$ given by  
    \[
    U = \prod_{i=1}^{m}
    \{ g \in \mathbf{Spin}_{V}(\mathcal{O}_{P_{i}}) \mid g \bmod P_{i} = \bar{g}_{i} \}  
    \times  
    \prod_{P\neq P_{i}} \mathbf{Spin}_{V}(\mathcal{O}_{P}).
    \]  
    By Lemma~\ref{lemma:exceptional-set-maximal-ideals}, we see that $U$ is non-empty.  
    Since $Spin(V)$ is dense in $\mathbf{Spin}_{V}(\mathbb{A}^{f}_{\F})$, 
    we obtain $U \cap Spin(V) \neq \emptyset$.  

    Here note 
    \[
    \mathbf{Spin}_{V}(\mathcal{O}) = \prod_{P} \mathbf{Spin}_{V}(\mathcal{O}_{P}) \cap \mathbf{Spin}_{V}(\F)
    \quad
    \text{in}\quad \mathbf{Spin}_{V}(\mathbb{A}^{f}_{\F}).
    \]  
    Thus it follows that any element in $U \cap Spin(V)$ must belong to $\Gamma_{\Lambda} = \mathbf{Spin}_{V}(\mathcal{O})$.  
    Hence, we conclude that $U \cap \Gamma_{\Lambda} \neq \emptyset$,  
    which implies that the reduction map is surjective.  
    This completes the proof.  
\end{proof}

Finally, we prove that for a suitable ideal $I$, the $\mathcal{O}$-arithmetic subgroup $\Gamma_{\Lambda}(I)$ satisfies Condition~\ref{condition-ck} (Proposition~\ref{prop:Ck}).  
To state the proposition, we introduce some notation.  

Recall that $e_0, \dots, e_n$ form an $\F$-basis of $V$.
Let $V'$ be the $\F$-subspace of $V$ generated by $e_1, \dots, e_n$, and define $\Lambda' := V' \cap \Lambda$.  
As a result, the following are defined:  
the Clifford algebra $C_{\even}(V') \equiv C_{\even}(V', Q|_{V'})$,  
its $\mathcal{O}$-subalgebra $C_{\even}(\Lambda')$,  
the spin group $\Spin_{V'}$,  
the $\mathcal{O}$-arithmetic subgroup $\Gamma_{\Lambda'}$,  
and its congruence subgroup $\Gamma_{\Lambda'}(I)$ for an ideal $I$.  

The following proposition is the main goal of this section.  
The proof is achieved by reformulating the argument in Millson~\cite[p.~245]{Millson76}  
regarding the construction of a family of compact hyperbolic manifolds  
with arbitrarily large first Betti numbers in terms of Clifford algebras.

\begin{proposition}
    \label{prop:Ck}
    Assume that $\dim_{\F} V \geq 4$ (recall that $\dim_{\F} V = n+1$).  
    Take $E_V$ and $E_{V'}$ be  finite sets of maximal ideals of $\mathcal{O}$ as given by applying  
    Lemma~\ref{lemma:exceptional-set-maximal-ideals}  
    to $(V, Q, \Lambda)$ and $(V', Q|_{V'}, \Lambda')$, respectively.  

    For $k \in \mathbb{N}$, choose an integer $m \geq 2$ such that $2^m - 1 \geq k$.  
    Suppose that there exist $m+2$ distinct prime ideals $P_{-1}, P_{0}, \dots, P_{m}$ satisfying the following conditions:  
    \begin{itemize}
        \item $P_i \cap \mathbb{Z} \neq P_j \cap \mathbb{Z}$ for any $i \neq j$;
        \item Each $P_i$ belongs to neither $E_V$ nor $E_{V'}$.
    \end{itemize}  
    Moreover, in addition to the assumptions in Proposition~\ref{proposition:discrete-cocompact},  
    suppose that $Q(e_{0}) \in \mathcal{O}^{\times}$.  
    Then, the torsion-free cocompact discrete subgroup $\Gamma_{\Lambda}(P_{-1}\cdots P_{m})$
    of $Spin(n,1)$ satisfies Condition~\ref{condition-ck} for $k$.  
\end{proposition}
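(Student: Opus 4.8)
\emph{Strategy.} The plan is to transcribe Millson's construction~\cite{Millson76} of hyperbolic manifolds with arbitrarily large first Betti number into the language of Clifford algebras, feeding in the strong approximation computation of Proposition~\ref{prop:Gamma/GammaI} in place of the orthogonal-group computation used in~\cite{Millson76}. Write $X=Spin(n,1)/Spin(n)$, $\Gamma:=\Gamma_{\Lambda}(P_{-1}\cdots P_{m})$ and $M:=\Gamma\backslash X$. First I would record that $M$ is a closed, orientable hyperbolic $n$-manifold: among $P_{-1},\dots,P_{m}$ at least two lie over distinct rational primes (in fact all $m+2\ge 4$ of them do), so $\Gamma\subset\Gamma_{\Lambda}(P_{-1}P_{0})$ is torsion-free by Lemma~\ref{lemma:torsion-free}; $\Gamma$ is discrete and cocompact in $Spin(n,1)$ by Proposition~\ref{proposition:discrete-cocompact}; and $\Gamma\subset Spin(n,1)$ acts by orientation-preserving isometries.

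\emph{The totally geodesic hypersurface.} Put $V':=\F e_{1}\oplus\dots\oplus\F e_{n}$ and $\Lambda':=\Lambda\cap V'=\mathcal{O}e_{1}\oplus\dots\oplus\mathcal{O}e_{n}$. By the sign conditions of Proposition~\ref{proposition:discrete-cocompact}, $Q|_{V'}$ has signature $(n-1,1)$ at $\sigma_{1}$, so $X':=X\cap(V'\otimes_{\sigma_{1}}\R)$ is a totally geodesic copy of $\mathbb{H}^{n-1}$ in $X=\mathbb{H}^{n}$; since $V$, hence $V'$, is anisotropic over $\F$, the congruence subgroup $\Gamma_{\Lambda'}(P_{-1}\cdots P_{m})=\Gamma\cap Spin(V')$ is a torsion-free cocompact lattice in $Spin(n-1,1)$, and $X'$ projects to a closed, orientable, totally geodesic (a priori only immersed) hypersurface $N_{0}$ of $M$. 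The role of the reflection in $V'$ is played by $e_{0}$: because $Q(e_{0})\in\mathcal{O}^{\times}$ by hypothesis, $e_{0}$ is a unit of $C(\Lambda)$ with $e_{0}^{-1}=Q(e_{0})^{-1}e_{0}\in C(\Lambda)$, so conjugation by $e_{0}$ stabilizes $C_{\even}(\Lambda)$ and every ideal $I\,C_{\even}(\Lambda)$; hence it normalizes $\Gamma_{\Lambda}$ and each $\Gamma_{\Lambda}(I)$, and on $V\otimes_{\sigma_{1}}\R$ it induces (up to sign) the orthogonal reflection in the hyperplane $V'\otimes\R$. This extra symmetry is what forces the relevant preimages of $N_{0}$ to be non-separating.

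\emph{The main step.} Now I would run Millson's argument~\cite[pp.~243--245]{Millson76} in the congruence tower over $M_{0}:=\Gamma_{\Lambda}\backslash X$, with $\mathbf{SO}_{V}$ replaced by $\mathbf{Spin}_{V}$ and the mod-$P$ surjectivity statements taken from Lemma~\ref{lemma:exceptional-set-maximal-ideals}, applied to both $(V,Q,\Lambda)$ and $(V',Q|_{V'},\Lambda')$ (which is why we require $P_{i}\notin E_{V}\cup E_{V'}$), and from Proposition~\ref{prop:Gamma/GammaI}. The auxiliary primes $P_{-1},P_{0}$ are used to pass to an intermediate congruence cover in which $N_{0}$ lifts to an embedded, two-sided, non-separating hypersurface with connected complement: embeddedness and two-sidedness follow from torsion-freeness together with the fact that, for $P\notin E_{V}\cup E_{V'}$, the reductions of $\mathbf{Spin}_{V}$ and $\mathbf{Spin}_{V'}$ modulo $P$ are surjective, so that the stabilizer in $\Gamma$ of each $\Gamma$-translate of $X'$ is exactly its intersection with the corresponding $Spin(V')$-conjugate, which fixes the normal direction $e_{0}$; two-sidedness together with $\Gamma\subset Spin$ then gives orientability. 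Passing further to $M$, whose deck group over the intermediate cover is $\prod_{i=1}^{m}Spin(\overline{\Lambda}_{P_{i}})$ by Proposition~\ref{prop:Gamma/GammaI}, the full preimage $\Pi\subset M$ of $N_{0}$ is a disjoint union of embedded, connected, orientable, closed, totally geodesic hypersurfaces. Forming the dual graph $\mathcal{G}$ (vertices $=$ components of $M\smallsetminus\Pi$, edges $=$ components of $\Pi$) and using that $\Spin_{V'}(\mathcal{O})$ surjects onto $\prod_{i}Spin(\overline{\Lambda'}_{P_{i}})$ — strong approximation for $\mathbf{Spin}_{V'}$, which is legitimate because $\dim_{\F}V'=n\ge 3$ forces $\mathbf{Spin}_{V'}$ simply connected — a Millson-type count gives $b_{1}(\mathcal{G})=\#\{\text{edges}\}-\#\{\text{vertices}\}+1\ge 2^{m}-1$, where the hypothesis $2^{m}-1\ge k$ is used. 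Since every connected graph has a set of $b_{1}(\mathcal{G})$ edges whose removal leaves it connected (delete the complement of a spanning tree), one can choose $k$ components $N_{1},\dots,N_{k}$ of $\Pi$ with $M\smallsetminus(N_{1}\cup\dots\cup N_{k})$ connected; these verify Condition~\ref{condition-ck}, proving Proposition~\ref{prop:Ck}.

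\emph{Main obstacle.} The hard part is the connectedness of the complement, i.e.\ the lower bound $b_{1}(\mathcal{G})\ge 2^{m}-1$. This forces one to control, inside $\prod_{i}Spin(\overline{\Lambda}_{P_{i}})$, not merely the image of $\Spin_{V'}(\mathcal{O})$ but also that of the vertex group of $\mathcal{G}$ and of a meridian of the hypersurface — equivalently, the $\Gamma$-stabilizers of \emph{all} translates of $X'$ and the $e_{0}$-symmetry that links the two sides — and, in contrast with the $\mathbf{SO}_{V}$ case in~\cite{Millson76}, the exceptional set $E_{V}$ of Lemma~\ref{lemma:exceptional-set-maximal-ideals} for the spin group may be strictly larger (cf.\ the Remark following that lemma), so one must restrict to primes outside $E_{V}\cup E_{V'}$ and rely on the simple-connectedness of $\mathbf{Spin}_{V}$ and $\mathbf{Spin}_{V'}$ to keep strong approximation available. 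By comparison, the embeddedness, two-sidedness, orientability and compactness of the $N_{i}$ are routine once torsion-freeness and the reduction statements are in hand.
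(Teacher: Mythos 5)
Your setup (torsion-freeness via $P_{-1},P_{0}$ and Lemma~\ref{lemma:torsion-free}, cocompactness via Proposition~\ref{proposition:discrete-cocompact}, the hypersurface coming from $V'$, and the unit $e_{0}$ acting by conjugation as the extra symmetry) matches the paper. But the central claim — that the complement of the chosen hypersurfaces is connected — is not actually proved. You reformulate it as the inequality $b_{1}(\mathcal{G})\ge 2^{m}-1$ for the dual graph of the full preimage $\Pi$ of $N_{0}$, attribute it to ``a Millson-type count'', and then in your final paragraph concede that this is the hard part without supplying the argument. The reformulation does not make the problem easier: to compute $b_{1}(\mathcal{G})=\#\{\text{edges}\}-\#\{\text{vertices}\}+1$ you would need an \emph{upper} bound on the number of components of $M\smallsetminus\Pi$, and neither strong approximation nor the surjectivity of $\Spin_{V'}(\mathcal{O})\to\prod_{i}Spin(\overline{\Lambda'}_{P_{i}})$ gives you that; those statements only count the translates of $X'$ (the edges), not how they cut $M$ apart.

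What is missing is precisely Millson's involution mechanism, which the paper isolates as Lemma~\ref{lemma:millson-main}: one does not take all of $\Pi$, but only $k$ translates $N_{i}=\alpha_{i}(X'_{\Lambda'}(I))$ with $\alpha_{i}\in\Gamma_{\Lambda}(P_{-1}P_{0})$ chosen so that (a) the classes $[\alpha_{i}]$ are pairwise distinct and nontrivial in $\Phi/\Phi'$ (disjointness, via the injectivity statement from Lemma~\ref{lem:jaffe}), and (b) $[e_{0}\alpha_{i}e_{0}^{-1}]\equiv[\alpha_{i}]\bmod\Phi'$, so that each $N_{i}$ is stable under the orientation-reversing isometry $\tau=\mathrm{Ad}(e_{0})$ and $\tau$ preserves its orientation; Lemma~\ref{lemma:millson-main} then forces $M\smallsetminus(N_{1}\cup\dots\cup N_{k})$ to be connected. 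The existence of $k\le 2^{m}-1$ such classes is where Proposition~\ref{prop:Gamma/GammaI} enters: it identifies $\Phi/\Phi'$ with $\prod_{i=1}^{m}\{\bar{x}\in\overline{\Lambda}_{P_{i}}\mid Q(x)\equiv Q(e_{0})\bmod P_{i}\}$, the class of $\alpha$ being $(\alpha e_{0}\alpha^{-1}\bmod P_{i})_{i}$, and the $\tau$-stable nontrivial classes contain the $2^{m}-1$ elements $(\pm\overline{e_{0}},\dots,\pm\overline{e_{0}})\neq(\overline{e_{0}},\dots,\overline{e_{0}})$. This is the source of the bound $2^{m}-1$; in your write-up that number appears only as the target of an unproved inequality. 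To complete your proof you would either have to carry out this involution argument (at which point the dual-graph language is superfluous) or find an independent bound on the vertices of $\mathcal{G}$, which is not available from the tools you cite.
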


\begin{example}
    \label{example:desired-Gamma}
    Let $n\geq 3$ and $\mathbb{F} = \mathbb{Q}(\sqrt{2})$. 
    We consider the following quadratic form with coefficients in $\mathcal{O} = \mathbb{Z}[\sqrt{2}]$:  
    \[
    Q(x) = x_0^2 + \cdots + x_{n-1}^2 - \sqrt{2} x_n^2.
    \]  
    We regard this as a quadratic form on $V = \mathbb{F}^{n+1}$
    with standard basis $e_0, \dots, e_n$,  
    and let $\Lambda = \mathcal{O}^{n+1}$.  
    The triple $(V,Q,\Lambda)$ satisfies all the assumptions in Proposition~\ref{proposition:discrete-cocompact} and 
    $Q(e_{0})=1$ is invertible in $\mathcal{O}$.
    
    Primes of the form $8a\pm 3$ exist infinitely and remain primes in $\mathcal{O}$. 
    For $k \in \mathbb{N}$, take a natural number $m \geq 2$ such that $2^m - 1 \geq k$, and choose $m+2$ such sufficiently large primes $p_{-1}, p_{0}, \ldots, p_m$. 
    By Proposition~\ref{prop:Ck},
    $\Gamma_{\Lambda}(p_{-1}\cdots p_m\mathcal{O})$ 
    is a torsion-free cocompact discrete subgroup of 
    $Spin(n,1)$ satisfying Condition~\ref{condition-ck} for $k\in \N$.
\end{example}

Thus, once Proposition~\ref{prop:Ck} is established,  
the proof of Theorem~\ref{theorem:Millson-spin} for $n\geq 3$ is complete (see Example~\ref{example:millson-n=2} for the proof in the case $n=2$).  
The rest of this section is devoted to the proof of Proposition~\ref{prop:Ck}.  
For this purpose, we use two geometric lemmas from Millson~\cite{Millson76}  
(Lemmas~\ref{lem:jaffe} and~\ref{lemma:millson-main}).  

The proof of the following lemma is essentially the same as that of  
\cite[Lem.~2.1]{Millson76} (referred to as Jaffe's lemma).  
In \cite{Millson76}, the statement was proved under the assumptions that  
$X$ is a hyperbolic manifold and $T$ is a group of order $2$.  
Since the results hold in a more general setting, we provide a proof in this broader context for future reference.

\begin{lemma}
\label{lem:jaffe}
    Let $X:=G/H$ be a homogeneous space,
    $\Gamma$ a discontinuous group for $X$,
    and 
    $T\subset \Aut(G)$ a finite subgroup such that $H$ and $\Gamma$ are
    both stable under $T$.

    Put $G':=\{g\in G\mid t(g) = g \text{ for any }t\in T\}$,
    $H':=G'\cap H$, $\Gamma'=G'\cap \Gamma$, and $X':=G'/H'$. 
    Then 
    the map $\pi\colon \Gamma'\backslash X'\rightarrow \Gamma\backslash X$ defined by
    $\Gamma'g' H' \mapsto \Gamma g' H$ 
    is a diffeomorphism into a regular submanifold of 
    $\Gamma\backslash X$.
\end{lemma}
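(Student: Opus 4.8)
\emph{Overview of the plan.} The idea is to realize $X'=G'/H'$ as an open piece of the $T$-fixed-point set of $X$, and then push this picture down through the quotient maps $\pi_{\Gamma}\colon X\to\Gamma\backslash X$ and $\pi_{\Gamma'}\colon X'\to\Gamma'\backslash X'$, using the freeness and properness of the $\Gamma$-action together with $T$-equivariance at every step.

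\emph{Step 1 ($X'$ inside $X$).} Since $T$ is finite it is a compact Lie group, and since $H$ is $T$-stable each $t\in T$ descends to a diffeomorphism $\bar t\colon X\to X$, $gH\mapsto t(g)H$, giving a smooth $T$-action on $X$. By the structure theorem for smooth actions of compact groups, the fixed-point set $X^{T}$ is a closed regular submanifold with $T_{x}(X^{T})=(T_{x}X)^{T}$ at every $x\in X^{T}$. The subgroup $G'=G^{T}$ is closed with Lie algebra $\mathfrak g'=\mathfrak g^{T}$ (average over $T$), and likewise $\mathfrak h'=\mathfrak h^{T}$; as $\R[T]$ is semisimple, $(\mathfrak g/\mathfrak h)^{T}=\mathfrak g'/\mathfrak h'$. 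The natural map $\iota\colon X'\to X$, $g'H'\mapsto g'H$, is a $G'$-equivariant injective immersion with image the $G'$-orbit $\mathcal O:=G'H/H\subseteq X^{T}$; comparing dimensions at $eH$ via $(T_{eH}X)^{T}=\mathfrak g'/\mathfrak h'$ and translating by $G'$ shows $\mathcal O$ is open in $X^{T}$, hence a regular submanifold of $X$, and that $\iota$ is a diffeomorphism of $X'$ onto $\mathcal O$.

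\emph{Step 2 (passing to quotients).} The subgroup $\Gamma'=\Gamma\cap G'$ is discrete, lies in $G'$ hence preserves $\mathcal O$, and acts on $X'\cong\mathcal O$ as the restriction of the $\Gamma$-action on $X$. Since $\iota$ is injective, equivariant, and carries compacta to compacta, proper discontinuity and freeness of $\Gamma$ on $X$ pass to $\Gamma'$ on $X'$; thus $\Gamma'$ is a discontinuous group for $X'$ and $\Gamma'\backslash X'$ is a smooth manifold. The map $\pi$ is $\pi_{\Gamma}\circ\iota$ carried through the covering $\pi_{\Gamma'}$, so it is well-defined, smooth, and an immersion (as $\pi_{\Gamma},\pi_{\Gamma'}$ are local diffeomorphisms and $\iota$ is an immersion). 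For injectivity: if $\gamma g_{1}'H=g_{2}'H$ with $\gamma\in\Gamma$ and $g_{i}'\in G'$, applying $t\in T$ and using that the $g_{i}'$ are $T$-fixed and $t(H)=H$ gives $t(\gamma)g_{1}'H=g_{2}'H$ too; by freeness the element of $\Gamma$ sending $g_{1}'H$ to $g_{2}'H$ is unique, so $t(\gamma)=\gamma$ for all $t$, i.e. $\gamma\in\Gamma'$, and then $\Gamma'g_{1}'H'=\Gamma'g_{2}'H'$ because $\iota$ is injective.

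\emph{Step 3 (embeddedness — the main obstacle).} It remains to see that $\pi$ is a diffeomorphism onto a regular submanifold, which reduces to showing the $\Gamma$-saturation $Y:=\Gamma\mathcal O=\bigcup_{\gamma\in\Gamma}\gamma\mathcal O$ is a regular submanifold of $X$ whose pieces $\gamma\mathcal O$, indexed by $\Gamma/\Gamma'$, are pairwise disjoint and locally finite; passing through $\pi_{\Gamma}$ then identifies $\pi$ with the inclusion $\Gamma\backslash Y\cong\Gamma'\backslash X'\hookrightarrow\Gamma\backslash X$. Disjointness follows from freeness and $T$-equivariance: if $x\in\gamma\mathcal O\cap\mathcal O$ then $x\in X^{T}$ and $\gamma^{-1}x\in\mathcal O\subseteq X^{T}$, so applying $t\in T$ yields $(\gamma^{-1}t(\gamma))x=x$, hence $\gamma^{-1}t(\gamma)=e$ by freeness and $\gamma\in\Gamma'$, i.e. $\gamma\mathcal O=\mathcal O$. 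For local finiteness, suppose $z_{j}\in\gamma_{j}\mathcal O$ with $z_{j}\to x_{0}\in\mathcal O$; writing $z_{j}=\gamma_{j}w_{j}$ with $w_{j}\in\mathcal O\subseteq X^{T}$ and applying $t\in T$ gives $t(z_{j})=(t(\gamma_{j})\gamma_{j}^{-1})z_{j}$, so $z_{j}$ and $(t(\gamma_{j})\gamma_{j}^{-1})z_{j}$ all lie in a fixed compact set; properness of the $\Gamma$-action forces $t(\gamma_{j})\gamma_{j}^{-1}$ into a finite subset of $\Gamma$, and after passing to a subsequence $t(\gamma_{j})\gamma_{j}^{-1}=\delta_{t}$ is independent of $j$ for every $t$. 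Then $t(\gamma_{1}^{-1}\gamma_{j})=\gamma_{1}^{-1}\delta_{t}^{-1}\delta_{t}\gamma_{j}=\gamma_{1}^{-1}\gamma_{j}$, so $\gamma_{1}^{-1}\gamma_{j}\in\Gamma'$ and all $\gamma_{j}$ lie in the single coset $\gamma_{1}\Gamma'$; thus only finitely many distinct translates accumulate at $x_{0}$. With disjointness and local finiteness, $Y=\bigsqcup_{\gamma\Gamma'}\gamma\mathcal O$ is a regular submanifold of $X$ on which $\Gamma$ acts with $\mathrm{Stab}_{\Gamma}(\mathcal O)=\Gamma'$, and $\Gamma\backslash Y\hookrightarrow\Gamma\backslash X$ realizes $\pi$ as a diffeomorphism onto a regular submanifold. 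I expect this last step to be the hard part: $\mathcal O$ is in general neither compact nor closed in $X$, so the embeddedness is not formal, and the device that rescues it is precisely the interplay between $T$-equivariance and properness of the $\Gamma$-action used to make $t(\gamma_{j})\gamma_{j}^{-1}$ eventually constant.
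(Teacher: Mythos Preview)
Your Steps 1 and 2 are correct and actually make explicit something the paper takes for granted (that $X'$ sits inside $X$ as an open piece of the fixed-point set $X^{T}$, hence as a regular submanifold). The injectivity argument---apply $t\in T$ and use freeness to force $t(\gamma)=\gamma$---is exactly the paper's key device.

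The gap is in Step 3. The implication ``disjoint and locally finite $\Rightarrow$ $Y$ is a regular submanifold'' is false in general: two disjoint embedded arcs in $\R^{2}$, one of which accumulates at a point of the other, form a locally finite two-member disjoint family whose union is not a submanifold there. What you actually need is that for each $x_{0}\in\mathcal O$ some neighborhood $U$ meets \emph{only} $\mathcal O$ among all translates $\gamma\mathcal O$. Your sequence argument, which invokes properness to trap $t(\gamma_{j})\gamma_{j}^{-1}$ in a finite set and then passes to a subsequence, only yields ``finitely many cosets accumulate at $x_{0}$''; applied to a constant sequence $\gamma_{j}=\gamma$ it gives no information about $\gamma$ itself. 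The fix is to use freeness more sharply: choose $U$ $T$-stable and so small that $\gamma U\cap U\neq\emptyset$ forces $\gamma=e$ (possible by free proper discontinuity). Then for any $y\in U$ with $\gamma y\in\mathcal O\subset X^{T}$, applying $t$ gives $t(\gamma)t(y)=\gamma y$ with $t(y)\in U$, hence $\gamma^{-1}t(\gamma)U\cap U\neq\emptyset$, so $t(\gamma)=\gamma$ and $\gamma\in\Gamma'$. This shows directly that $\pi_{\Gamma}(U)\cap\pi_{\Gamma}(\mathcal O)=\pi_{\Gamma}(U\cap\mathcal O)$, which is the submanifold chart---and is precisely how the paper argues, bypassing the saturation $Y$ altogether.
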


\begin{proof}
    Define the action of $t \in T$ on $gH \in X$ by $t(gH) = t(g)H$. Under this action, $T$ fixes every element of $X'$.

Take any point $x \in X'$. Then we choose a neighborhood $U$ of $x$ in $X$ such that, if $\gamma \in \Gamma$ satisfies $\gamma U \cap U \neq \emptyset$, then $\gamma = 1$. By replacing $U$ with $\bigcap_{t \in T} t(U)$, we may assume that $U$ is stable under the $T$-action.

We claim that if $\gamma \in \Gamma$ satisfies $\gamma U \cap X' \neq \emptyset$, then $\gamma \in \Gamma'$. Indeed, suppose there exists $y \in U$ such that $\gamma y \in X'$. Then, for any $t \in T$, we have $t(\gamma)t(y) = \gamma y$. Since $t(y) \in U$, it follows that $t(\gamma) = \gamma$. Hence, $\gamma \in \Gamma'$. This proves the claim. In particular, we also see that the map $\pi\colon \Gamma'\backslash X'\rightarrow \Gamma\backslash X$ is injective.

Let $\pi_{\Gamma} \colon X \to \Gamma \backslash X$
and $\pi_{\Gamma'} \colon X' \to \Gamma \backslash X'$ be 
the quotient maps. Then the image of $\pi$ is $\pi_{\Gamma}(X')$.
It follows from the previous claim that $\pi_{\Gamma}(U) \cap \pi_{\Gamma}(X') = \pi_{\Gamma}(U \cap X')$.
Noting that $\pi_{\Gamma}|_U \colon U \to \pi_{\Gamma}(U)$ is a diffeomorphism by the choice of $U$, we see that
$\pi_{\Gamma}(X')$ is a regular submanifold of $\Gamma \backslash X$. 
Since $\pi_{\Gamma}|_{X'}=\pi\circ\pi_{\Gamma'}$,  
the map $\pi$ is locally diffeomorphic to the regular submanifold $\pi_{\Gamma}(X')$.
From the above, our assertion is proved. 
\end{proof}

The following lemma is proved in Millson~\cite[Sect.~4, p.245]{Millson76}, where $\tilde{M}\supset F$, $\gamma_{1},\ldots,\gamma_{r}$,
and $\sigma$ therein correspond to our 
$M\supset N$, $\alpha_{1},\ldots,\alpha_{k}$,
and $\tau$, respectively. 
\begin{lemma}
\label{lemma:millson-main}
    Let $M$ be an orientable connected Riemannian manifold,
    $\tau$ an involutive isometry on $M$, 
    and $N$ an orientable connected hypersurface of $M$
    such that $\tau$ is the identity on $N$.
    Suppose that we are given
isometries $\alpha_{1},\ldots,\alpha_{k}$ on $M$ satisfying the following conditions:
    \begin{enumerate}[label=$(\alph*)$]
        \item 
        \label{item:disjoint}
        $\alpha_{i}(N)\cap N=\emptyset$ and 
        $\alpha_{i}(N)\cap \alpha_{j}(N)=\emptyset$ for 
        any $i\neq j$;
        \item 
        \label{item:stable}
        $\tau(\alpha_{i}(N))= \alpha_{i}(N)$ for any $i$; 
        \item 
        \label{itsm:inverse}
        $\tau$ preserves the orientation of
        $\alpha_{i}(N)$ for any $i$.
    \end{enumerate}
    Then $M\smallsetminus (\alpha_{1}(N)\cup\cdots\cup \alpha_{k}(N))$ is connected.
\end{lemma}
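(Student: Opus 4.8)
The plan is to argue by contradiction, exploiting the fact that a \emph{nontrivial} involutive isometry fixing $N$ pointwise must be the geodesic reflection across $N$, and in particular reverses the orientation of $M$. We may and do assume $\tau\neq\mathrm{id}$ (an involution in the strict sense; this is the case in the application, where $\tau$ is the geodesic reflection across $N$) and $k\geq 1$. The first step is to record the structural fact: for $p\in N$ the differential $d\tau_p$ fixes $T_pN$ and acts on the normal line by $\pm1$; the sign cannot be $+1$, since otherwise $d\tau_p=\mathrm{id}$ and an isometry of a connected manifold with trivial differential at a point is the identity, contradicting $\tau\neq\mathrm{id}$. Hence $\det d\tau_p=-1$ for $p\in N$, and by connectedness of $M$ and continuity of $\det d\tau$, the isometry $\tau$ is orientation-reversing at every point of $M$, in particular at every point of each $\alpha_i(N)$.

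Next I would set up the complement $W:=M\smallsetminus(\alpha_1(N)\cup\dots\cup\alpha_k(N))$. Since $M$ and $N$ are orientable, each $\alpha_i(N)$ is an orientable, hence two-sided, closed embedded hypersurface, so it admits a tubular neighborhood $T_i\simeq \alpha_i(N)\times(-1,1)$; after shrinking, the $T_i$ may be taken pairwise disjoint, disjoint from $N$, and (averaging a tubular neighborhood over $\{1,\tau\}$, or using the normal exponential map) $\tau$-invariant. Writing $T_i^{+}:=\alpha_i(N)\times(0,1)$ and $T_i^{-}:=\alpha_i(N)\times(-1,0)$, the key orientation computation is: $\tau$ preserves $\alpha_i(N)$ together with its orientation by hypotheses (b) and (c), while reversing the orientation of $M$, so $\tau$ reverses the co-orientation of $\alpha_i(N)$, i.e.\ $\tau$ interchanges $T_i^{+}$ and $T_i^{-}$. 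Each $T_i^{\pm}$ is connected and contained in $W$, so it lies in a single component of $W$; call that component $P_i^{\pm}$, so that $\tau(P_i^{+})=P_i^{-}$. A routine argument shows that \emph{every} component of $W$ is some $P_i^{\pm}$: no component is clopen (as $W\subsetneq M$ is connected), so a component accumulates on some $\alpha_i(N)$, hence meets the punctured tube $T_i\smallsetminus\alpha_i(N)=T_i^{+}\sqcup T_i^{-}\subset W$, and therefore coincides with $P_i^{+}$ or $P_i^{-}$. In particular there are at most $2k$ components.

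Now I would encode the combinatorics in the finite graph $\mathcal{G}$ whose vertices are the components of $W$ and with one edge $e_i$ joining $P_i^{+}$ to $P_i^{-}$ for each $i$. Connectedness of $M$ — via a path joining two given points of $W$, made transverse to the closed hypersurface $\bigcup_i\alpha_i(N)$, together with the fact that each transverse crossing of $\alpha_i(N)$ passes from $T_i^{+}$ to $T_i^{-}$ and hence traverses the edge $e_i$ — shows that $\mathcal{G}$ is connected. Since $\tau(W)=W$ by (b), $\tau$ permutes the components of $W$, inducing a permutation $\pi$ of the vertices of $\mathcal{G}$ with $\pi(P_i^{+})=P_i^{-}$ for every $i$. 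On the other hand $N\subset W$ by (a), $N$ is connected, so $N$ lies in a single component $W_N$, and $\tau(N)=N$ forces $\pi(W_N)=W_N$. If $W$ were disconnected, $\mathcal{G}$ would have at least two vertices; being connected, $W_N$ would be incident to some edge $e_i$, forcing $W_N\in\{P_i^{+},P_i^{-}\}$ and hence $\pi(W_N)\neq W_N$, a contradiction. Therefore $W$ is connected.

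The tubular-neighborhood bookkeeping and the transversality argument for connectedness of $\mathcal{G}$ are standard; the genuine content, and the point to be careful about, is the orientation accounting — that $\tau$, being the reflection across $N$, is orientation-reversing on $M$ yet fixes $N$ pointwise, so that it simultaneously swaps the two sides of every $\alpha_i(N)$ and fixes a point lying in the complement $W$. I would also be sure to use the implicit hypothesis $\tau\neq\mathrm{id}$ and the fact that the $\alpha_i(N)$ are disjoint closed embedded hypersurfaces, which is what makes the cutting picture legitimate.
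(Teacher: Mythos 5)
The paper offers no proof of this lemma; it simply cites Millson~\cite[Sect.~4, p.~245]{Millson76}, and your argument is essentially a reconstruction of his reflection/parity argument, so there is no genuinely different route to compare. The proof is correct: a nontrivial involutive isometry fixing $N$ pointwise is orientation-reversing on all of $M$, and together with (b) and (c) this forces $\tau$ to exchange the two sides of each $\alpha_i(N)$ while fixing the component of the complement containing $N$; the graph argument then closes the proof. One step should be tightened: you infer from connectedness of $\mathcal{G}$ (with at least two vertices) that $W_N$ is incident to some edge $e_i$ and hence $\pi(W_N)\neq P_i^{\pm}\ni W_N$, but this breaks down if $e_i$ is a loop, i.e.\ if $P_i^{+}=P_i^{-}$. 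The fix is one line: in a connected graph with at least two vertices every vertex is joined by an edge to a \emph{different} vertex (otherwise it would be isolated together with its loops), so one may choose $e_i$ with $P_i^{+}\neq P_i^{-}$ and $W_N\in\{P_i^{+},P_i^{-}\}$, and then $\pi$ swaps $P_i^{+}$ with $P_i^{-}$, giving the desired contradiction. Your observations that the statement implicitly assumes $\tau\neq\mathrm{id}$ and that the $\alpha_i(N)$ are closed embedded (two-sided) hypersurfaces are correct and consistent with the intended application, where $\tau$ is the geodesic reflection across a compact totally geodesic hypersurface.
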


We are ready to show Proposition~\ref{prop:Ck}. 
\begin{proof}[Proof of Proposition~\ref{prop:Ck}]
Recall Proposition~\ref{proposition:discrete-cocompact}.  
We shall write $\F_{\sigma_{1}}$ simply as $\R$ and let us define 
\[
L=\Spin_{V}(\R)\simeq Spin(n,1).
\]
 
Now we fix the embeddings of the subgroups  
\[
L' = Spin(n-1,1), \quad L_K = Spin(n), \quad L'_K = L' \cap L_K = Spin(n-1)
\]  
into $L$.  
Recall that $V'$ is the $\F$-subspace of $V$ generated by $e_1, \dots, e_n$,
where $\{e_{0},\ldots,e_{n}\}$ forms an $\F$-basis of $V$.  
Hence, the Clifford algebra $C_{\even}(V') \equiv C_{\even}(V', Q|_{V'})$ is naturally embedded in $C_{\even}(V)$.  
This allows us to regard $C_{\even}(\Lambda')$ 
as an $\mathcal{O}$-subalgebra of $C_{\even}(\Lambda)$  
and $\Spin_{V'}$ as an $\F$-subgroup of $\Spin_{V}$.  
We then define 
\[
L' := \Spin_{V'}(\R)\simeq Spin(n-1,1).
\]
Similarly, we define $L_K$ and $L_K'$ as the Lie groups of $\R$-points of  
the spin groups associated with the $\F$-subspaces of $V$  
generated by $\{e_0, \dots, e_{n-1}\}$ and $\{e_1, \dots, e_{n-1}\}$, respectively.  
Then we have $L_K\simeq Spin(n)$ and $L_K'\simeq Spin(n-1)$.  

We consider the ideal of $\mathcal{O}$:
\[
I=P_{-1}P_{0}\cdots P_{m}.
\]
By Proposition~\ref{proposition:discrete-cocompact} and Lemma~\ref{lemma:torsion-free}, 
the quotient spaces 
\[
X_{\Lambda}(I):= \Gamma_{\Lambda}(I)\backslash X \text{ and } 
X'_{\Lambda'}(I) := \Gamma_{\Lambda'}(I)\backslash X'
\]
are both orientable connected compact hyperbolic manifolds,
where $X=L/L_{K}$ and $X'=L'/L'_{K}$.

To show that $X'_{\Lambda'}(I)$ is a hypersurface of $X_{\Lambda}(I)$,  
we apply Lemma~\ref{lem:jaffe} to $(G,H,\Gamma) = (L,L_{K},\Gamma_{\Lambda}(I))$.  
For this purpose, now we verify that the assumptions of Lemma~\ref{lem:jaffe} are satisfied.  

Recall that elements of $V$ are regarded as elements of the Clifford algebra $C(V) = C(V,Q)$.  
Let $\tau\colon C_{\even}(V) \to C_{\even}(V)$ be the involutive $\F$-automorphism defined by  
\[
\tau(x) = e_0 x e_0^{-1}.
\]  
It is straightforward to verify that  
\[
C_{\even}(V') = \{ x \in C_{\even}(V) \mid \tau(x) = x \}.
\]  
In particular, we have 
\begin{equation}
    \label{eq:L'-centralizer}
    L'=\{x\in L\mid \tau(x)=x\}.
\end{equation}
Since $Q(e_{0})\in \mathcal{O}^{\times}$, 
the map $\tau$ preserves $C_{\even}(\Lambda)$. Hence, we also obtain
$C_{\even}(\Lambda') = \{x\in C_{\even}(\Lambda)\mid \tau(x)=x\}$, 
and thus
\begin{equation}
    \label{eq:Gamma'-centralizer}
    \Gamma_{\Lambda'}(I) = \Gamma_{\Lambda}(I)\cap L'. 
\end{equation}
Similarly, we have $L_{K}=\{x\in L\mid e_{n}xe_{n}^{-1}=x\}$
and $L'_{K}=L_{K}\cap L'$.

The map $\tau\colon C_{\even}(V) \to C_{\even}(V)$ 
defines an involutive automorphism of $L$,  
which we also denote by $\tau \in \Aut(L)$.  
It is clear that $\tau$ preserves both $\Gamma_{\Lambda}(I)$ and $L_K$.  
Thus, by \eqref{eq:L'-centralizer} and \eqref{eq:Gamma'-centralizer},  
Lemma~\ref{lem:jaffe} implies that $X'_{\Lambda'}(I)$  
can be naturally identified with a totally geodesic hypersurface of $X_{\Lambda}(I)$.  

Next, to complete the proof of our assertion, 
we apply Lemma~\ref{lemma:millson-main} to $(M,N) = (X_{\Lambda}(I), X'_{\Lambda'}(I))$. 
For this purpose, we find isometries $\alpha_1, \dots, \alpha_k$ of $X_{\Lambda}(I)$  
that satisfy conditions \ref{item:disjoint}--\ref{itsm:inverse} in Lemma~\ref{lemma:millson-main}.  

Before proceeding, we make the following observations.  
By Lemma~\ref{lemma:torsion-free}, 
the group $\Gamma_{\Lambda}(P_{-1}P_{0})$ is torsion-free.  
Moreover, by an argument similar, we have $\Gamma_{\Lambda'}(P_{-1}P_{0})=\Gamma_{\Lambda}(P_{-1}P_{0})\cap L'$ (see \eqref{eq:Gamma'-centralizer}) and 
Lemma~\ref{lem:jaffe} can also be applied to 
$(G,H,\Gamma) = (L,L_K,\Gamma_{\Lambda}(P_{-1}P_{0}))$.  
Thus, by the injectivity result of Lemma~\ref{lem:jaffe},  
for each $\alpha \in \Gamma_{\Lambda}(P_{-1}P_{0})$, we see that
\begin{equation}
\label{eq:jaffe'}
\alpha(X') \cap X' \neq \emptyset  
\text{ if and only if } \alpha \in \Gamma_{\Lambda'}(P_{-1}P_{0}).
\end{equation}  

We define finite groups $\Phi$ and $\Phi'$ as  
\[
\Phi := \Gamma_{\Lambda}(P_{-1}P_{0}) / \Gamma_{\Lambda}(I)\text{ and }\Phi' := \Gamma_{\Lambda'}(P_{-1}P_{0}) / \Gamma_{\Lambda'}(I).
\]  
The natural left actions of the finite group $\Phi$  
and the involution $\tau \in \Aut(L)$ on  
the Riemannian manifold $X_{\Lambda}(I)$  
are isometric.  
Moreover, the action of $\Phi$ preserves the orientation of $X_{\Lambda}(I)$,  
while the involution $\tau \in \Aut(L)$ reverses it.  

In what follows, we seek isometries $\alpha_1, \dots, \alpha_k$  
satisfying conditions \ref{item:disjoint}--\ref{itsm:inverse} in Lemma~\ref{lemma:millson-main}
among the elements of $\Phi$.  
By \eqref{eq:jaffe'}, we observe that for each $\alpha \in \Gamma_{\Lambda}(P_{-1}P_{0})$,
\begin{equation}
\label{eq:jaffe}
\alpha(X'_{\Lambda'}(I)) \cap X'_{\Lambda'}(I) \neq \emptyset  
\text{ if and only if }\alpha \in \Gamma_{\Lambda'}(P_{-1}P_{0}).
\end{equation}  

On condition~\ref{item:disjoint}:
Let $\alpha,\beta\in \Gamma_{\Lambda}(P_{-1}P_{0})$.
By \eqref{eq:jaffe}, we see that 
$\alpha(X'_{\Lambda'}(I))\cap \beta(X'_{\Lambda'}(I))\neq \emptyset$
if and only if $\alpha\equiv\beta\mod \Gamma_{\Lambda'}(P_{-1}P_{0})$. 

On condition~\ref{item:stable}: 
Let $\alpha \in \Gamma_{\Lambda}(P_{-1}P_{0})$.
For $x'\in X'_{\Lambda'}(I)$, we have 
\begin{equation}
    \label{eq:b}
    \tau\cdot(\alpha\cdot x')=\tau(\alpha)\cdot(\tau\cdot x')
    = (e_{0}\alpha e_{0}^{-1})\cdot x'.
\end{equation}
By \eqref{eq:jaffe}, we see that 
$\tau(\alpha(X'_{\Lambda'}(I)))=\alpha(X'_{\Lambda'}(I))$
if and only if $e_{0}\alpha e_{0}^{-1}\equiv \alpha \mod \Gamma_{\Lambda'}(P_{-1}P_{0})$.

On condition~\ref{itsm:inverse}: 
Let $\alpha \in \Gamma_{\Lambda}(P_{-1}P_{0})$.
In our setting, 
condition \ref{itsm:inverse} holds automatically under condition \ref{item:stable}.  
That is, we show that 
if $\alpha(X'_{\Lambda'}(I))$ is $\tau$-stable,
then $\tau$ preserves the orientation of $\alpha(X'_{\Lambda'}(I))$. 
By \eqref{eq:b}, the action of $\tau$ on $\alpha(X'_{\Lambda'}(I))$ coincides with the action of $e_{0}\alpha e_{0}^{-1}\alpha^{-1}\in \Gamma_{\Lambda'}(P_{-1}P_{0})$. This preserves the orientation of $\alpha(X'_{\Lambda'}(I))$.

Therefore, in order to apply Lemma~\ref{lemma:millson-main}, 
it suffices to find
$\alpha_{1},\ldots,\alpha_{k}\in \Gamma_{\Lambda}(P_{-1}P_{0})$
satisfying the following conditions:
\begin{enumerate}[label=(\roman*)]
\item 
\label{item:a}
$[\alpha_{i}]\not\equiv 1 \mod \Phi'$ and 
$[\alpha_{i}]\not\equiv[\alpha_{j}]\mod \Phi'$ for any $i\neq j$;
\item 
\label{item:b}
    $[e_{0}\alpha_{i}e_{0}^{-1}]\equiv [\alpha_{i}]\mod \Phi'$,
\end{enumerate}
where $[\alpha]$ means the element of $\Phi=\Gamma_{\Lambda}(P_{-1}P_{0})/\Gamma_{\Lambda}(I)$
represented by $\alpha\in \Gamma_{\Lambda}(P_{-1}P_{0})$. 

To this end, we analyze the finite set $\Phi / \Phi'$. 
Since $\dim_{\F}V'\geq 3$, 
Proposition~\ref{prop:Gamma/GammaI} can be applied to both
$\mathbf{Spin}_{V}$ and $\mathbf{Spin}_{V'}$. Hence, we have
\[
\Phi \simeq \prod_{i=1}^{m} Spin(\overline{\Lambda}_{P_{i}})
\text{ and }
\Phi' \simeq \prod_{i=1}^{m} Spin(\overline{\Lambda'}_{P_{i}}),
\]  
and thus obtain 
\begin{equation*}
    \Phi / \Phi' \simeq  
    \prod_{i=1}^{m} \big(Spin(\overline{\Lambda}_{P_{i}}) / Spin(\overline{\Lambda'}_{P_{i}})\big).
\end{equation*}  

Let us consider the natural action of each \( Spin(\overline{\Lambda}_{P_{i}}) \)  
on the vector space \( \overline{\Lambda}_{P_{i}} \) over the finite field \( k_{P_{i}} \),  
which is given by  
\[
\bar{v} \in \overline{\Lambda}_{P_{i}}\mapsto \bar{g} \bar{v} \bar{g}^{-1} \in \overline{\Lambda}_{P_{i}}
\]
for \( \bar{g} \in Spin(\overline{\Lambda}_{P_{i}}) \).  
Recall that elements of \( \overline{\Lambda}_{P_{i}} \) are regarded as elements of  
the Clifford algebra \( C_{\even}(\overline{\Lambda}_{P_{i}}) \).  
This action factors through the adjoint representation  
$\Ad\colon Spin(\overline{\Lambda}_{P_{i}}) \to SO(\overline{\Lambda}_{P_{i}})$. 

Here, note that the image of $\Ad$ is the subgroup $O'$ of  
$SO(\overline{\Lambda}_{P_{i}})$ consisting of elements with \emph{spinor norm} $1$  
(see, e.g., \cite[Sect.~24,  the identity~(24.7a)]{Shimura10}).  
By Millson~\cite[Lem.~3.1]{Millson76},  the $O'$-orbit of $e_0 \bmod P_i$, 
equivalently the $Spin(\overline{\Lambda}_{P_{i}})$-orbit,  
coincides with the sphere through $e_0 \bmod P_i$ in $\overline{\Lambda}_{P_{i}}$.  
Moreover, the isotropy group of $e_0 \bmod P_i$ is $Spin(\overline{\Lambda'}_{P_{i}})$.  
Thus, we obtain the bijection  
\begin{equation}
    \label{eq:strong-approximation}
    \Phi/\Phi' \simeq \prod_{i=1}^{m}\{\bar{x}\in \overline{\Lambda}_{P_{i}} \mid Q(x)\equiv 
Q(e_{0}) \mod P_{i}\},
\end{equation}  
where $\bar{x}$ means the element of $\overline{\Lambda}_{P_{i}}$ represented by $x\in \Lambda$. 
This map assigns to each $[g] \in \Phi$ (where $g \in \Gamma_{\Lambda}(P_{-1}P_{0})$)  
the element on the right-hand side whose $i$-th component is given by  
$ge_{0}g^{-1} \bmod P_{i}$.  

In the right-hand side of \eqref{eq:strong-approximation},  
choose $k$ distinct elements from the set  
\[
\{(\pm \overline{e_{0}},\dots,\pm \overline{e_{0}})\} \smallsetminus \{(\overline{e_{0}},\dots,\overline{e_{0}})\},
\]
which consists of $2^m - 1$ elements (recall $2^{m}-1>k$).  
Let $\alpha_1, \dots, \alpha_k$ be elements of $\Phi$  
that represent the corresponding elements of $\Phi / \Phi'$. 
It is immediate that these satisfy conditions \ref{item:a} and \ref{item:b}. 

Now we define $k$ orientable connected totally geodesic closed hypersurfaces  
$N_1, \ldots, N_k$ of $M = X_{\Lambda}(I)$ by  
\[
N_i := \alpha_i (X'_{\Lambda'}(I))\ \ \  (i=1, \ldots, k).
\]  
Applying Lemma~\ref{lemma:millson-main} to  
$(M, N) = (X_{\Lambda}(I), X'_{\Lambda'}(I))$ and $\alpha_1, \dots, \alpha_k$,  
we conclude that \( N_i \cap N_j = \emptyset \) for all \( i \neq j \)  
and that $M \smallsetminus (N_1 \cup \cdots \cup N_k)$ is connected.  
Thus, $\Gamma_{\Lambda}(I)$ satisfies Condition~\ref{condition-ck} for $k$,  
which completes the proof.  
\end{proof}

\subsection{Proof of Step 2 (An overview of bending construction)}
\label{section:step2}
In Step 2, we provide a proof of general properties (Lemmas~\ref{lemma:hnn-repeat}, ~\ref{lemma:bending}, and~\ref{lemma:deform-of-hyperbolic-lattice-property}) concerning iterated HNN extensions and bending construction.  
In Step 3, we will apply these results to construct the desired small deformation. 

Fix $k\in \N_+$.
Let $\Gamma$ be a torsion-free cocompact discrete subgroup of $L=Spin(n,1)$ satisfying Condition~\ref{condition-ck}.   
Lemma~\ref{lemma:hnn-repeat} asserts that the fundamental group  
$\Gamma$ of the hyperbolic manifold $M = \Gamma \backslash X$  
can be expressed as an iterated HNN extension of length $k$.  
Lemma~\ref{lemma:bending} provides a method for constructing  
a small deformation of a representation of $\Gamma$
through $k$ iterations of the bending construction.  
Furthermore, Lemma~\ref{lemma:deform-of-hyperbolic-lattice-property}  
provides a lower bound of the Zariski-closure of such a small deformation.

\begin{proof}[Proof of Lemma~\ref{lemma:hnn-repeat}]
The proof follows by repeatedly applying Proposition~\ref{prop:hnn}.  

We proceed by induction on $k$.  
For $k = 1$, the claim follows immediately by applying Proposition~\ref{prop:hnn} to our setting.  

Suppose $k \geq 2$, and assume that the assertion holds for $k - 1$.  
We set $S' := M \smallsetminus (N_2 \cup \dots \cup N_k)$. 
For each $i=2,\ldots,k$,
the two homomorphisms  
$j_{i,+}, j_{i,-} \colon \pi_{1}(N_{i}, y_{i}) \to \pi_{1}(S, x_{0})$  
can be factored as  
$\pi_{1}(N_{i}, y_{i}) \to \pi_{1}(S', x_{0}) \to \pi_{1}(S, x_{0})$.  
We also denote the first maps 
$\pi_{1}(N_{i}, y_{i}) \to \pi_{1}(S', x_{0})$  
by the same symbols $j_{i,+}$ and $j_{i,-}$.

Let $F_{k-1}$ be the free group generated by $a_2, \ldots, a_k$, and let $\mathcal{N}'$ be the normal subgroup of $\pi_1(S', x_0) * F_{k-1}$  
generated by  
\[
a_i j_{i,+}([\ell]) a_i^{-1} j_{i,-}([\ell])^{-1}, \quad ([\ell] \in \pi_1(N_i,y_{i}),\ i = 2, \dots, k).
\] 
By the induction hypothesis, there is a natural isomorphism  
\[
\pi_1(M, x_0) \simeq  
 (\pi_1(S', x_0) * F_{k-1}) / \mathcal{N}'.
\]

Next, let $S:=S'\smallsetminus N_1$, and let $\mathcal{N}_1$ be the normal subgroup of $\pi_1(S, x_0) * a_1^\mathbb{Z}$  
generated by  
\[
a_1 j_{1,+}([\ell]) a_1^{-1} j_{1,-}([\ell])^{-1}, \quad ([\ell] \in \pi_1(N_1,y_{1})).
\]  
Applying Proposition~\ref{prop:hnn} to the orientable connected manifold $S'$,  
we obtain the isomorphism  
\[
\pi_1(S', x_0) \simeq (\pi_1(S, x_0) * a_1^\mathbb{Z}) / \mathcal{N}_1.
\]

Combining these isomorphisms, we have  
\[
\pi_1(M, x_0) 
\simeq (( (\pi_1(S, x_0) * a_1^\mathbb{Z}) / \mathcal{N}_1 ) * F_{k-1}) / \mathcal{N}'
\simeq (\pi_1(S, x_0) * F_k) / \mathcal{N}.
\]
The second isomorphism is derived from the universal property of the free product.  
It is clear that the above isomorphism is induced by the surjective homomorphism $\Psi$ given by (\ref{eqn:Psi_pi_1}),
which completes the proof.  
\end{proof}

Before proving Lemma~\ref{lemma:bending}, we fix some notation.  
We recall that $\pi_\Gamma \colon X \rightarrow M = \Gamma\backslash X$ is the covering map, and we fix $\tilde{x}_{0} \in X$ such that $\pi_\Gamma(\tilde{x}_{0})=x_0$, as in the paragraph preceding Lemma~\ref{lemma:bending}.
For every $i = 1, \dots, k$, recall that the loop $\nu_{i}$ (see Definition~\ref{def:nu_i})  
intersects $N_{i}$ at exactly one point, namely, at $y_{i}$.  
Let $\nu'_{i,+}$ denote the segment of the loop $\nu_{i}$ from $x_{0}$ to $y_{i}$.
Via the covering map $\pi_\Gamma$,
we lift the path $\nu'_{i,+}$ to a path in $X$ that starts at $\tilde{x}_{0} \in X$,
and we denote its endpoint by $\tilde{y}_{i} \in X$. Then, $\pi_\Gamma(\tilde{y}_{i})=y_{i} \in N_{i}$. It follows from the commutative diagram~\eqref{eq:N_{i}-isom}  
that $\tilde{y}_{i}$ lies in $\alpha_{i} X'$.  
Thus, just as the isomorphism  
$D^{M}_{\tilde{x}_{0}} \colon \pi_{1}(M, x_{0}) \simeq \Gamma$  
(see \eqref{def:deck-transformation}) is defined,  
we also define the isomorphism  
$D^{N}_{\tilde{y}_{i}} \colon \pi_{1}(N, y_{i}) \simeq \alpha_{i} L' \alpha_{i}^{-1} \cap \Gamma$.

We are ready to prove Lemma~\ref{lemma:bending}. 
\begin{proof}[Proof of Lemma~\ref{lemma:bending}]
    First, we show that for each $i=1,\ldots, k$
    and any 
    \begin{equation}
        \label{eq:image-j-i+}
        D^{M}_{\tilde{x}_{0}}(\Psi(j_{i,+}([\ell])))\in \alpha_{i}L'\alpha_{i}^{-1}\cap \Gamma
        \ \ \text{ for any $[\ell]\in \pi_{1}(N_{i},y_{i})$}. 
    \end{equation}

    To verify \eqref{eq:image-j-i+}, we consider the following diagram.
    \[
    \xymatrix{
    (\pi_{1}(S,x_{0})* F_{k})/\mathcal{N} \ar[r]^(.6){\Psi}
     & \pi_{1}(M,x_{0}) \ar[rd]^{D^{M}_{\tilde{x}_{0}}}
    &  \\ 
    & \pi_{1}(M,y_{i}) \ar[r]_{D^{M}_{\tilde{y}_{i}}}
    \ar[u]^{[\nu'_{i,+}]^{-1}(-)[\nu'_{i,+}]}
    & \Gamma \\
    \pi_{1}(S,x_{0}) \ar[uu]^{\text{natural}} & 
    \pi_{1}(N,y_{i}) \ar[l]_{j_{i,+}} \ar[r]^(.45){D^{N_{i}}_{\tilde{y}_{i}}} \ar[u]^{\text{push}} &
    \alpha_{i}L'\alpha_{i}^{-1}\cap \Gamma 
    \ar@{^{(}->}[u]_{\text{inclusion}}
    }
    \]
    The commutativity of this diagram follows from the unique lifting property of covering maps with respect to paths.  
    For the definition of $j_{i,+}$, see \eqref{def:j+-}.  
    Recall that $\nu'_{i,+}$ is the path from $x_{0}$ to $y_{i}$ along the oriented loop
    $\nu_{i}$.  From the commutativity of this diagram, \eqref{eq:image-j-i+} follows immediately.

    Now we prove our assertion.
    Put $\psi:=D^{M}_{\tilde{x}_{0}}\circ\Psi$. 
    By Lemma~\ref{lemma:hnn-repeat},
    it suffices to show that for each $i=1,\ldots,k$
    and for any $[\ell]\in \pi_{1}(N_{i},y_{i})$, 
    \[
    \varphi_{t}(\psi(a_{i}))
    \varphi_{t}(\psi(j_{i,+}([\ell])))
    \varphi_{t}(\psi(a_{i}))^{-1}
    \varphi_{t}(\psi(j_{i,-}([\ell]))^{-1}
     =1.
    \]
    Since $j_{i,+}([\ell]),j_{i,-}([\ell])\in \pi_{1}(S,x_{0})$, we have 
    \begin{align*}
        (\text{LHS})&=
        \varphi(\psi(a_{i}))e^{tv_{i}}
        \varphi(\psi(j_{i,+}([\ell])))
        (\varphi(\psi(a_{i}))e^{tv_{i}})^{-1}
        \varphi(\psi(j_{i,-}([\ell])))^{-1} \\
        &=\varphi(\psi(a_{i}j_{i,+}([\ell])a_{i}^{-1}j_{i,-}([\ell])^{-1}))=1 
    \end{align*}
    where the second equality follows from \eqref{eq:image-j-i+}
    and assumption~\eqref{assumption:lemma:bending}.
    Thus our assertion is proved.
\end{proof}

For the proof of Step 3,  
we give the following lemma concerning the Zariski-closure  
of small deformations of $\Gamma$ obtained via Lemma~\ref{lemma:bending}.
The following proof is in the same line with Kassel~\cite[Sect.~6]{Kassel12}
for $\varphi\colon SO(n,1)\rightarrow SO(n,2)$ using Johnson--Millson~\cite[Lem.~5.9]{JoMi84}. 
\begin{lemma}[{cf.\ the argument in \cite[Lem.~6.4]{Kassel12}}]
\label{lemma:deform-of-hyperbolic-lattice-property}
Let $G$ be a Zariski-connected real algebraic group.  
Under the assumptions and notation of Lemma~\ref{lemma:bending},  
assume that the group homomorphism  
$\varphi\colon \Gamma\rightarrow G$  
arises from a homomorphism  
$\varphi\colon Spin(n,1)\rightarrow G$.  
Then, for any $t \in \mathbb{R}$,  
the Zariski-closure of $\varphi_{t}(\Gamma)$  
contains $\varphi(Spin(n,1))$ and $e^{t v_i}$ for $i = 1, \dots, k$. 
\end{lemma}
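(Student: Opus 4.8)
The plan is to combine the explicit description of the bent representation $\varphi_t$ from Lemma~\ref{lemma:bending} with the fact that the subgroup of $\Gamma$ coming from $\pi_1(S)$ is already Zariski-dense in $Spin(n,1)$. Throughout, write $\overline{A}$ for the Zariski-closure in $G$ of a subset $A$, and recall that $\overline{A}$ is a subgroup whenever $A$ is. Set $\Gamma_S := D_{\tilde{x}_{0}}^{M}(\Psi(\pi_1(S,x_0)))$ and $\gamma_i := D_{\tilde{x}_{0}}^{M}([\nu_i])\in\Gamma$ for $i=1,\dots,k$. By Lemma~\ref{lemma:hnn-repeat} the group $\Gamma$ is generated by $\Gamma_S$ together with $\gamma_1,\dots,\gamma_k$, and by Lemma~\ref{lemma:bending}~\ref{item:lemma:bending-S} the homomorphism $\varphi_t$ coincides with $\varphi$ on $\Gamma_S$, while $\varphi_t(\gamma_i)=\varphi(\gamma_i)\exp(tv_i)$ by construction.

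First I would make the following reduction: it suffices to prove that $\varphi(Spin(n,1))\subset\overline{\varphi_t(\Gamma)}$. Indeed, granting this, each $\varphi(\gamma_i)$ lies in $\varphi(Spin(n,1))\subset\overline{\varphi_t(\Gamma)}$ and each $\varphi_t(\gamma_i)$ lies in $\varphi_t(\Gamma)\subset\overline{\varphi_t(\Gamma)}$, so $\exp(tv_i)=\varphi(\gamma_i)^{-1}\varphi_t(\gamma_i)\in\overline{\varphi_t(\Gamma)}$, which is the remaining assertion; this covers every $t\in\R$ uniformly, including the trivial case $t=0$. Since $\varphi_t$ and $\varphi$ agree on $\Gamma_S$ and $\varphi$ is a morphism of real algebraic groups, hence continuous for the Zariski topologies, one has $\varphi(\overline{\Gamma_S})\subset\overline{\varphi(\Gamma_S)}=\overline{\varphi_t(\Gamma_S)}\subset\overline{\varphi_t(\Gamma)}$. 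Thus everything reduces to the claim that $\Gamma_S$ is Zariski-dense in $Spin(n,1)$, for then $\varphi(Spin(n,1))=\varphi(\overline{\Gamma_S})\subset\overline{\varphi_t(\Gamma)}$.

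To establish this claim I would argue geometrically. Under the identification $\pi_1(M,x_0)\simeq\Gamma$, the subgroup $\Gamma_S$ is the image of $\pi_1$ of the manifold $\widehat{M}$ obtained by cutting the closed hyperbolic manifold $M$ along the disjoint totally geodesic hypersurfaces $N_1,\dots,N_k$; by Condition~\ref{condition-ck}, $\widehat{M}$ is connected, it inherits a complete hyperbolic metric whose boundary (nonempty as soon as $k\ge 1$) is totally geodesic, and it is the convex core of $\Gamma_S\backslash X$. Hence $\Gamma_S$ is a torsion-free, convex-cocompact discrete subgroup of $Spin(n,1)$. It is non-elementary: for $n\ge 3$ it contains a cocompact lattice of a conjugate of $Spin(n-1,1)$ — namely the image of $j_{1,+}(\pi_1(N_1))$ under $D_{\tilde{x}_{0}}^{M}\circ\Psi$, which lies in $\alpha_1 L'\alpha_1^{-1}\cap\Gamma$ by the commutative diagram in the proof of Lemma~\ref{lemma:bending} — while for $n=2$ it is a free group of rank $\ge 2$. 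Moreover $\Gamma_S$ preserves no proper totally geodesic subspace of $X$: otherwise its convex core $\widehat{M}$ would have dimension strictly less than $n=\dim X$. Since every proper Zariski-closed subgroup of $SO_{0}(n,1)$ is either elementary (fixing a point of $X$ or of its boundary at infinity) or stabilizes a proper totally geodesic subspace of $X$, the image of $\Gamma_S$ in $SO_{0}(n,1)$ is Zariski-dense; as $\mathbf{Spin}_{n,1}$ is Zariski-connected, this forces $\overline{\Gamma_S}=Spin(n,1)$, which finishes the argument.

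I expect the Zariski-density of $\Gamma_S$ to be the only genuine difficulty, and within it the essential use of Condition~\ref{condition-ck} is the irreducibility statement, namely that $\Gamma_S$ preserves no proper totally geodesic subspace: it is precisely the connectedness of $S=M\smallsetminus(N_1\cup\dots\cup N_k)$ together with the disjointness of the $N_i$ that forces $\widehat{M}$ to be a genuine $n$-dimensional manifold with totally geodesic boundary, rather than something lower-dimensional or commensurable with a lattice of $Spin(n-1,1)$. The remaining ingredients — the algebraic reduction via Lemmas~\ref{lemma:hnn-repeat} and~\ref{lemma:bending}, and the passage from $SO_{0}(n,1)$ to $Spin(n,1)$ using Zariski-connectedness of $\mathbf{Spin}_{n,1}$ — are routine.
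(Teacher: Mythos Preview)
Your proof is correct and follows the same route as the paper's: reduce to the Zariski-density of $\Gamma_S$ in $Spin(n,1)$, then use Zariski-continuity of $\varphi$ and the identity $e^{tv_i}=\varphi(\gamma_i)^{-1}\varphi_t(\gamma_i)$. The only difference is that the paper obtains the Zariski-density of $\varpi(\Gamma_S)$ in $SO(n,1)$ by directly citing \cite[Lem.~5.9]{JoMi84}, whereas you unpack this step with a geometric argument via convex-cocompactness and irreducibility; the lift to $Spin(n,1)$ via Zariski-connectedness of $\mathbf{Spin}_{n,1}$ is handled the same way in both.
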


\begin{proof}
Let $L_{t}$ denote the Zariski-closure of $\varphi_{t}(\Gamma)$ in $G$.

Let $\varpi \colon Spin(n,1) \to SO_{0}(n,1)$ be the double cover,  
and consider the group isomorphism  
\[
\psi := D^{M}_{\tilde{x}_{0}} \circ \Psi \colon  
\pi_{1}(S, x_{0}) * F_{k} \to \Gamma \ (\subset Spin(n,1)).
\]  
By \cite[Lem.~5.9]{JoMi84},  
$\varpi (\psi(\pi_{1}(S, x_{0})))$ is Zariski-dense in $SO(n,1)$.  
Hence, $\psi(\pi_{1}(S, x_{0}))$ is also Zariski-dense in $Spin(n,1)$.  

We recall that $\varphi \colon Spin(n,1) \to G$ is a homomorphism  
in the sense of Lemma~\ref{lemma:morphism}.  
Since $\varphi$ is continuous in the Zariski-topology,  
and since $\varphi_{t}(\psi(\pi_{1}(S, x_{0}))) = \varphi(\psi(\pi_{1}(S, x_{0})))$  
by the construction of $\varphi_{t}$ (see \ref{item:lemma:bending-S} in Lemma~\ref{lemma:bending}),  
the Zariski-closure of $\varphi_{t}(\psi(\pi_{1}(S, x_{0})))$  
contains $\varphi(Spin(n,1))$.  
Thus, $L_{t}$ contains $\varphi(Spin(n,1))$.
Then it follows that $e^{tv_{i}} \in L_{t}$.
Indeed, the inclusive relation $\varphi(Spin(n,1)) \subset L_{t}$ implies that
 $\varphi(D^{M}_{\tilde{x}_{0}}(\nu_{i})) \in L_{t}$ for every $i = 1, \dots, k$.  
Thus, it follows that  
\[
e^{tv_{i}} = \varphi(D^{M}_{\tilde{x}_{0}}(\nu_{i}))^{-1} \varphi_{t}(D^{M}_{\tilde{x}_{0}}(\nu_{i})) \in L_{t}.
\]

Thus, the proof is completed.
\end{proof}

\subsection{Proof of Step 3 (Zariski-closure of specific deformation)}
\label{section:step3}
We recall from Theorem~\ref{theorem:bending}~\ref{item:maximal-zariski-closure} 
that when $n\geq 3$, the algebraic subgroup $G^{\varphi}$ (see Definition~\ref{def:g-phi}) achieves the upper bound of the Zariski-closure of any small deformation of $\varphi|_{\Gamma}$, up to $G$-conjugacy.
In this section, we provide a proof of Proposition~\ref{prop:zariski-closure-gphi}, which states that repeating the bending construction $k$ times results in
a small deformation $\varphi_{t}(\Gamma)$ of $\varphi|_{\Gamma}$, 
whose Zariski-closure in $G$, to be denoted by $L_t$, 
 attains the upper bound $G^{\varphi}$. 
The proof requires a careful discussion to handle the general situation in which we cannot expect
$v_i^{(j)} \in \mathfrak l_t$, nor the existence of towers of symmetric pairs $\mathfrak{l}=\mathfrak{l}^{0}\subset \mathfrak{l}^{1}\subset \dotsb \subset 
\mathfrak{l}^{k}=\mathfrak{g}$, as mentioned in Remark~\ref{remark:difference-Kassel}.

Before proceeding with the proof of Proposition~\ref{prop:zariski-closure-gphi},
 we first establish a couple of auxiliary lemmas.
\begin{lemma}
\label{lem:span_generic}
Let $\F$ be a field, and 
$A$ be an associated $\F$-algebra,
and let $V$ be an $A$-module which 
is finite-dimensional over $\F$.
For $n\ge 1$, let $V^{\oplus n}_{gen}$ 
denote the set of $n$-tuples
$v_1, \ldots, v_n \in V$ such that
\[
  A v_1 +\dotsb +A v_n =V.
\]
Then $V^{\oplus n}_{gen}$ 
is Zariski-open in $V^{\oplus n}$.
\end{lemma}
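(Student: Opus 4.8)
\textbf{Proof plan for Lemma~\ref{lem:span_generic}.}

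The statement is that the locus $V^{\oplus n}_{gen} \subset V^{\oplus n}$ on which $Av_1 + \dotsb + Av_n = V$ is Zariski-open. My plan is to exhibit this set as a union of basic open sets cut out by non-vanishing of certain polynomial (in fact, linear-algebraic) functions in the coordinates of the tuple $(v_1,\dots,v_n)$, hence open.

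First I would reduce to a finite computation. Since $V$ is finite-dimensional over $\F$, say $\dim_\F V = d$, fix an $\F$-basis $x_1,\dots,x_m$ of the image of $A$ in $\End_\F(V)$ (a finite-dimensional $\F$-algebra quotient of $A$; only this quotient acts, so we may as well assume $A$ is finite-dimensional, with basis $x_1,\dots,x_m$). Then for a tuple $(v_1,\dots,v_n)$, the subspace $\sum_i A v_i$ is the $\F$-span of the $mn$ vectors $x_a v_i$ ($1\le a\le m$, $1\le i\le n$). Each entry of each $x_a v_i$, expressed in a basis of $V$, is an $\F$-linear function of the coordinates of $v_i$; so the $d \times (mn)$ matrix $\Theta(v_1,\dots,v_n)$ whose columns are the $x_a v_i$ has entries that are linear forms in the coordinates of $(v_1,\dots,v_n) \in V^{\oplus n} \cong \F^{dn}$. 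The condition $\sum_i A v_i = V$ is precisely the condition $\operatorname{rank}\Theta(v_1,\dots,v_n) = d$, i.e.\ that some $d\times d$ minor of $\Theta$ is non-zero.

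Then I would conclude by the standard fact that the rank-$\ge d$ locus of a matrix with polynomial entries is Zariski-open: $V^{\oplus n}_{gen}$ is the union, over all size-$d$ subsets $S$ of the columns, of the sets $\{\det \Theta_S \ne 0\}$, each of which is the complement of the zero set of a polynomial in the coordinate ring of $V^{\oplus n}$, hence open; a finite union of open sets is open. (If $d = 0$ the statement is trivially true with $V^{\oplus n}_{gen} = V^{\oplus n}$.)

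I do not expect any real obstacle here; the only point requiring a little care is the reduction to a \emph{finite-dimensional} acting algebra, so that the map $(v_1,\dots,v_n)\mapsto \Theta(v_1,\dots,v_n)$ genuinely has polynomial (indeed linear) entries in finitely many coordinates — this is immediate because $V$ is finite-dimensional over $\F$, so the image of $A$ in $\End_\F(V)$ is finite-dimensional. After that, the argument is the textbook openness of the maximal-rank locus.
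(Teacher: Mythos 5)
Your proof is correct and follows essentially the same route as the paper's: both exhibit $V^{\oplus n}_{gen}$ as a union of loci on which a matrix whose entries are linear in the coordinates of $(v_1,\dots,v_n)$ attains maximal rank, i.e., on which some $d\times d$ minor does not vanish. Your explicit reduction to a finite basis of the image of $A$ in $\End_\F(V)$ makes the union finite, whereas the paper takes a (possibly infinite) union indexed by tuples of elements of $A$ — equally valid, since arbitrary unions of Zariski-open sets are open — so no gap either way.
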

\begin{example}
In the case $n=1$, an element in $V_{gen} (:=V^{\oplus 1}_{gen})$ is called a \emph{cyclic vector}.
We note that
$V_{gen} = V\smallsetminus \{0\}$ if and only in $V$ is irreducible.
\end{example}
\begin{proof}[Proof of Lemma~\ref{lem:span_generic}]
Suppose $V$ is $m$-dimensional. 
Then $V^{\oplus n}_{gen}$ is the union of the Zariski-open set
\[
\{ (v_1, \ldots, v_n)|
\dim_{\F}(\operatorname{span}_{\F}\{a_1 v_1, \ldots, a_n v_n\}) = m\}
\]
where $(a_1, \dots, a_n)$ runs over $A\times \dotsb \times A$.
\end{proof}

For a Lie algebra $\mathfrak{l}$,
we denote by $U(\mathfrak{l})_+$ the augmentation ideal of the universal
enveloping algebra of $\mathfrak{l}$.

We extend the adjoint action of the Lie algebra $\mathfrak{g}$
to an action on its universal enveloping algebra $U(\mathfrak{g})$, which we continue to denote by $\operatorname{ad}$.

We also use the following notation, which will be studied in Appendix~\ref{section:appendix-Zariski-dense-subgroups}. For a finite subset $\mathcal{X} \subset \mathfrak{g}$ and a subgroup $L \subset G$, we denote by $G(L; \mathcal{X})$ the identity component of the Zariski closure of the subgroup of $G$ generated by $L$ and the elements $e^X$ for $X \in \mathcal{X}$ (Definition~\ref{def:GLtX_Zariski_closure}). Its Lie algebra is denoted by $\mathfrak{g}(L;\mathcal{X})$. Furthermore, for each $t \in \mathbb{R}$, we define $t\mathcal{X}:=
\{tX|X\in \mathcal{X}\}$.

\begin{lemma}
\label{lemma:analytic_gt_through_ux}
Let $L\subset G$ be two Zariski-connected real algebraic groups.
 For any $X \in \mathfrak{l}$ and for any $u\in U(\mathfrak{l})_+$,
 there exists an analytic map
 \[
 a\equiv a_{X,u} \colon \R \to \mathfrak{g}
 \]
 such that
 $a(t) \in\mathfrak{g}(L; t\{X\})$ 
 for all $t \in \R\smallsetminus\{0\}$
 and
 $a(0)= \operatorname{ad}(u)X$.
\end{lemma}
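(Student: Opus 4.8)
The plan is to build the analytic curve $a_{X,u}$ by a single "bending" step inside $G$ and then to differentiate. Since $L$ is Zariski-connected, I may pick an element $\gamma\in L$ (or rather a finite set generating a Zariski-dense subgroup of $L$) and a group element $g_t\in G$ that conjugates things by $e^{tX}$; more to the point, the key algebraic fact is that for $X\in\mathfrak{l}$ the subgroup $G(L;t\{X\})$ contains $e^{tX}$ and all of $L$, hence contains $e^{-tX}\ell e^{tX}$ for every $\ell\in L$, and therefore its Lie algebra $\mathfrak{g}(L;t\{X\})$ contains $\operatorname{Ad}(e^{-tX})Y$ for all $Y\in\mathfrak{l}$, i.e. $e^{-t\,\operatorname{ad}X}Y\in\mathfrak{g}(L;t\{X\})$. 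Expanding $e^{-t\,\operatorname{ad}X}Y=\sum_{m\ge0}\frac{(-t)^m}{m!}(\operatorname{ad}X)^mY$, the difference quotient $\tfrac{1}{t}\big(e^{-t\,\operatorname{ad}X}Y-Y\big)$ lies in $\mathfrak{g}(L;t\{X\})$ for $t\neq0$ (because $Y\in\mathfrak{l}\subset\mathfrak{g}(L;t\{X\})$ and $\mathfrak{g}(L;t\{X\})$ is a linear subspace), and this is an analytic function of $t$ whose value at $t=0$ is $-\operatorname{ad}(X)Y=[Y,X]$. Iterating this device $r$ times produces, for any length-$r$ word $Y_1\cdots Y_r$ of elements of $\mathfrak{l}$, an analytic curve with value at $0$ equal to $\pm(\operatorname{ad}Y_1)\cdots(\operatorname{ad}Y_r)$ applied to an appropriate element — but I must be careful: I only want to use the single direction $X$, so the divided differences should all be taken in the variable $t$ attached to $X$.

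Here is the cleaner route I would actually write. Fix $u\in U(\mathfrak{l})_+$; by the PBW theorem $u$ is a linear combination of monomials $Y_1\cdots Y_k$ with $Y_i\in\mathfrak{l}$ and $k\ge1$, so by linearity it suffices to treat $u=Y_1\cdots Y_k$, and then $\operatorname{ad}(u)X=(\operatorname{ad}Y_1)\cdots(\operatorname{ad}Y_k)X$. For a single $Y\in\mathfrak{l}$ and the fixed $X$, set
\[
b_{Y}(t):=\frac{1}{t}\Big(\operatorname{Ad}(e^{tX})Y-Y\Big)=\sum_{m\ge1}\frac{t^{m-1}}{m!}(\operatorname{ad}X)^mY\qquad(t\neq0),
\]
which extends analytically to $t=0$ with $b_Y(0)=(\operatorname{ad}X)Y=-[Y,X]$; note $b_Y(t)\in\mathfrak{g}(L;t\{X\})$ for $t\neq0$ since $\operatorname{Ad}(e^{tX})Y\in\mathfrak{g}(L;t\{X\})$ and $Y\in\mathfrak{l}$ and the space is closed under subtraction and scaling. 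This handles $k=1$ up to sign. For $k\ge2$ the subtlety is that after one bending step the element $b_Y(t)$ need not lie in $\mathfrak{l}$, so I cannot simply feed it back into $\operatorname{Ad}(e^{tX})(-)$. Instead I replace the adjoint action of $\mathfrak{l}$ by the adjoint action of the Zariski-closed subgroup $G(L;t\{X\})$ itself: since $\mathfrak{g}(L;t\{X\})$ is a Lie subalgebra, $[\mathfrak{g}(L;t\{X\}),\mathfrak{g}(L;t\{X\})]\subset\mathfrak{g}(L;t\{X\})$, and in particular $[\,Y_1,\ \cdot\ ]$ preserves it for $Y_1\in\mathfrak{l}$. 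So define inductively
\[
a_{X,\,Y_1\cdots Y_k}(t):=\big(\operatorname{ad}Y_1\big)\cdots\big(\operatorname{ad}Y_{k-1}\big)\,b_{Y_k}(t),
\]
which is analytic in $t$, lies in $\mathfrak{g}(L;t\{X\})$ for $t\neq0$ (each bracket with $Y_i\in\mathfrak{l}\subset\mathfrak{g}(L;t\{X\})$ stays inside), and whose value at $t=0$ is $(\operatorname{ad}Y_1)\cdots(\operatorname{ad}Y_{k-1})(\operatorname{ad}Y_k)X=\operatorname{ad}(u)X$. Extending by linearity over the PBW decomposition of a general $u\in U(\mathfrak{l})_+$ and collecting terms gives the desired analytic map $a\equiv a_{X,u}$.

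The main obstacle — and the only place requiring genuine care — is precisely the point just isolated: ensuring at every stage of the iteration that the intermediate elements remain in $\mathfrak{g}(L;t\{X\})$ even though they leave $\mathfrak{l}$. The fix, as above, is to use that $\mathfrak{g}(L;t\{X\})$ is a \emph{Lie subalgebra} containing $\mathfrak{l}$, so one may keep bracketing with elements of $\mathfrak{l}$ indefinitely; the only genuine "bending" (use of $e^{tX}$) happens once, in the innermost step $b_{Y_k}$. One should also double-check the bookkeeping that $a_{X,u}$ is well-defined independently of the PBW representative of $u$ — but this is automatic once the construction is phrased as: $a_{X,u}(t)$ is the image of $u\otimes X$ under the analytic-in-$t$ linear map $U(\mathfrak{l})_+\otimes\mathfrak{l}\to\mathfrak{g}$ determined by $Y_1\cdots Y_k\otimes X\mapsto (\operatorname{ad}Y_1)\cdots(\operatorname{ad}Y_{k-1})b_{Y_k}(t)$, since $\operatorname{ad}\colon U(\mathfrak{l})\to\operatorname{End}(\mathfrak{g})$ is already a well-defined algebra homomorphism and $b_{Y}(t)$ depends linearly on $Y$. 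Finally, analyticity of $a$ is clear since each $b_Y(t)$ is a convergent power series in $t$ (entire, in fact) and the rest of the construction is $t$-independent linear algebra.
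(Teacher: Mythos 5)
Your proposal is correct in substance and follows essentially the same inductive scheme as the paper's proof: peel off the factors of a monomial $Y_1\cdots Y_k$ one at a time, with the only genuine use of $e^{tX}$ occurring in a single innermost divided difference. Your variant of applying $\operatorname{ad}(Y_i)$ directly to $b_{Y_k}(t)$ (using that $\mathfrak{g}(L;t\{X\})$ is a Lie subalgebra containing $\mathfrak{l}$) in place of the paper's repeated divided differences $\tfrac1t\bigl(\operatorname{Ad}(e^{tY_i})b(t)-b(t)\bigr)$ is a harmless, slightly cleaner simplification. Two small repairs are needed. First, the sign: with $b_Y(t)=\tfrac1t\bigl(\operatorname{Ad}(e^{tX})Y-Y\bigr)$ you get $b_{Y_k}(0)=[X,Y_k]=-\operatorname{ad}(Y_k)X$, so your curve satisfies $a(0)=-\operatorname{ad}(u)X$; you flag this "up to sign" for $k=1$ but then drop it for general $k$ — replace $e^{tX}$ by $e^{-tX}$ (as the paper does) or negate the final answer. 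Second, your parenthetical claim that the assignment $Y_1\cdots Y_k\mapsto(\operatorname{ad}Y_1)\cdots(\operatorname{ad}Y_{k-1})b_{Y_k}(t)$ descends to a well-defined map on $U(\mathfrak{l})_+$ is false (for $t\neq 0$ it does not annihilate $YZ-ZY-[Y,Z]$, since the last factor is treated asymmetrically); fortunately this is irrelevant, because the lemma only asserts existence of some $a_{X,u}$, so it suffices to fix one expression of $u$ as a linear combination of monomials — the value $a(0)=\operatorname{ad}(u)X$ is independent of that choice because $\operatorname{ad}\colon U(\mathfrak{l})\to\End(\mathfrak{g})$ is well defined.
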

\begin{proof}
We fix $X\in \mathfrak{l}$. 
The set of $u\in U(\mathfrak{l})_+$ for which
there exists such an analytic map $a \colon \R \to \mathfrak{g}$ with $a(0)=\operatorname{ad}(u)X$
forms a vector space.
 Thus, it suffices to show the claim in the case $u=Y_1\dotsb Y_n$ for some
 $Y_1, \ldots, Y_n \in U(\mathfrak{l})$ with $n \ge 1$. We prove this by the induction on $n$.

First, consider the case $n=1$. When $u=Y\in \mathfrak{l}$, we set
 \[
 a(t):=\frac{1}{t} (\operatorname{Ad}(e^{-tX}) Y-Y).
 \]
 Clearly, $a\colon \R \to \mathfrak{g}$ is an analytic map with $a(0) = - [X,Y] =
 \operatorname{ad}(Y) X$. Moreover, since $Y \in \mathfrak{l}$ and $e^{t X} \in G(L;t \{ X\})$, we conclude that $a(t) \in \mathfrak{g}(L; t \{X\})$ for any $t \neq 0$.

Next, suppose that $v=Y_2\dotsb Y_n\in U(\mathfrak l)_+$ and that there exists an analytic map $b \colon \R \to \mathfrak{g}$ such that $b(t) \in \mathfrak{g}(L; t \{X\})$
for any $t\neq 0$ and  $b(0)= \operatorname{ad}(v)X$.
Let $u := Y_1 v$ where $Y_1 \in \mathfrak{l}$.
We set
\[
 a(t):=\frac{1}{t} (\operatorname{Ad}(e^{tY_1}) b(t)-b(t)).
 \]
 Then $a\colon \R \to \mathfrak{g}$ is an analytic map and we have $a(0)=[Y_1, b(0)] = \operatorname{ad}(Y) (\operatorname{ad}(v)X) = \operatorname{ad}(Y_1 v) X$.
 Since $e^{tY} \in L$  and $b(t) \in  \mathfrak{g}(L;t \{X\})$,
we have $a(t) \in  \mathfrak{g}(L; t \{X\}) $ for any $t \neq 0$.

    By induction, the lemma is proved.
\end{proof}

\begin{proof}[Proof of Proposition~\ref{prop:zariski-closure-gphi}]
Let $L=Spin(n,1)$, and $L_{t}$ denote the Zariski-closure of $\varphi_{t}(\Gamma)$ in $G$.

First, we claim that $L_t \subset G^{\varphi}$ for every $t \in \R$.
It follows from the definition \eqref{eqn:optimal_direction_for_bending} of $v_j$ that $v_j \in \mathfrak{g}^\varphi$, where we recall
 Definition~\ref{definition:lie-g-phi} for $\mathfrak{g}^\varphi$.
Moreover, $\varphi(L)\subset G^{\varphi}$ by the definition of $G^{\varphi}$ in Definition~\ref{def:g-phi}.
Therefore, the construction of $\varphi_{t}$ in Lemma~\ref{lemma:bending} implies that $\varphi_{t}(\Gamma)\subset G^{\varphi}$, which concludes that $L_{t}\subset G^{\varphi}$.

Second, we claim that $\mathfrak{l}_{t}\supset \mathfrak{g}^{\varphi}$ 
if $t$ is sufficiently small, where we write $\mathfrak{l}_{t}$ for the (real) Lie algebra of $L_t$.
This claim will imply that $L_t = G^\varphi$
because $G^{\varphi}$ is Zariski-connected.

We recall from Lemma~\ref{lemma:deform-of-hyperbolic-lattice-property} the following two properties:
\begin{align}
    \varphi(L) &\subset L_{t},
    \label{eq:sl2-Lt}\\
    \exp(tv_{j}) &\in L_{t} \ \ \text{ for any }  1 \le j \le k.
    \label{eq:Xij-Lt}
\end{align}
We might expect from \eqref{eq:Xij-Lt} that 
$v_{j} \in \mathfrak{l}_{t}$ for a sufficiently small $t$.
However, such a statement is not true in our general setting.
Therefore, in what follows, we will proceed with a more careful discussion.

The properties (\ref{eq:sl2-Lt}) and (\ref{eq:Xij-Lt}) imply that
$G(\varphi(L);t\{v_1, \dots, v_k\})\subset L_t$
with the notation as in Definition~\ref{def:GLtX_Zariski_closure} of Appendix~\ref{section:appendix-Zariski-dense-subgroups}, and hence, we have
$\mathfrak{g}(\varphi(L), t \{v_1, \dots, v_m\}) \subset \mathfrak{l}_t$.

 Recall that $\mathfrak{g}(V_i) \ (\simeq \Hom_{L}(V_i, \mathfrak{g})\otimes V_i)$ denote the
 isotypic component of $V_i$ in $\mathfrak{g}$.
 We note that
$\mathfrak{g}(V_i)=
\mathfrak{g}^\varphi(V_i)$
by Definition~\ref{definition:lie-g-phi} of $\mathfrak{g}^\varphi$ for any $i \in \N$.
Let
 \[
 \operatorname{pr}_{i} \colon\mathfrak{g}\rightarrow 
 \mathfrak{g}(V_{i})
 \]
denote the projection to $\mathfrak{g}(V_i)$, 
see (\ref{eq:decomposition-g-even}).
Then, $\operatorname{pr}_i$ is an $L$-homomorphism.
Moreover, for $1 \le i$,
by the definition \eqref{eqn:optimal_direction_for_bending} of $v_j$, we have 
$\operatorname{pr}_i(v_j)=v_i^{(j)}$
for any $1 \le j \le [\mathfrak{g}:V_i]$.

We note that there are only finitely many non-zero $v_i^{(j)}$ because $\dim \mathfrak{g}<\infty$.
Since $V_i$ is not the trivial representation for any $i \ge 1$, there exists
$Y \in \mathfrak{l}$ such that
$\operatorname{ad}(Y) v_i^{(j)} \neq \{0\}$ whenever
$v_i^{(j)} \neq 0$.
In turn, we have $U(\mathfrak{l}) (\operatorname{ad}(Y) v_i^{(j)}) = V_i^{(j)}$ by the irreducibility of $V_i^{(j)} \simeq V_i$, and consequently
\[
\operatorname{pr}_i(\sum_{j=1}^m U(\mathfrak{l}) [Y,v_j])
=
\sum_{j=1}^m U(\mathfrak{l}) [Y, v_i^{(j)}] 
=\sum_{j=1}^m V_i^{(j)}
=
\mathfrak{g}(V_{i}).
\]

For each $i \ge 1$,
let $a_i \colon \R \to \mathfrak{g}$
be an analytic map, as given in Lemma~\ref{lemma:analytic_gt_through_ux} in Appendix~\ref{section:appendix-Zariski-dense-subgroups}, satisfying $a_i(0)=[Y, v_i]$ and such that 
$
a_i(t) \in \mathfrak{g}(\varphi(L); t \{v_i\}) 
$ for any  $t \neq 0$.
Then, for any sufficiently small $t \neq 0$, we have

\begin{equation*}
\operatorname{pr}_i(\sum_{j=1}^m U(\mathfrak{l}) a_j(t))
=
\sum_{j=1}^m U(\mathfrak{l})\operatorname{pr}_i( a_j(t))
=
\mathfrak{g}(V_{i}).
\end{equation*}
by Lemma~\ref{lem:span_generic}.
Since the $L$-module
$\sum_{j=1}^m U(\mathfrak{l}) a_j(t)$ is contained in
$\mathfrak{g}(\varphi(L); t \{v_1, \dots, v_m\})
\subset 
\mathfrak{l}_t$, we have shown, for any $i \in \N_+$,
\begin{align}
\label{eq:lt-Vij}
\mathfrak{l}_t\supset \mathfrak{g}(V_{i}).
\end{align}

Now we prove (\ref{eq:lt-Vij}) for $i=0$, that is,
$\mathfrak{l}_{t}\supset \mathfrak{z}_{\mathfrak{g}}(d\varphi(\mathfrak{l}))$.

Let $L'$ denote the Zariski closure
of the group generated by
$\varphi(L)$ and $\exp(\mathfrak{g}(V_i))$ for all $i \in \N_+$.
It follows from \eqref{eq:lt-Vij} that $L_t \supset L'$.
Moreover, the identity component $Z_G(\varphi(L))$ of
the centralizer normalizes $L'$.
Therefore, it follows from the Baker--Campbell--Hausdorff formula that
\[
\exp(-t u_j) \exp(t v_j)
= \exp(-t u_j) \exp(t (u_j+\sum_{i} v_i^{(j)})) 
\in L'   \ (\subset L_t).
\]
because $u_j \in \mathfrak{z}_{\mathfrak g}(d \varphi(\mathfrak{l}))$
and $\sum_{i} v_i^{(j)} \in \mathfrak{l}'$.
Thus  $\exp (t u_j) \in L_t$
because $\exp(t v_j) \in L_t$.
By the choice of $u_1, \dots, u_{\eta(Z_G(\varphi(L)))}$,
 $L_t$ contains $Z_G(\varphi(L))
=G(t\{u_1, \ldots, u_{\eta(Z_G(\varphi(L)))}\})$
for sufficiently small $t>0$.

This proves (\ref{eq:lt-Vij}) in the case $i=0$.
    
Therefore, we obtain $\mathfrak{l}_{t}\supset \mathfrak{g}^{\varphi}$,
    which we wanted to prove. 
\end{proof}

\section{Proof of main results}
\label{section:deformation-cpt-non-cpt}
In this section, we study deformations of discontinuous groups for $X = G/H$,  
and in particular, we complete the proofs of the main theorems of this article,  
Theorems~\ref{theorem:local-nonstd-zariski} and \ref{theorem:group-manifold-case}.  

This section is organized as follows:  

In Section~\ref{section:preliminary-deform-discontinuous-group},  
we review preliminaries on deformations of discontinuous groups for $X = G/H$ with non-compact $H$,  
as well as results on local rigidity and the stability of proper discontinuity and cocompactness.  
In Section~\ref{section:deform-cptCK}, we complete the proof of Theorem~\ref{theorem:local-nonstd-zariski}, which answers Question~\ref{question:deform-ck} (Q1)--(Q3).  
In Section~\ref{section:proof-son1-group-manifold1}, we prove Theorem~\ref{thm:SOn1-Zariski-dense}. 
In Section~\ref{section:proof-group-manifold},  
we complete the proof of Theorem~\ref{theorem:group-manifold-case},  
which answers Question~\ref{question:deform-ck} (Q1)--(Q3) for the exotic quotient $\Gamma_{L} \backslash G / \Gamma_{H}$.  
In Section~\ref{section:deform-noncptCK},  
we construct discontinuous groups for certain homogeneous spaces $X=G/H$ which are Zariski-dense subgroups of $G$ 
and have cohomological dimension between $2$ and the non-compact dimension $d(X)$,  
in connection with problem~\ref{mainquestion'}.

\subsection{Preliminaries on deformations of quotients 
\texorpdfstring{$\Gamma \backslash X$}{X/Gamma}
}
\label{section:preliminary-deform-discontinuous-group}

Let $X=G/H$ be a homogeneous space of a Lie group $G$ with $H$ non-compact.
In this section, we summarize some results of the local rigidity of discontinuous groups, and also discuss when local rigidity fails, along with how small deformations of discrete subgroups can preserve (or sometimes not preserve) the properties of the action, such as proper discontinuity and cocompactness.

Since proper discontinuity may change under small deformations in the general setting, the following concept introduced
by Kobayashi and Nasrin will be useful in clarifying the subsequent discussion.

\begin{definition}[{Stability of proper discontinuity, \cite{KobayashiNasrin06,Kobayashi2006-kokyuroku}}]
\label{def:deformation_space_for_X}
    Let $\varphi\in \mathcal{R}(\Gamma,G;X)$, as defined in Section~\ref{section:deformation_compact_CK-def}.
    The $\Gamma$-action on $X$ via $\varphi$
    is called \emph{stable as a discontinuous group for $X$} 
    (under any small deformation) if 
    $\mathcal{R}(\Gamma,G;X)$ is a neighborhood of $\varphi$ in
$\Hom(\Gamma,G)$, that is, 
if any small deformation $\varphi'$ of $\varphi$ in $\Hom(\Gamma,G)$
is faithful and discrete, and if the $\Gamma$-action on $X$ through $\varphi'$ 
is properly discontinuous. 
\end{definition}

Let us make a simple observation on the relationship between local rigidity for $\mathcal{R}(\Gamma,G;X)$ (see Definition~\ref{def:locally_rigid_for_X}) and
local rigidity for $\mathcal{R}(\Gamma,G)\equiv \mathcal{R}(\Gamma,G;G/\{e\}) $ (see Section~\ref{section:(G,Gamma)-locally-rigid}): 
\begin{lemma}
    \label{lemma:locally-rigid}
    Let $\Gamma$ be a discontinuous group for $G/H$, 
    and let $\iota\colon \Gamma\rightarrow G$ denote the inclusion map. 
    The following claims hold:
    \begin{enumerate}[label=$(\arabic*)$]
        \item 
        \label{item:locally-rigid->locally-rigid-discont}
        If $\iota$ is locally rigid, then 
        it is also locally rigid as a discontinuous group for $X$.
        \item 
        \label{item:stable->imverse-implication}
        If $\iota$ is stable as a discontinuous group, then the converse of \ref{item:locally-rigid->locally-rigid-discont} holds.
    \end{enumerate}
\end{lemma}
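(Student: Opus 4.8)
The plan is to reduce both statements to elementary point-set topology, resting on two observations. First, conjugation by any $g \in G$ acts on $X = G/H$ as a diffeomorphism, so it carries discontinuous groups to discontinuous groups; hence the $G$-orbit of any $\varphi \in \mathcal{R}(\Gamma,G;X)$ remains inside $\mathcal{R}(\Gamma,G;X)$, and in particular $G\cdot\iota \subset \mathcal{R}(\Gamma,G;X)$. Second, $G$ acts on $\Hom(\Gamma,G)$ by homeomorphisms preserving both subsets $\mathcal{R}(\Gamma,G)$ and $\mathcal{R}(\Gamma,G;X)$. One also records the tautology that $\mathcal{R}(\Gamma,G;X)$ carries the same topology whether regarded as a subspace of $\mathcal{R}(\Gamma,G)$ or of $\Hom(\Gamma,G)$.

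For \ref{item:locally-rigid->locally-rigid-discont}: assume $G\cdot\iota$ is open in $\mathcal{R}(\Gamma,G)$. Since $\mathcal{R}(\Gamma,G;X) \subset \mathcal{R}(\Gamma,G)$ carries the relative topology, the set $G\cdot\iota \cap \mathcal{R}(\Gamma,G;X)$ is open in $\mathcal{R}(\Gamma,G;X)$. By the conjugation-invariance noted above, $G\cdot\iota \subset \mathcal{R}(\Gamma,G;X)$, so this intersection equals $G\cdot\iota$ itself; hence $G\cdot\iota$ is open in $\mathcal{R}(\Gamma,G;X)$, i.e.\ $\iota$ is locally rigid as a discontinuous group for $X$.

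For \ref{item:stable->imverse-implication}: assume in addition that $\iota$ is stable as a discontinuous group, so there is an open set $U$ in $\Hom(\Gamma,G)$ with $\iota \in U \subset \mathcal{R}(\Gamma,G;X)$, and assume that $\iota$ is locally rigid as a discontinuous group for $X$, i.e.\ $G\cdot\iota$ is open in $\mathcal{R}(\Gamma,G;X)$; we must deduce that $G\cdot\iota$ is open in $\mathcal{R}(\Gamma,G)$. Openness of $G\cdot\iota$ in $\mathcal{R}(\Gamma,G;X)$ yields an open subset $W$ of $\Hom(\Gamma,G)$ with $\iota \in W$ and $W \cap \mathcal{R}(\Gamma,G;X) \subset G\cdot\iota$. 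Then $W \cap U$ is an open neighbourhood of $\iota$ in $\Hom(\Gamma,G)$, and since $W \cap U \subset U \subset \mathcal{R}(\Gamma,G;X)$ we get $W \cap U = W \cap U \cap \mathcal{R}(\Gamma,G;X) \subset W \cap \mathcal{R}(\Gamma,G;X) \subset G\cdot\iota$. Thus $\iota$ is an interior point of $G\cdot\iota$ in $\Hom(\Gamma,G)$; applying the $G$-action, which permutes the points of $G\cdot\iota$ via homeomorphisms of $\Hom(\Gamma,G)$, every point of $G\cdot\iota$ is interior, so $G\cdot\iota$ is open in $\Hom(\Gamma,G)$, a fortiori open in $\mathcal{R}(\Gamma,G)$, which is exactly local rigidity of $\iota$.

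I do not expect a genuine obstacle here: the only points requiring care are the bookkeeping of the three nested topologies $\mathcal{R}(\Gamma,G;X) \subset \mathcal{R}(\Gamma,G) \subset \Hom(\Gamma,G)$, the explicit role of the stability hypothesis — it is used precisely to convert ``open in the orbit's ambient space $\mathcal{R}(\Gamma,G;X)$'' into ``open in $\Hom(\Gamma,G)$'' by intersecting with $U$ — and the observation that conjugation by $G$ preserves proper discontinuity because $G$ acts on $X$ by diffeomorphisms, which is what makes $G\cdot\iota$ lie inside $\mathcal{R}(\Gamma,G;X)$ in the first place.
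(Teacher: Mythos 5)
Your proof is correct, and it fills in exactly the definition-unwinding that the paper dismisses with ``straightforward'' and ``follows directly from the definition of stability'': part (1) uses the $G$-invariance of $\mathcal{R}(\Gamma,G;X)$ plus the relative topology, and part (2) uses the stability neighborhood $U$ to upgrade openness in $\mathcal{R}(\Gamma,G;X)$ to openness in $\Hom(\Gamma,G)$, then translates by the $G$-action. No gaps; this is the same (and essentially the only) approach.
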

\begin{proof}
  \ref{item:locally-rigid->locally-rigid-discont} is straightforward,
  and \ref{item:stable->imverse-implication} follows directly from the definition of stability.   
\end{proof}

\begin{example}
In the group case where $X=G/\{e\}$, we have
 $\mathcal{R}(\Gamma,G;X)=\mathcal{R}(\Gamma,G)$.
For $\varphi\in \mathcal{R}(\Gamma,G;X)$ to be stable in the sense of Definition~\ref{def:deformation_space_for_X}, it is equivalent to the condition that any small deformation of $\varphi$ in $\Hom(\Gamma,G)$
is faithful and discrete.
\end{example}

\begin{remark}[{Weil~\cite{Weil_discrete_subgroups}}]
    Let $G$ be a (not necessarily reductive) Lie group, 
    and $\Gamma$ a cocompact discrete subgroup of $G$. 
    Then, the $\Gamma$-action on $G/\{e\}$ is stable as a discontinuous group for $G/\{e\}$.
\end{remark}

We now consider the case where $H$ is non-compact. 
    
\begin{example}
\label{ex:stability}
\begin{enumerate}[label=$(\arabic*)$]
    \item 
    \label{item:cocompact-unstable}
    (cocompact but unstable).
    Let $G:=\mathrm{Aff}(\R)$ be the affine transformation group of $X:=\mathbb{R}$.
    For $\varepsilon\neq 1$, 
    we define a homomorphism $\varphi_{\varepsilon}\colon  \Z \to G$ by mapping the generator
$1\in\Z$ to the affine transformation  $x\mapsto (1-\varepsilon)x + 1$.
Clearly, $\varphi_{0}(\Z)$ is a cocompact discontinuous group for $X=\R$, and $\varphi_\varepsilon \in\Hom(\Gamma, G)$ is faithful and discrete.
However, the properly discontinuous action of $\Gamma=\mathbb Z$ on $X=\mathbb R$ is not stable at $\varphi_0$, even though $\varphi_{\varepsilon}(\Z)$ is discrete in $G$. Indeed, $\varphi_{\varepsilon}(\Z)$
has the fixed point $\varepsilon^{-1} \in X$ for any $\varepsilon\neq 0,1$,
and consequently, the action of $\varphi_\varepsilon(\mathbb {Z})$ on $X$ cannot be properly discontinuous.
    \item (cocompact and stable).
In the setting where $X=G/H$ is of reductive type with non-compact $H$,
the first example of a \emph{stable} cocompact discontinuous group $\Gamma$, in arbitrarily high dimensions, was proven by Kobayashi~\cite{Kobayashi98}, as a solution to a conjecture by Goldman~\cite{Goldmannonstandard}. 
    \item (stability varies with a homomorphism $\varphi$).
 For some triples $(\Gamma,G,X)$,
 it is possible that $\mathcal{R}(\Gamma,G;X)$ contains both stable and unstable elements.
 (For an explicit description, see \cite{KobayashiNasrin06} by Kobayashi and Nasrin, particularly when $G$ is a nilpotent Lie group.) 
\end{enumerate}
\end{example}

In contrast to the failure of stability for cocompact properly discontinuous actions, as presented in Example~\ref{ex:stability}~\ref{item:cocompact-unstable} in the non-reductive case, we expect that proper discontinuity is an open condition in a reasonable setting where $X=G/H$ is of reductive type.
 After the solution to Goldman's conjecture \cite{Goldmannonstandard}  by Kobayashi~\cite{Kobayashi98} in 1990s, several sufficient conditions have been developed to ensure the stability property in the reductive setting
 (see, for example, Gu\'{e}ritaud--Kassel~\cite{GueritaudKassel2017maximally}, Gu\'{e}ritaud--Guichard--Kassel--Wienhard~\cite{GueritaudGuichardKasselWienhard2017anosov}, Kannaka~\cite{kannaka24}, and
 Kassel--Tholozan~\cite{KasselTholozan} (preprint), for recent work). In this article, we need: 
\begin{fact}[{Kassel~\cite[Thm.~1.3]{Kassel12}}]
\label{fact:stability-for-properness}
Let $G$ be a real reductive algebraic group, 
$X=G/H$ a homogeneous space of reductive type,
$L$ a real simple algebraic group of real rank $1$, and $\varphi\colon L\rightarrow G$ a Lie group homomorphism 
such that the $L$-action on $X=G/H$ via $\varphi$ is proper.
If $\Gamma$ is a torsion-free cocompact discrete subgroup  
of $L$, then the $\Gamma$-action on $X$ via $\varphi$ is stable as a discontinuous group for $X$.
\end{fact}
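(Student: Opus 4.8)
The plan is not to argue directly that proper discontinuity is an open condition — it is not, in general, even in the reductive setting — but to pass through Kassel's quantitative notion of \emph{sharpness}, a linear strengthening of properness that \emph{is} robust under perturbation. Fix a Cartan subspace $\mathfrak{a}$ of $\mathfrak{g}$, a closed Weyl chamber $\mathfrak{a}^+$, the Cartan projection $\mu\colon G\to\mathfrak{a}^+$, and a $W$-invariant norm $\|\cdot\|$ on $\mathfrak{a}$. Recall the Benoist--Kobayashi properness criterion: a discrete subgroup $\Delta\subset G$ acts properly discontinuously on $X=G/H$ if and only if for every $R>0$ the set $\{\delta\in\Delta: d(\mu(\delta),\mu(H))\le R\}$ is finite, and recall the structural fact that $\mu(H)$ lies, up to bounded Hausdorff distance, in a finite union of closed cones through the origin. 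Call a homomorphism $\psi\colon\Gamma\to G$ \emph{$(\kappa,C)$-sharp for $X$} if $d(\mu(\psi(\gamma)),\mu(H))\ge\kappa\,\|\mu(\psi(\gamma))\|-C$ for all $\gamma\in\Gamma$; sharpness evidently implies proper discontinuity, and it also implies that $\psi$ is discrete and (since $\Gamma$ is torsion-free) faithful.

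Step~1: the map $\varphi|_\Gamma$ is sharp. Since $L$ has real rank $1$, a torsion-free cocompact lattice $\Gamma\subset L$ is Gromov hyperbolic and its orbit map into the rank-one symmetric space $L/K_L$ is a quasi-isometric embedding, so $\mu_L(\gamma)$ (a nonnegative real, as $\operatorname{rank}L=1$) is comparable to the word length $|\gamma|$. Writing $g=k_1\exp(tX_0)k_2$ in Cartan form in $L$, where $X_0$ spans $\mathfrak{a}_L^+$, the element $d\varphi(X_0)$ is semisimple with $\operatorname{ad}$ $\mathbb{R}$-diagonalizable in $\mathfrak{g}$ (its weights in the $L$-module $\mathfrak{g}$ are real), so $\mu_G(\varphi(\exp(tX_0)))=t\,v_0$ for a fixed $v_0\in\mathfrak{a}^+$; since $\varphi(K_L)$ sits in a compact subgroup of $G$, on which $\mu_G$ is bounded, it follows that $\mu_G(\varphi(L))$ lies within bounded distance of the single ray $\mathbb{R}_{\ge 0}v_0$, and in particular $\|\mu_G(\varphi(\gamma))\|\asymp|\gamma|$. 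Properness of the $L$-action now forces $v_0$ to lie outside the cone $\overline{\mu(H)}$; because that set is a union of cones through the origin, $d(t v_0,\mu(H))$ grows linearly in $t$, which yields a sharpness estimate for all of $\varphi(L)$, hence for $\varphi(\Gamma)$.

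Step~2 (the main obstacle): sharpness is stable. Let $\varphi'\in\Hom(\Gamma,G)$ be sufficiently close to $\varphi|_\Gamma$ on a fixed finite symmetric generating set of $\Gamma$. The heart of the proof is a uniform estimate of the form $d\bigl(\mu_G(\varphi'(\gamma)),\mu_G(\varphi(\gamma))\bigr)\le\varepsilon(\varphi')\,|\gamma|+C'$ with $\varepsilon(\varphi')\to 0$ as $\varphi'\to\varphi|_\Gamma$, together with a lower bound $\|\mu_G(\varphi'(\gamma))\|\ge c_1|\gamma|-C_1$ valid uniformly for $\varphi'$ in a neighbourhood. These are exactly the quantitative properness/stability estimates in the spirit of Kobayashi~\cite[Thm.~2.6]{Kobayashi98}, sharpened by Kassel, and this is the step where the rank-one hypothesis on $L$ is essential: the Gromov-hyperbolicity of $\Gamma$ makes its quasi-isometric orbit map into $G/K$ structurally stable, so a small perturbation of the generators still gives a quasi-isometric embedding with uniformly controlled constants (in modern language, $\varphi|_\Gamma$ is Anosov and Anosov-ness is open; in higher rank this fails, since perturbations may rotate the image directions within the Weyl chamber). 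Feeding these two estimates into the sharpness inequality of Step~1 gives $d(\mu_G(\varphi'(\gamma)),\mu(H))\ge(\kappa c_1-\varepsilon(\varphi'))|\gamma|-C''$, which for $\varphi'$ close enough is positive and tends to infinity with $|\gamma|$. Hence $\varphi'$ is discrete and faithful, $\varphi'(\Gamma)$ acts properly discontinuously on $X$, and then freely, since a properly discontinuous action of a torsion-free group is free. Therefore $\varphi|_\Gamma$ lies in the interior of $\mathcal{R}(\Gamma,G;X)$, i.e.\ the $\Gamma$-action on $X$ via $\varphi$ is stable.
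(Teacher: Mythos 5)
The paper states this result as a \emph{Fact} and cites it to Kassel's Theorem~1.3 without reproducing the proof, so the comparison here is against Kassel's argument. Your architecture is exactly hers: replace proper discontinuity by the linear strengthening of sharpness, prove sharpness of the initial representation using the rank-one structure, and then show sharpness is stable. Step~1 is essentially complete and correct: $\mu_G(\varphi(L))$ does lie within bounded Hausdorff distance of a single ray $\R_{\ge 0}v_0$, the Benoist--Kobayashi criterion together with the conical structure of $\mu(H)$ forces $d(tv_0,\mu(H))\gtrsim t$, and \v{S}varc--Milnor gives $\|\mu_G(\varphi(\gamma))\|\asymp|\gamma|$.

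The gap is in Step~2, which you yourself call the heart of the proof but do not actually establish. The two displayed estimates are the right ones, and they are precisely the main technical content of Kassel's paper (her Theorem~4.1), so invoking ``the estimates of Kobayashi sharpened by Kassel'' amounts to citing the key lemma of the theorem you are proving. More importantly, the one-line justification you offer --- that the orbit map remains a quasi-isometric embedding with uniformly controlled constants --- does not yield the first estimate. Uniform quasi-isometry constants control only the \emph{norm} $\|\mu_G(\varphi'(\gamma))\|=d_{G/K}(o,\varphi'(\gamma)o)$, whereas sharpness requires control of the \emph{direction} of $\mu_G(\varphi'(\gamma))$ inside the Weyl chamber, i.e.\ the comparison $\|\mu_G(\varphi'(\gamma))-\mu_G(\varphi(\gamma))\|\le\varepsilon|\gamma|+C'$. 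A naive telescoping over a word $\gamma=s_1\cdots s_n$ fails because an isometry of $G/K$ close to the identity can displace points at distance $n$ from the origin by an amount comparable to $n$ with a multiplicative constant that does \emph{not} tend to $0$ (small rotations in negatively curved spaces already exhibit this), so the sublinear error term genuinely requires the finer fellow-travelling argument in the convex core of $\Gamma$ in $L/K_L$ that Kassel carries out (or, in modern language, the openness of Anosov representations together with the comparison of limit cones). As written, Step~2 is an assertion of the theorem's hardest ingredient rather than a proof of it.
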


In the case where $\Gamma$ is a discontinuous group for a group manifold
$(G\times G)/\diag G$ of the form
$\Gamma=\Gamma_{H}\times \Gamma_{L}$,
$\Gamma$ is not an irreducible lattice, and thus the assumption of
Fact~\ref{fact:stability-for-properness} is not satisfied.
However, an argument in the same spirit can still be applied, and the stability of discontinuous groups holds in the following form: 
\begin{fact}[{Kassel and Kobayashi~\cite{KasselKobayashi16}}]
\label{fact:exotic-stability}
    Let $G$ be a real reductive algebraic group, 
    $H$ and $L$ two closed reductive subgroups of $G$ such that the action of $H\times L$ on $G\simeq (G\times G)/\diag G$ is proper, 
    and let $\Gamma_{H}$ and $\Gamma_{L}$ be torsion-free irreducible cocompact discrete subgroups of $H$ and $L$, respectively. 
    Then the action of $\Gamma_{H}\times \Gamma_{L}$ on $G$
    is stable as a discontinuous group for $(G\times G)/\diag G$. 
\end{fact}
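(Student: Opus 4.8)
The plan is to deduce this from Kobayashi's properness criterion (see \cite{Benoist96,Kobayashi96}) together with the stability of quasi-isometric embeddings of cocompact lattices, in the spirit of the rank-one stability theorem of Kassel (Fact~\ref{fact:stability-for-properness}). Write $\Gamma := \Gamma_H\times\Gamma_L$, let $\iota\colon\Gamma\hookrightarrow G\times G$ be the inclusion, and fix a Cartan projection $\mu\colon G\to\mathfrak{a}_+$. Recall that a discrete subgroup $\Delta\subset G\times G$ acts properly discontinuously on $(G\times G)/\diag G\simeq G$ if and only if the map $(g_1,g_2)\mapsto(\mu(g_1),\mu(g_2))$ is proper relative to the diagonal, i.e. for every $R>0$ the set $\{(g_1,g_2)\in\Delta:\|\mu(g_1)-\mu(g_2)\|\le R\}$ is finite. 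Given $\varphi'\in\Hom(\Gamma,G\times G)$ close to $\iota$, put $\varphi'_1:=\varphi'|_{\Gamma_H\times\{e\}}$ and $\varphi'_2:=\varphi'|_{\{e\}\times\Gamma_L}$; these have commuting images and $\varphi'(\gamma_H,\gamma_L)=\varphi'_1(\gamma_H)\varphi'_2(\gamma_L)$, so it suffices to control $\varphi'_1$ and $\varphi'_2$. Writing $\varphi'_1(\gamma_H)=(a_1(\gamma_H),b_1(\gamma_H))$ and $\varphi'_2(\gamma_L)=(a_2(\gamma_L),b_2(\gamma_L))$, an element of $\varphi'(\Gamma)$ acts on $G$ by $g\mapsto a_1(\gamma_H)a_2(\gamma_L)\,g\,(b_1(\gamma_H)b_2(\gamma_L))^{-1}$, so what must be shown is that $\|\mu(a_1a_2)-\mu(b_1b_2)\|\to\infty$ along $\Gamma$, uniformly for $\varphi'$ near $\iota$.

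The decisive point is that the four homomorphisms $a_1,b_1\colon\Gamma_H\to G$ and $a_2,b_2\colon\Gamma_L\to G$ each stay in a controlled region. On the one hand, $a_1$ (resp.\ $b_2$) is close on a generating set to the inclusion $\Gamma_H\hookrightarrow H$ (resp.\ $\Gamma_L\hookrightarrow L$), and since a small deformation of a Zariski-dense cocompact lattice stays Zariski-dense, $a_1(\Gamma_H)$ is Zariski-dense in a conjugate of $H$ and $b_2(\Gamma_L)$ in a conjugate of $L$; the commutation relations then force the ``cross'' components $b_1(\Gamma_H)$ and $a_2(\Gamma_L)$ into small perturbations of $Z_G(L)$ and $Z_G(H)$, which are subdominant in the sense that their limit cones are transverse to $\mathcal{L}_G(L)$ and $\mathcal{L}_G(H)$. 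On the other hand, for the dominant components one splits into cases: if $\Gamma_H$ is locally rigid in $G$ --- whenever Margulis superrigidity ($\operatorname{rank}\ge 2$) or Corlette--Gromov--Schoen rigidity ($\mathfrak{h}$ of type $\mathfrak{sp}(m,1)$ or $\mathfrak{f}_{4(-20)}$) applies --- then $a_1$ is a conjugate of the inclusion and the claim follows from cocompactness of $\Gamma_H$ in $H$ and the totally geodesic embedding of the symmetric space of $H$ in that of $G$; otherwise $\mathfrak{h}$ is locally isomorphic to $\mathfrak{so}(m,1)$ or $\mathfrak{su}(m,1)$, and structural stability of convex-cocompact (Anosov) representations --- the rank-one content of Fact~\ref{fact:stability-for-properness}, reinforced by Fact~\ref{fact:klingler} and Raghunathan's vanishing theorem (Fact~\ref{fact:raghunathan}), which pin down the directions in which $\varphi'_1$ may move --- shows that $a_1$ remains a quasi-isometric embedding with $\mu_G(a_1(\gamma_H))$ within bounded distance of a fixed proper cone slightly fattening $\mathcal{L}_G(H)$ and satisfying $\|\mu_G(a_1(\gamma_H))\|\ge c|\gamma_H|-C$. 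The same analysis applies to $b_2$ with $\mathcal{L}_G(L)$.

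Finally one assembles these estimates. Properness of the $H\times L$-action on $(G\times G)/\diag G$ is equivalent, by Kobayashi's criterion and Benoist's limit-cone formulation, to the transversality of $\mathcal{L}_G(H)$ and $\mathcal{L}_G(L)$ inside $\mathfrak{a}_+$ (modulo the relevant Weyl translates); transversality of closed cones is an open condition, so the fattened cones produced above, together with the small contributions of $Z_G(L)$ and $Z_G(H)$, remain transverse. A standard estimate then yields $\|\mu(a_1a_2)-\mu(b_1b_2)\|\ge c'(\|\mu(a_1)\|+\|\mu(b_2)\|)-C'\ge c''(|\gamma_H|+|\gamma_L|)-C''\to\infty$, so $\varphi'(\Gamma)$ acts properly discontinuously; faithfulness and discreteness of $\varphi'$ follow since $\varphi'_1,\varphi'_2$ remain faithful and discrete (rigidity, resp.\ Anosov stability) while transversality forces $\varphi'_1(\Gamma_H)\cap\varphi'_2(\Gamma_L)=\{e\}$, whence $\varphi'$ is injective with discrete image. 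The main obstacle is the middle paragraph: handling the genuinely non-rigid rank-one factors, where one must simultaneously keep the diagonal components $a_1,b_2$ quasi-isometrically embedded and confirm that the cross components $b_1,a_2$ --- which the commutation relations confine to centralizers --- stay uniformly subdominant; this is exactly where the hypothesis that $\Gamma_H$ and $\Gamma_L$ are \emph{irreducible cocompact} lattices (hence Zariski-dense, with small centralizers) is essential and cannot be weakened.
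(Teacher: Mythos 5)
First, a point of reference: the paper does not prove this statement. It is quoted as a Fact from Kassel--Kobayashi \cite{KasselKobayashi16}, and the paper's entire justification is the remark that it ``is derived from \cite[Lem.~4.23]{KasselKobayashi16}, which provides a quantitative estimate of the sharpness constant in this setting.'' Your proposal is thus a from-scratch reconstruction. Its skeleton is right and matches the spirit of the source: reduce properness on $(G\times G)/\diag G$ to a Cartan-projection estimate, split $\varphi'$ into the four commuting pieces $a_1,b_1,a_2,b_2$ (exactly the decomposition of Lemma~\ref{lemma:group-manifold-deformation}), keep the diagonal pieces near the cones $\mu(H),\mu(L)$ and make the cross pieces negligible. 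But the middle paragraph, which you correctly identify as the heart of the argument, contains genuine gaps.

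(1) The assertion that ``a small deformation of a Zariski-dense cocompact lattice stays Zariski-dense, [so] $a_1(\Gamma_H)$ is Zariski-dense in a conjugate of $H$'' is false, and is contradicted by the central theme of this very paper: Theorems~\ref{theorem:local-nonstd-zariski} and~\ref{theorem:bending} produce small deformations whose Zariski closure is strictly larger than any conjugate of $H$. (2) Your control of the cross terms via centralizers does not deliver what the final estimate requires. Knowing $b_1(\Gamma_H)\subset Z_G(\overline{b_2(\Gamma_L)})$ gives no bound on $\|\mu(b_1(\gamma_H))\|$ (the centralizer may be non-compact), and a purely directional ``transverse limit cone'' constraint is insufficient once both factors of $b_1(\gamma_H)b_2(\gamma_L)$ are large. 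What is needed is the norm bound $\|\mu(b_1(\gamma_H))\|\le \varepsilon\,|\gamma_H|+C$, which follows simply from subadditivity of $\mu$ together with $b_1$ being close to the \emph{trivial} homomorphism on a generating set; no Zariski-closure or centralizer argument enters. (3) For the diagonal terms in the non-rigid rank-one cases you appeal to Anosov stability, but you neither check that the composite $\Gamma_H\hookrightarrow H\hookrightarrow G$ is Anosov in $G$ (this depends on the embedding and is not automatic), nor is ``remains a quasi-isometric embedding into $G$'' the statement you need: the required input is that $\mu_G(a_1(\gamma_H))$ stays within $\varepsilon\|\mu_G(a_1(\gamma_H))\|+C$ of the cone $\mu_G(H)$, uniformly for $\varphi'$ near $\iota$ --- which is precisely the quantitative sharpness estimate of \cite[Lem.~4.23]{KasselKobayashi16} that your argument assumes rather than proves. (In the rigid cases, what is relevant is local rigidity of $\Gamma_H$ \emph{in $G$}, i.e.\ $H^1(\Gamma_H,\mathfrak g)=0$ via Weil and Raghunathan, not superrigidity of $\Gamma_H$ as a lattice of $H$.)
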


Fact~\ref{fact:exotic-stability} is derived from \cite[Lem.\ 4.23]{KasselKobayashi16}, which provides a quantitative estimate of the sharpness constant in this setting.

\medskip

Finally, with respect to the cocompactness of the action, as shown by Kobayashi in \cite{Kobayashi89}, cocompactness is always preserved as long as the stability of the discontinuous group holds.

\begin{fact}[{\cite[Cor.~5.5]{Kobayashi89}}]
    Let $\Gamma$ be a cocompact discontinuous group for 
    $X=G/H$. Then for any $\varphi\in \mathcal{R}(\Gamma,G;X)$, 
    the quotient 
    $X_{\varphi(\Gamma)}$ is also compact.
\end{fact}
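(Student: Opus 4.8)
\emph{Approach.} The statement will be deduced from the characterization of cocompactness by cohomological dimension that is recalled in the introduction: for any discontinuous group $\Delta$ for $X$ one has $\operatorname{cd}_{\R}(\Delta)\le d(X)$ (the inequality~\eqref{ineq:cd-upper-bound}), with equality if and only if the $\Delta$-action on $X$ is cocompact (\cite[Cor.~5.5]{Kobayashi89}). The plan is to exploit that $\operatorname{cd}_{\R}$ is an invariant of the underlying abstract group, hence constant along the space $\mathcal{R}(\Gamma,G;X)$ of faithful representations; cocompactness is then preserved because it is detected by this algebraic invariant.

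\emph{Steps.} First I would observe that the inclusion $\Gamma\hookrightarrow G$ lies in $\mathcal{R}(\Gamma,G;X)$ and that, by hypothesis, $X_{\Gamma}=\Gamma\backslash X$ is compact; applying the equality case of~\eqref{ineq:cd-upper-bound} to this discontinuous group yields $\operatorname{cd}_{\R}(\Gamma)=d(X)$. Next, take an arbitrary $\varphi\in\mathcal{R}(\Gamma,G;X)$. Since $\varphi$ is faithful, it restricts to a group isomorphism $\Gamma\simeq\varphi(\Gamma)$, and $\operatorname{cd}_{\R}$ depends only on the isomorphism class of the group, so $\operatorname{cd}_{\R}(\varphi(\Gamma))=\operatorname{cd}_{\R}(\Gamma)=d(X)$. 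Finally, $\varphi(\Gamma)$ is again a discontinuous group for $X$ by the definition of $\mathcal{R}(\Gamma,G;X)$, so~\eqref{ineq:cd-upper-bound} and its equality case apply to it; since $\operatorname{cd}_{\R}(\varphi(\Gamma))=d(X)$ realizes the bound, the $\varphi(\Gamma)$-action on $X$ is cocompact, i.e.\ $X_{\varphi(\Gamma)}$ is compact, as desired.

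\emph{On the ingredient used as a black box.} For a self-contained account one would also need to reprove~\eqref{ineq:cd-upper-bound} together with its equality case. The inequality rests on the fact that $X=G/H$ is diffeomorphic to a vector bundle over a compact manifold (so $X$ admits, up to homotopy, a finite-dimensional finite-type model carrying the free proper $\Delta$-action) and on the finiteness of stabilizers for such an action; the equality case then follows from Poincar\'e--Lefschetz duality applied to the quotient manifold $X_{\Delta}$, which is closed exactly when the action is cocompact and otherwise has strictly smaller cohomological dimension. These are precisely the contents of \cite[Sect.~5]{Kobayashi89}, and I would invoke them rather than reprove them here.

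\emph{Main obstacle.} There is no real obstacle in the deformation statement itself: once the cohomological-dimension characterization is granted, the conclusion is immediate, since cocompactness is read off from the purely algebraic quantity $\operatorname{cd}_{\R}(\Gamma)$, which is literally unchanged along $\mathcal{R}(\Gamma,G;X)$. The only substantive input --- and the place where all the work lies --- is the equality case of~\eqref{ineq:cd-upper-bound}, which is here assumed as a cited result.
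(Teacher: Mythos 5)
Your argument is correct and is exactly the intended derivation: the paper itself states this as a cited Fact rather than proving it, but its introduction records precisely the two ingredients you use — the bound $\operatorname{cd}_{\R}(\Gamma)\leq d(X)$ of \eqref{ineq:cd-upper-bound} and its equality case characterizing cocompactness — and the Fact follows immediately since $\operatorname{cd}_{\R}$ is an invariant of the abstract group, unchanged under the faithful homomorphisms parametrized by $\mathcal{R}(\Gamma,G;X)$. No issues.
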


\medskip

We end this section by providing
  a key framework for proving the answer to Question~\ref{question:deform-ck}~(3), 
  which follows from the main result of Section~\ref{section:(G,Gamma)-deformation} along with the aforementioned stability theorem.

\begin{theorem}
\label{theorem:Zariski-dense-discontinuous-spin-lattice}
 Let $G$ be a Zariski-connected real reductive algebraic group,
 and $X=G/H$ a homogeneous space of reductive type.
 Let
$\varphi\colon Spin(n,1)\rightarrow G$ be a homomorphism such that 
the action of $Spin(n,1)$ on $X$, via $\varphi$, is proper.
 \begin{enumerate}
    \item [(1)]
There exists a pair $(\Gamma, \varphi')$ such that the Zariski-closure
of $\varphi'(\Gamma)$ equals $G^\varphi$ (Definition~\ref{def:g-phi}), where
$\Gamma$ is a torsion-free cocompact discrete subgroup 
of $Spin(n,1)$ and $\varphi'\in \mathcal{R}(\Gamma,G;X)$ is a small deformation of $\varphi|_{\Gamma}$ in $\Hom(\Gamma,G)$.
    \item [(2)]
If $\mathfrak{g}^{\varphi}=\mathfrak{g}$ (see Definition~\ref{definition:lie-g-phi}),
then $\varphi'(\Gamma)$ is Zariski-dense in $G$, where
 $(\Gamma, \varphi')$ is the pair in (1).
 \end{enumerate}
\end{theorem}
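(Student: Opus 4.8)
The plan is to deduce Theorem~\ref{theorem:Zariski-dense-discontinuous-spin-lattice} almost immediately from the machinery already developed, namely Theorem~\ref{theorem:bending}~\ref{item:bending-realize-Gphi} (realized in full by Proposition~\ref{prop:zariski-closure-gphi}), together with the stability theorem of Kassel (Fact~\ref{fact:stability-for-properness}). First I would fix the data: $G$ a Zariski-connected real reductive algebraic group, $X=G/H$ of reductive type, and $\varphi\colon Spin(n,1)\to G$ a homomorphism such that the $Spin(n,1)$-action on $X$ via $\varphi$ is proper. Since this action is proper and $H$ is noncompact, $\varphi$ is necessarily nontrivial; moreover $Spin(n,1)$ is a real simple algebraic group of real rank $1$, so the hypotheses of Fact~\ref{fact:stability-for-properness} are met once we choose a torsion-free cocompact lattice in $Spin(n,1)$.

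For part (1), I would invoke Theorem~\ref{theorem:bending}~\ref{item:bending-realize-Gphi}: there exist a torsion-free cocompact discrete subgroup $\Gamma$ of $Spin(n,1)$ and a small deformation $\varphi'$ of $\varphi|_\Gamma$ in $\Hom(\Gamma,G)$ whose Zariski-closure equals $G^\varphi$. (Concretely, one takes the $\Gamma\equiv\Gamma_k$ produced by Theorem~\ref{theorem:Millson-spin} for the appropriate $k$ as in \eqref{def:k}, and the bending deformation $\varphi_t$ of Proposition~\ref{prop:zariski-closure-gphi} for sufficiently small $t\neq 0$.) The only additional point needed beyond Theorem~\ref{theorem:bending} is that this $\varphi'$ actually lies in $\mathcal{R}(\Gamma,G;X)$, i.e.\ that it is faithful, discrete, and gives a properly discontinuous action on $X$. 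This is exactly where Fact~\ref{fact:stability-for-properness} enters: since the $Spin(n,1)$-action on $X$ via $\varphi$ is proper and $\Gamma$ is a torsion-free cocompact lattice in the rank-one simple group $Spin(n,1)$, the $\Gamma$-action on $X$ via $\varphi|_\Gamma$ is stable as a discontinuous group for $X$; hence a small enough deformation $\varphi'$ of $\varphi|_\Gamma$ remains in $\mathcal{R}(\Gamma,G;X)$. Shrinking $t$ if necessary so that $\varphi_t$ lies in this stability neighborhood while still having Zariski-closure $G^\varphi$, we obtain the desired pair $(\Gamma,\varphi')$. Part (2) is then purely formal: if $\mathfrak{g}^\varphi=\mathfrak{g}$, then by Lemma~\ref{lemma:g-phi-algebraic} and Definition~\ref{def:g-phi} (taking real points of the connected complex algebraic group $G^\varphi_\C = G_\C$) we get $G^\varphi = G$, so the Zariski-closure of $\varphi'(\Gamma)$ is all of $G$, i.e.\ $\varphi'(\Gamma)$ is Zariski-dense.

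The only mildly delicate point — and the one I would spell out carefully — is the compatibility of the two ``smallness'' requirements: the deformation must be small enough to retain proper discontinuity (from Fact~\ref{fact:stability-for-properness}) yet must not collapse the Zariski-closure below $G^\varphi$. This is not really an obstacle, since Proposition~\ref{prop:zariski-closure-gphi} asserts that the Zariski-closure of $\varphi_t(\Gamma)$ is \emph{exactly} $G^\varphi$ for \emph{all} sufficiently small $t\neq 0$, so the two conditions are simultaneously satisfiable along the one-parameter bending family $\{\varphi_t\}$; one just takes $t$ in the intersection of the two (nonempty, open-in-$t$) ranges. I would also note in passing that the construction uses $\Gamma$ in $Spin(n,1)$ rather than $SO(n,1)$, which is essential for the applications in Sections~\ref{section:deform-cptCK} and~\ref{section:deform-noncptCK} where the standard reductive subgroup acting properly on $X$ is $Spin(n,1)$ and does not descend to $SO(n,1)$ (cf.\ Remark~\ref{rem:lifting_to_spin}).
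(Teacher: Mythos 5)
Your proposal is correct and follows essentially the same route as the paper: combine Theorem~\ref{theorem:bending}~\ref{item:bending-realize-Gphi} (to realize $G^\varphi$ as the Zariski-closure of a small bending deformation) with Kassel's stability theorem (Fact~\ref{fact:stability-for-properness}) applied to $L=Spin(n,1)$ to keep the deformation inside $\mathcal{R}(\Gamma,G;X)$. Your explicit remark on reconciling the two ``smallness'' requirements via the one-parameter family $\varphi_t$ is a useful elaboration of a point the paper leaves implicit, but it is not a different argument.
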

\begin{proof}
We have shown in
Theorem~\ref{theorem:bending}~(1) that there exists a pair $(\Gamma, \varphi')$ where $\Gamma$ is a cocompact discrete subgroup $\Gamma$ of $L$ without torsion and $\varphi'$ is a small deformation of $\varphi|_\Gamma$ such that $\varphi'(\Gamma)$ is Zariski-dense in  $G^\varphi$.
Then the theorem follows from
Fact~\ref{fact:stability-for-properness}
applied to $L=Spin(n,1)$.   
\end{proof}

\subsection{Proof of 
\texorpdfstring{Theorem~\ref{theorem:local-nonstd-zariski}}{Theorem 2.9}}
\label{section:deform-cptCK}
In this section, we complete the proof of Theorem~\ref{theorem:local-nonstd-zariski}.
In the first half of this section, we discuss the case where the discontinuous group is locally rigid, and in the second half, we deal with the case where it is deformable. 

As shown in \cite[Sect.~3.12]{Kobayashi98} using Weil~\cite{Weil_remark}, the vanishing of the first cohomology of $\Gamma$ serves as an effective first sufficient condition to the local rigidity of a cocompact discontinuous group $\Gamma$ acting on a homogeneous space $G/H$. This condition can be addressed without considering the subgroup $H$, see Steps 1 and 3 below.
We shall prove in Steps 4 and 5 that this approach covers all the local rigidity cases in Theorem~\ref{theorem:local-nonstd-zariski}.
That is, in the setting where $(G,H,L)$ is one of the triples in Table~\ref{tab:cpt-CK-sym}, 
there exists a cocompact discrete subgroup $\Gamma$ of $L$ for which $H^{1}(\Gamma,\mathfrak{g})\neq 0$ if and only if there exists such a $\Gamma$ that is deformable as a discontinuous group for $X=G/H$.

We recall that $L_{ss}$ is the semisimple factor of $L$.
The proof of Theorem~\ref{theorem:local-nonstd-zariski} is organized in the following five steps.

\begin{description}
    \item[Step~1] Proof of local rigidity in the cases where $\mathfrak{l}_{ss}$ is 
    isomorphic to neither $\mathfrak{so}(n,1)$ nor $\mathfrak{su}(n,1)$. 
    \item[Step~2] Irreducible decomposition of the $\mathfrak{l}_{ss}$-module $\mathfrak{g}/\mathfrak{l}_{ss}$ in the cases where 
    $\mathfrak{l}_{ss}$ is isomorphic to either $\mathfrak{so}(n,1)$ or $\mathfrak{su}(n,1)$.
    \item[Step~3] Proof of local rigidity in the cases where
    $\mathfrak{l}_{ss}$ is isomorphic to either $\mathfrak{so}(n,1)$ or $\mathfrak{su}(n,1)$
    and $H^{1}(\Gamma,\mathfrak{g})=0$ for any cocompact discrete subgroup $\Gamma$ of $L$. 
    \item[Step~4] We discuss $\Gamma$ which is
    deformable but cannot be deformed into a non-standard discontinuous group for $X$. 
    This happens when $\mathfrak{l}_{ss}$ is isomorphic to $\mathfrak{su}(n,1)$.
    \item[Step~5] We discuss $\Gamma$ which can be deformed into a Zariski-dense subgroup.
    This happens when $\mathfrak{l}$ is isomorphic to $\mathfrak{so}(n,1)$.
\end{description}

\medskip
The proof of the theorem is mainly carried out for $L=L_{ss}$.
For each $j=1,2,3$, if (Q$j$) is affirmative for $L_{ss}$, then (Q$j$) is also affirmative for all $L$ such that  
$L_{ss} \subset L \subset L_{max}$,
as in Notation~\ref{notation:compact-factor}.
For the reverse direction, 
we use the following lemma.
\begin{lemma}
\label{lem:L_c_does_not_affect}
    Let $L_c$ denote the compact factor of a real reductive group $L$ without split center.
    Then we have \[  L=L_{ss} L_{c}.  \] 
Let $\Gamma$ be a torsion-free discrete subgroup of $L$. 
Then, the quotient map
$L\rightarrow L/L_{c}\simeq L_{ss}/(L_{ss}\cap L_{c})$ is injective when restricted to $\Gamma$, and therefore,
we may regard $\Gamma$ as a discrete subgroup of the semisimple Lie group $L_{ss}/(L_{ss}\cap L_{c})$.
\end{lemma}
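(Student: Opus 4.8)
The statement to prove is Lemma~\ref{lem:L_c_does_not_affect}: for a real reductive group $L$ without split center, with compact factor $L_c$ and semisimple factor $L_{ss}$, one has $L = L_{ss} L_c$, and for any torsion-free discrete subgroup $\Gamma$ of $L$, the projection $L \to L/L_c \simeq L_{ss}/(L_{ss}\cap L_c)$ restricts to an injection on $\Gamma$.

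\medskip

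\textbf{Plan of proof.} The decomposition $L = L_{ss}L_c$ is structural and essentially definitional: since $L$ is reductive with no split center, its center $Z(L)$ is compact, so writing $L = L_{ss} \cdot Z(L)^\circ \cdot (\text{finite})$ and absorbing $Z(L)$ into the compact factor gives $L = L_{ss}L_c$ where $L_c$ is the maximal connected normal compact subgroup (equivalently, the product of the compact simple factors together with the compact center). I would phrase this by passing to Lie algebras: $\mathfrak{l} = \mathfrak{l}_{ss} \oplus \mathfrak{z}(\mathfrak{l})$ with $\mathfrak{l}_{ss}$ semisimple, and $\mathfrak{l}_{ss} = \mathfrak{l}_{ss}^{nc} \oplus \mathfrak{l}_{ss}^{c}$ splitting off the sum of compact simple ideals; then $\mathfrak{l}_c := \mathfrak{l}_{ss}^c \oplus \mathfrak{z}(\mathfrak{l})$ (the center being compact by hypothesis) is a compact ideal, and $\mathfrak{l} = \mathfrak{l}_{ss} + \mathfrak{l}_c$. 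At the group level, since $L$ has finitely many connected components and $L_{ss}$, $L_c$ are the corresponding (closed, normal) subgroups with $\mathfrak{l}_{ss}+\mathfrak{l}_c = \mathfrak{l}$, one gets $L = L_{ss}L_c$; a small point to check is that the component group is carried by $L_{ss}$ in the conventions in force, or else one simply enlarges $L_c$ to include any extra components, which does not affect the statement. I do not expect this part to be the obstacle.

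\medskip

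\textbf{The injectivity of $\Gamma \to L/L_c$.} Here is the key point. The kernel of the restricted map is $\Gamma \cap L_c$, which is a discrete subgroup of the compact group $L_c$, hence finite. Since $\Gamma$ is torsion-free, $\Gamma \cap L_c = \{e\}$, so the map is injective. Finally, $L/L_c \simeq L_{ss}/(L_{ss}\cap L_c)$ follows from $L = L_{ss}L_c$ by the second isomorphism theorem, and $L_{ss}\cap L_c$ is exactly the compact part of $L_{ss}$ (together with any intersection with the compact center), so $L_{ss}/(L_{ss}\cap L_c)$ is a semisimple Lie group with trivial (in particular anisotropic) compact factor; thus $\Gamma$ embeds as a discrete subgroup of this semisimple group. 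One should record that the quotient map $L_{ss} \to L_{ss}/(L_{ss}\cap L_c)$ composed with the inclusion $\Gamma \hookrightarrow L \to L/L_c$ agrees with the claimed identification, which is routine from $L = L_{ss}L_c$.

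\medskip

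\textbf{Expected main obstacle.} There is essentially no deep obstacle; the only genuine subtlety is bookkeeping about connected components and the precise meaning of ``compact factor'' $L_c$ of a possibly disconnected reductive group, so that the identification $L/L_c \simeq L_{ss}/(L_{ss}\cap L_c)$ holds on the nose. I would handle this by fixing once and for all that $L_c$ denotes the (closed, normal) subgroup whose Lie algebra is the maximal compact ideal $\mathfrak{l}_c$ and which contains the compact part of the center, noting that replacing $L_c$ by a slightly larger compact normal subgroup only strengthens the conclusion (the kernel $\Gamma\cap L_c$ stays finite, hence trivial). The rest is the two-line torsion-free argument above.
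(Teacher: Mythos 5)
Your proof is correct and uses the same key argument as the paper: $\Gamma \cap L_c$ is a discrete subgroup of the compact group $L_c$, hence finite, and torsion-freeness forces it to be trivial. The paper's proof consists of exactly this observation (treating the decomposition $L = L_{ss}L_c$ as clear), so your additional bookkeeping on the Lie-algebra level is harmless elaboration rather than a different route.
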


\begin{proof}
Since $\Gamma$ is torsion-free, the finite subgroup $\Gamma \cap L_c$ must be the singleton $\{e\}$. Hence, the assertion is clear. 
\end{proof}

Step~1. We prove the cases where $\mathfrak{l}_{ss}$ is not isomorphic to $\mathfrak{so}(n,1)$ or $\mathfrak{su}(n,1)$. The proof is parallel to \cite[Sect.~3.12]{Kobayashi98}.
Let $\Gamma$ be a torsion-free cocompact discrete subgroup of $L$.
It follows from Lemma~\ref{lem:L_c_does_not_affect} that we 
can deduce that $\Gamma$ is a torsion-free cocompact discrete 
subgroup of the semisimple Lie group $L_{ss}/(L_{ss} \cap L_c)$.
Since $\mathfrak{l}_{ss}$ is not isomorphic to 
 $\mathfrak{so}(n,1)$ or $\mathfrak{su}(n,1)$,
we apply Raghunathan's vanishing theorem (Fact~\ref{fact:raghunathan}),
which implies that we have $H^{1}(\Gamma, \mathfrak{g}) = 0$. 
    This implies that the inclusion map $\iota\colon \Gamma \to G$ is locally rigid by a theorem of Weil~\cite{Weil_remark} in Section~\ref{section:(G,Gamma)-locally-rigid}.
    Therefore, by Lemma~\ref{lemma:locally-rigid}~\ref{item:locally-rigid->locally-rigid-discont}, $\iota$ is locally rigid as a discontinuous group for $G/H$.
    This gives a proof of Theorem~\ref{theorem:local-nonstd-zariski} in Cases~1'-1, 2-1, 5-1, 5', 6', 7, 8', 9, 11', and 12 in Table~\ref{tab:cpt-CK-sym}.
    
The remaining cases are written out in Table~\ref{tab:irr-dec} for the next steps.

\medskip

Step~2. Suppose that $\mathfrak{l}_{ss}$ is isomorphic to $\mathfrak{so}(n,1)$ or $\mathfrak{su}(n,1)$. 
As preparation for the following steps, we will provide the irreducible decomposition of $\mathfrak{g}/\mathfrak{l}_{ss}$ as an $\mathfrak{l}_{ss}$-module in the following lemma,
for the cases listed in Table~\ref{tab:irr-dec}:
\begin{lemma} \label{lemma:irr-dec} In the cases listed in Table~\ref{tab:irr-dec}, the irreducible decomposition of the real vector space \(\mathfrak{g}/\mathfrak{l}_{ss}\) 
as an \(\mathfrak{l}_{ss}\)-module is given. 
Here, \(\mathbb{R}^{n,1}\) denotes the standard representation of
\(\mathfrak{l}_{ss} = \mathfrak{so}(n,1)\) or \(\mathfrak{spin}(n,1)\), 
and \(\mathbb{C}^{n,1}\) represents the standard representation of
\(\mathfrak{l}_{ss} = \mathfrak{su}(n,1)\). The symbol \(\mathbf{1}\) denotes the trivial representation. \end{lemma}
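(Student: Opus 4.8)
The statement to prove is Lemma~\ref{lemma:irr-dec}: it asks for an explicit irreducible decomposition of $\mathfrak{g}/\mathfrak{l}_{ss}$ as an $\mathfrak{l}_{ss}$-module, case by case, for the cases remaining after Step~1 (recorded in Table~\ref{tab:irr-dec}). So I need to sketch how to produce these branching-law computations.

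My plan is to go through the remaining cases one family at a time, organizing by the type of $\mathfrak{l}_{ss}$ — either $\mathfrak{so}(n,1)$ (resp. $\mathfrak{spin}(n,1)$) or $\mathfrak{su}(n,1)$ — since the representation theory techniques differ slightly. For each triple $(G,H,L)$, the first step is to identify the embedding $\mathfrak{l}_{ss}\hookrightarrow\mathfrak{g}$ concretely (standard orthogonal/unitary inclusion, spin representation, or the exceptional inclusion $\mathfrak{g}_{2(2)}\hookrightarrow\mathfrak{so}(4,3)$), then decompose $\mathfrak g$ as an $\mathfrak{l}_{ss}$-module via $\operatorname{Ad}\circ\varphi$, and finally subtract the adjoint copy $\mathfrak l_{ss}$ itself. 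Most cases reduce to: (i) writing $\mathfrak g$ as $\wedge^2 V$ or $\operatorname{Sym}^2 V$ or $\mathfrak{su}$ of some standard module $V$ of $G$; (ii) plugging in the branching law $V|_{\mathfrak{l}_{ss}}$, which is typically $W\oplus\mathbf 1^{\oplus r}$ for the orthogonal/spin cases (as in Lemma~\ref{lemma:wedge_l_contains_spherical_harmonics}), or $\mathbb C^{n,1}\oplus\mathbf 1^{\oplus r}$ in the unitary cases; and (iii) expanding $\wedge^2(W\oplus\mathbf 1^{\oplus r})\simeq \wedge^2 W\oplus W^{\oplus r}\oplus \mathbf 1^{\oplus\binom r2}$ and similar identities. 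The spin cases ($\mathfrak{spin}(n,1)\subset\mathfrak{so}(p,q)$, e.g. Cases~1'-2, 2-2, 5-2, 7') and the complex cases (Cases~8, 9) require using the $G(p,q)$/Clifford-algebra dictionary from Appendix~\ref{section:clifford-spin} and Proposition~\ref{proposition:Kobayashi-Yoshino-G(p,q)} together with the decomposition \eqref{eqn:g(p,q)-structure} of $\mathfrak g(p,q)$ into exterior powers.

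The delicate cases are the low-rank coincidences and the exceptional one. For Cases~1'-2, 2-2, 5-2 one must exploit the isomorphism $Sp(1,1)\simeq Spin(4,1)$ and compute the spin-representation branching of $\wedge^2$ of the $8$-dimensional module of $SO(4,4)$ (or $\mathfrak{su}(2,2)$, $\mathfrak{so}(4,4)$) restricted through $\mathfrak{spin}(4,1)$; here the triality/half-spin subtleties of $\mathfrak{so}(4,4)$ need care, and one uses that $\wedge^6 V$ splits into two inequivalent $\mathfrak{spin}(p,q)$-submodules but these coincide as $\mathfrak{spin}(n,1)$-modules (as already observed in the proof of Proposition~\ref{prop:Clifford-z.d}). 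For Case~12/12' one needs the well-known decomposition of $\mathfrak{so}(7)\simeq\mathfrak{g}_2\oplus\mathbb R^7$ (and its split form $\mathfrak{so}(4,3)=\mathfrak{g}_{2(2)}\oplus\mathbb R^{4,3}$), and then the further branching of $\mathbb R^{4,3}$ to $\mathfrak{so}(4,1)$ when $L_{ss}=SO(4,1)$, which gives $\mathbb R^{4,1}\oplus\mathbf 1^{\oplus 2}$, hence $\mathfrak g/\mathfrak l_{ss}$ for Case~12' involves the $7$-dimensional $\mathfrak g_{2(2)}$-module restricted further. For Cases~10, 10', 11, 11' one uses $SO^*(8)\simeq Spin(6,2)$-type coincidences and $\mathfrak{so}^*(8)\simeq\mathfrak{so}(6,2)$ to realize $\mathfrak{spin}(6,1)\subset\mathfrak{so}^*(8)$ and $\mathfrak{su}(3,1)\subset\mathfrak{so}^*(8)$ and decompose accordingly.

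The main obstacle I anticipate is bookkeeping the real forms correctly: the decompositions must be stated over $\mathbb R$ (the table is in terms of $\mathbb R^{n,1}$, $\mathbb C^{n,1}$, $\mathbf 1$), so I must track which complex irreducibles are defined over $\mathbb R$, which come in complex-conjugate pairs realized as a single real module, and multiplicities thereof — this is where errors creep in. The cleanest route is: complexify, do the branching over $\mathbb C$ using standard highest-weight combinatorics (or just the exterior-power identities above, which hold over any field), then descend to $\mathbb R$ using the known real structures on $W=\mathbb R^{n,1}$, on $\mathbb C^{n,1}$ (no real structure, so contributes as is), and on the spherical harmonics modules (all defined over $\mathbb R$, per Remark~\ref{remark:spherical_harmonics}). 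Once the list is assembled, each entry is routine to verify by dimension count, which serves as a good consistency check; I would present Table~\ref{tab:irr-dec} and then remark that every row follows from the embedding description plus the elementary exterior/symmetric-power identities, spelling out only the $\mathfrak{so}(4,4)$-triality and $\mathfrak g_{2(2)}$ cases in detail.
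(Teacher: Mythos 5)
The paper offers no proof of Lemma~\ref{lemma:irr-dec} at all: the table is simply asserted as the output of a routine branching computation, so there is nothing of the authors' to compare against step by step. Your general recipe --- identify the embedding $\mathfrak{l}_{ss}\hookrightarrow\mathfrak{g}$, realize $\mathfrak{g}$ as $\wedge^2 V$, as $\mathfrak{su}(V)$, or via the Clifford model $\mathfrak{g}(p,q)=\bigoplus_{k\equiv 2\bmod 4}\wedge^{k}$ of \eqref{eqn:g(p,q)-structure}, substitute $V|_{\mathfrak{l}_{ss}}=W\oplus\mathbf{1}^{\oplus r}$ (or $\C^{n,1}\oplus\mathbf{1}^{\oplus r}$), expand, and subtract the adjoint copy --- is exactly the computation the lemma encodes, and it does produce the rows of Table~\ref{tab:irr-dec}; the dimension count you propose is the right consistency check. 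One simplification: no triality or half-spin issue arises in Cases~1'-2, 2-2, 5-2, since in the Clifford picture one works with $\wedge^{2}\oplus\wedge^{6}$ of the \emph{seven}-dimensional module $\R^{4,3}$ (odd rank, so every $\wedge^{k}$ is $\mathfrak{spin}(4,3)$-irreducible), not with $\wedge^2$ of the $8$-dimensional spin module.

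There is, however, a concrete error in your Case~12'. There $\mathfrak{l}_{ss}=\mathfrak{so}(4,1)$ sits in $\mathfrak{so}(4,3)$ in the standard block form (note $L_{max}=SO(4,1)\times SO(2)$), so the computation is just $\wedge^2(\R^{4,1}\oplus\mathbf{1}^{\oplus 2})\simeq\mathfrak{so}(4,1)\oplus(\R^{4,1})^{\oplus 2}\oplus\mathbf{1}$. The splitting $\mathfrak{so}(4,3)=\mathfrak{g}_{2(2)}\oplus\R^{4,3}$ that you invoke is a $\mathfrak{g}_{2(2)}$-module decomposition and is not $\mathfrak{so}(4,1)$-equivariant in any useful sense here (Case~12, where $L_{ss}=G_{2(2)}$, does not appear in Table~\ref{tab:irr-dec}; it is disposed of in Step~1). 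Your conclusion that $\mathfrak{g}/\mathfrak{l}_{ss}$ is the restricted $7$-dimensional module $\R^{4,1}\oplus\mathbf{1}^{\oplus 2}$ fails your own dimension test: $\dim\mathfrak{so}(4,3)-\dim\mathfrak{so}(4,1)=21-10=11$, not $7$; the correct entry is $(\R^{4,1})^{\oplus 2}+\mathbf{1}$. (The printed table entry for Case~12' has the multiplicities transposed and should also read $(\R^{4,1})^{\oplus 2}+\mathbf{1}$ --- so matching the table is not a valid check in this row. The discrepancy is harmless for the paper's later use, since every non-adjoint constituent is still a spherical harmonics module and Proposition~\ref{lem:l=so-Zd} goes through either way, but a proof of the lemma must get it right.)
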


        \renewcommand{\arraystretch}{1.3}
    \begin{table}
    \begin{tabular}{c|c|c|c}
& $\mathfrak{g}$ & $\mathfrak{l}_{ss}$ & $\mathfrak{g}/\mathfrak{l}_{ss}$\\
\hline
Case 1 & $\mathfrak{su}(2n,2)$ & $\mathfrak{su}(2n,1)$ & $\C^{2n,1}+\mathbf{1}$ \\
\hline
\begin{tabular}{c}
    Case 1'-2   \\
    Case 2-2  
\end{tabular}
 & $\mathfrak{su}(2,2)$ & $\mathfrak{spin}(4,1)$ & $\R^{4,1}$ 
\\
\hline
Case 3 & 
$\mathfrak{so}(2n,2)$ & $\mathfrak{so}(2n,1)$ & $\R^{2n,1}$ \\
\hline
Case 4-1-a &
$\mathfrak{so}(2n,2)$ $(n\geq 3)$ & $\mathfrak{su}(n,1)$ & $\bigwedge^{2}\C^{n,1} + \mathbf{1}$ \\
Case 4-1-b &
$\mathfrak{so}(4,2)$ & $\mathfrak{su}(2,1)$ & $\C^{2,1} + \mathbf{1}$ \\
Case 4-2 &
$\mathfrak{so}(2,2)$ & $\mathfrak{so}(2,1)$ & $\mathbf{1}^{\oplus 3}$ \\
\hline
Case 4' &
$\mathfrak{so}(2n,2)$ & $\mathfrak{so}(2n,1)$ & $\R^{2n,1}$ \\
\hline
Case 5-2 &
$\mathfrak{so}(4,4)$ & $\mathfrak{spin}(4,1)$ & $(\R^{4,1})^{\oplus 3} + \mathbf{1}^{\oplus 3}$ \\
\hline
Case 6 &
$\mathfrak{so}(8,8)$ & $\mathfrak{spin}(8,1)$ & $\bigwedge^{3}\R^{8,1}$ \\
\hline
Case 7' &
$\mathfrak{so}(4,4)$ & $\mathfrak{so}(4,1)$ & $(\R^{4,1})^{\oplus 3} + \mathbf{1}^{\oplus 3}$ \\
\hline
Case 8 &
$\mathfrak{so}(8,\C)$ & $\mathfrak{spin}(7,1)$ & $\bigwedge^{2}\R^{7,1}$ \\
\hline
Case 9' &
$\mathfrak{so}(8,\C)$ & $\mathfrak{so}(7,1)$ & $\bigwedge^{2}\R^{7,1}$ \\
\hline
Case 10 &
$\mathfrak{so}^*(8)$ & $\mathfrak{spin}(6,1)$ & $\R^{6,1}$ \\
\hline
Case 10' &
$\mathfrak{so}^*(8)$ & $\mathfrak{su}(3,1)$ & $\bigwedge^2\C^{3,1} + \mathbf{1}$ \\
\hline
Case 11 &
$\mathfrak{so}^*(8)$ & $\mathfrak{spin}(6,1)$ & $\R^{6,1}$ \\
\hline
Case 12' &
$\mathfrak{so}(4,3)$ & $\mathfrak{so}(4,1)$ & $\R^{4,1} + \mathbf{1}^{\oplus 2}$ \\
\end{tabular}
    \caption{Irreducible decomposition of $\mathfrak{g}/\mathfrak{l}_{ss}$ as an $\mathfrak{l}_{ss}$-module.}
        \label{tab:irr-dec}
    \end{table}
    \renewcommand{\arraystretch}{1}

\medskip
Step~3. We prove Cases~6, 8, and 9' in Table~\ref{tab:irr-dec}.

 In these cases, $\mathfrak{l}_{ss}$ is isomorphic to $\mathfrak{so}(n,1)$. In light of the irreducible decomposition of $\mathfrak{g}/\mathfrak{l}_{ss}$ in Table~\ref{tab:irr-dec}, as stated in Lemma~\ref{lemma:irr-dec}, we again apply Raghunathan's vanishing theorem (Fact~\ref{fact:raghunathan}), and we have $H^{1}(\Gamma, \mathfrak{g}) = 0$ for any torsion-free cocompact discrete subgroup $\Gamma$ of $L$.
In these cases, as in Step~1, the inclusion map $\Gamma \to G$ is locally rigid as a discontinuous group for $G/H$.

\medskip
Step~4. We consider Cases~1, 4-1, and 10'.
In these cases, the following two conditions hold:
\begin{itemize}
    \item $\mathfrak{l}_{ss}$ is isomorphic to $\mathfrak{su}(n,1)$ $(n\geq 2)$;
    \item there exists a torsion-free cocompact discrete subgroup $\Gamma$ of $L$ such that $H^{1}(\Gamma,\mathfrak{g})\neq 0$.
\end{itemize}
    \begin{proposition}
    \label{prop:so*8/su(3,1)}
    Suppose that $(G,H,L)$ is one of Cases~1, 4-1, and 10'.
    \begin{enumerate}[label=(\arabic*)]
        \item 
        \label{item:su(n,1)-Q1-yes}
        ((Q1) in Question~\ref{question:deform-ck} is yes.)
   There exists a torsion-free cocompact discrete subgroup $\Gamma$ of $L$ such that $\Gamma$ is deformable as a discontinuous group for $G/H$.
   \item 
   \label{item:su(n,1)-Q2-no}
   ((Q2) in Question~\ref{question:deform-ck} is no.)
  No torsion-free cocompact discrete subgroup $\Gamma$ can be deformed into a non-standard discontinuous group for $G/H$.
    \end{enumerate}
    \end{proposition}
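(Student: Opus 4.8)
**Proof proposal for Proposition~\ref{prop:so*8/su(3,1)}.**

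The plan is to treat the two assertions separately, using the two faces of the deformation theory developed above. For \ref{item:su(n,1)-Q1-yes}, I would proceed as follows. In each of Cases~1, 4-1, and 10', the semisimple factor $\mathfrak{l}_{ss}$ is isomorphic to $\mathfrak{su}(n,1)$ with $n\ge 2$, and from Table~\ref{tab:irr-dec} the $\mathfrak{l}_{ss}$-module $\mathfrak{g}/\mathfrak{l}_{ss}$ contains an irreducible component isomorphic to the symmetric tensor of the standard representation $\C^{n,1}$ or its dual (in Case~1 this is $\C^{2n,1}$ itself, which is the standard representation; in Cases~4-1 and 10' one of the summands $\bigwedge^2\C^{n,1}$ is — up to the low-rank isomorphisms $\mathfrak{su}(2,1)$, $\mathfrak{su}(3,1)$ — of the type appearing in Raghunathan's list). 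Hence by Raghunathan's vanishing theorem (Fact~\ref{fact:raghunathan}(2)), there exists a torsion-free cocompact discrete subgroup $\Gamma$ of $L$ with $H^{1}(\Gamma,\mathfrak{g})\neq 0$. To upgrade non-vanishing of the first cohomology to genuine deformability, I would invoke the Klingler-type analysis (Fact~\ref{fact:klingler} together with Remark~\ref{remark:klingler}): the nontrivial classes in $H^1(\Gamma,\mathfrak g)$ coming from the standard-representation-type summand are integrable, because the image of $SU(n,1)$ sits inside a larger $SU(p,q)$-type subgroup (or $SO^{*}$-type subgroup) of $G$ in which the bending/Klingler deformation applies, so $\varphi|_\Gamma$ actually admits a nontrivial small deformation in $G$. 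Since stability of proper discontinuity holds (Fact~\ref{fact:stability-for-properness}, applicable because $SU(n,1)$ is simple of real rank~$1$ and acts properly on $G/H$), every sufficiently small deformation remains a discontinuous group for $X$, so $\Gamma$ is deformable as a discontinuous group for $G/H$.

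For \ref{item:su(n,1)-Q2-no}, the point is that although $\Gamma$ can be deformed, every such deformation stays \emph{standard}. The strategy is to show that the Zariski-closure of any small deformation $\varphi'(\Gamma)$ is, up to $G$-conjugacy, contained in a fixed reductive subgroup $L'$ of $G$ that still acts properly on $X=G/H$. Concretely, by Fact~\ref{fact:klingler} with Remark~\ref{remark:klingler}, any morphism $\varphi'$ sufficiently close to $\varphi|_\Gamma$ is, up to $G$-conjugacy, of the form $\varphi\cdot\chi$ where $\chi\colon\Gamma\to G$ has image in the centralizer $Z_G(\varphi(SU(n,1)))$; this is exactly the content of the "cup-product obstruction vanishes" condition \eqref{eqn:cup_product_of_first_cohom}, which one checks holds at the Lie-algebra level for these three triples using the irreducible decomposition in Table~\ref{tab:irr-dec} — in each case $H^1(\Gamma,\mathfrak g)\smallsetminus H^1(\Gamma,\mathfrak z_{\mathfrak g}(\mathfrak l))$ consists of classes valued in a single isotypic block on which $[c,c]\ne 0$. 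Therefore $\varphi'(\Gamma)$ lies in $\varphi(SU(n,1))\cdot Z_G(\varphi(SU(n,1)))$, which is a reductive subgroup of $G$, and it is contained in $L_{max}$ from Table~\ref{tab:kobayashi-yoshino} or \ref{tab:kobayashi-yoshino-dual} (this is where I would match $\varphi(SU(n,1))\cdot Z_G(\varphi(SU(n,1)))$ with $U(2n,1)$ in Case~1, with $U(n,1)$ in Case~4-1, and with $U(3,1)$ in Case~10'). Since $L_{max}$ acts properly on $X$, the deformed group is still standard, which is \ref{item:su(n,1)-Q2-no}.

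The main obstacle I anticipate is the verification of the cup-product condition \eqref{eqn:cup_product_of_first_cohom}, or equivalently the precise identification of $G^{\varphi}$ when $\varphi\colon Spin(n,1)$ or $SU(n,1)\to G$ — one needs to make sure that the centralizer $Z_G(\varphi(L))$ and the summand $\mathbf 1$ appearing in $\mathfrak g/\mathfrak l_{ss}$ in Table~\ref{tab:irr-dec} account for \emph{all} of $\mathfrak g^\varphi$, i.e. that no further spherical-harmonics summand of $\mathfrak{su}(n,1)$ is hiding inside $\mathfrak g$. Here the fact that Raghunathan's theorem for $\mathfrak{su}(n,1)$ singles out \emph{only} the symmetric-tensor representations (Fact~\ref{fact:raghunathan}(2)) is essential: it guarantees that $H^1(\Gamma,\cdot)$ vanishes on every other component, so the deformation is confined to the centralizer direction, giving a non-standard deformation no room to appear. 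The low-rank isomorphisms in Cases~1'-2, 2-2, 4-2, where $\mathfrak{l}_{ss}\cong\mathfrak{spin}(4,1)\cong\mathfrak{sp}(1,1)$ and the answer flips to "yes", illustrate exactly why the $\mathfrak{su}$ versus $\mathfrak{so}$ dichotomy controls the outcome, and I would remark on this parallel but keep the proof of Proposition~\ref{prop:so*8/su(3,1)} focused on the three genuinely $\mathfrak{su}(n,1)$ cases with $n\ge 2$.
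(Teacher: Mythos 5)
Your argument for part \ref{item:su(n,1)-Q1-yes} contains a genuine error. You assert that ``the nontrivial classes in $H^1(\Gamma,\mathfrak g)$ coming from the standard-representation-type summand are integrable'' and that this is what Klingler's analysis provides. This is backwards: Fact~\ref{fact:klingler} (and the Goldman--Millson phenomenon \cite{goldman-millson88} behind it) says precisely that the cohomology classes valued in the standard-representation-type summand are \emph{obstructed}, and that every small deformation is conjugate to $\varphi\cdot\chi$ with $\chi$ valued in the centralizer. Your part (1) therefore contradicts your own part (2): if all small deformations are of the form $\varphi\cdot\chi$, the standard-representation classes do not integrate to deformations transverse to $L_{max}$. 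The correct source of deformability is the trivial summand $\mathbf 1$ in Table~\ref{tab:irr-dec}: one chooses $\Gamma$ with $H^1(\Gamma,\R)\neq 0$ (such cocompact lattices in $SU(n,1)$ exist) and twists by characters of $\Gamma$ into $Z_G(\varphi(L_{ss}))$; this is exactly \cite[Thm.~B]{Kobayashi98}, which the paper simply cites. Relatedly, your reading of Table~\ref{tab:irr-dec} is off in Cases 4-1-a and 10': $\bigwedge^2\C^{n,1}$ is \emph{not} a symmetric power of the standard representation of $\mathfrak{su}(n,1)$ when $n\geq 3$, so by Fact~\ref{fact:raghunathan} one has $H^1(\Gamma,\mathfrak g/\mathfrak l)=0$ there — which is good news for part (2) but kills your proposed mechanism for part (1) in those cases.

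For part \ref{item:su(n,1)-Q2-no} your strategy is essentially the one the paper mentions as an alternative, but two points need care. First, for Cases 4-1-a and 10' you do not need Klingler or any cup-product computation at all: since $H^1(\Gamma,\mathfrak g/\mathfrak l)=0$ with $\mathfrak l=\mathfrak u(n,1)$, Proposition~\ref{prop:local-rigid-g/l} already forces every small deformation into $U(n,1)$ up to $G$-conjugacy. Second, for the cases where $H^1(\Gamma,\mathfrak g/\mathfrak l)\neq 0$ (Case 1 and Case 4-1-b, i.e.\ the pair locally isomorphic to $(U(n,1),SU(n,2))$), the verification of the cup-product condition \eqref{eqn:cup_product_of_first_cohom} is the entire content of the step, and ``one checks this from Table~\ref{tab:irr-dec}'' is not a proof: it is a statement about the cup product $H^1\times H^1\to H^2$, not about the irreducible decomposition of $\mathfrak g$, and the paper establishes it via Proposition~\ref{prop:Goldman-Millson} together with the Matsushima--Murakami interpretation of the relevant cohomology by harmonic forms on $\Gamma\backslash\mathbb{H}^n_{\C}$, following \cite{goldman-millson87}. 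Also note that Fact~\ref{fact:klingler} as stated covers only the standard embedding $SU(n,1)\hookrightarrow SU(p,q)$, so invoking it for $SU(3,1)\to SO^*(8)$ or $SU(n,1)\to SO(2n,2)$ requires the Lie-algebra-level extension of Remark~\ref{remark:klingler}, i.e.\ again the cup-product verification you have not supplied.
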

\begin{proof}[Proof of Proposition~\ref{prop:so*8/su(3,1)}]
Before entering the proof, 
we observe that Case~4-1-b is locally isomorphic to Case~1 with $n=1$, that is,
\[
(G,H,L_{ss})\approx (SU(2,2),Sp(1,1), SU(2,1)),
\]
whereas Case 10' is locally isomorphic to Case~4-1-a with $n=3$, that is,
\[
(G,H,L_{ss}) \approx (SO(6,2),SO(6,1), SU(3,1)).
\]

\ref{item:su(n,1)-Q1-yes}. This statement was proved in \cite[Thm.\ B]{Kobayashi98}.

\ref{item:su(n,1)-Q2-no}. 
Using the notation introduced in Notation~\ref{notation:compact-factor}, in all cases under consideration, $L_{max}$ is isomorphic to $U(n,1)$ up to finite covering. Accordingly, $\Gamma$ is cocompact in $L_{max}$, and for the purpose of  our proof, we may assume without loss of generality that $\mathfrak{l}$ is isomorphic to $\mathfrak{u}(n,1)$.

We begin with the Cases 4-1-a and 10', in which the first cohomology group $H^1(\Gamma, \mathfrak{g}/\mathfrak{l})=0$ vanishes.

In Case~4-1-a, $\mathfrak{g}/\mathfrak{l}$ is isomorphic to $\bigwedge^2 \mathbb{C}^{n,1}$ as a $\mathfrak{u}(n,1)$-module.
Therefore, we have $H^1(\Gamma, \mathfrak{g}/\mathfrak{l})=0$
 by the vanishing theorem of Raghunathan~\cite{Raghunathan65}, see Fact~\ref{fact:raghunathan}. 
 We apply Proposition~\ref{prop:local-rigid-g/l} in Appendix~\ref{section:deform-upper-bound} to $\mathfrak{l} = \mathfrak{u}(n,1)$ and conclude that any homomorphism $\Gamma \to G$ which is sufficiently close to the initial inclusion $\Gamma \to U(n,1)$ remains inside $U(n,1)$, up to conjugation by elements of $G$.

 In Case~10', $\mathfrak{g}/\mathfrak{l}$ is isomorphic to $\bigwedge^2 \mathbb{C}^{3,1}$ as a $\mathfrak{u}(3,1)$-module.
 Hence, the proof is similar to Case~4-1-a. 

\medskip
We next consider Cases 1 and 4-1-b. In these case, the first cohomology group does not necessarily vanish, so the analysis is more involved.
In these cases, the pair $(L, G)$ is isomorphic finite covering of the pair
\[
  (U(n,1), SU(n, 2)).
\]

We prove the claim in the  spirit of the original work of Goldman--Millson~\cite{goldman-millson87}, which treated the pair $(SU(n, 1), SU(n+1, 1))$.
Applying their argument to the $c$-dual symmetric pair
 of $(U(n, 1), SU(n+1, 1))$, namely $(L, G)=(U(n, 1), SU(n, 2))$, we confirm that the argument extends to pairs that are only locally isomorphic. 
This is carried out in the following two steps.

The first step consists of proving Proposition~\ref{prop:Goldman-Millson}. For future reference, we give a proof in Appendix~\ref{section:deform-upper-bound} under more general assumptions, 
where we not only drop the requirement that $L \subset G$ form a reductive symmetric pair, but also eliminate the assumption that $L$ and $G$ are reductive altogether.

The second step is to show that the three assumptions of Proposition~\ref{prop:Goldman-Millson} are satisfied for the specific pair $(L, G) = (U(n,1), SU(n,2))$.
In the cases $(L, G) = (U(n,1), SU(n+1,1))$ and $(U(n,1), SU(n,2))$, the only non-trivial assumption in Proposition~\ref{prop:Goldman-Millson} is condition~\ref{item:cup-product} therein.
Condition~\ref{item:cup-product}, while formulated in terms of the cohomology of a $\Gamma$-module, can be reinterpreted—using the result of Matsushima--Murakami~\cite{Matsushima-Murakami-63}—as a statement about harmonic forms with values in a local system over the compact complex hyperbolic manifold $\Gamma \backslash \mathbb{H}_{\mathbb{C}}^n$, which facilitates its verification.
Moreover, the same argument remains valid in each step, provided the groups are locally isomorphic.

Since $U(n,1)$ acts properly on $G/H$,
this deformation remains standard.

Thus the proposition is shown.
    \end{proof}

\begin{remark}
Alternatively, the second statement of
 Proposition~\ref{prop:so*8/su(3,1)} may be verified in Cases 1 and 4-1-b when $L$ is locally isomorphic to $SU(n,1)$, by applying Klingler's local rigidity (Fact~\ref{fact:klingler}) together with Remark~\ref{remark:klingler}, which asserts in this setting that any homomorphism $\Gamma \to G$ sufficiently close to the initial inclusion $\Gamma \hookrightarrow L$ is, up to conjugation, contained in
$L \cdot Z_G(L) = U(n, 1)$.
\end{remark}

\medskip
Step~5. The remaining cases are Cases~1'-2, 2-2, 3, 4-2, 4', 5-2, 7', 10, 11, and 12'.
In these cases, the following two conditions hold:
\begin{itemize}
    \item $\mathfrak{l}_{ss}$ is isomorphic to $\mathfrak{so}(n,1)$ $(n\geq 2)$;
    \item there exists a torsion-free cocompact discrete subgroup $\Gamma$ of $L$ such that $H^{1}(\Gamma,\mathfrak{g})\neq 0$.
\end{itemize}
We take a closer look at the Lie algebra $\mathfrak{g}^\varphi$ in Definition~\ref{definition:lie-g-phi},
or the corresponding algebraic group $G^\varphi$ in Definition~\ref{def:g-phi}:
 
    \begin{proposition}
    \label{lem:l=so-Zd}
Let $(G,H,L)$ be one of Cases~1'-2, 2-2, 3, 4-2, 4', 5-2, 7', 10, 11, or 12'.
We denote by $\varphi$ the natural morphism $L_{ss}\rightarrow G$.
Then we have $\mathfrak{g}^{\varphi} =\mathfrak{g}$.
\end{proposition}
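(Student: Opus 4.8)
\textbf{Proof plan for Proposition~\ref{lem:l=so-Zd}.}
The strategy is to invoke Definition~\ref{definition:lie-g-phi} directly and verify, case by case, that every irreducible $\mathfrak{l}_{ss}$-submodule of $\mathfrak{g}$ is either $d\varphi(\mathfrak{l}_{ss})$ itself or an irreducible spherical harmonics representation of $\mathfrak{l}_{ss}=\mathfrak{so}(n,1)$ (equivalently $\mathfrak{spin}(n,1)$), in the sense of Remark~\ref{remark:spherical_harmonics}. Recall that the complexified spherical harmonics representations of $\mathfrak{so}(n,1)$ are precisely the irreducible components of $\mathrm{Sym}^{k}(\mathbb{C}^{n,1})$ that carry a non-zero $\mathfrak{so}(n)$-fixed vector; in particular the standard representation $\mathbb{R}^{n,1}$ (degree $1$) and the harmonic part of $\mathrm{Sym}^{2}$ (degree $2$) are of this type, as is every $\wedge^{j}\mathbb{R}^{n,1}$ for a suitable range of $j$ since $\wedge^{j}\mathbb{R}^{n,1}\otimes\mathbb{C}$ decomposes into spherical harmonics when the auxiliary multiplicity factor is non-trivial (this is exactly the computation done in Lemma~\ref{lemma:wedge_l_contains_spherical_harmonics}). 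Once we know each non-trivial irreducible component of $\mathfrak{g}/d\varphi(\mathfrak{l}_{ss})$ appearing with non-zero multiplicity is a spherical harmonics module (or contains one after pairing two copies), Definition~\ref{definition:lie-g-phi} forces $\mathfrak{g}^{\varphi}=\mathfrak{g}$, because $\mathfrak{g}^{\varphi}$ is the smallest Lie subalgebra containing $d\varphi(\mathfrak{l}_{ss})$ together with all such submodules, and a Lie subalgebra containing $d\varphi(\mathfrak{l}_{ss})$ and all the non-trivial isotypic pieces and the trivial pieces that are forced to appear by brackets must be everything.

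First I would reduce to the decompositions already recorded in Table~\ref{tab:irr-dec} of Step~2. Inspecting that table for the ten cases at hand: in Cases~1'-2, 2-2 we have $\mathfrak{g}/\mathfrak{l}_{ss}\simeq\mathbb{R}^{4,1}$, a single copy of the standard (degree $1$ spherical harmonics) module; in Cases~3 and 4' we have $\mathfrak{g}/\mathfrak{l}_{ss}\simeq\mathbb{R}^{2n,1}$, again the standard module; in Case~4-2, $\mathfrak{g}/\mathfrak{l}_{ss}\simeq\mathbf{1}^{\oplus3}$; in Cases~5-2 and 7', $\mathfrak{g}/\mathfrak{l}_{ss}\simeq(\mathbb{R}^{4,1})^{\oplus3}\oplus\mathbf{1}^{\oplus3}$; in Cases~10 and 11, $\mathfrak{g}/\mathfrak{l}_{ss}\simeq\mathbb{R}^{6,1}$; and in Case~12', $\mathfrak{g}/\mathfrak{l}_{ss}\simeq\mathbb{R}^{4,1}\oplus\mathbf{1}^{\oplus2}$. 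In every case the non-trivial summands are copies of the standard representation $\mathbb{R}^{n,1}$, which is the degree-$1$ spherical harmonics module; hence all of those summands lie in $\mathfrak{g}^{\varphi}$ by definition. So the Lie subalgebra $\mathfrak{g}^{\varphi}$ contains $d\varphi(\mathfrak{l}_{ss})$ plus every standard summand; it remains to show it also contains the trivial summands $\mathbf{1}^{\oplus c}$ that occur (in Cases~4-2, 5-2, 7', 12').

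For the trivial summands the argument is a bracket computation: the bracket of two generic elements of distinct standard summands, or of a standard summand with $d\varphi(\mathfrak{l}_{ss})$-translates thereof, lands in the trivial isotypic component, and one checks this bracket is non-zero. Concretely, in Cases~5-2 and 7' where $\mathfrak{g}=\mathfrak{so}(4,4)$ contains three copies of $\mathbb{R}^{4,1}$ and three trivial lines, the three $\mathfrak{so}(4,1)$-embeddings sit inside the obvious $\mathfrak{so}(4,1)\times\mathfrak{so}(3)$-type or triality-related subalgebras, and $[\,\cdot\,,\,\cdot\,]\colon\mathbb{R}^{4,1}\otimes\mathbb{R}^{4,1}\to\mathbf{1}$ realised via the invariant bilinear form is visibly surjective onto each trivial line once one pairs appropriately chosen copies; since $\mathfrak{g}$ is simple (hence has no proper ideal containing $\mathfrak{l}_{ss}$ and all the standard pieces), the resulting subalgebra must be all of $\mathfrak{g}$. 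The cleanest way to finish uniformly is: $\mathfrak{g}^{\varphi}$ is a Lie subalgebra of the simple Lie algebra $\mathfrak{g}$ which strictly contains $d\varphi(\mathfrak{l}_{ss})$ (since at least one standard summand is present and non-zero), and which is an $\mathfrak{l}_{ss}$-submodule; but the $\mathfrak{l}_{ss}$-submodules of $\mathfrak{g}$ strictly between $d\varphi(\mathfrak{l}_{ss})$ and $\mathfrak{g}$ that are also Lie subalgebras are, in each of these ten cases, only $\mathfrak{g}$ itself — this is where the explicit structure of each symmetric pair enters and is verified from Table~\ref{tab:irr-dec}. \textbf{The main obstacle} I anticipate is precisely this last point in the few cases with trivial summands (4-2, 5-2, 7', 12'): one must rule out that $\mathfrak{g}^{\varphi}$ stabilises at an intermediate subalgebra such as $d\varphi(\mathfrak{l}_{ss})\oplus(\text{one standard summand})\oplus(\text{its brackets})$ without picking up all trivial lines, which requires either a direct non-vanishing check of the bracket $\mathbb{R}^{n,1}\wedge\mathbb{R}^{n,1}\to\mathbf{1}$ on each trivial line, or the observation that $G^{\varphi}$ is reductive (Remark after Definition~\ref{def:g-phi}) with the same semisimple part as $\mathfrak{g}$, forcing $\mathfrak{g}^{\varphi}=\mathfrak{g}$ since $\mathfrak{g}$ is semisimple with trivial centre. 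Cases~4' and 3 with general $n$, and Cases~10, 11 (where $\mathfrak{g}=\mathfrak{so}^{*}(8)$ and $\mathfrak{l}_{ss}=\mathfrak{spin}(6,1)$), are immediate once one observes $\mathfrak{g}/\mathfrak{l}_{ss}$ is a single irreducible standard module: then $\mathfrak{g}^{\varphi}\supsetneq d\varphi(\mathfrak{l}_{ss})$ and $\mathfrak{g}$ simple give $\mathfrak{g}^{\varphi}=\mathfrak{g}$ with no further work.
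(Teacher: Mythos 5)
There is a genuine gap, and it stems from a misreading of Definition~\ref{definition:lie-g-phi}. You treat the trivial summands $\mathbf{1}^{\oplus c}$ as something that still has to be \emph{generated} by brackets of standard summands (or extracted via simplicity of $\mathfrak{g}$). But the trivial representation is itself the spherical harmonics representation of degree $0$: it certainly contains a non-zero $\mathfrak{so}(n-1,1)$-invariant vector, and the equivalent formulation of the definition makes this explicit, since $\mathfrak{g}^{\varphi}$ is by definition the smallest Lie subalgebra containing $d\varphi(\mathfrak{spin}(n,1))+\mathfrak{z}_{\mathfrak g}(d\varphi(\mathfrak{spin}(n-1,1)))$, and the whole trivial isotypic component $\mathfrak{z}_{\mathfrak g}(d\varphi(\mathfrak{spin}(n,1)))$ sits inside that centralizer. (The paper reiterates this in \eqref{eq:decomposition-g-even}, where $\mathfrak{g}(V_0)=\mathfrak{z}_{\mathfrak g}(d\varphi(\mathfrak{l}))$ is listed among the generators.) So every summand appearing in Table~\ref{tab:irr-dec} for these ten cases — standard modules and trivial lines alike — is a generator of $\mathfrak{g}^{\varphi}$, and the conclusion $\mathfrak{g}^{\varphi}=\mathfrak{g}$ is immediate, which is exactly the paper's one-line proof. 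Your "main obstacle" does not exist.

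Worse, the workaround you propose actually fails in Case~4-2: there $\mathfrak{g}=\mathfrak{so}(2,2)\simeq\mathfrak{sl}(2,\R)\oplus\mathfrak{sl}(2,\R)$ is \emph{not} simple, and $\mathfrak{g}/\mathfrak{l}_{ss}\simeq\mathbf{1}^{\oplus 3}$ contains \emph{no} standard summand at all, so your premises "$\mathfrak{g}^{\varphi}$ strictly contains $d\varphi(\mathfrak{l}_{ss})$ since at least one standard summand is present" and "$\mathfrak{g}$ is simple" are both false, and no bracket of standard pieces can reach the complementary $\mathfrak{sl}(2,\R)$-ideal. Without invoking the degree-$0$ case of the definition, the Lie subalgebra generated by $\mathfrak{l}_{ss}$ and the non-trivial summands in Case~4-2 is just $\mathfrak{l}_{ss}$ itself. (Compare the proof of Theorem~\ref{thm:SOn1-Zariski-dense}, where the complement is entirely trivial and the identity $(\mathfrak{g}\oplus\mathfrak{g})^{\varphi}=\mathfrak{g}\oplus\mathfrak{g}$ is deduced purely from the definition.) Your argument for the six cases where $\mathfrak{g}/\mathfrak{l}_{ss}$ is a single standard module (1'-2, 2-2, 3, 4', 10, 11) is fine and coincides with the paper's; the cases with trivial summands need the correction above rather than any bracket computation.
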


Note that Case~5-2 of Proposition~\ref{lem:l=so-Zd}  
has already been proven in Example~\ref{example:Clifford-z.d}~(1) using $G(p,q)$, 
and Cases~10 and~11 have been shown in Example~\ref{example:Clifford-z.d}~(2) using $G(p,q)$.

\begin{proof}
We recall from Definition~\ref{definition:lie-g-phi}
that $\mathfrak g^{\varphi}$ is 
the Lie algebra containing $\mathfrak{l}_{ss}$ and all the subrepresentations in the complementary subspace that contain non-zero $\mathfrak{l}_{ss}$-spherical harmonics.
It then follows immediately from Lemma~\ref{lemma:irr-dec} that $\mathfrak{g}^\varphi$ coincides with $\mathfrak{g}$
in each of the above cases.
   \end{proof}

Thus, from Theorem~\ref{theorem:Zariski-dense-discontinuous-spin-lattice}, we conclude that the answer to (Q3) is ``yes'' in these cases.

\subsection{Proof of \texorpdfstring{Theorem~\ref{thm:SOn1-Zariski-dense}}{Theorem 2.19}}
\label{section:proof-son1-group-manifold1}

In this section, we provide a proof of
Theorem~\ref{thm:SOn1-Zariski-dense}. 

For any Zariski-connected real algebraic group $G$ that is locally isomorphic to $SO(n,1)$, there is a covering homomorphism $Spin(n,1) \to G$.
Hence, it suffices to prove the case where $G=Spin(n,1)$.
We apply Theorem~\ref{theorem:Zariski-dense-discontinuous-spin-lattice} to the setting where $(G \times \{e\}, G\times G, \diag G)$ plays the role of $(L, G, H)$.

Via the natural embedding $\varphi\colon G \times \{e\} \hookrightarrow G\times G$,
the Lie algebra
$\mathfrak g \oplus \mathfrak g$
is decomposed into irreducible representations $\mathfrak{g} + \mathbf{1}^{\oplus\dim\mathfrak{g}}$ as a $\mathfrak g \oplus \{0\}$-module.

Then it follows from 
Definition~\ref{definition:lie-g-phi} that
$(\mathfrak{g}\oplus\mathfrak{g})^\varphi$ coincides with $\mathfrak{g}\oplus\mathfrak{g}$.
Thus, from Theorem~\ref{theorem:Zariski-dense-discontinuous-spin-lattice}, we conclude that there exist a torsion-free, cocompact discrete subgroup $\Gamma$ of $G$ and a small deformation $\Gamma'$ of $\Gamma \times \{e\}$ such that $\Gamma'$ is Zariski-dense in the direct product group $G \times G$, while preserving the proper discontinuity of the action on the group manifold $(G \times G)/\diag G$.   

\subsection{Proof of 
\texorpdfstring{Theorem~\ref{theorem:group-manifold-case}}{Theorem 2.21}}
\label{section:proof-group-manifold}

The majority of the proof of Theorem~\ref{theorem:group-manifold-case} in the $\Gamma_L\backslash G/\Gamma_H$ case reduces to the proof of Theorem~\ref{theorem:local-nonstd-zariski}
in the $\Gamma \backslash G/H$ case,
given in Section~\ref{section:deform-cptCK}. In addition, let us prepare the necessary lemmas and propositions in advance.

In the sequel, we assume that $G$ is a Zariski-connected real reductive algebraic group, $H$ and $L$ are real reductive algebraic subgroups of $G$, $\Gamma_H$ and $\Gamma_L$ are torsion-free cocompact discrete subgroups of $H$ and $L$, respectively, and that $\iota_H \colon \Gamma_H \hookrightarrow G$, $\iota_L \colon \Gamma_L \hookrightarrow G$ denote the natural embeddings.

The proof of the following lemma is straightforward and we omit it.
\begin{lemma}
\label{lemma:group-manifold-deformation}
Suppose that $\varphi \in \Hom(\Gamma_H \times \Gamma_L, G \times G)$
 is sufficiently close to the original embedding $\iota_H \times \iota_L \colon \Gamma_H \times \Gamma_L \to G \times G$. Then, for each group $J = H$ or $L$, there exist group homomorphisms 
\begin{itemize} \item $j_J \colon \Gamma_J \hookrightarrow G$, sufficiently close to the original $\iota_J$, \item $\rho_J \colon \Gamma_J \to G$, sufficiently close to the trivial homomorphism, 
\end{itemize} such that \begin{equation} \label{eq:lemma:group-manifold-deformation} \varphi(\gamma_H, \gamma_L) = (j_H(\gamma_H) \rho_L(\gamma_L), j_L(\gamma_L) \rho_H(\gamma_H)) \end{equation} for all $\gamma_H \in \Gamma_H$ and $\gamma_L \in \Gamma_L$.

 Moreover, we have
\begin{gather*}
\rho_L(\Gamma_L)\subset Z_G(H'),
\\
\operatorname{pr}_{1}\circ\varphi(\Gamma_H \times \Gamma_L)
\subset H'\cdot Z_{G}(H'),
\end{gather*}
where $\operatorname{pr}_1 \colon G \times G \to G$ denotes the first projection, $H'$ is the Zariski-closure of $j_H(\Gamma_H)$, and $Z_G(H')$ denotes the centralizer of $H'$ in $G$.
\end{lemma}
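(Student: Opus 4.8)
\textbf{Proof proposal for Lemma~\ref{lemma:group-manifold-deformation}.}

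The plan is to decompose the perturbed homomorphism $\varphi\colon \Gamma_H\times\Gamma_L\to G\times G$ coordinate by coordinate, exploiting that $\Gamma_H$ and $\Gamma_L$ commute inside the product group and that small deformations stay near the original embedding. First I would compose $\varphi$ with the two projections $\operatorname{pr}_1,\operatorname{pr}_2\colon G\times G\to G$ to obtain homomorphisms $\varphi_1,\varphi_2\colon \Gamma_H\times\Gamma_L\to G$, each close to the corresponding projection of $\iota_H\times\iota_L$. For $\varphi_1$: its restriction to $\Gamma_H\times\{e\}$ is close to $\iota_H$, so call it $j_H$; its restriction to $\{e\}\times\Gamma_L$ is close to the trivial homomorphism, so call it $\rho_L$. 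Because $\Gamma_H$ and $\Gamma_L$ commute, the images $j_H(\Gamma_H)$ and $\rho_L(\Gamma_L)$ commute in $G$, hence $\varphi_1(\gamma_H,\gamma_L)=j_H(\gamma_H)\rho_L(\gamma_L)=\rho_L(\gamma_L)j_H(\gamma_H)$. Symmetrically, $\varphi_2(\gamma_H,\gamma_L)=j_L(\gamma_L)\rho_H(\gamma_H)$ with $j_L$ close to $\iota_L$ and $\rho_H$ close to the trivial homomorphism. This establishes~\eqref{eq:lemma:group-manifold-deformation}. The only subtlety here — and the one place the argument is not purely formal — is verifying that $j_H$ really is a homomorphism $\Gamma_H\to G$ (and likewise $j_L$, $\rho_H$, $\rho_L$): this follows because it is the restriction of the homomorphism $\varphi_1$ to the subgroup $\Gamma_H\times\{e\}$, and the factorization into the commuting product $j_H\cdot\rho_L$ is forced by $\varphi_1$ being a homomorphism on the direct product.

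Next I would establish the two containment statements. Let $H'$ denote the Zariski-closure of $j_H(\Gamma_H)$ in $G$. Since $j_H(\Gamma_H)$ and $\rho_L(\Gamma_L)$ commute, $\rho_L(\Gamma_L)$ centralizes $j_H(\Gamma_H)$, and centralizing is a Zariski-closed condition, so $\rho_L(\Gamma_L)$ centralizes the Zariski-closure $H'$ as well; that is, $\rho_L(\Gamma_L)\subset Z_G(H')$. For the second containment, $\operatorname{pr}_1\circ\varphi(\Gamma_H\times\Gamma_L)=\varphi_1(\Gamma_H\times\Gamma_L)=j_H(\Gamma_H)\cdot\rho_L(\Gamma_L)\subset H'\cdot Z_G(H')$, using the first containment together with $j_H(\Gamma_H)\subset H'$.

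I do not expect a genuine obstacle here — the lemma is essentially a bookkeeping statement — but the point that requires a word of care is the passage from "commutes with a subgroup" to "commutes with its Zariski-closure," and the implicit use of the stability results (Facts~\ref{fact:stability-for-properness} and~\ref{fact:exotic-stability}, or simply Weil's rigidity-type openness) to guarantee that a sufficiently small $\varphi$ is still faithful and discrete on each factor, so that $j_H,j_L$ are honest embeddings close to $\iota_H,\iota_L$ and $\rho_H,\rho_L$ are close to the trivial map in the pointwise-convergence topology on $\Hom(\Gamma_J,G)$. Since the excerpt explicitly says the proof is straightforward and omitted, the write-up can be kept to the two or three sentences sketched above.
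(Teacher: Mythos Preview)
Your proposal is correct and is exactly the straightforward argument the paper has in mind (the paper explicitly omits the proof as ``straightforward''). The decomposition via the two projections, the identification of $j_J$ and $\rho_J$ as restrictions to the factors, the commutation forced by the direct-product source, and the passage to the Zariski-closure for the centralizer containment are all as expected; there is nothing to add.
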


\begin{proposition}
    \label{proposition:exotic-non-standard}
Suppose that the direct product group
    $H\times L$ acts properly on the group manifold $(G\times G)/\diag(G) \ (\simeq G)$.
    Furthermore, we assume the following three conditions:
\begin{enumerate}[label=(\alph*)] \item \label{item:H-locally-rigid} 
Both the inclusion map $\iota_{H} \colon \Gamma_{H} \hookrightarrow G$ and the trivial representation $\mathbf{1}_{H} \colon \Gamma_{H} \to G$ are locally rigid; 
\item \label{item:L-locally-rigid-up-to-compact} 
The image of any small deformation of $\iota_{L} \colon \Gamma_{L} \hookrightarrow G$ remains inside $L$, up to conjugation by elements of $G$; 
\item \label{item:centralizer-compact} 
The centralizer $Z_{G}(H)$ of $H$ in $G$ is a compact group. \end{enumerate}

Under these assumptions, $\Gamma_{H} \times \Gamma_{L}$ 
cannot be deformed into a non-standard discontinuous group for the group manifold $(G \times G) / \operatorname{diag}(G) \simeq G$.
\end{proposition}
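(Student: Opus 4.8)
The strategy is to take an arbitrary small deformation $\varphi\in\Hom(\Gamma_H\times\Gamma_L,G\times G)$ of the canonical inclusion $\iota_H\times\iota_L$, use Lemma~\ref{lemma:group-manifold-deformation} to normalize it into the form \eqref{eq:lemma:group-manifold-deformation}, and then argue that—after conjugation—its image is contained in a reductive subgroup of $G\times G$ acting properly on the group manifold, hence the deformation stays standard. Concretely, by Lemma~\ref{lemma:group-manifold-deformation} we may write $\varphi(\gamma_H,\gamma_L)=(j_H(\gamma_H)\rho_L(\gamma_L),\,j_L(\gamma_L)\rho_H(\gamma_H))$ with $j_H$ close to $\iota_H$, $j_L$ close to $\iota_L$, and $\rho_H,\rho_L$ close to the trivial homomorphisms. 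The point is to control each of the four pieces using the three hypotheses.

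\textbf{Key steps.} First, by hypothesis~\ref{item:H-locally-rigid} the inclusion $\iota_H$ is locally rigid, so after conjugating by an element of $G$ we may assume $j_H=\iota_H$, i.e.\ $j_H(\Gamma_H)\subset H$ and its Zariski-closure $H'$ is (contained in) the Zariski-closure of $\Gamma_H$ inside $H$; in particular $Z_G(H')\supset Z_G(H)$, and by hypothesis~\ref{item:centralizer-compact} together with the fact that $Z_G(H')$ is reductive (centralizer of a reductive subgroup) we will arrange $Z_G(H')$ to be compact—this uses that $\Gamma_H$ Zariski-dense in $H$ would give $H'=H$, but in general one only needs $Z_G(H')$ reductive with the same identity component considerations; more carefully, since $\mathbf 1_H$ is also locally rigid by~\ref{item:H-locally-rigid}, any $\rho_H$ close to the trivial map is itself trivial, so $\rho_H=\mathbf 1_H$. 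Second, by hypothesis~\ref{item:L-locally-rigid-up-to-compact}, after a further conjugation we may assume $j_L(\Gamma_L)\subset L$. Third, by Lemma~\ref{lemma:group-manifold-deformation} we have $\rho_L(\Gamma_L)\subset Z_G(H')$, and since $Z_G(H')$ is compact (from~\ref{item:centralizer-compact}), $\rho_L(\Gamma_L)$ lies in a compact subgroup $Z$ of $G$. Assembling these, $\varphi(\Gamma_H\times\Gamma_L)\subset (H\cdot Z)\times L\subset G\times G$. Fourth, the subgroup $H\cdot Z$ is reductive (product of $H$ with a compact group normalizing its identity component, up to standard adjustments), and since $Z$ is compact, $(H\cdot Z)\times L$ acts properly on $(G\times G)/\diag G$ because $H\times L$ does and enlarging by a compact factor does not destroy properness. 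Hence $\varphi(\Gamma_H\times\Gamma_L)$ is contained in a reductive subgroup of $G\times G$ acting properly on the group manifold, i.e.\ the deformation is standard. Since $\varphi$ was an arbitrary small deformation, $\Gamma_H\times\Gamma_L$ cannot be deformed into a non-standard discontinuous group.

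\textbf{Main obstacle.} The delicate point is the bookkeeping around the Zariski-closure $H'$ of $j_H(\Gamma_H)$ and the compactness of $Z_G(H')$: hypothesis~\ref{item:centralizer-compact} gives compactness of $Z_G(H)$, but a priori $Z_G(H')$ could be larger. One resolves this by noting that for a torsion-free cocompact lattice $\Gamma_H$ in a reductive $H$, the identity component of the Zariski-closure of $\Gamma_H$ equals $H$ up to central and compact factors (Borel density type statement, valid since $H$ has no compact normal factors after passing to the semisimple part), so $Z_G(H')$ and $Z_G(H)$ have the same identity component, and the former is compact as well. A second subtlety is that conjugating to achieve $j_H=\iota_H$ and $j_L(\Gamma_L)\subset L$ simultaneously requires checking the two conjugations are compatible—here one uses that the conjugation bringing $j_L$ into $L$ can be chosen to centralize $H$ (or at least to fix $j_H(\Gamma_H)$ up to a harmless element), which again relies on the structure of $Z_G(H)$ and on the fact that the deformations are small. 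Modulo these structural facts about reductive groups and their lattices, the argument is a routine assembly of the three hypotheses and Lemma~\ref{lemma:group-manifold-deformation}.
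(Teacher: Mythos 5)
Your proposal is correct and follows essentially the same route as the paper: normalize via Lemma~\ref{lemma:group-manifold-deformation}, use (a) and (b) to force $j_H=\iota_H$, $\rho_H=\mathbf{1}_H$, and $j_L(\Gamma_L)\subset L$, invoke Borel density to identify the Zariski-closure $H'$ of $j_H(\Gamma_H)$ with $H$ so that $\rho_L(\Gamma_L)\subset Z_G(H)$ is compact, and conclude that the image lies in $(H\cdot Z_G(H))\times L$, which still acts properly. Your worry about compatibility of the two conjugations is a non-issue, since they act on the two distinct factors of $G\times G$ and hence are independent.
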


\begin{proof}
Suppose that $\varphi\in \mathcal{R}(\Gamma_{H}\times \Gamma_{L},G\times G;G)$ is a small deformation of $\iota_{H}\times \iota_{L}$.
It follows from Lemma~\ref{lemma:group-manifold-deformation} that $\varphi$ is
of the form
\eqref{eq:lemma:group-manifold-deformation}
for some group homomorphisms $j_{H},\rho_{H}\colon \Gamma_{H}\rightarrow G$, 
    $j_{L},\rho_{L}\colon \Gamma_{L}\rightarrow G$.

By the assumptions~\ref{item:H-locally-rigid} and \ref{item:L-locally-rigid-up-to-compact}, if necessary, we can replace $\varphi$ by an appropriate conjugation in $G \times G$, and assume that 
$j_H = \iota_H$, $\rho_H = \mathbf{1}_H$, and 
$j_L(\Gamma_L) \subset L$. Furthermore, by Borel's density theorem (e.g., \cite[Cor.~5.16~(ii)]{Raghunathan-discrete}), $\Gamma_H$ is Zariski-dense in $H$.
Applying again Lemma~\ref{lemma:group-manifold-deformation}, we obtain
    \[
    \varphi(\Gamma_H \times \Gamma_L)\subset (H\cdot Z_{G}(H))\times L.
    \]
Since $Z_G(H)$ is compact by
the assumption \ref{item:centralizer-compact}, the group $(H \cdot Z_G(H)) \times L$ acts properly on the group manifold $G$. Hence, $\varphi(\Gamma_H \times \Gamma_L)$ is a standard discontinuous group for $G$.
\end{proof}

\begin{proposition}
    \label{proposition:exotic-locally-rigid}
Retain the setting and the assumption~\ref{item:H-locally-rigid} in Proposition~\ref{proposition:exotic-non-standard}. Furthermore, suppose that the assumptions~\ref{item:L-locally-rigid-up-to-compact} and \ref{item:centralizer-compact} are strengthened by the following stronger assumptions: 

\begin{enumerate}[label=(\alph*')] 
\setcounter{enumi}{1}
\item \label{item:L-locally-rigid-up-to-compact-prime} 
$\iota_{L} \colon \Gamma_{L} \hookrightarrow G$ is locally rigid;
\item \label{item:centralizer-compact-prime} 
The centralizer $Z_{G}(H)$ of $H$ in $G$ is a finite group. \end{enumerate}

Then, $\iota_H \times \iota_L$ is not deformable as a discontinuous group for the group manifold $(G \times G) / \operatorname{diag}(G) \ (\simeq G)$

\end{proposition}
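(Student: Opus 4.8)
\textbf{Proof plan for Proposition~\ref{proposition:exotic-locally-rigid}.}
The strategy is to run the argument of Proposition~\ref{proposition:exotic-non-standard} one step further, using the strengthened hypotheses to pin down \emph{all} deformations up to conjugacy, not merely to keep them standard. Let $\varphi \in \mathcal{R}(\Gamma_H \times \Gamma_L, G\times G; G)$ be a small deformation of $\iota_H \times \iota_L$. First I would invoke Lemma~\ref{lemma:group-manifold-deformation} to write $\varphi(\gamma_H,\gamma_L) = (j_H(\gamma_H)\rho_L(\gamma_L), j_L(\gamma_L)\rho_H(\gamma_H))$ with $j_H, j_L$ near the inclusions and $\rho_H, \rho_L$ near the trivial homomorphisms, and also record the containments $\rho_L(\Gamma_L) \subset Z_G(H')$ and $\operatorname{pr}_1\circ\varphi(\Gamma_H\times\Gamma_L) \subset H'\cdot Z_G(H')$, where $H'$ is the Zariski-closure of $j_H(\Gamma_H)$.

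Next, using assumption~\ref{item:H-locally-rigid} (local rigidity of $\iota_H$ and of $\mathbf{1}_H$) and assumption~\ref{item:L-locally-rigid-up-to-compact-prime} (local rigidity of $\iota_L$), I would conjugate $\varphi$ within $G\times G$ so that $j_H = \iota_H$, $\rho_H = \mathbf{1}_H$, and $j_L = \iota_L$ as well. At this point $\varphi(\gamma_H,\gamma_L) = (\iota_H(\gamma_H)\rho_L(\gamma_L), \iota_L(\gamma_L))$, so the deformation is entirely carried by $\rho_L$. Since $j_H(\Gamma_H) = \Gamma_H$ is Zariski-dense in $H$ by Borel's density theorem, we have $H' = H$, hence $\rho_L(\Gamma_L) \subset Z_G(H)$. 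But assumption~\ref{item:centralizer-compact-prime} says $Z_G(H)$ is a \emph{finite} group, so $\rho_L\colon \Gamma_L \to Z_G(H)$ is a homomorphism into a finite group that is close to the trivial homomorphism; by continuity of the evaluation maps on generators it must therefore be trivial. Consequently $\varphi = \iota_H\times\iota_L$ after conjugation, which is exactly the statement that $\iota_H\times\iota_L$ is locally rigid as a discontinuous group for $(G\times G)/\operatorname{diag}(G)$, i.e.\ not deformable.

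The one point requiring a little care — and the main obstacle — is the reduction step where we simultaneously normalize $j_H, \rho_H$ and $j_L$: local rigidity gives, for each factor, a conjugating element, but one must check that a single element of $G\times G$ achieves all three normalizations at once. This is handled by first conjugating in the first $G$-factor to fix $j_H$ and $\rho_H$ (the orbit of $(\iota_H,\mathbf{1}_H)$ under $G$ acting diagonally on the pair is open, by hypothesis~\ref{item:H-locally-rigid}, so a nearby deformation lands in it), then conjugating in the second $G$-factor to fix $j_L$ using~\ref{item:L-locally-rigid-up-to-compact-prime}; the two conjugations act on disjoint factors and so commute. A secondary subtlety is that after fixing $j_H = \iota_H$ the group $H'$ is literally the Zariski-closure of $\Gamma_H$, so invoking Borel density is legitimate; and the finiteness of $Z_G(H)$ together with the fact that $\Gamma_L$ is finitely generated makes the triviality of $\rho_L$ immediate, since a map from a finitely generated group to a finite group sufficiently close to the trivial map on a finite generating set is trivial. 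No further stability input is needed because, unlike Proposition~\ref{proposition:exotic-non-standard}, we are not asserting anything about the \emph{type} of a genuine deformation but rather that none exists.
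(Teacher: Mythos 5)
Your argument is correct and follows essentially the same route as the paper: normalize $j_H$, $\rho_H$, and $j_L$ via the rigidity hypotheses, observe that $\rho_L(\Gamma_L)\subset Z_G(H)$ exactly as in the proof of Proposition~\ref{proposition:exotic-non-standard}, and conclude that $\rho_L$ is trivial because $Z_G(H)$ is finite and $\Gamma_L$ is finitely generated. The only (harmless) slip is that in the decomposition of Lemma~\ref{lemma:group-manifold-deformation} the homomorphism $\rho_H$ sits in the \emph{second} factor, not the first; since local rigidity of $\mathbf{1}_H$ forces $\rho_H$ to be exactly trivial, no conjugation is needed for it, and your two factor-wise conjugations still commute and achieve the normalization.
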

\begin{proof}
As we already know from the proof of Proposition~\ref{proposition:exotic-non-standard}, $\rho_L(\Gamma_L)$ is contained in $Z_G(H)$, which is a finite group in our setting. Hence, the small deformation $\rho_L$ must be the trivial representation. Thus, the local rigidity follows.
\end{proof}

\begin{proposition}
    \label{proposition:exotic-not-zariski-dense}
 Assume the following: 
 \begin{enumerate}[label=(\alph*)] 
  \item 
  \label{item:G-simple} 
 $G$ is simple and Zariski-connected; 
  \item 
  \label{item:H-not-Zariski-dense} 
  The inclusion map $\iota_{H} \colon \Gamma_{H} \to G$ cannot be deformed into a Zariski-dense discrete subgroup of $G$.
  \end{enumerate} 
  Under these assumptions, the direct product group $\Gamma_{H} \times \Gamma_{L}$ cannot be deformed into a Zariski-dense discrete subgroup of $G \times G$.
 \end{proposition}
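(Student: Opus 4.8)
The key point is that any small deformation $\varphi$ of $\iota_H\times\iota_L$ into $G\times G$ decomposes, by Lemma~\ref{lemma:group-manifold-deformation}, into a formula of the form \eqref{eq:lemma:group-manifold-deformation}. The first projection $\operatorname{pr}_1\circ\varphi$ then has image contained in $H'\cdot Z_G(H')$, where $H'$ is the Zariski-closure of $j_H(\Gamma_H)$ and $j_H$ is a small deformation of $\iota_H$. By assumption~\ref{item:H-not-Zariski-dense}, the deformation $j_H$ of $\iota_H$ cannot be Zariski-dense in $G$, so $H'$ is a proper Zariski-closed subgroup of $G$. Since $G$ is simple and Zariski-connected by~\ref{item:G-simple}, the subgroup $H'\cdot Z_G(H')$ is a proper algebraic subgroup of $G$ (indeed $Z_G(H')$ is contained in the centralizer of a positive-dimensional or at least nontrivial subgroup, and cannot conspire with the proper subgroup $H'$ to fill out the simple group $G$; one checks this from the fact that $H'$ being proper forces $H'\cdot Z_G(H')\ne G$ — otherwise $G$ would be an almost-direct product with one factor proper and normalized, contradicting simplicity). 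Therefore $\operatorname{pr}_1\circ\varphi(\Gamma_H\times\Gamma_L)$ is not Zariski-dense in $G$, and a fortiori $\varphi(\Gamma_H\times\Gamma_L)$ is not Zariski-dense in $G\times G$.

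\textbf{Steps in order.} First I would invoke Lemma~\ref{lemma:group-manifold-deformation}: for $\varphi$ sufficiently close to $\iota_H\times\iota_L$, write $\varphi(\gamma_H,\gamma_L)=(j_H(\gamma_H)\rho_L(\gamma_L),\,j_L(\gamma_L)\rho_H(\gamma_H))$ with $j_H,j_L$ near the inclusions and $\rho_H,\rho_L$ near the trivial homomorphism, and extract the inclusion $\operatorname{pr}_1\circ\varphi(\Gamma_H\times\Gamma_L)\subset H'\cdot Z_G(H')$ where $H'=\overline{j_H(\Gamma_H)}^{\,\mathrm{Zar}}$. Second, by hypothesis~\ref{item:H-not-Zariski-dense}, $j_H$ being a small deformation of $\iota_H$, the group $H'$ is not all of $G$; here I should be slightly careful, since~\ref{item:H-not-Zariski-dense} a priori speaks only of deformations that remain \emph{discrete}, but the stability results (Fact~\ref{fact:stability-for-properness} applied appropriately, or simply the fact that $j_H$ is close to the faithful discrete $\iota_H$) guarantee $j_H(\Gamma_H)$ is again discrete, so the hypothesis applies and $H'\subsetneq G$. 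Third, I would argue that $H'\cdot Z_G(H')$ is a proper subgroup of $G$: since $G$ is simple, if $H'\cdot Z_G(H')=G$ then writing any element as $h z$ with $h\in H'$, $z\in Z_G(H')$, the subgroup $H'$ would be normal in $G$ (as $Z_G(H')$ centralizes it and $H'$ normalizes itself), forcing $H'=G$ by simplicity, a contradiction. Fourth, conclude that $\operatorname{pr}_1\circ\varphi(\Gamma_H\times\Gamma_L)$ lies in the proper algebraic subgroup $H'\cdot Z_G(H')$, hence is not Zariski-dense in $G$, so $\varphi(\Gamma_H\times\Gamma_L)$ cannot be Zariski-dense in $G\times G$ since its first projection already fails to be Zariski-dense.

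\textbf{Main obstacle.} The routine parts are the appeal to Lemma~\ref{lemma:group-manifold-deformation} and the Zariski-density bookkeeping. The one point requiring genuine care is the third step: verifying that $H'$ being a proper Zariski-closed subgroup of the simple group $G$ forces $H'\cdot Z_G(H')\ne G$. This is elementary but must be stated cleanly — the cleanest phrasing is that $N_G(H')\supset H'\cdot Z_G(H')$, and if $H'\cdot Z_G(H')=G$ then $H'\trianglelefteq G$, contradicting simplicity of $G$ (here Zariski-connectedness of $G$ ensures there are no proper positive-dimensional normal subgroups, and a finite normal subgroup would be central, hence also yields $H'$ central and then $H'\cdot Z_G(H')$ abelian, impossible if $G$ is noncompact simple of dimension $>1$). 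A secondary subtlety, worth a sentence, is ensuring the hypothesis~\ref{item:H-not-Zariski-dense} — phrased for discrete deformations — genuinely applies to the deformation $j_H$ produced by Lemma~\ref{lemma:group-manifold-deformation}, which follows because $j_H$ is close to the discrete faithful $\iota_H$ and discreteness is an open condition in this reductive setting (cf.\ Fact~\ref{fact:stability-for-properness} with $X=G/\{e\}$).
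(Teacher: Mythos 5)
Your proposal follows essentially the same route as the paper's proof: apply Lemma~\ref{lemma:group-manifold-deformation} to get $\operatorname{pr}_1\circ\varphi(\Gamma_H\times\Gamma_L)\subset H'\cdot Z_G(H')$, observe that $H'$ is a proper algebraic subgroup by hypothesis~\ref{item:H-not-Zariski-dense}, deduce that $H'\cdot Z_G(H')$ is proper using simplicity of $G$, and conclude that the first projection of the image is not Zariski-dense. One small slip in your third step: when disposing of the possibility that $H'$ is a finite (hence central) normal subgroup, you claim $H'\cdot Z_G(H')$ would then be abelian — but if $H'$ is central then $Z_G(H')=G$ and $H'\cdot Z_G(H')=G$, so that clause proves nothing and the finite case is not actually excluded by your reasoning. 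The paper closes this off more directly: since $j_H$ is a small deformation of the faithful discrete embedding $\iota_H$, it remains injective, so $H'$ contains the infinite group $j_H(\Gamma_H)\simeq\Gamma_H$ and therefore has positive dimension; a proper positive-dimensional normal subgroup of the Zariski-connected simple $G$ is then impossible. With that one correction your argument is complete and coincides with the paper's.
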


\begin{proof}
Take any  $\varphi \in \mathcal{R}(\Gamma_{H} \times \Gamma_{L}, G \times G)$ that is suffiently close to the original embedding $\iota_{H} \times \iota_{L}$. Then, by Lemma~\ref{lemma:group-manifold-deformation}, there exist group homomorphisms $j_{H}, \rho_{H} \colon \Gamma_{H} \to G$ and $j_{L}, \rho_{L} \colon \Gamma_{L} \to G$ satisfying \eqref{eq:lemma:group-manifold-deformation}.
Let $H'$ be the Zariski-closure of $j_{H}(\Gamma_H)$ in $G$. 
Since $j_{H}(\Gamma_{H}) \simeq \Gamma_{H}$ is an infinite group, $H'$ has positive dimension.
On the other hand, by the assumption \ref{item:H-not-Zariski-dense}, $H'$ is a proper algebraic subgroup of $G$. 
Therefore, by the assumption \ref{item:G-simple}, we can conclude that $H' \cdot Z_{G}(H')$ is a proper algebraic subgroup of the simple algebraic group $G$. 
On the other hand, from Lemma~\ref{lemma:group-manifold-deformation}, we have
 \[
    \operatorname{pr}_{1}\circ\varphi(\Gamma_H \times \Gamma_L)
    \subset H'\cdot Z_{G'}(H').
\]
 Therefore, the image of $\varphi$ cannot be Zariski-dense.
\end{proof}

We are ready to prove Theorem~\ref{theorem:group-manifold-case}. 
\begin{proof}[Proof of Theorem~\ref{theorem:group-manifold-case}]

\ref{item:theorem:group-manifold-case:local-rigidity}. 
    The equivalence of \ref{item:G/H-deformable} and \ref{item:group-deformable}:
    It follows from Theorem~\ref{thm:zariskidense-up-to-local-isom} that collecting 
    the triples $(G, H, L)$ for which the answer to (Q1) is ``yes'' for at least one of Case~i or Case~i' in Table~\ref{tab:cpt-CK-sym} yields the classification in \ref{item:group-deformable}.

The implication from \ref{item:G/H-deformable} to \ref{item:group-case-deformable} is straightforward. In fact, if $\Gamma_L$ is deformable as a discontinuous group for $G/H$, then it is also deformable for $G/\Gamma_H$. Consequently, the direct product group $\Gamma_L \times \Gamma_H$ is deformable as a discontinuous group for the group manifold $G \times G / \operatorname{diag}(G)$.

Let us prove the implication from \ref{item:group-case-deformable} to \ref{item:group-deformable} by contraposition.
Suppose that the triple $(G,H,L)$ belongs to one of the following cases, up to switching $H$ and $L$, and up to compact factors:
    \begin{itemize}
        \item Case~5. $(SO(4n,4),SO(4n,3),Sp(n,1))$ ($n\geq 2$);
        \item Case~6. $(SO(8,8),SO(8,7),Spin(8,1))$;
        \item Case~8. $(SO(8,\C),SO(7,\C),Spin(7,1))$;
        \item Case~9. $(SO(8,\C),Spin(7,\C),SO(7,1))$.
    \end{itemize}

Let $\Gamma_H$ and $\Gamma_L$ be torsion-free cocompact discrete subgroups of $H$ and $L$, respectively. Then the triples $(G, H, L)$ satisfy all the assumptions of Propositions~\ref{proposition:exotic-non-standard} and \ref{proposition:exotic-locally-rigid}. In fact, the assumptions~\ref{item:H-locally-rigid} in Proposition~\ref{proposition:exotic-non-standard} and~\ref{item:L-locally-rigid-up-to-compact-prime} in Proposition~\ref{proposition:exotic-locally-rigid} follow from the vanishing of the first cohomologies, $H^1(\Gamma_H, \mathfrak{g}) = H^1(\Gamma_L, \mathfrak{g}) = 0$, which is verified using Lemma~\ref{lemma:irr-dec} and Raghunathan's vanishing theorem (Fact~\ref{fact:raghunathan}). Furthermore, it is clear that $Z_G(H)$ is a finite group in each case. Hence, by Proposition~\ref{proposition:exotic-locally-rigid}, $\Gamma_H \times \Gamma_L$ is locally rigid as a discontinuous group for the group manifold $G$.

\ref{item:theorem:group-manifold-case:equiv}. 
    The equivalence of
    \ref{item:G/H-nonstd} and \ref{item:group-case-examples} has been proven in Theorem~\ref{thm:zariskidense-up-to-local-isom}.
    Let us verify the implications
     \ref{item:G/H-nonstd}$\Rightarrow$\ref{item:group-case-nonstd} and
    \ref{item:group-case-nonstd}$\Rightarrow$\ref{item:group-case-examples}.

\ref{item:G/H-nonstd}$\Rightarrow$\ref{item:group-case-nonstd}: Suppose that \ref{item:G/H-nonstd} holds. Then, by Theorem~\ref{thm:zariskidense-up-to-local-isom}~\ref{item:thm:zariskidense-up-to-local-isom-nonstd}$\Rightarrow$\ref{item:thm:zariskidense-up-to-local-isom-zariski-dense}, one can find a small deformation $j_{L} \in \mathcal{R}(\Gamma, G; G/H)$ of the original embedding $\iota_{L} \colon \Gamma_{L} \rightarrow G$ such that $j_{L}(\Gamma)$ is Zariski-dense in $G$, while preserving the proper discontinuity of the action on $G/H$. We take any cocompact discrete subgroup $\Gamma_H$ of $H$, and denote by $\iota_{H} \colon \Gamma_H \hookrightarrow G$ the natural embedding map. Then, $\iota_H \times j_L$ gives an element of $\mathcal{R}(\Gamma_H \times \Gamma_L, G \times G; G)$, since the Zariski-closure of $\Gamma_H \times j_L(\Gamma_L)$ is equal to $H \times G$, which cannot act properly on the group manifold because $H$ is non-compact.
   
Thus, $\Gamma_{H}\times j_{L}(\Gamma_{L})$ is a non-standard discontinuous group for the group manifold $G\simeq (G\times G)/\diag G$.

    \ref{item:group-case-nonstd}$\Rightarrow$\ref{item:group-case-examples}:

We prove the implication from \ref{item:group-case-nonstd} to \ref{item:group-case-examples} by contraposition. The only triples that are not in the list of \ref{item:group-case-examples}, yet satisfy the condition of being deformable (i.e., fulfilling the classification in Theorem~\ref{theorem:group-manifold-case} \ref{item:theorem:group-manifold-case:local-rigidity}), are the following: \newline Cases 1 and 2: $(SU(2n,2), Sp(n,1), U(2n,1))$ ($n \geq 2$).

This triple $(G, H, L)$ satisfies all the assumptions of Proposition~\ref{proposition:exotic-non-standard}. In fact, the assumption \ref{item:H-locally-rigid} follows from Raghunathan's vanishing theorem (Fact~\ref{fact:raghunathan}), and the assumption \ref{item:centralizer-compact} can be easily verified. Moreover, the assumption \ref{item:L-locally-rigid-up-to-compact} is verified using Klingler's local rigidity theorem, which holds for $\Gamma$ as a torsion-free cocompact discrete subgroup of $SU(n,1)$, and also holds for $U(n,1)$.
Thus, we conclude from Proposition~\ref{proposition:exotic-non-standard} that \ref{item:group-case-nonstd} is not true. 
Therefore, the implication \ref{item:group-case-nonstd}$\Rightarrow$\ref{item:group-case-examples} is established.
   
\ref{item:theorem:group-manifold-case:non-zariski}. We observe that the answer to (Q3) is \lq\lq no\rq\rq\ for at least one of Case i or Case i' in Table~\ref{tab:cpt-CK-sym}, except for Case 4.2, where $G = SO(2,2)$ is not simple. Hence, since $G$ is a simple Lie group, the direct product group $\Gamma_H \times \Gamma_L$ cannot be deformed into a Zariski-dense subgroup of $G \times G$, as stated in Proposition~\ref{proposition:exotic-not-zariski-dense}.

Thus, the proof of Theorem~\ref{theorem:group-manifold-case} is complete.
\end{proof}

We have excluded the case $G=SO(2,2)$ in
Theorem~\ref{theorem:group-manifold-case}~\ref{item:theorem:group-manifold-case:non-zariski}, which is not a simple Lie group,
and the conclusion differs as stated in Proposition~\ref{prop:so(2,2)-exotic}.
We now provide a proof of Proposition~\ref{prop:so(2,2)-exotic}).
Specifically, this occurs in Cases~3 and 4 $(n=1)$, i.e., in the following case where
\[
(G,H,L)=(SO(2,2),SO(2,1), SU(1,1))
\]
up to the switching of $H$ and $L$, and up to compact factors of $H$ and $L$.
 
\begin{proof}[Proof of Proposition~\ref{prop:so(2,2)-exotic}]
Let $\iota_H\colon H \hookrightarrow G$ and $\iota_L\colon L \hookrightarrow G$ denote the natural embeddings. 
The Lie algebra $\mathfrak{g}=\mathfrak{so}(2,2)$ is decomposed into the sum of irreducible representations of $H=SO(2,1)$ and $L=SU(1,1)$ via the adjoint actions as follows:
 \[
    \mathfrak{g} \simeq \mathfrak{h}+ \R^{2,1},\ \ 
    \mathfrak{g} \simeq \mathfrak{l}+ \mathbf{1}^{\oplus 3}. 
\]
Then, we have $\mathfrak{g}^{\iota_H} = \mathfrak{g}^{\iota_L}= \mathfrak{g}$.
Applying Theorem~\ref{theorem:bending} to $H=SO(2,1)$ and $L=SU(1,1)\simeq Spin(2,1)$,
there exist torsion-free cocompact discrete subgroups $\Gamma_{H}$ and $\Gamma_{L}$
of $H$ and $L$, respectively, that can be deformed into Zariski-dense subgroups of $G$.
Therefore, the direct product group $\Gamma_{H} \times \Gamma_{L}$ also has a small deformation $\phi \colon \Gamma_{H} \times \Gamma_{L} \to G \times G$ that is Zariski-dense in $G \times G$.
By the stability of proper discontinuity (Fact~\ref{fact:exotic-stability}), the action of $\phi(\Gamma_{H} \times \Gamma_{L})$ on $G$ is properly discontinuous. Hence, $\Gamma_{H} \times \Gamma_{L}$ can be deformed into a Zariski-dense subgroup of $G \times G$, preserving the proper discontinuity of the action on the group manifold $G = (G \times G)/\operatorname{diag}(G)$.   
\end{proof}

\subsection{Some results for \texorpdfstring{Problem~\ref{mainquestion'}}{Problem 1.2}}
\label{section:deform-noncptCK}

In this section, we present several results regarding Problem~\ref{mainquestion'}, namely, the existence problem of Zariski-dense discontinuous groups $\Gamma$ for $X=G/H$, when the cohomological dimension $\operatorname{cd}_{\mathbb R}(\Gamma)$ is between 2 and $d(X)$.

First, we consider the case where $\Gamma$ is isomorphic to the surface group $\pi_{1}(\Sigma_{g})$,  
for which $\operatorname{cd}_{\mathbb{R}}(\Gamma) = 2$.  
In this case, 
by combining Theorem~\ref{theorem:Zariski-dense-discontinuous-spin-lattice} (for $n=2$)  
with Okuda's results, as discussed in Remark~\ref{remark:okuda} below,   
we obtain the following theorem.  
For the proof, see \cite{KannakaOkudaTojo24}.
\begin{corollary}[{\cite[Thm.~5.4 (v)$\Rightarrow$(iii)]{KannakaOkudaTojo24}}]
\label{cor:surface-zariski}
    Let $X=G/H$ be a symmetric space, 
    where $G$ is a Zariski-connected real reductive algebraic group.
    Assume that there exists a discontinuous group for $X$ which is isomophic to 
    a surface group $\pi_{1}(\Sigma_{g})$.
    Then, among
    such discontinuous groups,
    there exists a subgroup that is Zariski-dense in $G$.
\end{corollary}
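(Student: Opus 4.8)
The plan is to reduce the statement, via the identification $Spin(2,1)\simeq SL(2,\mathbb{R})$, to producing a single \emph{even} homomorphism $\varphi\colon SL(2,\mathbb{R})\to G$ whose associated $SL(2,\mathbb{R})$-action on $X=G/H$ is proper, and then to feed this $\varphi$ into Theorem~\ref{theorem:Zariski-dense-discontinuous-spin-lattice} with $n=2$. Recall that a torsion-free cocompact discrete subgroup of $SL(2,\mathbb{R})\simeq Spin(2,1)$ is exactly a surface group $\pi_{1}(\Sigma_{g})$ with $g\ge 2$, so any pair $(\Gamma,\varphi')$ furnished by that theorem automatically has $\varphi'(\Gamma)$ isomorphic to a surface group and, since $\varphi'\in\mathcal{R}(\Gamma,G;X)$, a discontinuous group for $X$.

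The first step, which I expect to be the main obstacle, is to pass from the bare hypothesis (some discontinuous group for $X$ isomorphic to $\pi_{1}(\Sigma_{g})$) to the existence of an even $\varphi$ with proper action. A surface group with $g\ge 2$ is finitely generated, torsion-free, and not virtually abelian, so its acting properly discontinuously on $X$ excludes the Calabi--Markus phenomenon for $X$; by Okuda's classification of the semisimple symmetric spaces admitting proper $SL(2,\mathbb{R})$-actions (the input recorded in Remark~\ref{remark:okuda}; see also \cite{KannakaOkudaTojo24}), this forces $G/H$ to admit a proper action of a subgroup locally isomorphic to $SL(2,\mathbb{R})$, and moreover one can arrange the associated complex nilpotent orbit of $\mathfrak{g}_{\mathbb{C}}$, in the sense of condition~\ref{item:nilpotent-orbit-even} of Definition-Lemma~\ref{definition-lemma:even}, to be \emph{even}. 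By the equivalence \ref{item:nilpotent-orbit-even}$\Leftrightarrow$\ref{item:alg-g-phi} there, this says precisely that $\mathfrak{g}^{\varphi}=\mathfrak{g}$ for the resulting homomorphism $\varphi\colon SL(2,\mathbb{R})\to G$. It is exactly the selection of an even representative that genuinely relies on Okuda's results rather than on the bending machinery of this article, and this is where the real work lies.

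With such a $\varphi$ in hand the conclusion is immediate. The $Spin(2,1)$-action on $X$ via $\varphi$ is proper and $\mathfrak{g}^{\varphi}=\mathfrak{g}$, so part~(2) of Theorem~\ref{theorem:Zariski-dense-discontinuous-spin-lattice}, applied with $n=2$ and this $\varphi$, produces a torsion-free cocompact discrete subgroup $\Gamma$ of $Spin(2,1)\simeq SL(2,\mathbb{R})$ together with a small deformation $\varphi'\in\mathcal{R}(\Gamma,G;X)$ of $\varphi|_{\Gamma}$ with $\varphi'(\Gamma)$ Zariski-dense in $G$. Since $\varphi'\in\mathcal{R}(\Gamma,G;X)$, the group $\varphi'(\Gamma)$ is a discontinuous group for $X$, and since $\varphi'$ is faithful, $\varphi'(\Gamma)\simeq\Gamma\simeq\pi_{1}(\Sigma_{g'})$ is a surface group; hence $\varphi'(\Gamma)$ is the desired Zariski-dense member. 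As a final bookkeeping remark, the genus $g'$ need not coincide with the genus $g$ of the hypothesis, but the statement only asks for a Zariski-dense discontinuous group isomorphic to \emph{some} surface group, so this is harmless.
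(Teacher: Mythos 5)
Your proposal is correct and follows exactly the route the paper indicates: use Okuda's equivalence (v)$\Rightarrow$(ii) from Remark~\ref{remark:okuda} to extract an even homomorphism $\varphi\colon SL(2,\mathbb{R})\simeq Spin(2,1)\to G$ acting properly on $X$, note that evenness means $\mathfrak{g}^{\varphi}=\mathfrak{g}$ by Definition-Lemma~\ref{definition-lemma:even}, and then apply Theorem~\ref{theorem:Zariski-dense-discontinuous-spin-lattice} with $n=2$ to produce a Zariski-dense discontinuous surface group. You also correctly identify that the genuinely nontrivial input is Okuda's classification (deferred to \cite{KannakaOkudaTojo24}), which is precisely how the paper treats it.
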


\begin{remark}
    \label{remark:okuda}
    Several equivalent statements are known for the assumption in Corollary~\ref{cor:surface-zariski}, which we recall from Okuda~\cite{Oku13,Okuda17}. For  a semisimple symmetric space $X = G/H$,  the following six conditions are equivalent:
    \begin{enumerate}
        \item $X$ admits a proper action of $SL(2,\R)$ via a homomorphism $SL(2,\R)\rightarrow G$;
        \item 
        $X$ admits a proper action of $SL(2,\R)$ 
        via an \emph{even} homomorphism $SL(2,\R)\rightarrow G$
        (Definition-Lemma~\ref{definition-lemma:even});
        \item 
        \label{item:okuda-some-rank}
        $X$ admits a properly discontinuous action of the free group $F_{k}$ 
        of rank $k$ for \emph{some} $k \geq 2$ via a group homomorphism $F_{k}\rightarrow G$;
        \item 
        \label{item:okuda-any-rank}
        $X$ admits a properly discontinuous action of the free group $F_{k}$ 
        of rank $k$ for \emph{any} $k\geq 2$ via a group homomorphism $F_{k}\rightarrow G$;
        \item 
        \label{item:okuda-some-genus}
        $X$ admits a properly discontinuous action of the surface group $\pi_{1}(\Sigma_{g})$ of genus $g$ for \emph{some} $g\geq 2$ via a group homomorphism $\pi_{1}(\Sigma_{g})\rightarrow G$;
        \item 
        \label{item:okuda-any-genus}
        $X$ admits a properly discontinuous action of the surface group $\pi_{1}(\Sigma_{g})$ of genus $g$ for \emph{any} $g\geq 2$ via a group homomorphism $\pi_{1}(\Sigma_{g})\rightarrow G$;
    \end{enumerate}
\end{remark}
 
The above equivalence between \eqref{item:okuda-some-genus} and \eqref{item:okuda-any-genus} suggests the following question:
\begin{question}
       Let $\pi_{1}(\Sigma_{g})$ be as in the assumption of Corollary~\ref{cor:surface-zariski}. 
       Does there exist a Zariski-dense subgroup in $G$ 
    isomorphic to $\pi_{1}(\Sigma_{g})$?
\end{question}

We also do not know the answer to the following question:
\begin{question}
Can the assumption that $G/H$ is symmetric be removed in  
Corollary~\ref{cor:surface-zariski}?
\end{question}

The following result provides yet another perspective on
deformations with discontinuous groups of lower cohomological dimension
for reductive symmetric spaces.
\begin{corollary}
\label{cor:symmetric_space_cohomological_dim_Zariski_dense}
    Let $X=G/H$ be a symmetric space, 
    where $G$ is a Zarski-connected real semisimple algebraic group.
    If $X$ admits a finitely-generated discontinuous group which is Zariski-dense in $G$, then 
    there exists a discrete subgroup $\Gamma$ of $G$ with the following three properties: the comological dimension of $\Gamma$ is greater than 1,
    $\Gamma$ is Zariski-dense in $G$, and $\Gamma$ acts properly discontinuously on $X$.
\end{corollary}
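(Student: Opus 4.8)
\textbf{Proof proposal for Corollary~\ref{cor:symmetric_space_cohomological_dim_Zariski_dense}.}

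The plan is to reduce the statement to the surface-group case already handled by Corollary~\ref{cor:surface-zariski}, via the equivalences recorded in Remark~\ref{remark:okuda}. First I would note that the hypothesis is precisely that $X=G/H$ admits a finitely-generated discontinuous group $\Gamma_0$ that is Zariski-dense in $G$; in particular $\Gamma_0$ is infinite, so $X$ admits a proper discontinuous action by an infinite finitely-generated group. The key point is that this already forces the ``type $(A)$'' dichotomy of Benoist--Kobayashi: since $H$ is non-compact and $\Gamma_0$ is infinite acting properly discontinuously, one must have (in the language of Remark~\ref{remark:okuda}, specialized to the semisimple symmetric case) that $X$ admits a proper action of $SL(2,\R)$ via a homomorphism $SL(2,\R)\to G$. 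Indeed, a Zariski-dense finitely-generated discontinuous group cannot be virtually abelian --- if it were, its Zariski-closure would be abelian-by-finite, contradicting semisimplicity of $G$ --- so it contains a non-abelian free subgroup acting properly discontinuously on $X$, which by Okuda's theorem (condition~\eqref{item:okuda-some-rank} of Remark~\ref{remark:okuda}) is equivalent to the existence of a proper $SL(2,\R)$-action on $X$.

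Once this is in hand, the chain of equivalences in Remark~\ref{remark:okuda} gives condition~\eqref{item:okuda-some-genus}: $X$ admits a properly discontinuous action of a surface group $\pi_1(\Sigma_g)$ for some $g\ge 2$ via a homomorphism $\pi_1(\Sigma_g)\to G$. This verifies the hypothesis of Corollary~\ref{cor:surface-zariski}, which then produces a discontinuous group $\Gamma$ for $X$ isomorphic to a surface group and Zariski-dense in $G$. Such a $\Gamma$ has $\operatorname{cd}_{\R}(\Gamma)=2>1$, is Zariski-dense in $G$ by construction, and acts properly discontinuously on $X$ --- exactly the three properties claimed. Alternatively, one can invoke Theorem~\ref{theorem:Zariski-dense-discontinuous-spin-lattice} with $n=2$ directly: from the proper $SL(2,\R)\simeq Spin(2,1)$-action $\varphi$ on $X$ one checks, using that the even homomorphism can be taken (equivalence of (1) and (2) in Remark~\ref{remark:okuda}) together with Definition-Lemma~\ref{definition-lemma:even}, that $\mathfrak{g}^{\varphi}=\mathfrak{g}$ when $\varphi$ is even, so that a surface subgroup $\Gamma\subset Spin(2,1)$ deforms to a Zariski-dense discontinuous group; this is essentially the content of \cite{KannakaOkudaTojo24}.

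The main obstacle I anticipate is the first step: rigorously ruling out that $\Gamma_0$ is virtually abelian (or otherwise has abelian Zariski-closure) and deducing the existence of a free subgroup acting properly discontinuously. The properness of the ambient $\Gamma_0$-action passes to any subgroup, so the real content is the Tits-alternative-style dichotomy: a finitely-generated linear group that is Zariski-dense in a semisimple group either is virtually solvable (impossible here by Zariski-density in a semisimple group of positive dimension, since a virtually solvable group has solvable Zariski-closure) or contains a non-abelian free group. Invoking the Tits alternative and then Okuda's equivalence completes the argument cleanly. It may also be worth remarking that one could bypass the free-subgroup step entirely if $\Gamma_0$ itself is already known to have cohomological dimension $>1$; the corollary is stated to cover the case where $\Gamma_0$ might a priori be large but one wants a \emph{specific} $\Gamma$ with $\operatorname{cd}_{\R}(\Gamma)$ strictly between $1$ and $d(X)$, and the surface-group construction furnishes such a $\Gamma$ with $\operatorname{cd}_{\R}(\Gamma)=2$.
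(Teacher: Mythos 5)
Your proof is correct, but it follows a genuinely different route from the paper's. The paper first replaces $\Gamma_0$ by a torsion-free finite-index normal subgroup via Selberg's lemma (noting it stays Zariski-dense by Lemma~\ref{lem:finite-index-Zariski-dense}), then splits into cases: if $\operatorname{cd}_{\R}(\Gamma_0)\ge 2$ it simply takes $\Gamma=\Gamma_0$, and only in the case $\operatorname{cd}_{\R}(\Gamma_0)=1$ does it invoke Stallings' theorem to conclude $\Gamma_0$ is a free group $F_k$ (with $k\ge 2$ since $k=1$ would contradict Zariski-density in a non-abelian group), after which it proceeds exactly as you do, via Remark~\ref{remark:okuda} and the even $SL(2,\R)$-homomorphism to Corollary~\ref{cor:KOT} plus the stability theorem (Fact~\ref{fact:stability-for-properness}). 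You instead apply the Tits alternative directly to $\Gamma_0$ — Zariski-density in a positive-dimensional semisimple group rules out virtual solvability, so $\Gamma_0$ contains a non-abelian free subgroup, which inherits proper discontinuity — and then run the same Okuda/surface-group/bending machinery uniformly. Your route avoids Selberg and Stallings at the cost of the Tits alternative, treats all cases at once, and always outputs a $\Gamma$ with $\operatorname{cd}_{\R}(\Gamma)=2$ exactly; the paper's route is more economical when the original group already has $\operatorname{cd}_{\R}\ge 2$, since it keeps that group itself. Both arguments are sound, and your closing remark correctly anticipates the paper's case split.
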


\begin{lemma}
\label{lem:finite-index-Zariski-dense}
    Let $G$ be a Zariski-connected 
    real algebraic group, 
    $\Gamma$ a Zariski-dense subgroup of $G$, and
    $\Gamma'$ a finite-index normal subgroup of $\Gamma$. 
    Then $\Gamma'$ is also Zariski-dense in $G$. 
\end{lemma}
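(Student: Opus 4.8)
The plan is to prove that a finite-index normal subgroup $\Gamma'$ of a Zariski-dense subgroup $\Gamma$ of a Zariski-connected real algebraic group $G$ is itself Zariski-dense. First I would pass to the Zariski-closure: let $\mathbf{H}$ denote the Zariski-closure of $\Gamma'$ in $\mathbf{G}$, which is a Zariski-closed subgroup of $\mathbf{G}$, and let $H = \mathbf{H}(\mathbb{R})$. Since $\Gamma'$ is normal in $\Gamma$, every $\gamma \in \Gamma$ normalizes $\Gamma'$, hence normalizes its Zariski-closure $\mathbf{H}$ (the normalizer of a Zariski-closed subgroup is Zariski-closed, and conjugation by a fixed element is a morphism of varieties, so $\gamma \mathbf{H} \gamma^{-1}$ is again the Zariski-closure of $\gamma \Gamma' \gamma^{-1} = \Gamma'$). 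Therefore $\Gamma$ is contained in the normalizer $N_{\mathbf{G}}(\mathbf{H})$, and since $\Gamma$ is Zariski-dense and $N_{\mathbf{G}}(\mathbf{H})$ is Zariski-closed, we get $\mathbf{G} = N_{\mathbf{G}}(\mathbf{H})$, i.e.\ $\mathbf{H}$ is normal in $\mathbf{G}$.

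Next I would exploit the finite-index hypothesis to control the number of Zariski-connected components. Since $[\Gamma : \Gamma'] < \infty$, the group $\Gamma$ is a union of finitely many cosets of $\Gamma'$; taking Zariski-closures, $\mathbf{G}$ (the Zariski-closure of $\Gamma$) is contained in the union of the finitely many translates $g\mathbf{H}$ as $g$ ranges over coset representatives. Hence $\mathbf{H}$ has finite index in $\mathbf{G}$ as an abstract group, so $\mathbf{H}$ contains the identity Zariski-connected component $\mathbf{G}^{\circ}$ of $\mathbf{G}$. But $\mathbf{G}$ is Zariski-connected by hypothesis, so $\mathbf{G}^{\circ} = \mathbf{G}$, and therefore $\mathbf{H} = \mathbf{G}$. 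Taking real points (or simply noting $\Gamma' \subset G$ has Zariski-closure $\mathbf{H} = \mathbf{G}$) shows $\Gamma'$ is Zariski-dense in $G$.

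An alternative, essentially equivalent route avoids the normality discussion: since $\Gamma'$ has finite index in $\Gamma$, the Zariski-closure $\mathbf{H}$ of $\Gamma'$ has finite index in $\mathbf{G}$, hence contains the identity component $\mathbf{G}^{\circ} = \mathbf{G}$; this immediately gives the result and does not even use normality of $\Gamma'$. I would likely present this shorter argument, mentioning that normality is not needed.

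I do not expect a serious obstacle here: the one point requiring a little care is the claim that the Zariski-closure of a finite union of cosets $g_1\Gamma' \cup \dots \cup g_r\Gamma'$ equals $g_1\mathbf{H} \cup \dots \cup g_r\mathbf{H}$, which follows because each left-translation map $x \mapsto g_i x$ is a homeomorphism in the Zariski topology and Zariski-closure commutes with finite unions. Combined with the fact that a Zariski-connected algebraic group has no proper finite-index Zariski-closed subgroup, this completes the proof.
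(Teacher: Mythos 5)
Your proposal is correct. Both of your routes work: the normality argument is sound (the normalizer of a Zariski-closed subgroup is Zariski-closed, so Zariski-density of $\Gamma$ forces $\mathbf{H}$ to be normal), the identity $\overline{g_1\Gamma' \cup \dots \cup g_r\Gamma'} = g_1\mathbf{H} \cup \dots \cup g_r\mathbf{H}$ is justified exactly as you say, and a Zariski-closed finite-index subgroup of a Zariski-connected group is clopen and hence equals the whole group. The paper proves the lemma by a genuinely different mechanism: it uses the normality of $\Gamma'$ to pass to the quotient algebraic group $G_{\C}/G'_{\C}$, observes that the image of the finite group $\Gamma/\Gamma'$ is Zariski-dense there, invokes the fact that a finite subgroup of an algebraic group is Zariski-closed to conclude the quotient is finite, and then compares dimensions and uses connectedness. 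Your preferred (second) route is more elementary — it avoids constructing quotients of algebraic groups altogether and, as you correctly note, does not need the normality hypothesis at all; the trade-off is only that the paper's quotient argument packages the finiteness more algebraically. Either way the punchline is the same: a Zariski-connected group admits no proper Zariski-closed subgroup of finite index.
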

\begin{proof}
    Let $G'_{\C}$ be the
    Zariski-closure of $\Gamma'$ in 
    the set $G_{\C}$ of complex points of $G$. Then $G'_{\C}$ is a  
    Zariski-closed normal subgroup of $G_{\C}$. Hence, $G_{\C}/G'_{\C}$ is a complex algebraic group. 
    The image of the finite group
    $\Gamma/\Gamma'$ into $G_{\C}/G'_{\C}$ is Zariski-dense. Since a finite subgroup 
    of an algebraic group is Zariski-closed, we see that $G_{\C}/G'_{\C}$ is a finite group. 
    Hence, we have $\dim G'_{\C}=\dim G_{\C}$. 
    Since $G_{\C}$ is Zariski-connected, $G'_{\C}$ coincides with $G_{\C}$, which means  that $\Gamma'$ is Zariski-dense in $G$. This proves our lemma.  
\end{proof}

\begin{proof}[Proof of Corollary~\ref{cor:symmetric_space_cohomological_dim_Zariski_dense}]
Suppose that $\Lambda$ is
 a finitely-generated,  Zariski-dense discrete subgroup of $G$ such that it acts properly on $X$.
By Selberg's lemma (\cite[Lem.~8]{Selberg-lemma}), there exists a torsion-free normal subgroup $\Lambda'$ of $\Lambda$ such that $\Lambda'$ is of finite index in $\Lambda$. Since $\Lambda'$ is also Zariski-dense in $G$ by Lemma~\ref{lem:finite-index-Zariski-dense}, and since the cohomological dimension $\operatorname{cd}_{\mathbb{R}}(\Lambda')$ is the same with $\operatorname{cd}_{\mathbb{R}}(\Lambda)$, we may replace $\Lambda$ with $\Lambda'$ and assume that $\Lambda$ is torsion-free.

Since $\Lambda$ is Zariski-dense in $G$, the cohomological dimension $\operatorname{cd}_{\mathbb{R}}(\Lambda)$ of $\Lambda$ must be positive.  If $\operatorname{cd}_{\mathbb{R}}(\Lambda) \geq 2$, we can take $\Gamma = \Lambda$.
Suppose $\operatorname{cd}_{\mathbb{R}}(\Lambda)=1$.

By the Stallings theorem~\cite{Stallings-free-group}, 
$\Lambda$ must be a free group $F_k$.
If $k=1$, then $\Lambda$ is an abelian group, which cannot be Zariski-dense in the non-abelian group $G$.
If $k \ge 2$, then the proper discontinuity of the action of $\Lambda$ on $X$ implies that there exists an even homomorphism $\varphi \colon SL(2, \mathbb R) \to G$ (Definition--Lemma~\ref{definition-lemma:even})
such that $SL(2, \mathbb R)$ acts properly on $X$ via $\varphi$, by Remark~\ref{remark:okuda}.
Then, there exist a discrete subgroup in $\varphi(SL(2, \mathbb R))$ that is isomorphic to a surface group, and a small deformation of it into a Zariski-dense subgroup of $G$, 
as shown in Corollary~\ref{cor:KOT}.
This small deformation preserves the proper discontinuity of the action on $X=G/H$,
as stated by the stability theorem (Fact~\ref{fact:stability-for-properness}). 
See also Theorem~\ref{theorem:Zariski-dense-discontinuous-spin-lattice} in the case $n=2$.
\end{proof}

Next, we consider the case where the cohomological dimension of the discontinuous group $\Gamma$  
for $X=G/H$ is greater than $2$.  
As mentioned in Remark~\ref{rem:lifting_to_spin}~\ref{item:proper-action},  
there exist natural families of homogeneous spaces 
on which $SO(n,1)$ does not act properly,  
while $Spin(n,1)$ does act properly.  
These families yield a somewhat surprising connection between the existence of proper actions of $Spin(n,1)$ and the Hurwitz--Radon number, as will be detailed in the forthcoming paper
\cite{KannakaTojo25} by Kannaka and Tojo, for which we now provide a brief overview
and applications of Theorem~\ref{theorem:Zariski-dense-discontinuous-spin-lattice}.

Let $\ord_{2} \colon \mathbb{Q}^{\times} \to \mathbb{Z}$ denote the $2$-adic valuation,  
that is, for $t = 2^{m} (\text{odd}) / (\text{odd})$ with $m \in \mathbb{Z}$,  
we define $\ord_{2}(t) := m$.  
Furthermore, when $\ord_{2} t = 4a + b$ with $a \in \mathbb{Z}$ and $0 \leq b \leq 3$,  
we define $\rho(t) = 8a + 2^{b}$, which is called the \emph{Hurwitz--Radon number} of $t\in \Q^{\times}$.  
Since $\rho(t)$ depends only on $\ord_{2} t$,  
we summarize its values for powers of $2$ in Table~\ref{tab:value-HR-number}.
\begin{table}
\begin{tabular}{c||c|c|c|c|c|c|c|c|c|c|c}
    $t$ & $\ldots$ & $1/16$ & $1/8$ & $1/4$ & $1/2$ & $1$ & $2$ & $4$ & $8$ & $16$ & $\ldots$  \\
    \hline
    $\rho(t)$ & $\ldots$ & $-7$ & $-6$ & $-4$ & $0$ 
    & $1$ & $2$ & $4$ & $8$ & 9 & $\ldots$
\end{tabular}
\caption{Table of $\rho(t)$}
\label{tab:value-HR-number}
\end{table}

In \cite{KannakaTojo25}, for a pair $(\mathfrak{g}, \iota)$ of a reductive Lie algebra $\mathfrak{g}$  
and its completely reducible representation $\iota \colon \mathfrak{g} \to \mathfrak{gl}(N, \mathbb{C})$,  
a certain integer $\rho(\mathfrak{g}, \iota)$ is introduced.  
It is shown that for a classical Lie algebra $\mathfrak{g}$ and its natural representation $\iota$, the value of $\rho(\mathfrak{g}, \iota)$ can be computed using  
either the Hurwitz--Radon number $\rho(N)$ or the $2$-adic valuation $\ord_{2}(N)$.  
In particular,  for the standard representation $\iota\colon \mathfrak{so}(N,N)\rightarrow \mathfrak{gl}(2N,\C)$, 
\begin{equation*}
    \label{eq:Hurwitz-radon-so(N,N)}
    \rho(\mathfrak{so}(N,N), \iota) = \rho(N).
\end{equation*}
Furthermore, for certain families of homogeneous spaces $G/H$,  
the semisimple Lie groups that can act properly on $G/H$  
can be classified using $\rho(\mathfrak{g}, \iota)$ as follows:
\begin{theorem}[\cite{KannakaTojo25}]
\label{theorem:Kannaka_Tojo_Spin_very-even}
    Let $G/H$ be one of the classical 
    homogeneous spaces in Table~\ref{tab:very-even-symmetric},
    $\iota$ the standard representation of the classical Lie algebra $\mathfrak{g}$,
    and $L$ a connected semisimple 
    Lie group with no compact factors.
    Then $G/H$ admits a proper 
    $L$-action via a homomorphism from $L$ to $G$ if and only if 
    $L$ is isomorphic to $Spin(n,1)$
    for some $2\leq n\leq \rho(\mathfrak{g},\iota)$.  
\end{theorem}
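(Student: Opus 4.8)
The plan is to reduce Theorem~\ref{theorem:Kannaka_Tojo_Spin_very-even} to a representation-theoretic statement about which reductive Lie algebras embed into $\mathfrak{g}$ so that the restriction of the standard representation $\iota$ has a prescribed structure, and then to invoke the properness criterion of \cite[Thm.~4.1]{Kobayashi89}. First I would fix notation: for the homogeneous spaces $G/H$ in Table~\ref{tab:very-even-symmetric} (such as $\mathfrak{so}(N,N)/\mathfrak{so}(N-1,N)$ and its relatives, including the $\mathfrak{so}^{*}$, $\mathfrak{u}$, $\mathfrak{sp}$ and complex variants exhibited via the Clifford-algebra identifications of Proposition~\ref{proposition:Kobayashi-Yoshino-G(p,q)}), the isotropy $H$ is, up to compact and central factors, the stabilizer of a non-isotropic vector in the standard module, so a subgroup $L\subset G$ acts properly on $G/H$ if and only if the $L$-action on the ``sphere'' $\{Q(v)=c\}$ in the standard module is proper. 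The heart of the matter is the classification of semisimple $L$ with no compact factors admitting such a proper action.

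Next I would carry out the reduction in two directions. For the ``only if'' direction, suppose $L$ is connected semisimple with no compact factor and acts properly on $G/H$ via $\rho\colon L\to G$. By the properness criterion of Kobayashi, after composing with $\iota$ one obtains a completely reducible representation $\iota\circ d\rho$ of $\mathfrak{l}$ whose real/quaternionic/complex structure and whose invariant bilinear (or Hermitian) form must match the ambient one on a subspace of codimension one; the definition of $\rho(\mathfrak{g},\iota)$ in \cite{KannakaTojo25} is precisely engineered so that this forces $\mathfrak{l}\simeq\mathfrak{so}(n,1)$ with $n\le\rho(\mathfrak{g},\iota)$. Here I would use the rank-one reduction: the only non-compact simple Lie algebras all of whose non-trivial finite-dimensional representations restrict to the standard module of $\mathfrak{g}$ compatibly with the geometry are (real forms of) $\mathfrak{so}(n,1)$, because a proper action on a space form forces the linear isotropy to be contained in a group of the shape $O(n)\times O(1)$ (or its Clifford-type analogues), and among non-compact semisimple groups only $Spin(n,1)$ and $SO_{0}(n,1)$ embed into such a group with non-compact image acting properly; moreover the very-even condition and the identification in Theorem~\ref{theorem:Kannaka_Tojo_Spin_very-even} (see also Remark~\ref{rem:lifting_to_spin}~\ref{item:proper-action}) rule out $SO_{0}(n,1)$ and pin down $Spin(n,1)$. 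The bound $n\le\rho(\mathfrak{g},\iota)$ then comes from the Hurwitz--Radon number entering the dimension count of the spin representation realizing this embedding: $Spin(n,1)$ acts on the relevant sphere exactly when the spin module fits, which by the Radon--Hurwitz theorem happens precisely for $n\le\rho(N)=\rho(\mathfrak{g},\iota)$ (using \eqref{eq:Hurwitz-radon-so(N,N)} and its analogues). For the ``if'' direction, given $n\le\rho(\mathfrak{g},\iota)$ one constructs the homomorphism $Spin(n,1)\to G$ explicitly from the natural inclusion $C(n,1)\hookrightarrow C(p,q)$ of Clifford algebras (the spin representation machinery of Appendix~\ref{section:clifford-spin} and the map $\varphi$ preceding Theorem~\ref{thm:Clifford-z.d}), and verifies properness via the criterion of \cite[Thm.~4.1]{Kobayashi89} by computing the real-rank-one Cartan projection.

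The step I expect to be the main obstacle is the ``only if'' direction — specifically, showing that no non-compact semisimple group other than $Spin(n,1)$ can act properly, and that the upper bound on $n$ is exactly $\rho(\mathfrak{g},\iota)$ rather than something weaker. This requires a careful case analysis across the families in Table~\ref{tab:very-even-symmetric} using the structure of the isotropy representation, together with the precise arithmetic of Radon--Hurwitz numbers governing when a module of $Spin(n,1)$ carries the correct type of invariant form; the bookkeeping of real forms (orthogonal vs.\ symplectic vs.\ unitary vs.\ quaternionic-linear, and the periodicity mod $8$ in $p+q$ and mod $4$ in $p-q$) is delicate. Since this theorem is attributed to the forthcoming paper \cite{KannakaTojo25}, in the present article I would only sketch this reduction and cite \cite{KannakaTojo25} for the full classification, while giving the ``if'' construction in enough detail to make the applications of Theorem~\ref{theorem:Zariski-dense-discontinuous-spin-lattice} self-contained.
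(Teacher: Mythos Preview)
The paper does not contain a proof of this theorem: it is stated as a result from the forthcoming paper \cite{KannakaTojo25} and is used here only as input (see the paragraph preceding the statement and the accompanying Lemma~\ref{lem:2_equiv_n}, also cited from \cite{KannakaTojo25}). So there is no ``paper's own proof'' to compare your proposal against.

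That said, your sketch is consistent with the hints the present paper does give: the reduction to Kobayashi's properness criterion \cite[Thm.~4.1]{Kobayashi89}, the Clifford-algebra embeddings $Spin(n,1)\hookrightarrow G(p,q)$ from Appendix~\ref{section:clifford-spin} for the ``if'' direction, and the role of the Hurwitz--Radon number via \eqref{eq:Hurwitz-radon-so(N,N)} are exactly the ingredients the paper signals. Your closing remark---that in this article one should only cite \cite{KannakaTojo25} and provide the explicit ``if'' construction needed for Proposition~\ref{proposition:proper-g(p,q)} and Theorem~\ref{theorem:zariski-dense-noncpt-CK}---matches precisely what the paper actually does.
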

 
\begin{table}
    \centering
    \begin{tabular}{|c|c|c|}
        \hline
        $G$ & $H$ & $\rho(\mathfrak{g},\iota)$ \\
        \hline
        $SL(2N,\R)$ & $SO(N+1,N-1)$ & $\rho(N)+1$ \\
        $SL(2N,\R)$ & $SL(2p+1,\R)\times SL(2N-(2p+1),\R)$ & $\rho(N)+1$ \\
        $SL(2N,\C)$ & $SU(N+1,N-1)$ & $2\ord_{2}(N)+3$ \\
        $SL(2N,\C)$ & 
        $SL(2p+1,\C)\times SL(2N-(2p+1),\C)$ & $2\ord_{2}(N)+3$ \\
        $SL(2N,\HA)$ & $Sp(N+1,N-1)$ & $\rho(N/2)+5$ \\
        $SL(2N,\HA)$ & 
        $SL(2p+1,\HA)\times SL(2N-(2p+1),\HA)$ & $\rho(N/2)+5$ \\
        $SO(4N,\C)$ & $SO(2p+1,\C)\times SO(4N-2p-1,\C)$ & $\rho(N/4)+7$ \\
        $SO(4N,\C)$ & $SO(2N+1,2N-1)$ & $\rho(N/4)+7$ \\
        $SU(N,N)$ & $S(U(p,p+1)\times U(N-p,N-p-1))$ & $2\ord_{2}(N)+2$ \\
        $SO(N,N)$ & $SO(p,p+1)\times SO(N-q,N-q-1)$ & $\rho(N)$ \\
        $SO^{*}(4N)$ & $SO^{*}(4p+2)\times SO^{*}(4N-4p-2)$ & $\rho(N/4)+6$ \\
        $SO^{*}(4N)$ & $U(N+1,N-1)$ & $\rho(N/4)+6$  \\
        $Sp(N,N)$ & $Sp(p,p+1)\times Sp(N-p,N-p-1)$ & $\rho(N/2)+4$  \\
        $Sp(N,\R)$ & $Sp(N-1,\R)$ & $\rho(N/2)+2$ \\
        $Sp(N,\C)$ & $Sp(N-1,\C)$ & $\rho(N/2)+3$ \\
        \hline
    \end{tabular}
    \caption{The parameters $p$ and $q$ range over all integers satisfying  
$0 \leq p < N/2$ and $0 \leq q < N$, respectively.
    }
    \label{tab:very-even-symmetric}
\end{table}

For example, the space form $SO(N,N)/SO(N-1,N)$ admits a proper action of a connected semisimple Lie group $L$ with no compact factors if and only if
$L$ is isomorphic to $Spin(n,1)$ for some $2 \leq n \leq \rho(N)$.  

Furthermore, the following lemma provides a criterion to determine which actions of $Spin(n,1)$ are proper on the homogeneous spaces $G/H$ listed in Table~\ref{tab:very-even-symmetric}.  
This lemma is proved based on the properness criterion \cite[Thm.\ 4.1]{Kobayashi89} established by Kobayashi:
\begin{lemma}[\cite{KannakaTojo25}]
\label{lem:2_equiv_n}
    Let $G/H$ be one of the classical homogeneous spaces 
    in Table~\ref{tab:very-even-symmetric}, and
    $\iota$ the standard representation of the classical group $G$. 
    Then, for any non-trivial homomorphism $\varphi\colon Spin(n,1)\to G$ $(n\geq 2)$, the $Spin(n,1)$-action on $G/H$ via $\varphi$ is proper if and only if $\iota(\varphi(-1))=-I$, where $1$ is the identity element in the Clifford algebra $C_{\even}(n,1)$ and 
    $I$ is the identity matrix. 
    In particular, 
    any subgroup of $G$ isomorphic to
    $SO_{0}(n,1)$ cannot acts properly 
    on $G/H$.
\end{lemma}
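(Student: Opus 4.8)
The statement to be proved is Lemma~\ref{lem:2_equiv_n}: for $G/H$ one of the homogeneous spaces in Table~\ref{tab:very-even-symmetric} with standard representation $\iota$ of $G$, and for a non-trivial homomorphism $\varphi\colon Spin(n,1)\to G$ $(n\ge 2)$, the $Spin(n,1)$-action on $G/H$ via $\varphi$ is proper if and only if $\iota(\varphi(-1)) = -I$. The strategy is to reduce the properness question to the Kobayashi properness criterion \cite[Thm.~4.1]{Kobayashi89}, and then to translate that criterion into the stated condition on the central element. First I would recall that $Spin(n,1)$ is a real rank-one simple Lie group; by the properness criterion, a subgroup $L'$ acts properly on $G/H$ if and only if the ``Cartan projections'' (equivalently, the $\mathfrak{a}$-components in a $KAK$-type decomposition) of $L'$ and of $H$ have only a bounded overlap. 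Since $\operatorname{rank}_{\mathbb R} Spin(n,1)=1$, the image of $\varphi(Spin(n,1))$ has a one-dimensional polar part, so the question becomes whether a particular ray in $\mathfrak a_G$ (the maximal split Cartan of $\mathfrak g$) meets the cone spanned by the polar part of $H$ only at the origin.

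\textbf{Key steps.} Step~1: Fix a Cartan decomposition $\mathfrak g = \mathfrak k \oplus \mathfrak p$ compatible with $\varphi$, i.e., so that $d\varphi(\mathfrak{spin}(n,1))$ is $\theta$-stable and $d\varphi$ sends a chosen Cartan decomposition of $\mathfrak{spin}(n,1)$ into $\mathfrak k\oplus\mathfrak p$. Identify a maximal split torus $A_L\subset \varphi(Spin(n,1))$ and a maximal split torus $A_G\supset A_L$ of $G$. Step~2: Because $\varphi$ is non-trivial and $Spin(n,1)$ is simple, $\varphi$ is a finite covering onto its image; the nontrivial central element $-1\in Spin(n,1)$ (the kernel of $Spin(n,1)\to SO_0(n,1)$) goes to $\varphi(-1)\in G$, which lies in the maximal compact subgroup and commutes with $A_L$. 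The two alternatives are exactly $\varphi(-1)=e$ (equivalently $\iota(\varphi(-1))=I$, meaning $\varphi$ factors through $SO_0(n,1)$) or $\varphi(-1)\neq e$ with $\iota(\varphi(-1))=-I$ (since $\iota$ restricted to $C_{\even}(n,1)$ sends the center of $Spin(n,1)$ faithfully, the only two possibilities for $\iota(\varphi(-1))$ are $\pm I$ — this needs the observation that $-1$ acts by $-I$ in the standard, i.e.\ minuscule, representation of the classical $G$ when $\varphi$ does not descend). Step~3: Compute the polar part (Cartan projection direction) $H_L\in\mathfrak a_G$ of $\varphi(Spin(n,1))$ in the two cases and compare with the properness cone of $H$. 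The essential point is that when $\varphi(-1)=-I$ in the standard representation, the weights of $\iota\circ d\varphi$ restricted to $\mathfrak a_L$ are all half-integral (odd multiples of $1/2$), which forces the ray $\mathbb R_{\ge 0}H_L$ to avoid the polar cone of $H$ — precisely the combinatorial condition made explicit in \cite{KannakaTojo25} via the Hurwitz--Radon/$2$-adic bookkeeping of Table~\ref{tab:very-even-symmetric} and Theorem~\ref{theorem:Kannaka_Tojo_Spin_very-even}; whereas when $\varphi(-1)=I$ (so the action factors through $SO_0(n,1)$) the weights are integral and the ray lands inside the polar cone of $H$, violating the criterion. Step~4: Conclude the ``in particular'' clause: any $SO_0(n,1)$-subgroup of $G$ arises from a $\varphi$ with $\varphi(-1)=e$, hence $\iota(\varphi(-1))=I\neq -I$, hence the action is not proper.

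\textbf{Main obstacle.} The delicate part is Step~3 — making precise, uniformly across all fifteen families in Table~\ref{tab:very-even-symmetric}, the assertion that ``half-integral weights $\Leftrightarrow$ disjointness of polar cones.'' This is where one must invoke the specific structure of each $G/H$ in the table (the parities built into the blocks $SO(p,p+1)$, $S(U(p,p+1)\times U(\cdot))$, etc.) and the already-established Theorem~\ref{theorem:Kannaka_Tojo_Spin_very-even}; I would organize the argument so that Lemma~\ref{lem:2_equiv_n} becomes essentially a refinement of that theorem, extracting from its proof the explicit dichotomy $\iota(\varphi(-1))=\pm I$ rather than reproving the properness criterion from scratch. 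The remaining steps (the representation-theoretic fact that the nontrivial central element of $Spin(n,1)$ acts by $\pm I$ in the relevant minuscule representation, and the reduction to real rank one) are routine, and the $SO_0(n,1)$ corollary is immediate once the equivalence is in hand.
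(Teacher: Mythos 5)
The paper does not actually prove Lemma~\ref{lem:2_equiv_n}: it is quoted from the forthcoming paper \cite{KannakaTojo25}, with only the remark that the proof rests on the properness criterion of \cite[Thm.~4.1]{Kobayashi89}. Your choice of tool therefore matches the only indication the paper gives, and the mechanism you describe in Step~3 (half-integral versus integral $\mathfrak{a}$-weights of $\iota\circ d\varphi$, tested against the polar cone of $H$ via the Cartan projection) is the right one. As written, though, the proposal is a plan rather than a proof: the table-by-table verification that you yourself identify as the main obstacle is deferred to Theorem~\ref{theorem:Kannaka_Tojo_Spin_very-even} and \cite{KannakaTojo25}, so the hard content is not carried out.

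There is also a genuine gap in Step~2. You assert that the only possibilities for $\iota(\varphi(-1))$ are $\pm I$, reducing the lemma to a two-case dichotomy ($\varphi$ descends to $SO_0(n,1)$, or $\iota(\varphi(-1))=-I$). This is false in general: $\varphi(-1)$ is central in $\varphi(Spin(n,1))$ but not in $G$, so by Schur's lemma it acts by a sign $\pm1$ separately on each irreducible $Spin(n,1)$-constituent of the standard representation $\iota\circ\varphi$, and these signs need not agree. For example, $\varphi$ may be chosen so that $\iota\circ\varphi$ contains both a genuine (spin-type) constituent and a constituent factoring through $SO_0(n,1)$; then $\varphi(-1)\neq e$ and yet $\iota(\varphi(-1))\neq\pm I$. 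Your two cases therefore do not cover the ``only if'' direction of the lemma in this mixed situation. The repair is contained in your own Step~3: $\iota(\varphi(-1))=-I$ holds if and only if every constituent is genuine, which is equivalent to $0$ not occurring as an $\mathfrak{a}_L$-weight of $\iota\circ d\varphi$ (all weights lying in $\tfrac12+\Z$); in the mixed case a zero weight does occur, coming from the non-genuine constituent, and non-properness follows by the same polar-cone computation as in the descending case. You should organize the argument around the single invariant ``does $0$ occur as an $\mathfrak{a}_L$-weight of $\iota\circ d\varphi$'' rather than around the purported dichotomy on $\iota(\varphi(-1))$.
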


We now return to one of our main themes, Problem~\ref{mainquestion'}, and provide examples of 
 discontinuous groups $\Gamma$ for certain family homogeneous spaces $X=G/H$
that are Zariski-dense subgroups of $G$ with cohomological dimension $6$.
To be more precise, we shall take $G$ to be the identity component in the Zariski-topology, $G(p,q)_{0}$, of the group $G(p,q)$ (see Appendix~\ref{section:clifford-spin} for the definition).
We take $\Gamma$ to be the discrete subgroups of $G(p,q)_{0}$  
constructed by combining Theorem~\ref{theorem:bending}~\ref{item:bending-realize-Gphi} with Example~\ref{example:Clifford-z.d},  
which are of cohomological dimension $6$ and Zariski-dense.

As a corollary of Lemma~\ref{lem:2_equiv_n}, the following result follows immediately:
\begin{proposition}
    \label{proposition:proper-g(p,q)}
    Among the classical homogeneous spaces $G/H$ listed in Table~\ref{tab:very-even-symmetric},  
we consider those for which there exist some $p, q \in \mathbb{N}_{+}$ such that  
$G$ is isomorphic to $G(p,q)_{0}$.  
For any natural number $n \leq p$,  
the action of $Spin(n,1)$ on $G/H$,  
induced via the inclusion $Spin(n,1) \to G(p,q)_{0} \simeq G$,  
is proper.
\end{proposition}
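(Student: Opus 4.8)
The plan is to deduce Proposition~\ref{proposition:proper-g(p,q)} as a direct consequence of Lemma~\ref{lem:2_equiv_n}, which reduces the properness of a $Spin(n,1)$-action on one of the homogeneous spaces $G/H$ in Table~\ref{tab:very-even-symmetric} to the purely Clifford-algebraic condition $\iota(\varphi(-1)) = -I$, where $\iota$ is the standard representation of the classical group $G \simeq G(p,q)_0$ and $-1 \in C_{\even}(n,1)$ is minus the identity of the even Clifford algebra. So the whole content of the proof is to verify that, for the natural inclusion $\varphi\colon Spin(n,1) \to G(p,q)_0$ arising (when $p \ge n$ and $q \ge 1$) from the inclusion of quadratic spaces $\R^{n,1} \hookrightarrow \R^{p,q}$, we have $\iota(\varphi(-1)) = -I$, and moreover that $\varphi$ is non-trivial so that Lemma~\ref{lem:2_equiv_n} applies.

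First I would recall, using the notation of Appendix~\ref{section:clifford-spin}, that $Spin(n,1) \subset C_{\even}(n,1)^{\times}$ and the inclusion $Spin(n,1) \to Spin(p,q)$ is simply the restriction of the $\R$-algebra embedding $C_{\even}(n,1) \hookrightarrow C_{\even}(p,q)$ induced by $\R^{n,1} \hookrightarrow \R^{p,q}$; this embedding is unital, so the element $-1 \in C_{\even}(n,1)$ (the nontrivial element of the kernel of $Spin(n,1) \to SO_0(n,1)$) is sent to $-1 \in C_{\even}(p,q)$, i.e. to the nontrivial element of the kernel of $Spin(p,q) \to SO_0(p,q)$. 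Next, $G(p,q)$ is by definition (see \cite[Def.~4.3.1]{KobayashiYoshino05} and Proposition~\ref{proposition:Kobayashi-Yoshino-G(p,q)}) a group acting on a module over the full Clifford algebra $C(p,q)$, and its standard representation $\iota$ is, up to the identifications with classical groups recorded in Proposition~\ref{proposition:Kobayashi-Yoshino-G(p,q)}, the restriction to $G(p,q)_0$ of the Clifford module structure; in particular the central element $-1 \in C(p,q)$ acts on this module as $-I$. Since $\varphi(-1) = -1$ inside $C(p,q)^{\times}$ by the previous sentence, we get $\iota(\varphi(-1)) = \iota(-1) = -I$, which is exactly the condition required by Lemma~\ref{lem:2_equiv_n}.

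It remains to check $\varphi$ is non-trivial: this is immediate because $\varphi$ is injective on $Spin(n,1)$ for $n \ge 2$ (it is the restriction of an injective algebra homomorphism), so $\varphi(Spin(n,1))$ is an infinite subgroup, hence $\varphi$ is not the trivial homomorphism. Therefore Lemma~\ref{lem:2_equiv_n} applies and yields that the $Spin(n,1)$-action on $G/H$ via $\varphi$ is proper, for every $n$ with $2 \le n \le p$. (The edge case $n$ not satisfying $n \ge 2$ is vacuous here since $Spin(n,1)$ is only considered for $n \ge 2$; for $n \le p$ we also need $q \ge 1$, which holds because $G(p,q)_0$ occurring in Table~\ref{tab:very-even-symmetric} always has $q \ge 1$ in the relevant identifications, so the inclusion $\R^{n,1} \hookrightarrow \R^{p,q}$ genuinely exists.)

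The only genuine obstacle I anticipate is bookkeeping rather than mathematics: one must be careful to match the specific normalization of the standard representation $\iota$ of $G(p,q)_0$ used in Lemma~\ref{lem:2_equiv_n} — coming from \cite{KannakaTojo25} and tied to the identifications in Proposition~\ref{proposition:Kobayashi-Yoshino-G(p,q)} — with the Clifford module picture, and to confirm that under every one of the classical-group identifications ($O(p,q)$, $U(p,q)$, $Sp(p,q)$, $GL(N,\R)$, $GL(N,\C)$, $GL(N,\HA)$, $O^{*}(2N)$, $Sp(N,\R)$, $Sp(N,\C)$) the central element $-1$ of the Clifford algebra indeed acts as $-I$ in that standard representation. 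This is a finite, case-free check once one observes that $-1$ is central in $C(p,q)$ and acts as the scalar $-1$ on any Clifford module, so $\iota(-1) = -I$ regardless of which classical group $G(p,q)_0$ is isomorphic to; hence no case analysis is actually needed, and the proposition follows in a couple of lines.
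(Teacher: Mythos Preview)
Your proof is correct and follows exactly the route the paper indicates: the paper states that Proposition~\ref{proposition:proper-g(p,q)} ``follows immediately'' as a corollary of Lemma~\ref{lem:2_equiv_n}, and you have spelled out precisely that immediate verification---namely that the unital algebra embedding $C_{\even}(n,1)\hookrightarrow C_{\even}(p,q)$ sends $-1$ to $-1$, and that $-1\in C_{\even}(p,q)$ acts as $-I$ under the standard representation, so $\iota(\varphi(-1))=-I$.
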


By combining Proposition~\ref{proposition:proper-g(p,q)}, 
Example~\ref{example:Clifford-z.d},  
and Theorem~\ref{theorem:Zariski-dense-discontinuous-spin-lattice}, 
we obtain the following:
\begin{theorem}
    \label{theorem:zariski-dense-noncpt-CK}
    Every homogeneous space $X=G/H$ in Table~\ref{tab:very-even-symmetric-coh-6} admits a discontinuous group
    for $X$ which is a 
    Zariski-dense subgroup of $G$ with cohomological dimension 6.
\end{theorem}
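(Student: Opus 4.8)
The plan is to combine three ingredients already established in the paper: the classification of proper $Spin(n,1)$-actions via the Hurwitz--Radon number (Theorem~\ref{theorem:Kannaka_Tojo_Spin_very-even} and Lemma~\ref{lem:2_equiv_n}), the deformation-to-Zariski-dense result for $Spin(n,1)$ (Theorem~\ref{theorem:Zariski-dense-discontinuous-spin-lattice}), and the explicit Clifford-algebra computations of $\mathfrak{g}^\varphi$ (Example~\ref{example:Clifford-z.d}). The table \ref{tab:very-even-symmetric-coh-6} referenced in the statement should consist precisely of those $X=G/H$ drawn from Table~\ref{tab:very-even-symmetric} for which $G$ is isomorphic to the identity component $G(p,q)_0$ of a Clifford group $G(p,q)$ with $p\ge 6$ and $\mathfrak{g}(p,q)^\varphi=\mathfrak{g}(p,q)$ for the natural inclusion $\varphi\colon Spin(6,1)\hookrightarrow G(p,q)$, as listed in Example~\ref{example:Clifford-z.d} (e.g. $G(6,1)\simeq O^*(8)$, $G(7,1)\simeq O(8,\mathbb C)$, $G(8,1)\simeq O(8,8)$, $G(8,2)\simeq U(8,8)$, $G(9,1)\simeq GL(16,\mathbb R)$, etc., restricted to those whose $G/H$ appears in Table~\ref{tab:very-even-symmetric}).

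First I would fix such a homogeneous space $X=G/H$ with $G\simeq G(p,q)_0$ and $p\ge 6$. By Proposition~\ref{proposition:proper-g(p,q)}, the natural inclusion $\varphi\colon Spin(6,1)\to G(p,q)_0\simeq G$ gives a proper $Spin(6,1)$-action on $X$, because $6\le p$; concretely, Lemma~\ref{lem:2_equiv_n} applies since $\iota(\varphi(-1))=-I$ for the Clifford embedding. Next, by Example~\ref{example:Clifford-z.d} (the $n=6$ cases), we have $\mathfrak{g}^\varphi=\mathfrak{g}$ for each of the relevant inclusions. Then Theorem~\ref{theorem:Zariski-dense-discontinuous-spin-lattice}~(1)--(2), applied to $L=Spin(6,1)$ and this $\varphi$, produces a torsion-free cocompact discrete subgroup $\Gamma$ of $Spin(6,1)$ together with a small deformation $\varphi'\in\mathcal{R}(\Gamma,G;X)$ of $\varphi|_\Gamma$ such that $\varphi'(\Gamma)$ is Zariski-dense in $G$ while the $\varphi'(\Gamma)$-action on $X$ remains properly discontinuous (stability via Fact~\ref{fact:stability-for-properness}, since $Spin(6,1)$ is a rank-one simple group). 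Finally, since $\Gamma$ is a torsion-free cocompact discrete subgroup of $Spin(6,1)$ and $Spin(6,1)$ is locally isomorphic to $SO(6,1)$, the quotient $\Gamma\backslash(Spin(6,1)/Spin(6))$ is a closed hyperbolic $6$-manifold, hence $\operatorname{cd}_{\mathbb R}(\Gamma)=6$; cohomological dimension is a group invariant, so $\operatorname{cd}_{\mathbb R}(\varphi'(\Gamma))=6$ as well. This exhibits $\varphi'(\Gamma)$ as the desired discontinuous group for $X$: Zariski-dense in $G$ with cohomological dimension $6$.

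The only genuinely non-routine point is the bookkeeping of which entries of Table~\ref{tab:very-even-symmetric} actually have $G\cong G(p,q)_0$ with $p\ge 6$ and with the $\mathfrak{g}^\varphi=\mathfrak{g}$ property — i.e. assembling Table~\ref{tab:very-even-symmetric-coh-6} itself. This requires matching the periodicity identifications of $G(p,q)$ with classical groups (Proposition~\ref{proposition:Kobayashi-Yoshino-G(p,q)}) against the list in Table~\ref{tab:very-even-symmetric}, and checking in each case that $\rho(\mathfrak g,\iota)\ge 6$ so that a proper $Spin(6,1)$-action exists (which is automatic once $p\ge 6$ by Proposition~\ref{proposition:proper-g(p,q)}, consistent with Theorem~\ref{theorem:Kannaka_Tojo_Spin_very-even}). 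Everything else is a direct citation: properness from Proposition~\ref{proposition:proper-g(p,q)}, the equality $\mathfrak{g}^\varphi=\mathfrak{g}$ from Example~\ref{example:Clifford-z.d}, Zariski-density and preservation of proper discontinuity from Theorem~\ref{theorem:Zariski-dense-discontinuous-spin-lattice}, and $\operatorname{cd}_{\mathbb R}=6$ from the fact that cocompact torsion-free lattices in $Spin(6,1)$ have cohomological dimension equal to $d(Spin(6,1))=6$. Thus no new argument beyond correct table assembly is needed, and the theorem follows immediately by specializing the machinery of Sections~\ref{section:(G,Gamma)-deformation} and~\ref{section:realize-zariski} to $n=6$.
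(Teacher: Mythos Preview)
Your proposal is correct and follows essentially the same route as the paper: the paper's proof is the single sentence ``By combining Proposition~\ref{proposition:proper-g(p,q)}, Example~\ref{example:Clifford-z.d}, and Theorem~\ref{theorem:Zariski-dense-discontinuous-spin-lattice}, we obtain the following,'' and you have unpacked exactly these three citations. Your additional remark that $\operatorname{cd}_{\R}(\Gamma)=6$ because $\Gamma$ is a torsion-free cocompact lattice in $Spin(6,1)$ is implicit in the paper's surrounding discussion and is the right way to finish.
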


\begin{table}
    \centering
    \begin{tabular}{|c|c|c|c|}
        \hline
        $G$ & $H$ & $p$ & $d(X)$ \\
        \hline
        $SL(16,\R)$ & $SO(7,9)$ &  & $72$ \\
        $SL(16,\R)$ & $SL(2p+1,\R)\times SL(15-2p,\R)$
        & $0\leq p \leq 3$ & $-4p^2+28p+16$ \\
        $SL(8,\HA)$ & $SL(2p+1,\HA)\times SL(7-2p,\HA)$ & $p=0,1$ & $-16p^2 + 48p +29$ \\
        $Sp(16,\R)$ & $Sp(p,\R)\times Sp(15-p,\R)$ & $0\leq p\leq 7$ & $2(p+1)(16-p)$ \\
        $Sp(16,\C)$ & $Sp(p,\C)\times Sp(15-p,\C)$ & $0\leq p\leq 7$ & $-4p^2+60p+63$ \\
        $SO(8,8)$ & $SO(p,p+1)\times SO(8-p,7-p)$ & $0\leq p\leq 3$ & $-2p^2+14p+8$ \\
        $SU(8,8)$ & $S(U(p,p+1)\times U(8-p,7-p))$ & $0\leq p\leq 3$ & $-4p^2+28p+16$ \\
        $Sp(8,8)$ & $Sp(p,p+1)\times Sp(8-p,7-p)$ & $0\leq p\leq 3$ & $-8p^2+56p+32$ \\
        $SO^{*}(8)$ & $SO^{*}(6)\times SO^{*}(2)$ & & $6$ \\
        $SO^{*}(8)$ & $U(3,1)$ & & $6$ \\
        $SO^{*}(16)$ & $SO^{*}(4p+2)\times SO^{*}(14-4p)$ & $p=0,1$ & $2(2p+1)(7-2p)$ \\
        $SO^{*}(16)$ & $U(3,5)$ & & $26$ \\
        $SO(8,\C)$ & $SO(2p+1,\C)\times SO(7-2p,\C)$ & $p=0,1$ & $(2p+1)(8-2p)$ \\
        \hline
    \end{tabular}
    \caption{Some homogeneous spaces admitting Zariski-dense discontinuous groups of cohomological dimension 6.}
    \label{tab:very-even-symmetric-coh-6}
\end{table}

As seen in the proof of \cite[Cor.~5.5]{Kobayashi89}, the quantity $d(X)-\operatorname{cd}_{\R}(\Gamma)$
represents the degree of deviation from compactness of $X_\Gamma$.

We extract examples of homogeneous spaces $X=G/H$ for which $d(X)\leq 16$ from
Table~\ref{tab:very-even-symmetric-coh-6}.
\[
X=
\left\{
\begin{array}{ll}
SO^{*}(8)/(SO^{*}(6)\times SO^{*}(2))  & (d(X)=6), \\
SO^{*}(8)/U(3,1) & (d(X)=6), \\
SO(8,\C)/SO(7,\C) & (d(X)=7), \\
SO(8,8)/SO(7,8) & (d(X)=8), \\
SO^{*}(16)/(SO^{*}(14)\times SO^{*}(2)) & (d(X)=14), \\
SL(16,\R)/SL(15,\R) & (d(X)=16), \\
SU(8,8)/U(7,8) & (d(X)=16).
\end{array}
\right.
\]
In the first two cases where 
$d(X)=6$, the quotient space  $X_\Gamma$ is compact, and these cases have already been discussed in
Theorem~\ref{thm:zariskidense-up-to-local-isom}.
In contrast,  $X_\Gamma$ is non-compact in the other cases where $d(X)>6$.

\medskip

To analyze the results obtained here, we introduce two numerical invariants related to discontinuous groups for homogeneous spaces and their deformations.
\begin{definition} 
\label{def:max_cohomological_dim_for_discontinuous_group}
Let $X=G/H$ be a homogeneous space of reductive type.
\begin{enumerate}[label=(\arabic*)]
    \item 
    Let $\delta(X)$ denote the maximum cohomological dimension $\operatorname{cd}_{\mathbb R}(\Gamma)$ among all finitely-generated discontinuous groups $\Gamma$ for $X$.

    \item 
    Let $\delta_Z(X)$ denote the maximum cohomological dimension $\operatorname{cd}_{\mathbb R}(\Gamma)$ among all finitely-generated discontinuous groups $\Gamma$ for $X$ such that $\Gamma$ is Zariski-dense in $G$.
We set $\delta_Z(X):=0$ if there is no such discontinuous group $\Gamma$.
\end{enumerate}
\end{definition}
By definition and from \cite[Cor.~5.5]{Kobayashi89}, we have the following inequalities:
\[
\delta_{Z}(X)\leq \delta(X)\leq d(X),
\]
where we recall from (\ref{eqn:dX}) that $d(X)$ is the \lq\lq non-compact dimension\rq\rq\ of $X$,
which can be explicitly computed.
If the answer to Question (Q3) in Question~\ref{question:deform-ck} is affirmative, then we have the following equality:
\[ \delta_{Z}(X)=\delta(X)=d(X).
\]

We summarize some established results and new findings in terms of these numerical invariants:
    \begin{enumerate}[label=(\arabic*)]
     \item  (\cite[Cor.~4.4]{Kobayashi89}) $\delta(X)=0$ if and only if $\rank_{\R}(G)=\rank_{\R}(H)$ (the criterion for the Calabi-Markus phenomenon).
    \item If a reductive subgroup $L$ of $G$ acts properly on $X$, then $d(L) \leq \delta(X)$.
    \item (\cite[Cor.~5.5]{Kobayashi89}) $\delta(X) = d(X)$ if and only if $X$ admits a cocompact discontinuous group.
    \item (cf.\ Benoist~\cite[Thm.~1.1]{Benoist96}) A criterion for $\delta_{Z}(X)=0$ is explicitly given in terms of the action of the Weyl group.
    \item (Theorem~\ref{thm:zariskidense-up-to-local-isom})
    $\delta_{Z}(X)=\delta(X)=d(X)$ holds when the homogeneous spaces $G/H$ are among those listed in (iii) of Theorem~\ref{thm:zariskidense-up-to-local-isom}. 
    \item (Theorem~\ref{thm:SOn1-Zariski-dense})
     When $X=(G\times G)/\diag G$ with $\mathfrak g=\mathfrak{so}(n,1)$, then $\delta_Z(X)=  d(G) = n$.
     \item (Corollary~\ref{cor:symmetric_space_cohomological_dim_Zariski_dense})
     For any semisimple symmetric space $X=G/H$, $\delta_Z(X) \neq 1$.
    \item (Theorem~\ref{theorem:zariski-dense-noncpt-CK})
      $6 \le \delta_{Z}(X)$ for all $X=G/H$ in Table~\ref{tab:very-even-symmetric-coh-6}
    \end{enumerate}

\medskip

We do not know the explicit values of $\delta_Z(X)$ and $\delta(X)$ in the general setting. It may be of interest to
provide effective lower or upper bounds for these numerical invariants.

\section{Spectral analysis of \texorpdfstring{$X_\Gamma$}{X/Gamma} 
under small deformation of discontinuous groups 
\texorpdfstring{$\Gamma$}{Gamma}
}
\label{section:spectral-analysis}

In this section, we discuss analytic perspectives of the quotient space $X_\Gamma$. 
Spectral analysis on locally symmetric spaces, beyond the classical Riemannian setting, is a rapidly developing area of research
\cite{KasselKobayashi16,KasselKobayashi2019Invariant,KasselKobayashi2020Spectral,KasselKobayashi2019standard, Kobayashi16intrinsic}, 
with many open problems remaining to be addressed. 
We focus on the aspects of this analysis from the viewpoint of deformations of the discontinuous group $\Gamma$, which we have discussed in Sections~\ref{section:deformation_compact_CK}-\ref{section:deformation-cpt-non-cpt}. 
At the end of this section, we will present a set of questions for further exploration.

\subsection{Discrete spectra on compact quotients} 
We begin by outlining some basic setup and notation. For more details, we refer to the forthcoming book \cite{KasselKobayashi2019standard}, as well as the article introducing open problems and providing a brief survey \cite{Kobayashi22conjecture}.

 Let $X=G/H$ be a homogeneous space of reductive type. 
The manifold $X$ carries a pseudo-Riemannian structure with a transitive isometry group; namely, the reductive group $G$ acts transitively and isometrically on $X$. Such a pseudo-Riemannian structure can be defined using the Killing form $B$ if $G$ is semisimple.

Let $\mathbb{D}_G(X)$ denote the algebra of $G$-invariant differential operators on $X$. Suppose that $\Gamma$ is a discontinuous group for $X$. Then, any $D \in \mathbb{D}_G(X)$ induces a differential operator $D_{\Gamma}$ on the quotient $X_{\Gamma}=\Gamma\backslash X$ via the covering map $\varpi \colon X\rightarrow X_{\Gamma}$. 
We consider the set 
$$
 \mathbb{D}(X_{\Gamma}):=\{D_{\Gamma}\mid 
D\text{ is a $G$-invariant differential operator on $X$}\}
$$ as the algebra of \emph{intrinsic differential operators} on the $(G,X)$-manifold $X_{\Gamma}$.
Clearly, the correspondence $D \mapsto D_\Gamma$ defines an isomorphism between the algebras $\mathbb{D}_G(X)$ and $\mathbb{D}(X_\Gamma)$.
For example, $X_{\Gamma}$ inherits a pseudo-Riemannian structure from $X$ via $\varpi$,
and the Laplacian $\Delta_{X_\Gamma}$ on $X_{\Gamma}$ is locally the same operator $\Delta_X$ on $X$ via the diffeomorphism $\varpi$; hence, $\Delta_{X_\Gamma}$  belongs to $\mathbb{D}(X_{\Gamma})$.
For global analysis on $X_\Gamma$, it is important to note that the Laplacian is no longer an elliptic differential operator in our setting, where the metric is not positive-definite.

From now on, we assume that $X=G/H$ is a symmetric space, where $G$ is a real reductive algebraic group.
Let $\mathfrak{q}$ be the orthogonal complementary subspace of $\mathfrak{h}$ in $\mathfrak{g}$
with respect to an $\operatorname{Ad}(H)$-invariant non-degenerate symmetric bilinear form (e.g., the Killing form if $\mathfrak g$ is semisimple) of the Lie algebra $\mathfrak{g}$. 
We fix a maximal semisimple abelian subspace $\mathfrak{j}$ of $\sqrt{-1}\mathfrak{q}$
and denote by $W$ the Weyl group of 
the restricted root system of $(\mathfrak{g}_{\C},\mathfrak{j}_{\C})$.
The Harish-Chandra isomorphism $\mathbb{D}_G(X)\simeq S(\mathfrak{j}_{\C})^{W}$ 
gives the bijection
\[
\Hom_{\C\text{-alg}}(\mathbb{D}_G(X),\C) \simeq \mathfrak{j}_{\C}^{\vee}/W\ \ 
(\chi_{\lambda}\leftrightarrow \lambda).
\] 

In this section, we explore the following object related to locally symmetric spaces $X_{\Gamma}=\Gamma\backslash G/H$, which goes beyond the classical Riemannian case:
\begin{align*}
L^{2}(X_{\Gamma},\mathcal{M}_{\lambda})
&:=\{f\in L^{2}(X_{\Gamma})\mid 
D_{\Gamma}f=\chi_{\lambda}(D)f \ \textrm{for any}\ D\in \mathbb{D}_G(X)
\}, \\
\mathrm{Spec}_{d}(X_{\Gamma})
&:= \{\lambda\in \mathfrak{j}_{\C}^{\vee}/W \mid 
L^{2}(X_{\Gamma},\mathcal{M}_{\lambda})\neq 0
\},
\end{align*}
where $\chi_\lambda\colon \mathbb{D}_G(X)\to \mathbb{C}$ is the algebra homomorphism associated to $\lambda \in \mathfrak{j}_{\C}^{\vee}/W$. 
The differential equation 
\[
D_{\Gamma}f=\chi_{\lambda}(D)f
\] is interpreted in the weak sense,
 or in the sense of distributions.
We refer to an element in
$\mathrm{Spec}_{d}(X_{\Gamma})$ as a \emph{discrete spectrum} of $X_{\Gamma}$.

\begin{example}
If $G/H$ is a symmetric space of rank one, such as $SO(p,q+1)/SO(p,q)$, then the $\C$-algebra
$\mathbb{D}_G(X)$ is generated by the 
Laplacian of the pseudo-Riemannian manifold $X$.
In this case, $\mathrm{Spec}_{d}(X_{\Gamma})$ is identified with the set of discrete spectrum of the Laplacian.
\end{example}

\subsection{Constructing stable spectra
by Poincar\'e series \texorpdfstring{\cite{KasselKobayashi16}}{[29]}
}

We discuss the discrete spectrum, which is stable under small deformations of $X_\Gamma$, following its construction in Kassel and Kobayashi~\cite{KasselKobayashi16}.

A general construction of the discrete spectrum for reductive symmetric spaces $X=G/H$ was established by 
Flensted-Jensen~\cite{FlenstedJensen1980Discrete} under the rank condition \begin{equation} \label{eq:FJrank} \rank(G/H) = \rank(K/(H \cap K)). \end{equation} 
This is a generalization of the celebrated rank condition due to Harish-Chandra, and the necessity was proved by Matsuki--Oshima~\cite{OshimaMatsuki1982description}. 
In our notation, Flensted-Jensen's theorem says that there exist countably many elements 
in $\mathrm{Spec}_{d}(X)$ in the case $\Gamma=\{ e \}$.

Let us now consider a setting where $\Gamma$ is a discontinuous group for a reductive symmetric space $X=G/H$.
A general construction of the discrete spectrum for the locally symmetric space $X_\Gamma$, when $X$ satisfies the rank condition~(\ref{eq:FJrank}) and $\Gamma$ is a \lq\lq sharp\rq\rq\ discontinuous group for $X$, was provided in Kassel and Kobayashi~\cite{KasselKobayashi16} through the introduction of a generalized Poincar\'{e} series.
It is noted that infinitely many discrete spectra
remain stable under any small deformation of a discontinuous group $\Gamma$, as formulated in the following definition:

\begin{definition}[{\cite[Def.~1.6]{KasselKobayashi16}}]
\label{def:stable-spectrum}
    Let $\Gamma$ be a discontinuous group for $G/H$ via the inclusion map $\iota\colon \Gamma\rightarrow G$.
    We say $\lambda$ is a \emph{stable (discrete) spectrum}
if there exists a neighborhood $U$ of 
    $\iota \in \mathcal{R}(\Gamma,G;X)$ such that 
    \[
\lambda \in
 \bigcap_{\varphi\in U}
    \mathrm{Spec}_{d}(X_{\varphi(\Gamma)}).
    \]
\end{definition}

We recall from Definition~\ref{def:locally_rigid_for_X}
that
$X_{\varphi(\Gamma)}$ is locally rigid if the $G$-orbit through $\varphi$ is open in $\mathcal{R}(\Gamma, G;X)$.
Since 
$\mathrm{Spec}_{d}(X_{\varphi(\Gamma)})$
remains unchanged when $\varphi$ is replaced with its $G$-conjugate, all the discrete spectra are \emph{stable} in the sense of Definition~\ref{def:stable-spectrum} if $X_{\varphi(\Gamma)}$
is locally rigid. Thus, we are interested in the case where $X_{\varphi(\Gamma)}$ is deformable.  This is how affirmative answers to (Q1)--(Q3) in Question~\ref{question:deform-ck} regarding deformability relate to the theme of this section. 

The following theorem for the existence of stable spectra is formulated
for a subgroup $L$ when it is of real-rank one, such as $Spin(n,1)$.

\begin{fact}[{\cite[Thm.~1.7]{KasselKobayashi16}}]
    \label{fact:stable-spectrum}
   Let $X=G/H$ be a reductive symmetric space satisfying the rank condition~(\ref{eq:FJrank}).
Let $L$ be a real rank one semisimple Lie subgroup of $G$, which acts properly on $X$, and
 $\Gamma$ be a torsion-free, cocompact discrete subgroup of $L$.
 Then there exist infinite stable discrete spectra for $X_\Gamma$.
\end{fact}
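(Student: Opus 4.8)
\textbf{Proof proposal for Fact~\ref{fact:stable-spectrum}.}
The plan is to combine three ingredients already available in the excerpt. First, the rank condition~\eqref{eq:FJrank} is precisely the hypothesis of Flensted-Jensen's construction, so on the symmetric space $X=G/H$ itself there are countably many discrete spectra $\lambda \in \mathrm{Spec}_d(X)$ realized inside $L^2(X)$; moreover, the eigenfunctions produced by Flensted-Jensen satisfy explicit decay estimates (they are rapidly decreasing on $X$ in a suitable sense, controlled by the eigenvalue $\lambda$ relative to the $\rho$-shift). Second, since $L$ is a real rank one semisimple subgroup acting properly on $X$, and $\Gamma$ is a torsion-free cocompact discrete subgroup of $L$, the pair $(L,\Gamma)$ is \emph{sharp} for $X$ in the sense of Kassel--Kobayashi: there are constants $c,C>0$ so that the ``translation distance'' of $\gamma \in \Gamma$ in $X$ grows at least linearly, $\mu(\gamma \cdot o) \geq c\,\ell(\gamma) - C$, where $\ell$ is the word length (equivalently, the displacement in $L/L\cap K$). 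This is where cocompactness of $\Gamma$ in the rank one group $L$ is used, via the Švarc--Milnor lemma and the fact that the Cartan projection on $X$ restricted to $L$ is coarsely the Cartan projection on $L$.

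Given these two facts, the generalized Poincar\'e series
\[
\mathcal{P}_\Gamma \psi (x) := \sum_{\gamma \in \Gamma} \psi(\gamma^{-1}\cdot x),
\]
formed from a Flensted-Jensen eigenfunction $\psi \in L^2(X,\mathcal{M}_\lambda)$ with sufficiently strong decay (i.e. for $\lambda$ in the appropriate cone where the decay rate of $\psi$ beats the growth rate of $\Gamma$ encoded by the sharpness constant $c$), converges locally uniformly, descends to a $\Gamma$-invariant function on $X$, hence to a function on $X_\Gamma$, lies in $L^2(X_\Gamma)$, is nonzero (by a suitable choice of $\psi$ concentrated near a fundamental domain, or by a positivity argument when $\psi \geq 0$), and is an eigenfunction of every $D_\Gamma \in \mathbb{D}(X_\Gamma)$ with the same eigenvalue $\chi_\lambda(D)$ since $\mathcal{P}_\Gamma$ commutes with the $G$-invariant operators $D$. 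This yields $\lambda \in \mathrm{Spec}_d(X_\Gamma)$ for all $\lambda$ in an infinite set $\Lambda_0 \subset \mathrm{Spec}_d(X)$ cut out by the inequality ``decay of $\psi_\lambda$ exceeds growth governed by the sharpness constant''; infinitude follows because the Flensted-Jensen family is countable and the decay rate tends to infinity along the family, so all but finitely many $\lambda$ satisfy the inequality.

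For \emph{stability} under small deformations, I would invoke the fact, established in Section~\ref{section:preliminary-deform-discontinuous-group} (Fact~\ref{fact:stability-for-properness}, Kassel), that a torsion-free cocompact discrete subgroup $\Gamma$ of a real rank one simple group $L$ acting properly on $X$ is \emph{stable} as a discontinuous group for $X$: every $\varphi$ near the inclusion still gives a properly discontinuous cocompact action. The key quantitative point is that the sharpness constant can be chosen \emph{uniformly} on a whole neighborhood $U$ of $\iota$ in $\mathcal{R}(\Gamma,G;X)$ — this is exactly the content of the quantitative estimate behind Kassel's stability theorem (the sharpness constant depends continuously on $\varphi$ and stays bounded below on $U$). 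Therefore the convergence of the Poincar\'e series $\mathcal{P}_{\varphi(\Gamma)}\psi_\lambda$, its $L^2$-membership, and its non-vanishing all hold uniformly for $\varphi \in U$, and the same infinite set $\Lambda_0$ of eigenvalues $\lambda$ lies in $\bigcap_{\varphi \in U}\mathrm{Spec}_d(X_{\varphi(\Gamma)})$. Hence each such $\lambda$ is a stable discrete spectrum in the sense of Definition~\ref{def:stable-spectrum}, and there are infinitely many of them.

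\textbf{Main obstacle.} The routine parts are the eigenfunction property of $\mathcal{P}_\Gamma$ (formal, from $G$-invariance of $D$) and the counting/infinitude of $\Lambda_0$. The real work — and the part I expect to be most delicate — is the \emph{uniform} convergence and \emph{uniform} non-vanishing of the Poincar\'e series over the whole neighborhood $U$: one must show that the matching between the precise (eigenvalue-dependent) decay exponent of the Flensted-Jensen function $\psi_\lambda$ on $X$ and the growth exponent of the orbit $\varphi(\Gamma)\cdot o$ can be made with a margin that survives perturbation of $\varphi$. This requires the quantitative, perturbation-stable form of the sharpness constant for cocompact lattices in rank one subgroups, together with the observation that for $\lambda$ deep enough in the relevant cone the margin is arbitrarily large; both are available from \cite{KasselKobayashi16,Kassel12}, so the proof reduces to assembling them carefully. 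Non-vanishing I would handle by choosing $\psi_\lambda$ with a definite sign near a point (possible for the spherical Flensted-Jensen functions after an appropriate normalization) so that $\mathcal{P}_{\varphi(\Gamma)}\psi_\lambda$ is a sum of terms of the same sign at that point, hence positive there, uniformly in $\varphi \in U$.
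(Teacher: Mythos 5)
Your overall route is the same as the one the paper attributes to \cite{KasselKobayashi16}: form the generalized Poincar\'e series of Flensted-Jensen eigenfunctions (available under the rank condition~(\ref{eq:FJrank})), control convergence by matching the eigenvalue-dependent decay of $\psi_\lambda$ against the sharpness constant of the $\Gamma$-action, and obtain stability from the fact that the sharpness constant stays bounded below on a whole neighborhood of $\iota$ in $\mathcal{R}(\Gamma,G;X)$, which is the quantitative content behind Fact~\ref{fact:stability-for-properness}. That skeleton, and the infinitude of admissible $\lambda$, are consistent with the paper's outline.

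The step that does not work as you wrote it is the non-vanishing of $\mathcal{P}_{\varphi(\Gamma)}\psi_\lambda$. You propose to choose $\psi_\lambda$ ``with a definite sign near a point'' so that all terms of the series have the same sign there. But the terms of the Poincar\'e series at a point $x_0$ are the values $\psi_\lambda(\varphi(\gamma)^{-1}x_0)$ for \emph{all} $\gamma$, i.e.\ values of $\psi_\lambda$ along the entire orbit, not just near $x_0$; and Flensted-Jensen eigenfunctions (joint eigenfunctions of $\mathbb{D}_G(X)$, including the non-elliptic pseudo-Riemannian Laplacian) oscillate and change sign away from any given point, so positivity of $\psi_\lambda$ near $x_0$ gives no sign control on the tail. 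The argument actually needed — and the one the paper explicitly points to — is a ``main term dominates'' estimate: bound $\sum_{\gamma\neq e}|\psi_\lambda(\varphi(\gamma)^{-1}x_0)|$ by combining the decay of $\psi_\lambda$ with a count of the orbit points $\varphi(\Gamma)\cdot x_0$ lying in pseudo-balls, together with a Kazhdan--Margulis type separation estimate guaranteeing a uniform lower bound on how spread out the orbit is; for $\lambda$ sufficiently deep in the relevant cone this tail is strictly smaller than $|\psi_\lambda(x_0)|$, uniformly for $\varphi$ in the neighborhood $U$. Without replacing your positivity argument by this counting argument, the non-vanishing — and hence the conclusion that $\lambda\in\mathrm{Spec}_d(X_{\varphi(\Gamma)})$ — is not established.
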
 

The idea of the proof in \cite{KasselKobayashi16} is to construct periodic joint eigenfunctions of intrinsic differential operators by averaging eigenfunctions that decay rapidly at infinity over the $\Gamma$-orbits. 
To demonstrate the convergence of this infinite sum (the \emph{generalized Poincaré series}) and, furthermore, to prove that the total sum is non-zero, we require both analytic estimates for the eigenfunctions and geometric estimates related to the action of the discontinuous group $\Gamma$. 
The geometric estimates include counting $\Gamma$-orbits $\Gamma \cdot x$ in pseudo-balls, which is in a different setting compared to the counting estimate by Eskin--McMullen~\cite{EsMc93}, as well as a Kazhdan--Margulis type estimate for discontinuous groups.
The existence of the stable discrete spectrum follows from the fact that these estimates can be provided \emph{uniformly} with respect to the variables that appear in each of the estimates.

\medskip
Combining  Fact~\ref{fact:stable-spectrum} with
Theorem~\ref{theorem:Zariski-dense-discontinuous-spin-lattice},
we obtain the following:
\begin{theorem}
Let $G$ be a Zariski-connected real reductive algebraic group,  $X=G/H$ be a symmetric space satisfying the rank condition (\ref{eq:FJrank}), and $\varphi \colon Spin(n,1) \to G$ is a homomorphism such that the action of $Spin(n,1)$ on $X$ is proper via $\varphi$. Let $G^\varphi$ denote the algebraic subgroup of $G$, as introduced in Definition~\ref{def:g-phi}.
Then there exists a pair $(\Gamma, \varphi')$ with the following properties:
\begin{itemize}
    \item $\Gamma$ is a torsion-free cocompact arithmetic subgroup of $Spin(n,1)$,
    \item $\varphi'$ is a small deformation of $\varphi|_\Gamma$,
    \item the Zariski-closure of $\varphi'(\Gamma)$ is $G^\varphi$,
    \item  $\sharp (\mathrm{Spec}_{d}(X_{\varphi(\Gamma)}) \cap \mathrm{Spec}_{d}(X_{\varphi'(\Gamma)})) = \infty$.
\end{itemize}
\end{theorem}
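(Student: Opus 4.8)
The plan is to combine two results already established in the excerpt: Theorem~\ref{theorem:Zariski-dense-discontinuous-spin-lattice}, which produces the desired pair $(\Gamma,\varphi')$ realizing the maximal Zariski-closure $G^{\varphi}$ while preserving the proper discontinuity of the $\Gamma$-action on $X$, and Fact~\ref{fact:stable-spectrum} (Kassel--Kobayashi), which guarantees the existence of infinitely many \emph{stable} discrete spectra for the compact quotient $X_\Gamma$ when $\Gamma$ is a torsion-free cocompact discrete subgroup of a real rank one semisimple subgroup $L$ acting properly on $X$, provided the rank condition~(\ref{eq:FJrank}) holds. The key point is that stability of a discrete spectrum $\lambda$ means precisely that $\lambda$ lies in $\mathrm{Spec}_d(X_{\psi(\Gamma)})$ for all $\psi$ in some neighborhood $U$ of $\iota=\varphi|_\Gamma$ in $\mathcal{R}(\Gamma,G;X)$; so if $\varphi'$ is chosen close enough to $\varphi|_\Gamma$ to lie in that neighborhood, every stable spectrum is automatically shared by both $X_{\varphi(\Gamma)}$ and $X_{\varphi'(\Gamma)}$.

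First I would invoke Theorem~\ref{theorem:Zariski-dense-discontinuous-spin-lattice}~(1) applied to the homomorphism $\varphi\colon Spin(n,1)\to G$: since the $Spin(n,1)$-action on $X$ via $\varphi$ is proper, there exists a torsion-free cocompact discrete subgroup $\Gamma$ of $Spin(n,1)$ and a small deformation $\varphi'\in\mathcal{R}(\Gamma,G;X)$ of $\varphi|_\Gamma$ whose Zariski-closure equals $G^{\varphi}$. Inspecting the construction behind that theorem (ultimately Theorem~\ref{theorem:bending}~\ref{item:bending-realize-Gphi} and Theorem~\ref{theorem:Millson-spin}), one sees that $\Gamma$ may be taken to be the cocompact arithmetic group $\Gamma_\Lambda(I)$ constructed in Section~\ref{section:realize-zariski} (e.g.\ as in Example~\ref{example:desired-Gamma}), which is arithmetic in $Spin(n,1)$; this gives the first three bulleted properties. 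Moreover, the deformation parameter in the bending construction can be taken arbitrarily small: Proposition~\ref{prop:zariski-closure-gphi} shows that for all sufficiently small $t\ne 0$ the Zariski-closure of $\varphi_t(\Gamma)$ already equals $G^{\varphi}$, so there is freedom to shrink $t$ and keep $\varphi'=\varphi_t$ as close to $\varphi|_\Gamma$ as we wish.

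Next I would apply Fact~\ref{fact:stable-spectrum} with $L=\varphi(Spin(n,1))$ (or rather its image, which is simple of real rank one, noting $\varphi$ is non-trivial since the action is proper): the rank condition~(\ref{eq:FJrank}) is in force by hypothesis, $L$ acts properly on $X$, and $\Gamma$ is a torsion-free cocompact discrete subgroup of $L$. Hence there are infinitely many $\lambda\in\mathrm{Spec}_d(X_{\varphi(\Gamma)})$ that are stable in the sense of Definition~\ref{def:stable-spectrum}; let $\Lambda_{\mathrm{st}}$ denote this infinite set and let $U\subset\mathcal{R}(\Gamma,G;X)$ be a neighborhood of $\iota=\varphi|_\Gamma$ such that $\Lambda_{\mathrm{st}}\subset\bigcap_{\psi\in U}\mathrm{Spec}_d(X_{\psi(\Gamma)})$. (Technically, each $\lambda$ comes with its own neighborhood; but the construction of stable spectra in \cite{KasselKobayashi16} is uniform, so one fixed $U$ works for infinitely many $\lambda$ simultaneously — this is exactly the content of Fact~\ref{fact:stable-spectrum}, which already asserts the existence of infinitely many stable spectra rather than infinitely many spectra each stable on its own neighborhood.) Finally, by shrinking $t$ as above we arrange $\varphi'\in U$; then $\Lambda_{\mathrm{st}}\subset\mathrm{Spec}_d(X_{\varphi(\Gamma)})\cap\mathrm{Spec}_d(X_{\varphi'(\Gamma)})$, so this intersection is infinite, which is the last bulleted property.

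The only genuine subtlety — and the step I would be most careful about — is the compatibility of the two neighborhoods: the deformation $\varphi'$ furnished by Theorem~\ref{theorem:Zariski-dense-discontinuous-spin-lattice} must be made to lie inside the spectral-stability neighborhood $U$ of Fact~\ref{fact:stable-spectrum}, and one must be sure that forcing $\varphi'$ to be that close does not destroy the property that its Zariski-closure is all of $G^{\varphi}$. As noted, Proposition~\ref{prop:zariski-closure-gphi} resolves this: the Zariski-closure is $G^{\varphi}$ for \emph{every} sufficiently small $t\ne0$, so the two smallness requirements are compatible. One should also record that $\varphi'\in\mathcal{R}(\Gamma,G;X)$ — i.e.\ that $\varphi'(\Gamma)$ is still a discontinuous group for $X$ — which is guaranteed by the stability part of Theorem~\ref{theorem:Zariski-dense-discontinuous-spin-lattice} (equivalently Fact~\ref{fact:stability-for-properness} applied to $L=Spin(n,1)$), so that $\mathrm{Spec}_d(X_{\varphi'(\Gamma)})$ is even defined. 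With these observations in place the proof is a short assembly of the cited results.
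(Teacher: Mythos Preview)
Your proposal is correct and follows the paper's approach exactly: the paper states the theorem immediately after Fact~\ref{fact:stable-spectrum} with the one-line justification ``Combining Fact~\ref{fact:stable-spectrum} with Theorem~\ref{theorem:Zariski-dense-discontinuous-spin-lattice}, we obtain the following,'' and your write-up simply unpacks that combination. Your attention to the compatibility of the two smallness requirements (via Proposition~\ref{prop:zariski-closure-gphi}) and to the uniformity of the stability neighborhood in \cite{KasselKobayashi16} fills in exactly the details the paper leaves implicit.
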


As an illustration, the geometric results in Theorems~\ref{thm:7_dim_compact_space_form}
and \ref{theorem:zariski-dense-noncpt-CK}
yield the following examples, respectively:
\begin{example}
    There exists a 7-dimensional compact manifold $M$, with a pseudo-Riemannian metric of signature $(4,3)$ and constant sectional curvature $-1$, satisfying the following properties: 
\begin{itemize}
\item $\pi_1(M)$ is isomorphic to a torsion-free arithmetic cocompact discrete subgroup of $Spin(4,1)$;
\item the holonomy representation $\pi_1(M) \to SO(4,4)$ has a Zariski-dense image;
\item for a sufficiently large integer $N$,   $m(6-m)$ is a discrete spectrum
of the pseudo-Riemannian Laplacian $\Delta_{M}$ for any integer $m \ge N$.      
\end{itemize}
\end{example}
\begin{example}
    There exists a 15-dimensional non-compact manifold $M$, with a pseudo-Riemannian metric of signature $(8,7)$ and constant sectional curvature $-1$, satisfying the following properties: 
\begin{itemize}
\item $\pi_1(M)$ is isomorphic to a torsion-free arithmetic cocompact discrete subgroup of $Spin(6,1)$;
\item the holonomy representation $\pi_1(M) \to SO(8,8)$ has a Zariski-dense image;
\item for a sufficiently large integer $N$, $m(14-m)$ is a discrete spectrum
of the pseudo-Riemannian Laplacian $\Delta_{M}$ for any integer $m \ge N$.      
\end{itemize}
\end{example}

\subsection{Multiplicity of the discrete spectrum on compact quotients 
\texorpdfstring{$X_\Gamma$}{X/Gamma}
}
In our setting, which goes beyond Riemannian geometry, the Laplacian is no longer an elliptic differential operator.
As a result, the multiplicity of the discrete spectrum of the Laplacian can be infinite, 
even when the manifold $X_\Gamma$ is compact.
Thus, there is no guarantee that the multiplicity of the spectrum, 
$\dim L^{2}(X_{\Gamma},\mathcal{M}_{\lambda})$, is finite.

This section briefly reviews the latest findings regarding the multiplicity of the discrete spectrum $\lambda$, specifically,
$\dim_{\C}L^{2}(X_{\Gamma},\mathcal{M}_{\lambda})$
given by Kassel and Kobayashi. 
\begin{fact}[{\cite{KasselKobayashi_infty-mult, KasselKobayashi2019standard}}]
\label{fact:kobayashi-kassel-multiplicity}
    Let $X=G/H$ be a reductive symmetric space with $H$ non-compact.
    Assume that there exists a reductive subgroup 
    $L$ of $G$ such that $L$ acts properly on $G/H$,
    and that $L_{\C}$ acts spherically on
    the complexification 
    $X_{\C}=G_{\C}/H_{\C}$.

 Then, for any torsion-free discrete subgroup $\Gamma$ of $L$, we have 
    \[
    \dim_{\C}L^{2}(X_{\Gamma},\mathcal{M}_{\lambda})=\infty
    \]
    for every $\lambda\in \mathrm{Spec}_{d}(X)$ 
    sufficiently away from the walls.
\end{fact}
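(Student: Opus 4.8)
\textbf{Proof proposal for Fact~\ref{fact:kobayashi-kassel-multiplicity}.}

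The plan is to reduce the statement to a branching-law problem for the pair $(G, L)$ and then exploit the assumed sphericity of the $L_{\C}$-action on $X_{\C}$. First I would recall the setup of \cite{KasselKobayashi_infty-mult, KasselKobayashi2019standard}: because $X=G/H$ is a reductive symmetric space satisfying the rank condition, the Flensted-Jensen construction furnishes, for each $\lambda \in \mathrm{Spec}_{d}(X)$, a nonzero closed $G$-submodule $\mathcal{H}_{\lambda} \subset L^{2}(X, \mathcal{M}_{\lambda})$ which is a (typically irreducible) representation of $G$ in the discrete series for $X$. The key observation is that $L^{2}$-eigenfunctions on the quotient $X_{\Gamma}$ with eigenvalue $\chi_{\lambda}$ can be produced from $L$-periodic vectors, and more precisely that there is an injection
\[
\Hom_{L}\bigl(C^{\infty}(\Gamma\backslash L), \mathcal{H}_{\lambda}|_{L}\bigr) \hookrightarrow L^{2}(X_{\Gamma}, \mathcal{M}_{\lambda}),
\]
at least after suitable analytic justification (continuity of the eigenfunctions, convergence of the associated series, and the fact that $\Gamma$ is ``sharp'' in the sense used for the Poincaré series in \cite{KasselKobayashi16}). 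Thus it suffices to show that $\mathcal{H}_{\lambda}|_{L}$ contains infinitely many $L$-types that survive the $\Gamma$-period map, and for this the natural route is to control the restriction $\mathcal{H}_{\lambda}|_{L}$ and show it is \emph{$L$-admissible but of unbounded multiplicity}, or, alternatively, to show directly that the relevant space of periodic vectors is infinite-dimensional.

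The heart of the argument is the sphericity hypothesis: $L_{\C}$ acts spherically on $X_{\C} = G_{\C}/H_{\C}$. I would use this to understand the $\mathfrak{l}$-module structure of the Flensted-Jensen modules by passing through the ``$c$-dual'' / complexification picture, where the discrete series for $X$ are realized in a $\mathcal{D}$-module on $X_{\C}$ or in cohomology of line bundles; sphericity of the $L_{\C}$-action forces each such module, when restricted to $\mathfrak{l}_{\C}$, to decompose with multiplicities governed by a finitely-generated module over $S(\mathfrak{j}')$ for a Cartan subspace $\mathfrak{j}'$ of the symmetric space $L/(L\cap H)$. Concretely: when $\lambda$ is deep inside the positive Weyl chamber (``away from the walls''), the leading exponents of $\mathcal{H}_{\lambda}$ along the $L$-direction are dominant and non-degenerate, so the restriction to $L$ contains an infinite string of representations — e.g. an infinite family of spherical principal-series or discrete-series constituents of $L$ — each appearing. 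Averaging each of the corresponding rapidly-decaying matrix coefficients over $\Gamma$ (the generalized Poincaré series of \cite{KasselKobayashi16}, whose convergence and nonvanishing for $\lambda$ away from the walls is exactly Fact~\ref{fact:stable-spectrum}'s mechanism) produces infinitely many linearly independent elements of $L^{2}(X_{\Gamma}, \mathcal{M}_{\lambda})$, giving $\dim_{\C} L^{2}(X_{\Gamma},\mathcal{M}_{\lambda}) = \infty$.

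I would organize the write-up in the following steps: (i) set up the Flensted-Jensen module $\mathcal{H}_{\lambda}$ and record that $\mathrm{Spec}_{d}(X) \neq \emptyset$ under the rank condition; (ii) establish the linear injection from $L$-periodic vectors in $\mathcal{H}_{\lambda}$ into $L^{2}(X_{\Gamma},\mathcal{M}_{\lambda})$, using sharpness of $\Gamma$ and the convergence estimates of \cite{KasselKobayashi16} to guarantee the Poincaré series land in $L^{2}$; (iii) use the sphericity of the $L_{\C}$-action on $X_{\C}$ to prove that $\mathcal{H}_{\lambda}|_{L}$ contains infinitely many irreducible $L$-constituents each of whose spherical (or lowest-$K_L$-type) vectors is $\Gamma$-periodizable with nonzero periodization when $\lambda$ is away from the walls; (iv) conclude by counting. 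The main obstacle I anticipate is step (iii): controlling the restriction $\mathcal{H}_{\lambda}|_{L}$ precisely enough to exhibit \emph{infinitely many} surviving constituents. Sphericity gives a clean multiplicity-finiteness/admissibility-type statement, but one still needs the \emph{growth} of the number of $L$-types with nonzero $\Gamma$-periodization, and one must verify that ``sufficiently away from the walls'' is the correct quantitative hypothesis ensuring both that the periodization does not vanish and that distinct $L$-types remain distinct after periodizing. A secondary technical point is the regularity/analyticity needed to view the periodized distributions as genuine $L^{2}$-eigenfunctions and to guarantee linear independence; this I would handle by the same uniform estimates (Kazhdan--Margulis-type bounds, orbit-counting in pseudo-balls) invoked in the proof of Fact~\ref{fact:stable-spectrum}.
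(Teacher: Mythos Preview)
Your overall architecture matches the paper's hint: both routes combine (a) an analysis of the restriction $\mathcal{H}_{\lambda}|_{L}$ of the Flensted--Jensen discrete series with (b) the generalized Poincar\'e series machinery of \cite{KasselKobayashi16}. The difference is in what you do with the sphericity hypothesis at step~(iii). The paper identifies the key structural input as a \emph{geometric variant of the discrete decomposability theorem} \cite{Kobayashi94-dd}: under the hypotheses of the Fact, every $H$-distinguished irreducible unitary representation of $G$ restricts to $L$ as a genuine (Hilbert) direct sum of irreducible unitary representations of $L$, with no continuous spectrum. This is a sharper and more usable statement than your ``$L$-admissible but of unbounded multiplicity'' (which, as written, is self-contradictory: admissibility means each $L$-type has finite multiplicity) or your ``multiplicities governed by a finitely-generated $S(\mathfrak j')$-module''. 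Discrete decomposability is exactly what makes the periodization step clean: each irreducible $L$-summand contributes an honest closed subspace on which the $\Gamma$-averaging can be carried out, and the infinitely many summands yield infinitely many independent $L^{2}$-eigenfunctions on $X_{\Gamma}$.

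So the gap in your plan is that your step~(iii) circles around the right phenomenon without naming the theorem that delivers it. Controlling ``leading exponents along the $L$-direction'' or invoking sphericity for multiplicity bounds does not by itself rule out continuous spectrum in $\mathcal{H}_{\lambda}|_{L}$, and if continuous spectrum were present your injection in step~(ii) would not obviously produce infinitely many linearly independent $L^{2}$-classes. Once you replace your step~(iii) with the discrete decomposability result (which is where sphericity of $L_{\C}$ on $X_{\C}$ actually enters), the rest of your outline --- Poincar\'e series convergence, nonvanishing for $\lambda$ away from the walls, and the uniform orbit-counting estimates --- goes through essentially as you describe.
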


A key step in the proof is a geometric variant of the discrete decomposability theorem (\cite{Kobayashi94-dd}), which states that any
$H$-distinguished irreducible unitary representation of $G$ decomposes into irreducible unitary representations of $L$ without continuous spectrum, under the assumptions of Fact~\ref{fact:kobayashi-kassel-multiplicity}.
This is combined with  the method of generalized Poincaré series introduced in the proof of Fact~\ref{fact:stable-spectrum}.

The triples $(G,H,L)$, where $G$ is simple and $G/H$ is allowed to be non-symmetric, that satisfy the assumptions in
Fact~\ref{fact:kobayashi-kassel-multiplicity}
were classified in \cite[Table~1.1]{KasselKobayashi2019standard}.
We extract the following cases from that table:
$G/H$ satisfies the rank condition \eqref{eq:FJrank} and $L$ is maximal. 
These are summarized in 
Table~\ref{tab:spherical-d.s} below.
The homogeneous spaces $G/H$ in Cases 7' and 12' are not symmetric spaces; however, 
the $\mathbb{C}$-algebra $\mathbb{D}_G(G/H)$ 
is generated by the Laplacian, and 
Fact~\ref{fact:kobayashi-kassel-multiplicity} extends to these cases as well.

\begin{table}[htb!]
    \centering
    \begin{tabular}{c|c|c}
         Case & $G/H$ & $L$  \\
         \hline
         1 & $SU(2n,2)/Sp(n,1)$ & $U(2n,1)$ \\
        \hline
        2 & $SU(2n,2)/U(n,1)$ & $Sp(n,1)$ \\
        \hline
        3$_{\even}$ & $SO(2n,2)/U(n,1)$ & $SO(2n,1)$ \text{($n$ is even)}\\
        \hline
        4 & $SO(2n,2)/SO(2n,1)$ & $U(n,1)$ \\
        \hline
        5 & $SO(4n,4)/SO(4n,3)$ & $Sp(1)\cdot Sp(n,1)$ \\
        \hline
        6 & $SO(8,8)/SO(8,7)$ & $Spin(8,1)$ \\
        
        \hline
        7' & $SO(4,4)/Spin(4,3)$ & $Spin(4,1)\times SO(3)$ \\
        \hline
        12' & $SO(4,3)/G_{2(2)}$ & $SO(4,1)\times SO(2)$ 
    \end{tabular}
    \caption{The triples $(L, G, H)$ that apply to 
    Fact~\ref{fact:kobayashi-kassel-multiplicity}.}
    \label{tab:spherical-d.s}
\end{table}

\subsection{Multiplicity of stable discrete spectra on compact quotients}

Let $\Gamma$ be a discontinuous group for $X=G/H$ via the inclusion map $\iota\colon \Gamma\rightarrow G$.

By Definition~\ref{def:stable-spectrum},
$\lambda$ is said to belong to the stable discrete spectrum if there exists a neighborhood $U$ of $\iota$ in
$\mathcal{R}(\Gamma,G,X)$ such that 
$$\min_{\varphi\in U} \dim_{\C} L^{2}(X_{\varphi(\Gamma)},\mathcal{M}_{\lambda}) \ge 1.
$$
As the neighborhood $U$ becomes smaller, the left-hand side of this inequality tends to increase.
Thus, we consider the following definition:

\begin{definition}
Let $\mathcal{U}_{\Gamma}$ denote the set of all neighborhoods of $\iota$ in
$\mathcal{R}(\Gamma,G,X)$.
    For $\lambda\in \mathrm{Spec}_{d}(X_{\Gamma})$, we define the \emph{multiplicity of the stable discrete spectrum} by
    \[
    \widetilde{\mathcal{N}}_{X_{\Gamma}}(\lambda)
    := \sup_{U\in \mathcal{U}_{\Gamma}}\min_{\varphi\in U} \dim_{\C} L^{2}(X_{\varphi(\Gamma)},\mathcal{M}_{\lambda}). 
    \]
\end{definition}

When the inclusion map $\iota\colon \Gamma\rightarrow G$ is locally rigid, 
the multiplicity of the stable discrete spectrum, 
$\widetilde{\mathcal{N}}_{X_{\Gamma}}(\lambda)$, reduces to the dimension of $L^{2}(X_{\Gamma},\mathcal{M}_{\lambda})$.
In contrast, when $\iota$ is not locally rigid
as a discontinuous group for $G/H$,
we have
$\widetilde{\mathcal{N}}_{X_{\Gamma}}(\lambda)\neq 0$ if and only if 
$\lambda$ belongs to the stable discrete spectrum. 

In the special case where $X$ is the $3$-dimensional anti-de Sitter space $SO(2,2)/SO(1,2)$,
the $\mathbb{C}$-algebra $\mathbb{D}_G(X)$ is generated by the Laplacian. Therefore, it is sufficient to consider eigenvalues of the Laplacian, and we can regard 
$\mathrm{Spec}_{d}(X_{\Gamma})$ as a subset of
$\C$.
We note that $X_{\Gamma}$ has abundant non-standard deformations.
In this case, Kannaka found a lower bound for
the multiplicity of the stable discrete spectrum,
$\widetilde{\mathcal{N}}_{X_{\Gamma}}(\lambda)$,
as follows.
We define $\lambda_{m}:=4m(m-1)$ for $m\in\N$.
\begin{fact}[\cite{Kannaka_sigma21}]
    \label{fact:kannaka-sigma}
    Let $\Gamma$ be a cocompact discontinuous group for the $3$-dimensional anti-de Sitter space $X = SO(2,2)/SO(1,2)$.     
    Then there exists a positive constant $c_{\Gamma}$ such that 
    \[
    \widetilde{\mathcal{N}}_{X_{\Gamma}}(\lambda_{m})
    \geq \log_{3}m -c_{\Gamma}
    \]
    holds for all $m \in \N$.
\end{fact}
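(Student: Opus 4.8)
The plan is to obtain the lower bound for $\widetilde{\mathcal{N}}_{X_{\Gamma}}(\lambda_m)$ by exhibiting, for each sufficiently large $m$, many linearly independent joint eigenfunctions in $L^2(X_{\varphi(\Gamma)}, \mathcal{M}_{\lambda_m})$ that persist under all small deformations $\varphi$ of $\iota$. Since the $3$-dimensional anti-de Sitter space $X = SO(2,2)/SO(1,2)$ is a rank-one symmetric space, the algebra $\mathbb{D}_G(X)$ is generated by the pseudo-Riemannian Laplacian $\Delta_X$, so $\mathrm{Spec}_d(X_\Gamma) \subset \mathbb{C}$ and we only need eigenfunctions of $\Delta_{X_\Gamma}$ with eigenvalue $\lambda_m = 4m(m-1)$. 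The first step is to recall the construction of stable discrete spectra from \cite{KasselKobayashi16} via generalized Poincaré series: one starts with a rapidly-decaying eigenfunction $f$ on $X$ with $\Delta_X f = \lambda_m f$ coming from a discrete-series-type representation of $G$, and forms $\sum_{\gamma \in \Gamma} \gamma^* f$. The key point for a multiplicity bound is that the space of such seed eigenfunctions $f$ on $X$ with the given eigenvalue is itself a representation of $G$ (or at least a large space), and the Poincaré series map is injective, or has controllably small kernel, on a suitable subspace.

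The second step is the quantitative heart: one must show that the dimension of the space of seed functions producing independent periodic eigenfunctions grows like $\log_3 m$. Here the cocompactness of $\Gamma$ (a surface group times a circle-like factor, or more precisely a cocompact discontinuous group for $3$-dimensional AdS, which by Kassel--Guéritaud-type results is a deformation of a standard one coming from $SO(2,1) \hookrightarrow SO(2,2)$) enters through a counting estimate for $\Gamma$-orbits in pseudo-balls, together with a Kazhdan--Margulis-type injectivity radius bound, exactly as in the proof of Fact~\ref{fact:stable-spectrum}. The base $3$ and the particular shift $c_\Gamma$ should emerge from comparing: (i) the number of $K$-types, or equivalently the dimension of the space of spherical harmonics of degree $m$ on the relevant compact dual, which grows polynomially in $m$; against (ii) the decay/growth rates in the uniform estimates, which force us to discard all but a logarithmically small fraction of the seed functions to guarantee both convergence of the Poincaré series and non-vanishing of the total sum uniformly over a neighborhood $U$ of $\iota$. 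The supremum over shrinking $U$ in the definition of $\widetilde{\mathcal{N}}$ is what allows one to keep enlarging the surviving space as the deformation parameter is constrained.

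The main obstacle I expect is step (ii): proving that the estimates of \cite{KasselKobayashi16} can be made not only uniform over the deformation but also sharp enough in the spectral parameter $m$ to produce a genuine $\log m$ lower bound rather than merely $\widetilde{\mathcal{N}}_{X_\Gamma}(\lambda_m) \geq 1$. This requires tracking the $m$-dependence through the asymptotic expansion of the seed eigenfunctions near the boundary of $X$ (which for AdS$_3$ can be written explicitly using hypergeometric functions or Legendre functions, since $SO(2,2) \simeq SL(2,\mathbb{R}) \times SL(2,\mathbb{R})$ up to covering) and through the orbit-counting constant, and then showing the two compete to leave a space of dimension $\gtrsim \log_3 m$. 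A secondary technical point is to verify that for the specific eigenvalue parametrization $\lambda_m = 4m(m-1)$ the rank condition \eqref{eq:FJrank} holds for $SO(2,2)/SO(1,2)$ (it does, as this is a rank-one space) and that $\lambda_m$ indeed lies in the range where the Flensted-Jensen/Kassel--Kobayashi construction applies, i.e.\ that $m(m-1)$ corresponds to a discrete series parameter sufficiently far from the walls; this should follow from a direct check using the $SL(2,\mathbb{R}) \times SL(2,\mathbb{R})$ structure, where the discrete spectrum of the AdS$_3$ Laplacian and its multiplicities are classically understood. Once these quantitative inputs are in place, assembling them into the stated inequality $\widetilde{\mathcal{N}}_{X_\Gamma}(\lambda_m) \geq \log_3 m - c_\Gamma$ is a matter of bookkeeping the constants into $c_\Gamma$.
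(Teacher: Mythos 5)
First, note that the paper does not prove this statement at all: it is quoted as a \emph{Fact} from the external reference \cite{Kannaka_sigma21}, so there is no internal proof to compare against. Judged on its own terms, your proposal correctly identifies the framework that the cited reference uses -- generalized Poincar\'e series of rapidly decaying eigenfunctions on $X$, with convergence and non-vanishing estimates made uniform over a neighborhood of $\iota$ in $\mathcal{R}(\Gamma,G;X)$, exploiting $SO(2,2)\approx SL(2,\R)\times SL(2,\R)$ and the rank-one structure so that only the Laplacian eigenvalue $\lambda_m=4m(m-1)$ matters. The verification you flag as ``secondary'' (the rank condition and the identification of the discrete series parameters) is indeed routine here.

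The genuine gap is that the entire quantitative content of the statement -- producing at least $\log_{3}m-c_{\Gamma}$ \emph{linearly independent} stable eigenfunctions -- is not carried out; you explicitly defer it as ``the main obstacle I expect,'' and the mechanism you guess at is not an argument. Worse, your first step asserts that ``the Poincar\'e series map is injective, or has controllably small kernel, on a suitable subspace'': this is precisely the content of the cited paper (its title is about \emph{linear independence} of generalized Poincar\'e series), so invoking it begs the question. What is actually needed, and what you do not supply, is a concrete device certifying independence of $N$ Poincar\'e series simultaneously and uniformly over the deformation -- e.g.\ choosing seed eigenfunctions peaked at suitably separated points and proving that the matrix of evaluations is invertible (diagonal dominance), with the competition between the $m$-dependent decay rate of the seeds and the $\Gamma$-orbit counting function determining how many such points can be used; the base $3$ and the constant $c_{\Gamma}$ come out of exactly this comparison. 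Without that step your argument only yields $\widetilde{\mathcal{N}}_{X_{\Gamma}}(\lambda_m)\geq 1$ (i.e.\ Fact~\ref{fact:stable-spectrum}), not the stated logarithmic lower bound.
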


We recall from Theorem~\ref{theorem:local-nonstd-zariski} a list of triples $(G,H,L)$ such that $X=G/H$ admits a compact standard quotient that is deformable. 
Combining Table~\ref{tab:cpt-CK-sym} with Table~\ref{tab:spherical-d.s}, 
we obtain the following proposition.

\begin{proposition}
\label{prop:compare_two_Tables}
Let $(G,H,L)$ be one of Cases~1, 2 $(n=1)$, 3$_{\even}$, 4, 5 $(n=1)$, 7', and 12' in Table~\ref{tab:spherical-d.s}. 
Then, Facts~\ref{fact:stable-spectrum} and~\ref{fact:kobayashi-kassel-multiplicity}
apply to any such triple $(G,H,L)$. 
\end{proposition}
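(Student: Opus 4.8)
\textbf{Proof proposal for Proposition~\ref{prop:compare_two_Tables}.}

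The plan is to observe that the proposition asserts a compatibility between two already-established facts whose hypotheses must simply be checked against each of the seven triples in question. Concretely, Fact~\ref{fact:stable-spectrum} requires that $X=G/H$ be a reductive symmetric space satisfying the rank condition~\eqref{eq:FJrank}, that $L$ be a real rank one semisimple Lie subgroup acting properly on $X$, and that $\Gamma$ be a torsion-free cocompact discrete subgroup of $L$. Fact~\ref{fact:kobayashi-kassel-multiplicity} requires that $H$ be non-compact, that $L$ act properly on $G/H$, and that $L_{\C}$ act spherically on the complexification $X_{\C}=G_{\C}/H_{\C}$. So the first step is to enumerate the relevant triples: Cases~1, 2 (with $n=1$), $3_{\even}$, 4, 5 (with $n=1$), 7', and 12' of Table~\ref{tab:spherical-d.s}, and to record for each the semisimple factor $L_{ss}$ of $L$ together with the replacement subgroup of rank one that is used for bending. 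In Cases~1, 2, 3, 4 this is $U(2n,1)$, $Sp(n,1)$, $SO(2n,1)$, $U(n,1)$ respectively; in the cases with $n=1$ (Cases~1, 2, 5) one uses the exceptional isomorphism $Sp(1,1)\simeq Spin(4,1)$ recorded in the remark following Theorem~\ref{theorem:local-nonstd-zariski}; and in Cases~7' and 12' the relevant rank-one group is $Spin(4,1)$ (resp.\ $SO(4,1)$), which is precisely why these non-symmetric cases are compatible with Fact~\ref{fact:kobayashi-kassel-multiplicity}, since $\mathbb{D}_G(G/H)$ there is still generated by the Laplacian.

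The second step is to verify the hypotheses of Fact~\ref{fact:kobayashi-kassel-multiplicity}. Non-compactness of $H$ is immediate from the table (in every case $H$ contains a factor such as $Sp(n,1)$, $U(n,1)$, $SO(2n,1)$, $Spin(4,3)$, or $G_{2(2)}$, all non-compact). Properness of the $L$-action on $G/H$ is exactly the content of Tables~\ref{tab:kobayashi-yoshino} and~\ref{tab:kobayashi-yoshino-dual}, from which Table~\ref{tab:cpt-CK-sym} (and hence Table~\ref{tab:spherical-d.s}) is derived, so nothing new is needed. The sphericity of the $L_{\C}$-action on $X_{\C}$ is the defining property of the entries of \cite[Table~1.1]{KasselKobayashi2019standard}, from which Table~\ref{tab:spherical-d.s} was extracted precisely by selecting those rows where $G/H$ satisfies the rank condition \eqref{eq:FJrank} and $L$ is maximal; thus this too is already recorded. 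At this point Fact~\ref{fact:kobayashi-kassel-multiplicity} applies verbatim to the seven triples.

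The third step is to verify the hypotheses of Fact~\ref{fact:stable-spectrum}. The rank condition~\eqref{eq:FJrank} holds by the selection criterion used to form Table~\ref{tab:spherical-d.s}, so this is free. That $L$ acts properly is, again, the content of the Kobayashi--Yoshino tables. The only genuine point is that Fact~\ref{fact:stable-spectrum} is stated for a \emph{real rank one semisimple} subgroup $L$, whereas the $L$ appearing in Table~\ref{tab:spherical-d.s} (and in Proposition~\ref{prop:compare_two_Tables}) carries a compact factor, e.g.\ $U(2n,1)$, $Sp(1)\cdot Sp(n,1)$, $Spin(4,1)\times SO(3)$, $SO(4,1)\times SO(2)$. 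Here one invokes Lemma~\ref{lem:L_c_does_not_affect}: a torsion-free discrete subgroup $\Gamma$ of $L$ is, up to the finite intersection $\Gamma\cap L_c=\{e\}$, a torsion-free cocompact discrete subgroup of the semisimple rank-one group $L_{ss}/(L_{ss}\cap L_c)$, which is locally $Sp(n,1)$, $SO(2n,1)$, $SU(n,1)$, or $Spin(4,1)$ as appropriate; since a compact factor acts trivially on $X$, properness of the $\Gamma$-action and the hypotheses of Fact~\ref{fact:stable-spectrum} are unaffected. Running Fact~\ref{fact:stable-spectrum} for this semisimple rank-one group then yields infinitely many stable discrete spectra for $X_\Gamma$, completing the proof.

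The work here is almost entirely bookkeeping; the only mild obstacle is the cross-referencing between the two source tables and the normalization of the compact factors, which is precisely what Lemma~\ref{lem:L_c_does_not_affect} and the exceptional isomorphism $Sp(1,1)\simeq Spin(4,1)$ are there to handle. No new analytic or representation-theoretic input is required.
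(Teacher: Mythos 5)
Your proposal is correct and follows essentially the same route as the paper, which simply cross-references Table~\ref{tab:cpt-CK-sym} with Table~\ref{tab:spherical-d.s} and relies on the fact that Table~\ref{tab:spherical-d.s} was already constructed so that the rank condition~\eqref{eq:FJrank}, properness, and sphericity hold, with Lemma~\ref{lem:L_c_does_not_affect} disposing of the compact factors of $L$. One small slip: the exceptional isomorphism $Sp(1,1)\simeq Spin(4,1)$ is relevant only to Cases~2 and~5 with $n=1$ (where $L_{ss}=Sp(n,1)$), not to Case~1, whose rank-one group is $U(2n,1)$ for every $n$; this does not affect the verification.
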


\begin{remark}
    \begin{enumerate}[label=(\arabic*)]
        \item 
        We have excluded Cases~2 $(n\geq 2)$, 5 ($n\geq 2$), and 6 because
        any compact standard quotient $X_{\Gamma}$ is locally rigid in these cases.
        \item By Theorem~\ref{thm:zariskidense-up-to-local-isom}, 
        there exists a cocompact standard quotient $X_\Gamma$ that admits a Zariski-dense deformation in Cases~2 ($n=1$), 3$_{\even}$, 5 ($n=1$), 7', and 12'.
    \end{enumerate}
\end{remark}

Regarding the stable discrete spectrum, including its multiplicity and distribution, 
we are particularly interested in the setting where 
a compact standard quotient $X_{\Gamma}$ of $X=G/H$ 
admits a non-standard deformation.
 We address the following questions.
 
\begin{question}
\label{question:discrete-stable-spec}
    Is the set of stable discrete spectra 
    $\{\lambda\in \mathrm{Spec}_{d}(X_{\Gamma})\mid 
        \widetilde{\mathcal{N}}_{X_{\Gamma}}(\lambda)\neq 0\}$
        discrete in $\mathfrak{j}_{\C}^{\vee}/W$?
\end{question}

\begin{question}
\label{question:mult-stable-spec}
    Does there exist a small deformation $\varphi\in\mathcal{R}(\Gamma,G;X)$ such that
    $\dim_{\C}L^{2}(X_{\varphi(\Gamma)},\mathcal{M}_{\lambda})<\infty$
    holds for any $\lambda\in \mathfrak{j}^{\vee}_{\C}$?
\end{question}
An affirmative answer to Question~\ref{question:mult-stable-spec} would imply that
$\widetilde{\mathcal{N}}_{X_{\Gamma}}(\lambda)<\infty$.
Moreover, if Question~\ref{question:discrete-stable-spec} has an affirmative answer, the following question might become meaningful.
\begin{question}
\label{question:dist-stable-spec}
Does there exist any geometric information about the global structure of the $(G,X)$-manifold $X_\Gamma$ that we can extract from the asymptotic behavior of 
the multiplicity of the stable discrete spectrum,
$\widetilde{\mathcal{N}}_{X_{\Gamma}}(\lambda)$?
\end{question}

We examine Question~\ref{question:discrete-stable-spec} in the classical Riemannian case:
\begin{remark}
    Let $X_{\Gamma}=\Gamma\backslash X$ be a compact 
    quotient of an irreducible Riemannian symmetric space $X=G/K$ of non-compact type. 
Then $X_\Gamma$ is not locally rigid if and only
if $X$ is the two-dimensional hyperbolic space 
by the local rigidity theorem (\cite{Weil_discrete_subgroups}).
Moreover, since $K$ is compact, the $G$-action on $X$ is proper, and thus there does not exist a non-standard deformation of $X_{\Gamma}$.

\begin{enumerate}[label=$(\arabic*)$]
    \item ($\dim X = 2$). 
    When $X$ is a two-dimensional hyperbolic space, one cannot obtain any information about $X_{\Gamma}$ because $\widetilde{\mathcal{N}}_{X_{\Gamma}}(\lambda) = 0$ for $\lambda \geq 1/4$ (Wolpert~\cite[Thm.\ 5.14]{Wo94}).
    
    \item ($\dim X \ge 3$).
    In this case, $X_\Gamma$ is locally rigid, and therefore, 
    $\widetilde{\mathcal{N}}_{X_{\Gamma}}(\lambda) = \dim_{\C} L^{2}(X_{\Gamma}, \mathcal{M}_{\lambda})$, which is finite because the Laplacian in the Riemannian case is an elliptic differential operator.
    Weyl's law in spectral theory tells us that one can extract geometric quantities, such as the dimension and volume, of $X_\Gamma$ from the asymptotic behavior of $\widetilde{\mathcal{N}}_{X_{\Gamma}}(\lambda)$ as $\|\lambda\| \to \infty$.
\end{enumerate} 
\end{remark}

\appendix

\section{Topologies of real algebraic groups}
\label{section:algebraic_group}
In this section, we provide a brief review of basic concepts for complex algebraic groups as well as real algebraic groups that are used throughout this article. In particular, we highlight the relationship between the two topologies  
on complex (or real) algebraic groups: the usual topology and the Zariski-topology, which is weaker than the usual topology.

We begin with the connectedness of complex algebraic groups.
\begin{lemma}
\label{lem:Zariski-connected}
Let $G_{\C}$ be a Zariski-connected complex algebraic group. 
Then it is also connected in the usual topology.   
\end{lemma}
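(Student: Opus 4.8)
The plan is to reduce the statement to a standard fact about irreducibility of algebraic groups. Recall that for any algebraic group $G_{\C}$ over $\C$, Zariski-connectedness is equivalent to Zariski-irreducibility of the underlying variety: an algebraic group has a unique irreducible component through the identity, which is a normal subgroup of finite index (the other components being its cosets), so if the group is connected in the Zariski topology then it consists of this single irreducible component. Thus it suffices to prove: if $G_{\C}$ is an irreducible complex algebraic variety that is also a group, then $G_{\C}(\C)$ is connected in the usual (analytic) topology.

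First I would invoke the general principle that a complex algebraic variety which is irreducible (over $\C$) is connected in the usual topology. This is classical and can be found, e.g., in Mumford's \emph{Red Book} or Shafarevich; the key point is that for an irreducible variety, the smooth locus is a connected complex manifold (connectedness of the smooth locus of an irreducible variety in the analytic topology follows from the fact that it is a connected Zariski-open subset of an irreducible variety together with the Riemann-extension/normalization argument, or directly: an irreducible variety is analytically irreducible at a generic point and the smooth locus is path-connected because any two points can be joined by a chain of irreducible curves on the variety whose normalizations are connected Riemann surfaces), and the full variety is the analytic closure of its smooth locus. For a group $G_{\C}$, the variety is automatically smooth everywhere (homogeneity under translation), so $G_{\C}(\C)$ is itself a connected complex manifold.

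Alternatively — and this is perhaps the cleanest self-contained route — one can argue as follows. Let $G_{\C}^{\circ}$ denote the identity component of $G_{\C}$ in the usual topology; it is an open (hence closed) subgroup of finite index, since $G_{\C}$ is locally connected and has finitely many usual-topology components (a complex algebraic variety has finitely many connected components in the analytic topology). An open subgroup of finite index is also Zariski-closed: its cosets are finitely many, each the analytic-closure-equals-Zariski-closure of a constructible set, and the complement of $G_{\C}^{\circ}$ is a finite union of such cosets, hence Zariski-closed, so $G_{\C}^{\circ}$ is Zariski-open. Since $G_{\C}$ is Zariski-connected and $G_{\C}^{\circ}$ is a non-empty Zariski-open-and-closed subset, we conclude $G_{\C}^{\circ}=G_{\C}$, i.e.\ $G_{\C}$ is connected in the usual topology.

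The main obstacle is justifying that a Zariski-open subset of a Zariski-connected complex algebraic group is analytically dense — equivalently, that removing a proper Zariski-closed subset does not disconnect it in the analytic topology. This is exactly the statement that a Zariski-connected (equivalently irreducible) complex variety minus a proper closed subvariety is connected in the usual topology, which in turn rests on the fact that the complement has complex codimension $\geq 1$ and real codimension $\geq 2$, so it cannot locally separate; combined with irreducibility this gives global connectedness. I would cite this standard transfer principle (GAGA-type / Mumford's connectedness theorem) rather than reprove it, so the proof in the paper would be short: reduce to irreducibility, then quote the analytic connectedness of irreducible complex varieties applied to the smooth variety $G_{\C}$.
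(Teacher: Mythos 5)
Your proof is correct and follows essentially the same route as the paper: reduce Zariski-connectedness to irreducibility of the algebraic group (the paper cites Borel, Chap.~I, Prop.~1.2), then invoke the standard fact that an irreducible complex algebraic variety is connected in the usual topology (the paper cites Shafarevich, Chap.~VII.2). The alternative self-contained argument you sketch is a reasonable expansion of that citation, but the paper simply quotes the standard result as you ultimately propose to do.
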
 

\begin{proof}
    By \cite[Chap.~1, Prop.~1.2]{Borel-alg-grp}, $G_{\C}$ is irreducible in the Zariski-topology. 
    Hence, by \cite[Chap.~VII.2]{Shafarevich_BasicAG},
    $G_{\C}$ is connected in the usual topology.
\end{proof}

Every connected complex semisimple Lie group is linear. 
Moreover, the categories of connected complex semisimple Lie groups  
and connected complex semisimple algebraic groups are equivalent, as described below.
\begin{lemma}
    \label{lemma:analytic->algebraic}
    \begin{enumerate}[label=(\arabic*)]
        \item 
        \label{item:semisimple->Zariski-closed}
        Let $G_{\mathbb{C}}$ be a complex algebraic group,  
        and $\mathfrak{l}_{\mathbb{C}}$ a complex semisimple Lie subalgebra of $\mathfrak{g}_{\mathbb{C}}$.  
        Then, the analytic subgroup $L_{\mathbb{C}}$ of $G_{\mathbb{C}}$  
        corresponding to $\mathfrak{l}_{\mathbb{C}}$ is Zariski-closed.  
        
        \item 
        \label{item:semisimple->hom-algebraic}
        Let $L_{\C}$ be a connected complex semisimple algebraic group, 
        $G_{\C}$ a complex algebraic group, 
        $\varphi_{\C}\colon L_{\C}\rightarrow G_{\C}$ a holomorphic 
        Lie group homomorphism. 
        Then $\varphi_{\C}$ is a morphism of algebraic groups. 
    \end{enumerate}
\end{lemma}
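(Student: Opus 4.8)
\textbf{Proof proposal for Lemma~\ref{lemma:analytic->algebraic}.}

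The plan is to prove part~\ref{item:semisimple->Zariski-closed} first and then derive part~\ref{item:semisimple->hom-algebraic} from it by a graph argument. For part~\ref{item:semisimple->Zariski-closed}, let $\overline{L_{\mathbb{C}}}$ denote the Zariski-closure of the analytic subgroup $L_{\mathbb{C}}$ in $G_{\mathbb{C}}$. This is a Zariski-closed subgroup, hence an algebraic subgroup of $G_{\mathbb{C}}$, and it contains $L_{\mathbb{C}}$; moreover $\overline{L_{\mathbb{C}}}$ is Zariski-connected because $L_{\mathbb{C}}$ is connected in the usual topology, hence Zariski-connected, and the Zariski-closure of a Zariski-connected set is Zariski-connected. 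By Lemma~\ref{lem:Zariski-connected}, $\overline{L_{\mathbb{C}}}$ is then connected in the usual topology. Let $\mathfrak{m}_{\mathbb{C}}$ be its Lie algebra; then $\mathfrak{l}_{\mathbb{C}} \subseteq \mathfrak{m}_{\mathbb{C}}$, and it suffices to show the reverse inclusion, since a connected Lie group is determined by its Lie subalgebra inside an ambient Lie group and $L_{\mathbb{C}}, \overline{L_{\mathbb{C}}}$ would then be the same analytic subgroup. The key point is that $\mathfrak{l}_{\mathbb{C}}$, being semisimple, equals its own derived algebra $[\mathfrak{l}_{\mathbb{C}},\mathfrak{l}_{\mathbb{C}}]$, so $L_{\mathbb{C}}$ is generated (as an abstract group) by commutators of its one-parameter subgroups $\exp(tX)$, $X \in \mathfrak{l}_{\mathbb{C}}$; I would invoke the standard fact (see e.g.\ \cite[Chap.~1, §7]{Borel-alg-grp} or \cite[§7.5]{Humphreys-LAG}) that a subgroup of an algebraic group generated by the images of a family of irreducible varieties through the identity is itself a Zariski-closed connected subgroup, together with the observation that $\{\exp(tX) : t \in \mathbb{C}\}$ is the image of the affine line. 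Applying this to the family indexed by $X \in \mathfrak{l}_{\mathbb{C}}$ shows that $L_{\mathbb{C}}$ is already Zariski-closed, i.e.\ $L_{\mathbb{C}} = \overline{L_{\mathbb{C}}}$, which is the claim. Alternatively, and perhaps more cleanly, one uses that a semisimple Lie algebra has no nontrivial one-dimensional quotients or solvable ideals, so $\mathfrak{m}_{\mathbb{C}}/\mathfrak{l}_{\mathbb{C}}$ cannot be an $\mathfrak{l}_{\mathbb{C}}$-module complement forced by the closure; I would pick whichever formulation of the Chevalley-type result is most directly citable.

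For part~\ref{item:semisimple->hom-algebraic}, consider the graph $\Gamma_{\varphi_{\mathbb{C}}} := \{(x, \varphi_{\mathbb{C}}(x)) : x \in L_{\mathbb{C}}\} \subset L_{\mathbb{C}} \times G_{\mathbb{C}}$. Its Lie algebra is the graph $\{(Y, d\varphi_{\mathbb{C}}(Y)) : Y \in \mathfrak{l}_{\mathbb{C}}\}$, which is a complex semisimple Lie subalgebra of $\mathfrak{l}_{\mathbb{C}} \oplus \mathfrak{g}_{\mathbb{C}}$ isomorphic to $\mathfrak{l}_{\mathbb{C}}$ (it is the image of the injective Lie algebra homomorphism $Y \mapsto (Y, d\varphi_{\mathbb{C}}(Y))$). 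Since $L_{\mathbb{C}}$ is connected, $\Gamma_{\varphi_{\mathbb{C}}}$ is exactly the analytic subgroup of the algebraic group $L_{\mathbb{C}} \times G_{\mathbb{C}}$ corresponding to this semisimple subalgebra. By part~\ref{item:semisimple->Zariski-closed}, $\Gamma_{\varphi_{\mathbb{C}}}$ is Zariski-closed in $L_{\mathbb{C}} \times G_{\mathbb{C}}$, hence an algebraic subgroup. The first projection $p_1 \colon \Gamma_{\varphi_{\mathbb{C}}} \to L_{\mathbb{C}}$ is then a morphism of algebraic groups that is bijective; since $L_{\mathbb{C}}$ is a (necessarily smooth) algebraic group in characteristic zero and $p_1$ is bijective with injective differential, it is an isomorphism of algebraic groups (in characteristic zero a bijective morphism of algebraic groups is an isomorphism). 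Therefore $\varphi_{\mathbb{C}} = p_2 \circ p_1^{-1}$, the composition of the inverse isomorphism with the second projection $p_2 \colon \Gamma_{\varphi_{\mathbb{C}}} \to G_{\mathbb{C}}$, is a morphism of algebraic groups, as required.

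The main obstacle is part~\ref{item:semisimple->Zariski-closed}: one must be careful that the group generated by the one-parameter subgroups $\exp(\mathbb{C}X)$ for $X \in \mathfrak{l}_{\mathbb{C}}$ really is all of $L_{\mathbb{C}}$ and really is Zariski-closed. The first assertion is standard (a connected Lie group is generated by the exponentials of its Lie algebra), and the second is the algebraic-groups generation lemma cited above; the delicate part is only to state it in exactly the form needed and to confirm its hypotheses (the $\exp(\mathbb{C}X)$ are images of irreducible varieties passing through the identity, and there are enough of them to span $\mathfrak{l}_{\mathbb{C}}$). No genuinely new estimate is required — everything reduces to citing structure theory of algebraic groups in characteristic zero — so I would keep the write-up short, giving the graph argument in full and citing \cite{Borel-alg-grp} for the closure statement rather than reproving it.
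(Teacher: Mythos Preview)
Your graph argument for part~\ref{item:semisimple->hom-algebraic} is correct and coincides with the paper's proof (the paper invokes Zariski's main theorem for the bijective-morphism step, which is your ``bijective morphism of algebraic groups in characteristic zero is an isomorphism'').

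For part~\ref{item:semisimple->Zariski-closed}, however, there is a real gap. You want to apply Borel's generation lemma to the family $\{t \mapsto \exp(tX)\}_{X \in \mathfrak{l}_{\mathbb{C}}}$, claiming each $\exp(\mathbb{C}X)$ is ``the image of the affine line.'' But that lemma requires \emph{morphisms of varieties}, and $t \mapsto \exp(tX)$ is holomorphic, not algebraic, unless $X$ is nilpotent (so the series terminates). For a general semisimple $X$ the image $\exp(\mathbb{C}X)$ need not even be Zariski-closed: e.g.\ in $SL_3(\mathbb{C})$ with $X = \operatorname{diag}(1,\alpha,-1-\alpha)$ for irrational $\alpha$, the one-parameter subgroup is Zariski-dense in the full maximal torus. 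So the hypotheses of the generation lemma are not met as you have stated them. Your ``alternative'' sketch is too vague to repair this---there is no evident reason why $\mathfrak{l}_{\mathbb{C}}$ should be an ideal in $\mathfrak{m}_{\mathbb{C}}$, nor what ``forced by the closure'' would mean.

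The fix is straightforward: restrict to \emph{nilpotent} $X \in \mathfrak{l}_{\mathbb{C}}$. Since $\mathfrak{l}_{\mathbb{C}}$ is semisimple, its abstract Jordan decomposition is preserved under any representation, so such $X$ are nilpotent in $\mathfrak{g}_{\mathbb{C}} \subset \mathfrak{gl}_n$ and $t \mapsto \exp(tX)$ is a genuine morphism $\mathbb{A}^1 \to G_{\mathbb{C}}$. The nilpotent elements (root vectors) generate $\mathfrak{l}_{\mathbb{C}}$ as a Lie algebra, hence their unipotent one-parameter subgroups generate $L_{\mathbb{C}}$ as a group, and now the generation lemma applies. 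The paper takes a different route: it invokes Chevalley's theorem (Fact~\ref{fact:chevalley}) that a closed complex Lie subgroup and its Zariski closure have the same derived algebra; combining $[\mathfrak{l}_{\mathbb{C}},\mathfrak{l}_{\mathbb{C}}] = \mathfrak{l}_{\mathbb{C}}$ with $[\overline{L_{\mathbb{C}}},\overline{L_{\mathbb{C}}}] = \overline{L_{\mathbb{C}}}$ (from \cite[I, Cor.~2.3]{Borel-alg-grp}) then yields $\overline{\mathfrak{l}_{\mathbb{C}}} = \mathfrak{l}_{\mathbb{C}}$ directly.
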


For the proof, we recall the following theorem from Chevalley~\cite[Chap.~2, Thm.~13]{Chevalley51}: 
\begin{fact}
\label{fact:chevalley}
Let $G_{\C}$ be a closed complex 
Lie subgroup of $GL(n,\C)$ with finitely many connected components, 
$\overline{G}_{\C}$ its Zariski-closure in $GL(n,\C)$, 
and $\overline{\mathfrak{g}_{\C}}$
the complex Lie algebra of $\overline{G}_{\C}$. 
Then, their derived algebras are identical: $[\mathfrak{g}_{\C},\mathfrak{g}_{\C}]=[\overline{\mathfrak{g}}_{\C},\overline{\mathfrak{g}}_{\C}]$. 
\end{fact}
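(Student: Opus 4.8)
The plan is to give Chevalley's classical argument, whose engine is the elementary observation that, for any linear subspace $\mathfrak{c}\subset\mathfrak{gl}(n,\C)$, the normalizer $N_{GL(n,\C)}(\mathfrak{c}):=\{g\mid \Ad(g)\mathfrak{c}=\mathfrak{c}\}$ is Zariski-closed (the conditions $\Ad(g)\mathfrak{c}\subset\mathfrak{c}$ and $\Ad(g^{-1})\mathfrak{c}\subset\mathfrak{c}$ being polynomial in $g$ on $GL(n,\C)$), combined with the principle that a morphism of algebraic groups which is trivial on a Zariski-dense subgroup is trivial.

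First I would reduce to the case that $G_{\C}$ is connected. Its identity component $G_{\C}^{0}$ has finite index, so $\overline{G_{\C}^{0}}$ has finite index in $\overline{G}_{\C}$ (the Zariski-closure of a subgroup of $GL(n,\C)$ is again an algebraic subgroup; see, e.g., \cite{Borel-alg-grp}), hence is open, hence $\mathrm{Lie}(\overline{G_{\C}^{0}})=\mathrm{Lie}(\overline{G}_{\C})=\overline{\mathfrak{g}}_{\C}$; and of course $\mathrm{Lie}(G_{\C}^{0})=\mathrm{Lie}(G_{\C})=\mathfrak{g}_{\C}$. Since $[\mathfrak{l},\mathfrak{l}]$ depends only on the Lie algebra, we may assume $G_{\C}$ connected. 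Then $G_{\C}\subset\overline{G}_{\C}$ gives $\mathfrak{g}_{\C}\subset\overline{\mathfrak{g}}_{\C}$, whence $[\mathfrak{g}_{\C},\mathfrak{g}_{\C}]\subset[\overline{\mathfrak{g}}_{\C},\overline{\mathfrak{g}}_{\C}]$; the content is the reverse inclusion.

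The heart of the proof I would isolate as a transfer lemma: \emph{if $\mathfrak{c}\subset\mathfrak{d}$ are subspaces of $\mathfrak{gl}(n,\C)$ both normalized by $\overline{G}_{\C}$ under $\Ad$, and if $[\mathfrak{g}_{\C},\mathfrak{d}]\subset\mathfrak{c}$, then $[\overline{\mathfrak{g}}_{\C},\mathfrak{d}]\subset\mathfrak{c}$.} Indeed, $\Ad$ induces a morphism of algebraic groups $\psi\colon\overline{G}_{\C}\to GL(\mathfrak{d}/\mathfrak{c})$; the restriction $\psi|_{G_{\C}}$ has vanishing differential at the identity because $\ad(\mathfrak{g}_{\C})$ acts trivially on $\mathfrak{d}/\mathfrak{c}$, hence $\psi|_{G_{\C}}$ is trivial since $G_{\C}$ is connected; as $G_{\C}$ is Zariski-dense in $\overline{G}_{\C}$ and $\psi^{-1}(\{1\})$ is Zariski-closed, $\psi$ is trivial, and differentiating at the identity yields $[\overline{\mathfrak{g}}_{\C},\mathfrak{d}]\subset\mathfrak{c}$. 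With this in hand the proof runs in three moves. (1) Since $G_{\C}$ normalizes itself, $G_{\C}\subset N_{GL(n,\C)}(\mathfrak{g}_{\C})$, hence $\overline{G}_{\C}\subset N_{GL(n,\C)}(\mathfrak{g}_{\C})$, so $\mathfrak{g}_{\C}$ is an ideal of $\overline{\mathfrak{g}}_{\C}$. (2) For $g\in G_{\C}$ the map $\Ad(g)|_{\mathfrak{g}_{\C}}=d(\Int(g))_{e}$ is a Lie algebra automorphism of $\mathfrak{g}_{\C}$, hence preserves $\mathfrak{b}:=[\mathfrak{g}_{\C},\mathfrak{g}_{\C}]$; so $G_{\C}\subset N_{GL(n,\C)}(\mathfrak{b})$ and hence $\overline{G}_{\C}\subset N_{GL(n,\C)}(\mathfrak{b})$, so $\mathfrak{b}$ too is an ideal of $\overline{\mathfrak{g}}_{\C}$. (3) Apply the lemma with $(\mathfrak{c},\mathfrak{d})=(\mathfrak{b},\mathfrak{g}_{\C})$, the hypothesis $[\mathfrak{g}_{\C},\mathfrak{g}_{\C}]\subset\mathfrak{b}$ being tautological, to get $[\overline{\mathfrak{g}}_{\C},\mathfrak{g}_{\C}]\subset\mathfrak{b}$; then apply it once more with $(\mathfrak{c},\mathfrak{d})=(\mathfrak{b},\overline{\mathfrak{g}}_{\C})$, now using the inclusion just obtained, to conclude $[\overline{\mathfrak{g}}_{\C},\overline{\mathfrak{g}}_{\C}]\subset\mathfrak{b}=[\mathfrak{g}_{\C},\mathfrak{g}_{\C}]$.

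The point to get right — the ``main obstacle'' — is that a single use of the normalizer observation controls only \emph{one} bracket: it shows various subspaces are ideals of $\overline{\mathfrak{g}}_{\C}$, but not that $\overline{\mathfrak{g}}_{\C}/\mathfrak{b}$ is abelian. One must bootstrap along the chain $\mathfrak{b}\subset\mathfrak{g}_{\C}\subset\overline{\mathfrak{g}}_{\C}$, transferring triviality of the adjoint action from $G_{\C}$ to $\overline{G}_{\C}$ one step at a time, and it is precisely the Zariski-density of $G_{\C}$ in $\overline{G}_{\C}$ that makes each step go through.
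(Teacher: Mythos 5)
Your proof is correct: the reduction to the connected case, the closedness of the normalizer of a subspace, the transfer lemma via the algebraic morphism $\psi\colon\overline{G}_{\C}\to GL(\mathfrak{d}/\mathfrak{c})$ killed on the Zariski-dense subgroup $G_{\C}$, and the two-step bootstrap along $\mathfrak{b}\subset\mathfrak{g}_{\C}\subset\overline{\mathfrak{g}}_{\C}$ are all sound. Note that the paper offers no proof to compare against --- it states this as a Fact quoted from Chevalley \cite[Chap.~2, Thm.~13]{Chevalley51} --- and what you have written is exactly that classical argument, reconstructed in full.
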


\begin{proof}[Proof of Lemma~\ref{lemma:analytic->algebraic}]
    \ref{item:semisimple->Zariski-closed}. 
    It suffices to show that the Zariski-closure $\overline{L_{\mathbb{C}}}$ of $L_{\mathbb{C}}$ coincides with $L_{\mathbb{C}}$.  
    Since $L_{\mathbb{C}}$ is connected in the usual topology, $L_{\mathbb{C}}$ is Zariski-connected.  
    Hence, $\overline{L_{\mathbb{C}}}$ is also Zariski-connected.  
    By Lemma~\ref{lem:Zariski-connected},  
    $\overline{L_{\mathbb{C}}}$ is connected in the usual topology.  

    Since $\mathfrak{l}_{\mathbb{C}}$ is semisimple,  
    we have $[\mathfrak{l}_{\mathbb{C}}, \mathfrak{l}_{\mathbb{C}}] = \mathfrak{l}_{\mathbb{C}}$,  
    which implies that $[L_{\mathbb{C}}, L_{\mathbb{C}}] = L_{\mathbb{C}}$  
    due to the connectedness of $L_{\mathbb{C}}$.  
    Furthermore, by \cite[Chap.~I, Cor.~2.3]{Borel-alg-grp},  
    we also obtain $[\overline{L_{\mathbb{C}}}, \overline{L_{\mathbb{C}}}] = \overline{L_{\mathbb{C}}}$.  
    Thus, applying Fact~\ref{fact:chevalley}, we conclude that  
    \begin{equation*}
        \overline{\mathfrak{l}_{\mathbb{C}}} = [\overline{\mathfrak{l}_{\mathbb{C}}}, \overline{\mathfrak{l}_{\mathbb{C}}}]  
        = [\mathfrak{l}_{\mathbb{C}}, \mathfrak{l}_{\mathbb{C}}] = \mathfrak{l}_{\mathbb{C}}.
    \end{equation*}  

    Since both $L_{\mathbb{C}}$ and $\overline{L_{\mathbb{C}}}$ are connected,  
    it follows that $\overline{L_{\mathbb{C}}} = L_{\mathbb{C}}$.
    This proves the assertion.

    \ref{item:semisimple->hom-algebraic}. 
    Consider the graph of $\varphi_{\C}\colon L_{\C}\rightarrow G_{\C}$ given by  
    \[
        G(\varphi_{\C}) := \{ (\ell, \varphi_{\C}(\ell)) \mid \ell \in L_{\mathbb{C}} \}.
    \]
    The connected complex Lie group $G(\varphi_{\C})$ is the analytic subgroup of $L_{\mathbb{C}} \times G_{\mathbb{C}}$  
    corresponding to the graph of the differential $d\varphi_{\C}$,
    \[
        \mathfrak{g}(d\varphi_{\C}) := \{ (X, d\varphi_{\C}(X)) \mid X \in \mathfrak{l}_{\mathbb{C}} \}.
    \]
    Since the Lie algebra $\mathfrak{g}(d\varphi_{\C})$ is isomorphic to $\mathfrak{l}_{\mathbb{C}}$,  
    it is semisimple.  
    Thus, by \ref{item:semisimple->Zariski-closed},  
    we conclude that $G(\varphi_{\C})$ is Zariski-closed in $L_{\mathbb{C}} \times G_{\mathbb{C}}$.  
    It follows that $G(\varphi_{\C})$ is a Zariski-connected complex algebraic group.  

    Next, consider the first projection  
    $\operatorname{pr}_{1} \colon G(\varphi_{\C}) \to L_{\mathbb{C}}$, which  
    is a bijective morphism of algebraic groups. 
    It follows from Zariski's main theorem (see \cite[Chap.~AG, Thm.~18.2]{Borel-alg-grp}) that  
    this map is an isomorphism of algebraic groups.  
    Since $\varphi_{\C}\colon L_{\mathbb{C}} \to G_{\mathbb{C}}$ is the composition of  
    the isomorphism $\operatorname{pr}_{1}^{-1} \colon L_{\mathbb{C}} \to G(\varphi_{\C})$  
    and the second projection $\operatorname{pr}_{2} \colon G(\varphi_{\C}) \to G_{\mathbb{C}}$,  
    we see that $\varphi_{\C}$ is also a morphism of algebraic groups.
\end{proof}

Next, we focus on real algebraic groups.  
For a real algebraic group $\mathbf{G}$,  
we denote by $G$ the Lie group of real points $\mathbf{G}(\mathbb{R})$  
and by $G_{\mathbb{C}}$ the complex Lie group of complex points $\mathbf{G}(\mathbb{C})$.
Note that $G_{\mathbb{C}}$ provides a complexification of $G$,  
that is, the Lie algebra $\mathfrak{g}_{\mathbb{C}}$ of $G_{\mathbb{C}}$  
is naturally identified with the complexification $\mathfrak{g}\otimes_{\R}\C$ of the Lie algebra $\mathfrak{g}$ of $G$.
\begin{definition}
\label{def:Zariski-connected}
    A real algebraic group $\mathbf{G}$ is said to be \emph{Zariski-connected} if $G_{\C}$ is connected in the Zariski-topology.
\end{definition}

\begin{remark}
    In contrast to the case of complex algebraic groups,  
    even if $\mathbf{G}$ is Zariski-connected,  
    $G$ is not necessarily connected in the usual topology.  
    For instance, $SO(n,1)$ is Zariski-connected  
    in the sense of Definition~\ref{def:Zariski-connected}  
    but is not connected in the usual topology.
\end{remark}
    
\begin{definition}
\label{def:Zariski-dense}
    Let $\mathbf{G}$ be a real algebraic group. 
    The \emph{Zariski-topology} on  $G$ means the relative topology induced by the Zariski-topology on $G_{\mathbb{C}}$.  
    The \emph{Zariski-closure} of a subset $S$ of $G$ is defined as its closure with respect to the Zariski-topology.  
    We say that $S$ is \emph{Zariski-dense} in $G$ if it is dense in $G$ with respect to the Zariski-topology. 
\end{definition}
If $\mathbf{G}$ is Zariski-connected, then 
    $G$ is Zariski-dense in $G_{\C}$.
    Hence, if $\mathbf{G}$ is Zariski-connected, then a subset $S$ of $G$ is Zariski-dense in $G$
if and only if $S$ is Zariski-dense in $G_{\C}$.

In the case of semisimple real algebraic groups,  
the following lemma can be compared with Lemma~\ref{lemma:analytic->algebraic}:
\begin{lemma}
    \label{lemma:morphism}
    Let $\mathbf{L}$ be a Zariski-connected real semisimple algebraic group, $\mathbf{G}$ a  real algebraic group, and
    $\varphi\colon L\rightarrow G$ an abstract group homomorphism. 
    Assume that $L_{\C}$ is simply-connected. 
    Then the following claims about $\varphi$
    are equivalent: 
    \begin{enumerate}[label=(\roman*)]
        \item 
        \label{item:continuous}
        $\varphi$ is continuous in the usual topology; 
        \item 
        \label{item:C^infty}
        $\varphi$ is of $C^{\infty}$-class; 
        \item 
        \label{item:analytic}
        $\varphi$ is real analytic; 
        \item 
        \label{item:holomorphic}
        $\varphi$ can be extended to a holomorphic homomorphism
        $\varphi_{\C}\colon L_{\C}\rightarrow G_{\C}$ of complex Lie groups; 
        \item 
        \label{item:R-hom}
        $\varphi$ is obtained from an $\R$-morphism $\mathbf{L} \to \mathbf{G}$ of algebraic groups.
    \end{enumerate}
We simply refer to $f$ as a \emph{homomorphism}.
In particular, the image of $f_{\C}$ is Zariski-closed, and 
$f$ is continuous in the Zariski-topology. 
\end{lemma}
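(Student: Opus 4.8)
The plan is to observe that the implications $(v)\Rightarrow(iv)\Rightarrow(iii)\Rightarrow(ii)\Rightarrow(i)$ are each either trivial or standard, so that the whole equivalence follows once the loop is closed by the single classical step $(i)\Rightarrow(iii)$ together with the two algebraic-geometric steps $(iii)\Rightarrow(iv)$ and $(iv)\Rightarrow(v)$. Concretely: $(iii)\Rightarrow(ii)\Rightarrow(i)$ needs no comment; $(v)\Rightarrow(iv)$ holds because an $\R$-morphism $\mathbf{L}\to\mathbf{G}$ of algebraic groups induces a morphism $\mathbf{L}(\C)\to\mathbf{G}(\C)$, which is in particular a holomorphic group homomorphism $L_\C\to G_\C$ restricting to $\varphi$ on $L=\mathbf{L}(\R)$; and $(iv)\Rightarrow(iii)$ holds because the restriction of a holomorphic homomorphism to real points is real analytic.

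For $(i)\Rightarrow(iii)$ I would invoke the classical fact that a continuous homomorphism between Lie groups is automatically of $C^\infty$-class, and that a smooth homomorphism is real analytic because on one-parameter subgroups it is given by $t\mapsto\exp\!\big(t\,d\varphi(X)\big)$. For $(iii)\Rightarrow(iv)$: first note that since $\mathbf{L}_\C$ is simply-connected and semisimple, the group $L=\mathbf{L}(\R)$ is connected (see \cite{PlRa94}). Taking the differential $d\varphi\colon\mathfrak{l}\to\mathfrak{g}$ and extending it $\C$-linearly yields a Lie-algebra homomorphism $d\varphi_\C\colon\mathfrak{l}_\C\to\mathfrak{g}_\C$, and by simple connectedness of $L_\C$ there is a unique holomorphic homomorphism $\varphi_\C\colon L_\C\to G_\C$ with differential $d\varphi_\C$. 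Since $L$ is connected and $\varphi$ and $\varphi_\C|_L$ have the same differential, they coincide; hence $\varphi_\C$ extends $\varphi$, which is $(iv)$.

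For $(iv)\Rightarrow(v)$: by Lemma~\ref{lemma:analytic->algebraic}~\ref{item:semisimple->hom-algebraic}, the holomorphic homomorphism $\varphi_\C\colon L_\C\to G_\C$ is already a morphism of complex algebraic groups, so it remains only to check it is defined over $\R$. Let $\sigma_L$ and $\sigma_G$ be the complex conjugations on $L_\C$ and $G_\C$ attached to the given $\R$-structures. Then $\sigma_G\circ\varphi_\C\circ\sigma_L$ is again a holomorphic group homomorphism $L_\C\to G_\C$, and it agrees with $\varphi_\C$ on $L=\mathbf{L}(\R)$ because $\varphi(L)\subset\mathbf{G}(\R)$; since $\mathbf{L}$ is Zariski-connected, $L$ is Zariski-dense in $L_\C$, so two morphisms agreeing on $L$ are equal, and therefore $\sigma_G\circ\varphi_\C\circ\sigma_L=\varphi_\C$. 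Thus $\varphi_\C$ commutes with the $\R$-structures and descends to an $\R$-morphism $\mathbf{L}\to\mathbf{G}$ inducing $\varphi$, giving $(v)$. The two concluding assertions then follow immediately: the image of the morphism $\varphi_\C$ is a Zariski-closed subgroup, since the image of a homomorphism of algebraic groups is closed, and $\varphi=\varphi_\C|_L$ is continuous in the Zariski-topology as the restriction of a morphism.

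The only delicate point — and the sole place where the hypotheses on $\mathbf{L}$ are genuinely used — is the transition between infinitesimal and global data: I must know that $\varphi$ is determined by its differential, which forces $\mathbf{L}(\R)$ to be connected and is extracted from simple connectedness of $L_\C$, and I must know that identities verified on $\mathbf{L}(\R)$ extend to $L_\C$, which is the Zariski-density of $L$ in $L_\C$ coming from Zariski-connectedness of $\mathbf{L}$. Everything else is functoriality of complexification together with the already-established Lemma~\ref{lemma:analytic->algebraic}.
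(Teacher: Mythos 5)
Your proposal is correct and follows essentially the same route as the paper: the same trivial implications, the same lift of the complexified differential through the simply-connected $L_{\C}$ for (iii)$\Rightarrow$(iv), and the same appeal to Lemma~\ref{lemma:analytic->algebraic}~\ref{item:semisimple->hom-algebraic} for (iv)$\Rightarrow$(v). The only (harmless) divergences are that you verify the reality of $\varphi_{\C}$ by comparing $\sigma_G\circ\varphi_{\C}\circ\sigma_L$ with $\varphi_{\C}$ on the Zariski-dense subset $L\subset L_{\C}$, whereas the paper checks the conjugation identity at the Lie-algebra level and invokes connectedness of $L_{\C}$, and that you explicitly justify $\varphi_{\C}|_L=\varphi$ via connectedness of $L$ (a point the paper passes over as clear).
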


\begin{proof}
    It is well known that \ref{item:continuous}, \ref{item:C^infty}, and \ref{item:analytic} are equivalent,  
    and \ref{item:R-hom} $\Rightarrow$ \ref{item:continuous} is obvious.  
    Thus, we focus on proving \ref{item:analytic} $\Rightarrow$ \ref{item:holomorphic} $\Rightarrow$ \ref{item:R-hom}.

\ref{item:analytic}$\Rightarrow$\ref{item:holomorphic}.
    Since $L_{\mathbb{C}}$ is simply-connected,  
the complexification of the differential $df \colon \mathfrak{l} \to \mathfrak{g}$  
can be lifted to a holomorphic homomorphism  
$f_{\mathbb{C}} \colon L_{\mathbb{C}} \to G_{\mathbb{C}}$ of complex Lie groups.  
It is clear that $f_{\mathbb{C}}$ extends $f$.

    \ref{item:holomorphic}$\Rightarrow$\ref{item:R-hom}.
    By Lemma~\ref{lemma:analytic->algebraic}~\ref{item:semisimple->hom-algebraic},  
$f_{\mathbb{C}}$ is a morphism of algebraic groups.  
Thus, it suffices to show that $f_{\mathbb{C}}$ is defined over $\mathbb{R}$.  

Let $\sigma_{L} \colon L_{\mathbb{C}} \to L_{\mathbb{C}}$  
and $\sigma_{G} \colon G_{\mathbb{C}} \to G_{\mathbb{C}}$  
denote the complex conjugations associated with the real algebraic groups $\mathbf{L}$ and $\mathbf{G}$, respectively.  
Identifying $\mathfrak{l}_{\mathbb{C}} = \mathfrak{l} \otimes_{\mathbb{R}} \mathbb{C}$  
and $\mathfrak{g}_{\mathbb{C}} = \mathfrak{g} \otimes_{\mathbb{R}} \mathbb{C}$,  
we have $df_{\mathbb{C}} = df \otimes_{\mathbb{R}} \operatorname{id}_{\C}$, 
$d\sigma_{L}=\id_{\mathfrak{l}}\otimes \sigma$, and $d\sigma_{G}=\id_{\mathfrak{g}}\otimes \sigma$, where $\sigma$ is the complex conjugate of $\C$.  
Hence, it follows that $df_{\mathbb{C}} \circ d\sigma_{L} = d\sigma_{G} \circ df_{\mathbb{C}}$.
Since $L_{\mathbb{C}}$ is connected,  we see that
$f_{\mathbb{C}} \circ \sigma_{L} = \sigma_{G} \circ f_{\mathbb{C}}$.  
Thus, the morphism of algebraic groups $f_{\mathbb{C}}$ is defined over $\mathbb{R}$.
\end{proof}

\section{Clifford algebras and Spin groups}
\label{section:clifford-spin}
Throughout this article, we use Clifford algebras and spin groups extensively. 
In this appendix, we introduce some notation related to these concepts.

Let $\F$ be a field of characteristic $\neq 2$, and $V$ be an $n$-dimensional $\F$-vector space equipped with a non-degenerate symmetric bilinear form $Q\colon V\times V \rightarrow \F$. 
As usual, we put $Q(v):=Q(v,v)$, which defines a quadratic form on the $\F$-vector space $V$. 
Let $TV:=\bigoplus_{i\in\N}V^{\otimes i}$ denote the tensor algebra.
Let let $I(Q)$ be the two-sided ideal generated by 
$v\otimes v - Q(v)$ for all $v\in V$.
The quotient $\F$-algebra 
\[
C(V)\equiv C(V,Q):= TV/I(Q)
\]
is called the \emph{Clifford algebra} associated with the quadratic form $Q$.

When $\mathbb{E}$ is an extension field of $\F$, 
we consider the $\E$-linear extension of 
the $\F$-bilinear form $Q$ on the $\E$-vector space 
$V\otimes_{\F}\mathbb{E}$. 
Then the $\mathbb{E}$-algebra 
$C(V)\otimes_{\F}\mathbb{E}$
is naturally isomorphic to $C(V\otimes_{\F}\mathbb{E})$. 

Let $q\colon TV\rightarrow C(V)$ be the quotient homomorphism. 
If $\{e_{1},\ldots,e_{n}\}$ is an $\F$-basis of $V$,
then the elements 
$q(e_{i_{1}})\cdots q(e_{i_{k}})$
$(1\leq i_{1}<\cdots< i_{k}\leq n,\ 0\leq k\leq n)$
form an $\F$-basis of $C(V)$. 
In particular, $\dim_{\F}C(V)=2^{n}$ and 
the restriction of $q$ to the subspace $V$ of $TV$
is injective. Throughout this article, 
we think of $V$ as a subspace of $C(V)$
via $q$ and the image $q(v)$ of $v\in V$ is also denoted by the same symbol $v$.

The $\F$-linear map
$V\rightarrow C(V)$ defined by $v\mapsto -v$
is uniquely extended to an involutive $\F$-algebra automorphism $(-)'$
of $C(V)$. 
We define an $\F$-subalgebra $C_{\even}(V)$ of $C(V)$ by 
\[
C_{\even}(V)\equiv C_{\even}(V,Q):=\{x\in C(V,Q)\mid x'=x\},
\]
which is called the \emph{even Clifford algebra} of $Q$.
If $\{e_{1},\ldots,e_{n}\}$ is an $\F$-basis of $V$, then 
we have 
$(e_{i_{1}}\cdots e_{i_{k}})'=(-1)^{k}e_{i_{1}}\cdots e_{i_{k}}$
for $1\leq i_{1}<\cdots< i_{k}\leq n$
and $0\leq k\leq n$.
Hence, $e_{i_{1}}\cdots e_{i_{k}}\in C_{\even}(V)$
if and only if $k$ is even.
In particular, $\dim_{\F}C_{\even}(V) = 2^{n-1}$.
As is well-known, $C_{\even}(V)$ is a semisimple $\F$-algebra.

The $\F$-linear map $V\rightarrow C(V)$ defined by 
$v\mapsto v$
is uniquely extended to an $\F$-algebra involutive \emph{anti}-automorphism $(-)^{*}$
of $C(V)$. The involution $*$ preserves $C_{\even}(V)$. 
For an extension field $\E$ of $\F$, 
we define
\begin{align}
    \mathbf{G}_{V}(\E) &:= 
    \{x\in C_{\even}(V\otimes_{\F}\E)^{\times} \mid 
xx^{*} = 1 
\} \label{def:g_V} \\
    \mathbf{Spin}_{V}(\E) &:=\{x\in \mathbf{G}_{V}(\E) \mid x(V\otimes_{\F}\E)x^{-1}\subset V\otimes_{\F}\E
\}. \label{def:spin_V}
\end{align}
Here note $C_{\even}(V\otimes_{\F}\E)$ is naturally isomorphic to $C_{\even}(V)\otimes_{\F}\E$ as an $\E$-algebra. 
Hence, 
we can regard both $\mathbf{G}_{V}$ and $\mathbf{Spin}_{V}$
as $\F$-algebraic groups by identifying $C_{\even}(V)$ with $\F^{2^{n-1}}$. 
We put
\[
G(V):=\mathbf{G}_{V}(\F)\text{ and }
Spin(V):=\mathbf{Spin}_{V}(\F).
\]
The group $Spin(V)$ is called the \emph{spin group} of $Q$. 
Note that the elements $\pm 1 \in C_{\even}(V)$ 
belong to $Spin(V)$.
Since $C_{\even}(V)$ is a semisimple $\F$-algebra, 
$G(V)$ is a classical group defined over $\F$.

For $x\in Spin(V)$ and $v\in V$, 
we have $xvx^{-1}\in V$ in $C(V)$ by definition.
One can check that the $\F$-linear map $v\mapsto xvx^{-1}$ 
defines an element of the special orthogonal group $SO(V)$
with respect to $Q$. 
Thus we obtain an $\F$-homomorphism
\[
\Ad\colon \mathbf{Spin}_{V}\rightarrow \mathbf{SO}_{V},\ \ x\mapsto (v\mapsto xvx^{-1}).
\]
The kernel of this homomorphism is $\{\pm 1\}$. 
If $\mathbb{F}$ is algebraically closed, then the map $\Ad$ is surjective. 
However, this is not true in general.  
Indeed, when $\mathbb{F} = \mathbb{R}$, the image of $\Ad$ coincides with
the identity component of the orthogonal Lie group $SO(V)$, which has two connected components if the quadratic form $Q$ is not positive-definite.  

From now on, we assume $\F = \R$.
Then $G(V)$ and $Spin(V)$ are Lie groups.
Let us recall that their Lie algebras $\mathfrak{g}(V)$ and  
$\mathfrak{spin}(V)$ are realized as Lie subalgebras of $C_{\even}(V)$. Since $C_{\even}(V)$ is an $\F$-algebra, we note that $C_{\even}(V)$ is a Lie algebra over $\F$ with the bracket $[x,y] = xy - yx$. Further, via the exponential map $\exp\colon C_{\even}(V) \rightarrow C_{\even}(V)^\times$ ($x \mapsto \sum_{n=0}^{\infty} x^n/n!$), we think of $C_{\even}(V)$ as the Lie algebra of the Lie group $C_{\even}(V)^\times$. In particular, we have: 
\begin{align*}
\label{def:spin-lie}
\mathfrak{g}(V) &= \{x\in C_{\even}(V,Q)\mid 
x+ x^{*} = 0
\} \\
\mathfrak{spin}(V)&= 
\{x\in \mathfrak{g}(V)\mid \ad(x)(V) \subset V
\},
\end{align*}
where $\ad(x)(v) = [x,v]$. From this, we obtain the following lemma easily: 
\begin{lemma}
\label{lemma:g(p,q)-structure}
Let ${e_{1},\ldots,e_{n}}$ be an orthogonal basis of the $\F$-vector space $V$ with respect to $Q$. 
We write $\wedge^{k}$ $(0\leq k\leq n)$ for the 
$\F$-vector subspace of $C_{\even}(V)$ spanned by the elements $e_{i_{1}}\cdots e_{i_{k}}$ $(1\leq i_{1}<\cdots<i_{k} \leq n)$.
Then we have
\[
\mathfrak{g}(V) = \bigoplus_{\substack{0\leq k\leq n \\
k\equiv 2\bmod 4}} \wedge^{k}
\quad \text{and} \quad \mathfrak{spin}(V) = \wedge^{2}.
\]
\end{lemma}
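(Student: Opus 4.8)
The statement to be proved is Lemma~\ref{lemma:g(p,q)-structure}: for an orthogonal basis $\{e_1,\dots,e_n\}$ of $(V,Q)$, the Lie subalgebras $\mathfrak{g}(V)$ and $\mathfrak{spin}(V)$ of $C_{\even}(V)$ decompose as $\mathfrak{g}(V)=\bigoplus_{0\le k\le n,\ k\equiv 2\bmod 4}\wedge^k$ and $\mathfrak{spin}(V)=\wedge^2$. The plan is to work directly from the descriptions of $\mathfrak{g}(V)$ and $\mathfrak{spin}(V)$ as subspaces of $C_{\even}(V)$ cut out by the anti-automorphism $*$ and by the condition $\ad(x)(V)\subset V$, and to compute the effect of $*$ and of $\ad(e_i)$ on the standard monomial basis $e_I:=e_{i_1}\cdots e_{i_k}$, $I=\{i_1<\dots<i_k\}$.

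\emph{Step 1 (action of $*$ on monomials).} First I would record that $(-)^*$ is the anti-automorphism fixing each $e_i$, hence $e_I^{*}=e_{i_k}\cdots e_{i_1}$. Since the $e_i$ are orthogonal, distinct basis vectors anticommute: $e_ie_j=-e_je_i$ for $i\neq j$. Reversing the order of $k$ distinct factors therefore introduces the sign $(-1)^{k(k-1)/2}$, so $e_I^{*}=(-1)^{k(k-1)/2}e_I$. Consequently $e_I+e_I^{*}=0$ precisely when $k(k-1)/2$ is odd, i.e.\ when $k\equiv 2$ or $3\pmod 4$. Restricting to $C_{\even}(V)$ forces $k$ even, leaving exactly $k\equiv 2\pmod 4$. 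Since the various $\wedge^k$ are $*$-stable and span $C_{\even}(V)$, this shows $\mathfrak{g}(V)=\{x\in C_{\even}(V)\mid x+x^{*}=0\}=\bigoplus_{k\equiv 2(4)}\wedge^k$, which is the first assertion.

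\emph{Step 2 (the bracket with a basis vector).} For the second assertion I would use the formula already derived in the body of the paper: for $j\in\{1,\dots,n\}$ and a subset $I$, one has $e_je_I=(-1)^{?}e_Ie_j$ according to whether $j\in I$, exactly as in the proof of Lemma~\ref{lemma:sym-diff} (and its precursor $e_je_I=\pm e_Ie_j$). More precisely, commuting $e_j$ past $e_I$ costs a sign $(-1)^{\#(I\setminus\{j\})}$ together with the relation $e_j^2=Q(e_j)$ when $j\in I$. From this one reads off that $[e_j,e_I]=e_je_I-e_Ie_j$ is a scalar multiple of $e_{I\triangle\{j\}}$ (either $e_{I\cup\{j\}}$ or $e_{I\setminus\{j\}}$), and it is nonzero exactly when parity forces the two products to differ. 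Writing $x=\sum_I c_Ie_I\in\mathfrak{g}(V)$, the condition $\ad(x)(e_j)\in V$ for every $j$ says that $\sum_I c_I[e_j,e_I]$ lies in the span of $e_1,\dots,e_n$, i.e.\ has only terms $e_{I\triangle\{j\}}$ of length $1$. Since $x\in\mathfrak{g}(V)$ already has only components in $\wedge^k$ with $k\equiv 2\pmod 4$, a component $c_Ie_I$ with $\#I=k\ge 6$ produces, upon bracketing with a suitable $e_j$, a nonzero term in $\wedge^{k-1}$ or $\wedge^{k+1}$, which can never be cancelled by contributions coming from other $\wedge^{k'}$ ($k'\equiv 2\bmod 4$, so $|k'-k|\ge 4$): the lengths of the output monomials from different $\wedge^{k'}$ are distinct. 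Hence all components of $x$ of length $\ge 6$ must vanish, so $x\in\wedge^2$; conversely every element of $\wedge^2$ satisfies $\ad(x)(V)\subset V$ because bracketing a degree-$2$ monomial with $e_j$ lands in $\wedge^1=V$. This gives $\mathfrak{spin}(V)=\wedge^2$.

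\emph{Main obstacle.} The only delicate point is the cancellation argument in Step 2: one must be careful that, although $\ad(e_j)$ does not preserve the grading by $\#I$, the images of the distinct graded pieces $\wedge^{k'}$ ($k'$ running over residues $\equiv 2\bmod 4$) land in disjoint graded pieces of $C(V)$, so no cancellation between them is possible; this is what lets us conclude componentwise that only $\wedge^2$ survives. I would phrase this cleanly by projecting onto the degree-$1$ part of $C(V)$ and noting the projection of $\ad(e_j)(\wedge^{k})$ is zero unless $k=2$. Everything else is a routine sign bookkeeping using the anticommutation of distinct orthogonal basis vectors, and the fact that $\{e_I\}$ is an $\F$-basis of $C(V)$.
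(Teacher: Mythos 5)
Your proof is correct and is exactly the computation the paper leaves implicit (the paper states the lemma follows "easily" from the descriptions $\mathfrak{g}(V)=\{x\mid x+x^{*}=0\}$ and $\mathfrak{spin}(V)=\{x\in\mathfrak{g}(V)\mid \ad(x)(V)\subset V\}$ and gives no further argument): the sign $(-1)^{k(k-1)/2}$ for $*$ on $e_I$ and the observation that $\ad(e_j)$ maps $\wedge^k$ ($k$ even) into $\wedge^{k-1}$, nonzero exactly when $j\in I$, so that contributions from distinct $k\equiv 2\bmod 4$ cannot cancel. Only note that your closing rephrasing should project onto the components of degree $\ge 2$ (i.e.\ onto each $\wedge^{k-1}$ with $k\ge 6$) rather than onto the degree-$1$ part, since $\ad(x)(e_j)\in V$ is the vanishing of those higher components — the argument you actually carried out in Step 2 already does this correctly.
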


Let $\R^{p,q}$ be the real vector space $\R^{p+q}$ equipped with the quadratic form $x_{1}^{2}+\dotsb+ x_{p}^2-x_{p+1}^2-\dotsb -x_{p+q}^2$. In this case, $C(\R^{p,q})$, $C_{\even}(\R^{p,q})$, 
$G(\R^{p,q})$, $\mathfrak{g}(\R^{p,q})$, 
$Spin(\R^{p,q})$, and $\mathfrak{spin}(\R^{p,q})$ are denoted by $C(p,q)$, $C_{\even}(p,q)$, $G(p,q)$, $\mathfrak{g}(p,q)$, 
$Spin(p,q)$, and $\mathfrak{spin}(p,q)$, respectively. 
For the structure of the even Clifford algebra $C_{\even}(p,q)$ as an 
$\mathbb{R}$-algebra, see, for example, \cite[Prop.~4.4.1]{KobayashiYoshino05}. 
Moreover, Kobayashi and Yoshino 
provided an explicit identification of the groups $G(p,q)$ with classical groups, which is useful for a uniform treatment of sporadic cases in Table~\ref{tab:kobayashi-yoshino}. We extract the classification table of $G(p,q)$ from \cite{KobayashiYoshino05}: 
\begin{proposition}[{\cite[Prop.~4.4.4]{KobayashiYoshino05}}]
    \label{proposition:Kobayashi-Yoshino-G(p,q)}
    For $p,q\geq 1$, the group $G(p,q)$ is isomorphic to one of the following classical Lie groups. Here, $n=2^{(p+q-\alpha)/2}$ and $\alpha\in\{3,4,5,6\}$
    are given according to $p-q\mod 8$ as in the table.
    
\[
\renewcommand{\arraystretch}{1.2}
\begin{array}{c|c||c|c}
G(p,q)  & \alpha & p-q \mod 8& p+q \mod 8\\ \hline
O(n,n)^2  &  &    & 0 \\
GL(2n,\R) & 4& 0 & \pm 2 \\ 
Sp(n,\R)^2   &  &   & 4\\ \hline
\begin{array}{c}
     O(n,n)  \\
     Sp(n,\R)
\end{array}  &3 & \pm 1 & 
\begin{array}{c}
     \pm 1  \\
     \pm 3
\end{array}\\ \hline
O(2n,\C)        &   &       & 0\\
U(n,n)  & 4 & \pm 2 &\pm 2\\
Sp(n,\C)       &    &       & 4\\ \hline
\begin{array}{c}
     O^*(4n)  \\
     Sp(n,n)
\end{array}  & 5 & \pm 3 & 
\begin{array}{c}
     \pm 1  \\
     \pm 3
\end{array} \\ \hline
O^*(4n)^2              &          &             & 0 \\ 
GL(2n,\HA) & 6      & 4 & \pm 2\\
Sp(n,n)^2 & & & 4 
\end{array}
\renewcommand{\arraystretch}{1}
\]
\end{proposition}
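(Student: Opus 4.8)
The plan is to view $G(p,q)$ as the \emph{unitary group} $\{x\in C_{\even}(p,q)^{\times}\mid xx^{*}=1\}$ of the semisimple real algebra equipped with the anti-involution $(C_{\even}(p,q),{*})$, where $*$ is reversion (the anti-automorphism extending $v\mapsto v$), and then to identify this pair explicitly. Once one knows an isomorphism $(C_{\even}(p,q),{*})\cong(M(N,\mathbb{K}),\sigma)$ for a division algebra $\mathbb{K}\in\{\mathbb{R},\mathbb{C},\mathbb{H}\}$ and a standard anti-involution $\sigma$ (adjunction with respect to a nondegenerate bilinear or $\mathbb{K}$-sesquilinear form $B$), or $(M(N,\mathbb{K})^{\oplus 2},\sigma)$, the associated unitary group is by definition one of $O(\cdot,\cdot)$, $Sp(\cdot,\mathbb{R})$, $O(\cdot,\mathbb{C})$, $Sp(\cdot,\mathbb{C})$, $U(\cdot,\cdot)$, $Sp(\cdot,\cdot)$, $O^{*}(\cdot)$, their squares, or a general linear group, so reading off the table reduces to: (i) pinning down $\mathbb{K}$ and $N$; (ii) pinning down the symmetry type and signature of $B$; and (iii) in the non-simple case, deciding whether $*$ preserves or exchanges the two simple summands.

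For step (i) I would invoke the classification of real Clifford algebras (Bott periodicity modulo $8$, after Atiyah--Bott--Shapiro). Using the standard isomorphisms $C_{\even}(p,q)\cong C(p,q-1)\cong C(q,p-1)$ (available since $p,q\ge 1$), together with the periodicity isomorphisms $C(a+8,b)\cong C(a,b)\otimes_{\mathbb{R}}M(16,\mathbb{R})$ and $C(a+1,b+1)\cong C(a,b)\otimes_{\mathbb{R}}M(2,\mathbb{R})$, one gets that $C_{\even}(p,q)$ is a matrix algebra over $\mathbb{R}$, $\mathbb{C}$ or $\mathbb{H}$, or a sum of two such, with the division-algebra type governed by $p-q\bmod 8$ and the matrix size $N$ a power of $2$ governed by $p+q$; this produces the columns $\alpha$ and $n=2^{(p+q-\alpha)/2}$. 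The trichotomy "central simple / complex center / split" is controlled by the parity of $\dim V=p+q$ and by the value of $\omega^{2}$, where $\omega=e_{1}\cdots e_{p+q}$: $p+q$ odd forces $C_{\even}$ central simple; $p+q$ even gives $C_{\even}\cong M(N,\mathbb{C})$ when $\omega^{2}=-1$ (the complex entries $O(2n,\mathbb{C})$, $U(n,n)$, $Sp(n,\mathbb{C})$) and $C_{\even}\cong M(N,\mathbb{K})^{\oplus 2}$ when $\omega^{2}=+1$ (the squared entries and the $GL$ entries).

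For steps (ii) and (iii) I would propagate $*$ through the same periodicity isomorphisms — under which it corresponds to $\sigma\otimes(\mathrm{transpose})$ on the new tensor factor — so that it suffices to treat one representative $(p,q)$ with $p,q$ small for each residue class of $(p\bmod 8,\,q\bmod 8)$. For these finitely many base cases, build $C_{\even}(p,q)$ concretely as an iterated Kronecker product of $2\times 2$ real, complex, or quaternionic matrices realizing the generators $e_{i}e_{p+q}$, compute reversion explicitly, and read off the adjoint form $B$: whether it is symmetric or skew, whether it is $\mathbb{R}$-, $\mathbb{C}$- or $\mathbb{H}$-(sesqui)linear, and — crucially, since the ambient quadratic form has indefinite signature $(p,q)$ with $p,q\ge 1$ — that $B$ has split signature (or balanced signature $(n,n)$ in the $U(n,n)$ case, and is the quaternionic skew-Hermitian form in the $O^{*}(4n)$ case), which rules out the compact real forms $O(N)$, $U(N)$, $Sp(N)$. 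In the split cases, whether $*$ fixes or swaps the two central idempotents $\tfrac12(1\pm\omega)$ is equivalent to whether reversion fixes or negates $\omega$, i.e. the sign $(-1)^{(p+q)(p+q-1)/2}$: it swaps exactly when $p+q\equiv 2\pmod 4$, giving the $GL(2n,\mathbb{R})$ and $GL(2n,\mathbb{H})$ entries, and preserves them otherwise, giving $O(n,n)^{2}$, $Sp(n,\mathbb{R})^{2}$, $O^{*}(4n)^{2}$, $Sp(n,n)^{2}$.

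The main obstacle is the bookkeeping of the anti-involution. There are two natural anti-automorphisms of $C(p,q)$ (reversion and Clifford conjugation), and the isomorphisms $C_{\even}(p,q)\cong C(p,q-1)$ and the periodicity steps do not all intertwine reversion with reversion — some twist it by the grade involution $(-)'$ — so at each step one must track which anti-involution is being carried and how its fixed form transforms. Matching the eight classes $p-q\bmod 8$ with the correct real form, in particular separating $O^{*}(4n)$ from $Sp(n,n)$ and $U(n,n)$ from $O(2n,\mathbb{C})$ and $Sp(n,\mathbb{C})$ (which differ only through the refinement by $p+q\bmod 8$), is where care is required; the safest route is the explicit Kronecker-product model for the base cases, which makes $B$ and its signature literally computable, followed by the purely formal propagation via $\otimes\,M(16,\mathbb{R})$ and $\otimes\,M(2,\mathbb{R})$.
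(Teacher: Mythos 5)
The paper itself contains no proof of this proposition---it is quoted verbatim from Kobayashi--Yoshino \cite[Prop.~4.4.4]{KobayashiYoshino05}, whose argument runs along essentially the lines you describe: determine $(C_{\even}(p,q),{*})$ via the mod-$8$ periodicity of real Clifford algebras and then read off $G(p,q)$ as the unitary group of the resulting semisimple algebra with anti-involution, using $p,q\geq 1$ to force the split/balanced signatures and the parity of $(p+q)(p+q-1)/2$ to decide whether $*$ exchanges the two central idempotents. Your plan is therefore sound; the one imprecision is the assertion that periodicity carries reversion to $\sigma\otimes(\text{transpose})$---on the $M(2,\R)$ factor arising from $C(1,1)$ the reversion is the adjoint involution of a \emph{split symmetric} form (Gram matrix $e_{1}$, since $e_{1}^{t}=e_{1}$ but $e_{2}^{t}=-e_{2}$), not the bare transpose, though as that form is split and orthogonal the tensoring step still preserves the involution type and balanced signature, and your fallback of explicit Kronecker-product base cases would catch this in any event.
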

Lemma~\ref{lemma:g(p,q)-structure} and Proposition~\ref{proposition:Kobayashi-Yoshino-G(p,q)} are used in Section~\ref{section:(G,Gamma)-bending-application}. 

\section{Torsion freeness of congruence subgroups}
\label{section:torsion-arithmetic} 

In this appendix, we prove the following lemma concerning torsion of arithmetic groups, which is used in Section~\ref{section:proof-step1}.  
It seems to be well-known to experts, but we give a proof for 
the convenience of the reader. 
\begin{lemma}
    \label{lemma:torsion-free-general}
    Let $\F$ be a number field of finite degree and 
    $\mathcal{O}$ the ring of integers of $\F$. 
    Suppose that prime ideals $P,Q$ of $\mathcal{O}$
    satisfy $P\cap \Z\neq Q\cap\Z$.
    Then the congruence subgroup
    \[
    \{g\in GL(n,\mathcal{O})\mid g\equiv 1\mod PQ\}
    \]
    is torsion-free.
\end{lemma}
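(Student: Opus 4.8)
\textbf{Proof plan for Lemma~\ref{lemma:torsion-free-general}.}
The plan is to show that any torsion element $g$ in the congruence subgroup $\Gamma(PQ) := \{g\in GL(n,\mathcal{O})\mid g\equiv 1\bmod PQ\}$ must be the identity. Suppose $g$ has finite order; we may assume $g$ has prime order $\ell$ (otherwise replace $g$ by a suitable power). Write $g = 1 + A$ with $A\in PQ\cdot M(n,\mathcal{O})$, so $A$ is a nonzero matrix whose entries lie in $PQ$, and $g^\ell = 1$. First I would expand $g^\ell = (1+A)^\ell = 1$ using the binomial theorem to obtain the relation
\[
\ell A + \binom{\ell}{2}A^2 + \dots + A^\ell = 0.
\]
Let me keep track of the ideals: if the entries of $A$ lie in an ideal $I$, then the entries of $A^k$ lie in $I^k$. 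The idea is that the dominant term $\ell A$ has entries in $\ell I$, while the remaining terms $\binom{\ell}{2}A^2 + \dots$ have entries in $I^2$; localizing cleverly at the right primes will force $A = 0$.

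The key technical step is to localize at a prime $P'$ lying over the rational prime $p' = \ell$, or — since $P\cap\Z \neq Q\cap\Z$ — to exploit that $\ell$ cannot simultaneously be divisible by both the rational prime below $P$ and the one below $Q$. Concretely: let $p = P\cap\Z$ and $q = Q\cap\Z$ be the distinct rational primes. Since $p\neq q$, the prime $\ell$ is different from at least one of them, say $\ell\neq p$. Then $\ell$ is a unit in the localization $\mathcal{O}_P$. Working in $\mathcal{O}_P$, the entries of $A$ lie in $P^{a}$ for the maximal such power $a\geq 1$ (well-defined since $\mathcal{O}_P$ is a DVR and $A\neq 0$), i.e. some entry $A_{ij}$ has $P$-adic valuation exactly $a$. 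From $\ell A = -(\binom{\ell}{2}A^2 + \dots + A^\ell)$, the right-hand side has all entries in $P^{2a}$, while $\ell A_{ij}$ has valuation exactly $a$ (as $\ell$ is a $P$-adic unit). Since $a < 2a$, this is a contradiction unless $A = 0$ in $\mathcal{O}_P$, i.e. all entries of $A$ are zero in $\mathcal{O}_P$.

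Finally I would upgrade this local vanishing to global vanishing. The congruence condition $A\in PQ\cdot M(n,\mathcal{O})$ already puts the entries of $A$ into $Q$, and we have just shown the entries of $A$ vanish in $\mathcal{O}_P$, hence lie in arbitrarily high powers of $P$ — more carefully, one should instead run the valuation argument to conclude directly that $A_{ij}=0$ as an element of $\mathcal{O}_P$, and note that $\bigcap_{k} P^k \mathcal{O}_P = 0$. Since the natural map $\mathcal{O} \to \mathcal{O}_P$ is injective, $A_{ij} = 0$ in $\mathcal{O}$ for all $i,j$, so $A = 0$ and $g = 1$. The main obstacle is simply being careful with the case analysis: one must genuinely use the hypothesis $P\cap\Z\neq Q\cap\Z$ to guarantee that for the prime $\ell$ dividing the order, there is a prime among $\{P, Q\}$ whose residue characteristic is not $\ell$, so that $\ell$ is a unit in that localization; the presence of \emph{two} coprime congruence conditions is exactly what makes this work, whereas a single prime ideal $P$ with residue characteristic $\ell$ would not suffice (cf.\ the failure of $\Gamma_\Lambda(P)$ being torsion-free in general). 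I expect no serious difficulty beyond this bookkeeping.
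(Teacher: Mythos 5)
Your proof is correct, and it takes a genuinely different route from the paper's. The paper passes to a splitting field $\E \supset \F$, uses that a finite-order $g$ is diagonalizable with eigenvalues that are roots of unity, observes from $g\equiv 1 \bmod PQ$ that each eigenvalue $\zeta_m$ satisfies $(\zeta_m-1)^n\equiv 0 \bmod P'Q'$ for primes $P',Q'$ above $P,Q$, and then invokes a cyclotomic lemma: if $m$ has two distinct prime factors then $1-\zeta_m$ is a unit, and if $m=\ell^r>1$ then $(1-\zeta_m)$ generates a prime above $\ell$, forcing $\ell\in P\cap Q$ and contradicting $P\cap\Z\neq Q\cap\Z$. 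You instead run the classical Minkowski-type valuation argument directly on the matrix: reduce to prime order $\ell$, write $g=1+A$, expand $(1+A)^\ell=1$, and localize at whichever of $P,Q$ has residue characteristic different from $\ell$ (which exists precisely because $P\cap\Z\neq Q\cap\Z$), so that $\ell$ is a unit there and the leading term $\ell A$ has strictly smaller valuation than the higher-order terms unless $A=0$; injectivity of $\mathcal{O}\to\mathcal{O}_P$ then gives $A=0$ globally. Both proofs exploit the hypothesis in exactly the same way — to find a member of $\{P,Q\}$ whose residue characteristic avoids the prime order $\ell$ — but yours is more elementary and self-contained (no extension of scalars, no diagonalizability, no arithmetic of cyclotomic fields), while the paper's eigenvalue formulation makes the mechanism (``no nontrivial root of unity is congruent to $1$ mod $P'Q'$'') more transparent. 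Your valuation bookkeeping ($v(\ell A_{ij})=a$ versus $v\bigl(\sum_{k\ge 2}\binom{\ell}{k}A^k\bigr)\ge 2a$, whence $a\ge 2a$, contradicting $a\ge 1$) is sound, and your closing remark correctly identifies why a single prime would not suffice.
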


We give an elementary lemma for the proof:
\begin{lemma}
\label{lemma:root-of-unity}
Let $\F$, $\mathcal{O}$, $P$, and $Q$ be as in Lemma~\ref{lemma:torsion-free-general}.
For $m\in \N_{+}$, 
if a primitive $m$-th root $\zeta_{m}$ in $\F$
satisfies $\zeta_{m}\equiv 1\mod PQ$, then $m=1$.
\end{lemma}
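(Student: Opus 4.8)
The plan is to reduce the statement to the classical evaluation of cyclotomic polynomials at $1$. Write $\Phi_m$ for the $m$-th cyclotomic polynomial and recall the factorization $\prod_{1\le j\le m,\ \gcd(j,m)=1}(1-\zeta_m^{j})=\Phi_m(1)$, together with the well-known values $\Phi_m(1)=\ell$ when $m=\ell^{k}$ is a power of a rational prime $\ell$ with $k\ge 1$, and $\Phi_m(1)=1$ when $m$ has at least two distinct prime divisors. Arguing by contradiction, I would suppose $m>1$ and $\zeta_m\equiv 1\pmod{PQ}$; then $1-\zeta_m$ lies in the proper ideal $PQ$. We may also assume $P$ and $Q$ are nonzero (otherwise $PQ=0$, forcing $\zeta_m=1$ and $m=1$), so that $P\cap\Z$ and $Q\cap\Z$ are nonzero prime ideals of $\Z$, each generated by a rational prime.

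First I would dispose of the case that $m$ has two distinct prime divisors. Here each factor $1-\zeta_m^{j}$ with $\gcd(j,m)=1$ lies in $\Z[\zeta_m]\subseteq\mathcal{O}$, and their product is $\Phi_m(1)=1$; hence $1-\zeta_m$ would be a unit of $\mathcal{O}$, contradicting $1-\zeta_m\in PQ$. So $m$ must be a prime power $\ell^{k}$ with $k\ge 1$.

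For the remaining case I would use $\Phi_{\ell^{k}}(1)=\ell$, i.e.\ $\ell=\prod_{\gcd(j,m)=1}(1-\zeta_m^{j})$ in $\mathcal{O}$. Since for each such $j$ the quotient $(1-\zeta_m^{j})/(1-\zeta_m)=1+\zeta_m+\dots+\zeta_m^{j-1}$ belongs to $\Z[\zeta_m]\subseteq\mathcal{O}$, the element $1-\zeta_m$ divides $\ell$ in $\mathcal{O}$, so $\ell\in(1-\zeta_m)\mathcal{O}\subseteq PQ\subseteq P\cap Q$. Then $\ell\in P\cap\Z$ and $\ell\in Q\cap\Z$; since both of these are prime ideals of $\Z$ containing the prime $\ell$, they coincide with $\ell\Z$, contradicting $P\cap\Z\ne Q\cap\Z$. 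This contradiction yields $m=1$.

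The argument is routine; the only point deserving a word of care is that $\F$ may be strictly larger than $\Q(\zeta_m)$, which is accommodated by the inclusion $\Z[\zeta_m]\subseteq\mathcal{O}$ (the ring of integers of $\Q(\zeta_m)$ is $\Z[\zeta_m]$) used above. I do not anticipate any serious obstacle.
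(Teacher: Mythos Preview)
Your proof is correct and follows essentially the same approach as the paper's: split into the case where $m$ has two distinct prime divisors (where $1-\zeta_m$ is a unit, giving an immediate contradiction) and the prime-power case $m=\ell^k$ (where $1-\zeta_m$ divides $\ell$, forcing $\ell\in P\cap Q$ and hence $P\cap\Z=Q\cap\Z$). The only difference is that the paper cites Shimura for the facts about $1-\zeta_m$, whereas you derive them directly from the standard evaluation of $\Phi_m(1)$; the underlying argument is the same.
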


\begin{proof}
    Let $k=\Q(\zeta_{m})$ and $\mathcal{O}_{k}=\Z[\zeta_{m}]$ the ring of integers of $k$.
    If $m$ has two distinct prime factors, then $1-\zeta_{m}$ is a unit in $\mathcal{O}_{k}$
    (see, e.g., by \cite[Lem.\ 17.2(2)]{Shimura10}). 
    Hence, $1-\zeta_{m}$ is not contained in any prime ideal of $\mathcal{O}_{k}$. 
    However, since $P\cap \mathcal{O}_{k}$ is a prime ideal of $\mathcal{O}_{k}$ and
    $1-\zeta_{m}\in P$ by assumption, 
    this leads to a contradiction.
    Hence, suppose $m=\ell^{r}$ for some prime number $\ell$ and non-negative integer $r$. 

    For contradiction, suppose $m>1$, equivalently $r>0$.
    Then $(1-\zeta_{m})\mathcal{O}_{k}$ is a prime ideal of $\mathcal{O}_{k}$ containing $\ell$ (see, e.g., by \cite[Thm.\ 17.5]{Shimura10}).
    Hence, the prime number $\ell$ is contained in both $P$ and $Q$, which contradicts the assumption that $P\cap \Z\neq Q\cap\Z$.  Therefore, we have $m=1$.
\end{proof}

We are ready to show Lemma~\ref{lemma:torsion-free-general}.  
\begin{proof}[Proof of Lemma~\ref{lemma:torsion-free-general}]
    Let $g$ be an element of $GL(n, \mathcal{O})$ such that $g \equiv 1 \mod PQ$, and let $f(t) \in \mathcal{O}[t]$ be the characteristic polynomial of $g$.

Since $g \equiv 1 \mod PQ$, we have $f(t) \equiv (t-1)^n \mod PQ$. Let $\E$ be a finite extension field of $\F$ containing all the eigenvalues of $g$, and let $P'$ and $Q'$ be prime ideals of the integers $\mathcal{O}_{\E}$ in $\E$ containing $P$ and $Q$, respectively. Then, it follows that $f(t) \equiv (t-1)^n \mod P'Q'$.

Now suppose that $g$ has finite order. In this case, $g$ is diagonalizable over $\E$, and its eigenvalues are all roots of unity. To prove our assertion, it suffices to show that these eigenvalues are equal to 1.

Let $\zeta_m$ be an eigenvalue of $g$, which is a primitive $m$-th root of unity. Since $f(\zeta_m) = 0$, we have $(\zeta_m - 1)^n \equiv 0 \mod P'Q'$. Since both $P'$ and $Q'$ are prime ideals of $\mathcal{O}_{\E}$, it follows that $\zeta_m - 1 \equiv 0 \mod P'Q'$. By Lemma~\ref{lemma:root-of-unity}, we conclude that $m = 1$. This completes the proof.
\end{proof}

\section{A lemma on the HNN extension}
\label{section:hnn-extension}
Let $M$ be an orientable connected manifold and 
    $N$ an orientable connected compact hypersurface
    such that $S:=M\smallsetminus N$ is connected. 
    In this appendix, we present Proposition~\ref{prop:hnn}, which explains how the fundamental group $\pi_{1}(M)$ of $M$ is described by $\pi_{1}(S)$ and $\pi_{1}(N)$.
    We see that $\pi_{1}(M)$ is an HNN extension of $\pi_{1}(S)$. 
    Although the argument using the van Kampen theorem is standard to experts, we provide a proof for the convenience of readers who may not be familiar with it.
    This proposition is used in Section~\ref{section:step2}, where 
    $M$ and $N$ are hyperbolic manifolds. 
 
    Let us introduce some notations. 
    By the tubular neighborhood theorem,
    one can and do take an open neighborhood $\tilde{N}^{(4)}$ of $N$ in $M$
    and a diffeomorphism 
    \[
    f\colon N\times (-4,4)\simeq \tilde{N}^{(4)}
    \]
    such that $f(N\times\{0\})$ coincides with $N$.  
    Fix $y\in N$ and put $\tilde{N}^{(t)}:=f(N\times (-t,t))$ and 
    $y^{(t)}:=f(y,t)$ for $-4 < t< 4$.
    Since $S=M\smallsetminus N$ is connected, 
    we may and do assume that $M\smallsetminus \tilde{N}^{(4)}$ is path-connected.
    \begin{definition}
        Fix a base point $x_0 \in M \smallsetminus \tilde{N}^{(4)}$.  
    We define the oriented loop $\nu$ in $M$, starting at $x_0$,  
    as the composition of the following paths:  

    \begin{itemize}
        \item a path from $x_0$ to $y^{(3)}$  
        inside $M \smallsetminus \tilde{N}^{(3)}$;
        \item the path from $y^{(3)}$ to $y^{(-3)}$  
        given by $f(\{y\} \times [-3,3])$;
        \item a path from $y^{(-3)}$ to $x_0$  
        inside $M \smallsetminus \tilde{N}^{(3)}$.
    \end{itemize}
Furthermore, we can take the loop $\nu$ to be a closed \emph{submanifold} of $M$.  
Additionally, let $\nu_+$ be the path along the loop $\nu$ from $x_0$ to $y^{(1)}$,  
and let $\nu_-$ be the path along $\nu$ from $y^{(-1)}$ to $x_0$.  

    \end{definition}
    \begin{figure}[h]
    \centering
    \begin{minipage}[b]{0.49\columnwidth}
    \centering{\includegraphics[
    width=5cm
    ]{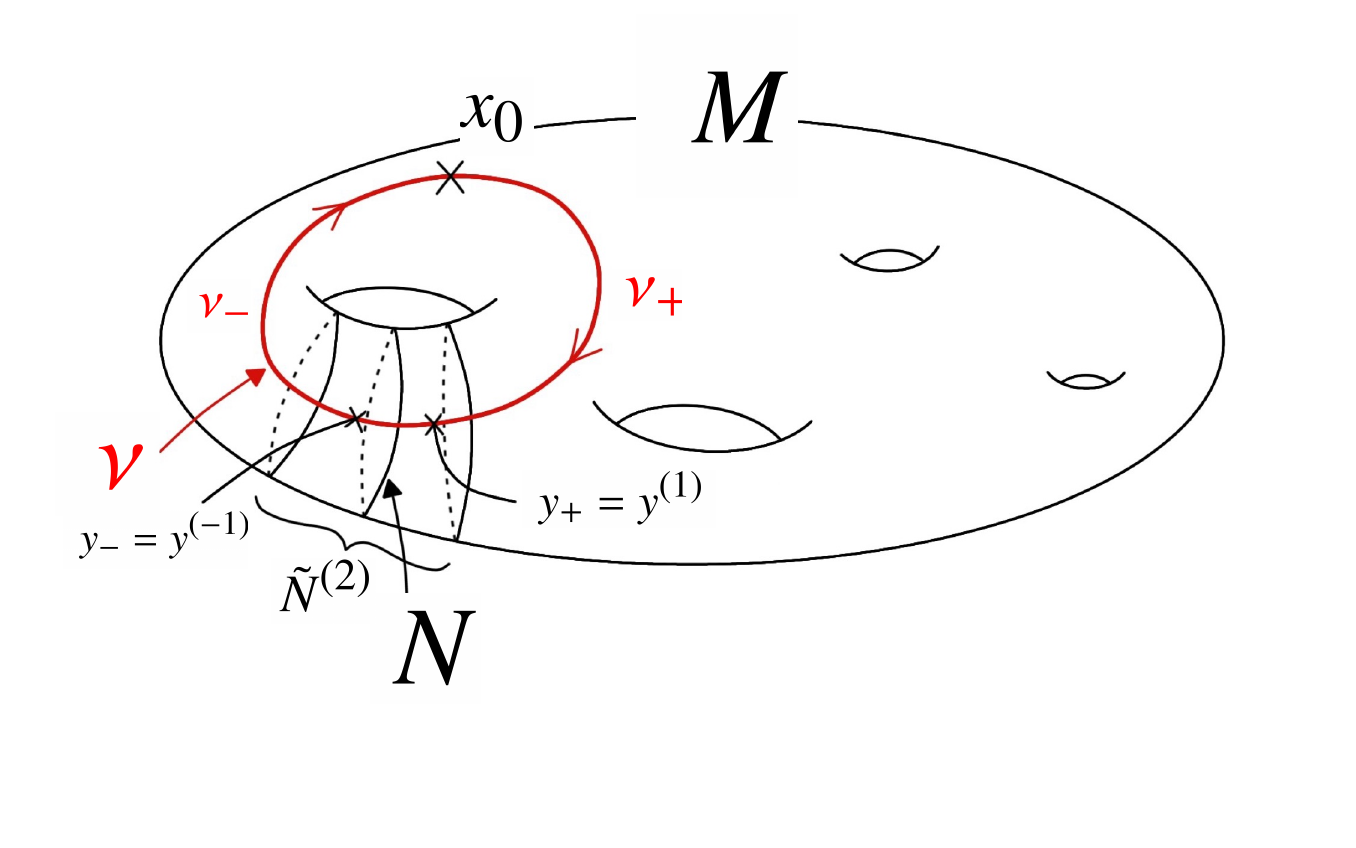}}
        \caption{}
        \label{fig:hnn1}
    \end{minipage}
    \begin{minipage}[b]{0.49\columnwidth}
    \centering{\includegraphics[width=5cm]{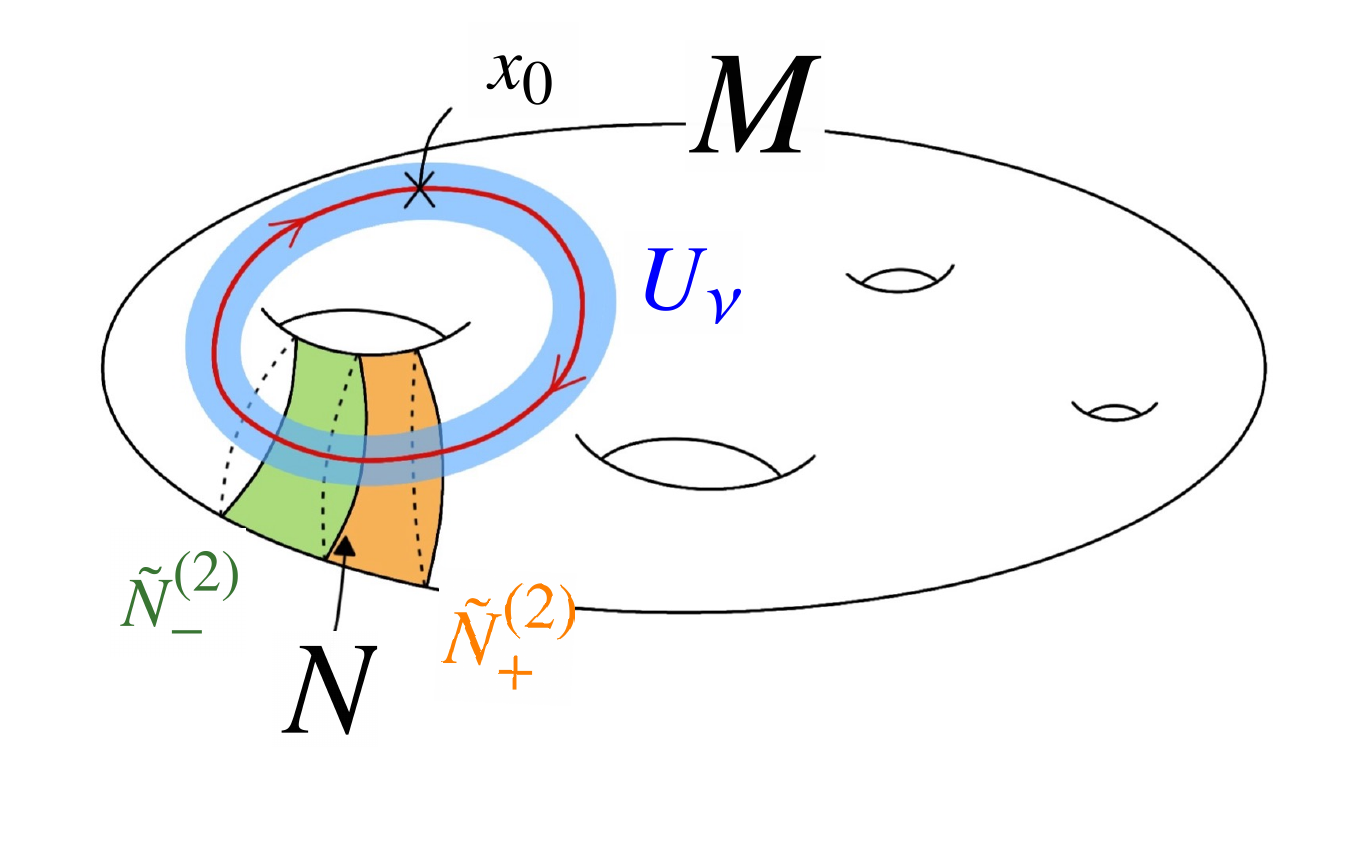}}
        \caption{}
        \label{fig:tubular_nbd}
    \end{minipage}
    \end{figure}
    \begin{figure}
        \centering
       
    \end{figure}
    Figure~\ref{fig:hnn1} summarizes some of the notation introduced so far.

    For an oriented loop $\ell$ starting at $y$ in $N$ and a fixed $-4 <t < 4$,  
    $f(\ell, t)$ forms an oriented loop in $M$ starting at $y^{(t)}$.
    With the above paths $\nu_{+}$ and $\nu_{-}$, 
    we define $j_{+},j_{-}\colon \pi_{1}(N,y)\rightarrow \pi_{1}(S,x_{0})$ as two group homomorphisms 
    \[
    j_{+}([\ell]):= [\nu_{+}^{-1}\circ f(\ell,1)\circ \nu_{+}],\ 
    j_{-}([\ell]):= [\nu_{-}\circ
    f(\ell,-1)\circ\nu_{-}^{-1}].
    \]
    Here, $b^{-1}$ denotes the path obtained by reversing the orientation of the path $b$, and $[c]$ denotes the homotopy class defined by the loop $c$.

\begin{proposition}
    \label{prop:hnn}
    Let $a^{\mathbb{Z}}$ be the infinite cyclic group generated by $a$.  
Under the above setting, we define a group homomorphism  
\[
\Psi \colon \pi_{1}(S, x_{0}) * a^{\mathbb{Z}} \to \pi_{1}(M, x_{0})
\]
by the natural map $\pi_{1}(S, x_{0}) \to \pi_{1}(M, x_{0})$, 
and by sending the generator $a$ to $[\nu]$.  
Then, $\Psi$ is surjective, and its kernel is the normal subgroup $\mathcal{N}$,  
generated by the elements  
\[
a j_{+}([\ell]) a^{-1} j_{-}([\ell])^{-1}, \ \  \text{for every } [\ell] \in \pi_{1}(N, y).
\]

\end{proposition}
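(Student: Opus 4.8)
\textbf{Proof proposal for Proposition~\ref{prop:hnn}.}

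The plan is to apply the Seifert--van Kampen theorem to a suitable open cover of $M$ adapted to the tubular neighborhood $\tilde{N}^{(4)}$ of $N$, and then to identify the resulting pushout presentation with the stated HNN extension. First I would split $M$ into the two open sets $U := M \smallsetminus N = S$ and $V := \tilde{N}^{(4)} \cong N \times (-4,4)$. Their intersection $U \cap V = \tilde{N}^{(4)} \smallsetminus N$ is diffeomorphic to $N \times ((-4,0) \sqcup (0,4))$, hence it is the disjoint union of two connected components, each homotopy equivalent to $N$. This is exactly the feature of the cover that forces an HNN extension rather than an amalgamated free product: van Kampen for a cover whose pairwise intersection is \emph{disconnected} produces one stable letter per extra component of $U\cap V$. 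I would set up the groupoid version of van Kampen (or, equivalently, choose basepoints carefully and pick a path $\nu$ connecting the two ``sides'') to handle the disconnectedness cleanly.

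Next I would run the computation. Choose the basepoint $x_0 \in S \smallsetminus \tilde{N}^{(4)}$ as in the statement, and choose auxiliary basepoints $y^{(1)} \in f(N \times \{1\})$ and $y^{(-1)} \in f(N \times \{-1\})$ in the two components of $U \cap V$. Using the paths $\nu_+$ (from $x_0$ to $y^{(1)}$) and $\nu_-$ (from $y^{(-1)}$ to $x_0$) that are the two halves of $\nu$, transport the fundamental groups of the two components of $U\cap V$ — each isomorphic to $\pi_1(N,y)$ via $f(-,1)$ and $f(-,-1)$ respectively — into $\pi_1(S,x_0)$; these transported maps are precisely $j_+$ and $j_-$ from \eqref{def:j+-}. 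Since $V \cong N \times (-4,4)$ is homotopy equivalent to $N$ and deformation-retracts onto either slice, $\pi_1(V)$ gets identified with $\pi_1(N)$, and the inclusion-induced maps $\pi_1(U\cap V\text{-component}) \to \pi_1(V)$ become the identity on $\pi_1(N)$ (up to the chosen identifications). Feeding this into the van Kampen pushout, with the stable letter $a := [\nu]$ implementing the change of component inside $V$, yields the presentation: $\pi_1(M,x_0)$ is generated by $\pi_1(S,x_0)$ together with $a$, subject only to the relations $a\, j_+([\ell])\, a^{-1} = j_-([\ell])$ for all $[\ell] \in \pi_1(N,y)$. That is exactly the assertion that $\Psi$ is surjective with kernel the normal closure $\mathcal{N}$ of the elements $a\,j_+([\ell])\,a^{-1}\,j_-([\ell])^{-1}$.

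The main obstacle, and the step requiring the most care, is the bookkeeping of basepoints and connecting paths so that the abstractly-produced stable letter of the van Kampen pushout is genuinely the class $[\nu]$ of the explicitly described loop, and so that the edge homomorphisms of the pushout are literally $j_+$ and $j_-$ rather than some conjugates of them. Concretely, one must check that the loop $\nu$, which passes once through the tube $f(\{y\}\times[-3,3])$ and otherwise stays in $S$, represents the element that ``crosses'' $N$ with the correct orientation, and that reading the relation $a j_+([\ell]) a^{-1} = j_-([\ell])$ off the two inclusions $U\cap V \hookrightarrow V$ matches the two collar slices $N\times\{1\}$ and $N\times\{-1\}$ in the right order. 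Once the basepoint groupoid argument is organized — for instance by applying the van Kampen theorem for the fundamental groupoid on the vertex set $\{x_0, y^{(1)}, y^{(-1)}\}$ and then collapsing a maximal tree of connecting paths given by $\nu_\pm$ — this is routine, and the surjectivity of $\Psi$ is immediate since $\pi_1(S,x_0)$ and $[\nu]$ together generate the pushout. I would also remark that orientability of $M$ and of $N$ is what guarantees the collar $N\times(-4,4)$ is a trivial (rather than twisted) normal bundle, which is implicitly used when writing $V \cong N\times(-4,4)$; and connectedness of $S$ is what makes $U$ connected so that the van Kampen hypotheses apply with a single copy of $\pi_1(S)$.
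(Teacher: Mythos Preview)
Your approach is correct and is the standard ``groupoid van Kampen'' route to the HNN presentation: cover $M$ by $S$ and the collar $\tilde{N}^{(4)}$, accept that the intersection has two components, and let the groupoid bookkeeping produce the stable letter $a=[\nu]$ and the edge maps $j_\pm$. The paper, by contrast, avoids the disconnected intersection entirely: it enlarges the collar to $V:=\tilde{N}^{(2)}\cup U_\nu$, where $U_\nu$ is a thin tubular neighborhood of the loop $\nu$ itself, so that $V\cap S$ becomes path-connected. It then applies the \emph{classical} van Kampen theorem three times---once to $M=V\cup S$, once to compute $\pi_1(V)\cong\pi_1(N)*a^{\mathbb Z}$ from $V=\tilde{N}^{(2)}\cup U_\nu$ (contractible intersection), and once to compute $\pi_1(V\cap S)\cong\pi_1(N)*\pi_1(N)$ from the two half-collars plus pieces of $U_\nu$ (again contractible intersection)---and reads off the same pushout. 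Your version is shorter and more conceptual, at the price of invoking the groupoid theorem; the paper's version is more elementary in its prerequisites (only the connected-intersection van Kampen), at the price of the auxiliary $U_\nu$ construction and three separate computations. Both arrive at the identical pushout diagram, and your remarks about basepoint bookkeeping and the role of orientability are on point.
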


\begin{proof}     
    Since $M$ is orientable, one can 
    take a tubular neighborhood $U_{\nu}$ of $\nu$ in $M$
    and a diffeomorphism $g\colon \nu\times U \rightarrow U_{\nu} $,
    where $U$ is a contractible open neighborhood $U$ of $y$ in $N$
    and $\nu=g(\nu\times \{y\})$.
    By taking a sufficiently small $U_{\nu}$, we may and do assume the following:
    \begin{itemize}
        \item 
        $g(y^{(t)},u)=f(u,t)$ holds for $-2\leq t\leq 2$ and $u\in U$. 
        
        \item 
        $U_{\nu}\smallsetminus f(N\times [-2,2])=g((\nu\smallsetminus\{y^{(t)}\mid -2\leq t\leq  2\})\times U)$. 
    \end{itemize}
    In particular, $U_{\nu}\cap \tilde{N}^{(2)}$ and 
    $U_{\nu}\smallsetminus f(N\times [-2,2])$
    are both contractible.
    
    We put 
    \[
    V:=\tilde{N}^{(2)}\cup U_{\nu}.
    \]
    Since $M=V\cup S$ and $V\cap S$ is path-connected, 
    by the van Kampen theorem,
    we obtain the following pushout diagram:
    \begin{align}
    \label{eq:pi1-M}
    \xymatrix{
    \pi_{1}(V\cap S,x_{0}) \ar[r] \ar[d] & \pi_{1}(S,x_{0}) \ar[d] \\ 
    \pi_{1}(V,x_{0}) \ar[r]  & \pi_{1}(M,x_{0}).
    \ar@{}[lu]|{\circlearrowright}
    }
    \end{align}

    Now we compute $\pi_{1}(V,x_{0})$ and 
    $\pi_{1}(V\cap S,x_{0})$ by the van Kampen theorem.
    Since $\tilde{N}^{(2)} \cap U_{\nu}$ is contractible, we have 
    \begin{align*}
        \pi_{1}(V,y^{(1)}) &\simeq \pi_{1}(\tilde{N}^{(2)},y^{(1)})
        *\pi_{1}(U_{\nu},y^{(1)}).
    \end{align*}
    Recall that $U_{\nu}$ and $\tilde{N}^{(2)}$ are homotopic to $\nu$ and $N$, respectively.
    Hence, we have $\pi_{1}(\tilde{N}^{(2)},y^{(1)}) \simeq \pi_{1}(N,y)$
    and $\pi_{1}(U_{\nu},y^{(1)})\simeq a^{\Z}$. 
    Furthermore, we use $\nu_+$ to change the base point $y^{(1)}$ to $x_0$.  
    The resulting isomorphism  
    \begin{equation}
    \label{eq:pi1-V}
    \pi_1(V, x_0) \xleftarrow[\simeq]{\lambda} \pi_1(N, y) * a^{\mathbb{Z}},
    \end{equation}  
    is obtained by $\lambda([\ell])= [\nu_{+}^{-1}\circ f(\ell,1)\circ \nu_+]$ for $[\ell]\in \pi_1(N, y)$ and $\lambda(a)=[\nu]$.

    We define  
    \[
    \tilde{N}^{(2)}_{+} := f(N \times (0,2)), \quad  
    \tilde{N}^{(2)}_{-} := f(N \times (-2,0))
    \] (see Figure~\ref{fig:tubular_nbd}).  
    Then, $V \cap S$ is the union of the two open sets  
    \[
    A_{+} := \tilde{N}^{(2)}_{+} \cup (U_{\nu} \smallsetminus f(N \times [-2,2)))
    \text{ and }
    A_{-} := \tilde{N}^{(2)}_{-} \cup (U_{\nu} \smallsetminus f(N \times (-2,2])).
    \]
    The intersection $A_+ \cap A_-$ is given by $U_{\nu} \smallsetminus f(N \times [-2,2])$,
    which is contractible.  
    Thus, by the van Kampen theorem, we obtain  
    \[
    \pi_{1}(V\cap S, x_{0})\simeq \pi_{1}(A_{+},x_{0}) * \pi_{1}(A_{-},x_{0}).
    \]
    Furthermore, by the choice of $U_{\nu}$,  
    both $A_{+}$ and $A_{-}$ are homotopy equivalent to $N$.  
    Hence, we obtain the isomorphism 
    \begin{align}
    \label{eq:pi1-VS}
    \pi_1(V \cap S, x_0)\xleftarrow[\simeq]{\mu} \pi_1(N, y) * \pi_1(N, y),
    \end{align}  
    which is induced by the two homomorphisms  
    $\pi_1(N, y) \to \pi_1(V \cap S, x_0)$  
    mapping $[\ell] \in \pi_1(N, y)$ to  
    $[\nu_{+}^{-1}\circ f(\ell,1)\circ \nu_+]$ and $[\nu_{-}\circ f(\ell,-1)\circ \nu_{-}^{-1}]$
    in $ \pi_1(V \cap S, x_0) $, respectively.

    Here we note 
    \[
    [\nu_{-}\circ f(\ell,-1)\circ \nu_{-}^{-1}]= [\nu\circ\nu_{+}^{-1}\circ f(\ell,1)\circ \nu_{+}\circ \nu^{-1}] \quad \text{in}\quad \pi_{1}(V,x_{0}).
    \]
    Hence, from \eqref{eq:pi1-V} and \eqref{eq:pi1-VS}, the following diagram commutes: 
    \begin{align*}
    \xymatrix{
    \pi_{1}(N,y)*\pi_{1}(N,y) \ar[r]^(.55){\mu}_(.55){\simeq} \ar[d]_{\id * (a(-)a^{-1})} & \pi_{1}(V\cap S ,x_{0}) \ar[d]^{\text{natural}} \\ 
    \pi_{1}(N,y)*a^{\Z} \ar[r]^{\lambda}_{\simeq}  & \pi_{1}(V,x_{0}).
    \ar@{}[lu]|{\circlearrowright}
}
    \end{align*}
    Therefore, by \eqref{eq:pi1-M}, 
    we obtain the following
    pushout diagram: 
    \begin{align}
    \label{eq:pushout-first}
    \xymatrix{
    \pi_{1}(N,y)*\pi_{1}(N,y) \ar[r]^(.6){j_{+}*j_{-}} \ar[d]_{\id * (a(-)a^{-1})} & \pi_{1}(S,x_{0}) \ar[d]^{\text{natural}} \\ 
    \pi_{1}(N,y)*a^{\Z} \ar[r]^(.55){j_{+}*(a\mapsto[\nu])}  & \pi_{1}(M,x_{0}).
    \ar@{}[lu]|{\circlearrowright}
}
    \end{align}

    On the other hand, by the definition of the normal subgroup $\mathcal{N}$, we have the following pushout diagram:
\begin{align}
\label{eq:pushout-second}
\xymatrix{
\pi_{1}(N,y)*\pi_{1}(N,y) \ar[r]^(.6){j_{+}*j_{-}} \ar[d]_{\id * (a(-)a^{-1})} & \pi_{1}(S,x_{0}) \ar[d] \\
\pi_{1}(N,y)*a^{\mathbb{Z}} \ar[r]^(.43){j_{+}*\id}  & (\pi_{1}(S,x_{0})*a^{\mathbb{Z}})/\mathcal{N}.
\ar@{}[lu]|{\circlearrowright}
}
\end{align}
By comparing the two pushout diagrams \eqref{eq:pushout-first} and \eqref{eq:pushout-second},  
we obtain the desired conclusion of the theorem.  
\end{proof}

\section{An Upper bound for local deformation of discrete subgroups}
\label{section:deform-upper-bound}

The main result of this appendix is
Proposition~\ref{prop:local-rigid-g/l}, 
which provides an upper bound for the Zariski-closure of potential deformations of discrete subgroups in the general setting.
This proposition is used in the proof of Theorem~\ref{theorem:bending}~\ref{item:maximal-zariski-closure},
as well as in part of the proof of Proposition~\ref{prop:so*8/su(3,1)}~(2).
While the results of this section are likely known to experts,  
we formulate them in a form suitable for our purposes and prove them using the \emph{curve selection lemma} in the  subanalytic geometry.
See Lemma~\ref{lemma:subanalytic-property}~\ref{item:subanalytic-curve-selection}. 
We include a proof for the benefit of readers who may not be familiar with them.

\medskip
\begin{setting}
    \label{setting:appendix-deform}
    Let $G$ be a Lie group, 
    $L$ a closed subgroup of $G$, 
    and $\Gamma$ a finitely-presented group.
    By choosing $N$ generators of $\Gamma$ satisfying a finite set of relations, we identify $\Hom(\Gamma, G)$ with a subset of $X := G^N$.
\end{setting}
We do not require $L$ or $G$ to be reductive here.
In this setting, we provide a general local rigidity theorem. In other words, we give a necessary condition for a representation of $\Gamma$ into $L$ to be deformed in $G$, beyond $L$, up to $G$-conjugacy. Recall that $G$ acts on $\Hom(\Gamma, G)$ by inner automorphisms, where $(g \cdot \varphi)(\gamma) := g \varphi(\gamma) g^{-1}$.

\begin{proposition}
    \label{prop:local-rigid-g/l}
In Setting~\ref{setting:appendix-deform}, let $\varphi \colon \Gamma \to G$ be a group homomorphism such that $\varphi(\Gamma) \subset L$. We regard the Lie algebras $\mathfrak{g}$ and $\mathfrak{l}$ as $\Gamma$-modules via $\varphi$. If $H^1(\Gamma, \mathfrak{g}/\mathfrak{l}) = 0$, then there exists a neighborhood $U$ of $\varphi$ in $\Hom(\Gamma, G)$ such that for any $\varphi' \in U$, we have $\varphi'(\Gamma) \subset L$, up to $G$-conjugacy. 
\end{proposition}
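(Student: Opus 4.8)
The plan is to reduce the statement to a standard slice/transversality argument for the $G$-action on $\Hom(\Gamma,G)$, carried out in the real algebraic or subanalytic category so that the curve selection lemma becomes available. First I would set up local coordinates near $\varphi$: fix a complement $\mathfrak{m}$ of $\mathfrak{l}$ in $\mathfrak{g}$, so $\mathfrak{g}=\mathfrak{l}\oplus\mathfrak{m}$ as vector spaces, and use the exponential map to produce a local chart of $G$ near the identity adapted to this splitting. Via the $N$-generator embedding $\Hom(\Gamma,G)\subset X=G^N$, the subset $\Hom(\Gamma,L)$ is cut out inside $\Hom(\Gamma,G)$ by the condition that the $\mathfrak{m}$-components of the generators vanish, together with the relations defining $\Gamma$. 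The tangent space to $\Hom(\Gamma,G)$ at $\varphi$ is the space of $1$-cocycles $Z^1(\Gamma,\mathfrak{g})$, the $G$-orbit direction is $B^1(\Gamma,\mathfrak{g})$, and the hypothesis $H^1(\Gamma,\mathfrak{g}/\mathfrak{l})=0$ says precisely that the projection $Z^1(\Gamma,\mathfrak{g})\to Z^1(\Gamma,\mathfrak{g}/\mathfrak{l})$ has image equal to $B^1(\Gamma,\mathfrak{g}/\mathfrak{l})$; equivalently, every infinitesimal deformation of $\varphi$ can be corrected, to first order, by an infinitesimal $G$-conjugation so as to keep $\varphi(\Gamma)$ tangent to $L$.

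Next I would argue by contradiction to upgrade this first-order statement to an honest neighborhood statement. Suppose no such neighborhood $U$ exists. Then there is a sequence $\varphi_k\to\varphi$ in $\Hom(\Gamma,G)$ such that no $G$-conjugate of $\varphi_k$ lands in $L$. Consider the subanalytic (in fact real algebraic, if $G$ is algebraic; in general real analytic, hence subanalytic after a suitable semialgebraic truncation) set
\[
S:=\{(\varphi',g)\in \Hom(\Gamma,G)\times G \mid g\varphi'(\gamma)g^{-1}\in L \text{ for all generators }\gamma\},
\]
and its image $\pi(S)\subset\Hom(\Gamma,G)$ under the first projection, which is the set of representations conjugate into $L$. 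The complement $\Hom(\Gamma,G)\smallsetminus\pi(S)$ contains all $\varphi_k$ and has $\varphi$ in its closure; by the curve selection lemma (Lemma~\ref{lemma:subanalytic-property}~\ref{item:subanalytic-curve-selection}) there is a real-analytic curve $c(s)$, $s\in[0,\varepsilon)$, with $c(0)=\varphi$ and $c(s)\notin\pi(S)$ for $s>0$. Differentiating at $s=0$ gives a cocycle $\dot c(0)\in Z^1(\Gamma,\mathfrak{g})$; by $H^1(\Gamma,\mathfrak{g}/\mathfrak{l})=0$ we may subtract a coboundary, i.e. replace $c$ by $g(s)\cdot c(s)$ for a suitable analytic path $g(s)$ in $G$ with $g(0)=e$, so that the new curve is tangent to $\Hom(\Gamma,L)$. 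The point is then that the obstruction to integrating this tangency is governed by higher cohomology in $\mathfrak{g}/\mathfrak{l}$, but since we only need the conclusion ``$\varphi'(\Gamma)\subset L$ up to $G$-conjugacy'' rather than a smooth structure on the deformation space, a more elementary argument suffices: iterate the coboundary correction order by order along the analytic curve using $H^1(\Gamma,\mathfrak{g}/\mathfrak{l})=0$ at each stage, and invoke analyticity to conclude that the corrected curve actually lies in $\Hom(\Gamma,L)$ for all small $s$, contradicting $c(s)\notin\pi(S)$.

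I expect the main obstacle to be making the ``iterate the correction order by order'' step rigorous without smuggling in a full integrability/deformation-theory machinery. The cleanest route is probably this: work with the analytic map $F\colon G\times[0,\varepsilon)\to \mathfrak{m}^{N}$ sending $(g,s)$ to the tuple of $\mathfrak{m}$-components (in the adapted chart) of $g\,c(s)(\gamma_i)\,g^{-1}$; one wants to solve $F(g(s),s)=0$ with $g(0)=e$. The linearization of $F$ in the $g$-variable at $(e,0)$ is, up to identifications, the coboundary map $B^1(\Gamma,\mathfrak{g})\to B^1(\Gamma,\mathfrak{g}/\mathfrak{l})$ composed with projection, and surjectivity of $Z^1(\Gamma,\mathfrak{g})\to Z^1(\Gamma,\mathfrak{g}/\mathfrak{l})$ forced by the vanishing of $H^1(\Gamma,\mathfrak{g}/\mathfrak{l})$ is exactly what lets one apply the analytic implicit function theorem (after restricting $g$ to a complementary slice to the stabilizer to make the linearized operator an isomorphism onto its image). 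This yields an analytic $g(s)$ with $g(s)c(s)g(s)^{-1}\in L$ for all small $s$, the desired contradiction. The bookkeeping identifying tangent spaces of $\Hom(\Gamma,G)$ with cocycle spaces, and the reduction to a slice transverse to $B^1(\Gamma,\mathfrak{g})$, is routine but must be written carefully; everything else follows the classical Weil rigidity template adapted to the relative setting $\mathfrak{g}/\mathfrak{l}$.
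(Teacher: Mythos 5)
Your overall architecture coincides with the paper's: reduce to an analytic curve through $\varphi$ via the curve selection lemma, then kill the transverse ($\mathfrak{m}$-valued) part of the curve order by order using $H^1(\Gamma,\mathfrak{g}/\mathfrak{l})=0$. However, the step you yourself flag as the main obstacle is where your argument breaks. The analytic implicit function theorem applied to $F\colon G\times[0,\varepsilon)\to\mathfrak{m}^N$ requires the linearization $D_gF(e,0)\colon\mathfrak{g}\to\mathfrak{m}^N$, $C\mapsto\bigl(\operatorname{pr}_{\mathfrak{m}}(\Ad(\varphi(\gamma_i))C-C)\bigr)_i$, to be surjective onto $\mathfrak{m}^N$; its image is only the projection of $B^1(\Gamma,\mathfrak{g})$, which under the hypothesis equals the image of $Z^1(\Gamma,\mathfrak{g}/\mathfrak{l})$ in $\mathfrak{m}^N$ — in general a proper subspace of $C^1(\Gamma,\mathfrak{g}/\mathfrak{l})$ restricted to generators. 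Restricting $g$ to a slice so that the linearization is ``an isomorphism onto its image'' does not rescue the IFT: one must additionally know that $F(\cdot,s)$ takes values in that image for all small $s$, and that is precisely the content of the higher-order computation (at each order $k$ one must verify, via the Baker--Campbell--Hausdorff formula and the group relations, that the $k$-th Taylor coefficient reduces to a $1$-cocycle modulo $\mathfrak{l}$ once the lower orders have been corrected). The iteration therefore only produces a \emph{formal} power series $g(s)$; the correct tool for converting it into an actual analytic solution of the analytic equation $g(s)\cdot c(s)\in\Hom(\Gamma,L)$ is Artin's approximation theorem, as in Lemma~\ref{lemma:local-rigid-g/l-analytic-vesrion}, not the implicit function theorem.

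A secondary gap is in your curve-selection step: the saturation $\pi(S)=G\cdot\Hom(\Gamma,L)$ is the image of a real analytic set under a non-proper projection, and in the generality of Setting~\ref{setting:appendix-deform} ($G$ an arbitrary Lie group) such an image need not be subanalytic, so the curve selection lemma cannot be applied to its complement. The paper circumvents this by replacing $G$ with a compact subanalytic neighborhood $W$ of $e$, for which the action map is proper on $W\times\Hom(\Gamma,L)$ (Lemma~\ref{lemma:Z-subanalytic}); since the correcting path $g(s)$ tends to $e$, it eventually lies in $W$, and the contradiction still closes. Your parenthetical ``semialgebraic truncation'' gestures at this but would need to be made precise along these lines.
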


Our proof relies on properties of real analytic subsets.  
A subset $A$ of a real analytic manifold $X$ is called \emph{real analytic} in $X$  
if, for each point $x \in X$, there exist an open neighborhood $U$ of $x$ in $X$  
and analytic functions $f_1, \dots, f_n$ on $U$ such that  
\[
    U \cap A = \{ u \in U \mid f_1(u) = \cdots = f_n(u) = 0 \}.
\]  
In Setting~\ref{setting:appendix-deform},  
$\Hom(\Gamma, G)$ is a real analytic subset of $X = G^N$.  
Since $L$ is real analytic in $G$,  
it follows that $\Hom(\Gamma, L)$ is also real analytic in $X$.  

\textbf{Step~1}. 
As a first step, we prove Proposition~\ref{prop:local-rigid-g/l}, assuming that the small deformation of $\varphi$ is described by an analytic curve $\varphi_{t}$. 
The following discussion relying on Artin's approximation theorem is also seen in 
Goldman--Millson~\cite[Sect.~2]{goldman-millson87}.
To be precise, we show the following lemma: 
\begin{lemma}
    \label{lemma:local-rigid-g/l-analytic-vesrion}
    In the setting of Proposition~\ref{prop:local-rigid-g/l}, 
    for any one-parameter family of group homomorphisms 
    $\varphi_{t}\colon \Gamma\rightarrow G$, depending analytically on $t \in \R$,  with $\varphi_{0}=\varphi$,
    there exists an analytic curve $g_{t}$ in $G$ with $g_{0}=e$ such that $g_{t}\cdot \varphi_{t}\in \Hom(\Gamma,L)$ for any sufficiently small $t$.
\end{lemma}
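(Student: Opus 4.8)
\textbf{Proof proposal for Lemma~\ref{lemma:local-rigid-g/l-analytic-vesrion}.}
The plan is to integrate the vanishing of $H^1(\Gamma,\mathfrak g/\mathfrak l)$ order by order in $t$ along the analytic curve $\varphi_t$, producing a formal power series solution $g_t$ to the equation $g_t\cdot\varphi_t\in\Hom(\Gamma,L)$, and then to invoke Artin's approximation theorem to replace the formal solution by a convergent (analytic) one. First I would set up the relevant cocycle calculus: writing $\varphi_t(\gamma)=\exp(t\,c_1(\gamma)+O(t^2))\varphi(\gamma)$, the first-order term $\gamma\mapsto c_1(\gamma)$ defines a $\mathfrak g$-valued $1$-cocycle on $\Gamma$ (this is the standard computation behind Weil's infinitesimal deformation theory). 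Composing with the projection $\mathfrak g\to\mathfrak g/\mathfrak l$, we get a class in $H^1(\Gamma,\mathfrak g/\mathfrak l)$, which vanishes by hypothesis; hence, after subtracting a coboundary — i.e. after conjugating $\varphi_t$ by $\exp(t\,\xi_1)$ for an appropriate $\xi_1\in\mathfrak g$ — we may assume $c_1(\gamma)\in\mathfrak l$ for all $\gamma$, so that the conjugated curve lies in $\Hom(\Gamma,L)$ modulo $t^2$.

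Next I would run the induction. Suppose we have found $\xi_1,\dots,\xi_{k-1}\in\mathfrak g$ such that $g_t^{(k-1)}:=\exp(t^{k-1}\xi_{k-1})\cdots\exp(t\xi_1)$ satisfies $g_t^{(k-1)}\cdot\varphi_t\in\Hom(\Gamma,L)\pmod{t^k}$. Expanding one more order, the obstruction to continuing is a map $\gamma\mapsto o_k(\gamma)\in\mathfrak g/\mathfrak l$; using that $g_t^{(k-1)}\cdot\varphi_t$ is already a genuine homomorphism into $G$ through order $t^{k-1}$, a direct computation shows $o_k$ is a $1$-cocycle with values in the $\Gamma$-module $\mathfrak g/\mathfrak l$. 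Since $H^1(\Gamma,\mathfrak g/\mathfrak l)=0$, there is $\xi_k\in\mathfrak g$ whose coboundary cancels $o_k$; setting $g_t^{(k)}:=\exp(t^k\xi_k)g_t^{(k-1)}$ pushes the agreement with $\Hom(\Gamma,L)$ to order $t^{k+1}$. This yields a formal power series $\hat g_t=\prod_{k\ge1}\exp(t^k\xi_k)$ with $\hat g_t\cdot\varphi_t$ mapping $\Gamma$ into $L$ as a formal curve; concretely, picking generators $\gamma_1,\dots,\gamma_N$ of $\Gamma$ as in Setting~\ref{setting:appendix-deform}, the pair $(\hat g_t,\hat g_t\cdot\varphi_t)$ is a formal solution of the system of analytic equations expressing ``$g_t\cdot\varphi_t(\gamma_i)\in L$ for all $i$'' in the analytic variety $G\times\Hom(\Gamma,L)\subset G\times G^N$.

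Finally I would apply Artin's approximation theorem (in the one-variable form: a formal solution of a system of analytic equations can be approximated to arbitrary finite order by a convergent solution) to obtain an \emph{analytic} curve $g_t$ in $G$, defined for small $t$, with $g_0=e$ and agreeing with $\hat g_t$ to high order — in particular $g_t\cdot\varphi_t\in\Hom(\Gamma,L)$ for all sufficiently small $t$, since membership in the analytic subset $\Hom(\Gamma,L)$ is detected by the vanishing of analytic functions whose Taylor expansions vanish identically. Care is needed because the naive finite-order matching only gives $g_t\cdot\varphi_t(\gamma_i)\in L$ up to some $O(t^M)$ error, not exactly; the standard fix, as in Goldman--Millson~\cite{goldman-millson87}, is to run Artin's theorem on the combined system of \emph{equations and their solvability conditions} (i.e. on the equations cutting out $\{(g_t,\psi_t): \psi_t=g_t\cdot\varphi_t,\ \psi_t(\gamma_i)\in L\}$ simultaneously), so that the convergent output is forced to land exactly in $\Hom(\Gamma,L)$. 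I expect this last bookkeeping — correctly packaging the conjugation equation and the $L$-membership equations into one analytic system to which Artin's theorem applies cleanly — to be the main technical obstacle; the cohomological induction itself is routine once the cocycle identities are verified.
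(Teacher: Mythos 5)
Your proposal is correct and follows essentially the same route as the paper: an order-by-order induction in $t$ using the BCH expansion and the vanishing of $H^1(\Gamma,\mathfrak g/\mathfrak l)$ to build a formal conjugating series, followed by Artin's approximation theorem applied to the analytic equation $g_t\cdot\varphi_t\in\Hom(\Gamma,L)$ (which makes sense because $\Hom(\Gamma,L)$ is a real analytic subset of $G^N$). The only superfluous point is your final worry about finite-order matching: Artin's theorem already produces an \emph{exact} analytic solution of the system admitting a formal solution, so no extra packaging of "solvability conditions" is needed beyond writing the membership condition as analytic equations in $g_t$, exactly as the paper does.
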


\begin{proof}[Proof of Lemma~\ref{lemma:local-rigid-g/l-analytic-vesrion}]
For each $\gamma \in \Gamma$, the real analytic curve $\varphi_t(\gamma) \varphi(\gamma)^{-1}$ in $G$, starts at $e$ when $t=0$.
Its Taylor expansion defines a sequence of maps $u_\ell \equiv u_\ell(\varphi_t) \colon \Gamma \to \mathfrak{g}$ $(\ell=1,2,\dots)$,
such that 
\[
    \varphi_{t}(\gamma) = \exp(u_{1}(\gamma) t + u_{2}(\gamma) t^{2} + \dotsb) \varphi(\gamma),
\]
where the domain of the convergence may depend on $\gamma$.

We fix $k \geq 1$, and prove the following claim: \textit{assume that $u_{\ell}(\Gamma) \subset \mathfrak{l}$ for $\ell=1,\dots,k-1$.  
Then, there exists an analytic curve in $G$, to be denoted by $g^{(k)}_{t}$, with the following two properties:}
\begin{enumerate}
    \item[(1)] $u_\ell (\varphi_t)=u_\ell(g^{(k)}_{t}\cdot \varphi_t)$ for $\ell =1, \dots, k-1$,
    \item[(2)]
     $u_k(g^{(k)}_{t}\cdot \varphi_t)(\Gamma) \subset \mathfrak{l}$,
\end{enumerate}
where $g^{(k)}_{t}\cdot \varphi_t$ denotes a homomorphism $\Gamma \to \mathfrak g$ that is obtained by conjugating $\varphi_t$ with $g^{(k)}_{t}$.

To prove the claim, we compute
$\varphi_{t}(\gamma\eta)=\varphi_{t}(\gamma)\varphi_{t}(\eta)$ using
 the Baker--Campbell--Hausdorff formula:
\begin{align*}
    \varphi_{t}(\gamma \eta)&= 
    \exp(\sum_{n=1}^{\infty}u_{n}(\gamma)t^{n})\varphi(\gamma)\exp(\sum_{n=1}^{\infty}u_{n}(\eta)t^{n})\varphi(\eta) \nonumber \\
    &= \exp(\sum_{n=1}^{\infty}u_{n}(\gamma)t^{n})\exp(\Ad(\varphi(\gamma))(\sum_{n=1}^{\infty}u_{n}(\eta)t^{n}))\varphi(\gamma\eta) \nonumber \\
    &=\exp(A(t)+B(t)+\frac{1}{2}[A(t),B(t)]+\dotsb)\varphi(\gamma\eta), 
\end{align*}
where $A(t)=\sum_{n=1}^{\infty}u_{n}(\gamma)t^{n}$ and $B(t)=\Ad(\varphi(\gamma))(\sum_{n=1}^{\infty}u_{n}(\eta)t^{n})$. 
Hence, we have 
\begin{equation*}
    \sum_{n=1}^{\infty}u_{n}(\gamma\eta)t^{n}
    =A(t)+B(t)+\frac{1}{2}[A(t),B(t)]+\cdots.
\end{equation*} 

By assumption, the images of $u_{1},\dots, u_{k-1}$ are contained in $\mathfrak{l}$.  
Comparing the coefficients of $t^{k}$ in the above identity modulo $\mathfrak l$, we obtain  
\[
u_{k}(\gamma\eta)\equiv u_{k}(\gamma) + \Ad(\varphi(\gamma)) u_{k}(\eta)  \mod \mathfrak{l}.
\]
Thus, $u_{k} \bmod \mathfrak{l}$ defines a $1$-cocycle.  
By the assumption $H^{1}(\Gamma, \mathfrak{g}/\mathfrak{l}) = 0$,  
there exists an element $C_{k} \in \mathfrak{g}$ such that,  
\[
u_{k}(\gamma) \equiv \Ad(\varphi(\gamma)) C_{k} - C_{k} \mod \mathfrak{l},
\]
for any $\gamma\in \Gamma$. Now, we set $g_{t}^{(k)} := \exp(t^{k} C_{k})$.
Clearly, $u_{1}, \dots, u_{k-1}$ remain unchanged, when we take the conjugate $g_{t}^{(k)} \cdot \varphi_{t}$.
On the other hand, the coefficient of $t^k$ in 
\[
\log((g_{t}^{(k)} \cdot \varphi_{t}) \varphi(\gamma)^{-1})
=
\log(
\exp(t^k C_k) \exp(A(t)) \exp(- t^k \operatorname{Ad}(\varphi(\gamma)) C_k))
\]
amounts to
$C_k + u_k(\gamma) - \operatorname{Ad}(\varphi(\gamma)) C_k \in \mathfrak{l}$. 
Hence, our claim is proved.

We now use the claim inductively on $k\ge 1$.
We start with $\varphi_t$, and proceed with
$g_t^{(1)} \cdot \varphi_t$, $g_t^{(2)}g_t^{(1)}\cdot \varphi_t$, and so on.
Correspondingly, the family of maps from $\Gamma$ to $\mathfrak{g}$, 
\[
u_\ell^{(k)}:=u_\ell(g_t^{(k)} \dotsb g_t^{(1)} \cdot \varphi_t)
\]
has the following properties:
\[
u_\ell^{(k)} =   u_\ell^{(\ell)} \textrm{ and } u_\ell^{(k)}(\Gamma) \subset \mathfrak{l} \textrm{ for all } k \ge \ell.
\]

Consequently, we obtain the formal power series  
\[
g_t = \dotsb g_t^{(2)} g_t^{(1)}.
\]  
Then, $g_t$ provides a formal solution to the equation
\begin{equation}
\label{eqn:Artin_lemma_Hom}
g_t \cdot \varphi_t \in \Hom(\Gamma,L).
 \end{equation}
Since $\Hom(\Gamma,L)$ is a real analytic subset of $X = G^N$ and $\varphi_t$ is an analytic curve, the condition (\ref{eqn:Artin_lemma_Hom})
defines an analytic equation for $g_t$. 
By Artin's approximation theorem (\cite[Thm.~1.2]{artin1968solutions}),  
there exists an analytic solution $g_t \in G$ satisfying $g_0 = 1$.  
Thus, our assertion is proved.  
\end{proof}

\textbf{Step~2}. 
We complete the proof of Proposition~\ref{prop:local-rigid-g/l} 
by reducing it to the analytic curve case (Step~1).
The following discussion uses the curve selection lemma 
as in Goldman--Millson~\cite[Sect.~1]{goldman-millson87},  
where they use the fact from complex hyperbolic geometry that 
$G\cdot \Hom(\Gamma,L)$ is a locally algebraic subset of $\Hom(\Gamma,G)$
in the setting that $G=SU(n+1,1)$, $L=U(n,1)$, and $\Gamma$ is 
a cocompact discrete subgroup of $U(n,1)$. 
In the following, we simplify this discussion using the concept of subanalytic subsets, as stated in Lemma~\ref{lemma:Z-subanalytic}. 

Now we review briefly the definition and basic properties of subanalytic subsets (Definition~\ref{def:subanalytic} to Lemma~\ref{lemma:subanalytic-property}), 
as presented in Kashiwara--Schapira~\cite[Sect.~8.2]{kashiwara2013sheaves}.  
\begin{definition}
\label{def:subanalytic}
Let $X$ be a real analytic manifold. 
A subset $Z$ of $X$ is called \emph{subanalytic} if, 
for each point $x$ of $X$, 
there exists an open neighborhood $U$ of $x$ in $X$ 
such that
\begin{equation}
\label{eq:subanalytic}
Z\cap U = \bigcup_{i=1}^{p}(f_{i1}(Y_{i1})\smallsetminus f_{i2}(Y_{i2})),
\end{equation}
where, for each $i=1,\ldots,p$ and $j=1,2$, 
$Y_{ij}$ is a compact real analytic manifold 
and $f_{ij}\colon Y_{ij}\rightarrow U$ is a real analytic map. 
\end{definition}

\begin{remark}
\label{remark:subanalytic}
The above definition is equivalent to the following definition given in Bierstone--Milman~\cite[Prop.~3.13]{Bierstone-Milman-subanalytic}:
for each point $x$ of $X$, 
there exists a neighborhood $U$ of $x$ in $X$,
such that
\[
Z\cap U = \bigcup_{i=1}^{p}(f_{i1}(A_{i1})\smallsetminus f_{i2}(A_{i2})), 
\]
where, for each $i=1,\ldots,p$ and $j=1,2$, 
$A_{ij}$ is a closed real analytic subset of a real analytic manifold $Y_{ij}$,
$f_{ij}\colon Y_{ij}\rightarrow U$ is a real analytic map,
and $f_{ij}|_{A_{ij}}$ is proper.
The non-trivial implication from the definition in Remark~\ref{remark:subanalytic} to Definition~\ref{def:subanalytic} can be verified using 
the following Hironaka's uniformization theorem:
\end{remark}

\begin{fact}[{Hironaka's uniformization theorem \cite[Thm.~0.1]{Bierstone-Milman-subanalytic}}]
\label{fact:hironaka}
    Let $Z$ be a closed subanalytic subset of a real analytic manifold $X$
    in the sense of Remark~\ref{remark:subanalytic}. 
    Then there exist a real analytic manifold $Y$ and 
    a proper real analytic map $f\colon Y\rightarrow X$ such that 
    $f(Y)=Z$.
\end{fact}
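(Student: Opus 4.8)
The plan is to reduce the statement to Hironaka's resolution of singularities. Since the conclusion is local on $X$, we may assume $X$ is an open subset of a real analytic manifold and, by the standard characterization of subanalytic sets as local projections of relatively compact semianalytic sets (equivalent to the local description recorded in Remark~\ref{remark:subanalytic}), that $Z = \pi(A)$ for a relatively compact semianalytic subset $A$ of $X \times \R^{n}$, where $\pi \colon X \times \R^{n} \to X$ is the projection. Replacing $A$ by its closure $\overline{A}$ — a \emph{compact} semianalytic set — and using that $Z$ is closed, one checks $\pi(\overline{A}) = \overline{\pi(A)} = Z$; since $\pi|_{\overline{A}}$ is then proper, it suffices to realize the compact semianalytic set $\overline{A}$ as the image of a real analytic manifold under a proper real analytic map $\psi \colon N \to X \times \R^{n}$, for then $\pi \circ \psi$ is proper and real analytic with image $Z$.

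Next one works locally in $X \times \R^{n}$, where a closed semianalytic set is a finite union of pieces of the form $\{\, h = 0,\ h_{1} \ge 0,\ \dots,\ h_{r} \ge 0 \,\}$ with $h, h_{1}, \dots, h_{r}$ real analytic. Applying Hironaka's resolution of singularities to the analytic functions $h, h_{1}, \dots, h_{r}$ yields a proper real analytic map $\rho \colon \widetilde{M} \to X \times \R^{n}$, a locally finite composition of blow-ups with smooth analytic centres, surjective and an isomorphism over the complement of the zero loci, such that in suitable local coordinates each of $h \circ \rho,\ h_{1} \circ \rho,\ \dots,\ h_{r} \circ \rho$ is a monomial times a non-vanishing analytic unit. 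In such coordinates the preimage $\rho^{-1}(\{\, h = 0,\ h_{i} \ge 0\,\})$ is a finite union of closed coordinate ``octants'' $\{\, x_{j} = 0\ (j \in J),\ \varepsilon_{i} x_{i} \ge 0\ (i \in I) \,\}$ with $\varepsilon_{i} \in \{\pm 1\}$; each octant is the image of a copy $\R^{d}$ of Euclidean space under a proper analytic monomial map that squares the half-space coordinates and kills the vanishing ones (e.g.\ $t \mapsto (\dots, t_{i}^{2}, \dots, 0, \dots, t_{\ell}, \dots)$). Composing these parametrisations with $\rho$, and taking the disjoint union over the finitely many octants, the finitely many local pieces, and a locally finite cover of $X$ (local finiteness preserving properness), produces the required proper analytic surjection onto $\overline{A}$, hence onto $Z$.

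The only genuinely deep ingredient — and the step that cannot be circumvented — is Hironaka's resolution of singularities in the real analytic category: the existence of the composition $\rho$ of blow-ups simultaneously monomializing the finitely many analytic functions $h, h_{i}$ while remaining an isomorphism over the complement of their zero loci. Granting this, the rest is the (somewhat fiddly) combinatorics of the octant decomposition together with the bookkeeping needed to glue the local data into a single proper analytic map; and if in addition one wanted the sharp bound $\dim N \le \dim Z$ — which the present statement does not require — one would also have to stratify the maps involved by generic rank and slice along generic fibres. These are exactly the arguments carried out in Bierstone--Milman~\cite{Bierstone-Milman-subanalytic}.
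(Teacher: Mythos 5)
The paper does not prove this statement: it is recorded as a \emph{Fact} and attributed to Bierstone--Milman \cite[Thm.~0.1]{Bierstone-Milman-subanalytic}, so there is no in-paper argument to compare yours against. Judged on its own, your sketch follows the standard route to uniformization via resolution of singularities and is sound in outline: reduce to compact semianalytic sets using the projection characterization of subanalytic sets, monomialize the defining functions by a proper composition of blow-ups, and parametrize the resulting coordinate octants by proper analytic maps from Euclidean spaces. You also correctly isolate resolution of singularities as the one irreducibly deep input.

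Two places where your write-up hides genuine (if standard) work deserve flagging. First, the equivalence between the paper's Remark~E.2 description and the ``projection of a relatively compact semianalytic set'' description, as well as the local presentation of a \emph{closed} semianalytic set as a finite union of basic closed pieces $\{h=0,\ h_1\ge 0,\dots,h_r\ge 0\}$, are themselves theorems of {\L}ojasiewicz--Hironaka theory rather than definitions; they cannot be taken for free. Second, the octants you parametrize live only in coordinate charts of the resolved manifold $\widetilde M$ and are not closed in $\widetilde M$, so ``taking the disjoint union over octants and charts'' does not immediately yield a proper map: one must either cut the octants down to compact pieces with corners and reparametrize, or run the induction on dimension with the fiber-cutting lemma as Bierstone--Milman actually do. Since you explicitly defer these points to \cite{Bierstone-Milman-subanalytic} --- exactly the source the paper itself cites --- the proposal is acceptable as a sketch, but it should not be mistaken for a self-contained proof.
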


We now explain immediate consequences of Definition~\ref{def:subanalytic} and Remark~\ref{remark:subanalytic}.  
By Definition~\ref{def:subanalytic}, a closed ball in Euclidean space $\mathbb{R}^n$ is subanalytic,  
as it can be expressed as the projection of the unit sphere $S^{n}$.  
Thus, every point in a real analytic manifold admits a compact subanalytic neighborhood.  
Furthermore, by Remark~\ref{remark:subanalytic}, any real analytic subset of a real analytic manifold $X$ is subanalytic.  

We also need the following:  
\begin{lemma}
    \label{lemma:subanalytic-property}
    Let $X$ be a real analytic manifold.
    \begin{enumerate}[label=(\arabic*)]       
        \item 
        \label{item:subanalytic-complement}
        If $Z_{1}$ and $Z_{2}$ are subanalytic subsets in $X$, then 
        $Z_{1}\smallsetminus Z_{2}$ is subanalytic in $X$.
        \item 
        \label{item:subanalytic-proper-image}
        Let $f\colon X\rightarrow Y$ be an analytic map 
        of real analytic manifolds. If $Z$ is a subanalytic subset of $X$ and 
        $f$ is proper on $\overline{Z}$, then $f(Z)$ is a subanalytic subset of $Y$.
        \item 
        \label{item:subanalytic-curve-selection}
        (Curve Selection Lemma)
        Let $Z$ be a subanalytic subset of $X$ and $z\in \overline{Z}$. 
Then there exists an analytic curve $c\colon (-1,1)\rightarrow X$ such that 
$c(0)=z$ and $c(t)\in Z$ for any $t\neq 0$.
    \end{enumerate}
\end{lemma}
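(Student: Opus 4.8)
The three assertions are standard structural facts about subanalytic sets, and the plan is to deduce them from Definition~\ref{def:subanalytic}, Remark~\ref{remark:subanalytic}, and Hironaka's uniformization theorem (Fact~\ref{fact:hironaka}), citing \cite{kashiwara2013sheaves, Bierstone-Milman-subanalytic} for the parts whose complete proofs are long. First I would record the elementary consequences of Definition~\ref{def:subanalytic}: the subanalytic subsets of a fixed real analytic manifold $X$ form a class stable under locally finite unions and under finite intersections (for the latter, $(f_{1}(Y_{1})\smallsetminus g_{1}(Z_{1}))\cap(f_{2}(Y_{2})\smallsetminus g_{2}(Z_{2}))=(f_{1}(Y_{1})\cap f_{2}(Y_{2}))\smallsetminus(g_{1}(Z_{1})\cup g_{2}(Z_{2}))$ and $f_{1}(Y_{1})\cap f_{2}(Y_{2})$ is the image of the fibre product $Y_{1}\times_{X}Y_{2}$ under an analytic map), and, moreover, the closure of a subanalytic set is subanalytic and the preimage of a subanalytic set under an analytic map is subanalytic.

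For \ref{item:subanalytic-complement}, since $Z_{1}\smallsetminus Z_{2}=Z_{1}\cap(X\smallsetminus Z_{2})$ and $X$ itself is subanalytic, the statement reduces by the previous paragraph to the fact that the complement of a subanalytic set is subanalytic. This is Gabrielov's complement theorem; I would cite \cite[Sect.~8.2]{kashiwara2013sheaves} or \cite{Bierstone-Milman-subanalytic}. A self-contained derivation from Fact~\ref{fact:hironaka} is available, following Bierstone--Milman, but it is lengthy and I would not reproduce it. This complement-stability is the deepest ingredient of the lemma and the main obstacle.

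For \ref{item:subanalytic-proper-image}, I would use the description of subanalytic sets from Remark~\ref{remark:subanalytic} and argue locally near a point $y\in Y$. Properness of $f$ on $\overline{Z}$ makes $f^{-1}(y)\cap\overline{Z}$ compact, so it is covered by finitely many charts $U_{\alpha}\subset X$ on each of which $Z\cap U_{\alpha}=\bigcup_{i}\bigl(f_{i1}(A_{i1})\smallsetminus f_{i2}(A_{i2})\bigr)$ with $A_{ij}$ closed analytic and $f_{ij}|_{A_{ij}}$ proper; since $f$ is proper on $\overline{Z}$, the closed set $\overline{Z}\smallsetminus\bigcup_{\alpha}U_{\alpha}$ has closed image avoiding a neighbourhood of $y$, so near $y$ one has $f(Z)=\bigcup_{\alpha}f(Z\cap U_{\alpha})$. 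Each $f(Z\cap U_{\alpha})$ is a Boolean combination of images of closed analytic sets under the proper analytic maps $f\circ f_{ij}$, and \ref{item:subanalytic-complement} lets one rewrite such a combination in the form of Remark~\ref{remark:subanalytic}. The only slightly delicate point here is the bookkeeping with the covers and with the set-differences, which is routine once \ref{item:subanalytic-complement} is available; in any case \cite[Sect.~8.2]{kashiwara2013sheaves} gives a streamlined treatment.

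For \ref{item:subanalytic-curve-selection}, the statement is local, so I would take $X=\R^{N}$ and $z=0$, and replace $Z$ by $Z\cap B$ for a small closed ball $B$ about $0$, which is subanalytic and relatively compact with $0\in\overline{Z\cap B}$; if $0\in Z$ the constant curve $c\equiv 0$ works, so assume $0\in\overline{Z}\smallsetminus Z$. Applying Fact~\ref{fact:hironaka} to the closed subanalytic set $\overline{Z}$ yields a proper analytic map $\varphi\colon M\to\R^{N}$ from a real analytic manifold with $\varphi(M)=\overline{Z}$; then $\varphi^{-1}(Z)$ is subanalytic in $M$, and by properness there exists $m_{0}\in\overline{\varphi^{-1}(Z)}$ with $\varphi(m_{0})=0$. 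After shrinking near $m_{0}$ and, if necessary, a further resolution, one arranges that $\varphi^{-1}(Z)$ is locally a normal-crossings (in particular semianalytic) set, for which the curve selection lemma is the classical one-variable Puiseux argument; pushing the resulting analytic curve forward by $\varphi$ gives the desired $c$. Full details of this last reduction are in \cite[Sect.~8.2]{kashiwara2013sheaves} and \cite{Bierstone-Milman-subanalytic}; together with Gabrielov's theorem used in \ref{item:subanalytic-complement}, it constitutes the substantive content of the lemma.
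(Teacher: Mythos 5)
The paper offers no proof of this lemma at all: it is stated as a standard fact, with the entire review of subanalytic sets explicitly attributed to Kashiwara--Schapira~\cite[Sect.~8.2]{kashiwara2013sheaves}, which is precisely the reference you fall back on for the substantive ingredients (Gabrielov's complement theorem, proper images, curve selection via uniformization). Your proposal therefore matches the paper's treatment — and goes somewhat beyond it by sketching the standard arguments — so it is correct and takes essentially the same approach.
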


Under the preparation above, we provide a lemma for the proof of 
Proposition~\ref{prop:local-rigid-g/l}:

\begin{lemma}
    \label{lemma:Z-subanalytic}
    Suppose that we are in Setting~\ref{setting:appendix-deform}.
    Let $W$ be a compact subanalytic neighborhood of the identity element in $G$.
    Then the subset 
    \begin{equation*}
    Z:=\Hom(\Gamma,G)\smallsetminus (W\cdot \Hom(\Gamma,L)).
    \end{equation*} 
    is subanalytic in $X=G^{N}$.
\end{lemma}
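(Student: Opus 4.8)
The plan is to prove Lemma~\ref{lemma:Z-subanalytic} by expressing $Z$ as a Boolean combination of images of proper analytic maps, and then invoking the stability properties of subanalytic sets collected in Lemma~\ref{lemma:subanalytic-property}. First I would recall that $\Hom(\Gamma,G)$ is a real analytic subset of $X=G^N$ (it is cut out inside $G^N$ by the finitely many relation words of $\Gamma$, each of which is an analytic map $G^N\to G$), hence subanalytic by Remark~\ref{remark:subanalytic}; likewise $\Hom(\Gamma,L)$ is a real analytic, hence subanalytic, subset of $X$, since $L$ is real analytic in $G$. Then, by Lemma~\ref{lemma:subanalytic-property}~\ref{item:subanalytic-complement}, it suffices to show that $W\cdot\Hom(\Gamma,L)$ is subanalytic in $X$.

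Next I would realize $W\cdot\Hom(\Gamma,L)$ as the image of a subanalytic set under a proper analytic map. Consider the analytic action map
\[
a\colon G\times X \to X,\qquad a(g,(x_1,\dots,x_N)) := (g x_1 g^{-1},\dots, g x_N g^{-1}),
\]
which is analytic because multiplication and inversion in the Lie group $G$ are analytic. Put $Y:=W\times \Hom(\Gamma,L)\subset G\times X$. Since $W$ is compact subanalytic in $G$ and $\Hom(\Gamma,L)$ is subanalytic in $X$, the product $Y$ is subanalytic in $G\times X$. Moreover $W\cdot\Hom(\Gamma,L) = a(Y)$, and the restriction of $a$ to $\overline Y\subset W\times X$ is proper: indeed $\overline Y$ is contained in the set $W\times X$, and the projection $W\times X\to X$ is proper because $W$ is compact, while $a$ composed with that projection controls the relevant fibres; more directly, for a compact $K\subset X$ the preimage $a^{-1}(K)\cap \overline Y$ is closed and contained in the compact-in-the-first-factor, closed-in-the-second set $W\times (\text{closure of }\{x : g x g^{-1}\in K \text{ for some } g\in W\})$, and since $W$ is compact this second set is contained in a compact subset of $X$. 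Hence by Lemma~\ref{lemma:subanalytic-property}~\ref{item:subanalytic-proper-image}, $a(Y)=W\cdot\Hom(\Gamma,L)$ is subanalytic in $X$.

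Combining these, $Z = \Hom(\Gamma,G)\smallsetminus \big(W\cdot\Hom(\Gamma,L)\big)$ is the difference of two subanalytic subsets of $X$, hence subanalytic by Lemma~\ref{lemma:subanalytic-property}~\ref{item:subanalytic-complement}. I expect the main technical point to be the verification that $a|_{\overline Y}$ is proper onto $X$; the subtlety is that $\overline Y$ may be larger than $W\times\Hom(\Gamma,L)$, but since $\overline Y\subset W\times X$ and $W$ is compact, properness reduces to the elementary fact that a closed subset of $W\times X$ on which the projection to $X$ has compact image-preimage behaviour is proper — which follows from compactness of $W$. The remaining ingredients (that relation words give analytic maps, that products and complements of subanalytic sets are subanalytic, that compact subanalytic neighborhoods exist) are all either routine or already recorded in the excerpt, so no further calculation is needed.
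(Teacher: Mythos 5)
Your proof is correct and follows essentially the same route as the paper: realize $W\cdot\Hom(\Gamma,L)$ as the image of the subanalytic set $W\times\Hom(\Gamma,L)$ under the conjugation action map, apply Lemma~\ref{lemma:subanalytic-property}~\ref{item:subanalytic-proper-image} (properness coming from compactness of $W$), and then conclude with the complement property \ref{item:subanalytic-complement}. The only difference is that you spell out the properness verification, which the paper asserts without detail.
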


\begin{proof}
The map $G \times \Hom(\Gamma,G)\rightarrow \Hom(\Gamma,G)$ defined by $(g,\varphi)\mapsto g\cdot \varphi$
is proper on $W\times \Hom(\Gamma,L)$. 
Hence, the image 
$W\cdot \Hom(\Gamma,L)$ is a subanalytic subset of $X$ by Lemma~\ref{lemma:subanalytic-property}~\ref{item:subanalytic-proper-image}. 
By Lemma~\ref{lemma:subanalytic-property}~\ref{item:subanalytic-complement},
$Z$ is subanalytic in $X$.
\end{proof}

We are ready to prove Proposition~\ref{prop:local-rigid-g/l}.
\begin{proof}[Proof of Proposition~\ref{prop:local-rigid-g/l}]
Retain the notation from Lemma~\ref{lemma:Z-subanalytic}.  

To prove Proposition~\ref{prop:local-rigid-g/l}, 
it suffices to show that 
$G\cdot \Hom(\Gamma,L)$ is a neighborhood of $\varphi$ in $\Hom(\Gamma,G)$.
Suppose, for the sake of contradiction, that this is not the case.  
Then $W\cdot \Hom(\Gamma,L)$ is also not a neighborhood of $\varphi$ in $\Hom(\Gamma,G)$,  
which implies that $\varphi \in \overline{Z}$.  

By Lemma~\ref{lemma:Z-subanalytic}, the subset $Z$ is subanalytic in $X$.  
Applying the curve selection lemma (Lemma~\ref{lemma:subanalytic-property}~\ref{item:subanalytic-curve-selection}),  
we obtain an analytic curve $\{\varphi_t\}_{-1<t<1}$ in $X$ such that  
$\varphi_0 = \varphi$ and $\varphi_t \in Z$ for all $t \neq 0$.  
In particular, $\varphi_t \notin W \cdot \Hom(\Gamma,L)$ for all $t \neq 0$.  

On the other hand, since $\varphi_t \in \Hom(\Gamma,G)$, Lemma~\ref{lemma:local-rigid-g/l-analytic-vesrion} (Step~1) ensures the existence of an analytic curve $g_t$ in $G$  
such that $g_t \cdot \varphi_t \in \Hom(\Gamma,L)$. 
Hence, $\varphi_t \in W\cdot \Hom(\Gamma,L)$ for sufficiently small $t$,
which is a contradiction.  
Thus, $G\cdot \Hom(\Gamma,L)$ must be a neighborhood of $\varphi$ in $\Hom(\Gamma,G)$.  
This completes the proof of Proposition~\ref{prop:local-rigid-g/l}.  
\end{proof} 

Although the following result is not used in the main body of this article, we include it for future reference, as it can be derived using the same idea originally due to Goldman--Millson~\cite{goldman-millson87}. The proof outlines only the necessary modifications to Proposition~\ref{prop:local-rigid-g/l}.

In the following proposition, we recall that
a $1$-cochain $c \colon \Gamma \to \mathfrak{g}$
is a $1$-cocycle if it satisfies
\[
c(\gamma \eta)=c(\gamma)+\operatorname{Ad}(\varphi(\gamma)) c(\eta),
\]
for all $\gamma, \eta \in \Gamma$,
and that the associated $2$-cochain 
\[
[c, c] \colon \Gamma \times \Gamma \to \mathfrak{g},
\quad
[c, c](\gamma, \eta) := [c(\gamma), \Ad(\varphi(\gamma))(c(\eta))]
\]
is a $2$-cocycle whenever $c$ is a $1$-cocycle.
\begin{proposition}
    \label{prop:Goldman-Millson}
    In Setting~\ref{setting:appendix-deform}, let $\varphi \colon \Gamma \to G$ be a group homomorphism such that $\varphi(\Gamma) \subset L$. We regard the Lie algebras $\mathfrak{g}$ and $\mathfrak{l}$ as $\Gamma$-modules via $\varphi$. 
    Assume the following:
    \begin{enumerate}[label=(\arabic*)]
    \item There exists an $L$-invariant complementary subspace $\mathfrak{m}$ of $\mathfrak{l}$ in $\mathfrak{g}$;
    \item 
    $H^{1}(\Gamma,\mathfrak{l}/\mathfrak{z})=0$, where $\mathfrak{z}$ denotes the center of $\mathfrak{l}$;
    \item 
    \label{item:cup-product}
    For any non-zero element $c \in H^{1}(\Gamma,\mathfrak{m})$, we have
\[
\operatorname{pr}_{\mathfrak{l}}([c,c]) \neq 0
\quad \textrm{ in } H^{2}(\Gamma,\mathfrak{l}),
\]
where $\operatorname{pr}_\mathfrak{l}$ denotes the first projection onto $H^{2}(\Gamma,\mathfrak{l})$
    with respect to the direct sum decomposition 
    \[
    H^{2}(\Gamma,\mathfrak{g}) = H^{2}(\Gamma,\mathfrak{l}) \oplus H^{2}(\Gamma,\mathfrak{m}).
    \]
\end{enumerate}

    Then there exists a neighborhood $U$ of $\varphi$ in $\Hom(\Gamma, G)$ such that for any $\varphi' \in U$, 
     the image $\varphi'(\Gamma)$ is conjugate into $L$ by an element of $G$.
\end{proposition}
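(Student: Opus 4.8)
The plan is to mirror the proof of Proposition~\ref{prop:local-rigid-g/l}, but replace the single cohomological vanishing hypothesis by the more refined obstruction-theoretic argument that goes back to Goldman--Millson~\cite{goldman-millson87}. The key new input over Proposition~\ref{prop:local-rigid-g/l} is that the deformation need not stay inside $\mathfrak{l}$ at first order: we only know it stays inside $\mathfrak{l}$ modulo the quadratic obstruction in $H^2$. So the work is concentrated in the analytic-curve lemma analogous to Lemma~\ref{lemma:local-rigid-g/l-analytic-vesrion}; the reduction from arbitrary small deformations to analytic curves (the curve selection lemma argument of Step~2) is verbatim the same, since it only uses that $\Hom(\Gamma,L)$ is real analytic, that $W\cdot\Hom(\Gamma,L)$ is subanalytic, and Lemma~\ref{lemma:subanalytic-property}.

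First I would fix a one-parameter analytic family $\varphi_t\colon\Gamma\to G$ with $\varphi_0=\varphi$ and write, as in Lemma~\ref{lemma:local-rigid-g/l-analytic-vesrion},
\[
\varphi_t(\gamma)=\exp\Bigl(\sum_{\ell\ge 1}u_\ell(\gamma)t^\ell\Bigr)\varphi(\gamma),
\]
with $u_\ell\colon\Gamma\to\mathfrak{g}$. The Baker--Campbell--Hausdorff computation already recorded in the proof of Lemma~\ref{lemma:local-rigid-g/l-analytic-vesrion} shows: if $u_1,\dots,u_{k-1}$ all have image in $\mathfrak l$, then modulo $\mathfrak m$ the map $\operatorname{pr}_{\mathfrak m}\circ u_k$ is a $1$-cocycle in $Z^1(\Gamma,\mathfrak m)$ (here one uses assumption~(1), the $L$-invariance of $\mathfrak m$, to split the bracket terms). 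The plan is then an induction on $k$ in two sub-steps. In the first sub-step one normalizes the $\mathfrak m$-component of $u_k$: using the hypothesis~\ref{item:cup-product}, one argues that the cohomology class $[\operatorname{pr}_{\mathfrak m}u_k]\in H^1(\Gamma,\mathfrak m)$ must vanish --- indeed, the degree-$k$ obstruction to integrating the truncated deformation, projected to $H^2(\Gamma,\mathfrak l)$, is $\operatorname{pr}_{\mathfrak l}([\,\overline{u}_k,\overline u_k\,])$ up to lower-order contributions that already lie in $\mathfrak l$, and since $\varphi_t$ is an honest (convergent, hence formally integrable) deformation this obstruction is zero; by hypothesis~\ref{item:cup-product} this forces $[\overline u_k]=0$, so after a coboundary correction $u_k(\Gamma)\subset\mathfrak l$. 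In the second sub-step one conjugates by $g_t^{(k)}:=\exp(t^kC_k)$ with $C_k\in\mathfrak g$ chosen so that the coboundary correction just mentioned is absorbed; this leaves $u_1,\dots,u_{k-1}$ untouched and arranges $u_k(g_t^{(k)}\cdot\varphi_t)(\Gamma)\subset\mathfrak l$. (Hypothesis~(2), $H^1(\Gamma,\mathfrak l/\mathfrak z)=0$, enters here exactly as in Goldman--Millson to handle the $\mathfrak l$-valued part and to guarantee the correction can be chosen consistently modulo the center $\mathfrak z$, which acts trivially and so does not affect conjugation.) Iterating over $k$ produces a formal power series $g_t=\cdots g_t^{(2)}g_t^{(1)}$ solving $g_t\cdot\varphi_t\in\Hom(\Gamma,L)$; Artin's approximation theorem~\cite[Thm.~1.2]{artin1968solutions} then upgrades this to a convergent analytic solution $g_t$ with $g_0=e$, exactly as in the proof of Lemma~\ref{lemma:local-rigid-g/l-analytic-vesrion}.

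The main obstacle I anticipate is the bookkeeping in the first sub-step: making precise the claim that the leading-order obstruction to integrating a deformation whose lower-order terms lie in $\mathfrak l$ is, in the $\mathfrak m$-direction, exactly the quadratic cup-square $\operatorname{pr}_{\mathfrak l}([\overline u_k,\overline u_k])$ modulo terms that do not see the $\mathfrak m$-cocycle --- one has to track carefully which of the BCH correction terms (triple brackets etc.) can contribute at order $t^{2k}$ and verify they land in $\mathfrak l$ or are exact. This is the step where the symmetric-pair structure (or the weaker assumption~(1)) is genuinely used, and it is the heart of the Goldman--Millson mechanism; the remaining analytic machinery (subanalyticity, curve selection, Artin approximation) is imported unchanged from Appendix~\ref{section:deform-upper-bound}. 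Once Step~1 is in place, Step~2 concludes the proof by contradiction precisely as for Proposition~\ref{prop:local-rigid-g/l}: if $G\cdot\Hom(\Gamma,L)$ were not a neighborhood of $\varphi$, the subanalytic set $Z=\Hom(\Gamma,G)\smallsetminus(W\cdot\Hom(\Gamma,L))$ would contain $\varphi$ in its closure, the curve selection lemma would produce an analytic curve $\varphi_t\in Z$ for $t\ne0$, and Step~1 would place $\varphi_t$ in $W\cdot\Hom(\Gamma,L)$ for small $t$, a contradiction.
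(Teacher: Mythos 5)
Your overall strategy coincides with the paper's: reduce to analytic one-parameter families via subanalyticity and the curve selection lemma, run a formal induction on the Taylor coefficients $u_k$ using the Baker--Campbell--Hausdorff formula, detect the vanishing of $[\operatorname{pr}_{\mathfrak m}u_k]\in H^1(\Gamma,\mathfrak m)$ from hypothesis~(3) via the order-$t^{2k}$ identity, correct by conjugation with $\exp(t^kC_k)$, and conclude with Artin approximation. The skeleton and the logic ``integrability makes the cup-square obstruction exact, hence hypothesis~(3) kills the class'' are exactly the paper's.

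The gap is the one you flag yourself, and it is genuine. With your induction hypothesis $u_\ell(\Gamma)\subset\mathfrak l$ for $\ell<k$, the $\mathfrak l$-component of the coefficient of $t^{2k}$ in the BCH identity is not just the cocycle relation for $v_{2k}$ plus $\tfrac12\operatorname{pr}_{\mathfrak l}[w_k,w_k]$: it also contains terms $\tfrac12[v_i(\gamma),\Ad(\varphi(\gamma))v_j(\eta)]$ with $i+j=2k$ and $i<k<j$ (and higher nested brackets), which are $\mathfrak l$-valued, involve the uncontrolled coefficients $v_j$ with $j>k$, and are not visibly coboundaries; so the identification of the obstruction does not close. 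The paper's fix is to strengthen the induction target to $u_\ell(\Gamma)\subset\mathfrak z$ for $\ell<k$ --- which is all the conjugation step can produce anyway, since $H^1(\Gamma,\mathfrak l/\mathfrak z)=0$ only lets you normalize $v_k$ \emph{into} $\mathfrak z$, not to zero. With that hypothesis, $[\mathfrak z,\mathfrak l]=0$ and $[\mathfrak z,\mathfrak m]\subset\mathfrak m$ (the latter from assumption~(1)) annihilate every contaminating bracket in the $\mathfrak l$-projection: at order $t^k$ both $v_k$ and $w_k$ become honest $1$-cocycles, and at order $t^{2k}$ the only surviving non-coboundary term is $\tfrac12\operatorname{pr}_{\mathfrak l}[w_k,w_k]$ (the $[v_k,v_k]$ contribution is the cup square of a coboundary modulo $\mathfrak z$, hence exact). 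One then concludes that $\operatorname{pr}_{\mathfrak l}([w_k,w_k])$ is a $2$-coboundary, which is exactly what hypothesis~(3) requires. So restate your claim as $u_k(g_t^{(k)}\cdots g_t^{(1)}\cdot\varphi_t)(\Gamma)\subset\mathfrak z$ rather than $\subset\mathfrak l$ and the induction closes. (A small slip besides: the obstruction you invoke lives at order $t^{2k}$, not ``degree $k$.'')
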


\begin{proof}
By applying a similar argument using the \emph{curve selection lemma} (see Lemma~\ref{lemma:subanalytic-property}~(3)) as in Step~2 of the proof of Proposition~\ref{prop:local-rigid-g/l}, our assertion reduces to the case of an analytic deformation.
Namely, for any family of group homomorphisms $\varphi_{t}\colon \Gamma\rightarrow G$, depending analytically on $t\in \R$, with $\varphi_{0}=\varphi$, 
it suffices to show that there exists an analytic curve $g_{t}$ in $G$ with $g_{0}=e$ such that $g_{t}\cdot \varphi_{t}\in \Hom(\Gamma,L)$ for all sufficiently small $t$.

Suppose that $\varphi_{t}\colon \Gamma\rightarrow G$ is a family of group homomorphisms that depends analytically on $t\in \R$, with $\varphi_{0}=\varphi$.
For each $\gamma \in \Gamma$, the real analytic curve $\varphi_t(\gamma) \varphi(\gamma)^{-1}$ in $G$, starts at $e$ when $t=0$.
The Taylor expansion of this curve defines a sequence of maps $u_\ell \equiv u_\ell(\varphi_t) \colon \Gamma \to \mathfrak{g}$ $(\ell=1,2,\dots)$,
such that 
\[
    \varphi_{t}(\gamma) = \exp(u_{1}(\gamma) t + u_{2}(\gamma) t^{2} + \dotsb) \varphi(\gamma),
\]
where the domain of convergence may depend on $\gamma$.

We fix $k \geq 1$, and prove the following claim: \textit{assume that $u_{\ell}(\Gamma) \subset \mathfrak{z}$ for $\ell=1,\dots,k-1$.  
Then, there exists an analytic curve in $G$, to be denoted by $g^{(k)}_{t}$, with the following two properties:}
\begin{enumerate}[label=(\arabic*)]
    \item $u_\ell (\varphi_t)=u_\ell(g^{(k)}_{t}\cdot \varphi_t)$ for all $\ell =1, \dots, k-1$,
    \item
     $u_k(g^{(k)}_{t}\cdot \varphi_t)(\Gamma) \subset \mathfrak{z}$,
\end{enumerate}
where $g^{(k)}_{t}\cdot \varphi_t$ denotes a homomorphism $\Gamma \to \mathfrak g$ that is obtained by conjugating $\varphi_t$ with $g^{(k)}_{t}$.

Once this claim has been proved, we may apply Artin's approximation theorem (\cite[Thm.~1.2]{artin1968solutions}) to the formal power series $g_t = \dotsb g_t^{(2)} g_t^{(1)}$  to conclude the proposition, just as in Step~1 of the proof of Proposition~\ref{prop:local-rigid-g/l}.

The remainder of the argument is devoted to proving the claim.
To this end, we compute
$\varphi_{t}(\gamma\eta)=\varphi_{t}(\gamma)\varphi_{t}(\eta)$ using
 the Baker--Campbell--Hausdorff formula:
\begin{align*}
    \varphi_{t}(\gamma \eta)&= 
    \exp(\sum_{n=1}^{\infty}u_{n}(\gamma)t^{n})\varphi(\gamma)\exp(\sum_{n=1}^{\infty}u_{n}(\eta)t^{n})\varphi(\eta) \nonumber \\
    &= \exp(\sum_{n=1}^{\infty}u_{n}(\gamma)t^{n})\exp(\Ad(\varphi(\gamma))(\sum_{n=1}^{\infty}u_{n}(\eta)t^{n}))\varphi(\gamma\eta) \nonumber \\
    &=\exp(A(t)+B(t)+\tfrac{1}{2}[A(t),B(t)]+\dotsb)\varphi(\gamma\eta), 
\end{align*}
where $A(t)=\sum_{n=1}^{\infty}u_{n}(\gamma)t^{n}$ and $B(t)=\Ad(\varphi(\gamma))(\sum_{n=1}^{\infty}u_{n}(\eta)t^{n})$. 
Hence, we have 
\begin{equation}
    \label{eq:BCH-formula}
    \sum_{n=1}^{\infty}u_{n}(\gamma\eta)t^{n}
    =A(t)+B(t)+\frac{1}{2}[A(t),B(t)]+\cdots.
\end{equation} 

Let us consider the decomposition 
$u_n = v_n + w_n$ $(n = 1, 2, \ldots)$ of the $1$-cochains corresponding to the decomposition $\mathfrak{g} = \mathfrak{l} + \mathfrak{m}$ as $L$-modules.
By assumption, the images of $v_{1},\dots, v_{k-1}$ are contained in $\mathfrak{z}$
and $w_{1},\dots, w_{k-1}$ vanish.  
Comparing the $\mathfrak{l}$-components of the coefficient of $t^{k}$ in the identity \eqref{eq:BCH-formula}, we see that
both $v_{k}$ and $w_{k}$ define $1$-cocycles.  
Since $H^{1}(\Gamma, \mathfrak{l}/\mathfrak{z}) = 0$ by assumption,  
there exists an element $C'_{k} \in \mathfrak{l}$ such that,  
\[
v_{k}(\gamma) \equiv \Ad(\varphi(\gamma)) C'_{k} - C'_{k} \mod \mathfrak{z},
\]
for all $\gamma\in \Gamma$. 

By comparing the $\mathfrak{l}$-component of the coefficient of $t^{2k}$ in the identity \eqref{eq:BCH-formula}, we obtain
\[
v_{2k}(\gamma\eta)= v_{2k}(\gamma) + \Ad(\varphi(\gamma)) v_{2k}(\eta) 
+\frac{1}{2}\big(\text{the $\mathfrak{l}$-component of }[w_{k},w_{k}](\gamma,\eta)\big).
\]
This shows that the $\mathfrak{l}$-component of the $2$-cocycle $[w_{k},w_{k}]$ is a $2$-coboundary.  
By assumption~\ref{item:cup-product}, $w_k\colon \Gamma \to \mathfrak{m}$ is a $1$-coboundary.
Hence,
there exists an element $C''_{k} \in \mathfrak{m}$ such that, for all $\gamma\in \Gamma$, 
\[
w_{k}(\gamma) = \Ad(\varphi(\gamma)) C''_{k} - C''_{k}.
\] 

Now, we set $C_{k}:=C'_{k}+C''_{k}\in \mathfrak{g}$ and define $g_{t}^{(k)} := \exp(t^{k} C_{k})\in G$.
Clearly, $u_{1}, \dots, u_{k-1}$ remain unchanged, when we take replace $\varphi_t$ with its conjugate $g_{t}^{(k)} \cdot \varphi_{t}\in G$.
On the other hand, the coefficient of $t^k$ in 
\[
\log((g_{t}^{(k)} \cdot \varphi_{t}) \varphi(\gamma)^{-1})
=
\log(
\exp(t^k C_k) \exp(A(t)) \exp(- t^k \operatorname{Ad}(\varphi(\gamma)) C_k))
\]
is equal to
$C_k + u_k(\gamma) - \operatorname{Ad}(\varphi(\gamma)) C_k \in \mathfrak{z}$. 
Hence, the claim is proved, and the proposition follows.
\end{proof}

\section{Continuous family of Zariski-dense subgroups}
\label{section:appendix-Zariski-dense-subgroups}

This section discusses a {\emph{continuous family}} of Zariski-dense subgroups in Zariski-connected, algebraic groups $G$.
In Theorem~\ref{thm:two_generates_reductive}, we determine
the optimal number of generators when $G$ is reductive.

Let $G$ be a Zariski-connected real algebraic group, and let $\mathfrak{g}$ be the real Lie algebra of $G$.
\begin{definition}
\label{def:GLtX_Zariski_closure}
For a finite subset $\mathcal{X}=\{X_1, \dots, X_n \}$,
    we denote by $G(\mathcal{X})$
    the identity component of the Zariski-closure of the subgroup of $G$ generated by $e^{X_{1}},\ldots,e^{X_{n}}$.
It is also useful to introduce the notation
$G(L;\mathcal{X})$, which denotes the identity component of the Zariski-closure of the subgroup of $G$ generated by
 $L$ and $e^{X_{1}},\ldots,e^{X_{n}}$,
when $L$ is a subgroup of $G$.

For each fixed $t \in \mathbb{R}$,  
we define $t\mathcal{X} := \{tX \mid X \in \mathcal{X}\}$, and accordingly define the
Zariski-connected subgroups $G(t\mathcal X)$ and $G(L;t\mathcal{X})$, separately for each $t$.
\end{definition}
\begin{example}
\label{example:unipotent-g(X)}
Let $G$ be a unipotent real algebraic group, $X \in \mathfrak{g}$, and set
$\mathcal X =\{X\}$.
Then the Lie algebra $\mathfrak{g}(\mathcal X)$ of $G(\mathcal X)$---the Zariski-closure of the subgroup generated by $\exp(X)$---is $\mathbb{R} X$,
because the exponential map $\exp \colon \mathfrak{g} \to G$ is an isomorphism of algebraic groups.  
\end{example}

In Definition~\ref{def:GLtX_Zariski_closure}, it is important to note that the subgroups $G(t\mathcal{X})$ do not necessarily vary continuously with $t$, as will already be evident in the abelian case (see Proposition~\ref{proposition:commutative-eta} below).

 \begin{definition}
 \label{def:eta}
 Let $G$ be a Zariski-connected real algebraic group.
We define $\underline{\eta}(G), \eta(G), \overline\eta(G)$
    as the minimal cardinalities 
 of finite subset $\mathcal{X}$ satisfying the following conditions, respectively:
    \begin{align*}
     \overline\eta(G)&: \quad
      G(t\mathcal{X})=G  \textrm{ for all } t>0;
        \\
        \eta(G)&: \quad
        G(t\mathcal{X})=G
        \textrm{ for any } \delta > t>0 \textrm{ with some } \delta>0; 
         \\
        \underline{\eta}(G)&: \quad
       G(\mathcal{X})=G.
    \end{align*}
\end{definition}

We observe that $\eta(G)$ and $\overline{\eta}(G)$ depend on the {\emph{family}} of Zariski-connected subgroups $G(t\mathcal{X})$, indexed by the continuous parameter $t$, whereas $\underline{\eta}(G)$ does not involve any continuous parameter.
It is clear that
\[
\underline{\eta}(G)\leq \eta(G) \le \overline{\eta}(G).
\]

Although our primary focus is on $\eta(G)$ for its application in 
 Sections~\ref{section:outline-bending} 
 and \ref{section:step3}, we frequently discuss $\underline{\eta}(G)$ and $\overline{\eta}(G)$ as well.
First of all, we show that
the value of $\underline\eta(G)$ can be arbitrarily large: 
\begin{example}
\label{example:eta-additive}
    For $G=\R^{n}$ regarded as a unipotent  commutative algebraic group,
    we have
        $\underline{\eta}(G)= \eta(G)=\overline{\eta}(G)=n$.
\end{example}

In contrast to the unipotent commutative case discussed in Example~\ref{example:eta-additive},
there exists an upper bound for $\overline{\eta}(G)$ in the reductive algebraic group.
\begin{theorem}
\label{thm:two_generates_reductive}
    Let $G$ be a Zariski-connected real reductive algebraic group. Then we have
    \[
    \underline{\eta}(G)= \eta(G)=\overline\eta(G)=2,
    \]
    except for the following cases: 
    \begin{itemize}
        \item ($\underline{\eta}, \eta, \overline{\eta})=(1,1,1)$
        when 
        $G$ is a split $\R$-torus 
        $(G\simeq (\R^{\times})^{b})$;
\item $(\underline\eta, \eta, \overline\eta)=(1,2,2)$ when 
        $G$ is a non-split $\R$-torus $(G\simeq \T^{a}\times (\R^{\times})^{b})$;
        \item $(\underline\eta, \eta, \overline\eta)=(2,2,3)$ when 
        $\mathfrak{g}$ contains $\mathfrak{su}(2)$ as an ideal.
    \end{itemize}
\end{theorem}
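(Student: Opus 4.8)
\textbf{Proof plan for Theorem~\ref{thm:two_generates_reductive}.}
The plan is to reduce everything to the structure theory of real reductive algebraic groups and then handle a short list of building blocks. First I would observe that the three quantities $\underline\eta,\eta,\overline\eta$ behave well under the almost-direct product decomposition $G = Z_0 \cdot G_1 \cdots G_r$, where $Z_0$ is the identity component of the center (an $\R$-torus) and each $G_i$ is the image in $G$ of a connected simple factor of the universal cover; more precisely, for any finite set $\mathcal X$ meeting each factor's Lie algebra appropriately one has $G(t\mathcal X)=G$ iff the projections generate each factor, so the relevant $\eta$-invariant of $G$ is controlled by the maximum of those of the factors, with the only subtlety being that one may need to \emph{share} generators across factors whose simple Lie algebras are pairwise non-isomorphic (a Goursat-type argument: a Zariski-closed subgroup of $G_i\times G_j$ projecting onto both and with no common composition factor is the whole product). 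This is exactly the type of counting carried out in the context of $Z_G(\varphi(L))$ in Section~\ref{section:step3}, and Lemma~\ref{lem:span_generic} and Lemma~\ref{lemma:analytic_gt_through_ux} give the tools to pass from ``generic element'' statements to the desired open/closed conditions in $t$.

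Next I would dispose of the torus cases by a direct computation. For a split torus $G\simeq(\R^\times)^b$, the Zariski closure of the cyclic group generated by a single generic element $e^X$ with $X\in\R^b$ is all of $G$ (a single multiplicatively independent tuple suffices, and such tuples are dense), giving $(\underline\eta,\eta,\overline\eta)=(1,1,1)$; note there is no continuous-parameter subtlety since scaling $X$ by $t$ preserves multiplicative independence of the coordinates for all $t\neq 0$ outside a countable set — but in fact for a split torus one checks it holds for \emph{all} $t>0$ after choosing $X$ with $\Z$-linearly independent coordinates over $\pi\sqrt{-1}$-free directions, hence $\overline\eta=1$ too. For a non-split torus $G\simeq\T^a\times(\R^\times)^b$ with $a\geq 1$, a single $e^{tX}$ generates a subgroup whose closure is a torus of dimension depending arithmetically on $t$ (for generic $t$ it is everything, but for a dense set of $t$ — e.g. rational multiples of periods — it is a proper subgroup), so $\underline\eta=1$ while $\eta=\overline\eta=2$: two elements $e^{tX_1},e^{tX_2}$ can be arranged so that their closure is all of $G$ for \emph{all} $t>0$, using that $\dim G\leq\dim\bigl(\text{closure of }\langle e^{tX_1}\rangle\bigr)+\dim\bigl(\text{closure of }\langle e^{tX_2}\rangle\bigr)$ can be forced by choosing $X_1,X_2$ spanning complementary arithmetic data.

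Then I would treat the simple factors. For a Zariski-connected simple $G$ that is not locally isomorphic to a compact group, two generic elements generate a Zariski-dense subgroup — this is classical (density of two-generated Zariski-dense subgroups in simple algebraic groups over $\R$, cf.\ the Kuranishi/Richardson type argument, or simply: the set of pairs $(g,h)$ generating a proper Zariski-closed subgroup is a countable union of proper subvarieties) — and moreover for the \emph{linear-in-$t$} family $e^{tX_1},e^{tX_2}$ one can choose $X_1,X_2$ so that the closure of $\langle e^{tX_1},e^{tX_2}\rangle$ is all of $G$ for all sufficiently small $t>0$, again by Lemma~\ref{lemma:analytic_gt_through_ux} applied to bring $\operatorname{ad}(U(\mathfrak g))X_1$ close to all of $\mathfrak g$: the condition ``$\mathfrak g(t\{X_1,X_2\})=\mathfrak g$'' is Zariski-open in the coefficients by Lemma~\ref{lem:span_generic}, hence holds on a punctured neighborhood of $0$. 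This gives $\underline\eta=\eta=2$; and $\overline\eta=2$ as well unless the family degenerates at some isolated $t>0$, which does not happen for $\mathfrak{sl}$-type factors. The exceptional case is $\mathfrak g\supset\mathfrak{su}(2)$ as an ideal: here $G$ has a compact factor locally $SU(2)$, two generic elements of $SU(2)$ generate a dense (free) subgroup so $\underline\eta=\eta=2$, but for the rigid linear family $e^{tX_1},e^{tX_2}$ there is always some small $t>0$ at which $e^{tX_1}$ and $e^{tX_2}$ both lie in a common one-parameter subgroup conjugate rotation axis or generate a finite/dihedral group — more precisely, one cannot make $\langle e^{tX_1},e^{tX_2}\rangle$ dense in $SU(2)$ for \emph{all} $t>0$ with only two directions, because for $t$ a suitable rational multiple of $2\pi$ over the relevant roots both elements become torsion with commuting-up-to-finite behavior — so a third direction is needed to fill in, yielding $\overline\eta=3$. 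The main obstacle I expect is precisely this last point: proving the sharp lower bound $\overline\eta(SU(2))=3$, i.e.\ that no two analytic-in-$t$ (indeed, linear-in-$t$) curves of generators keep the group dense for every $t>0$. I would handle it by an explicit resonance argument in $SU(2)$: parametrize $X_1,X_2$ by their axes and speeds, and show the ``bad'' set of $t$ (where the two rotations generate a subgroup of a fixed finite polyhedral group, or share an axis) is always nonempty by a pigeonhole/continuity argument on the two rotation angles modulo $2\pi$, while a third generic direction provably removes all such $t$.
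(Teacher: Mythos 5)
Your overall architecture (reduce to the central torus and the simple isotypic factors via an almost-direct-product/Goursat argument, then compute each building block) matches the paper's, and your treatment of the two torus cases is essentially the paper's Proposition~\ref{proposition:commutative-eta}. But there is a genuine gap at the decisive exceptional case, namely the lower bound $\overline\eta(G)\geq 3$ when $\mathfrak g$ contains $\mathfrak{su}(2)$ as an ideal. Your proposed mechanism --- that for some $t>0$ both $e^{tX_1}$ and $e^{tX_2}$ ``become torsion'' and generate a finite or dihedral group, detected by a pigeonhole on the two rotation angles --- does not work: simultaneous torsion forces the two rotation speeds to be commensurable, which fails for generic $X_1,X_2$, and in any case two torsion elements of $SU(2)$ about distinct axes typically generate a topologically (hence Zariski) dense subgroup, so no resonance argument of this kind produces a proper closure. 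The correct mechanism, used in Lemma~\ref{lem:three_generators_su2}, is much simpler and kills only \emph{one} generator: since every one-parameter subgroup of the compact group $SU(2)$ is a circle, there is $t_0>0$ with $\exp(t_0X_2)=e$, so $G(t_0\{X_1,X_2\})=G(t_0\{X_1\})$ is abelian and therefore proper. This also explains why the exception is exactly the $\mathfrak{su}(2)$ ideals: in any other simple factor one can place a generator on a ray in a Cartan subalgebra along which $\exp$ never meets the center (Lemma~\ref{lem:generic_not_center}).

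Two further steps are asserted rather than proved. First, for a noncompact simple factor you claim $\overline\eta=2$ ``unless the family degenerates at some isolated $t>0$, which does not happen for $\mathfrak{sl}$-type factors''; making the pair work for \emph{all} $t\neq0$ simultaneously is the hardest part of the paper's argument (Lemma~\ref{lemma:overline_eta_simple}), requiring $X$ and $Y$ chosen in two Cartan subalgebras, each $\mathfrak j$-filled with respect to the other, together with avoidance of the countable bad sets and of the fixed-point locus $(G_\C)_{\mathrm{fix}}$. The tools you cite (Lemmas~\ref{lem:span_generic} and~\ref{lemma:analytic_gt_through_ux}) only yield the conclusion for all sufficiently small $t$, i.e.\ $\eta=2$, not $\overline\eta=2$. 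Second, your Goursat reduction covers pairwise non-isomorphic simple factors but says nothing about an isotypic factor $\mathfrak s^{\oplus n}$, where a subgroup surjecting onto each copy can be a diagonal; the paper treats this case separately (Propositions~\ref{prop:three_generators_su2_n} and~\ref{prop:overline_eta_simple_n}) by choosing arithmetically independent data in the different copies.
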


The upper bounds for $\underline{\eta}(G)$, $\eta(G)$, and $\overline\eta(G)$ when $G$ is \emph{non-reductive} will be given in Lemma~\ref{lemma:basic_properties_eta}.

\begin{remark}
It is known that $\underline{\eta}(G) = 2$ for semisimple $G$; see, for example, Labourie~\cite[Lem.~5.3.13]{Labourie_lecture} for a proof in the case where $G$ is the split group $SL(n, \mathbb{R})$. 
Our approach to the invariants $\eta$ and $\overline\eta$ as well as $\underline\eta$, however, offers an alternative method of proof that extends to the family $G(t \mathcal{X})$ as the parameter $t \in \R$ varies.
\end{remark}

Before turning to the proof of Theorem~\ref{thm:two_generates_reductive}, we first summarize some basic properties of  $\underline{\eta}(G)$, $\eta(G)$, and $\overline\eta(G)$ in Lemmas~\ref{lemma:hom->g(phi)} and \ref{lemma:basic_properties_eta} without assuming that $G$ is reductive.

\begin{lemma}
\label{lemma:hom->g(phi)}
    For any homomorphism of real algebraic groups $\varphi \colon G \to G'$ and any finite subset $\mathcal{X} \subset \mathfrak{g}$,  
    we have the identity  
    \[
    d\varphi(\mathfrak{g}(\mathcal{X})) =
    \mathfrak{g}'(d\varphi(\mathcal{X})).
    \]
\end{lemma}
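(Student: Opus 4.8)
The statement asserts that taking differentials commutes with forming the Zariski-closure generated by $\{e^{X}\}_{X\in\mathcal{X}}$. I would prove both inclusions $d\varphi(\mathfrak{g}(\mathcal{X}))\subset \mathfrak{g}'(d\varphi(\mathcal{X}))$ and $\mathfrak{g}'(d\varphi(\mathcal{X}))\subset d\varphi(\mathfrak{g}(\mathcal{X}))$. First, I would reduce to the case where $\varphi$ is surjective onto its image: the image $\varphi(G)$ is a Zariski-closed subgroup of $G'$ (for algebraic-group morphisms the image is closed, cf.\ the standard facts recalled in Appendix~\ref{section:algebraic_group}), and both sides of the desired identity are contained in $d\varphi(\mathfrak{g})=\mathrm{Lie}(\varphi(G))$. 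So replacing $G'$ by the Zariski-closure of $\varphi(G)$, I may assume $\varphi$ has Zariski-dense (hence, since $\varphi(G)$ is closed, full) image.

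\textbf{The two inclusions.} For $\subseteq$: let $H$ be the subgroup of $G$ generated by the elements $e^{X}$, $X\in\mathcal{X}$, with Zariski-closure $\overline{H}$ whose identity component is $G(\mathcal{X})$. Then $\varphi(H)$ is the subgroup of $G'$ generated by $e^{d\varphi(X)}=\varphi(e^{X})$, $X\in\mathcal{X}$, so by definition its Zariski-closure has identity component $G'(d\varphi(\mathcal{X}))$. Since $\varphi$ is a morphism of varieties, it is Zariski-continuous, so $\varphi(\overline{H})\subset \overline{\varphi(H)}$; passing to identity components and Lie algebras gives $d\varphi(\mathfrak{g}(\mathcal{X}))\subset \mathfrak{g}'(d\varphi(\mathcal{X}))$. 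For $\supseteq$: since $\varphi$ is surjective with image a closed subgroup and $\varphi$ is a morphism of algebraic groups, $\varphi$ maps Zariski-dense subsets to Zariski-dense subsets of $G'$ and, more to the point, $\varphi(\overline{H})$ is itself Zariski-closed in $G'$ (the image of a closed subgroup under a surjective morphism of algebraic groups is closed) and contains $\varphi(H)$, hence contains $\overline{\varphi(H)}$. Thus $\varphi(\overline{H})=\overline{\varphi(H)}$, and taking identity components and differentiating yields $d\varphi(\mathfrak{g}(\mathcal{X}))= \mathfrak{g}'(d\varphi(\mathcal{X}))$ directly (this actually gives equality in one stroke, so the first inclusion is only needed as a sanity check in the non-surjective reduction).

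\textbf{Subtleties and the main obstacle.} The one genuine point that needs care is the behavior of identity components: $\varphi(\overline{H})=\overline{\varphi(H)}$ is an equality of (possibly disconnected) algebraic groups, and I must check that $\varphi$ carries the identity component $(\overline{H})^{\circ}=G(\mathcal{X})$ onto the identity component $\overline{\varphi(H)}^{\circ}=G'(d\varphi(\mathcal{X}))$. This follows because a surjective morphism of algebraic groups sends identity component onto identity component (the image of a connected group is connected, and $\varphi((\overline{H})^{\circ})$ is a finite-index closed connected subgroup of $\varphi(\overline{H})$, hence equals its identity component). Once this is in place, differentiating the surjection $(\overline H)^{\circ}\twoheadrightarrow \overline{\varphi(H)}^{\circ}$ gives $d\varphi(\mathfrak{g}(\mathcal{X}))=\mathfrak{g}'(d\varphi(\mathcal{X}))$. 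I expect the bookkeeping around ``image of a closed subgroup is closed'' and ``identity component maps onto identity component'' — both standard but requiring the right references in the real-algebraic setting of Appendix~\ref{section:algebraic_group} — to be the only mildly delicate part; there is no hard analysis here.
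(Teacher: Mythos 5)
Your overall strategy coincides with the paper's: both inclusions are obtained from (a) Zariski-continuity of $\varphi$, giving $\varphi(\overline{H})\subset\overline{\varphi(H)}$, and (b) closedness of the image of an algebraic group under a morphism, giving the reverse containment; one then passes to Lie algebras. The difference is that the paper first extends $\varphi$ to the complexification $\varphi_{\C}\colon G_{\C}\to G'_{\C}$, proves the group-level identity $\varphi_{\C}(G_{\C}(\mathcal{X}))=G'_{\C}(d\varphi(\mathcal{X}))$ there, and only at the end descends to real Lie algebras — and this is not a cosmetic choice.

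The step that fails in your write-up is the repeated use of ``the image of a (closed subgroup under a) morphism of algebraic groups is Zariski-closed'' at the level of \emph{real points}. In the convention of Appendix~\ref{section:algebraic_group}, $G=\mathbf{G}(\R)$ carries the topology induced from $G_{\C}$, and for real points the image of a morphism need not be Zariski-closed: the squaring map $\R^{\times}\to\R^{\times}$ has image $\R_{>0}$, whose Zariski-closure is all of $\R^{\times}$, and $SL(2,\R)\to PGL(2,\R)$ has image $PSL(2,\R)$, a proper index-two subgroup of the Zariski-connected group $PGL(2,\R)$, hence not Zariski-closed. The latter example also defeats your claim that a surjective morphism carries identity component onto identity component (in the Zariski topology on real points). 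So the asserted set-level equality $\varphi(\overline{H})=\overline{\varphi(H)}$, and the reduction ``$\varphi(G)$ is Zariski-closed, so assume $\varphi$ surjective,'' are both false as stated. The gap is repairable precisely because the lemma only concerns Lie algebras: either complexify first, as the paper does (over $\C$ images of morphisms of algebraic groups are closed and connectedness behaves well, and the real Lie algebras are recovered at the end), or observe that $\varphi(G(\mathcal{X}))$ is Zariski-dense and open in the real points of the complex image group, so the two sides have the same Lie algebra even though they may differ as groups. As written, though, several intermediate assertions are literally incorrect, and the needed idea — working over $\C$ or arguing up to finite index/open subgroups — is missing.
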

\begin{proof}
Consider the extension of $\varphi$ to a morphism of complex algebraic groups  
$\varphi_{\mathbb{C}} \colon G_{\mathbb{C}} \to G'_{\mathbb{C}}$.  
Then $\varphi_{\mathbb{C}}(G_{\mathbb{C}}(\mathcal{X}))$  is Zariski-closed and contains $\exp(d\varphi(X))$  
for each $X \in \mathcal{X}$.  
Hence $\varphi_{\mathbb{C}}(G_{\mathbb{C}}(\mathcal{X})) \supset G'_{\mathbb{C}}(d\varphi(\mathcal{X}))$.  
The reverse inclusion follows from the Zariski-continuity of $\varphi_{\mathbb{C}}$.  
Therefore, we obtain the equality
\[\varphi_{\mathbb{C}}(G_{\mathbb{C}}(\mathcal{X})) = G'_{\mathbb{C}}(d\varphi(\mathcal{X})).\]  
Taking the corresponding real Lie algebras then yields the desired identity.
\end{proof}
\begin{definition}
\label{def:kappa}    
Suppose that $A$ is an associated $\R$-algebra, and that $V$ is a finitely generated $A$-module.
Let $\kappa(A, V)$ denote the minimal number of generators of $V$ as an $A$-module.
When $V$ is a $G$-module for a group $G$, we define 
\[ \kappa(G, V):=\kappa(\R[G], V),
\] where $\R[G]$ is the group ring.
\end{definition}

For the reader's convenience, we provide an explicit formula for $\kappa(S, \mathfrak{u})$ in terms of the multiplicities of irreducible representations.
In the following lemma, the formula applies with $A = U(\mathfrak{s})$ and $V = \mathfrak{u}$.

\begin{lemma}
    Let $A$ be an associative $\R$-algebra, 
    and $V$ an $A$-module which is finite-dimensional over $\R$. 
    Assume 
    $V$ decomposes 
    $\bigoplus_{i} V_{i}^{n_{i}}$,
    where $V_{1}, V_{2},\ldots$ 
    are mutually non-isomorphic irreducible $A$-module and $n_{i}\in \N$ is the multiplicity of $V_{i}$ in $V$. 
    Put $D_{i}:=\End_{A}(V_{i})$,
    which is isomorphic to either 
    $\R$, $\C$, or $\HA$.
    We have 
    \[
    \kappa(A,V) = 
    \max_{i}(\lceil\frac{n_{i}}{\dim_{D_{i}}V_{i}}\rceil). 
    \]
\end{lemma}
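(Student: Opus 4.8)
The statement to prove is the explicit formula
\[
\kappa(A,V) = \max_{i}\left\lceil\frac{n_{i}}{\dim_{D_{i}}V_{i}}\right\rceil
\]
for the minimal number of generators of a finite-dimensional $A$-module $V = \bigoplus_i V_i^{n_i}$ over an associative $\R$-algebra $A$. The plan is to reduce to the semisimple quotient, then invoke Wedderburn theory to handle each isotypic block separately. First I would observe that the kernel of the action $A \to \End_{\R}(V)$ does not change $\kappa$, so without loss of generality $A$ acts faithfully and, since $V$ is semisimple, $A$ is a semisimple $\R$-algebra; by Wedderburn, $A \simeq \prod_i M_{m_i}(D_i)$ where $m_i = \dim_{D_i} V_i$ and $D_i = \End_A(V_i)$ is $\R$, $\C$, or $\HA$. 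Under this decomposition, generating $V$ as an $A$-module amounts to generating each homogeneous component $V_i^{n_i}$ simultaneously as a module over the simple factor $M_{m_i}(D_i)$, because a single $r$-tuple $(v^{(1)},\dots,v^{(r)}) \in V^r$ generates $V$ iff its $i$-th component generates $V_i^{n_i}$ for every $i$. Hence $\kappa(A,V) = \max_i \kappa(M_{m_i}(D_i), V_i^{n_i})$, and it remains to compute $\kappa(M_m(D), (D^m)^{\oplus n})$, where I write $V_i = D^m$ as the standard simple module (a right $D$-vector space of dimension $m$ on which $M_m(D)$ acts by left multiplication).

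Next I would carry out this last computation. Giving $r$ elements of $V_i^{\oplus n} = (D^m)^{\oplus n}$ is the same as giving an $r$-tuple of $m \times n$ matrices over $D$, or equivalently a single $m \times (nr)$ matrix $B$ over $D$; the $M_m(D)$-submodule generated by these elements is $M_m(D) \cdot B$, which equals the whole of $V_i^{\oplus n}$ exactly when the rows of $B$ span the free right $D$-module $D^{nr}$ — equivalently, when $B$ has a right inverse, which (by the theory of finite-dimensional modules over the division ring $D$, or Morita equivalence) happens iff $m \le nr$. The smallest such $r$ is $\lceil m \cdot \text{(something)}\rceil$ — more precisely, rereading: we need the column span over $D$... let me restate cleanly: the $r$-tuple generates iff the $D$-column space of the $m\times(nr)$ matrix has rank $m$, i.e. iff $nr \ge m$, giving minimal $r = \lceil m/n \rceil$. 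Wait — this must be reconciled with the claimed answer $\lceil n_i / m_i \rceil = \lceil n/m \rceil$. The resolution is that $A$ acts on $V_i^{\oplus n}$ on one side and the generators live on the other: one should instead view an $r$-tuple of generators as an $(nr) \times m$ — no. The correct bookkeeping, which I would spell out carefully, is that $\operatorname{Hom}_A(A^{\oplus r}, V_i^{\oplus n}) \simeq (\operatorname{Hom}_A(A, V_i^{\oplus n}))^{\oplus r} \simeq (V_i^{\oplus n})^{\oplus r}$ and surjectivity of such a map is detected after applying the Morita equivalence $M_m(D)\text{-mod} \simeq D\text{-mod}$, under which $V_i \mapsto D$ and $A = M_m(D) \mapsto D^{\oplus m}$; so the question becomes whether $r$ copies of $D^{\oplus m}$ surject onto $n$ copies of $D$, i.e. whether $rm \ge n$, giving $\kappa = \lceil n/m \rceil = \lceil n_i / \dim_{D_i} V_i \rceil$ for the block $i$. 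Taking the maximum over $i$ then yields the formula.

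The main obstacle — or rather the only point requiring genuine care — is getting the Morita-theoretic bookkeeping exactly right, i.e. tracking which of $n_i$ and $m_i = \dim_{D_i} V_i$ ends up in the numerator versus the denominator, and handling the non-commutativity of $D_i = \HA$ consistently (left vs. right module conventions). I would therefore present the block computation via the canonical identification $\operatorname{Hom}_{M_m(D)}(M_m(D), V_i^{\oplus n_i}) \cong V_i^{\oplus n_i}$ and the elementary fact that a map $A^{\oplus r} \to M$ of $A$-modules is surjective iff it is surjective after applying any Morita equivalence, reducing everything to the transparent statement that $r \cdot m_i \ge n_i$ is necessary and sufficient for $r$ copies of the progenerator to generate $V_i^{\oplus n_i}$. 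The reduction steps (faithfulness, semisimplicity of $A$, Wedderburn, componentwise generation) are routine and I would state them briefly; the extreme cases $n_i = 0$ and $V = 0$ are trivial and can be dispatched in a sentence.
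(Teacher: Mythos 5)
Your proposal is correct and follows essentially the same route as the paper: both arguments rest on the fact that the image of $A$ in $\End_{\R}(V)$ is all of $\prod_i \End_{D_i}(V_i)\simeq\prod_i M_{m_i}(D_i)$ (the paper cites Jacobson's density theorem, you reach it via passing to the faithful image and Wedderburn, which is the same fact), after which the count reduces to the block-by-block observation that $r$ generators suffice for $V_i^{\oplus n_i}$ iff $r\,\dim_{D_i}V_i\ge n_i$. Your Morita-equivalence bookkeeping for that last step, after the self-correction, lands on the right side of the $n_i$ versus $m_i$ issue and matches the paper's cyclicity criterion.
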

\begin{proof}
By Jacobson's density theorem,  
the homomorphism of $\R$-algebras $A \rightarrow \prod_{i} \End_{D_{i}}(V_{i})$
induced by the action of $A$ on $V_{i}$ is surjective.  
Therefore, we see that the $A$-module $\bigoplus_{i} V_{i}^{m_{i}}$ $(m_{i}\in \N)$ is cyclic if and only if  $m_{i}\leq \dim_{D_{i}} V_{i}$. 
Our conclusion follows from this fact. 
\end{proof}

\begin{lemma} 
\label{lemma:basic_properties_eta}
Let $G$ be a Zariski-connected real algebraic group. In what follows, $\xi$ denotes any one of $\underline{\eta}$, $\eta$, or $\overline\eta$. 
\begin{enumerate}[label=(\arabic*)]
    \item (Quotient).
    Let $\varphi\colon G\rightarrow G'$
    be a homomorphism of Zariski-connected real algebraic groups such that
    $d\varphi$ is surjective. 
    Then 
    \[
   \xi(G) \ge \xi(G').
    \]
    Furthermore, if $d\varphi$ is injective, then equality holds. 
\item 
\label{item:almost-direct-group}
(Almost direct group).
Let $S$ and $V$ be Zariski-connected normal subgroups such that $G=S\cdot V$ with $S \cap V$ finite, and with the Lie algebra $\mathfrak{s}$ being semisimple.
Viewed as $\mathfrak{s}$-modules, we assume that the irreducible constituents of $\mathfrak{v}$ and $\mathfrak{s}$ are distinct.
Then we have
\[
\xi(G) = \max(\xi(S), \xi(V)).
\]
\item (Levi decomposition)
     Let $G=S\cdot U$ be a Levi decomposition of 
    a Zariski-connected real  algebraic group $G$, 
    where $S$ is a maximal real reductive algebraic subgroup and $U$ is the unipotent radical of $G$.
    Then, we have
    \[
    \xi(G)\leq 
    \xi(S)+\kappa(S,\mathfrak{u}).
    \]  
\end{enumerate}
\end{lemma}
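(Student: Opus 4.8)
\textbf{Proof plan for Lemma~\ref{lemma:basic_properties_eta}.}

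The plan is to treat each of the three parts in turn, using the invariance of $\mathfrak{g}(\mathcal{X})$ under morphisms recorded in Lemma~\ref{lemma:hom->g(phi)} as the main tool. In all three parts, the key structural fact is that for a homomorphism $\varphi\colon G\to G'$ of Zariski-connected real algebraic groups, $d\varphi(\mathfrak{g}(\mathcal{X}))=\mathfrak{g}'(d\varphi(\mathcal{X}))$; in particular, if $d\varphi$ is surjective then $\mathfrak{g}(\mathcal{X})=\mathfrak{g}$ implies $\mathfrak{g}'(d\varphi(\mathcal{X}))=\mathfrak{g}'$, and the same statement holds with $t\mathcal{X}$ replacing $\mathcal{X}$ for each fixed $t$, so it respects all three variants $\underline\eta,\eta,\overline\eta$ simultaneously. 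I will write $\xi$ for any one of them throughout, exactly as in the statement.

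\emph{Part (1) (Quotient).} Given $\mathcal{X}\subset\mathfrak{g}$ with $G(t\mathcal{X})=G$ (in the relevant sense for $\xi$), the set $d\varphi(\mathcal{X})\subset\mathfrak{g}'$ has the same cardinality at most, and by Lemma~\ref{lemma:hom->g(phi)} (applied with $t\mathcal{X}$, for each $t$) we get $G'(t\,d\varphi(\mathcal{X}))=G'$. Hence $\xi(G')\le\#\mathcal{X}$, and taking the minimum over all witnessing $\mathcal{X}$ gives $\xi(G')\le\xi(G)$. If moreover $d\varphi$ is injective, then $\varphi$ is an isogeny onto its image; since $G$ and $G'$ have the same Lie algebra and $\mathfrak{g}(\mathcal{X})$ is defined purely in terms of the Lie algebra of the Zariski closure, a witnessing $\mathcal{X}'\subset\mathfrak{g}'=\mathfrak{g}$ for $G'$ also witnesses for $G$, giving the reverse inequality $\xi(G)\le\xi(G')$.

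\emph{Part (2) (Almost direct group).} Let $p_S\colon G\to S$ and $p_V\colon G\to V$ be the projections coming from the almost-direct decomposition $G=S\cdot V$ (well-defined up to the finite subgroup $S\cap V$; at the level of Lie algebras $\mathfrak{g}=\mathfrak{s}\oplus\mathfrak{v}$ as $\mathfrak{s}$-modules). The inequality $\xi(G)\ge\max(\xi(S),\xi(V))$ follows from Part (1). For the reverse, take $\mathcal{X}_S\subset\mathfrak{s}$ and $\mathcal{X}_V\subset\mathfrak{v}$ witnessing $\xi(S)$ and $\xi(V)$ respectively, pad the shorter list with zeros so both have length $N:=\max(\xi(S),\xi(V))$, and set $\mathcal{X}:=\{X_S^{(j)}+X_V^{(j)}: 1\le j\le N\}$. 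One must show $\mathfrak{g}(t\mathcal{X})=\mathfrak{g}$ for the appropriate range of $t$. The point is that $\mathfrak{g}(t\mathcal{X})$ is an ideal of $\mathfrak{g}$ (being the Lie algebra of a Zariski-connected subgroup normalized by nothing a priori—here one needs $\mathfrak{g}(t\mathcal{X})$ to be stable under the adjoint action of $\mathfrak{g}(t\mathcal{X})$ itself, which is automatic, and then to use the reductivity/semisimplicity input) whose projections to $\mathfrak{s}$ and to $\mathfrak{v}$ are $\mathfrak{s}(t\mathcal{X}_S)=\mathfrak{s}$ and $\mathfrak{v}(t\mathcal{X}_V)=\mathfrak{v}$ by Lemma~\ref{lemma:hom->g(phi)}. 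Since $\mathfrak{s}$ is semisimple and the irreducible $\mathfrak{s}$-constituents of $\mathfrak{v}$ are disjoint from those of $\mathfrak{s}$, any $\mathfrak{s}$-submodule (hence any ideal) of $\mathfrak{g}=\mathfrak{s}\oplus\mathfrak{v}$ surjecting onto both factors must be all of $\mathfrak{g}$; this is where the disjointness hypothesis is used. Thus $\mathfrak{g}(t\mathcal{X})=\mathfrak{g}$, giving $\xi(G)\le N$.

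\emph{Part (3) (Levi decomposition).} Write $G=S\cdot U$. Choose $\mathcal{X}_S\subset\mathfrak{s}$ witnessing $\xi(S)$, and choose $Y_1,\dots,Y_r\in\mathfrak{u}$ with $r=\kappa(S,\mathfrak{u})$ that generate $\mathfrak{u}$ as an $S$-module (equivalently, as an $\R[S]$-module, via the conjugation action). Consider $\mathcal{X}:=\mathcal{X}_S\cup\{Y_1,\dots,Y_r\}$, of cardinality $\xi(S)+\kappa(S,\mathfrak{u})$. I claim $\mathfrak{g}(t\mathcal{X})=\mathfrak{g}$ in the sense relevant to $\xi$. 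Indeed, $\mathfrak{g}(t\mathcal{X})$ contains $\exp(tX)$ for $X\in t\mathcal{X}_S$, so by Part (1)/Lemma~\ref{lemma:hom->g(phi)} applied to $p_S\colon G\to S$ it maps onto $\mathfrak{s}$; in fact, since $\mathfrak{g}(t\mathcal{X})$ is a Lie subalgebra containing a subalgebra isogenous to $\mathfrak{s}$ (using that $\mathcal{X}_S$ already generates $S$ up to finite index and the exponentials $e^{tX}$ for $X\in t\mathcal X_S$ lie in $\mathfrak g(t\mathcal X)$), we may arrange $\mathfrak{s}\subset\mathfrak{g}(t\mathcal{X})$. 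Then $\mathfrak{g}(t\mathcal{X})$ is stable under $\operatorname{ad}(\mathfrak{s})$, hence under the $S$-action, and it contains $tY_1,\dots,tY_r$, hence contains the $S$-submodule of $\mathfrak{u}$ they generate, which is all of $\mathfrak{u}$. Therefore $\mathfrak{g}(t\mathcal{X})\supset\mathfrak{s}+\mathfrak{u}=\mathfrak{g}$, proving $\xi(G)\le\xi(S)+\kappa(S,\mathfrak{u})$.

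\emph{Anticipated difficulty.} The delicate point, common to Parts (2) and (3), is the passage from "the projections/images of $\mathfrak{g}(t\mathcal{X})$ are large" to "$\mathfrak{g}(t\mathcal{X})$ itself is large", i.e., controlling that the Zariski-closed subgroup generated by the chosen exponentials does not just project onto the factors but actually contains them. In Part (2) this is handled by the Schur-type argument using the disjointness of $\mathfrak{s}$-constituents; in Part (3) one needs to know that once a subalgebra contains a conjugate-generating set for $\mathfrak{s}$ and some Levi-type structure, it genuinely contains $\mathfrak{s}$ — this should follow from $\mathfrak s$ being semisimple (so $e^{tX}$ for $X$ in a generating set of $\mathfrak s$ already Zariski-generate $S$ up to finite index, hence $\mathfrak s\subseteq\mathfrak g(t\mathcal X)$), but the bookkeeping about the finite-index/finite-center issues and about which range of $t$ works for $\eta$ versus $\overline\eta$ requires care. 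I would address it by first reducing, via Part (1) with the isogeny $S_{\mathrm{sc}}\times\cdots\to G$ onto a suitable cover, to the case where all relevant groups are as simply-connected as possible, and only then run the Lie-algebra argument.
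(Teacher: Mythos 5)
Your overall route coincides with the paper's: part (1) via Lemma~\ref{lemma:hom->g(phi)} (with the pullback $(d\varphi)^{-1}(\mathcal{X}')$ handling the isogeny case), part (3) via the set $\mathcal{X}_S\cup\{Y_1,\dots,Y_r\}$ together with the facts that $G(t\mathcal{X})\supset G(t\mathcal{X}_S)=S$ and that $tY_j\in\mathfrak{g}(t\mathcal{X})$ for unipotent directions (Example~\ref{example:unipotent-g(X)}), and part (2) via the padded sums $Z_j=X_j^{(S)}+Y_j^{(V)}$. Parts (1) and (3) are correct essentially as you argue them.

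The gap is in part (2), at exactly the point you flag as delicate. The space $\mathfrak{g}(t\mathcal{Z})$ is the Lie algebra of the Zariski closure of a finitely generated subgroup; it is a Lie subalgebra of $\mathfrak{g}$, but it is neither an ideal of $\mathfrak{g}$ nor, a priori, an $\operatorname{ad}(\mathfrak{s})$-stable subspace --- your own parenthetical concedes that nothing normalizes it, and stability under $\operatorname{ad}$ of itself only says it is a subalgebra. Consequently the implication you invoke (``any $\mathfrak{s}$-submodule, hence any ideal, of $\mathfrak{s}\oplus\mathfrak{v}$ surjecting onto both factors is everything'') is applied to an object that has not been shown to be an $\mathfrak{s}$-submodule; establishing that property is the entire content of the step. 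The phenomenon that must be excluded is the graph of an isomorphism: in $\mathfrak{sl}(2,\R)\oplus\mathfrak{sl}(2,\R)$ the twisted diagonal $\{(X,\sigma(X))\mid X\in\mathfrak{sl}(2,\R)\}$ is a subalgebra with both projections surjective which is neither an $\mathfrak{s}$-submodule nor all of $\mathfrak{g}$. The paper closes this by arguing directly on the subalgebra $\mathfrak{g}'=\mathfrak{g}(t\mathcal{Z})$: from $\operatorname{pr}_1(\mathfrak{g}')=\mathfrak{s}$ together with the hypothesis that $\mathfrak{s}$ and $\mathfrak{v}$ share no irreducible $\mathfrak{s}$-constituents, one first deduces $\mathfrak{s}\oplus 0\subset\mathfrak{g}'$ (a Goursat-type step, using $[\mathfrak{s},\mathfrak{v}]=0$ and the semisimplicity of $\mathfrak{s}$ to rule out a nonzero graph component), and only then uses $\operatorname{pr}_2(\mathfrak{g}')=\mathfrak{v}$ to conclude $\mathfrak{g}'=\mathfrak{g}$. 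You need to supply this intermediate deduction; as written, your argument for (2) does not go through.
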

\begin{proof}
In the proof below, since the argument remains unchanged when the parameter $t$ is included, we treat $\underline{\eta}$, $\eta$, and $\overline\eta$ simultaneously, without explicitly introducing the parameter $t$. 

(1).
Let $\mathcal{X}$ be a finite subset of the Lie algebra $\mathfrak{g}$ such that $G(\mathcal{X}) = G$.  
By Lemma~\ref{lemma:hom->g(phi)}, we have
\[\mathfrak{g}'(d\varphi(\mathcal{X})) = d\varphi(\mathfrak{g}(\mathcal{X})) = \mathfrak{g}'.
\]  
Since $G'$ is Zariski-connected, it follows that $G'(d\varphi(\mathcal{X})) = G'$.  
Hence, the desired inequality follows.

Now suppose that $d\varphi$ is injective (and hence bijective).  
Let $\mathcal{X}'$ be a finite subset of the Lie algebra $\mathfrak{g}'$ such that $G'(\mathcal{X}') = G'$.  
Set $\mathcal{X}:=(d\varphi)^{-1}(\mathcal{X}')$.
Again, by Lemma~\ref{lemma:hom->g(phi)}, 
\[d\varphi(\mathfrak{g}(\mathcal{X})) = \mathfrak{g}'(\mathcal{X}') = \mathfrak{g}'.
\]  
This implies that $\mathfrak{g}(\mathcal{X}) = \mathfrak{g}$.  
Since $G$ is Zariski-connected, we obtain $G(\mathcal{X}) = G$, which proves the reverse inequality.

(2).
We know that $\xi(G)\geq \max(\xi(S),\xi(V))$ by (1). We now prove the reverse inequality.

Let $\operatorname{pr}_1 \colon \mathfrak{g} \to \mathfrak{s}$
and
$\operatorname{pr}_2 \colon \mathfrak{g} \to \mathfrak{v}$ be the projection maps
associated with the direct sum decomposition $\mathfrak{g}=\mathfrak{s}+\mathfrak{v}$.
Choose a finite subset $\mathcal{X}=\{X_1, \dots, X_m\} \subset \mathfrak{s}$ such that $S(\mathcal{X}) = S$, 
 a finite subset $\mathcal{Y}=\{Y_1, \dots, Y_n\} \subset \mathfrak{v}$ 
 such that $V(\mathcal{Y}) = V$.
Define 
\[\mathcal{Z}=\{ Z_1, \ldots, Z_{\max(m,n)}\} \text{ with } Z_i:=X_i+Y_i,
\]
where we take $X_i= 0$ for $i>m$ and $Y_i=0$ for $i>n$.
Let $\mathfrak g'$ denote the Lie algebra of $G(\mathcal{Z})$.

By Lemma~\ref{lemma:hom->g(phi)}, we have $\operatorname{pr}_1(\mathfrak{g}')=
\mathfrak{s}$ and
$\operatorname{pr}_2(\mathfrak{g}')=
\mathfrak{v}$.
Since none of the $\mathfrak{s}$-irreducible constituents of $\mathfrak{v}$ appear in $\mathfrak{s}$, it follows from
$\operatorname{pr}_1(\mathfrak{g}')=\mathfrak{s}$ that
$\mathfrak{g}'$ contains the direct sum $\mathfrak{s}\oplus 0$.
Therefore, from
$\operatorname{pr}_2(\mathfrak{g}')=
\mathfrak{v}$, we conclude that $\mathfrak{g}'=\mathfrak{g}$, and consequently, $G(\mathcal{Z})=G$. 
Hence, we have established the reverse inequality
$\xi(G)\leq \max(\xi(S),\xi(V))$. 

(3).
  Let $\mathcal{X}$ be a finite subset of $\mathfrak{s}$ such that  
$G(\mathcal{X}) = S$, and 
let $\mathcal{Y}$ be a finite generating set of the $S$-module $\mathfrak{u}$
such that $\sharp\mathcal{Y}=\kappa(S, \mathfrak{u})$.
Then, the group $G(\mathcal{X} \cup \mathcal{Y})$ contains $G(\mathcal{X}) = S$, 
and consequently, 
\[
G(\mathcal{X} \cup \mathcal{Y})=G(S; \mathcal{Y}).
\]
As seen in Example~\ref{example:unipotent-g(X)},  
the Lie algebra $\mathfrak{g}(\mathcal{X} \cup \mathcal{Y})$ contains $\mathcal{Y}$.  
It follows that
\[\mathfrak{g}(\mathcal{X} \cup \mathcal{Y}) = 
\mathfrak{s}+\mathfrak{u}
=\mathfrak{g}.
\]
Since $G$ is Zariski-connected, we conclude that 
$G(\mathcal{X} \cup \mathcal{Y}) = G$.  
This yields the desired inequality.
\end{proof}

As an immediate consequence of Lemma~\ref{lemma:basic_properties_eta}~\ref{item:almost-direct-group},
we obtain the following result in the reductive setting.
\begin{proposition}
\label{prop:reductive_eta}
Let $G$ be a Zariski-connected, reductive algebraic group,
and suppose that
\[G=G_0 \cdot G_1 \dotsb G_r\]
is the almost direct product,
where $G_0$ is commutative,
and the Lie algebras $\mathfrak{g}_i$ are of the form
$\mathfrak{s}_i^{\oplus n_i}$,
with each $\mathfrak{s}_i$ a simple Lie algebra not isomorphic to $\mathfrak{s}_j$ for $1 \le i \neq j \le r$.
In what follows, $\xi$ denotes any one of $\underline{\eta}$, $\eta$, or $\overline\eta$. 
Then we have
\[
  \xi(G)= \max_{0\le i \le r} {\xi(G_i)}. 
\]
\end{proposition}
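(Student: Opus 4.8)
\textbf{Proof proposal for Proposition~\ref{prop:reductive_eta}.}
The plan is to derive Proposition~\ref{prop:reductive_eta} as a straightforward corollary of Lemma~\ref{lemma:basic_properties_eta}~\ref{item:almost-direct-group}, applied inductively along the decomposition $G = G_0 \cdot G_1 \cdots G_r$. The key observation is that the hypothesis on the simple factors $\mathfrak{s}_i$ being pairwise non-isomorphic guarantees exactly the ``distinct irreducible constituents'' condition needed to invoke part~\ref{item:almost-direct-group} repeatedly. So the argument is essentially bookkeeping: set up the induction correctly and check the module-theoretic hypotheses at each step.

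First I would fix notation: write $H_j := G_0 \cdot G_1 \cdots G_j$ for $0 \le j \le r$, so that $H_0 = G_0$ and $H_r = G$. Each $H_j$ is a Zariski-connected reductive algebraic group (a product of Zariski-connected normal subgroups), and $H_j = H_{j-1} \cdot G_j$ is an almost direct product: $H_{j-1} \cap G_j$ is finite because the Lie algebras $\mathfrak{h}_{j-1}$ and $\mathfrak{g}_j$ intersect trivially (one sees this by comparing which simple factors occur, noting $\mathfrak{g}_0$ is abelian and $\mathfrak{g}_j \simeq \mathfrak{s}_j^{\oplus n_j}$ involves only $\mathfrak{s}_j$, which does not occur in $\mathfrak{h}_{j-1} = \mathfrak{g}_0 \oplus \bigoplus_{i<j}\mathfrak{s}_i^{\oplus n_i}$). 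To apply Lemma~\ref{lemma:basic_properties_eta}~\ref{item:almost-direct-group} with $S = G_j$ (which has semisimple Lie algebra $\mathfrak{s}_j^{\oplus n_j}$) and $V = H_{j-1}$, I need that the $\mathfrak{s}_j^{\oplus n_j}$-irreducible constituents of $\mathfrak{h}_{j-1}$ are distinct from those of $\mathfrak{g}_j = \mathfrak{s}_j^{\oplus n_j}$. Since $\mathfrak{s}_j$ acts trivially on $\mathfrak{g}_0$ and on each $\mathfrak{s}_i$ for $i \neq j$ (these are ideals commuting with $\mathfrak{g}_j$ in $\mathfrak{g}$), the $\mathfrak{g}_j$-module $\mathfrak{h}_{j-1}$ is trivial, whereas the adjoint module $\mathfrak{g}_j$ has no trivial constituent; hence the constituents are indeed distinct and part~\ref{item:almost-direct-group} yields
\[
\xi(H_j) = \max\bigl(\xi(G_j),\, \xi(H_{j-1})\bigr).
\]

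Then I would run the induction: $\xi(H_0) = \xi(G_0)$ trivially, and by the displayed identity,
\[
\xi(H_j) = \max\bigl(\xi(G_j),\, \xi(H_{j-1})\bigr) = \max\bigl(\xi(G_j), \xi(G_{j-1}), \dots, \xi(G_0)\bigr) = \max_{0 \le i \le j} \xi(G_i)
\]
for each $j$; taking $j = r$ gives the claim. This works uniformly for $\xi \in \{\underline{\eta}, \eta, \overline{\eta}\}$ since Lemma~\ref{lemma:basic_properties_eta}~\ref{item:almost-direct-group} is stated for all three. I do not anticipate a serious obstacle here — the only point requiring minor care is confirming that the hypotheses of part~\ref{item:almost-direct-group} (Zariski-connectedness of the normal factors, finiteness of the intersection, semisimplicity of one factor, and the disjointness of irreducible constituents) genuinely hold at each stage of the induction rather than just for the single two-factor decomposition stated in that lemma. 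One subtlety worth a sentence in the write-up: part~\ref{item:almost-direct-group} requires the factor with semisimple Lie algebra to be normal in $H_j$, which holds since each $G_i$ is normal in $G$ and hence in $H_j$.
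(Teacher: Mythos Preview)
Your proposal is correct and follows exactly the approach the paper intends: the paper states the proposition ``as an immediate consequence of Lemma~\ref{lemma:basic_properties_eta}~\ref{item:almost-direct-group}'' without further proof, and your write-up simply unpacks that immediate consequence via the obvious induction on the number of factors, checking the disjoint-constituents hypothesis at each step.
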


\begin{remark}
Let $\xi$ denote any one of $\underline{\eta}$, $\eta$, or $\overline\eta$. 
When $\mathfrak{g}$ is the direct sum of $n$ of a simple Lie algebra $\mathfrak{s}$ of a fixed type,
the invariant $\xi(G)$ does not depend on $n$.
We will give an explicit formula---for example, for $\overline{\eta}(G)$--- in 
Propositions~\ref{prop:three_generators_su2_n} and \ref{prop:overline_eta_simple_n}.
\end{remark}

\medskip
We now begin the proof of Theorem~\ref{thm:two_generates_reductive}.
We first consider the elementary case in which $G$ is commutative, addressed in Proposition~\ref{proposition:commutative-eta}.
We then prove Proposition~\ref{prop:two_generates_reductive} which shows the result for $\underline{\eta}(G)$ and $\eta(G)$ in the general case where $G$ is reductive. 
Finally, the results for $\overline{\eta}(G)$, when $\mathfrak{g}$ is the direct sum of a simple Lie algebras of a fixed type are given in Propositions~\ref{prop:three_generators_su2_n} and \ref{prop:overline_eta_simple_n}.
These complete the proof of Theorem~\ref{thm:two_generates_reductive}.

\medskip

In the commutative case, the real algebraic group $G$ decomposes as a direct sum of several copies of
\[
\mathbf{SO}_{2} := \left\{ \begin{pmatrix} x & -y \\ y & x \end{pmatrix} \;\middle|\; x^{2} + y^{2} = 1 \right\}, \quad
\mathbf{G}_{m} := \left\{ \begin{pmatrix} x & 0 \\ 0 & y \end{pmatrix} \;\middle|\; xy = 1 \right\}.
\]
The groups of real points of these algebraic groups are isomorphic, as Lie groups, to 
$\T := \{ e^{it} \mid t \in \mathbb{R} \}$ and $\mathbb{R}^{\times}$, respectively.  
In what follows, we regard these Lie groups as equipped with the above real algebraic group structures, and we use the corresponding notation accordingly.

For $X=(X_1, \dots, X_n) \in \R^n$
such that $X_{1},\ldots,X_{n}$ are linearly independent over $\Q$,
we define a countable subset of $\R$ by
\begin{equation}
\label{eqn:exclude_rational}
\Lambda(X):=\bigcup_{\ell \in \N} \bigcup_{\lambda\in \Z^n\smallsetminus\{0\}} \frac{\ell}{\sum_{j=1}^n\lambda_j X_j}. 
\end{equation}

\begin{proposition}
    \label{proposition:commutative-eta}
    Let $G=\T^{a}\times (\R^{\times})^{b}$.
    Suppose that real numbers $X_{1},\ldots,  X_{a+b}$ are linearly independent over $\Q$. 
    Then 
        \[
        (e^{\sqrt{-1}t X_{1}},\ldots,e^{\sqrt{-1}t X_{a}},e^{t X_{a+1}},\ldots,e^{t X_{a+b}})\in \T^{a}\times (\R^{\times})^{b}
        \]
        generates a Zariski-dense subgroup in $G$ for any $t\in \R\smallsetminus
       \pi \Lambda(X_1, \ldots, X_a)$. 
    Furthermore, we have
    \[
    \underline{\eta}(G)=1,\quad
    \eta(G)=\overline{\eta}(G)=\begin{cases}
        2 & (a>0), \\
        1 & (a=0).
    \end{cases}
    \]
\end{proposition}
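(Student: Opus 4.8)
The statement has two parts: an explicit genericity assertion (the specified element generates a Zariski-dense subgroup for all $t$ outside $\pi\Lambda(X_1,\dots,X_a)$), and the computation of the three invariants $\underline{\eta}(G)$, $\eta(G)$, $\overline{\eta}(G)$. My plan is to first reduce everything to understanding the Zariski-closure of a cyclic subgroup of $\T^a\times(\R^\times)^b$, then derive the values of the invariants from that analysis.

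\textbf{Step 1: The Zariski-closure of a cyclic subgroup in a torus.} Fix $X=(X_1,\dots,X_{a+b})\in\R^{a+b}$ with the $X_i$ linearly independent over $\Q$, and fix $t\in\R$. Let $g_t:=(e^{\sqrt{-1}tX_1},\dots,e^{\sqrt{-1}tX_a},e^{tX_{a+1}},\dots,e^{tX_{a+b}})$ and let $A_t$ be the Zariski-closure of $\langle g_t\rangle$ in $G$, equivalently the Zariski-closure of $\langle g_t\rangle$ in $G_{\C}$ (using Zariski-connectedness of $G$). Since $G_{\C}\simeq (\C^\times)^{a+b}$, a Zariski-closed subgroup is cut out by monomial equations; concretely, $A_t$ is the smallest Zariski-closed subgroup containing $g_t$, and its identity component $A_t^\circ = G(t\{X\})$ has Lie algebra equal to the $\R$-span of the image of $\R X$ under the various coordinate projections — more precisely, $A_t = G$ if and only if $g_t$ lies in no proper algebraic subgroup. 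A proper Zariski-closed subgroup of $(\C^\times)^{a+b}$ of full dimension would have to be the kernel of a nontrivial character $z\mapsto z_1^{\lambda_1}\cdots z_{a+b}^{\lambda_{a+b}}$ with $\lambda\in\Z^{a+b}\setminus\{0\}$ composed with finitely many such; and $A_t$ has \emph{positive codimension} exactly when some such character, or a translate detecting a root-of-unity relation, vanishes on $g_t$. I would spell out: $A_t\subsetneq G$ iff either (i) $\prod_j z_j^{\lambda_j}$ is constant on $\langle g_t\rangle$ for some $\lambda\neq 0$ (which forces a $\Z$-linear relation among the $tX_j$ modulo $2\pi\sqrt{-1}$ in the logarithms), or (ii) $g_t$ has finite order in some quotient. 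Both phenomena are controlled by whether $t\sum_j\lambda_j X_j\in 2\pi\Z$ for the indices $j\le a$ (the compact directions) — the noncompact $\R^\times$-coordinates $e^{tX_j}$ are never roots of unity for $t\neq 0$ and never satisfy a multiplicative relation unless the exponents are $\Q$-dependent, which is excluded. This is precisely why the bad set is $\pi\Lambda(X_1,\dots,X_a)$: rescaling, $t\sum_{j\le a}\lambda_j X_j\in 2\pi\Z$ means $t\in \frac{2\pi\ell}{\sum\lambda_j X_j}=\pi\cdot\frac{2\ell}{\sum\lambda_j X_j}$, and the union over $\ell\in\Z$, $\lambda\in\Z^a\setminus\{0\}$ of these is exactly $\pi\Lambda(X_1,\dots,X_a)$ (up to the harmless inclusion of $\ell\le 0$). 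Hence for $t\notin\pi\Lambda(X_1,\dots,X_a)$, no character relation and no torsion relation can hold, so $A_t=G$, proving the first assertion.

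\textbf{Step 2: Computing $\underline{\eta}(G)$.} We must produce a single element whose powers are Zariski-dense. Choose $X_1,\dots,X_{a+b}$ linearly independent over $\Q$ and pick any $t\notin\pi\Lambda(X_1,\dots,X_a)$ — this set is countable, hence proper, so such $t$ exists. By Step 1, $g_t$ generates a Zariski-dense subgroup, so $G(\{tX\})=G$, giving $\underline{\eta}(G)\le 1$; and $\underline{\eta}(G)\ge 1$ trivially unless $G$ is trivial. Thus $\underline{\eta}(G)=1$ in all cases, including $a>0$.

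\textbf{Step 3: Computing $\eta(G)$ and $\overline{\eta}(G)$; the main obstacle.} When $a=0$, $G=(\R^\times)^b$ is a split torus, and the single element $(e^{tX_1},\dots,e^{tX_b})$ with $X_i$ $\Q$-independent is Zariski-dense for \emph{all} $t\neq 0$ (there is no torsion obstruction and no character relation, since $\Lambda$ is taken over the empty index set and is empty); hence $\eta(G)=\overline{\eta}(G)=1$. When $a>0$, the obstacle is that for a \emph{single} vector $\mathcal X=\{X\}$, the bad set $\pi\Lambda(X_1,\dots,X_a)$ is nonempty and accumulates at $0$ (take $\ell=1$, $\lambda_1=1$, $\lambda_{\ge 2}=0$: $t=\pi\cdot\frac{2}{X_1}$ scaled down by taking larger $\lambda_1$ — actually one sees $\pi\Lambda$ has $0$ as an accumulation point since $\frac{2\ell}{\sum\lambda_j X_j}\to 0$ as $|\lambda|\to\infty$), so there is no $\delta>0$ with $G(t\{X\})=G$ for all $0<t<\delta$; therefore $\eta(G)\ge 2$ and $\overline\eta(G)\ge 2$. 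For the upper bound, I would use two vectors $\mathcal X=\{X,X'\}$ chosen so that the two bad sets $\pi\Lambda$-type sets are \emph{disjoint away from $0$}: for instance take $X,X'$ with all $2a$ coordinates jointly $\Q$-independent, and observe that $G(t\{X,X'\})$ fails to be $G$ only if $g_t$ (from $X$) \emph{and} $g'_t$ (from $X'$) simultaneously lie in the same proper subgroup — but a proper subgroup imposes a character relation that cannot hold for both $tX$ and $tX'$ at a common $t\neq 0$ when the coordinates are $\Q$-independent across the two. More cleanly: $G(t\{X,X'\})\supset G(t\{X\})$, and the set of $t$ where $G(t\{X\})\subsetneq G$ is $\pi\Lambda(X_1,\dots,X_a)$, countable; one checks directly that adding $X'$ kills each remaining bad $t$ because the second element's compact coordinates $e^{\sqrt{-1}tX'_j}$ are then non-roots-of-unity and multiplicatively independent, so no proper closed subgroup contains both. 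This gives $G(t\{X,X'\})=G$ for \emph{all} $t>0$, hence $\overline\eta(G)\le 2$, and therefore $\eta(G)=\overline\eta(G)=2$ when $a>0$. The delicate point — and the step I expect to require the most care — is verifying rigorously in Step 3 that two generic vectors suffice for \emph{every} $t>0$ (the $\overline\eta$ claim, not just $\eta$): this amounts to showing that for no $t>0$ can a single proper algebraic subgroup of $(\C^\times)^a\times(\R^\times)^b$ simultaneously contain two "sufficiently independent" one-parameter-sampled points, which I would handle by a clean character-theoretic argument (a proper closed subgroup lies in $\ker\chi$ for some character $\chi$ of the connected component times a finite group, and $\chi(g_t)=\chi(g'_t)=1$ forces two independent integer relations that are incompatible with joint $\Q$-independence of all $2a+b$ exponents).

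\textbf{Assembling.} Combining Steps 1–3: the explicit element of the statement is Zariski-dense for $t\notin\pi\Lambda(X_1,\dots,X_a)$ by Step 1; $\underline\eta(G)=1$ always by Step 2; and $\eta(G)=\overline\eta(G)=2$ if $a>0$, while $\eta(G)=\overline\eta(G)=1$ if $a=0$, by Step 3. This is exactly the asserted statement, and these values feed into Theorem~\ref{thm:two_generates_reductive} via Proposition~\ref{prop:reductive_eta}.
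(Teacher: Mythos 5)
Your route is essentially the paper's, repackaged: the paper first proves that a real algebraic subgroup of $\T^{a}\times(\R^{\times})^{b}$ surjecting onto both factors is everything (Lemma~\ref{lem:jordan-decomposition}, via the Jordan decomposition into elliptic and hyperbolic parts) and then treats $\T^{n}$ and $(\R^{\times})^{n}$ separately (Lemma~\ref{lemma:eta-three-pattern}), whereas you argue directly with characters of $G_{\C}\simeq(\C^{\times})^{a+b}$; the modulus argument that forces a character killing $g_{t}$ to be trivial on the $(\R^{\times})^{b}$-coordinates is exactly the Jordan-decomposition step in disguise. Your treatment of $\overline{\eta}(G)\le 2$ is, if anything, slightly sharper than the paper's: a proper Zariski-closed subgroup of a torus lies in the kernel of a single nontrivial character $\chi_{\lambda}$, and $\chi_{\lambda}(g_{t})=\chi_{\lambda}(g_{t}')=1$ with $t\neq 0$ yields $m'\sum_{j}\lambda_{j}X_{j}=m\sum_{j}\lambda_{j}X_{j}'$ with $m,m'\neq 0$, contradicting joint $\Q$-independence of the $2a$ compact exponents; the paper instead notes that $\Lambda(X)\cap\Lambda(Y)=\{0\}$ so that for each $t>0$ one of the two elements is already dense by itself. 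Both are correct.

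The one genuine gap is in your lower bound $\eta(G)\ge 2$ for $a>0$. By Definition~\ref{def:eta} you must show that \emph{no} singleton $\mathcal{X}=\{X\}$ works on any punctured interval $(0,\delta)$, for \emph{every} $X\in\mathfrak{g}$, but your argument only addresses $X$ whose coordinates are $\Q$-linearly independent (the set $\Lambda(X_{1},\dots,X_{a})$ is not even defined otherwise). The repair is the one in the paper's proof of Lemma~\ref{lemma:eta-three-pattern}~(2): for arbitrary $X$, either all compact coordinates $X_{1},\dots,X_{a}$ vanish, in which case $G(t\{X\})\subset\{1\}^{a}\times(\R^{\times})^{b}$ for every $t$, or some $X_{j}\neq 0$ with $j\le a$, in which case there are arbitrarily small $t>0$ with $tX_{j}\in 2\pi\Q$, so the $j$-th coordinate of $g_{t}$ is a root of unity, $g_{t}\in\ker(z_{j}^{N})$ for some $N\ge 1$, and $G(t\{X\})$ is proper. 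Relatedly, "$t\in\pi\Lambda$" must be converted into an explicit character relation (e.g.\ $t\sum_{j}\lambda_{j}X_{j}=\pi\ell$ gives $\chi_{2\lambda}(g_{t})=1$) before it certifies failure of density; your Step~1 asserts the relevant equivalence but does not verify this direction. With these points supplied the proof is complete.
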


Before proving Proposition~\ref{proposition:commutative-eta}, we present two preliminary lemmas.
\begin{lemma}
    \label{lem:jordan-decomposition}
    Let $G=\T^{a}\times (\R^{\times})^{b}$,
    and let $\operatorname{pr}_{i}$ denote the projection to the $i$-th factor for $i=1,2$.  
If $G'$ is a real algebraic subgroup of $G$ such that $\operatorname{pr}_1(G')=\T^a$ and
$\operatorname{pr}_2(G')=(\R^\times)^b$.
    Then $G'=G$. 
\end{lemma}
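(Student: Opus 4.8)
\textbf{Proof plan for Lemma~\ref{lem:jordan-decomposition}.}
The statement is an elementary rigidity fact about algebraic subgroups of a product of one-dimensional tori, and the plan is to reduce it to the two basic cases $G = \T$ and $G = \R^\times$, where the only Zariski-closed subgroups are the whole group or finite cyclic subgroups. First I would set $H_1 := \T^a \times \{e\}$ and $H_2 := \{e\} \times (\R^\times)^b$, so that $G = H_1 \times H_2$ as algebraic groups, and observe that $G'$ contains $G' \cap H_1$ and $G' \cap H_2$, which are the kernels of the restrictions of $\operatorname{pr}_2$ and $\operatorname{pr}_1$ to $G'$, respectively. The surjectivity hypotheses $\operatorname{pr}_1(G') = \T^a$ and $\operatorname{pr}_2(G') = (\R^\times)^b$ then say exactly that $G'/(G' \cap H_2) \simeq \T^a$ and $G'/(G' \cap H_1) \simeq (\R^\times)^b$.

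The key step is to show that $\operatorname{pr}_1$ restricted to $G' \cap H_1$ is already surjective onto $\T^a$; dually that $\operatorname{pr}_2$ restricted to $G'\cap H_2$ is surjective onto $(\R^\times)^b$. Granting these, we get $H_1 \subset G'$ and $H_2 \subset G'$, hence $G' = H_1 \cdot H_2 = G$. To prove the key step I would argue at the level of Lie algebras and component groups: the image $\operatorname{pr}_1(G' \cap H_1)$ is a Zariski-closed subgroup of $\T^a$, and $\T^a / \operatorname{pr}_1(G'\cap H_1)$ receives a surjection from $G'/(G'\cap H_1) \simeq (\R^\times)^b$. But $(\R^\times)^b$ is a split torus while any proper quotient of $\T^a$ by a Zariski-closed subgroup is again (up to finite factors) a non-split torus $\T^{a'}$ with $a' \ge 1$, which admits no surjective morphism from a split torus in the category of $\R$-algebraic groups (a morphism of tori defined over $\R$ preserves the split/anisotropic decomposition, and $\T$ has no nontrivial $\R$-points in a split quotient). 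Hence the quotient $\T^a/\operatorname{pr}_1(G'\cap H_1)$ must be finite, and since $\T^a$ is Zariski-connected, $\operatorname{pr}_1(G'\cap H_1) = \T^a$. The dual statement for $H_2$ is even easier since $(\R^\times)^b$ receives a surjection from $\operatorname{pr}_2(G'\cap H_2)$ with quotient a finite group times nothing obstructive, and again Zariski-connectedness of $(\R^\times)^b$ forces equality.

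The main obstacle I anticipate is making precise the assertion that a split $\R$-torus admits no nontrivial morphism onto a non-split (anisotropic) $\R$-torus, i.e. handling the interaction of the real structure with the algebraic-group structure correctly; this is where the hypothesis that $G'$ is defined over $\R$ (rather than merely $G'_\C$ being a subgroup of $G_\C$) is used. I would phrase this via the character lattices with Galois action: a morphism $\mathbf{G}_m^b \to \mathbf{T}$ over $\R$ corresponds to a Galois-equivariant map of character lattices $X^*(\mathbf{T}) \to X^*(\mathbf{G}_m^b) = \Z^b$ with \emph{trivial} Galois action on the target, so its image lies in the Galois-fixed sublattice of $X^*(\mathbf{G}_m^b)$, which is all of $\Z^b$, but composing with a surjection forces $X^*(\mathbf{T})$ itself to have large Galois-fixed part — contradicting anisotropy of a nontrivial quotient of $\T^a$. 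Once this linear-algebra-over-$\Z$-with-involution statement is in hand, the rest is bookkeeping with connectedness, and the lemma follows. I would keep this short, as it is a technical lemma feeding into Proposition~\ref{proposition:commutative-eta}.
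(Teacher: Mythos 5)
Your plan is correct, but it takes a genuinely different --- and considerably longer --- route than the paper. The paper's proof is essentially two lines: every element $g=(g_1,g_2)$ of $G=\T^{a}\times(\R^{\times})^{b}$ is semisimple with Jordan decomposition into elliptic and hyperbolic parts $g=(g_1,1)\cdot(1,g_2)$, and the Jordan components of an element of a real algebraic subgroup $G'$ again lie in $G'$; hence $G'=\operatorname{pr}_1(G')\times\operatorname{pr}_2(G')=G$. Your argument instead reduces the lemma to showing $\T^{a}\times\{e\}\subset G'$ and $\{e\}\times(\R^{\times})^{b}\subset G'$, using the fact that there is no nontrivial morphism of $\R$-tori from a split torus onto an anisotropic one (nor conversely), proved via Galois-equivariance of the induced map on character lattices. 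That works, and it makes visible exactly where the real structure enters. One point you must make explicit in the write-up: the surjection $(\R^{\times})^{b}\twoheadrightarrow \T^{a}/\operatorname{pr}_1(G'\cap H_1)$ has to be produced as a morphism of \emph{algebraic} groups --- factor the algebraic map $\mathbf{G}'\to\mathbf{SO}_2^{a}\to\mathbf{SO}_2^{a}/(\mathbf{G}'\cap\mathbf{H}_1)$ through the algebraic quotient $\mathbf{G}'/(\mathbf{G}'\cap\mathbf{H}_1)\simeq\mathbf{G}_m^{b}$, using Zariski-density of the real points and the fact that a bijective morphism of algebraic groups in characteristic $0$ is an isomorphism. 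Mere continuity would not suffice, since $x\mapsto|x|^{\sqrt{-1}}$ is a surjective continuous homomorphism $\R^{\times}\to\T$. You anticipate this issue in your final paragraph, so I regard it as a matter of care in the write-up rather than a gap. As for what each approach buys: the paper's is shorter and relies on a standard structural fact (compatibility of the Jordan decomposition with real algebraic subgroups), whereas yours is self-contained within the theory of tori and exhibits the split/anisotropic dichotomy that is the real content of the lemma.
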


\begin{proof}
In our setting, every element $g$ of $G$ is semisimple, and its Jordan decomposition is given by $g=g_e g_h$, where $g_e=(\operatorname{pr}_1(g),1)$
and $g_h=(1,\operatorname{pr}_2(g))$ are
the elliptic and hyperbolic parts, respectively. 
Since the Jordan decomposition in the real  algebraic subgroup $G'$ is compatible with that of $G$, we have
$G' = \operatorname{pr}_{1}(G') \times \operatorname{pr}_{2}(G')$.
Hence, we conclude $G' = G$.
\end{proof}

\begin{lemma}
    \label{lemma:eta-three-pattern}
     Suppose that real numbers $X_{1},\ldots,X_{n}$ are linearly independent over $\Q$.
    \begin{enumerate}[label=(\arabic*)]
        \item 
       When $G=(\R^{\times})^{n}$,
 $\exp(X)=(e^{X_{1}},\ldots,e^{X_{n}})\in G$ generates a Zariski-dense subgroup in $G$. 
        In particular, 
        $\underline{\eta}(G)= \eta(G)
        =\overline{\eta}(G)=1$.
        \item 
 When $G=\T^{n}$,      
        $\exp(\pi\sqrt{-1}t X)\in G$ generates a Zariski-dense subgroup of $G$ if $t\in \R\smallsetminus \Lambda(X)$, 
        where $\Lambda(X)$ is defined as in \eqref{eqn:exclude_rational} for $X=(X_{1},\ldots,X_{n})$. 
        Furthermore, we have
        $\underline{\eta}(G)=1$ and 
        $\eta(G)=\overline{\eta}(G)=2$. 
    \end{enumerate}
\end{lemma}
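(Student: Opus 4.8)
\textbf{Proof plan for Lemma~\ref{lemma:eta-three-pattern}.}

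The plan is to treat the two cases separately, starting with the (easier) split case $G=(\R^{\times})^{n}$, and then reducing the compact case $G=\T^{n}$ to a Kronecker-type density statement combined with the fact that $\underline\eta$ distinguishes $\T^{n}$ from $(\R^\times)^{n}$ precisely because the former is anisotropic.

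For part (1), the subgroup generated by $\exp(X) = (e^{X_1}, \ldots, e^{X_n})$ is $\{(e^{kX_1}, \ldots, e^{kX_n}) \mid k \in \Z\}$. I would compute its Zariski-closure $G'$ in $G_\C = (\C^\times)^n$ directly: any polynomial $f$ in the coordinate ring vanishing on this subgroup gives, after clearing denominators, a Laurent polynomial relation $\sum_{\lambda} c_\lambda (e^{kX_1})^{\lambda_1}\cdots(e^{kX_n})^{\lambda_n} = 0$ holding for all $k \in \Z$; since the exponents $e^{\langle \lambda, X\rangle}$ are pairwise distinct for distinct $\lambda \in \Z^n$ (using $\Q$-linear independence of $X_1, \ldots, X_n$, so $\langle \lambda, X\rangle \ne \langle \mu, X\rangle$ for $\lambda \ne \mu$), a Vandermonde argument forces all $c_\lambda = 0$. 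Hence $G' = G_\C$, and $\exp(X)$ generates a Zariski-dense subgroup. Therefore $\underline\eta(G) = 1$, and since $\{\exp(tX)\}$ has the same form for every $t \ne 0$ (replacing $X$ by $tX$, still $\Q$-independent), we also get $\eta(G) = \overline\eta(G) = 1$.

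For part (2), I would first handle the density statement for a fixed generic $t$. The cyclic subgroup generated by $\exp(\pi\sqrt{-1}tX)$ has Zariski-closure $G' \subset \T^n_\C = (\C^\times)^n$; a character $\chi_\lambda(z) = z_1^{\lambda_1}\cdots z_n^{\lambda_n}$ is trivial on $G'$ iff $\chi_\lambda(\exp(\pi\sqrt{-1}tX)) = e^{\pi\sqrt{-1}t\langle\lambda,X\rangle} = 1$, i.e. iff $t\langle\lambda,X\rangle \in 2\Z$, which for $\lambda \ne 0$ means $t \in \frac{2\ell}{\langle\lambda,X\rangle}$ for some $\ell \in \Z$ — exactly the set excluded by $\Lambda(X)$ as defined in \eqref{eqn:exclude_rational} (up to the harmless factor which I will track carefully: $\Lambda(X)$ collects $\ell/\langle\lambda,X\rangle$ over $\ell \in \N$, and by symmetry $\lambda \leftrightarrow -\lambda$ and scaling this captures the bad $t$). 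For $t \notin \Lambda(X)$ the only character trivial on $G'$ is the trivial one, and since $\T^n_\C$ is a torus, a closed subgroup on which no non-trivial character vanishes must be all of $\T^n_\C$; thus $G'=\T^n_\C$ and the subgroup is Zariski-dense. This shows $\eta(G) \le 1$ would follow if we could always pick such a generic $t$ arbitrarily close to $0$ — and indeed we can, since $\Lambda(X)$ is countable, hence has empty interior. So $\eta(G) \le 1$?  No: here is the subtlety. The \emph{definition} of $\eta$ requires a \emph{fixed} finite set $\mathcal{X}$ that works for all small $t > 0$; with $n$ elements $X_1, \ldots, X_n$ packaged as a single tuple we cannot avoid, for the single generator $\exp(tX)$, the countably many bad values of $t$ accumulating at $0$. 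Concretely, for $\mathcal{X} = \{X\}$ with $X = (X_1, \ldots, X_n)$, there exist $t$ arbitrarily close to $0$ (namely $t = 2\ell/\langle\lambda,X\rangle$ with $\ell$ small and $\lambda$ chosen so $\langle\lambda,X\rangle$ is large) for which $\exp(tX)$ does not generate a Zariski-dense subgroup. So $\eta(G) \ge 2$, and $\underline\eta(G) = 1$. Then I would exhibit $\mathcal{X} = \{X, Y\}$ with two generic independent real tuples realizing $\eta(G) = \overline\eta(G) = 2$: for $\overline\eta$ one wants $G(t\mathcal{X}) = \T^n_\C$ for \emph{all} $t > 0$; using two independent generators $\exp(tX), \exp(tY)$, the joint character condition $t\langle\lambda,X\rangle \in 2\Z$ \emph{and} $t\langle\lambda,Y\rangle \in 2\Z$ for the same $\lambda \ne 0$ forces $\langle\lambda,X\rangle/\langle\lambda,Y\rangle \in \Q$, which we can preclude by choosing $X, Y$ so that the ratios $\langle\lambda,X\rangle/\langle\lambda,Y\rangle$ are irrational for all $\lambda \in \Z^n\smallsetminus\{0\}$ (possible since this excludes only countably many linear conditions on $(X,Y) \in \R^{2n}$). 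That gives $\overline\eta(G) \le 2$, and combined with $\eta(G) \ge 2 \ge \underline\eta(G) \ge 1$ and $\eta \le \overline\eta$, all the stated values follow.

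The main obstacle I anticipate is the bookkeeping in part (2): precisely matching the excluded set to the definition \eqref{eqn:exclude_rational} of $\Lambda(X)$ (signs, the factor of $\pi$, whether $\ell$ ranges over $\N$ or $\Z$), and — more conceptually — correctly articulating why a \emph{single} generic $t$ giving density does not contradict $\eta(G) = 2$, since $\eta$ quantifies over $t$ in a whole punctured neighborhood of $0$ with $\mathcal{X}$ fixed first. Once that distinction is cleanly stated, the rest is a routine Kronecker/Vandermonde density argument for tori.
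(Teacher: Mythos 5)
Your proposal is correct and follows essentially the same strategy as the paper: the Vandermonde/Chevalley argument for $(\R^{\times})^{n}$, the observation that the bad set of $t$ accumulates at $0$ to force $\eta(\T^{n})\ge 2$, and two generically chosen generators to get $\overline{\eta}(\T^{n})\le 2$. The only localized difference is in establishing Zariski-density of the cyclic subgroup of $\T^{n}$ for $t\notin\Lambda(X)$: the paper invokes Kronecker's theorem (density in the usual topology implies Zariski-density), whereas you use the duality between closed subgroups of a torus and subgroups of its character lattice — both are standard and valid, and your careful handling of the quantifier order distinguishing $\underline{\eta}=1$ from $\eta=2$ matches the paper's.
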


\begin{proof}
    (1). This is essentially
    Chevalley's lemma 
    (e.g., \cite[Lem.~2.A.1.2]{Wallach-real-reductive-group}).
    
    In fact, suppose that a polynomial  
 $f(x_{1}, \ldots, x_{n}) = \sum_{\alpha\in \mathbb{N}^{n}} a_{\alpha} x^{\alpha}$
  vanishes on $\exp(\Z X)$, 
 where $a_{\alpha} \in \mathbb{R}$ and $x^{\alpha} = x_{1}^{\alpha_{1}} \cdots x_{n}^{\alpha_{n}}$. This means that
    \[
    \sum_{\alpha \in \mathbb{N}^{n}} a_{\alpha} \exp(\langle \alpha, X \rangle \ell) = 0  
    \quad \text{for all } \ell \in \mathbb{Z},
    \]  
where $\langle \cdot, \cdot \rangle$ denotes the standard inner product on $\mathbb{R}^{n}$.  
Since the values $\langle \alpha, X \rangle$ are pairwise distinct for all $\alpha \in \mathbb{N}^{n}$ by the assumption on $X$, 
we deduce that $a_\alpha=0$ by downward induction, starting from the largest $\langle \alpha, X\rangle$, and thus $f \equiv 0$.
This shows $\underline{\eta}(G)=1$.
Since the assumption on $X$ remains unchanged if we replace $X$ with a non-zero scalar multiple $t X$, we conclude $\eta(G)=\overline{\eta}(G)=1$.

    (2). 
    If $t \in \mathbb{R} \smallsetminus  \Lambda(X)$,  
    then the set $\exp(\pi \sqrt{-1} \mathbb{Z} t X)$ is dense in $\mathbb{T}^{n}$  
    with respect to the usual topology, and hence also Zariski-dense.  
    Therefore, we conclude that $\underline{\eta}(\mathbb{T}^{n}) = 1$.
    
    Next, we show that $\eta(\mathbb{T}^{n}) \geq 2$.  
    Let $X = (X_{1}, \ldots, X_{n}) \in \mathbb{R}^{n}$ be arbitrary.  
    Then, there exists a positive number $t > 0$ as small as desired  
    such that $t X_{1} \in \mathbb{Q} \pi$.  
    For such $t$, $G(t\{X\})$ is contained
    in the $1 \times \T^{n-1}$ because
    the projection of $\exp(\sqrt{-1} \mathbb{Z} t X) \subset \mathbb{T}^{n}$  
    to the first component has finite image. 
    Hence, we have shown $\eta(\mathbb{T}^{n}) \geq 2$.

    Finally, we show that $\overline{\eta}(\mathbb{T}^{n}) \leq 2$.  
    We choose $X = (X_{1}, \ldots, X_{n})$ and $Y = (Y_{1}, \ldots, Y_{n})$ such that $X_{1}, \ldots, X_{n}, Y_{1}, \ldots, Y_{n} \in \mathbb{R}$  
    are $\mathbb{Q}$-linearly independent,
    and that $\Lambda(X)\cap \Lambda(Y)=\{0\}$.  
Then, for any positive real number $t$,
at least one of the two elements $\exp(t X)$ and $\exp(t Y)$  
    generates a Zariski-dense subgroup in $G$.  
    Therefore, we conclude that $\overline{\eta}(\mathbb{T}^{n}) \leq 2$,  
    and the proof of (2) is complete.
\end{proof}

\begin{proof}[Proof of Proposition~\ref{proposition:commutative-eta}]
Both assertions in Proposition~\ref{proposition:commutative-eta}  
follow immediately from  
Lemmas~\ref{lem:jordan-decomposition} and~\ref{lemma:eta-three-pattern}.
\end{proof}

We now consider the case where $G$ is non-commutative.
\begin{proposition}
\label{prop:two_generates_reductive}
 Let $G$ be a Zariski-connected, non-commutative, real reductive algebraic group.
 Then we have
$\underline{\eta}(G) = \eta(G) = 2$.    
\end{proposition}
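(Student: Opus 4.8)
The statement to prove is that for a Zariski-connected, non-commutative, real reductive algebraic group $G$, we have $\underline{\eta}(G) = \eta(G) = 2$. The plan is to establish the two inequalities $\underline\eta(G) \ge 2$ and $\eta(G) \le 2$ separately, and then squeeze using $\underline\eta(G) \le \eta(G)$.

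For the lower bound $\underline\eta(G) \ge 2$: a single element $e^{X}$ generates a subgroup contained in the Zariski-closure of the one-parameter group $\{e^{tX}\}_{t\in\R}$, hence in a commutative (in fact abelian, even of the form product of a torus and a vector group) algebraic subgroup of $G$. Since $G$ is non-commutative and Zariski-connected, no proper algebraic subgroup can equal $G$; in particular $G(\{X\}) \subsetneq G$ for every $X \in \mathfrak{g}$, so $\underline\eta(G) \ge 2$. I would phrase this cleanly by noting that $\mathfrak{g}(\{X\})$ is a commutative Lie algebra for any single $X$, whereas $\mathfrak g$ is not commutative.

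For the upper bound $\eta(G) \le 2$: this is the main work. By Proposition~\ref{prop:reductive_eta}, writing $G = G_0 \cdot G_1 \cdots G_r$ as an almost direct product with $G_0$ commutative and each $\mathfrak{g}_i \simeq \mathfrak{s}_i^{\oplus n_i}$ for pairwise non-isomorphic simple $\mathfrak{s}_i$, we have $\eta(G) = \max_{0 \le i \le r}\eta(G_i)$. For the commutative factor $G_0 \simeq \T^a \times (\R^\times)^b$, Proposition~\ref{proposition:commutative-eta} gives $\eta(G_0) \le 2$. So it remains to show $\eta(G_i) \le 2$ for each non-commutative (hence semisimple) factor; that is, it suffices to treat the case $\mathfrak{g} = \mathfrak{s}^{\oplus n}$ with $\mathfrak{s}$ simple. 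Here I would use Lemma~\ref{lemma:basic_properties_eta}(2) reducing to $n = 1$ (the irreducible constituents of $\mathfrak{s}$ inside $\mathfrak{s}^{\oplus n}$ all coincide, so that lemma does not directly apply — I need to be careful: the relevant reduction is the one in Proposition~\ref{prop:reductive_eta} / the remark following it, which asserts $\eta(G)$ does not depend on $n$; I should invoke exactly that). Thus the crux is: for $G$ with $\mathfrak{g}$ simple, find $X, Y \in \mathfrak{g}$ such that for all sufficiently small $t > 0$, the closure of $\langle e^{tX}, e^{tY}\rangle$ is all of $G$. The standard approach (cf.\ Labourie~\cite[Lem.~5.3.13]{Labourie_lecture}) is to take $X$ regular semisimple, so that $G(\{tX\})$ contains a maximal torus $T$ for generic $t$, and then choose $Y$ so that the only algebraic subgroup containing $T$ and $e^{tY}$ is $G$ itself — e.g.\ pick $Y$ in general position so that $\mathrm{Ad}(e^{tY})$ moves the root subspaces around enough that no proper algebraic subgroup normalized appropriately can survive. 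Concretely: a proper Zariski-closed subgroup $H \supset T$ has Lie algebra $\mathfrak{h}$ a sum of $\mathfrak{t}$ and some root spaces $\mathfrak{g}_\alpha$ for $\alpha$ in a subset closed under the relevant operations; for $H$ to also contain $e^{tY}$ with $Y = \sum_{\alpha} Y_\alpha$ having all $Y_\alpha \neq 0$, one shows the smallest algebraic subgroup generated is $G$ by a connectedness/ideal argument in $\mathfrak{s}$ (using simplicity). The requirement "for all sufficiently small $t$" is handled by choosing $X$ with $\mathfrak{Q}$-linearly independent eigenvalue data, exactly as in Lemma~\ref{lemma:eta-three-pattern}(2), and noting the genericity conditions on $t$ are satisfied off a discrete set accumulating only at $0$, which is precisely the content of the definition of $\eta$ (as opposed to $\overline\eta$).

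\textbf{Main obstacle.} The delicate point is the simple-$\mathfrak{g}$ case: proving that a well-chosen pair $(X, Y)$, suitably scaled by all small $t$, generates a Zariski-dense subgroup. One must argue that no proper Zariski-closed subgroup $H$ contains both $e^{tX}$ (forcing $\mathfrak h \supseteq \mathfrak t$, a Cartan subalgebra, once $t$ avoids a bad discrete set) and $e^{tY}$; this uses the structure of subalgebras of a simple Lie algebra containing a Cartan subalgebra together with the genericity of $Y$ relative to the root decomposition. I would isolate this as a lemma, prove it for one fixed $n$ (say $n=1$), and invoke Proposition~\ref{prop:reductive_eta} and the $n$-independence remark to conclude for all $G_i$, then combine with the commutative case via Proposition~\ref{prop:reductive_eta} to obtain $\eta(G) \le 2$; together with $\underline\eta(G) \ge 2$ and $\underline\eta \le \eta$ this gives $\underline\eta(G) = \eta(G) = 2$.
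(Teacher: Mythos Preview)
Your lower bound $\underline\eta(G)\ge 2$ is fine and matches the paper. The genuine gap is in your argument for $\eta(G)\le 2$, specifically in how you handle the parameter $t$. You propose to take $X$ regular semisimple so that $G(t\{X\})$ equals the Cartan subgroup $J$ ``for generic $t$'', and then assert that the exceptional set of $t$-values is ``a discrete set accumulating only at $0$''. This is false whenever $J$ has a nontrivial compact part $\T^a$ (e.g.\ for any compact simple group, in particular for $\mathfrak{su}(2)$). By Proposition~\ref{proposition:commutative-eta} the bad set is $\pi\Lambda(X_1,\dots,X_a)$ from \eqref{eqn:exclude_rational}, which is countable but \emph{dense} in $\R$. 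Hence there is no interval $(0,\delta)$ on which $G(t\{X\})=J$ holds throughout, and your sketch breaks down exactly at the step that separates $\eta$ from $\underline\eta$.

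The paper's remedy is to work with \emph{two} Cartan subalgebras simultaneously. It produces a pair of regular semisimple elements $X'\in\mathfrak{j}_1$, $Y'\in\mathfrak{j}_2$ that are \emph{mutually filled}: $X'$ is $\mathfrak{j}_2$-filled and $Y'$ is $\mathfrak{j}_1$-filled (Definition~\ref{def:spread_Cartan}, Lemmas~\ref{lem:filled_elements} and \ref{lemma:two_filled_elements}). One then rescales $Y'$ by a factor $s$ close to $1$ so that the two countable bad sets satisfy $s\Lambda'\cap\Lambda''=\{0\}$. Consequently, for every $t\neq 0$ at least one of $G(t\{X'\})=J_1$ or $G(t\{sY'\})=J_2$ holds; in either case the filled property of the other element, fed through Lemma~\ref{lemma:analytic_gt_through_ux}, forces $\mathfrak{g}(t\{X',sY'\})=\mathfrak{g}$ for all sufficiently small $t$. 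This two-Cartan rescaling trick is the missing idea.

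A secondary remark: the paper does \emph{not} reduce to simple factors via Proposition~\ref{prop:reductive_eta}; the filled-element argument works directly for any real reductive $\mathfrak{g}$, since root spaces are one-dimensional regardless of simplicity. Your detour through ``an ideal argument using simplicity'' and through the $n$-independence remark (which is a forward reference, proved later only for $\overline\eta$) is therefore both unnecessary and, as stated, logically incomplete.
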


To prove Proposition~\ref{prop:two_generates_reductive}, we introduce useful concepts and provide two lemmas.

Suppose that $\mathfrak{j}_\C$ is a Cartan subalgebra of a complex reductive Lie algebra $\mathfrak{g}_\C$.
Let $\Delta(\mathfrak{g}_\C, \mathfrak{j}_\C)$ denote the root system. We write
\[
\mathfrak{g}_\C
=
\mathfrak{j}_\C \oplus \bigoplus_{\alpha \in \Delta(\mathfrak{g}_\C, \mathfrak{j}_\C)} \mathfrak{g}_{\C,\alpha}
\]
for the root decomposition.
Accordingly, for $X \in \mathfrak{g}_\C$, we decompose as
$X=X_0 + \sum X_\alpha$.

Under this notation, we introduce the notion of $\mathfrak{j}$-{\emph{filled}}, which will help to clarify the structure of the proof of Proposition~\ref{prop:two_generates_reductive}.
\begin{definition} 
\label{def:spread_Cartan}
We say that $X\in \mathfrak{g}_\C$ is $\mathfrak{j}_\C$-\emph{filled} if it decomposes as $X=X_0+\sum X_\alpha$ with $X_\alpha \neq 0$ for every $\alpha \in \Delta(\mathfrak{g}_\C, \mathfrak{j}_\C)$.
Let $\mathfrak{g}_{\C}(\mathfrak{j}_\C)_{\textrm{fill}}$ denote the set of $\mathfrak{j}_\C$-filled elements. 

When $\mathfrak g$ is a real reductive Lie algebra and $\mathfrak j$ is a Cartan subalgebra, we write $\mathfrak{g}_\C$ and $\mathfrak{j}_\C$ for their complexifications, respectively.
We say that an element $X$ of $\mathfrak{g}$ is $\mathfrak{j}$-\emph{filled} if
$
X \in \mathfrak{g}(\mathfrak{j})_{\textrm{fill}}
:=\mathfrak{g}
\cap \mathfrak{g}_\C(\mathfrak{j}_\C)_{\textrm{fill}}.
$
\end{definition}

Here are basic properties of $\mathfrak{j}$-filled elements:
\begin{lemma}
\label{lem:filled_elements}
\begin{enumerate}[label=(\arabic*)]
    \item The set $\mathfrak{g}(\mathfrak{j})_{\textrm{\emph{fill}}}$ is open dense in $\mathfrak{g}$ with respect to the usual topology.
\item
If $X \in \mathfrak{g}(\mathfrak{j})_{\textrm{\emph{fill}}}$, then the Lie algebra generated by $\mathfrak{j}$ and $X$ is equal to $\mathfrak{g}$.
\item
If $X \in \mathfrak{g}(\mathfrak{j})_{\textrm{\emph{fill}}}$, then
 $X \in \mathfrak{g}(\operatorname{Ad}(g)\mathfrak{j})_{\textrm{\emph{fill}}}$
if $g\in G$ is sufficiently close to the identity element $e$.    
\end{enumerate}
\end{lemma}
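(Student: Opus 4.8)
The statement to prove is Lemma~\ref{lem:filled_elements}, which asserts three basic properties of $\mathfrak{j}$-filled elements. Here is my plan.

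\textbf{Strategy for (1).} The plan is to realize $\mathfrak{g}(\mathfrak{j})_{\textrm{fill}}$ as the complement of a proper Zariski-closed subset, hence open and dense. For each root $\alpha \in \Delta(\mathfrak{g}_\C, \mathfrak{j}_\C)$, let $\operatorname{pr}_\alpha \colon \mathfrak{g}_\C \to \mathfrak{g}_{\C,\alpha}$ be the projection coming from the root decomposition; this is a $\C$-linear map, and the condition $X_\alpha \neq 0$ is the nonvanishing of a polynomial (indeed linear) function on $\mathfrak{g}_\C$. Thus $\mathfrak{g}_\C(\mathfrak{j}_\C)_{\textrm{fill}}$ is the intersection over all $\alpha$ of finitely many nonempty Zariski-open subsets of $\mathfrak{g}_\C$, hence Zariski-open and nonempty, hence open and dense in the usual topology on $\mathfrak{g}_\C$. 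Now $\mathfrak{g}(\mathfrak{j})_{\textrm{fill}} = \mathfrak{g} \cap \mathfrak{g}_\C(\mathfrak{j}_\C)_{\textrm{fill}}$ is the intersection of the real form $\mathfrak{g}$ with a Zariski-open dense subset of $\mathfrak{g}_\C$; since the complex conjugation $\sigma$ permutes the root spaces (it sends $\mathfrak{g}_{\C,\alpha}$ to $\mathfrak{g}_{\C,\bar\alpha}$, as $\mathfrak{j}$ is defined over $\R$), the defining set is $\sigma$-stable and its real points are Zariski-dense in it (because $\mathfrak g$ is Zariski-dense in $\mathfrak{g}_\C$ as a real form of an affine space). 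Concretely one just needs that the real points of a nonempty Zariski-open $\sigma$-stable subset of a complex vector space with a real structure are nonempty and dense in the real form — this is elementary.

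\textbf{Strategy for (2).} The plan is a downward induction on the height of roots, showing that the Lie algebra $\mathfrak{n}$ generated by $\mathfrak{j}$ and a $\mathfrak{j}$-filled element $X$ contains every root space $\mathfrak{g}_{\C,\alpha}$ (working after complexification, which is harmless since $\mathfrak{n}_\C$ is the Lie algebra generated by $\mathfrak{j}_\C$ and $X$). Since $\mathfrak{j}_\C \subset \mathfrak{n}_\C$ and $X = X_0 + \sum_\beta X_\beta \in \mathfrak{n}_\C$ with all $X_\beta \neq 0$, for any $H \in \mathfrak{j}_\C$ we have $[H, X] = \sum_\beta \beta(H) X_\beta \in \mathfrak{n}_\C$. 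Choosing $H$ so that the values $\beta(H)$ are pairwise distinct (a generic choice), a Vandermonde argument applied to $[H,[H,\cdots[H,X]\cdots]]$ shows that each individual $X_\beta$ lies in $\mathfrak{n}_\C$; in particular $\mathfrak{n}_\C$ contains a nonzero vector in every root space. Since each $\mathfrak{g}_{\C,\alpha}$ is one-dimensional, $\mathfrak{n}_\C \supset \mathfrak{g}_{\C,\alpha}$ for all $\alpha$, and together with $\mathfrak{j}_\C$ this gives $\mathfrak{n}_\C = \mathfrak{g}_\C$; taking real points, $\mathfrak{n} = \mathfrak{g}$. (I should note that $\mathfrak{g}$ reductive and $\mathfrak{j}$ a Cartan subalgebra means the root-space decomposition is with respect to the action of $\mathfrak{j}_\C$, and the center of $\mathfrak{g}_\C$ is contained in $\mathfrak{j}_\C$, so no extra care is needed for the center.)

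\textbf{Strategy for (3).} The plan is to use the openness from part (1) plus a continuity/conjugation argument. Fix $X \in \mathfrak{g}(\mathfrak{j})_{\textrm{fill}}$. For $g \in G$, the subspace $\operatorname{Ad}(g)\mathfrak{j}$ is again a Cartan subalgebra, with root spaces $\operatorname{Ad}(g)\mathfrak{g}_{\C,\alpha}$ and projections $\operatorname{Ad}(g) \circ \operatorname{pr}_\alpha \circ \operatorname{Ad}(g)^{-1}$. The condition ``$X$ is $\operatorname{Ad}(g)\mathfrak{j}$-filled'' is the statement that $\operatorname{pr}_\alpha(\operatorname{Ad}(g)^{-1}X) \neq 0$ for every $\alpha$, i.e.\ that $\operatorname{Ad}(g)^{-1}X \in \mathfrak{g}_\C(\mathfrak{j}_\C)_{\textrm{fill}}$. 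Since at $g = e$ we have $X \in \mathfrak{g}(\mathfrak{j})_{\textrm{fill}}$, and the set $\mathfrak{g}_\C(\mathfrak{j}_\C)_{\textrm{fill}}$ is open by part (1), and $g \mapsto \operatorname{Ad}(g)^{-1}X$ is continuous, there is a neighborhood of $e$ in $G$ on which $\operatorname{Ad}(g)^{-1}X$ stays $\mathfrak{j}$-filled, which is exactly the assertion. I expect parts (1) and (3) to be entirely routine; the only real content is the Vandermonde/induction argument in part (2), and even there the main point — that each root space is one-dimensional and that $\mathfrak{j}_\C$ acts with distinct eigenvalues for a generic element — is standard structure theory. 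The mild obstacle worth spelling out carefully is the passage between the complex and real pictures: one must check that $\sigma$-stability of the $\mathfrak{j}_\C$-filled locus and Zariski-density of the real form together give the real statements, but this is straightforward.
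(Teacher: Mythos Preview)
Your proposal is correct and follows the same approach as the paper's (very terse) proof: Zariski-openness of the filled locus for (1), one-dimensionality of root spaces for (2), and continuity of the root decomposition for (3). Your write-up is considerably more detailed---in particular you spell out the Vandermonde argument in (2) and the clean reduction $X \in \mathfrak{g}(\operatorname{Ad}(g)\mathfrak{j})_{\mathrm{fill}} \iff \operatorname{Ad}(g)^{-1}X \in \mathfrak{g}(\mathfrak{j})_{\mathrm{fill}}$ in (3)---whereas the paper dispatches the whole lemma in three sentences; note only that your opening phrase ``downward induction on the height of roots'' in (2) does not match the Vandermonde argument you actually (correctly) give.
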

\begin{proof}
Since $\mathfrak{g}_{\C}(\mathfrak{j}_\C)_{\textrm{fill}}$ is Zariski-open in $\mathfrak{g}_\C$,
 $\mathfrak{g}(\mathfrak{j})_{\textrm{fill}}$ is open dense in $\mathfrak{g}$.
Since every root space $\mathfrak{g}_{\C,\alpha}$ is one-dimensional, the complex Lie algebra generated by $\mathfrak{j}$ and a $\mathfrak{j}$-filled element $X$ is $\mathfrak{g}_\C$. 
Hence, the second statement holds. The last statement follows from the continuity of the root decomposition with respect to the choice of Cartan subalgebras.
\end{proof}

\begin{lemma}
\label{lemma:two_filled_elements}
Let $\mathfrak{g}$ be a real reductive Lie algebra.
Then there exists a pair of elements $X$ and $Y$ satisfying the following properties:
let $\mathfrak{j}_1$ be the centralizer of $X$ in $\mathfrak{g}$, and $\mathfrak{j}_2$ be that of $Y$ in $\mathfrak{g}$. Then
\begin{enumerate}
    \item[(1)]  both $\mathfrak{j}_1$ and $\mathfrak{j}_2$ are Cartan subalgebras of $\mathfrak{g}$;
    \item[(2)]  $X$ is $\mathfrak{j}_2$-filled, and $Y$ is $\mathfrak{j}_1$-filled.
\end{enumerate}   
\end{lemma}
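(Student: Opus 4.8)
\textbf{Proof plan for Lemma~\ref{lemma:two_filled_elements}.}

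The plan is to produce the pair $(X,Y)$ by an explicit construction using two Cartan subalgebras in ``sufficiently generic position,'' and then to invoke the density statement in Lemma~\ref{lem:filled_elements}~(1). First I would fix a Cartan subalgebra $\mathfrak{j}_1$ of $\mathfrak{g}$ and choose a regular element $X \in \mathfrak{j}_1$, so that $\mathfrak{j}_1$ is exactly the centralizer of $X$ in $\mathfrak{g}$; this takes care of the requirement that $\mathfrak{j}_1$ be a Cartan subalgebra and is automatic for $X$ regular. The symmetric requirement for $Y$ and $\mathfrak{j}_2$ is handled the same way once $\mathfrak{j}_2$ is chosen. So the real content is to arrange simultaneously that $X$ is $\mathfrak{j}_2$-filled and $Y$ is $\mathfrak{j}_1$-filled.

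The key step is the observation that ``$X$ is $\mathfrak{j}_2$-filled'' is an open condition on the pair $(X,\mathfrak{j}_2)$, and more usefully, by Lemma~\ref{lem:filled_elements}~(3), if $X \in \mathfrak{g}(\operatorname{Ad}(g_0)\mathfrak{j})_{\mathrm{fill}}$ for one particular $g_0$, it remains so for all $g$ near $g_0$. I would therefore proceed as follows. Start with any Cartan subalgebra $\mathfrak{j}$ and any $\mathfrak{j}$-filled element $X_0 \in \mathfrak{g}(\mathfrak{j})_{\mathrm{fill}}$; such $X_0$ exists by Lemma~\ref{lem:filled_elements}~(1). Note that a $\mathfrak{j}$-filled element is in particular regular (its $\mathfrak{j}$-component $X_{0,0}$ must be regular in $\mathfrak{j}_\C$ for the nilpotent-plus-semisimple decomposition to behave, or one simply perturbs $X_0$ within $\mathfrak{g}(\mathfrak{j})_{\mathrm{fill}}$ to make its centralizer a Cartan subalgebra — the set of regular elements is also open dense, so the intersection with $\mathfrak{g}(\mathfrak{j})_{\mathrm{fill}}$ is nonempty). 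Replacing $X_0$ by such a perturbation, set $X:=X_0$ and let $\mathfrak{j}_1 := \mathfrak{z}_{\mathfrak{g}}(X)$, a Cartan subalgebra. Symmetrically, pick $Y_0 \in \mathfrak{g}(\mathfrak{j}_1)_{\mathrm{fill}}$ that is also regular, and set $Y := Y_0$, $\mathfrak{j}_2 := \mathfrak{z}_{\mathfrak{g}}(Y)$. This immediately gives condition~(1), and gives ``$Y$ is $\mathfrak{j}_1$-filled'' by construction. What remains is to check that ``$X$ is $\mathfrak{j}_2$-filled'' can be arranged — here I would use that the choice of $Y_0$ is itself free within the open dense set $\mathfrak{g}(\mathfrak{j}_1)_{\mathrm{fill}} \cap \mathfrak{g}_{\mathrm{reg}}$, and that the map $Y_0 \mapsto \mathfrak{z}_{\mathfrak{g}}(Y_0)$ sweeps out a full-dimensional family of Cartan subalgebras; since $\{ \mathfrak{j}' : X \in \mathfrak{g}(\mathfrak{j}')_{\mathrm{fill}}\}$ is open and nonempty (it contains $\operatorname{Ad}(g)\mathfrak{j}_1$ for $g$ near $e$ by Lemma~\ref{lem:filled_elements}~(3), since $X$ is $\mathfrak{j}_1$-filled… wait, $X$ need not be $\mathfrak{j}_1$-filled).

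Let me instead organize the genericity more carefully, which I expect to be the main obstacle: I want $X$ and $Y$ chosen so that each is filled with respect to the Cartan subalgebra determined by the other, and these two conditions are coupled. The clean way is a dimension/openness argument in the product variety. Consider the set $\Omega \subset \mathfrak{g}_{\mathrm{reg}} \times \mathfrak{g}_{\mathrm{reg}}$ of pairs $(X,Y)$ such that $X$ is $\mathfrak{z}_{\mathfrak{g}}(Y)$-filled and $Y$ is $\mathfrak{z}_{\mathfrak{g}}(X)$-filled. Each condition is open (openness of the filled locus plus continuity of $Z \mapsto \mathfrak{z}_{\mathfrak{g}}(Z)$ on the regular set), so $\Omega$ is open; the task is to show $\Omega \neq \emptyset$. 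For this I would fix one Cartan subalgebra $\mathfrak{j}$, pick $X$ regular in $\mathfrak{j}$ (so $\mathfrak{z}_{\mathfrak{g}}(X) = \mathfrak{j}$), and then look for $Y$ that is $\mathfrak{j}$-filled and such that $X$ is $\mathfrak{z}_{\mathfrak{g}}(Y)$-filled. The first condition carves out the open dense set $\mathfrak{g}(\mathfrak{j})_{\mathrm{fill}}$; for the second, since being filled is a non-vanishing (Zariski-open) condition on each root component and $\operatorname{Ad}(G)$ acts transitively enough on Cartan subalgebras of a fixed type that the root-space projections of a fixed $X$ vary non-trivially, the locus of ``bad'' $Y$ (where some root component of $X$ relative to $\mathfrak{z}_{\mathfrak{g}}(Y)$ vanishes) is contained in a proper Zariski-closed subset. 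Hence a generic $Y \in \mathfrak{g}(\mathfrak{j})_{\mathrm{fill}}$ works, and we are done — one does need non-commutativity of $\mathfrak{g}$ here only in the degenerate sense that if $\mathfrak{g}$ is abelian the root system is empty and every element is trivially filled, so the statement is vacuous but still true. The subtle point deserving care is verifying that the ``bad'' locus for $Y$ is genuinely proper and not all of $\mathfrak{g}(\mathfrak{j})_{\mathrm{fill}}$; I would settle this by exhibiting, for a single explicit non-bad pair in each simple factor (e.g., using a principal $\mathfrak{sl}_2$ together with a generic semisimple element), that $\Omega$ meets that factor, and then combining factors.
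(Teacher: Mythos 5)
Your overall strategy (genericity plus openness of the filled condition) is the right one, but the pivotal step is left unproven. After fixing $X$ regular in a Cartan subalgebra $\mathfrak{j}=\mathfrak{j}_1$, you must produce a regular $Y$ that is simultaneously $\mathfrak{j}_1$-filled and has the property that the \emph{fixed} element $X$ is $\mathfrak{z}_{\mathfrak{g}}(Y)$-filled. The second condition is the hard one: since $X$ lies in $\mathfrak{j}_1$, all of its root components relative to $\mathfrak{j}_1$ vanish, so Lemma~\ref{lem:filled_elements} gives you nothing directly, and you need to show that the set of Cartan subalgebras $\mathfrak{j}'$ with $X\in\mathfrak{g}(\mathfrak{j}')_{\mathrm{fill}}$ is non-empty (and meets the Cartans arising as $\mathfrak{z}_{\mathfrak{g}}(Y)$ for $Y$ ranging over $\mathfrak{g}(\mathfrak{j}_1)_{\mathrm{fill}}\cap\mathfrak{g}^{\mathrm{ss}}_{\mathrm{reg}}$). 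You acknowledge this is ``the subtle point deserving care'' but resolve it only by proposing to exhibit an unspecified explicit pair in each simple factor via a principal $\mathfrak{sl}_2$; that verification is not carried out, and it is precisely the content of the lemma. (The claim is in fact provable — e.g.\ the $\operatorname{Ad}(G)$-orbit of a regular semisimple $X$ spans an affine subspace not contained in any hyperplane $\{Z_\alpha=0\}$, so generic $\operatorname{Ad}(g)\mathfrak{j}$ works — but some such argument must appear.)

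The paper sidesteps this difficulty by reversing the order of choices. It first picks $Y\in\mathfrak{g}^{\mathrm{ss}}_{\mathrm{reg}}\cap\mathfrak{g}(\mathfrak{j})_{\mathrm{fill}}$ for a reference Cartan $\mathfrak{j}$, obtaining $\mathfrak{j}_2=\mathfrak{z}_{\mathfrak{g}}(Y)$. By Lemma~\ref{lem:filled_elements}~(3), $Y$ stays $\operatorname{Ad}(g)\mathfrak{j}$-filled for all $g$ in a neighborhood $V$ of $e$; and since $\operatorname{Ad}(V)\mathfrak{j}$ contains a non-empty open subset of $\mathfrak{g}$ (the orbit map is submersive at regular semisimple points of $\mathfrak{j}$), it must meet the open dense set $\mathfrak{g}^{\mathrm{ss}}_{\mathrm{reg}}\cap\mathfrak{g}(\mathfrak{j}_2)_{\mathrm{fill}}$. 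Choosing $X$ in that intersection and setting $\mathfrak{j}_1=\operatorname{Ad}(g)\mathfrak{j}$ delivers both coupled conditions at once: $X$ is $\mathfrak{j}_2$-filled because it was selected from $\mathfrak{g}(\mathfrak{j}_2)_{\mathrm{fill}}$ \emph{after} $\mathfrak{j}_2$ was determined, and $Y$ is $\mathfrak{j}_1$-filled by the perturbation stability. If you want to keep your ordering, you must supply the spanning/genericity argument you deferred; otherwise, adopting the paper's ordering removes the gap entirely.
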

Let $\mathfrak{g}^{\textrm{ss}}_{\textrm{reg}}$
denote the set of semisimple, regular elements in $\mathfrak{g}$.
Then $\mathfrak{g}^{\textrm{ss}}_{\textrm{reg}}$ contains an open dense subset of $\mathfrak g$.
\begin{proof}[Proof of Lemma~\ref{lemma:two_filled_elements}
]
 We fix a Cartan subalgebra $\mathfrak{j}$, and take $Y$ from  $ \mathfrak{g}^{\textrm{ss}}_\textrm{{reg}} \cap \mathfrak{g}(\mathfrak{j})_{\textrm{fill}}$.
By Lemma~\ref{lem:filled_elements}~(1), such a $Y$ exists. 
 The centralizer of $Y$ in $\mathfrak{g}$, denoted by $\mathfrak{j}_2$, is a Cartan subalgebra of $\mathfrak{g}$ because $Y \in \mathfrak{g}^{\textrm{ss}}_{\textrm{reg}}$.
 
 Since $Y \in \mathfrak{g}(\mathfrak{j})_{\textrm{fill}}$, by
 Lemma~\ref{lem:filled_elements}~(3), there exists an open neighborhood $V$ of $e$ in $G$ such that $Y \in \mathfrak{g}(\operatorname{Ad}(g)\mathfrak{j})_{\textrm{fill}}$ for any $g \in V$.

 Since $\operatorname{Ad}(V) \mathfrak{j}$ contains a non-empty open subset of $\mathfrak{g}$, there exists $g \in V$ such that
$\operatorname{Ad}(g) \mathfrak{j} \cap \mathfrak{g}^{\textrm{ss}}_{ \textrm{reg}} \cap \mathfrak{g}(\mathfrak{j}_2)_{\textrm{fill}} \neq \emptyset$.
We then choose $X$ from this set.

Let $\mathfrak{j}_1$ denote the centralizer of $X$ in $\mathfrak{g}$, which equals the Cartan subalgebra $\operatorname{Ad}(g) \mathfrak{j}$.
Then $X$ is $\mathfrak{j}_2$-filled, and $Y$ is $\mathfrak{j}_1$-filled.
Thus, the lemma is proved.
\end{proof}

We are ready to prove Proposition~\ref{prop:two_generates_reductive}.
\begin{proof}[Proof of  Proposition~\ref{prop:two_generates_reductive}]
Since $G$ is non-commutative, it is clear that
$2 \le \underline{\eta}(G)$.
Therefore, it suffices to show $\eta(G)\le 2$.

We take two Cartan subalgebras $\mathfrak{j}_i$ $(i=1,2)$,
$X \in \mathfrak{j}_1$, and $Y \in \mathfrak{j}_2$, as
given in Lemma~\ref{lemma:two_filled_elements}.
Let $J_i$ be the Cartan subgroup of $G$
with Lie algebras $\mathfrak{j}_i$ $(i=1,2)$.

By Lemma~\ref{lem:filled_elements}~(1), we may replace $X$ with a regular semisimple element $X' \in \mathfrak{j}_1$ such that $X'$ is close enough to $X$ that $X'$ remains $\mathfrak{j}_2$-filled,
while $G(t\{X'\}) = J_1$ for any $t \in \R \smallsetminus \Lambda'$ by applying Proposition~\ref{proposition:commutative-eta} to the $\R$-torus $J_{1}$, where $\Lambda'$ is a countable set of $\R$ depending on $X'$,
as given in \eqref{eqn:exclude_rational}.

Similarly, we may replace $Y$ with a sufficiently close $Y' \in \mathfrak{j}_2$ such that $Y'$ remains $\mathfrak{j}_1$-filled, while
$G(t{Y'}) = J_2$ for any $t \in \R \smallsetminus \Lambda''$,
where $\Lambda''$ is a countable set of $\R$ depending on $Y'$.
Since both $\Lambda'$ and $\Lambda''$ are countable, we can take $s \in \R$, which is sufficiently close to $1$,
such that 
 $s \Lambda' \cap \Lambda''= \{0\}$.
We set 
\[
\mathcal{X} :=\{X', s Y' \}
\]
Then, for any $t \in \R\smallsetminus\{0\}$, at least one of the following holds:
\[
G(t\{X'\})=J_1 \ \text{ or } \ G(t\{sY'\})=J_2.
\]
For instance, suppose that $G(t\{sY'\})=J_2$.
Then $G(t\{X', sY'\})=G(J_2; t\{X'\})$.
We write $X'=X'+\sum X_\alpha'$ for root decomposition.
We take $H \in \mathfrak{j}_2$ such that $\alpha(H)\neq 0$ for any $\alpha \in \Delta(\mathfrak{g}_\C, \mathfrak{j}_{2,\C})$. Then there is an analytic curve $a\colon \R \to \mathfrak{g}$ such that
$a(t) \in \mathfrak{g}(t \{X'\})$ for all $t \in \R \smallsetminus \{0\}$ and $a(0)=[H,X']
=\sum_{\alpha} \alpha(H) X'_\alpha$, as stated in Lemma~\ref{lemma:analytic_gt_through_ux}.
In particular, $a(0) \in \mathfrak{g}_{\textrm{fill}}$.
Therefore, if $t$ is sufficiently small, then
$a(t) \in \mathfrak{g}_{\textrm{fill}}$. 
It follows from Lemma~\ref{lem:filled_elements}~(2) that the Lie algebra $\mathfrak{g}(t\{X', sY'\})$ of $G(t\{X', s Y'\})$ equals $\mathfrak{g}$.

Similarly in the case where $G(t\{X', sY'\})=G(J_1; t\{sY'\})$,
the proof works. 
Therefore, we have shown that $\mathfrak{g}(t\{X', sY'\})=\mathfrak{g}$ in either case.
Since $G$ is Zariski-connected, we have  $G(t\{X', s Y'\})=G$.
Thus, the proposition is proved.
\end{proof}

We now determine $\overline\eta(G)$
for reductive groups $G$, starting
with an exceptional case:
\begin{lemma}
\label{lem:three_generators_su2}
If $\mathfrak{g}=\mathfrak{su}(2)$, then $\overline\eta (G)=3$.
\end{lemma}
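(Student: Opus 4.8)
The plan is to prove the two inequalities $\overline{\eta}(G)\ge 3$ and $\overline{\eta}(G)\le 3$ separately, using only the compactness of $G$ (recall that a Zariski-connected real algebraic group with Lie algebra $\mathfrak{su}(2)$ is either $SU(2)$ or $SO(3)$, hence compact) together with the elementary structure of one-parameter subgroups; no appeal to Lemma~\ref{lemma:analytic_gt_through_ux} or the $\mathfrak{j}$-filled machinery is needed here because $\mathfrak{su}(2)$ is three-dimensional.

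For the lower bound I would fix an arbitrary subset $\mathcal{X}=\{X,Y\}\subset\mathfrak{su}(2)$ with $\#\mathcal{X}\le 2$ and produce a single $t>0$ with $G(t\mathcal{X})\ne G$, which by Definition~\ref{def:eta} rules out $\overline{\eta}(G)\le 2$. Since $G$ is compact, if $X\ne 0$ the one-parameter subgroup $\{\exp(sX):s\in\R\}$ is a circle, so there is a minimal period $T_X>0$ with $\exp(T_XX)=e$; put $T_X:=1$ if $X=0$. Taking $t:=T_X$, the subgroup of $G$ generated by $e^{tX}=e$ and $e^{tY}$ is just $\langle e^{tY}\rangle\subset\exp(\R Y)$, whose Zariski-closure in $G_{\C}$ is a torus of dimension $\le 1$. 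Hence $\dim G(t\mathcal{X})\le 1<3=\dim G$, so $G(t\mathcal{X})\ne G$; the same conclusion is trivial when $\#\mathcal{X}\le 1$. Therefore $\overline{\eta}(G)\ge 3$, in genuine contrast with $\underline{\eta}(G)=\eta(G)=2$ from Proposition~\ref{prop:two_generates_reductive}.

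For the upper bound I would exhibit $\mathcal{X}=\{X_1,X_2,X_3\}$ with $G(t\mathcal{X})=G$ for every $t>0$. Identifying $\mathfrak{su}(2)\cong\R^3$, I would first record that for a nonzero $X$ the cyclic group $\langle e^{tX}\rangle$ is Zariski-dense in the $1$-torus $\exp(\R X)$ --- so that $G(t\{X\})=\exp(\R X)$ has Lie algebra $\R X$ --- for all $t$ outside a countable exceptional set $\Lambda_X$; conjugating $X$ to $\operatorname{diag}(ia,-ia)$ one computes, exactly as in Lemma~\ref{lemma:eta-three-pattern}~(2) applied to $\T$, that $\Lambda_X=T_X\,\Q$ where $T_X>0$ is the period of $\exp(\cdot\,X)$, and the identical computation applies whether $G\simeq SU(2)$ or $G\simeq SO(3)$. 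In particular $\Lambda_X\cap\Lambda_{X'}=\{0\}$ whenever $T_X/T_{X'}\notin\Q$. I would then choose $X_1,X_2,X_3$ with pairwise non-collinear axes and with norms selected so that $T_{X_1},T_{X_2},T_{X_3}$ have pairwise irrational ratios (for instance norms $1,\sqrt{2},\sqrt{3}$), so that the $\Lambda_{X_i}$ meet pairwise only in $0$. Given any $t>0$, then $t$ lies in at most one $\Lambda_{X_i}$, so there are distinct indices $i\ne j$ with $t\notin\Lambda_{X_i}\cup\Lambda_{X_j}$; hence the Lie algebra $\mathfrak{g}(t\mathcal{X})$ contains $G(t\{X_i\})$ and $G(t\{X_j\})$, so it contains $\R X_i+\R X_j$, and being a Lie subalgebra it also contains $[X_i,X_j]\ne 0$, whence $\mathfrak{g}(t\mathcal{X})=\mathfrak{su}(2)$. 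Since $G$ is Zariski-connected this forces $G(t\mathcal{X})=G$, giving $\overline{\eta}(G)\le 3$, and combined with the lower bound $\overline{\eta}(G)=3$.

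The only mildly delicate point I anticipate is the explicit identification $\Lambda_X=T_X\,\Q$ and the simultaneous choice of the $X_i$ with pairwise non-parallel axes and pairwise irrational period ratios; both are routine once $X$ is brought to diagonal form, and since the period $T_X$ depends only on the norm of $X$ (for a fixed $G$) the norm prescription above does the job.
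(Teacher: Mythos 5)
Your proof is correct and follows essentially the same route as the paper: the lower bound kills one generator by choosing $t$ to be a period of its one-parameter circle subgroup, and the upper bound picks three pairwise non-collinear elements whose exceptional sets of "bad" $t$ meet pairwise only in $0$, so that for every $t>0$ two of the $G(t\{X_i\})$ are maximal tori generating all of $\mathfrak{su}(2)$. Your explicit choice of norms $1,\sqrt2,\sqrt3$ simply makes concrete the basis whose existence the paper's proof asserts.
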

\begin{proof}
First, we show the inequality $2 < \overline{\eta}(G)$.
Let $X$ and $Y \in \mathfrak{g}$.
Then there exists $t \in \R$ such that $\exp(t Y)= 1$.
Consequently,
$G(t\{X, Y\})=G(t \{X\})$ is abelian,
which cannot coincide with $G$.
Hence, we have shown that $3 \le \overline{\eta}(G)$.

Conversely, we can choose a basis $X_1, X_2, X_3 \in \mathfrak{g}$ such that, for any $t \neq 0$, at least two of $G(t\{X_i\})$ ($i=1,2,3$)  are maximal tori in $G$. Therefore,
$G(t\{X_1, X_2,X_3\})=G$, showing
$\overline\eta(G)\le 3$.
\end{proof}

\begin{proposition}
\label{prop:three_generators_su2_n}
If $\mathfrak{g}$ is the direct sum of $n$ copies of $\mathfrak{su}(2)$, then $\overline\eta (G)=3$ for any $n \in \N_+$.
\end{proposition}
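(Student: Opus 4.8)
The strategy is to reduce the case of $n$ copies of $\mathfrak{su}(2)$ to the single copy case (Lemma~\ref{lem:three_generators_su2}) by invoking Proposition~\ref{prop:reductive_eta}, and then deal separately with the two inequalities. Write $G = G_1 \cdots G_n$ (almost direct product) where each $G_i$ is a Zariski-connected real algebraic group with Lie algebra a copy of $\mathfrak{su}(2)$; since all the $\mathfrak{s}_i$ are isomorphic to $\mathfrak{su}(2)$, this is \emph{not} a situation where Proposition~\ref{prop:reductive_eta} applies directly in the stated form (the proposition requires the simple summands to be pairwise non-isomorphic when grouped). So the reduction cannot be purely formal; we must argue with the fact that $\mathfrak g$ is isotypic.

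\textbf{Lower bound.} The inequality $\overline\eta(G) \ge 3$ follows from the same obstruction as in Lemma~\ref{lem:three_generators_su2}: given any two elements $X, Y \in \mathfrak g$, I would choose $t \neq 0$ such that $\exp(tY) = 1$ in $G$ — this is possible because the image of the analytic homomorphism $t \mapsto \exp(tY)$ lies in a compact torus (each $G_i$ has compact real points, being locally $SU(2)$), hence is periodic, so $\exp(tY)$ returns to the identity for a discrete set of $t$'s including arbitrarily small ones. Then $G(t\{X,Y\}) = G(t\{X\})$, which is a Zariski-closure of a one-parameter subgroup, hence solvable (indeed abelian), and therefore cannot equal the non-solvable group $G$. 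This shows $\overline\eta(G) \ge 3$, and combined with $\underline\eta(G) \le \eta(G) \le \overline\eta(G)$ and Proposition~\ref{prop:two_generates_reductive} (which gives $\underline\eta(G)=\eta(G)=2$) we recover the claimed pattern for $\underline\eta$ and $\eta$ as well.

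\textbf{Upper bound.} For $\overline\eta(G) \le 3$, I would produce three explicit elements $X_1, X_2, X_3 \in \mathfrak g$ such that for every $t \neq 0$, at least two of the three groups $G(t\{X_i\})$ are maximal tori of $G$ (i.e.\ Cartan subgroups); then the same argument as in the proof of Proposition~\ref{prop:two_generates_reductive} — using Lemma~\ref{lemma:two_filled_elements} on filled elements and the analytic-curve trick of Lemma~\ref{lemma:analytic_gt_through_ux} — upgrades ``two Cartan subalgebras in general position'' to ``the whole Lie algebra $\mathfrak g$,'' and Zariski-connectedness of $G$ finishes. To construct the $X_i$: in each factor $\mathfrak g_i \simeq \mathfrak{su}(2)$ the set of regular semisimple elements whose one-parameter closure is a maximal torus is the complement of a countable union $\Lambda_i$ of ``rational'' times (as in Proposition~\ref{proposition:commutative-eta} applied to the compact torus $J_i \simeq \T$); I would pick, for each of the three generators, a regular semisimple, $\mathfrak{j}$-filled element $X_j = (X_j^{(1)}, \ldots, X_j^{(n)})$ with the $X_j^{(i)}$ chosen so that the associated bad-parameter sets have pairwise-disjoint intersections with the scalings of one another, across the three generators simultaneously. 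This is a countability argument: there are only countably many bad $t$ for each $(i,j)$, so one can rescale two of the three generators by generic real factors to guarantee that for no $t \neq 0$ do two of the three generators simultaneously fail to generate a Cartan in some factor. The key point making the factor-by-factor analysis cohere into a statement about all of $G$ is that an element generating a Cartan in \emph{each} simple factor generates the full maximal torus of $G$, and that ``$\mathfrak j$-filled in $G$'' can be arranged coordinate-wise.

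\textbf{Main obstacle.} The delicate step is the last one: ensuring, by a single choice of three elements plus generic rescalings, that the ``at least two out of three are Cartan'' condition holds \emph{uniformly in $t$ and simultaneously in all $n$ factors}. In the $n=1$ case (Lemma~\ref{lem:three_generators_su2}) this is easy because one is controlling a single compact torus; for general $n$ one must control $n$ tori at once, and a priori the bad-parameter sets could conspire factor by factor. The resolution is that each bad set is still countable (a finite union over factors of countable sets), so a generic rescaling of the generators avoids all coincidences; I would spell this out using the explicit description \eqref{eqn:exclude_rational} of $\Lambda(X)$ and the fact that a countable union of proper subgroups (here, of $\R$ under addition, or rather the relevant scaling relations) cannot cover $\R$. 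Once that combinatorial bookkeeping is in place, the transfer from two Cartans to all of $\mathfrak g$ is verbatim the argument already given for Proposition~\ref{prop:two_generates_reductive}, so no new ideas are needed there.
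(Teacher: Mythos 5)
Your lower-bound argument contains a genuine error. You claim that for any $Y \in \mathfrak g$ the curve $t \mapsto \exp(tY)$ is periodic because its image lies in a compact torus, so that $\exp(tY)=1$ for arbitrarily small $t \neq 0$. This fails as soon as $n \ge 2$: a one-parameter subgroup of a torus of dimension $\ge 2$ is periodic only when its frequencies are commensurable. For instance, with $Y=(Y_1,Y_2) \in \mathfrak{su}(2)\oplus\mathfrak{su}(2)$ chosen so that the circles $\exp(\R Y_1)$ and $\exp(\R Y_2)$ have incommensurable periods, one has $\exp(tY)=1$ only for $t=0$, and the $t$ you need does not exist. The repair is to work in a single simple factor, where periodicity does hold (every nonzero element of $\mathfrak{su}(2)$ generates a circle): choose $t\neq 0$ with $\exp(tY^{(1)})=1$ in the quotient of $G$ by the remaining factors; then by Lemma~\ref{lemma:hom->g(phi)} the image of $\mathfrak g(t\{X,Y\})$ under the corresponding projection is abelian, hence proper, so $G(t\{X,Y\})\neq G$. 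This is precisely the paper's route: Lemma~\ref{lemma:basic_properties_eta}~(1) applied to the quotient onto one factor reduces the lower bound to the $n=1$ case of Lemma~\ref{lem:three_generators_su2}.

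For the upper bound your route is genuinely different from the paper's. The paper chooses a triple $X_1^{(i)},X_2^{(i)},X_3^{(i)}$ in each factor as in Lemma~\ref{lem:three_generators_su2} and sets $X_j:=\sum_i X_j^{(i)}$, arguing factor by factor; you instead arrange that for every $t\neq 0$ at least two of the three elements generate full maximal tori of the product $G$ (via rational independence of the rotation angles, as in Proposition~\ref{proposition:commutative-eta}, together with pairwise-disjoint countable bad sets obtained by generic rescaling) and then invoke the filled-element mechanism of Lemma~\ref{lem:filled_elements}~(2), which does not require simplicity. Your version is heavier, but it is workable, and it has the merit of confronting directly the point that surjecting onto each simple factor does not by itself exclude diagonal subgroups when the factors are isomorphic; the countability bookkeeping you outline does go through. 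So the upper bound is acceptable; only the lower bound needs to be corrected.
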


\begin{proof}
By Lemmas~\ref{lemma:basic_properties_eta} (1) and \ref{lem:three_generators_su2}, we have $\overline{\eta}(G) \le 3$.
Let us prove the reverse inequality.

We write $G$ as the almost direct product of groups $G = S^{(1)} \dotsb S^{(n)}$, which corresponds to the direct sum decomposition of the Lie algebra $\mathfrak{g} \simeq \mathfrak{s}^{(1)} \oplus \dotsb \oplus \mathfrak{s}^{(n)}$, where each $ \mathfrak{s}^{(i)}$ is isomorphic to $\mathfrak{su}(2)$. 
We choose $X_1^{(i)}, X_2^{(i)}, X_3^{(i)}$ in $\mathfrak{s}^{(i)}$ for every $1 \le i \le n$, as in Lemma~\ref{lem:three_generators_su2},
and define $X_j:=\sum_{i=1}^n X_j^{(i)}$ for $j \in\{ 1,2,3\}$.
Then, $G(t\{X_{1},X_{2},X_{3}\})=G$ for all $t\neq 0$.
\end{proof}
The following lemma indicates that the case $G=SU(2)$ is the only exception.
\begin{lemma}
\label{lemma:overline_eta_simple}
Suppose that $G$ is a Zariski-connected, real simple algebraic group such that $\mathfrak{g}$ is not isomorphic to $\mathfrak{su}(2)$.
There exist regular semisimple elements $X$ and $Y \in \mathfrak g$
such that $G(t\{X, Y\})=G$ for all $t \neq 0$.
In particular, $\overline\eta(G) = 2.$
\end{lemma}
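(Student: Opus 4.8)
The plan is to upgrade the proof of Proposition~\ref{prop:two_generates_reductive}, which only gives $\eta(G)=2$, to a statement valid for \emph{all} $t\neq 0$. Since $G$ is simple it is non-commutative, so $\underline\eta(G)\geq 2$ (if $\underline\eta(G)=1$ then $G$ would be the Zariski closure of a cyclic group, hence abelian); thus it suffices to produce regular semisimple $X,Y\in\mathfrak g$ with $G(t\{X,Y\})=G$ for every $t\neq 0$, after which $\overline\eta(G)=2$ is immediate. That construction is the entire content of the lemma.

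First I would invoke Lemma~\ref{lemma:two_filled_elements} to fix Cartan subalgebras $\mathfrak j_1,\mathfrak j_2$ and regular semisimple $X_0\in\mathfrak j_1$, $Y_0\in\mathfrak j_2$ with $X_0$ being $\mathfrak j_2$-filled and $Y_0$ being $\mathfrak j_1$-filled; let $J_1,J_2$ denote the corresponding maximal $\R$-tori. Applying Proposition~\ref{proposition:commutative-eta} to $J_1$ and $J_2$ and perturbing $X_0$ inside $\mathfrak j_1$ and $Y_0$ inside $\mathfrak j_2$ — which, by Lemma~\ref{lem:filled_elements}(1) and the openness of $\mathfrak g^{\textrm{ss}}_{\textrm{reg}}$, preserves the filled and regular-semisimple conditions — I obtain $X\in\mathfrak j_1$, $Y\in\mathfrak j_2$ together with countable sets $\Lambda_X,\Lambda_Y\subset\R$ such that $G(t\{X\})=J_1$ for $t\notin\Lambda_X$ and $G(t\{Y\})=J_2$ for $t\notin\Lambda_Y$ (here $\Lambda_X$ depends only on the elliptic coordinates of $X$, as in \eqref{eqn:exclude_rational}, and likewise for $\Lambda_Y$). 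Next, choosing regular $H_i\in\mathfrak j_i$ (so that $\alpha(H_i)\neq 0$ for all roots with respect to $\mathfrak j_{i,\C}$), Lemma~\ref{lemma:analytic_gt_through_ux}, applied with $L=J_2$, $u=H_2$, resp.\ $L=J_1$, $u=H_1$, supplies analytic curves $a=a_{X,H_2}$ and $b=b_{Y,H_1}$, given explicitly by $a(t)=\tfrac1t(\operatorname{Ad}(e^{-tX})H_2-H_2)$ etc., with $a(t)\in\mathfrak g(J_2;t\{X\})$, $b(t)\in\mathfrak g(J_1;t\{Y\})$ for $t\neq 0$, while $a(0)=[H_2,X]$ is $\mathfrak j_2$-filled and $b(0)=[H_1,Y]$ is $\mathfrak j_1$-filled. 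Since $\mathfrak g(\mathfrak j_i)_{\textrm{fill}}$ is Zariski-open and a nonzero real-analytic function on $\R$ has discrete zero set, the sets $\Lambda_a:=\{t\neq 0:a(t)\notin\mathfrak g(\mathfrak j_2)_{\textrm{fill}}\}$ and $\Lambda_b:=\{t\neq 0:b(t)\notin\mathfrak g(\mathfrak j_1)_{\textrm{fill}}\}$ are countable.

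Now I would take the pair $\{X,sY\}$ for a scalar $s$ to be chosen. As in Proposition~\ref{prop:two_generates_reductive}, if $s\Lambda_X\cap\Lambda_Y=\{0\}$ then for every $t\neq 0$ at least one of $G(t\{X\})=J_1$, $G(t\{sY\})=J_2$ holds. In the first case $G(t\{X,sY\})\supseteq G(J_1;(ts)\{Y\})\ni b(ts)$, so $G(t\{X,sY\})$ contains $\mathfrak j_1$ and the $\mathfrak j_1$-filled element $b(ts)$ as soon as $ts\notin\Lambda_b$; by Lemma~\ref{lem:filled_elements}(2) its Lie algebra is then $\mathfrak g$, and Zariski-connectedness of $G$ forces $G(t\{X,sY\})=G$. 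Symmetrically, in the second case $G(t\{X,sY\})=G$ whenever $t\notin\Lambda_a$. Hence $G(t\{X,sY\})$ can fail to equal $G$ only for $t$ in
\[
(\Lambda_X\cup s^{-1}\Lambda_b)\cap(s^{-1}\Lambda_Y\cup\Lambda_a)=(\Lambda_X\cap s^{-1}\Lambda_Y)\cup(\Lambda_X\cap\Lambda_a)\cup s^{-1}(\Lambda_b\cap\Lambda_Y)\cup(s^{-1}\Lambda_b\cap\Lambda_a).
\]
The first and last terms are killed by choosing $s$ outside the countably many resonant ratios. The remaining two, $\Lambda_X\cap\Lambda_a$ and $\Lambda_Y\cap\Lambda_b$, do not involve $s$ and are the heart of the matter: I would make them trivial using the freedom still available in the choice of $X$ (equivalently of $\Lambda_X$, which sees only the elliptic part of $X$) together with the regular element $H_2$ (which, with $X$, determines $a_{X,H_2}$ and hence $\Lambda_a$), and symmetrically for $Y,H_1$. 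Setting $X'=X$, $Y'=sY$ then yields the required pair, and the lemma follows.

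The hypothesis $\mathfrak g\not\cong\mathfrak{su}(2)$ enters precisely at this separation step: only for $\mathfrak{su}(2)$ are all Cartan subgroups one-dimensional compact tori, so there is no room either to keep $\{e^{tX}\}$ and $\{e^{tY}\}$ from degenerating or to put the exceptional sets in mutually generic position — indeed $\overline\eta$ is genuinely $3$ there by Lemma~\ref{lem:three_generators_su2}. The main obstacle I anticipate is exactly arranging $\Lambda_X\cap\Lambda_a=\{0\}$ and its mirror: these sets are invisible to the scaling parameter, so one must show that as $X$ and $H_2$ range over suitable open dense subsets the countable set $\Lambda_a$ avoids the countable set $\Lambda_X$, which requires tracking how the zero loci of the root components of $\operatorname{Ad}(e^{-tX})H_2-H_2$ depend analytically on $X$ and $H_2$, with no soft shortcut apparently available.
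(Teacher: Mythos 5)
Your overall architecture (two Cartan subalgebras in ``mutually generic position'', a scaling parameter $s$ to decouple the resonance sets, and a fallback case analysis for each $t$) matches the spirit of the paper's proof, but the step you yourself flag as ``the heart of the matter'' is a genuine gap, and the paper resolves it by a different mechanism rather than by the decoupling you hope for. Your argument inherits from Proposition~\ref{prop:two_generates_reductive} the use of the analytic curves $a(t),b(t)$ of Lemma~\ref{lemma:analytic_gt_through_ux} together with the $\mathfrak{j}$-filled condition; this inherently produces exceptional countable sets $\Lambda_a,\Lambda_b$ of parameters $t$ at which $a(t)$ or $b(t)$ fails to be filled, and as you correctly compute, the residual bad sets $\Lambda_X\cap\Lambda_a$ and $\Lambda_Y\cap\Lambda_b$ are invisible to the scaling parameter $s$. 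You do not prove these intersections reduce to $\{0\}$, and it is not clear they can be arranged to: $\Lambda_a$ is the zero locus of the $\mathfrak{j}_2$-root components of $\tfrac1t(\operatorname{Ad}(e^{-tX})H_2-H_2)$, which depends on $X$ in a way entangled with the very elliptic coordinates of $X$ that determine $\Lambda_X$, so there is no independent parameter left to move one set off the other. As written, the proposal establishes $G(t\{X,sY\})=G$ only for $t$ outside a countable set, i.e.\ it re-proves $\eta(G)=2$ but not $\overline\eta(G)=2$.

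The paper's proof closes exactly this hole by discarding the filled-element/analytic-curve argument in the fallback step and replacing it with Lemma~\ref{lem:J_Y_generates_G}: if $Y\in\operatorname{Ad}(\ell^{-1})\mathfrak j$ with $\ell,\ell^{-1}$ avoiding the Zariski-closed set $(G_\C)_{\mathrm{fix}}$ built from fixed points of non-central torus elements on the flag varieties $G_\C/H_\C$ of the finitely many maximal connected subgroups $H_\C\supset J_\C$, then $G(J;t\{Y\})=G$ for \emph{every} $t$ with $\exp(tY)\notin Z_G$ — and the latter is arranged for all $t\neq 0$ at once by Lemma~\ref{lem:generic_not_center}. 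This produces no exceptional set of $t$'s in the fallback step, so the only bad set is $\Lambda\cap\Lambda'$, which the rescaling does kill. To repair your proof you would need either to import that lemma (in which case your filled-element machinery becomes superfluous for this step) or to supply a genuinely new argument that the zero sets of the root components of $a(t)$ can be made disjoint from $\Lambda_X\smallsetminus\{0\}$; the latter is precisely what you concede you cannot yet do.
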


For the proof, we need some notation and lemmas.

When $G_\C$ acts on $X_\C$,
we denote by $(X_\C)^g
:=\{x\in X_\C|gx =x\}$.

\begin{lemma}
Let $G_\C$ be a Zariski-connected, complex simple algebraic group,
$Z_{G_\C}$ the center, $J_\C$ a Cartan subgroup,  $H_\C$ a Zariski-connected algebraic subgroup containing $J_\C$, and $X_\C := G_\C/H_\C$. 
 \begin{enumerate}
     \item[(1)] $(X_\C)^g = X_\C$
     if and only if $g \in Z_{G_\C}$.
 \\
 \item[(2)]
 We set
 \begin{equation}
 \label{eqn:X_universal_fixed}
(X_\C)_\textrm{fix}
:=
 \bigcup_{g\in J_\C\smallsetminus Z_{G_\C}}
(X_\C)^g
 \end{equation}
Then $(X_\C)_\textrm{fix}$
is a Zariski-closed subset
with positive codimension in $X_\C$.
 \end{enumerate}
\end{lemma}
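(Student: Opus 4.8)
The statement to prove concerns the set $(X_\C)_{\textrm{fix}}$ defined in \eqref{eqn:X_universal_fixed}: namely that for $G_\C$ a Zariski-connected complex simple algebraic group with center $Z_{G_\C}$, Cartan subgroup $J_\C$, and $H_\C \supset J_\C$ a Zariski-connected algebraic subgroup, setting $X_\C = G_\C/H_\C$, we have (1) $(X_\C)^g = X_\C$ iff $g \in Z_{G_\C}$, and (2) $(X_\C)_{\textrm{fix}}$ is Zariski-closed of positive codimension in $X_\C$.

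\textbf{Proof plan.} For part (1), the ``if'' direction is immediate: a central element acts trivially on $G_\C/H_\C$ since $Z_{G_\C} \subset H_\C$ (as $J_\C \subset H_\C$ and $Z_{G_\C}$ lies in every Cartan subgroup of a connected reductive group). For the ``only if'' direction, suppose $g \notin Z_{G_\C}$ but $(X_\C)^g = X_\C$. Then $g x H_\C = x H_\C$ for all $x \in G_\C$, i.e. $x^{-1} g x \in H_\C$ for all $x$, so the conjugacy class of $g$ lies in $H_\C$. Taking the Zariski-closure, the smallest normal subgroup of $G_\C$ containing $g$ is contained in $H_\C$; since $G_\C$ is simple modulo center and $g \notin Z_{G_\C}$, this normal closure is all of $G_\C$, forcing $H_\C = G_\C$ and $X_\C$ a point, a degenerate case one can exclude (or handle separately as trivially satisfying the conclusion with $Z_{G_\C}$ of finite index). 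Hence $g \in Z_{G_\C}$.

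For part (2), the key is to exhibit $(X_\C)_{\textrm{fix}}$ as the image of a proper morphism from a Zariski-closed subvariety, hence Zariski-closed, and then to estimate its dimension. Consider the incidence variety $\mathcal{Z} := \{(\bar g, x H_\C) \in (J_\C/Z_{G_\C}) \times X_\C : \text{a representative } g \text{ of } \bar g \text{ fixes } x H_\C\}$; more precisely, work with $\widetilde{\mathcal Z} := \{(g, xH_\C) \in J_\C \times X_\C : x^{-1} g x \in H_\C\}$, which is Zariski-closed in $J_\C \times X_\C$. Then $(X_\C)_{\textrm{fix}}$ is the image under the second projection of the closed subset $\widetilde{\mathcal Z} \cap ((J_\C \smallsetminus Z_{G_\C}) \times X_\C)$; since $J_\C \smallsetminus Z_{G_\C}$ is not closed one must be slightly careful—I would instead take the closure and argue that adding the central fibre (which projects onto all of $X_\C$) does not help, or better, pass to the quotient $J_\C / Z_{G_\C}$ which is an affine torus, note $\widetilde{\mathcal Z}$ descends, and use that the projection restricted to the complement of the identity section is proper because $J_\C/Z_{G_\C}$ minus a point is covered by finitely many closed pieces in an appropriate compactification. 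For the codimension bound: for a fixed regular element $g \in J_\C \smallsetminus Z_{G_\C}$, the fixed locus $(X_\C)^g = \{xH_\C : x^{-1}gx \in H_\C\}$ has dimension at most $\dim X_\C - \operatorname{rank}_{\textrm{something}}$; concretely, $\dim (X_\C)^g = \dim\{x : x^{-1} g x \in H_\C\} - \dim H_\C$, and since $g$ is regular its $G_\C$-conjugacy class has dimension $\dim G_\C - \operatorname{rank} G_\C$ and meets $H_\C$ in a subvariety of dimension $\le \dim H_\C - \operatorname{rank} G_\C$ (because $J_\C \subset H_\C$, the class meets $J_\C$ in the finite Weyl orbit, and transversality considerations give the drop), hence $\dim(X_\C)^g \le \dim X_\C - 1$. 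Letting $g$ vary over the $1$-dimensional-or-more family modulo the $0$-dimensional stabilizer, the total dimension of $(X_\C)_{\textrm{fix}}$ is at most $\dim(X_\C)^g + \dim(J_\C/Z_{G_\C}) - $ (generic fibre dimension of the family map), and a careful bookkeeping—using that $\operatorname{rank} G_\C \ge 1$ so the conjugacy classes of generic elements of $J_\C$ genuinely move—yields $\dim (X_\C)_{\textrm{fix}} < \dim X_\C$.

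\textbf{Expected main obstacle.} The delicate point is the dimension count in part (2): showing the bound is \emph{strict} requires ruling out the degenerate possibility that the fixed loci $(X_\C)^g$ sweep out all of $X_\C$ as $g$ ranges over $J_\C \smallsetminus Z_{G_\C}$. This is exactly what part (1) prevents pointwise (no single non-central $g$ fixes everything), but one needs the family version. I would handle this by a semicontinuity/genericity argument: the function $\bar g \mapsto \dim (X_\C)^g$ is upper semicontinuous on $J_\C/Z_{G_\C}$, its value is $< \dim X_\C$ on the dense open set of regular $\bar g$ by the conjugacy-class estimate above, and the exceptional set where it could jump to $\dim X_\C$ is contained in $Z_{G_\C}/Z_{G_\C} = \{e\}$ by part (1); combining, $\dim \widetilde{\mathcal Z}' < \dim X_\C + \dim(J_\C/Z_{G_\C}) - $ (fibre), and the projection to $X_\C$, being dominant onto its closed image only if fibres are positive-dimensional generically, forces $\dim (X_\C)_{\textrm{fix}} \le \dim X_\C - 1$. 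The case $\mathfrak g \simeq \mathfrak{sl}(2,\C)$ (rank one) should be checked by hand since the numerics are tightest there, and indeed works because $J_\C/Z_{G_\C} \cong \C^\times$ acts on $X_\C = G_\C/H_\C$ with $(X_\C)^g$ finite for regular $g$ when $H_\C = J_\C$ (flag variety $\mathbb{P}^1$, two fixed points).
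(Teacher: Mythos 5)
Part (1) of your proposal is correct and is essentially the paper's argument: $(X_\C)^g=X_\C$ forces $g$ to lie in the proper normal subgroup $\bigcap_{\ell}\ell H_\C\ell^{-1}$, hence in $Z_{G_\C}$ by simplicity.

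Part (2), however, has two genuine gaps, and the key idea that closes both is missing. First, Zariski-closedness: you project the incidence set $\widetilde{\mathcal Z}\cap((J_\C\smallsetminus Z_{G_\C})\times X_\C)$ to $X_\C$, but this is the image of a \emph{non-closed} subset under a \emph{non-proper} map ($J_\C$ is affine and $X_\C$ need not be complete, since $H_\C$ need not be parabolic), so you only get constructibility. Your two proposed repairs do not work as stated: taking the closure in $J_\C\times X_\C$ may add points over $Z_{G_\C}$, and "finitely many closed pieces in a compactification" of $J_\C/Z_{G_\C}$ minus a point is not a properness argument. Second, the dimension count: knowing $\dim(X_\C)^g\le\dim X_\C-1$ for each non-central $g$ (which does follow from part (1) and irreducibility of $X_\C$) does not bound the dimension of the union over the positive-dimensional family $J_\C\smallsetminus Z_{G_\C}$ unless you can bound the fibres of $\operatorname{pr}_2$ from below. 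That fibre over $xH_\C$ is $(J_\C\cap xH_\C x^{-1})\smallsetminus Z_{G_\C}$, and a closed subgroup of a torus can be finite while still containing non-central elements, so the fibre can be $0$-dimensional; your "careful bookkeeping" and the semicontinuity remark do not resolve this. (Your own test case already shows the tension: for $G_\C=SL(2,\C)$ and $H_\C=B$, one has $\dim J_\C+\max_g\dim(X_\C)^g=1=\dim X_\C$, so the naive count is not strict.)

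The missing idea, which is what the paper uses, is that the hypothesis $J_\C\subset H_\C$ makes the union in \eqref{eqn:X_universal_fixed} \emph{finite}: for $g=\exp(T)$ with $T\in\mathfrak j_\C$, the fixed-point condition on $X_\C$ depends only on the finite set $\Delta^T=\{\alpha\in\Delta(\mathfrak g_\C,\mathfrak j_\C)\mid e^{\alpha(T)}=1\}$, and $\Delta^T\ne\Delta$ exactly when $g\notin Z_{G_\C}$. Hence $(X_\C)_{\textrm{fix}}$ is a finite union of sets $(X_\C)^{g_i}$, each Zariski-closed and proper by part (1), and therefore of positive codimension in the irreducible variety $X_\C$. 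This bypasses both the properness issue and the fibre-dimension issue entirely; without some such finiteness (or an honest stratified fibre analysis), your dimension count does not go through.
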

\begin{proof}
(1). The condition $(X_\C)^g = X_\C$ implies that 
      $g$ belongs to the proper normal subgroup
      $\bigcap_{\ell \in G_\C} \ell H_\C \ell^{-1}$.
      Since $G_\C$ is simple, we conclude $g \in Z_{G_\C}$.
     The converse statement is clear.
\newline
(2).  Since $H_\C$ contains the Cartan subgroup $J_\C$, the defining equation
$\exp(T) x = x$ in $X_\C$ for $T \in \mathfrak{j}_\C$ depends solely on the
finite set 
\[
 \Delta^T:=
 \{ \alpha\in \Delta(\mathfrak{g}_\C, \mathfrak{j}_\C)| e^{\alpha(T)}=1\}.
\]
Hence, the right-hand-side of
\eqref{eqn:X_universal_fixed} is a Zariski-closed subset with positive codimention in $X_\C$.
\end{proof}
For a fixed Cartan subgroup $J_\C$ of $G_\C$, we define
\begin{equation}
\label{eqn:G_C_fix_J}
(G_\C)_\textrm{fix}
:=
\bigcup_{J_\C \subset H_\C } \pi_{H_\C}^{-1}
(G_\C/H_\C)_\textrm{fix},
\end{equation}
where $H_\C$ runs over the finite set of all Zariski-connected, maximal algebraic subgroups containing the Cartan subgroup $J_\C$, and $\pi_{H_\C}\colon G_\C \to G_\C/H_\C$ is the natural quotient map.
Then,  $(G_\C)_\textrm{fix}$ is a Zariski-closed subset of $G_\C$.

\begin{lemma}
\label{lem:J_Y_generates_G}
Suppose $Y \in \operatorname{Ad}(\ell^{-1}) \mathfrak{j}_\C$ 
where $\ell \in G_\C \smallsetminus
 (G_\C)_\textrm{fix}$.
 If $\exp(Y) \notin Z_{G_\C}$,
then $G_\C(J_\C; \{Y\})= G_\C$.
\end{lemma}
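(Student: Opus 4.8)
\textbf{Proof plan for Lemma~\ref{lem:J_Y_generates_G}.}
The plan is to show that the Zariski-closure $M_\C := \overline{G_\C(J_\C; \{Y\})}$ must be all of $G_\C$ by ruling out the possibility that it is a proper Zariski-closed subgroup. First I would observe that $M_\C$ is a Zariski-connected algebraic subgroup of $G_\C$ (being the identity component of a Zariski-closure) which contains the Cartan subgroup $J_\C$ as well as the element $\exp(Y)$. Suppose for contradiction that $M_\C \neq G_\C$. Since $M_\C$ is a proper Zariski-connected algebraic subgroup containing $J_\C$, it is contained in some Zariski-connected \emph{maximal} algebraic subgroup $H_\C$ with $J_\C \subset H_\C \subsetneq G_\C$; this is the finite family of subgroups indexed in the definition \eqref{eqn:G_C_fix_J}.

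The key step is then to translate the containment $\exp(Y) \in M_\C \subset H_\C$ into a statement about fixed points on $X_\C = G_\C/H_\C$, and to use the hypothesis $\ell \notin (G_\C)_\textrm{fix}$ to derive a contradiction. Write $Y = \operatorname{Ad}(\ell^{-1}) T$ with $T \in \mathfrak{j}_\C$, so $\exp(Y) = \ell^{-1} \exp(T)\, \ell$. The condition $\exp(Y) \in H_\C$ is equivalent to $\exp(T) \in \ell H_\C \ell^{-1}$, i.e. $\exp(T)$ fixes the point $\ell H_\C \in G_\C/H_\C$. Since $\exp(T) \in J_\C$ and $\exp(Y) \notin Z_{G_\C}$ forces $\exp(T) \notin Z_{G_\C}$, this says precisely that the point $\ell H_\C$ lies in $(G_\C/H_\C)^{\exp(T)} \subset (G_\C/H_\C)_\textrm{fix}$, hence $\ell \in \pi_{H_\C}^{-1}\bigl((G_\C/H_\C)_\textrm{fix}\bigr) \subset (G_\C)_\textrm{fix}$, contradicting the hypothesis $\ell \in G_\C \smallsetminus (G_\C)_\textrm{fix}$.

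Therefore $M_\C = G_\C$, which is exactly the assertion $G_\C(J_\C;\{Y\}) = G_\C$ since the left-hand side is by definition the identity component of the Zariski-closure, and $G_\C$ is Zariski-connected. The main obstacle I anticipate is the bookkeeping in the first reduction step: namely, confirming that a proper Zariski-connected algebraic subgroup of $G_\C$ containing the Cartan subgroup $J_\C$ is indeed contained in one of the \emph{finitely many} Zariski-connected maximal algebraic subgroups containing $J_\C$, so that the union in \eqref{eqn:G_C_fix_J} genuinely captures it; this uses that there are only finitely many conjugacy classes of such maximal subgroups and that each is determined by the finite combinatorial data $\Delta^T$ appearing in the preceding lemma. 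Once that reduction is in place, the remainder is the straightforward fixed-point translation described above, with no serious computation.
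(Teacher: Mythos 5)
Your proposal is correct and follows essentially the same route as the paper's proof: argue by contradiction, place $G_\C(J_\C;\{Y\})$ inside a Zariski-connected maximal algebraic subgroup $H_\C \supset J_\C$, translate $\exp(Y)\in H_\C$ into the statement that $\ell H_\C$ is a fixed point of $\exp(T)\in J_\C\smallsetminus Z_{G_\C}$ on $G_\C/H_\C$, and contradict $\ell\notin (G_\C)_\textrm{fix}$ via the definition \eqref{eqn:G_C_fix_J}. The reduction step you flag (finiteness of the maximal Zariski-connected subgroups containing $J_\C$) is likewise taken for granted in the paper's definition of $(G_\C)_\textrm{fix}$, so there is no discrepancy.
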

\begin{proof}
If this were not the case, there would exist a Zariski-connected maximal algebraic subgroup $H_\C$ such that $ G(J_\C; \{Y\}) \subset H_\C$, where $Y=\operatorname{Ad}(\ell^{-1}) X$ with $X \in \mathfrak{j}_\C$

Let $o \in G_\C/H_\C$ denote the origin.
We observe that
\[
 \ell \cdot o \in \ell (G_\C/H_\C)^{\exp(Y)}
=  (G_\C/H_\C)^{\exp(X)}.
\]
On the other hand, since $Y\in \mathfrak{j}_\C$ satisfies $\exp(Y) \notin Z_{G_\C}$, we have $\exp(X) \notin Z_{G_\C}$.
It follows from the definition \eqref{eqn:G_C_fix_J} that $ (G_\C/H_\C)^{\exp(X)} \subset (G_\C/H_\C)_\textrm{fix}$.
This contradicts the assumption that $\ell \notin (G_\C)_\textrm{fix}$.
\end{proof}
\begin{lemma}
\label{lem:generic_not_center}
Let $G$ be a Zariski-connected, real simple
algebraic group such that
$\mathfrak{g} \neq \mathfrak{su}(2)$,
and let $\mathfrak{j}$ be a Cartan subalgebra of $\mathfrak{g}$.
Then there exists a countable set $\Xi$
in $\mathfrak{j}$ such that 
 $\exp(t Y) \not \in Z_{G}$ for any $t\in \R \smallsetminus \{0\}$ and for any $Y \not\in \R \Xi$.
\end{lemma}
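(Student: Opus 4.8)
The statement to be proved is Lemma~\ref{lem:generic_not_center}: for a Zariski-connected real simple algebraic group $G$ with $\mathfrak{g} \neq \mathfrak{su}(2)$ and a Cartan subalgebra $\mathfrak{j}$, there is a countable set $\Xi \subset \mathfrak{j}$ such that $\exp(tY) \notin Z_G$ whenever $t \in \R \smallsetminus \{0\}$ and $Y \notin \R\Xi$. The key point is that the center $Z_G$ is finite (since $G$ is simple), so the condition $\exp(tY) \in Z_G$ is, for each fixed center element $z$, a very restrictive condition on the pair $(t,Y)$.

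\textbf{First step: reduce to root-lattice conditions.} I would fix a regular element $H_0 \in \mathfrak{j}$ and use the exponential map on the abelian group $J = \exp(\mathfrak{j})$ (or rather its identity component). Writing $\mathfrak{j}_\C$ for the complexification, recall that for $Y \in \mathfrak{j}$ the element $\exp(Y)$ lies in $Z_{G_\C}$ if and only if $\alpha(Y) \in 2\pi\sqrt{-1}\,\Z$ for every root $\alpha \in \Delta(\mathfrak{g}_\C, \mathfrak{j}_\C)$, i.e.\ $Y$ lies in the (complexified) coweight lattice translated appropriately; more precisely $\exp(Y) \in Z_{G_\C}$ forces $Y$ to lie in a fixed countable union $\Lambda_Z$ of affine-lattice translates inside $\mathfrak{j}_\C$, one coset for each potential central value. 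Since $Z_G \subset Z_{G_\C}$, the condition $\exp(tY) \in Z_G$ implies $tY \in \Lambda_Z$. The set $\Lambda_Z$ is a countable union of cosets of the $\Q$-span of the coroots (a proper $\Q$-subspace of $\mathfrak{j}$ when $\mathrm{rank}\, \mathfrak{g} \geq 1$, which is automatic), intersected with $\mathfrak{j}$.

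\textbf{Second step: extract $\Xi$.} For each coset $C$ occurring in $\Lambda_Z \cap \mathfrak{j}$, I want to understand which lines $\R Y$ through the origin meet $C$ in a nonzero point. If $C$ is a coset $v + W$ of a proper subspace $W \subsetneq \mathfrak{j}$ with $v \neq 0$, then $\R Y \cap C \neq \{0\}$ forces $Y$ to lie in the (single, or empty) line determined by $C$, or in the subspace $W$ itself when $v \in W$; if $v = 0$, i.e.\ $C = W$, then we would be stuck, so I must check that the homogeneous coset $C = \{0\}$-translate can only be $W = $ (coroot $\Q$-span), and $tY \in W$ for some $t \neq 0$ means $Y \in W_\R$, a proper subspace---but this is \emph{not} countable, so I need to be more careful here. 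The resolution: $\exp(Y) \in Z_G$ with $Z_G$ \emph{finite} means $Y$ lies in finitely many cosets only \emph{after} fixing that $\exp(Y)$ equals a specific $z \in Z_G$; but for the value $z = e$ we genuinely get the full coweight lattice $\Lambda_0$, which is a \emph{lattice} (discrete, hence countable), not a subspace. So $\Lambda_Z \cap \mathfrak{j}$ is a countable union of \emph{discrete} lattice cosets, and each such coset meets only countably many lines through the origin. Collecting one nonzero point from each such line (over all cosets and all center elements) gives the desired countable set $\Xi$: if $Y \notin \R\Xi$ then $tY \notin \Lambda_Z$ for all $t \neq 0$, hence $\exp(tY) \notin Z_{G_\C} \supseteq Z_G$.

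\textbf{Main obstacle.} The delicate point, and where the hypothesis $\mathfrak{g} \neq \mathfrak{su}(2)$ or rather $\mathrm{rank}\,\mathfrak{g} \geq 1$ together with simplicity enters, is ensuring that the relevant lattice cosets are genuinely \emph{discrete} (countable) and do not degenerate into positive-dimensional subspaces --- equivalently, that $\exp|_{\mathfrak{j}}$ has discrete kernel, which holds because $J$ is a real torus times a vector group with the vector-group part injective under $\exp$ and the compact-torus part having a lattice kernel. Concretely I would split $\mathfrak{j} = \mathfrak{j}_{\mathrm{split}} \oplus \mathfrak{j}_{\mathrm{ell}}$ into the split and elliptic (compact) parts, note $\exp$ is injective on $\mathfrak{j}_{\mathrm{split}}$, and handle $\mathfrak{j}_{\mathrm{ell}}$ via its cocharacter lattice; the condition $\exp(Y) \in Z_G$ then pins down the $\mathfrak{j}_{\mathrm{split}}$-component to finitely many values (here one uses that $Z_G$ is finite and split tori have trivial torsion) and the $\mathfrak{j}_{\mathrm{ell}}$-component to a countable lattice coset, as above. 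Once that discreteness is in hand, the extraction of $\Xi$ is the routine ``a discrete set meets countably many lines through a point'' argument. I expect the write-up of this discreteness reduction --- making precise the coweight/coroot lattice bookkeeping and the split-versus-elliptic dichotomy --- to be the only real content; the rest is formal.
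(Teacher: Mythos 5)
Your proof is correct and is essentially the paper's argument: the paper simply observes that the kernel of $\exp\colon\mathfrak{j}\to J$ is countable (so that, $Z_G$ being finite, $\exp^{-1}(Z_G)\cap\mathfrak{j}$ is a countable union of cosets of that kernel, hence countable, and one takes $\Xi$ to be this set), and your root-lattice computation of $\exp^{-1}(Z_{G_\C})\cap\mathfrak{j}_\C$ is just a more explicit route to the same discreteness. One remark: the hypothesis $\mathfrak{g}\neq\mathfrak{su}(2)$ is not what guarantees discreteness of these fibers --- they are cosets of the discrete subgroup $\ker(\exp|_{\mathfrak{j}})$ for any Cartan subalgebra of any reductive $\mathfrak{g}$ --- so it plays no role in the proof of this lemma and matters only downstream, where one needs $\R\Xi\neq\mathfrak{j}$.
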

\begin{proof}
Let $J$ be the Cartan subgroup of $G$ with Lie algebra $\mathfrak j$.
We note that the dimension of $\mathfrak{j}$ is greater that $1$.
 Since the kernel of the exponential map $\exp\colon \mathfrak{j} \to J$
 is a countable set, the lemma is clear.
\end{proof}

We are ready to prove Lemma~\ref{lemma:overline_eta_simple}.
\begin{proof}[Proof of Lemma~\ref{lemma:overline_eta_simple}]
We shall find $X \in \mathfrak{j}$ and $Y \in \mathfrak{j}'$ in the order
$\mathfrak j \Rightarrow Y \Rightarrow \mathfrak{j}' \Rightarrow X$ as follows.

First, we take a Cartan subalgebra $\mathfrak{j}$ of $\mathfrak{g}$. 
The set $(G_\C)_{\textrm{fix}}$
is defined, as in \eqref{eqn:G_C_fix_J}.

Next, we choose
$Y \in \mathfrak{g}^{\textrm{ss}}_{\textrm{reg}}$
that satisfies the following three conditions, which are generic by 
Proposition~\ref{prop:overline_eta_simple_n}, 
Lemma~\ref{lem:generic_not_center},
 and Proposition~\ref{proposition:commutative-eta},
respectively: 
\begin{itemize}
    \item[(a)]
$\exp(t Y) \not \in Z_{G}$ for any $t\in \R \smallsetminus \{0\}$;
\item[(b)] $Y \in \operatorname{Ad}(\ell^{-1}) \mathfrak j$ for some $\ell \in G$ such that $\ell$ and $\ell^{-1}$ do not belong to $(G_\C)_\textrm{fix}$.
\item[(c)] $G(t\{Y\})$ is a Cartan subgroup of $G$ for any $t \in \R\smallsetminus \Lambda$, where $\Lambda$ is a countable subset.
\end{itemize}
Let $J'$ be the Cartan subgroup corresponding to $\mathfrak{j}':=\operatorname{Ad}(\ell^{-1}) \mathfrak j$.
Then, for any $t \in \R\smallsetminus \Lambda$, we have $G(t\{Y\})=J'$.  

Third, we take $X\in \mathfrak{j}$ satisfying the following two conditions: 
\begin{itemize}
    \item[(a)'] 
$\exp(t X) \not \in Z_{G}$ for any $t\in \R \smallsetminus \{0\}$;
\item[(c)']
$G(t\{X\})=J$ for any $t \in \R\smallsetminus \Lambda'$, where $\Lambda'$ is a countable subset.
\end{itemize}
We already know that
\begin{itemize}
    \item[(b)'] 
$X \in \operatorname{Ad}(\ell) \mathfrak j'$, where $\ell \in G$ such that $\ell$ and $\ell^{-1}$ do not belong to $(G_\C)_\textrm{fix}$.
\end{itemize}

Rescaling $X$ if necessary, we may and do assume that $\Lambda \cap \Lambda' =\{0\}$.
Then, for any $t\neq 0$, at least one of the following holds:
$G(t\{X\})=J$ or $G(t\{ Y\})=J'$.

In the case where $G(t\{X\})=J$,
we have $G(t\{X,Y\})=G(J; tY) =G$,
where the last identity follows from
Lemma~\ref{lem:J_Y_generates_G}.
On the other hand, in the case where
$G(t;\{ Y\})=J'$, we have $G(t\{X,Y\})=G(J'; tX) =G$,
again by Lemma~\ref{lem:J_Y_generates_G}.
Therefore, we have shown
$\overline\eta(G) \le 2$. 

Since the reverse inequality $2 \le \overline\eta(G)$ is clear,
the last assertion is also proved.
\end{proof}

\begin{proposition}
\label{prop:overline_eta_simple_n}
Suppose that $G$ is a Zariski-connected real algebraic group such that $\mathfrak{g}$ is isomorphic to $\mathfrak{s}^n$, the direct sum of $n$ copies of a simple Lie algebra $\mathfrak{s}$ which is not isomorphic to $\mathfrak{su}(2)$.
Then there exist regular semisimple elements $X$ and $Y \in \mathfrak g$
such that $G(t\{X, Y\})=G$ for all $t \neq 0$.
In particular, $\overline\eta(G) = 2$ for every $n \in \N_+$.
\end{proposition}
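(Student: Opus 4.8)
\textbf{Proof proposal for Proposition~\ref{prop:overline_eta_simple_n}.}
The plan is to mimic the structure of the proof of Proposition~\ref{prop:three_generators_su2_n}, reducing the statement for $\mathfrak{g}\simeq\mathfrak{s}^{n}$ to the already-established single-factor case (Lemma~\ref{lemma:overline_eta_simple}), while being careful that the ``bad'' parameter sets $\Lambda_i$ produced for each factor can be arranged to intersect only at $0$. Write $G$ as an almost direct product $G=S^{(1)}\cdots S^{(n)}$, corresponding to $\mathfrak{g}\simeq\mathfrak{s}^{(1)}\oplus\cdots\oplus\mathfrak{s}^{(n)}$ with each $\mathfrak{s}^{(i)}\simeq\mathfrak{s}$. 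Since $\mathfrak{s}\not\simeq\mathfrak{su}(2)$, Lemma~\ref{lemma:overline_eta_simple}, together with its proof, gives for each $i$ regular semisimple elements $X^{(i)},Y^{(i)}\in\mathfrak{s}^{(i)}$ and two Cartan subgroups $J^{(i)},J'^{(i)}$ of $S^{(i)}$, such that $S^{(i)}(t\{X^{(i)}\})=J^{(i)}$ for $t\notin\Lambda'_i$ and $S^{(i)}(t\{Y^{(i)}\})=J'^{(i)}$ for $t\notin\Lambda_i$, where $\Lambda_i,\Lambda'_i$ are countable subsets of $\mathbb{R}$, and moreover $S^{(i)}(t\{X^{(i)},Y^{(i)}\})=S^{(i)}$ for every $t\neq0$.

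The key point is to rescale so that all the exceptional sets are disjoint off $0$. Because each $\Lambda_i$ and $\Lambda'_i$ is countable, and rescaling $X^{(i)}\mapsto s_i X^{(i)}$ replaces $\Lambda'_i$ by $s_i^{-1}\Lambda'_i$ (similarly for $Y^{(i)}$), we may inductively choose nonzero scalars $s_1,\dots,s_n$ and $s_1',\dots,s_n'$, arbitrarily close to $1$, so that the countable sets $s_i^{-1}\Lambda'_i$ and $s_i'^{-1}\Lambda_i$, for all $i$, pairwise intersect only at $0$; this uses only that a countable union of countable sets is countable and that a generic scalar avoids the countably many forbidden ratios. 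After this rescaling (and renaming), set
\[
X:=\sum_{i=1}^{n}X^{(i)}, \qquad Y:=\sum_{i=1}^{n}Y^{(i)}\in\mathfrak{g},
\]
which are regular semisimple in $\mathfrak{g}$ since each summand is regular semisimple in the corresponding factor. Now fix $t\neq0$. By the disjointness of the exceptional sets, for at most one index $i$ can $t$ lie in $\Lambda'_i\cup\Lambda_i$; in fact, reorganizing the rescaling so that all $\Lambda'_i$ and $\Lambda_i$ are mutually disjoint off $0$, for each fixed $t\neq0$ and each $i$ at least one of ``$t\notin\Lambda'_i$'' or ``$t\notin\Lambda_i$'' holds. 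Using Lemma~\ref{lemma:hom->g(phi)} applied to the projections $\operatorname{pr}_i\colon\mathfrak{g}\to\mathfrak{s}^{(i)}$, the Lie algebra $\mathfrak{g}(t\{X,Y\})$ projects onto $S^{(i)}(t\{X^{(i)},Y^{(i)}\})=\mathfrak{s}^{(i)}$ for every $i$; moreover, since the $\mathfrak{s}^{(i)}$ are pairwise non-isomorphic simple ideals' — here they may be isomorphic, so we instead argue as in Lemma~\ref{lemma:basic_properties_eta}~\ref{item:almost-direct-group} and Proposition~\ref{prop:three_generators_su2_n}: for each $i$, at least one of $G(t\{X^{(i)}\})$ or $G(t\{Y^{(i)}\})$ is the full Cartan subgroup of $S^{(i)}$, and the argument of Lemma~\ref{lemma:overline_eta_simple} then upgrades the pair $(X^{(i)},Y^{(i)})$ to generate all of $S^{(i)}$, so that $\mathfrak{g}(t\{X,Y\})$ surjects onto each $\mathfrak{s}^{(i)}$ under $\operatorname{pr}_i$ with full control, forcing $\mathfrak{g}(t\{X,Y\})=\mathfrak{g}$ by the same diagonal-subgroup analysis used in the proof of Proposition~\ref{prop:three_generators_su2_n}.

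Since $G$ is Zariski-connected, $\mathfrak{g}(t\{X,Y\})=\mathfrak{g}$ yields $G(t\{X,Y\})=G$ for all $t\neq0$, hence $\overline\eta(G)\le2$. The reverse inequality $2\le\overline\eta(G)$ is immediate because $G$ is non-commutative (a single one-parameter subgroup $e^{tX}$ has abelian Zariski-closure, and likewise $G(t\{X\})$ can never equal $G$), so $\overline\eta(G)=2$, and in particular $\eta(G)=\underline\eta(G)=2$ as well by the chain $\underline\eta\le\eta\le\overline\eta$ together with Proposition~\ref{prop:two_generates_reductive}. The main obstacle I anticipate is the bookkeeping in the disjointness step: one must verify that the simultaneous rescaling of all $2n$ elements can be carried out so that, for every fixed $t\neq0$, no index $i$ is ``bad for both $X^{(i)}$ and $Y^{(i)}$'' at that $t$ — this is where the countability of the $\Lambda$'s is essential and where a slightly careful inductive choice of scalars (rather than a naive one) is needed, exactly paralleling the rescaling argument already used inside the proof of Lemma~\ref{lemma:overline_eta_simple}.
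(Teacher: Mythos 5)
Your overall route is the same as the paper's: decompose $G$ as an almost direct product $S_1\cdots S_n$, apply Lemma~\ref{lemma:overline_eta_simple} in each factor to get $X_i,Y_i$, and set $X=\sum X_i$, $Y=\sum Y_i$. Two comments on the details. First, the cross-factor rescaling to make all the exceptional sets pairwise disjoint off $0$ is not needed and is not what the paper does. What the paper extracts from the \emph{proof} of Lemma~\ref{lemma:overline_eta_simple} (via Lemma~\ref{lem:J_Y_generates_G}) are the conditions $S_i(J_i;t\{Y_i\})=S_i$ and $S_i(J_i';t\{X_i\})=S_i$ for \emph{all} $t$, alongside $\Lambda_i\cap\Lambda_i'=\{0\}$ \emph{within} each factor; these already give $S_i(t\{X_i,Y_i\})=S_i$ for every $t\neq 0$, independently factor by factor, so no relation between $\Lambda_i$ and $\Lambda_j$ for $i\neq j$ enters.

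Second, the step you correctly flag as delicate --- passing from ``each projection $\operatorname{pr}_i(\mathfrak{g}(t\{X,Y\}))=\mathfrak{s}^{(i)}$'' to ``$\mathfrak{g}(t\{X,Y\})=\mathfrak{g}$'' when the simple ideals are mutually isomorphic --- is not closed by the references you invoke. Lemma~\ref{lemma:basic_properties_eta}~\ref{item:almost-direct-group} explicitly assumes the irreducible constituents of the two normal subgroups are distinct, which is exactly what fails here, and the proof of Proposition~\ref{prop:three_generators_su2_n} contains no ``diagonal-subgroup analysis''; it simply asserts the conclusion, as does the paper's own proof of the present proposition (which states without further argument that $G(t\{X,Y\})$ contains each $S_i$). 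So your write-up matches the paper's level of detail at this point but does not actually rule out the possibility that $\mathfrak{g}(t\{X,Y\})$ is a proper subalgebra of $\mathfrak{s}^{\oplus n}$ surjecting onto every factor (a sum of twisted diagonals): if, say, $X_2=\phi(X_1)$ and $Y_2=\phi(Y_1)$ for an isomorphism $\phi\colon\mathfrak{s}^{(1)}\to\mathfrak{s}^{(2)}$, then $e^{tX}$ and $e^{tY}$ lie in a diagonal copy of $S$ and the conclusion fails, so some genuine genericity in the choice of the $X_i,Y_i$ across factors (or a Goursat-type argument excluding diagonals) is required. Naming this obstruction and closing it is the one substantive item your proposal leaves open.
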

\begin{proof}
We write $G$ as the almost direct product of groups 
\[
G = S_1 \dotsb S_n,
\]
which corresponds to the direct sum decomposition of the Lie algebra 
\[\mathfrak{g} \simeq \mathfrak{s}_1 \oplus \dotsb \oplus \mathfrak{s}_n,
\]
where each $ \mathfrak{s}_i$ is isomorphic to the same simple Lie algebra $ \mathfrak{s}$.
For $1 \le i \le n$, we choose Cartan subgroups $J_i$ and $J'_i$ of $S_i$, and regular semisimple elements $X_i, Y_i \in  \mathfrak{s}_i$ as in Lemma~\ref{lemma:overline_eta_simple}, satisfying the following conditions:
\begin{align*}
    S(\{t X_i\})    &= J_i       & \quad & \text{for any } t \in \mathbb{R} \smallsetminus \Lambda_i, \\
    S_i(\{t Y_i\})   &= J'_i      & \quad & \text{for any } t \in \mathbb{R} \smallsetminus \Lambda'_i, \\
    S_i(J_i; \{t Y_i\}) &= S_i     & \quad & \text{for all } t \in \mathbb{R}, \\
    S_i(J'_i; \{t X_i\}) &= S_i    & \quad & \text{for all } t \in \mathbb{R},
\end{align*}
where $\Lambda_i$ and $\Lambda'_i $ are countable sets such that $ \Lambda_i \cap \Lambda'_i = \{0\}$.
We define $X := X_1 + \dotsb + X_n$ and $Y := Y_1 + \dotsb + Y_n$.
Then $G(t \{X, Y\})$ contains the subgroup $S_i$ for every $1 \leq i \leq n$ and for each $t \in \R\smallsetminus \{0\}$.
Hence, it follows that $G(t \{X, Y\}) = G$ for all $t \in \R\smallsetminus\{0\}$.
\end{proof}

\begin{proof}[Proof of Theorem~\ref{thm:two_generates_reductive}]

By Proposition~\ref{prop:reductive_eta}, it suffices to show that the Lie algebra $\mathfrak{g}$ is
either commutative or the direct sum of simple Lie algebras of the same type.

The commutative case is proved in Proposition~\ref{proposition:commutative-eta}, with two exceptional cases given in Lemma~\ref{lemma:eta-three-pattern}.

The formulas for $\underline{\eta}(G)$ and $\eta(G)$ in the non-commutative case are proved in
Proposition~\ref{prop:two_generates_reductive}.
The formula for $\overline{\eta}(G)$
in the non-commutative case is proved in 
Proposition~\ref{prop:overline_eta_simple_n}, with the exception case given in Proposition~\ref{prop:three_generators_su2_n}.

Hence, the proof of Theorem~\ref{thm:two_generates_reductive} is complete.
\end{proof}

\section*{Acknowledgments}
The first author was partially supported by the Special Postdoctoral
Researcher Program at RIKEN and JSPS under Grant-in Aid for Scientific Research 
(JP24K16929).

The second author was partially supported by the JSPS under Grant-in Aid for Scientific Research
(JP23H00084)
 and by the Institut Henri Poincar\'e (Paris) and the Institut des Hautes {\'E}tudes Scientifiques (Bures-sur-Yvette).  
He would like to express his gratitude to Gopal Prasad, Andrei Rapinchuk, B. Sury, and Aleksy Tralle for their warm hospitality during the  Conference ``Zariski Dense Subgroups, Number Theory, and Geometric Applications,'' held at the Ramanujan Lecture Hall, ICTS, Bangalore, India, from January 1 to 12, 2024, where he gave a series of three lectures on related topics.

\end{document}